\newtheorem{theorem}{Theorem}[section]
\newtheorem{remark}{Remark}[section]
\newtheorem{definition}{Definition}[section]
\newtheorem{proposition}[definition]{Proposition}
\newtheorem{lemma}[definition]{Lemma}
\newtheorem{problem}[definition]{Problem}
\newtheorem{corollary}[definition]{Corollary}
\numberwithin{equation}{section}
\numberwithin{figure}{section}
\def\qed{\mbox{}\hfill\raisebox{-2pt}{\rule{5.6pt}{8pt}\rule{4pt}{0pt}}\medskip\par}
\newcommand{\co}{\mathcal{O}}
\newcommand{\cl}[1]{\overline{#1}}
\newcommand{\defeq}{\coloneqq}
\newcommand{\abs}[1]{|#1|}
\newcommand{\eqdef}{\eqqcolon}
\newcommand{\btheta}{{\bm{\theta}}}
\newcommand{\qiter}{\mathcal{Q}^{\rm iter}}
\newcommand{\qint}{\mathcal{Q}^{\rm int}}
\newcommand{\qtheta}{Q^{\btheta}}
\newcommand{\gsh}{\mathfrak{g}_{\rm sh}}
\newcommand{\gfive}{\mathfrak{g}_5}
\newcommand{\gsix}{\mathfrak{g}_6}
\newcommand{\gtheta}{\mathcal{G}_1^{\btheta}}
\newcommand{\vphithetastar}{\varphi_{\btheta}^\ast}
\newcommand{\Futheta}{\mathfrak{F}_{(u,\btheta)}}
\newcommand{\dfive}{\mathcal{D}^5}
\newcommand{\dsix}{\mathcal{D}^6}
\newcommand{\rcal}{\mathcal{R}}
\newcommand{\norm}[1]{\Vert #1 \Vert}
\DeclareMathOperator{\supp}{{\rm supp}}
\newcommand{\vmin}{v_{\min}}
\newcommand{\tabs}[1]{\Big|#1\Big|}
\newcommand{\bfx}{\mathbf{x}}
\newcommand{\mCase}[1]{{\rm \textbf{Case {#1}.}}}
\newcommand{\mytbf}[1]{\textbf{#1}}
\DeclareMathOperator{\sgn}{sgn}
\newcommand{\qeqref}[1]{(C-\ref{#1})}
\renewcommand{\qed}{\hfill\ensuremath{\square}}
\newcommand{\blank}{\mathrel{\,\cdot\,}}
\newcommand{\idot}{\mathcal{I}(\cdot,\btheta)}
\begin{document}
\title[Two-Dimensional Riemann Problem with Four-Shock Interactions]{Global Solutions of the Two-Dimensional \\
Riemann Problem with Four-Shock Interactions\\ for the Euler Equations for Potential Flow}

\author{Gui-Qiang G.~Chen}
\address[Gui-Qiang G.~Chen]{Mathematical Institute, University of Oxford, Oxford, OX2 6GG, UK}
\email{\tt chengq@maths.ox.ac.uk}

\author{Alex Cliffe}
\address[Alex Cliffe]{Mathematical Institute, University of Oxford, Oxford, OX2 6GG, UK}
\email{\tt alex.cliffe@maths.ox.ac.uk}

\author{Feimin Huang}
\address[Feimin Huang]{Academy of Mathematics and Systems Science, Chinese Academy of Sciences, Beijing 100190, China}
\email{\tt fhuang@amt.ac.cn}

\author{Song Liu}
\address[Song Liu]{Department of Mathematics, City University of Hong Kong, Hong Kong, China;
Academy of Mathematics and Systems Science, Chinese Academy of Sciences, Beijing 100190, China
}
\email{\tt liusong@amss.ac.cn}

\author{Qin Wang}
\address[Qin Wang]{Department of Mathematics, Yunnan University, Kunming, 650091, China}
\email{\tt mathwq@ynu.edu.cn}

\date{\today}

\begin{abstract}
We present a rigorous approach and related techniques to construct global solutions
of the two-dimensional (2-D) Riemann problem with four-shock interactions for
the Euler equations for potential flow.
With the introduction of three critical angles: the vacuum critical angle from the compatibility conditions,
the detachment angle, and the sonic angle, we clarify all
configurations of the Riemann solutions for the interactions of two-forward and two-backward shocks,
including the subsonic-subsonic reflection configuration that has not
emerged in previous results.
To achieve this, we first identify the three critical angles that determine the configurations,
whose existence and uniqueness follow from our
rigorous proof of the strict monotonicity of the steady detachment
and sonic angles for 2-D steady potential flow
with respect to the Mach number of the upstream state.
Then we reformulate the 2-D Riemann problem into the shock reflection-diffraction problem
with respect to a symmetric line, along with
two independent incident angles
and two sonic boundaries varying with the choice of incident angles.
With these, the problem can be further reformulated as a free boundary problem
for a second-order
quasilinear equation
of mixed elliptic-hyperbolic type.
The difficulties arise from the degenerate ellipticity of the
nonlinear equation
near the sonic boundaries, the nonlinearity of the free boundary condition,
the singularity of the solution near the corners of the domain,
and the geometric properties of the free boundary.
To solve the problem, we need to analyze
the solutions for a quasilinear degenerate elliptic equation
by the maximum principle of the mixed-boundary value problem,
the theory of the oblique derivative boundary value problem, the uniform
{\it a priori} estimates, and the iteration method.
To the best of our knowledge, this is the first rigorous result
for the 2-D Riemann problem with four-shock interactions
for the Euler equations.
The approach and techniques developed for the Riemann problem for four-wave interactions
should be useful for solving other 2-D Riemann problems
for more general Euler equations and
related nonlinear hyperbolic systems of conservation laws.

\end{abstract}
\subjclass[2020]{35M30, 35M12, 35R35, 35Q31, 76N10, 35L65, 35L70, 35B65, 35D30, 35J70, 35J66}
\keywords{Riemann problem, shock interaction, Euler equations, potential flow, free boundary, mixed elliptic-hyperbolic type,
{\it a priori} estimates, degenerate elliptic equation, regularity, convexity.}

\maketitle
\smallskip
\tableofcontents

\smallskip
\section{Introduction}
We are concerned with the two-dimensional (2-D) Riemann problem for the Euler equations for potential flow,
consisting of the conservation law of mass
and the Bernoulli law:
\begin{align}\label{System:2DPotentialFlowEqs}
\begin{cases}
\, \partial_{t}\rho+{\rm div}_{\mathbf{x}}(\rho \nabla_{\mathbf{x}}\Phi) = 0\,,\\[1mm]
\, \partial_{t}\Phi+\frac{1}{2}\abs{\nabla_{\mathbf{x}}\Phi}^2+h(\rho) = B\,,
\end{cases}
\end{align}
where $(t,\mathbf{x})\in(0,\infty)\times\mathbb{R}^2$, $\rho = \rho(t,\mathbf{x})$ is the density, $\Phi=\Phi(t,\mathbf{x})$ is the velocity potential
({\it i.e.}, $\nabla_{\mathbf{x}}\Phi = (u,v)$
is the velocity),
$h(\rho) \defeq \int_{1}^{\rho}\frac{p'(s)}{s} \, {\rm d} s$ is the enthalpy with the polytropic pressure law $p(\rho)=A\rho^{\gamma}$ for $\gamma\geq1$ and $A > 0$, and
the Bernoulli constant \(B\) is determined by the far field flow.
Without loss of generality, we may fix $A = \gamma^{-1}$ by scaling and
denote the sonic speed by $c(\rho)\defeq \sqrt{p'(\rho)} = \rho^{\frac{\gamma-1}{2}}$.

In this paper, we focus on the 2-D Riemann problem with four-shock interactions and corresponding Riemann initial
given by
\begin{equation}
\label{Eq:RiemannInitialData}
(\rho,\Phi)(0 , \mathbf{x}) = (\rho_i, (u_i, v_i) \cdot \mathbf{x} ) \qquad\,\, \mbox{for $\mathbf{x} \in \Lambda_{i}$}\,,
\end{equation}
for suitable constant states $(i)$
with values \(U_i \defeq (\rho_i, u_i, v_i),\)
and corresponding scale-invariant domains $\Lambda_i \subseteq \mathbb{R}^2$ for $i=1,2,3,4$.
We choose these domains to be four sectors of \(\mathbb{R}^2,\) defined by
\begin{align}\label{Eq:InitialDomain}
\begin{aligned}
&\quad \Lambda_1 \defeq \big\{\mathbf{x} \in \mathbb{R}^2 \,:\, -{\theta}_{14} < \theta < {\theta}_{12} \big\} \,,
&\,\,
\Lambda_2 &\defeq \big\{\mathbf{x} \in \mathbb{R}^2 \,:\, {\theta}_{12} < \theta < \pi-{\theta}_{32} \big\}\,,\\
&\quad \Lambda_3 \defeq \big\{\mathbf{x} \in \mathbb{R}^2 \,:\, \pi-{\theta}_{32} < \theta < \pi+{\theta}_{34} \big\}\,,
&\,\,
\Lambda_4 &\defeq \big\{\mathbf{x} \in \mathbb{R}^2 \,:\, \pi+{\theta}_{34} < \theta < 2\pi-{\theta}_{14} \big\}\,,
\end{aligned}
\end{align}
where \(\theta\) is the polar angle of point \(\mathbf{x}\in\mathbb{R}^2,\)
and the four parameters \({\theta}_{12}, {\theta}_{32}, {\theta}_{34}, {\theta}_{14} \in (0,\frac{\pi}{2})\).
The initial data and shock discontinuities are depicted in Fig.~\ref{fig1:InitialFourShocks}.

\smallskip
The Euler equations for potential flow~\eqref{System:2DPotentialFlowEqs} are
the oldest, yet still prominent, paradigm of fluid dynamic equations for inviscid fluids,
which have been widely used in aerodynamics.
System~\eqref{System:2DPotentialFlowEqs}
has further been extended to the full Euler equations
for more general inviscid compressible fluids when the effect of vortex sheets and the derivation of
vorticity become significant.
One of the main features of the Euler equations is the formation of shocks
in finite time,
no matter how smooth the initial data are.
The shock is one of the most fundamental nonlinear waves
in compressible fluid flow,
and the analysis of shocks dates back to the 19th century;
see {\it e.g.}~\cite{Earnshaw1860, Hugoniot1889, Rankine1870, Riemann1860, Stokes1848}.
In~1860, Riemann~\cite{Riemann1860} first considered a special initial value problem
for the one-dimensional \mbox{(1-D)} Euler system, with two constant states
separated at the origin---now known as the Riemann problem.
For general 1-D strictly hyperbolic systems of conservation laws,
Lax~\cite{PDLax57} showed that the Riemann solutions are the combination of
three fundamental waves: the rarefaction waves, the shocks, and the contact discontinuities.
In 1965, Glimm~\cite{Glimm-65} employed the Riemann solutions as building blocks
of the Glimm scheme to establish the global existence of solutions in BV (bounded variation)
of 1-D strictly hyperbolic systems of conservation laws
for the initial data of small BV, for which the Riemann solutions play a fundamental role.
For the uniqueness and stability of BV solutions,
we refer to~\cite{Bressan2000, BressanLeFloch97, BressanLY99, LiuTP-81, LiuYang-99} and the references cited therein.
For a systematic theory of 1-D hyperbolic systems of conservation laws,
we refer to~\cite{Bressan2000, Dafermos2016, GlimmMajda91, PDLax73, MajdaRM-84}
and the references cited therein.
\begin{figure}
\centering
\includegraphics[width=13cm]{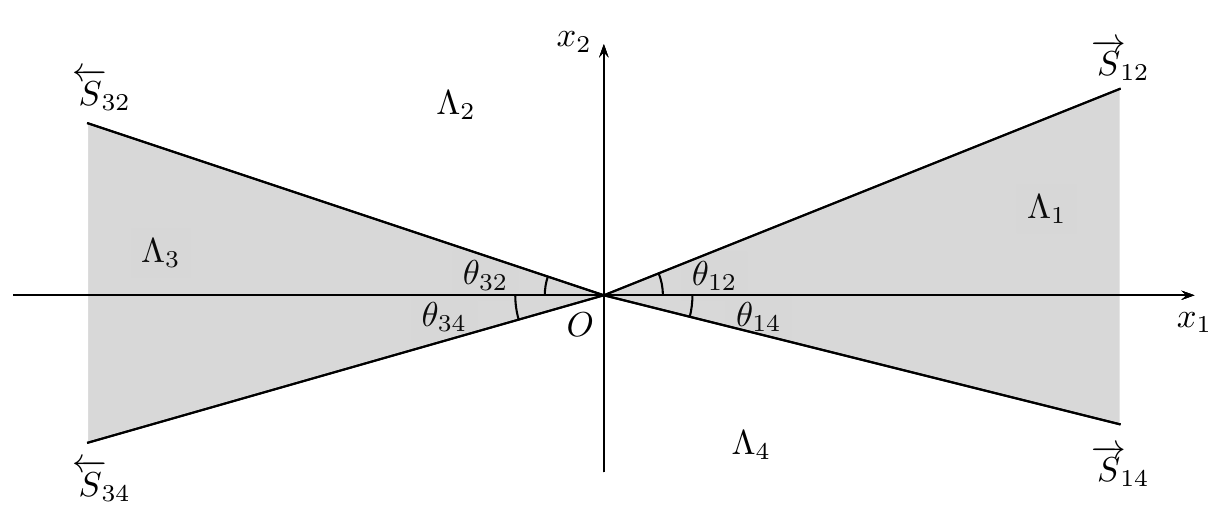}
\caption{The four sectors of the Riemann initial data.} \label{fig1:InitialFourShocks}
\end{figure}

Multi-dimensional (M-D)
Riemann problems for the Euler system are vastly more
complicated; see~\cite{ChangHsiao-89, Dafermos2016, GlimmMajda91, PDLax73, PDLax06}
and the references cited therein.
For the 2-D four-quadrant Riemann problem with four initial constant states
given in the four quadrants
so that each jump between two neighbouring quadrants projects exactly one planar fundamental wave,
Zhang-Zheng~\cite{ZhangZheng1990SIAM} predicted that there are a total of 16 genuinely different configurations
of the Riemann solutions for polytropic gases.
In Chang-Chen-Yang~\cite{ZhangChenYang1995DCDS, ZhangChenYang2000DCDS},
it is first observed that, when two initially parallel slip lines are present,
it makes a distinguished difference whether the vorticity waves generated have the same or opposite sign,
which, along with Lax-Liu~\cite{LaxLiu1998}, leads to the classification with a total of 19 genuinely
different configurations of the Riemann solutions for the compressible Euler equations
for polytropic gases, via characteristic analysis;
also see~\cite{Glimm-08,KTa,LiZhangYang98,SchulzR1993} and the references cited therein.
However, there have been few rigorous global results
for the 2-D four-quadrant Riemann problem for the Euler equations,
except for the result obtained by Li-Zheng in~\cite{JLiZheng10}, in which the four-rarefaction wave case was considered.
On the other hand, the 2-D Riemann problem has been
solved for simplified models such as the pressure gradient system~\cite{ChenWZ-21},
and the local structure of wave interactions in the Riemann problem has been analyzed in~\cite{JLiZhangZheng06, JLiZheng09, LiZheng11}.
For the 2-D Riemann problem for Chaplygin gases, we refer to~\cite{SXChen2012, Serre-09} and the references cited therein.

The purpose of this paper is to solve rigorously the 2-D Riemann problem with four-shock interactions
globally for the Euler equations for potential flow~\eqref{System:2DPotentialFlowEqs}.
The 2-D Riemann initial data \eqref{Eq:RiemannInitialData} under consideration in this paper satisfy that
\begin{equation*}
\begin{cases}
\,\text{one forward shock}\;
\overrightarrow{S}\!_{1j}\;\text{is generated between states} \; (1) \;\text{and}\; (j),\\[1mm]
\,\text{one backward shock}\; \overleftarrow{S}\!_{3j} \;\text{is generated between states} \; (3)\;\text{and}\; (j),
\end{cases}
\end{equation*}
for $j=2,4$,
and that the angular bisectors at the vertices of $\Lambda_{1}$ and $\Lambda_{3}$ in Fig.~\ref{fig1:InitialFourShocks} coincide.
Without loss of generality, we choose $\theta_{12}=\theta_{14}$ and $\theta_{32}=\theta_{34}$,
and then the $x_1$--axis may be considered as a rigid wall.
The above Riemann problem corresponds to one of the conjectures for
the Euler equations proposed in Zhang-Zheng~\cite{ZhangZheng1990SIAM}.

The problem considered has a close relation to the shock reflection-diffraction problem, whose research dates back to 1878, in which Ernst Mach revealed the complexity of
the shock reflection-diffraction configurations that include two fundamental modes:
the regular reflection-diffraction and
the Mach reflection-diffraction.
The shock reflection phenomena occur when a moving planar shock wave impinges on a rigid wall;
see~Courant-Friedrichs~\cite{Courant48} and Chen-Feldman~\cite{ChenFeldman-RM2018}.
In the recent three decades, there have been many theoretical results
on shock reflection/diffraction problems.
Indeed, Elling-Liu~\cite{EllingTPLiu2008CPAM}
considered the self-similar solutions of the Prandtl-Meyer problem for supersonic potential flow
onto a solid wedge for a class of physical parameters.
Chen-Feldman~\cite{ChenFeldman10} proved the first global existence
of solutions of the shock reflection-diffraction problem for potential flow
when the incident angles are small; later, they succeeded to establish the global existence of solutions
for the incident angles all the way up to the detachment angle in~\cite{ChenFeldman-RM2018}.
Furthermore, Chen-Feldman-Xiang~\cite{ChenFeldmanXiang2020ARMA}
proved the convexity of transonic shocks, which leads to the uniqueness
and stability of the admissible solution~\cite{CFX-20AIMS}.
More recently, Bae-Chen-Feldman~\cite{BCF-2019} obtained the existence and the optimal regularity of admissible
solutions of the Prandtl-Meyer reflection problem for the general case up to the detachment angle.
For the supersonic flow passing through a wing or curved wedge, see also~\cite{SXChen92, SXChen98, ChenFeldman22}.

For the 2-D Riemann problem with four-shock interactions under consideration,
we can reformulate the problem into a shock reflection-diffraction problem with respect to a symmetric line.
Compared with the previous results, such as the shock reflection-diffraction problem~\cite{ChenFeldman-RM2018}
and the Prandtl-Meyer reflection problem~\cite{BCF-2019},
we now have two independent incident angles in the four-shock reflection problem.
Correspondingly, we have two sonic boundaries varying with the choice of incident angles,
which leads to more complicated configurations.
After introducing the self-similar coordinates, the original problem can
be reformulated as a free boundary problem for a second-order quasilinear
partial differential equation (PDE)
of mixed elliptic-hyperbolic type.
The difficulties come from the degenerate ellipticity of the nonlinear PDE
near the sonic boundaries, the nonlinearity of the free boundary conditions,
the singularity of the solution near the corners of the domain,
and the geometric properties of the free boundary.
To solve our problem, we need to
analyze
the solutions
for a quasilinear degenerate elliptic equation
by the maximum principle of mixed-boundary value problems
and the theory of the oblique derivative boundary value
problems~\cite{Gilbarg-Trudinger83, Lieberman-86, Lieberman2013},
as well as the uniform estimates and the iteration method
from~\cite{BCF-2019, ChenFeldman-RM2018, EllingTPLiu2005}.

An important part of our analysis includes the determination of the
three critical angles: the vacuum critical angle, the detachment angle, and the sonic angle.
Their existence and uniqueness follow
from Lemma~\ref{lem:MonotonicityOfSteadyAngles},
whereby we prove the strict monotonicity of the steady detachment
and sonic angles for 2-D steady potential flow with respect to
the Mach number of the upstream state.
Using these critical angles,
we classify all configurations of the solutions of the Riemann problem.
In particular, the subsonic-subsonic reflection configuration
has never emerged in the previous works.
Lemma~\ref{lem:MonotonicityOfSteadyAngles} should be
useful in the analysis
of other shock reflection-diffraction problems.

To the best of our knowledge, this is the first result on the shock reflection-diffraction
of the Riemann problem for potential flow with piecewise constant initial data
in four sectors.
Furthermore, the mathematical methods developed in solving the free boundary problem
for nonlinear PDEs such as the uniform {\it a priori} estimate on the ellipticity of
the equation,
the anisotropic scaling method, and the sophisticated construction of the iteration set
and iteration map will provide insights for the analysis of more general
nonlinear problems.
In particular, the Euler equations for potential flow are the core part of
the full Euler equations, coupled with the incompressible-type Euler equations,
in a local domain, the techniques and approaches developed for the Riemann problem for four-wave interactions
will be useful to guide the analysis of the structure of Riemann solutions, shock reflection/diffraction problems,
and numerical simulations of the full Euler equations.

The organisation of this paper is as follows:
In Section~\ref{Sec:FormulateProblemsDefAdmisSolu},
we present the mathematical formulation of the Riemann problem with four-shock interactions
and the definition of admissible solutions of the free boundary problem,
and state three main results.
In Section~\ref{Sec:UniformEstiAdmiSolu},
we obtain the directional monotonicity, the strict ellipticity, and the uniform weighted
H\"{o}lder estimates of admissible solutions.
In Section~\ref{Sec:IterationMethod-Existence},
we prove the existence of admissible solutions by constructing a suitable iteration set and iteration map via
applying the Leray-Schauder degree theory.
Finally, in Section~\ref{Sec:OptimalRegularity-Convexity},
we give the proof of the optimal regularity of admissible solutions,
as well as the proof of the convexity of free boundaries and transonic shocks.
In Appendix~\ref{Sec:Appendix-A}, we give the details for some calculations
involving the potential flow equation
and, in Appendix~\ref{Sec:Appendix-B},
we present some known results that are needed throughout the proofs.

\section{The Riemann Problem and Main Theorems for Four-Shock Interactions}
\label{Sec:FormulateProblemsDefAdmisSolu}

In this section, we first present the mathematical formulation of the Riemann problem
with four-shock interactions
as a free boundary problem, along with the definition of admissible solutions of the free boundary problem,
and then state three main theorems.

\subsection{Mathematical formulation}\label{SubSec:MathFormulation}
The Riemann problem~\eqref{System:2DPotentialFlowEqs}--\eqref{Eq:RiemannInitialData} is invariant
under the self-similar scaling:
\begin{align*}
    (t,\mathbf{x}) \mapsto (\lambda t, \lambda \mathbf{x})\,,
    \quad (\rho , \Phi)(t,\mathbf{x}) \mapsto (\rho,
    \frac{1}{\lambda} \Phi)(\lambda t, \lambda \mathbf{x}) \qquad\;\; \mbox{for any $\lambda>0$}\,.
\end{align*}
This leads us to consider the self-similar solution $(\rho,\phi)(\bm{\xi})\defeq(\rho,\frac{\Phi}{t})(t, \mathbf{x})$
in the self-similar coordinates $\bm{\xi} = (\xi,\eta) \defeq
\frac{\mathbf{x}}{t} \in\mathbb{R}^2$.
Moreover, by introducing the pseudo-potential function $\varphi(\bm{\xi})\defeq\phi(\bm{\xi})-\frac12|\bm{\xi}|^2$,
we obtain the pseudo-steady potential flow equation for $\varphi(\bm{\xi})$ in the form:
\begin{eqnarray}
\label{Eq4PseudoPoten}
\mbox{div}(\rho(|D\varphi|,\varphi)D\varphi)+2\rho(|D\varphi|,\varphi) = 0\,,
\end{eqnarray}
where \({\rm div}\) and \(D\) represent the divergence and gradient operators with respect to the self-similar
coordinates $\bm{\xi}\in\mathbb{R}^2$, and the density is determined from the Bernoulli law as
\begin{eqnarray}\label{Relat4Rho}
\rho(|D\varphi|,\varphi) =
\begin{cases}
\big(1+(\gamma-1)(B-\frac12|D\varphi|^2-\varphi)\big)^{\frac{1}{\gamma-1}}
&\quad \text{if} \; \gamma>1\,,\\
\,\text{exp}(B-\frac12|D\varphi|^2-\varphi) &\quad \text{if} \; \gamma=1\,.
\end{cases} \end{eqnarray}
Equation~\eqref{Eq4PseudoPoten} with~\eqref{Relat4Rho} is a second-order nonlinear equation of mixed elliptic-hyperbolic type.
It is elliptic if and only if $|D\varphi|<c(|D\varphi|,\varphi)$ (pseudo-subsonic),
while it is hyperbolic if and only if $|D\varphi|>c(|D\varphi|,\varphi)$ (pseudo-supersonic).

We seek a weak solution for the pseudo-steady potential flow equation~\eqref{Eq4PseudoPoten}--\eqref{Relat4Rho}.

\begin{definition}\label{Def:WeakSoluForPotentialFlowEq}
A function  $\varphi\in W^{1,\infty}_{{\rm loc}}(\mathbb{R}^2)$ is called a weak solution
of equation~\eqref{Eq4PseudoPoten}--\eqref{Relat4Rho} in $\mathbb{R}^2$
if $\varphi$ satisfies the following properties{\rm :}
\begin{enumerate}[{\rm (i)}]
\item \label{Def-2-1:item1}
    $B-\frac12|D\varphi|^2-\varphi \geq  h(0^+) \quad \text{a.e.~in } \mathbb{R}^2 ;$

\item \label{Def-2-1:item3}
$\int_{\mathbb{R}^2}\big(\rho(|D\varphi|,\varphi)D\varphi\cdot D\zeta-2\rho(|D\varphi|,\varphi)\zeta\big)
\, {\rm d}\bm{\xi}=0\,$ for every $\zeta\in C^{\infty}_{\rm {c}}(\mathbb{R}^2)$.
\end{enumerate}
\end{definition}

Let $S$ be a $C^1$--curve across which $|D\varphi|$ is discontinuous.
Then $\varphi$ is a weak solution if and only if $\varphi$ satisfies the Rankine--Hugoniot conditions on $S$:
\begin{eqnarray}\label{RHCondi}
[\varphi]_S=\left[\rho(|D\varphi|,\varphi)D\varphi\cdot\bm{\nu}_{S}\right]_S=0\,,
\end{eqnarray}
where $[F]_{S}$ represents the jump of a quantity $F$ across $S$,
and $\bm{\nu}_{S}$ is any unit normal vector on $S$.
By Definition~\ref{Def:WeakSoluForPotentialFlowEq}\eqref{Def-2-1:item1}
and~\eqref{RHCondi},
\(D\varphi \cdot \bm{\nu}_{S}\) is either positive or negative across the discontinuity \(S\).
We say that \(\bm{\nu}_{S}\) points from upstream to downstream
if \(D\varphi \cdot \bm{\nu}_{S} > 0\),
and from downstream to upstream if \(D\varphi \cdot \bm{\nu}_{S} < 0\).
The discontinuity $S$ is called a \textit{shock} if it further satisfies
the physical entropy condition: {\it the corresponding density $\rho(|D\varphi|,\varphi)$
increases across $S$ from upstream to downstream,
or equivalently, \(|D\varphi \cdot \bm{\nu}_S|\) decreases across \(S\)
from upstream to downstream.}

In the self-similar coordinates $\bm{\xi}=(\xi,\eta)$, the initial condition~\eqref{Eq:RiemannInitialData}
becomes the following asymptotic boundary condition at infinity:
For any \(\theta \in [0,2\pi)\) and \(i = 1,2,3,4\),
\begin{align*}
    \lim_{r \to \infty} \norm{\varphi - \varphi_i}_{C^{0,1}(R_\theta \setminus B_r(\bm{0}))} = 0\,,
\end{align*}
whenever ray \(R_\theta \defeq \{ r(\cos{\theta},\sin{\theta})\,:\,r > 0 \}\) is contained
in sector \(\Lambda_i\) as defined in \eqref{Eq:InitialDomain},
where the pseudo-potential function \(\varphi_i\) corresponding to
the uniform state \( (i)\) is given by
\begin{align} \label{4States}
    \varphi_i (\bm{\xi}) \defeq -\frac12 \abs{\bm{\xi}}^2 + (u_i,v_i) \cdot \bm{\xi} + k_i \,,
\end{align}
and constant \(k_i\) is uniquely determined to satisfy~\eqref{Relat4Rho}
with \((\rho, \varphi) = (\rho_i,\varphi_i)\) for $i=1,2,3,4$.
We may simply fix \(k_2 = 0\) in~\eqref{4States} since the Bernoulli constant can be shifted to \(B - k_2\).

Since system~\eqref{System:2DPotentialFlowEqs} is of Galilean
invariance,
it is convenient to normalize the initial data to satisfy \(u_2 = v_1 = 0\).
From~\eqref{Relat4Rho} and~\eqref{4States} with \(k_2 = 0\), the Bernoulli constant \(B\) is then fixed as
\begin{align} \label{eq:bernoulli-constant-def}
    B = \frac12 v_2^2 + h(\rho_2)\,.
\end{align}

In this paper, we assume that the initial data~\eqref{Eq:RiemannInitialData} are
chosen such that,
in the far field region, any two neighboring states are connected by exactly one planar shock discontinuity.
Furthermore, we restrict our attention to the case:
\begin{align} \label{eq:initial-data-entropy-condition}
    \max\{ \rho_1,\rho_3 \} < \min\{ \rho_2, \rho_4\}\,,
\end{align}
so that the initial data consist
precisely of two forward shocks
$\{\overrightarrow{S}\!_{12}, \overrightarrow{S}\!_{14}\}$
and two backward shocks $\{\overleftarrow{S}\!_{32}, \overleftarrow{S}\!_{34}\}$,
which travel away from the origin in pairs; see
Case 5.2 in~\cite{ZhangChenYang1995DCDS}.
Moreover, to facilitate our analysis, we focus
on the symmetric case:
\begin{align} \label{eq:symmetry-assumption}
    \theta_{12}=\theta_{14}\eqdef \theta_1, \quad\,
    \theta_{32}=\theta_{34}\eqdef \theta_2\, \qquad\, \text{with $\theta_1,\theta_2 \in (0,\tfrac{\pi}{2})$}\,.
\end{align}
We refer to parameters \(\btheta \defeq (\theta_1,\theta_2)\) as the incident angles of the shock discontinuities;
see Fig.~\ref{fig2-IncidShock}.
\begin{figure}
\centering
\includegraphics[width=15cm]{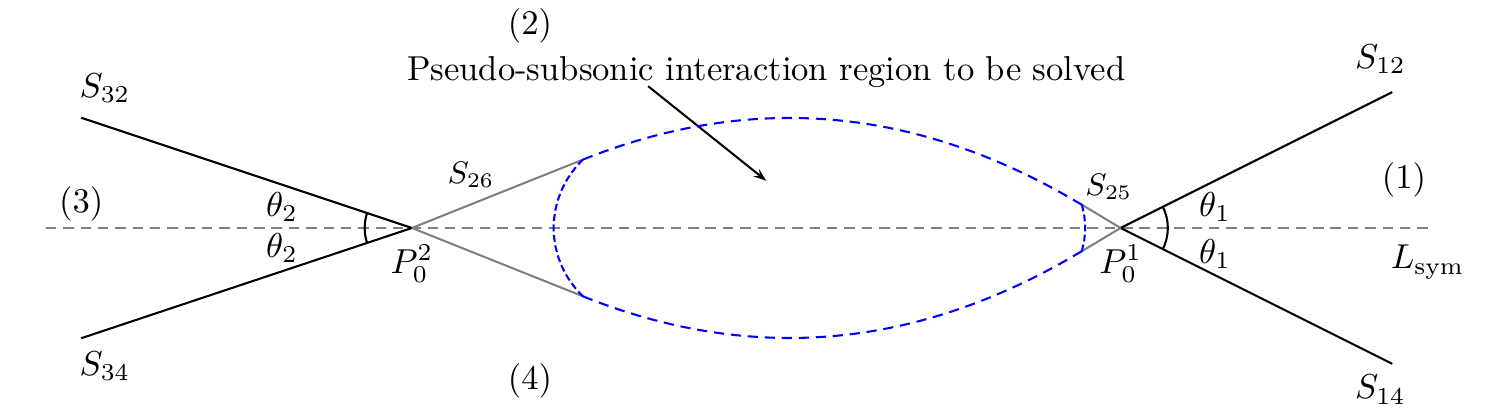}
\caption{Four-shock interaction with symmetric incident shocks.} \label{fig2-IncidShock}
\end{figure}

\subsubsection{Compatibility conditions}
\label{SubSec-201InitialData}
It is clear that the initial data and parameters \(\btheta\) cannot be specified arbitrarily.
In the following lemma, we give the necessary and sufficient conditions for
the initial data \(U_i = (\rho_i,u_i,v_i)\) and parameters \(\btheta\) to generate two forward
shocks $\{\overrightarrow{S}\!_{12}, \overrightarrow{S}\!_{14}\}$
and two backward shocks $\{\overleftarrow{S}\!_{32}, \overleftarrow{S}\!_{34}\}$.

\begin{lemma}\label{Lem:Existence4FourShocks}
Fix $\gamma\geq1$ and $\rho_2>0$.
There exist both a constant \(\vmin \in [-\infty,0)\) depending on $(\gamma,\rho_2)$
and a critical angle \(\theta^{\rm cr} \in (0,\frac{\pi}{2}]\) depending on $(\gamma,\rho_2,v_2)$
such that, whenever $v_2\in (\vmin,0)$ and $\btheta\in (0,\theta^{\rm cr})^2,$
there exists a unique choice of constant states $U_i=(\rho_i,u_i,v_i),$ for $i=1,3,4,$
satisfying the Rankine--Hugoniot conditions~\eqref{RHCondi} between any two neighbouring states
and the entropy condition~\eqref{eq:initial-data-entropy-condition}
so that two forward shocks $\{\overrightarrow{S}\!_{12}, \overrightarrow{S}\!_{14}\}$
and two backward shocks $\{\overleftarrow{S}\!_{32}, \overleftarrow{S}\!_{34}\}$
are generated.
\end{lemma}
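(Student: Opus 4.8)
The plan is to use the Rankine--Hugoniot condition $[\varphi]_S=0$ to fix the velocities and the straight--line geometry of the four incident shocks, and then to reduce the remaining condition $[\rho\,D\varphi\cdot\bm{\nu}_S]_S=0$, together with \eqref{Relat4Rho}, to two decoupled scalar equations for the densities $\rho_1$ and $\rho_3$.

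First, since $S_{ij}$ separates the two uniform states $\varphi_i,\varphi_j$ with $D\varphi_i=(u_i,v_i)-\bm{\xi}$, the relation $[\varphi_i-\varphi_j]=0$ along $S_{ij}$ is equivalent to $S_{ij}$ lying on the straight line $L_{ij}\defeq\{\bm{\xi}:((u_i,v_i)-(u_j,v_j))\cdot\bm{\xi}=k_j-k_i\}$, whose normal is parallel to $(u_i,v_i)-(u_j,v_j)$; prescribing the polar angle of $L_{ij}$ to be $\theta_{ij}$ thus imposes one linear relation on the velocity components. Seeking the configuration symmetric across the $\xi$--axis (natural given $\theta_{12}=\theta_{14}$, $\theta_{32}=\theta_{34}$, and using the normalization $u_2=v_1=0$), we then have $U_4=(\rho_2,0,-v_2)$, $v_1=v_3=0$, $u_1=v_2\tan\theta_1$, and $u_3=-v_2\tan\theta_2$, while $S_{14}$ and $S_{34}$ carry the reflected data of $S_{12}$ and $S_{32}$. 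Evaluating $[\rho\,D\varphi\cdot\bm{\nu}_S]_S=0$ on $S_{12}$, where $\bm{\xi}\cdot\bm{\nu}$ is constant on $L_{12}$ with value $-k_1/|(u_1,-v_2)|$, yields $k_1=(\rho_1u_1^2+\rho_2v_2^2)/(\rho_2-\rho_1)$, whereas \eqref{Relat4Rho} and \eqref{eq:bernoulli-constant-def} give $k_1=\tfrac12(v_2^2-u_1^2)+h(\rho_2)-h(\rho_1)$. Equating these, and doing the same on $S_{32}$, reduces the problem to the single scalar equation
\begin{align*}
\frac{\rho_\ast\,w+\rho_2v_2^2}{\rho_2-\rho_\ast}=\tfrac12\bigl(v_2^2-w\bigr)+h(\rho_2)-h(\rho_\ast)\,,\qquad w=v_2^2\tan^2\theta\,,
\end{align*}
with $(\rho_\ast,\theta)=(\rho_1,\theta_1)$ and $(\rho_\ast,\theta)=(\rho_3,\theta_2)$ respectively.

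I would then study this equation on the interval $(0,\rho_2)$ dictated by the entropy condition \eqref{eq:initial-data-entropy-condition}. Its left--hand side is strictly increasing in $\rho_\ast$, with derivative $\rho_2(w+v_2^2)/(\rho_2-\rho_\ast)^2>0$, rising from $v_2^2$ at $\rho_\ast=0^+$ to $+\infty$ as $\rho_\ast\to\rho_2^-$, while its right--hand side is strictly decreasing, with derivative $-h'(\rho_\ast)=-c(\rho_\ast)^2/\rho_\ast<0$, falling from $\tfrac12(v_2^2-w)+h(\rho_2)-h(0^+)$ to $\tfrac12(v_2^2-w)$. Hence the two sides cross \emph{at most once}, which already gives uniqueness of $\rho_\ast\in(0,\rho_2)$, and they do cross (exactly once) if and only if $v_2^2<\tfrac12(v_2^2-w)+h(\rho_2)-h(0^+)$, i.e.
\begin{align*}
\frac{v_2^2}{\cos^2\theta}=v_2^2\,(1+\tan^2\theta)<2\bigl(h(\rho_2)-h(0^+)\bigr)\,.
\end{align*}
Since $h(\rho_2)-h(0^+)=\rho_2^{\gamma-1}/(\gamma-1)=c(\rho_2)^2/(\gamma-1)$ is finite for $\gamma>1$ and equals $+\infty$ for $\gamma=1$, this suggests setting $\vmin\defeq-\sqrt{2(h(\rho_2)-h(0^+))}$ (so $\vmin=-\infty$ when $\gamma=1$), depending only on $(\gamma,\rho_2)$, and $\theta^{\rm cr}\defeq\arccos\bigl(|v_2|/(-\vmin)\bigr)\in(0,\tfrac\pi2]$ (equal to $\tfrac\pi2$ when $\gamma=1$), depending on $(\gamma,\rho_2,v_2)$: for $v_2\in(\vmin,0)$ we have $|v_2|<-\vmin$, so $\theta^{\rm cr}$ is well defined and positive, and the displayed crossing condition holds for every $\theta\in(0,\theta^{\rm cr})$; applying this to $\theta_1$ and $\theta_2$ produces the unique admissible $\rho_1,\rho_3\in(0,\rho_2)$.

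It remains to check that this data is the claimed configuration. Since $\rho_4=\rho_2$, we get $\max\{\rho_1,\rho_3\}<\rho_2=\min\{\rho_2,\rho_4\}$, so \eqref{eq:initial-data-entropy-condition} holds; a short computation (using $k_1>v_2^2$) shows that $|D\varphi\cdot\bm{\nu}_S|$ strictly decreases across each of $S_{12},S_{32},S_{34},S_{14}$ from the lower--density to the higher--density side, so each is a genuine shock, with $\{\overrightarrow{S}\!_{12},\overrightarrow{S}\!_{14}\}$ forward and $\{\overleftarrow{S}\!_{32},\overleftarrow{S}\!_{34}\}$ backward as in Case~5.2 of \cite{ZhangChenYang1995DCDS}, and $\theta_1,\theta_2\in(0,\tfrac\pi2)$ makes the four shock lines bound the sectors $\Lambda_i$ consistently. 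The step I expect to be the main obstacle is the uniqueness of the \emph{entire} configuration rather than of $\rho_1$ and $\rho_3$ separately: once $U_1$ is fixed, $\overrightarrow{S}\!_{14}$ determines $U_4$ only up to the one free parameter given by its strength, and $U_3$ then inherits a free parameter through $\overleftarrow{S}\!_{34}$, so one must show that imposing the last shock $\overleftarrow{S}\!_{32}$ with the correct entropy branch forces the reflection--symmetric choice $U_4=(\rho_2,0,-v_2)$. I would handle this by following the unique entropy branch around the loop $(2)\to(1)\to(4)\to(3)\to(2)$ and showing, via a monotonicity argument, that the resulting closing condition on that parameter has $U_4=(\rho_2,0,-v_2)$ as its only solution.
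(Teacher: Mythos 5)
Your existence analysis is correct and, once unwound, is the paper's argument in different clothing. Multiplying your scalar equation through by $\rho_2-\rho_\ast$ and simplifying gives $\tfrac12(\rho_2+\rho_\ast)(v_2^2+w)=(\rho_2-\rho_\ast)\bigl(h(\rho_2)-h(\rho_\ast)\bigr)$, i.e. $\ell(\rho_\ast,\rho_2)^2=v_2^2/\cos^2\theta$ with $\ell$ as in \eqref{eq:sonic-centre-relations}; your increasing-LHS/decreasing-RHS crossing argument is then the monotonicity \eqref{eq:monotonicity-of-ell} of $\ell(\cdot,\rho_2)$ on $(0,\rho_2)$ in disguise, and your $\vmin=-\sqrt{2(h(\rho_2)-h(0^+))}$ and $\theta^{\rm cr}=\arccos(|v_2|/(-\vmin))$ coincide with \eqref{Eq:DefThetaVac} because $\ell(0^+,\rho_2)=\sqrt{2(h(\rho_2)-h(0^+))}$. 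The paper reaches the same compatibility condition \eqref{CompatiCondiGTI} through the velocity-jump relations \eqref{Eq:RelationForUij} rather than through the constants $k_i$ and the value of $\bm{\xi}\cdot\bm{\nu}$ on the shock line, but that is only bookkeeping; the constants and the uniqueness of $\rho_1,\rho_3$ come out the same.

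The genuine gap is the one you flag yourself at the end: the lemma asserts uniqueness of the \emph{whole} triple $(U_1,U_3,U_4)$, not merely of $\rho_1,\rho_3$ within the reflection-symmetric ansatz, and imposing $U_4=(\rho_2,0,-v_2)$, $v_1=v_3=0$ at the outset proves existence but leaves that claim as a proposed plan ("I would handle this by following the unique entropy branch around the loop\ldots"). Your loop idea is the right mechanism, but it should be executed, and in fact it closes in two lines rather than by a one-parameter shooting argument: writing the four jump conditions as in \eqref{Eq:RelationForUij} with $\theta_{12}=\theta_{14}=\theta_1$, $\theta_{32}=\theta_{34}=\theta_2$, and computing $(u_4,v_4)-(u_2,v_2)$ in two ways (once through state $(1)$, once through state $(3)$), the first components give $(\ell_{12}-\ell_{14})\sin\theta_1=(\ell_{34}-\ell_{32})\sin\theta_2$. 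Under the entropy condition \eqref{eq:initial-data-entropy-condition}, the monotonicity \eqref{eq:monotonicity-of-ell} makes the left-hand side carry the strict sign of $\rho_2-\rho_4$ and the right-hand side the strict sign of $\rho_4-\rho_2$, so both vanish and $\rho_4=\rho_2$ is forced; the second components then reduce to exactly \eqref{CompatiCondiGTI}, to which your scalar analysis applies. Note also that $v_1=0$ is a normalization from Galilean invariance, not an unknown, so after $\rho_4=\rho_2$ there is no residual free parameter, and your description of the degrees of freedom ($\rho_4$ through $\overrightarrow{S}\!_{14}$, then $\rho_3$ through $\overleftarrow{S}\!_{34}$, closed by the two scalar conditions from $\overleftarrow{S}\!_{32}$) is consistent with this count. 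As written, though, your proof establishes existence and the uniqueness of $\rho_1,\rho_3$, but not the uniqueness actually asserted in the lemma.
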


\begin{proof}
Throughout the proof, we take \(i =1,3\), and \(j = 2,4\).
In the self-similar coordinates, we denote the shock discontinuity line
between state $(i)$ and state $(j)$ by
\begin{equation} \label{eq:incident-shock-sij}
S_{ij}\defeq \{\bm{\xi} \in \mathbb{R}^2 \,:\, \varphi_i(\bm{\xi})=\varphi_j(\bm{\xi})\}
= \big\{\bm{\xi}\in\mathbb{R}^2 \,:\, \eta=(-1)^{\frac{i+j+1}{2}}\xi\tan\theta_{ij}+a_{ij}
\big\}\,,
\end{equation}
where the expressions of $\varphi_{i}$ and $\varphi_{j}$ are given in~\eqref{4States},
$\theta_{ij}\in(0,\frac{\pi}{2})$ is the angle between the $\xi$--axis and the shock discontinuity
line $S_{ij}$, and $a_{ij}$ is a constant determined below.
We fix the unit normal vector on \(S_{ij}\) to be
\begin{equation*}
\bm{\nu}_{ij}=(
(-1)^{\frac{i+1}{2}} \sin\theta_{ij},(-1)^{\frac{j+2}{2}}\cos\theta_{ij}
)\,,
\end{equation*}
which points towards the downstream state according to the entropy
condition~\eqref{eq:initial-data-entropy-condition}, so that
\begin{align} \label{eq:incident-shock-entropy-condition}
    D\varphi_i(\bm{\xi}) \cdot \bm{\nu}_{ij} > D\varphi_j(\bm{\xi}) \cdot \bm{\nu}_{ij} > 0\,
    \qquad\; \text{for $\bm{\xi} \in S_{ij}$}\,.
\end{align}

A straightforward calculation by using~\eqref{Relat4Rho}--\eqref{4States}  and~\eqref{eq:incident-shock-sij}--\eqref{eq:incident-shock-entropy-condition} gives
\begin{equation}\label{Eq:RelationForUij}
(u_i,v_i)-(u_j,v_j)=\ell_{ij}\bm{\nu}_{ij}\,, \quad
a_{ij}=(-1)^{\frac{i+j+1}{2}}
\Big(v_i -u_i\tan\theta_{ij} +  \frac{(-1)^{\frac{i-1}{2}}\rho_j\ell_{ij}}{(\rho_i-\rho_j)\cos\theta_{ij}} \Big)\,,
\end{equation}
where  $\ell_{ij}\defeq\ell(\rho_i,\rho_j)$ is defined as
\begin{equation} \label{eq:sonic-centre-relations}
\ell(\rho_i,\rho_j) \defeq  \sqrt{ \frac{2(\rho_i-\rho_j)(h(\rho_i) - h(\rho_j))}{\rho_i+\rho_j}}\,.
\end{equation}
It is direct to check that \(\ell\) is symmetric and satisfies the strict monotonicity: For any \(\bar{\rho} > 0\),
\begin{align} \label{eq:monotonicity-of-ell}
    \rho \mapsto \ell(\bar{\rho}, \rho) \;\; \text{is
    strictly increasing on $\rho \in (\bar{\rho},\infty)$ and
    strictly decreasing on $\rho \in (0, \bar{\rho})$}\,.
\end{align}
From~\eqref{eq:symmetry-assumption} and~\eqref{Eq:RelationForUij}, we deduce that
\begin{equation*}
(\ell_{12}-\ell_{14})\sin\theta_{1}
=(\ell_{34}-\ell_{32})\sin\theta_{2}\,,
\end{equation*}
from which $\rho_4 = \rho_2$ must hold by virtue of the entropy condition~\eqref{eq:initial-data-entropy-condition} and~\eqref{eq:monotonicity-of-ell}.
Thus, we obtain the following relations:
\begin{equation*}\label{CompatiCondi}
\ell_{12}\cos{\theta}_{1} = \ell_{32}\cos{\theta}_{2} \,,\qquad
\left\{
\begin{alignedat}{3}
\vphantom{f^2}{}
u_2 &= u_4 &&= u_1+\ell_{12}\sin{\theta}_{1} &&= u_3-\ell_{32}\sin{\theta}_{2}\,, \\
v_1 &= v_3 &&= v_2+\ell_{12}\cos{\theta}_{1} &&= v_4-\ell_{32}\cos{\theta}_{2}\,.
\end{alignedat}
\right.
\end{equation*}
Recall that $u_2 = v_1 = 0$ are fixed, so that
\begin{align} \label{eq:states-1-4}
    \quad U_1 = (\rho_1,-\ell_{12}\sin\theta_1,0)\,, \,\,\,
    U_2 = (\rho_2, 0 , v_2)\,, \,\,\,
    U_3 = (\rho_3,\ell_{32}\sin\theta_2,0)\,,
    \,\,\,
    U_4 = (\rho_2,0,-v_2)\,,
\end{align}
whenever $\rho_1,\rho_3\in(0,\rho_2)$ satisfy
\begin{equation}\label{CompatiCondiGTI}
\ell_{12}\cos{\theta}_{1} = \ell_{32} \cos{\theta}_{2} = -v_2\,.
\end{equation}
We refer to~\eqref{CompatiCondiGTI} as the compatibility conditions for the initial data.
We define
\begin{align} \label{Eq:DefThetaVac}
    \vmin \defeq -\ell(0^+,\rho_2) \in [-\infty,0)\,, \qquad
    \theta^{\rm cr} \defeq \arccos{\big(\frac{-v_2}{\ell(0^+,\rho_2)} \big)} \in (0, \tfrac{\pi}{2}]\,.
\end{align}
Then, by~\eqref{eq:monotonicity-of-ell}, the necessary and sufficient conditions for the existence
of  $\rho_1$ and $\rho_3$ uniquely solving~\eqref{CompatiCondiGTI} subject to~\eqref{eq:initial-data-entropy-condition} are
\begin{align*}
    v_2 \in (\vmin,0)\,, \qquad
    \btheta \in (0,\theta^{\rm cr})^2\,.
\end{align*}
The constant states $U_i$, $i=1,3,4$, are then uniquely determined
by~\eqref{eq:states-1-4}--\eqref{CompatiCondiGTI}
depending only on \((\gamma, \rho_2, v_2, \btheta)\).
\end{proof}

We call $\theta^{\rm cr}$ the vacuum critical angle, because \(\rho_1 \to 0^+\) as \(\theta_1 \to \theta^{{\rm cr}-}\)
and \(\rho_3 \to 0^+\) as \(\theta_2 \to \theta^{{\rm cr}-}\), according to~\eqref{CompatiCondiGTI}.
Note that, in potential flow, the incident shock with vacuum upstream state
is mathematically consistent, which is in contrast to the non-potential Euler flow for which the maximal density ratio across a shock is bounded.

It follows from Lemma~\ref{Lem:Existence4FourShocks} that $\rho_2 = \rho_4$.
Without loss of generality, we fix $\rho_2 = \rho_4 = 1$ via the scale-invariance of
equation~\eqref{Eq4PseudoPoten} as follows
\begin{equation*}
\bm{\xi} \mapsto c_{2} \bm{\xi}\,, \qquad
(\rho, \varphi, \rho_2, v_2) \mapsto \big(\frac{\rho}{\rho_{2}}, \frac{\varphi}{c_{2}^{2}}, 1, \frac{v_2}{c_2} \big)\,.
\end{equation*}
Then the entropy condition~\eqref{eq:initial-data-entropy-condition}
becomes $\max\{\rho_1,\rho_3\} < 1$, and  \(v_{\min}= -\ell(0^+,1)\) depends only on \(\gamma \geq 1\).
For simplicity, we occasionally use the abbreviation $\ell(\cdot) \defeq \ell(\cdot,1)$.

\subsubsection{The asymptotic boundary value problem in the upper half-plane}
Equation~\eqref{System:2DPotentialFlowEqs} and the initial data in~\eqref{Eq:RiemannInitialData}
given by~\eqref{eq:states-1-4} are invariant under reflection with respect to the \(x_1\)--axis.
Thus, we look for solutions of~\eqref{System:2DPotentialFlowEqs}--\eqref{Eq:RiemannInitialData}
in the symmetric form:
\begin{equation*}
    \Phi(t,x_1,x_2) = \Phi(t,x_1,-x_2)
     \qquad\; \text{for all $(t,x_1,x_2) \in (0,\infty) \times \mathbb{R}^2$}\,,
\end{equation*}
which is equivalent to
\begin{align*}
    \varphi(\xi,\eta) = \varphi(\xi, -\eta)\qquad \text{for all $(\xi,\eta) \in \mathbb{R}^2$}\,.
\end{align*}
For this reason, it suffices to consider the restriction of solutions to the upper half-plane
\begin{align*}
    \mathbb{R}^2_+ \defeq \big\{ \bm{\xi} \in \mathbb{R}^2 \,:\, \eta > 0\big\}\,,
\end{align*}
so long as we impose the slip boundary condition
\begin{equation*} \label{eq:slip-boundary-condition-varphi-2}
    D\varphi \cdot (0,1) = 0 \qquad\; \text{on $L_{\rm sym} \defeq \big\{\bm{\xi}\in\mathbb{R}^2 \,:\, \eta=0\big\}$}\,.
\end{equation*}
This condition means that \(L_{\rm sym}\) can be regarded as a solid wall.

Hereafter, we use symbols $S_{12}$ and $S_{32}$ to represent only the closure of the rays
of the incident shocks ${S}_{12}$ and ${S}_{32}$ lying in the upper half-plane.
We denote by \(P_0^1= (\xi^{P_0^1}, \eta^{P_0^1})\) the point of intersection
between \(S_{12}\) and \(L_{\rm sym}\),
and by \(P_0^2 = (\xi^{P_0^2},\eta^{P_0^2})\) the point of intersection
between \(S_{32}\) and \(L_{\rm sym}\).
We expect the incident shocks \(S_{12}\) and \(S_{32}\) to undergo shock reflection-diffraction
at points \(P_0^1\) and \(P_0^2\) respectively; see Fig.~\ref{fig2-IncidShock}.
From~\eqref{eq:incident-shock-sij}, \eqref{Eq:RelationForUij}, and~\eqref{eq:states-1-4}, we have
\begin{equation}\label{P0-Pos}
\begin{aligned}
P_0^1 = ( {-}\ell(\rho_1) \sin\theta_{1}+\frac{1}{1-\rho_1}\frac{\ell(\rho_1)}{\sin{\theta}_{1}},\,0),\qquad
P_0^2 = (\ell(\rho_3) \sin\theta_{2}-\frac{1}{1-\rho_3}\frac{\ell(\rho_3)}{\sin{\theta}_{2}},\,0)\,.
\end{aligned}
\end{equation}
Using~\eqref{4States}, \eqref{eq:states-1-4}--\eqref{CompatiCondiGTI}, and~\eqref{P0-Pos},
the pseudo-potentials of states \((1)\)--\((3)\) are given by
\begin{equation}\label{PseudoVelocity123}
\,\,\varphi_1 = -\frac{1}{2}|\bm{\xi}|^2 + v_2 (\xi-\xi^{P_0^1}) \tan{\theta_1}, \;
\varphi_2 = -\frac{1}{2}|\bm{\xi}|^2 + v_2 \eta, \;
\varphi_3 = -\frac{1}{2}|\bm{\xi}|^2 - v_2 (\xi-\xi^{P_0^2}) \tan{\theta_2}.
\end{equation}
In particular, \(\varphi_1\) is independent of \(\theta_2\), while \(\varphi_3\) is independent of \(\theta_1\).
The following lemma concerning the pseudo-Mach number of state \((2)\)
at the reflection points \(P_0^1\) and \(P_0^2\) is fundamental to our later analysis
of the critical angles in~\S\ref{SubSec-203DetachAngle}.

\begin{lemma} \label{lem:MonotonicityOfMach2}
Fix \(\gamma \geq 1\) and  \(v_2\in(v_{\min},0)\).
For any incident angle \(\theta_1 \in (0, \theta^{\rm cr}),\)
the pseudo-Mach number of state \((2)\) at the reflection point \(P_0^1\) is given by
\begin{align}
\label{eq:appendixMachNumberState12}
    M_2^{(\theta_1)} \defeq
    \abs{D\varphi_2(P_0^1)} =
    \frac{\ell(\rho_1)}{1-\rho_1} \big( \rho_1^2 + \cot^2{\theta_1} \big)^{\frac12} \,,
\end{align}
with \(\rho_1 = \rho_1(\theta_1;v_2) \in (0,1)\) given by~\eqref{CompatiCondiGTI}.
Moreover, the pseudo-Mach number \(M_2^{(\theta_1)}\) is a strictly decreasing function of the incident angle \(\theta_1\)
and satisfies
that \(M_2^{(\theta_1)} \to \infty\) as \(\theta_1 \to 0^+\).
\end{lemma}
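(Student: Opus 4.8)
\emph{Steps 1--2: the formula and the limit $\theta_1\to0^+$.} Since $\varphi_2(\bm{\xi}) = -\tfrac12\abs{\bm{\xi}}^2 + v_2\eta$ by~\eqref{PseudoVelocity123}, we have $D\varphi_2(\bm{\xi}) = (-\xi,\,v_2-\eta)$, so at the reflection point $P_0^1 = (\xi^{P_0^1},0)$ of~\eqref{P0-Pos} one has $\abs{D\varphi_2(P_0^1)}^2 = (\xi^{P_0^1})^2 + v_2^2$. Writing the abscissa in~\eqref{P0-Pos} as $\xi^{P_0^1} = \ell(\rho_1)\,\frac{\cos^2\theta_1 + \rho_1\sin^2\theta_1}{(1-\rho_1)\sin\theta_1}$ and using the compatibility condition~\eqref{CompatiCondiGTI} in the form $v_2^2 = \ell(\rho_1)^2\cos^2\theta_1$, the sum of the two squares collapses through the elementary identity
\[
\big(\cos^2\theta_1 + \rho_1\sin^2\theta_1\big)^2 + (1-\rho_1)^2\sin^2\theta_1\cos^2\theta_1 = \cos^2\theta_1 + \rho_1^2\sin^2\theta_1 ,
\]
which yields~\eqref{eq:appendixMachNumberState12}. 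For the limit, \eqref{CompatiCondiGTI} together with the strict decrease of $\rho\mapsto\ell(\rho)$ on $(0,1)$ from~\eqref{eq:monotonicity-of-ell} makes $\rho_1 = \rho_1(\theta_1;v_2)$ the unique root of $\ell(\rho_1)\cos\theta_1 = -v_2$; as $\theta_1\to0^+$ we get $\ell(\rho_1)\to -v_2>0$, so $\rho_1$ tends to the fixed $\rho_1^\ast\in(0,1)$ with $\ell(\rho_1^\ast)=-v_2$, and thus $1-\rho_1$ stays bounded away from $0$ while $\cot\theta_1\to\infty$; hence $M_2^{(\theta_1)}\to\infty$ by~\eqref{eq:appendixMachNumberState12}.

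\emph{Step 3: strict monotonicity.} The same two ingredients show $\theta_1\mapsto\rho_1(\theta_1;v_2)$ is strictly decreasing: a decrease of $\cos\theta_1$ forces $\ell(\rho_1)$ to increase, hence $\rho_1$ to decrease. It therefore suffices to prove that $M_2^{(\theta_1)}$, viewed as a function of $\rho_1$ alone after substituting $\cot^2\theta_1 = v_2^2/(\ell(\rho_1)^2-v_2^2)$ (valid since $\ell(\rho_1)>-v_2$ for $\theta_1>0$), is strictly \emph{increasing} on the relevant range $\rho_1\in(0,\rho_1^\ast)$. Writing $(M_2^{(\theta_1)})^2 = v_2^2 + (\xi^{P_0^1})^2$ with
\[
\xi^{P_0^1} = \frac{v_2^2(1-\rho_1) + \rho_1\,\ell(\rho_1)^2}{(1-\rho_1)\,\sqrt{\ell(\rho_1)^2 - v_2^2}} > 0 ,
\]
I would differentiate in $\rho_1$, clear denominators, and substitute both the closed form $\ell(\rho)^2 = -2(1-\rho)h(\rho)/(1+\rho)$ coming from~\eqref{eq:sonic-centre-relations} and its $\rho$-derivative $(\ell(\rho)^2)' = -\frac{2}{1+\rho}\big(\frac{\ell(\rho)^2}{1-\rho} + (1-\rho)\rho^{\gamma-2}\big)$, which holds for all $\gamma\ge1$ because $h'(\rho)=\rho^{\gamma-2}$. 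After the cancellations, strict positivity of the derivative of $(\xi^{P_0^1})^2$ reduces to the single inequality
\[
\ell(\rho)^4 - (1-\rho)^2\rho^{\gamma-1}\,\ell(\rho)^2 + (1-\rho)^2\rho^{\gamma-2}(1+\rho)\,v_2^2 > 0 .
\]
Since the last term is positive, it is enough to verify $\ell(\rho)^2 > (1-\rho)^2\rho^{\gamma-1}$ on $(0,1)$; for $\gamma>1$ this is equivalent to the elementary inequality $2 + (\gamma-1)\rho^{\gamma+1} > (\gamma+1)\rho^{\gamma-1}$, whose difference of sides vanishes at $\rho=1$ and has derivative $(\gamma^2-1)\rho^{\gamma-2}(\rho^2-1)<0$ on $(0,1)$, hence is strictly positive there, while for $\gamma=1$ it reduces to $\ln(\rho^2)<\rho^2-1$, which holds on $(0,1)$. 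This gives $\frac{\mathrm{d}}{\mathrm{d}\rho_1}M_2^{(\theta_1)}>0$ on $(0,\rho_1^\ast)$, and combined with the strictly decreasing dependence $\theta_1\mapsto\rho_1$ we conclude that $M_2^{(\theta_1)}$ is strictly decreasing in $\theta_1$.

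\emph{Expected main obstacle.} Steps 1--2 are routine; the crux is the algebra in Step 3. The two choices that keep it tractable are: eliminating $\theta_1$ in favour of $\rho_1$, so that $\ell$ and $(\ell^2)'$ become explicit power/rational expressions in $\rho$; and organising the derivative computation around $\xi^{P_0^1}$, the abscissa of the reflection point, rather than $M_2^{(\theta_1)}$ directly, since the two differ only by the additive constant $v_2^2$ and it is $\xi^{P_0^1}$ whose positivity and monotonicity are cleanest to track. Carrying the cancellations far enough that the sign question collapses to the displayed one-variable polynomial inequality in $\rho$ is the one place where care is needed.
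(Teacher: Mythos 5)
Your proposal is correct and takes essentially the same route as the paper: the decisive step in both is the inequality $\ell(\rho)^2 > \rho(1-\rho)^2 h'(\rho)$ on $(0,1)$, established by the same one-variable argument (your $g(\rho)=2+(\gamma-1)\rho^{\gamma+1}-(\gamma+1)\rho^{\gamma-1}$ is exactly $-(\gamma-1)\tilde{f}(\rho)$ from the paper's proof), with the remaining term ($+v_2^2$ for you, $-\cot^2\theta_1$ for the paper) only reinforcing the sign. The difference is purely organizational: you eliminate $\theta_1$ and differentiate $(\xi^{P_0^1})^2$ in $\rho_1$ (your displayed reduction to the polynomial inequality does check out), whereas the paper differentiates $(M_2^{(\theta_1)})^2$ implicitly in $\theta_1$ and arrives at the equivalent sign condition.
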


\begin{proof}
Formula~\eqref{eq:appendixMachNumberState12} follows directly from~\eqref{CompatiCondiGTI} and~\eqref{P0-Pos}--\eqref{PseudoVelocity123}.

For the second property,
the limit is clear since~\eqref{CompatiCondiGTI} implies that \(\ell(\rho_1) \to -v_2 > 0\) as \(\theta_1 \to 0^+\),
whilst \(\cot^2{\theta_1} \to \infty\) as \(\theta_1 \to 0^+\).
To determine the strict monotonicity of \(M_2^{(\theta_1)}\), we differentiate~\eqref{CompatiCondiGTI} and~\eqref{eq:appendixMachNumberState12}
with respect to \(\theta_1\) and re-arrange it to obtain
\begin{align}
\label{eq:appendixDerivativeExpressionMachNumberState2}
    \frac{ (1-\rho_1)^2 \cot{\theta_1} }{2 ( \rho_1 + \cot^2{\theta_1} ) (\ell(\rho_1))^2} \frac{{\rm d} (M_2^{(\theta_1)})^2}{{\rm d} \theta_1} =
    \rho_1 + \frac{1}{1-\rho_1}\frac{\ell(\rho_1)}{\ell'(\rho_1)} - \cot^2{\theta_1}\,.
\end{align}
Furthermore, from the definition of \(\ell(\rho)\) in~\eqref{eq:sonic-centre-relations}, we directly compute
\begin{align*}
    (1 - \rho) \frac{\ell'(\rho)}{\ell(\rho)} = - \frac{(\ell(\rho))^2 + (1-\rho)^2 h'(\rho)}{(\ell(\rho))^2(1 + \rho)}\,.
\end{align*}
Substituting the above expression into the right-hand side
of~\eqref{eq:appendixDerivativeExpressionMachNumberState2} gives
\begin{align*}
    \text{RHS of~\eqref{eq:appendixDerivativeExpressionMachNumberState2}} =
    \frac{ \rho_1 (1-\rho_1)^2 h'(\rho_1) - (\ell(\rho_1))^2}{(\ell(\rho_1))^2 + (1-\rho_1)^2 h'(\rho_1) } - \cot^2{\theta_1}\,.
\end{align*}
Denote the numerator of the first term above by
\begin{align*}
    f(\rho) \defeq \rho (1-\rho)^2 h'(\rho) - (\ell(\rho))^2 \equiv \frac{1-\rho}{1+\rho} \, \tilde{f}(\rho)\,,
\end{align*}
where $\tilde{f}(\rho) \defeq (\gamma+1) h(\rho) - (\gamma-1) \rho^2 h(\rho) + (1 - \rho^2)$.
Observe that $\tilde{f}(1) = 0$ and
\begin{align*}
\tilde{f}'(\rho) = (\gamma + 1) (1-\rho^2) h'(\rho) > 0\, \qquad \mbox{for any $\rho\in(0,1)$}\,.
\end{align*}
Thus, \(\tilde{f}(\rho) < 0\) for all \(\rho\in(0,1)\).
It directly follows that the RHS of~\eqref{eq:appendixDerivativeExpressionMachNumberState2} is negative.
We conclude that \(M_2^{(\theta_1)}\) is a strictly decreasing function of \(\theta_1 \in (0,\theta^{\rm cr})\).
\end{proof}

It follows immediately from Lemma~\ref{lem:MonotonicityOfMach2} that \(\xi^{P_0^1} > 0\) is
a strictly decreasing function of \(\theta_1 \in (0,\theta^{\rm cr})\)
with \(\xi^{P_0^1} \to \infty\) as \(\theta_1 \to 0^+\),
since
\(M_2^{(\theta_1)} = \abs{( -\xi^{P_0^1}, v_2)}\).
Similarly, \(\xi^{P_0^2} < 0\) is a strictly increasing function of \(\theta_2 \in (0,\theta^{\rm cr})\)
with \(\xi^{P_0^2} \to -\infty\) as \(\theta_2 \to 0^+\).

We define the background solution $\bar{\varphi} \in C^{0,1}_{\rm loc}(\mathbb{R}^2_+)$ as
\begin{align}
\label{BackGroundSolu}
\bar{\varphi}(\bm{\xi}) \defeq \min\big\{ \varphi_1(\bm{\xi}), \varphi_2(\bm{\xi}), \varphi_3(\bm{\xi})\big\} \,,
\end{align}
and define the three open domains
\begin{equation}
\label{eq:domains-omega-123}
\Omega_i \defeq {\rm int}\,\{ \bm{\xi} \in \mathbb{R}^2_+ \,:\, \varphi_i(\bm{\xi}) = \bar{\varphi}(\bm{\xi})\}\,
\qquad \mbox{for $i = 1,2,3$}\,,
\end{equation}
where \({\rm int}\,G\) denotes the interior of a set \(G \subseteq \mathbb{R}^2\).
Note that \(\Omega_1\) and \(\Omega_3\) are the wedge-shaped domains with wedge
angles \(\theta_1\) and \(\theta_2\) respectively, and they become empty sets
in the limits: \(\theta_1 \to 0^+\) and \(\theta_2 \to 0^+\), respectively.

In light of the above discussion, we seek solutions of the following asymptotic boundary value problem
in the self-similar coordinates $\bm{\xi}=(\xi,\eta)$ in the upper half-plane $\mathbb{R}^2_+$.

\begin{problem}[Asymptotic boundary value problem]\label{BVP}
Fix $\gamma\geq1$ and $v_2\in(v_{\min},0)$.
For any $\btheta \in (0,\theta^{\rm cr})^2,$ determine the existence of a weak
solution $\varphi\in W^{1,\infty}_{\rm loc}(\mathbb{R}_+^2)$ of
equations~\eqref{Eq4PseudoPoten}--\eqref{Relat4Rho} in $\mathbb{R}_+^2$
satisfying the following conditions{\rm :}
\begin{enumerate}[{\rm (i)}]
\item\label{BVP-item1}
The asymptotic boundary condition at infinity{\rm :}
\begin{equation*}
\lim_{r\to\infty} \norm{\varphi-\bar{\varphi}}_{C^{0,1}(R_{\theta} \setminus B_r(\bm{0}))} = 0\,
\qquad\, \text{for any $\theta \in (0,\pi)$}\,;
\end{equation*}

\item\label{BVP-item2}
The slip boundary condition on the symmetric boundary{\rm :}
\begin{equation*}
D\varphi\cdot(0,1)=0 \qquad \text{on $L_{\rm sym} = \big\{\bm{\xi} \in \mathbb{R}^2 \,:\, \eta=0\big\}$}\,.
\end{equation*}
\end{enumerate}
\end{problem}

\subsubsection{Normal reflection configurations}
\label{SubSec-202NormalShock}
It is meaningful to extend the range of parameters in Problem~\ref{BVP} to allow \(\btheta \in [0,\theta^{\rm cr})^2\).
We study the reflection for the case: \(\btheta = \bm{0}\), which we call the normal reflection configuration.
It is clear that setting \(\varphi= \bar{\varphi} \equiv \varphi_2\) would satisfy Problem~\ref{BVP}\eqref{BVP-item1}
at infinity.
However, since \(\varphi_2\) does not satisfy Problem~\ref{BVP}\eqref{BVP-item2},
a boundary layer must be present.
For this reason, we introduce a uniform downstream state \((0)\), determined by a pseudo-potential \(\varphi_0\)
satisfying~Problem~\ref{BVP}\eqref{BVP-item2},
such that
a straight reflected shock \(S_0\) is formed between states \((2)\) and \((0)\).
In fact, the only possible straight reflected shock that satisfies~\ref{BVP}\eqref{BVP-item1}
is a normal shock \(S_0 \defeq \{\bm{\xi} \in \mathbb{R}^2_+ \,:\, \eta = \eta_0 \}\) for some \(\eta_0 > 0\), which is parallel to \(L_{\rm sym}\).
The constant density of state \((0)\) given by \(\rho_0 \defeq \rho(\abs{D\varphi_0},\varphi_0)\) should satisfy the entropy condition \(\rho_0 > 1\).
We demonstrate that state \((0)\) described above is uniquely determined by \((\gamma, v_2)\).

The pseudo-potential \(\varphi_0\) has form~\eqref{4States} with $i=0$,
for suitable constants \((u_0,v_0,k_0)\) to be determined.
It follows from Problem~\ref{BVP}\eqref{BVP-item2} that
\(v_0 = 0\), whilst \(u_0 = 0\) follows from the Rankine--Hugoniot conditions~\eqref{RHCondi}
between states \((2)\) and \((0)\) because \(S_0\) is parallel to \(L_{\rm sym}\).
The value of \(k_0 = v_2 \eta_0\) is then obtained by using the continuity
of \(\varphi = \varphi_0 = \varphi_2\) on \(S_0\).

It remains to determine constant \(\eta_0 > 0\), which fixes the location of \(S_{0}\).
Combining the Bernoulli law~\eqref{Relat4Rho} with the Rankine--Hugoniot conditions~\eqref{RHCondi}, we find the necessary condition
\begin{align} \label{eq:normal-reflection-density-relation}
    &\ell(\rho_0) = -v_2\,.
\end{align}
It is clear that~\eqref{eq:normal-reflection-density-relation} admits a unique solution \(\rho_0 \in (1 ,\infty)\)
depending only on \((\gamma, v_2)\).
Indeed, \(\ell(\cdot)\) is strictly increasing on \((1,\infty)\) and \(\ell(1) = 0\).
Constant \(\eta_0\) is then uniquely determined by the Rankine--Hugoniot conditions~\eqref{RHCondi} to be
\begin{equation}\label{Sol-NormalReflec0-RH}
{\eta}_0 \defeq -\frac{v_2}{\rho_0 - 1 } > 0\, \qquad \text{where \(\rho_0 \in (1,\infty)\)
satisfies~\eqref{eq:normal-reflection-density-relation}.}
\end{equation}
Thus, the pseudo-potential of state \((0)\) has been uniquely determined above by \((\gamma,v_2)\) as
\begin{align*}
    \varphi_0 (\bm{\xi}) \defeq -\frac12 \abs{\bm{\xi}}^2 + v_2 \eta_0 \,.
\end{align*}
It is direct to verify that
\begin{align}
\label{eq:normal-reflection-solution}
    \varphi_{\rm norm}(\bm{\xi}) \defeq \min\{\varphi_0(\bm{\xi}), \varphi_2(\bm{\xi})\} =
    \begin{cases}
    \varphi_2(\bm{\xi}) &\quad \text{for}\; \eta > \eta_0\,, \\
    \varphi_0(\bm{\xi}) \, \quad &\quad \text{for}\; 0 < \eta < \eta_0\,,
    \end{cases}
\end{align}
is a solution to {\rm Problem~\ref{BVP}} when \(\btheta = \bm{0}\); see {\rm Fig.~\ref{fig:Normal}(a)}.

Briefly, we mention the case:
\(\btheta \in \big( \{0\} \times (0,\theta^{\rm cr}) \big) \cup \big( (0,\theta^{\rm cr}) \times \{0\}\big)\),
which is called the unilateral normal reflection; a normal reflection on one side with respect to
the symmetric boundary $L_{\rm sym}$ occurs by the same argument as the normal reflection
case $\btheta=\mathbf{0}$ above, while the other side undergoes a regular reflection at the reflection point,
of which details will be discussed in \S\ref{subsec:regular-reflection-configurations} below;
see the case: $\btheta \in (0,\theta^{\rm cr})\times \{0\}$ in {\rm Fig.~\ref{fig:Normal}(b)}.
Moreover, the unilateral normal reflections are essentially the same as the Prandtl-Meyer reflection
configurations considered in~\cite{BCF-2019}.

\begin{figure}
    \centering
    \begin{subfigure}{0.39\textwidth}
        \centering
        \includegraphics[height=3.7cm]{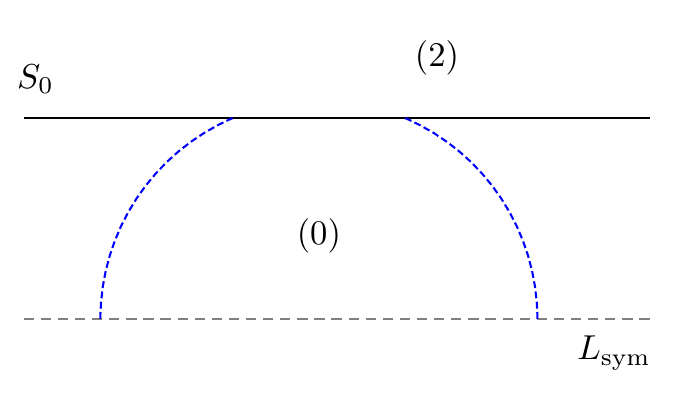}
        \caption*{(a) Normal reflection when $\theta_1=\theta_2={0}$.}
        \label{fig:N-a}
    \end{subfigure}
    \hfill
    \begin{subfigure}{0.59\textwidth}
        \centering
        \includegraphics[height=3.7cm]{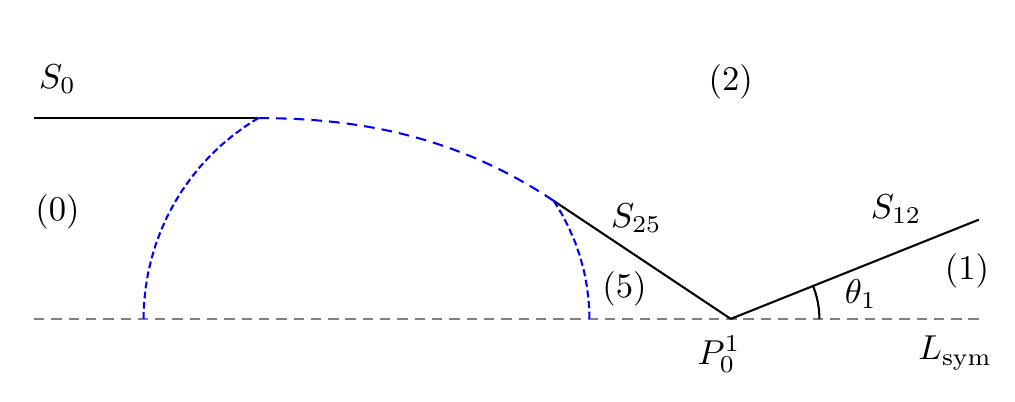}
        \caption*{(b) Unilateral normal reflection when $\theta_1>\theta_2=0$.}
        \label{fig:N-b}
    \end{subfigure}
    \caption{Structure of solutions of Problem~\ref{BVP} involving normal shocks.}
    \label{fig:Normal}
\end{figure}

\subsection{Regular reflection configurations}
\label{subsec:regular-reflection-configurations}
In this section, we introduce the detachment and sonic angles and
describe the structure of three genuinely different configurations of the four-shock interactions.

\subsubsection{Detachment and sonic angles}\label{SubSec-203DetachAngle}
For \(\btheta \in (0,\theta^{\rm cr})^2\), the incident shocks \(S_{12}\) and \(S_{32}\) intersect
the symmetric line \(L_{\rm sym}\) at points \(P_0^1\) and \(P_0^2\) respectively,
with coordinates given by~\eqref{P0-Pos}.
As discussed above, the symmetric line can be regarded as a rigid wall and, similarly to
the previous works~\cite{ChenFeldman-RM2018}, we expect that, if the incident angle $\theta_{1}$ is
less than a detachment angle $\theta^{\rm d}$,
there exists a regular reflection of \(S_{12}\) at point \(P_0^1\), {\it i.e.},
there exist both a uniform state~\((5)\) determined by a pseudo-potential \(\varphi_5\)
and a straight reflected shock \(S_{25}\) passing through \(P_0^1\)
such that \(S_{25}\) separates the uniform upstream state~\((2)\) from the uniform downstream state~\((5)\).
Similarly, we expect that there exists a regular reflection of \(S_{32}\) at point \(P_0^2\)
if $\theta_2$ is less than $\theta^{\rm d}$;
that is, there exist both a uniform state~\((6)\) determined by a pseudo-potential \(\varphi_6\)
and a straight reflected shock \(S_{26}\) passing through \(P_0^2\).
The expected structure of shock regular reflection on the upper half-plane
can be seen in Fig.~\ref{fig2-IncidShock}.
We now determine the detachment angle $\theta^{\rm d}$ and the sonic angle $\theta^{\rm s}$.

The regular reflection of \(S_{12}\) at point \(P_0^1\) can be reduced to the study of an algebraic system.
By the argument in~\cite[\S7.1]{ChenFeldman-RM2018},
to determine the uniform state \((5)\), it suffices to apply the Rankine--Hugoniot conditions
at the reflection point~\(P_0^1\) only.
The pseudo-potential \(\varphi_5\) has form~\eqref{4States} with $i=5$, and the straight reflected shock \(S_{25}\)
must have the form:
\begin{align*}
S_{25} \defeq \big\{ \bm{\xi} \in \cl{\mathbb{R}^2_+} \,:\, \varphi_5(\bm{\xi})
  = \varphi_2(\bm{\xi})\big\} = \big\{ \bm{\xi} \in \cl{\mathbb{R}^2_+} \,:\,
  \eta = \xi \tan{\theta_{25}} + a_{25}
  \big\}
\end{align*}
for some constants \((u_5,v_5,k_5, \theta_{25}, a_{25})\) depending on \((\gamma, v_2, \theta_1)\) to be determined,
where \(\theta_{25}\) is the angle between the reflected shock \(S_{25}\) and the positive \(\xi\)--axis, and \(a_{25}\)
is the \(\eta\)--intercept of the reflected shock \(S_{25}\).
Applying the Rankine--Hugoniot conditions~\eqref{RHCondi} between states \((2)\) and \((5)\) at \(P_0^1\), we have
\begin{align} \label{eq:state5RHcondition1}
    \begin{split}
       &\varphi_5(P_0^1) = \varphi_2(P_0^1)\,, \;\;
       D\varphi_5(P_0^1) \cdot \bm{\tau}_{25} = D \varphi_2(P_0^1) \cdot \bm{\tau}_{25}\,,
       \;\; \rho_5 D\varphi_5(P_0^1) \cdot \bm{\nu}_{25}  = D \varphi_2(P_0^1) \cdot \bm{\nu}_{25}\,,
    \end{split}
\end{align}
where \(\bm{\nu}_{25} \defeq \frac{D(\varphi_2 - \varphi_5)(P_0^1)}{|D(\varphi_2 - \varphi_5)(P_0^1)|}\), \(\bm{\tau}_{25}
\defeq \bm{\nu}^\perp\), and \(\rho_5 \defeq \rho( |D\varphi_5|,\varphi_5 )\) is given by~\eqref{Relat4Rho}.
Furthermore, the pseudo-potential \(\varphi_5\) should satisfy the entropy condition:
\begin{align} \label{eq:state5entropycondition1}
    \rho_5 > 1\,,
    \quad \text{or equivalently}\,, \quad
    D \varphi_2 (P_0^1) \cdot \bm{\nu}_{25} > D \varphi_5(P_0^1) \cdot \bm{\nu}_{25} > 0\,;
\end{align}
and the slip boundary condition:
\begin{equation} \label{eq:state5slipBC}
    D\varphi_5 \cdot (0,1) = 0 \qquad \text{on $L_{\rm sym}$}\,.
\end{equation}

A necessary condition for the regular reflection of \(S_{12}\) at \(P_0^1\) is clearly
that state \((2)\) must be pseudo-supersonic at \(P_0^1\).
From Lemma~\ref{lem:MonotonicityOfMach2},
there exists a unique \(\theta^+ \in (0,\theta^{\rm cr}]\) such that \(M_2^{(\theta_1)} > 1\) if and only if \(\theta_1 \in (0,\theta^+)\).
However, note that \(M_2^{(\theta_1)}|_{\theta_1 = \theta^+}\) is not necessarily $1$ if \(\theta^+=\theta^{\rm cr}\).

Introduce the steady detachment angle \(\theta_{\rm stdy}^{\rm d}(\rho_\infty,u_\infty)\)
for the steady potential flow with constant supersonic upstream state \(U_\infty = (\rho_\infty,u_\infty,0)\),
as defined in~\cite[§7.1]{ChenFeldman-RM2018}.
Also, for \(\theta_1 \in (0,\theta^{\rm cr})\),
denote by \(\hat{\theta}_{25}(\theta_1)\) the acute angle between the pseudo-velocity \(D\varphi_2(P_0^1) = (-\xi^{P_0^1}, v_2)\)
and the symmetric line \(L_{\rm sym}\):
\begin{align} \label{eq:state5SelfSimilarTurningAngle}
    \hat{\theta}_{25}(\theta_1) \defeq \arccos{(
    \frac{\xi^{P_0^1}}
    {|D\varphi_2(P_0^1)|})}
    = \arcsin{( \frac{- v_2}{|D\varphi_2(P_0^1)|}) } \in ( 0, \tfrac{\pi}{2} )\,.
\end{align}
Similarly to~\cite[\S7.4]{ChenFeldman-RM2018}, whenever \(\theta_1 \in (0 ,\theta^+)\),
the existence and multiplicity of solutions of the algebraic system~\eqref{eq:state5RHcondition1}--\eqref{eq:state5slipBC}
are determined as follows:
\begin{enumerate}[\quad(a)]
    \item If \(\hat{\theta}_{25}(\theta_1) < \theta_{\rm stdy}^{\rm d}(1,|D\varphi_2(P_0^1)|)\), there are two solutions;
    \item If \(\hat{\theta}_{25}(\theta_1) = \theta_{\rm stdy}^{\rm d}(1,|D\varphi_2(P_0^1)|)\), there is one solution;
    \item If \(\hat{\theta}_{25}(\theta_1) > \theta_{\rm stdy}^{\rm d}(1,|D\varphi_2(P_0^1)|)\), there are no solutions.
\end{enumerate}

We demonstrate that there exists at most one value \(\theta^{\rm d} \in (0,\theta^+)\)
such that \(\theta_1 = \theta^{\rm d}\) satisfies the equality in~(b) above.
Indeed, using \eqref{eq:state5SelfSimilarTurningAngle} and Lemma~\ref{lem:MonotonicityOfMach2},
it is clear that \(\hat{\theta}_{25}(\theta_1)\) is a strictly increasing function
of \(\theta_1\in (0 ,\theta^{\rm cr})\), whilst,
for \(\theta_{\rm stdy}^{\rm d}(1,|D\varphi_2(P_0^1)|)\), we state below a general fact
about 2-D steady potential flow, from which it is clear that \(\theta_{\rm stdy}^{\rm d}(1,|D\varphi_2(P_0^1)|)\)
is strictly decreasing with \(\theta_1 \in (0,\theta^+)\).
The proof of this statement is given in Appendix~\ref{SubSec:A1}.

\begin{lemma} \label{lem:MonotonicityOfSteadyAngles}
Let \(\theta_{\rm stdy}^{\rm d}\) and \(\theta_{\rm stdy}^{\rm s}\) denote the steady detachment and sonic angles
for the steady potential flow with constant supersonic upstream state \(U_\infty = (\rho_\infty,u_\infty,0)\).
Then \(\theta_{\rm stdy}^{\rm d}\) and \(\theta_{\rm stdy}^{\rm s}\) are smooth, strictly
increasing functions of the upstream Mach number \(M_\infty \defeq \frac{u_\infty}{c(\rho_\infty)} > 1\).
    Moreover, the following limits hold{\rm :}
    \begin{align*}
        \lim_{M_\infty \to 1^+} (\theta^{\rm d}_{\rm stdy}, \theta^{\rm s}_{\rm stdy}) = (0 , 0) \,, \qquad\,\,
        \lim_{M_\infty \to \infty} (\theta^{\rm d}_{\rm stdy}, \theta^{\rm s}_{\rm stdy})
        = (\tfrac{\pi}{2}, \arctan{\sqrt{\tfrac{2}{\gamma-1}}})\,.
    \end{align*}
\end{lemma}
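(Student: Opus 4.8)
The plan is to reduce the statement to the analysis of the steady oblique‑shock polar for potential flow. First I would record the shock relations for a uniform supersonic upstream state $U_\infty=(\rho_\infty,u_\infty,0)$ and a straight shock making angle $\beta$ with the incoming flow. Continuity of the velocity potential preserves the tangential velocity across the shock, while the mass‑flux condition and the Bernoulli law yield, after eliminating the downstream state, the relation
\[
\tfrac12\,M_\infty^2\sin^2\beta\,\frac{m^2-1}{m^2}=\frac{m^{\gamma-1}-1}{\gamma-1},
\]
where $m>1$ is the density ratio across the shock (the right‑hand side read as $\log m$ when $\gamma=1$), and the turning angle $\theta_w$ and downstream Mach number $M_2$ are given by $\tan(\beta-\theta_w)=\frac1m\tan\beta$, equivalently $\tan\theta_w=\frac{(m-1)\tan\beta}{m+\tan^2\beta}$, and $M_2^2=\big(M_\infty^2-\frac{2}{\gamma-1}(m^{\gamma-1}-1)\big)\big/m^{\gamma-1}$. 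For fixed $M_\infty>1$ the first relation determines $m=m(M_\infty,\beta)\in(1,m_\infty)$ for $\beta\in(\arcsin\frac1{M_\infty},\frac{\pi}{2})$; an elementary computation shows that $g(m)\defeq M_\infty^2\sin^2\beta$ depends on $m$ alone and is strictly increasing, so $m$ is smooth and strictly increasing in each of $M_\infty$ and $\beta$. Then $\theta^{\rm d}_{\rm stdy}(M_\infty)=\max_{\beta}\theta_w(M_\infty,\beta)$, while $\theta^{\rm s}_{\rm stdy}(M_\infty)=\theta_w(M_\infty,\beta^{\rm s}(M_\infty))$ with $\beta^{\rm s}$ characterized by $M_2(M_\infty,\beta^{\rm s})=1$.

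For the detachment angle: since $\theta_w$ vanishes at both endpoints of the $\beta$‑interval and is positive inside, it attains an interior maximum; I would check that $\partial_\beta\theta_w=0$ forces $\tan^2\beta>m$ and that $\partial_\beta^2\theta_w<0$ at every such critical point, so the maximiser $\beta^{\rm d}(M_\infty)$ is unique and, by the implicit function theorem, smooth, whence $\theta^{\rm d}_{\rm stdy}$ is smooth. Strict monotonicity then follows from the envelope theorem, $\frac{{\rm d}}{{\rm d}M_\infty}\tan\theta^{\rm d}_{\rm stdy}=\partial_{M_\infty}\tan\theta_w\big|_{\beta=\beta^{\rm d}}$, together with the observation that, at fixed $\beta$, $\partial_{M_\infty}\tan\theta_w=\partial_m(\tan\theta_w)\cdot\partial_{M_\infty}m=\frac{\tan\beta\,(1+\tan^2\beta)}{(m+\tan^2\beta)^2}\,\partial_{M_\infty}m>0$, both factors being positive.

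For the sonic angle, the cleanest route is to note from $M_2=1$ that the density ratio at the sonic point is the explicit strictly increasing bijection $m_s=m_s(M_\infty)$ of $(1,\infty)$ onto itself determined by $m_s^{\gamma-1}=\frac{(\gamma-1)M_\infty^2+2}{\gamma+1}$; inverting, $M_\infty^2=\frac{(\gamma+1)m_s^{\gamma-1}-2}{\gamma-1}$. Substituting this into the turning‑angle formula expresses $\tan\theta^{\rm s}_{\rm stdy}$ as an explicit algebraic function of the single variable $m_s\in(1,\infty)$ (smoothness is immediate once one verifies the elementary inequality $M_\infty^2-g(m_s)>0$), and monotonicity in $M_\infty$ is then equivalent to monotonicity in $m_s$. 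I would establish the latter by differentiating and factoring the numerator of the derivative, isolating an auxiliary function that vanishes at $m_s=1$ and has a sign‑definite derivative for $m_s>1$---the same device as in the proof of Lemma~\ref{lem:MonotonicityOfMach2}. The limiting values are then read off from these formulas: as $M_\infty\to1^+$ we have $m,m_s\to1$, $\beta^{\rm d},\beta^{\rm s}\to\frac{\pi}{2}$, and $\theta_w\to0$, giving $(0,0)$; as $M_\infty\to\infty$ the density ratios blow up (a feature special to potential flow), so $\theta^{\rm d}_{\rm stdy}\to\frac{\pi}{2}$ upon choosing $\beta$ near $\frac{\pi}{2}$, while $g(m_s)\sim\frac{2}{\gamma-1}m_s^{\gamma-1}$ and $M_\infty^2-g(m_s)\sim m_s^{\gamma-1}$ give $\tan\theta^{\rm s}_{\rm stdy}\to\sqrt{\tfrac{2}{\gamma-1}}$; the case $\gamma=1$ is recovered by continuity in $\gamma$.

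I expect the main obstacle to be the strict monotonicity of the sonic angle: after the substitution it amounts to the sign analysis of a fairly involved algebraic expression in $m_s$ and $\gamma$, and finding the right factorization (as with $\tilde f$ in Lemma~\ref{lem:MonotonicityOfMach2}) is the delicate point. A secondary difficulty is keeping track of the correct branch of the shock polar (the weak reflected shock, with $\beta^{\rm s}<\beta^{\rm d}$) and verifying the non‑degeneracy $\partial_\beta^2\theta_w<0$ needed to apply the implicit function theorem for the smoothness of $\theta^{\rm d}_{\rm stdy}$.
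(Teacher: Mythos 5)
Your proposal is correct in substance and works with the same underlying object as the paper — the steady potential-flow shock polar parametrized by the density ratio — but it organizes the argument somewhat differently. The paper works directly in velocity space: it writes $w=\tan\theta_{\rm stdy}$ as a function of $\tau=\rho/\rho_\infty$ and $M_\infty$, then combines the critical conditions (the detachment equation and $(\gamma+1)h(\tau_{\rm s})=M_\infty^2-1$) into \emph{parametric} formulas giving both $w^{\rm d},M_\infty^2$ as smooth functions of $\tau_{\rm d}$ and $w^{\rm s},M_\infty^2$ as smooth functions of $\tau_{\rm s}$; showing each pair is strictly increasing in its parameter (via auxiliary functions $f_{\rm d},f_{\rm s}$ reduced to $\tilde f_{\rm d},\tilde f_{\rm s}$ that vanish at $1$ and have positive derivative) yields monotonicity, smoothness, and the limits in one stroke, uniformly in $\gamma\ge1$ through the identities $\tau h'=(\gamma-1)h+1$, $h''/h'=(\gamma-2)/\tau$. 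You instead parametrize by $(\beta,m)$; your jump relation, turning-angle formula, $M_2$ formula, and sonic relation $m_s^{\gamma-1}=\frac{(\gamma-1)M_\infty^2+2}{\gamma+1}$ are all correct, and your "elementary" monotonicity of $g(m)=M_\infty^2\sin^2\beta$ is precisely the paper's $\tilde f_{\rm s}$-computation in disguise, as is the inequality $M_\infty^2-g(m_s)>0$. For the detachment angle your sup/envelope argument is actually more economical for \emph{monotonicity} (since $\partial_m\tan\theta_w>0$ and $\partial_{M_\infty}m>0$ at fixed $\beta$, the maximum over an enlarging $\beta$-interval is strictly increasing, with no need for differentiability of $\beta^{\rm d}$), but it buys smoothness only after verifying the nondegeneracy $\partial^2_\beta\theta_w<0$ at critical points, which you correctly flag; the paper sidesteps this entirely, importing uniqueness of $\theta^{\rm d}_{\rm stdy},\theta^{\rm s}_{\rm stdy}$ from \cite[Lemma~7.3.2]{ChenFeldman-RM2018} and getting smoothness for free from $(M_\infty^2)_{\tau_{\rm d}}>0$. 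The two steps you leave as plans — that nondegeneracy, and the sign factorization for $(w^{\rm s})'$ — are exactly where the paper either cites the known result or carries out the $f_{\rm s},\tilde f_{\rm s}$ computation successfully, so they are genuine work but not obstacles; the only loose point is your appeal to "continuity in $\gamma$" for $\gamma=1$, which is better handled, as in the paper, by writing everything in terms of $h$ (with $h=\ln$ when $\gamma=1$) so the argument is uniform in $\gamma\ge1$.
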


With the above discussion, the following proposition gives the necessary and sufficient criterion
on the incident angle $\theta_1$ for the existence of a regular reflection of \(S_{12}\) at point \(P_0^1\).

\begin{proposition}[Local reflection theory]
\label{Prop:localtheorystate5}
Fix \(\gamma\geq 1\) and \(v_2 \in (v_{\min},0)\).
There exists a unique \(\theta^{\rm d} = \theta^{\rm d}(\gamma, v_2) \in (0, \theta^{\rm cr} ],\)
called the detachment angle, such that,
for \(\theta_1 \in (\theta^{\rm d},\theta^{\rm cr}),\) there are no states \((5)\) of form~\eqref{4States}
satisfying~\eqref{eq:state5RHcondition1}--\eqref{eq:state5slipBC} and,
for each \(\theta_1 \in (0,\theta^{\rm d}),\) there are exactly two states \((5)\) of form~\eqref{4States}
satisfying~\eqref{eq:state5RHcondition1}--\eqref{eq:state5slipBC}{\rm :}
the weak reflection state \(\varphi_5^{\rm wk}\) and the strong reflection state \(\varphi_5^{\rm sg},\)
distinguished by
 \(1 < \rho_5^{\rm wk} < \rho_5^{\rm sg}\,,\)
where \(\rho_5^{\rm wk} \defeq \rho(\abs{D\varphi_5^{\rm wk}},\varphi_5^{\rm wk})\)
and \(\rho_5^{\rm sg} \defeq \rho(\abs{D\varphi_5^{\rm sg}},\varphi_5^{\rm sg})\)
are the constant densities of the weak and strong reflection states \((5)\) respectively.
Furthermore, denoting by
\(O_5^{\rm wk} \defeq (u_5^{\rm wk}, v_5^{\rm wk})\) and \(O_5^{\rm sg} \defeq (u_5^{\rm sg}, v_5^{\rm sg})\) the sonic centres,
and by \(c_5^{\rm wk}\) and \(c_5^{\rm sg}\) the sonic speeds of the weak and strong reflection states~\((5)\) respectively,
then
\begin{enumerate}[{\rm (i)}]
    \item \label{Prop:localtheorystate5:continuity-properties}
    \((\rho_5^{\rm wk}, O_5^{\rm wk})\in C([0,\theta^{\rm d}])\cap C^{\infty}([0,\theta^{\rm d}))\) and
    \((\rho_5^{\rm sg}, O_5^{\rm sg}) \in C((0 ,\theta^{\rm d}])\cap C^{\infty}((0,\theta^{\rm d}))\).
    In particular, the following limits exist{\rm :}
    \begin{align*}
        \lim_{\theta_1 \to 0^+} (\rho_5^{\rm wk}, O_5^{\rm wk}, u_5^{\rm wk} \xi^{P_0^1}) = (\rho_0, \mathbf{0}, -v_2 \eta_0)\,,
        \qquad
        \lim_{\theta_1 \to \theta^{\rm d-}} (\rho_5^{\rm wk}, \rho_5^{\rm sg}, O_5^{\rm wk}, O_5^{\rm sg})\,,
    \end{align*}
    where \(\rho_0\) and \(\eta_0\) are the density and shock location of the normal reflection state~$(0)$
    which are given in~{\rm\S\ref{SubSec-202NormalShock}}.

    \item \label{Prop:localtheorystate5:3n4}
    For any \(\theta_1 \in (0,\theta^{\rm d}),\)
    \begin{align}
    &| D\varphi_2 (P_0^1) | > | D\varphi_2 (P_0^1|_{\theta_1 = \theta^{\rm d}}) | > 1\,,\label{eq:state2SupersonicCondition}\\
    &0 < u_5^{\rm wk} < u_5^{\rm sg} < \xi^{P_0^1}\,, \qquad  v_5^{\rm wk } = v_5^{\rm sg} = 0\,. \label{eq:localtheorystate5-sonic-centres}
    \end{align}

    \item \label{Prop:localtheorystate5:6}
    There exists a unique \(\theta^{\rm s} = \theta^{\rm s}(\gamma, v_2) \in (0, \theta^{\rm d}],\) called the sonic angle, such that
    \begin{alignat*}{2}
        &| D \varphi_5^{\rm wk}(P_0^1) | > c_5^{\rm wk}\, \qquad &&\text{for all} \;\; \theta_1 \in (0, \theta^{\rm s})\,, \\
        &| D \varphi_5^{\rm wk}(P_0^1) | < c_5^{\rm wk}\, \qquad &&\text{for all} \;\; \theta_1 \in (\theta^{\rm s}, \theta^{\rm d}]\,.
    \end{alignat*}
    Furthermore, if \(\theta^{\rm s} < \theta^{\rm cr},\) then \(\theta^{\rm s} < \theta^{\rm d}\)
    and $| D \varphi_5^{\rm wk}(P_0^1) | = c_5^{\rm wk}$ when $\theta_1 = \theta^{\rm s}$.
\end{enumerate}
\end{proposition}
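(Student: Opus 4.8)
The plan is to reduce Proposition~\ref{Prop:localtheorystate5} to the classical local theory of steady potential oblique-shock reflection off a rigid wall, feeding in Lemmas~\ref{lem:MonotonicityOfMach2} and~\ref{lem:MonotonicityOfSteadyAngles}. First I would observe, as in \cite[\S7.1]{ChenFeldman-RM2018}, that after removing the common term $-\tfrac12\abs{\bm{\xi}}^2$ from the pseudo-potentials and translating the sonic centre, the algebraic system \eqref{eq:state5RHcondition1}--\eqref{eq:state5slipBC} for state $(5)$ at $P_0^1$ coincides with the system for a steady potential oblique shock off a wall, whose constant supersonic upstream state has speed $\abs{D\varphi_2(P_0^1)} = M_2^{(\theta_1)}$ (and $c(\rho_2)=c(1)=1$, so this equals the upstream Mach number), whose deflection angle is the wall angle $\hat{\theta}_{25}(\theta_1)$ of \eqref{eq:state5SelfSimilarTurningAngle}, and whose downstream state has $v_5=0$ by \eqref{eq:state5slipBC}. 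By Lemma~\ref{lem:MonotonicityOfMach2} the relevant range is $\theta_1 \in (0,\theta^+)$, since on $[\theta^+,\theta^{\rm cr})$ state $(2)$ fails to be pseudo-supersonic at $P_0^1$ and no state $(5)$ of form~\eqref{4States} exists; and there the discussion of \cite[\S7.4]{ChenFeldman-RM2018} carries over, so that the number of states $(5)$ is $2$, $1$, or $0$ according as $\hat{\theta}_{25}(\theta_1)$ is less than, equal to, or greater than $\theta^{\rm d}_{\rm stdy}(1, M_2^{(\theta_1)})$, the two of them (when present) being the weak and strong reflection states with $1 < \rho_5^{\rm wk} < \rho_5^{\rm sg}$, and all the structural facts of the steady theory — their sonic centres, the supersonic/subsonic character of the downstream state on the weak branch, and the $C^\infty$-dependence on the steady data — transferring to state $(5)$.

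Next I would pin down the detachment angle. Set $g(\theta_1) \defeq \theta^{\rm d}_{\rm stdy}(1, M_2^{(\theta_1)}) - \hat{\theta}_{25}(\theta_1)$ for $\theta_1 \in (0,\theta^+)$. By Lemma~\ref{lem:MonotonicityOfSteadyAngles} the first summand is smooth and strictly increasing in the upstream Mach number; by Lemma~\ref{lem:MonotonicityOfMach2} the map $\theta_1 \mapsto M_2^{(\theta_1)}$ is smooth and strictly decreasing, and $\theta_1 \mapsto \hat{\theta}_{25}(\theta_1)$ is strictly increasing (as already noted below \eqref{eq:state5SelfSimilarTurningAngle}); hence $g$ is strictly decreasing. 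As $\theta_1 \to 0^+$ we have $M_2^{(\theta_1)} \to \infty$ and $\hat{\theta}_{25}(\theta_1) \to 0$, so $g(0^+) = \tfrac{\pi}{2} > 0$ by Lemma~\ref{lem:MonotonicityOfSteadyAngles}. If $\theta^+ < \theta^{\rm cr}$, then $M_2^{(\theta_1)} \to 1^+$ as $\theta_1 \to \theta^{+-}$, so $\theta^{\rm d}_{\rm stdy}(1, M_2^{(\theta_1)}) \to 0$ whereas $\hat{\theta}_{25}(\theta_1) \to \arcsin(-v_2) > 0$; thus $g(\theta^{+-}) < 0$ and $g$ has a unique zero $\theta^{\rm d} \in (0,\theta^+) \subset (0,\theta^{\rm cr})$. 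If $\theta^+ = \theta^{\rm cr}$, then either $g > 0$ throughout $(0,\theta^{\rm cr})$ and we set $\theta^{\rm d} \defeq \theta^{\rm cr}$, or $g$ has its unique zero $\theta^{\rm d} \in (0,\theta^{\rm cr})$. In all cases $\theta^{\rm d} = \theta^{\rm d}(\gamma,v_2) \in (0,\theta^{\rm cr}]$ is uniquely determined, $g > 0$ on $(0,\theta^{\rm d})$ and $g < 0$ on $(\theta^{\rm d},\theta^+)$; combined with the first paragraph this gives the stated count of states $(5)$ for $\theta_1 \in (0,\theta^{\rm cr})$, together with the coincidence $\rho_5^{\rm wk} = \rho_5^{\rm sg}$ at $\theta^{\rm d}$ when $\theta^{\rm d} < \theta^{\rm cr}$. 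The first inequality in \eqref{eq:state2SupersonicCondition} is then immediate from Lemma~\ref{lem:MonotonicityOfMach2}, and $\abs{D\varphi_2(P_0^1|_{\theta_1=\theta^{\rm d}})} = M_2^{(\theta^{\rm d})} > 1$ since $\theta^{\rm d} \le \theta^+$ and, in the boundary case $\theta^{\rm d} = \theta^+ = \theta^{\rm cr}$, the inequality $g \ge 0$ near $\theta^{\rm cr}$ would fail if the limit of $M_2^{(\theta_1)}$ were $1$ (because $\theta^{\rm d}_{\rm stdy}(1,1) = 0 < \hat{\theta}_{25}$).

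For the remaining properties of the two branches I would argue as follows. The steady data $(\gamma, M_2^{(\theta_1)}, \hat{\theta}_{25}(\theta_1))$ depend smoothly on $\theta_1$ throughout $(0,\theta^{\rm cr})$, and on each branch the linearization of \eqref{eq:state5RHcondition1}--\eqref{eq:state5slipBC} is nonsingular away from the detachment value, so the implicit function theorem yields $(\rho_5^{\rm wk}, O_5^{\rm wk}) \in C^\infty((0,\theta^{\rm d}))$ and $(\rho_5^{\rm sg}, O_5^{\rm sg}) \in C^\infty((0,\theta^{\rm d}))$, the two branches meeting in a simple fold at $\theta_1 = \theta^{\rm d}$, which gives continuity up to $\theta^{\rm d}$ and the existence of $\lim_{\theta_1 \to \theta^{{\rm d}-}}(\rho_5^{\rm wk},\rho_5^{\rm sg},O_5^{\rm wk},O_5^{\rm sg})$. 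Near $\theta_1 = 0$ the normalization degenerates ($\xi^{P_0^1} \to \infty$), and I would re-parametrize the steady weak branch by $(M_2^{(\theta_1)})^{-2}$, which by \eqref{CompatiCondiGTI}, \eqref{P0-Pos}, and \eqref{eq:appendixMachNumberState12} extends to a smooth function of $\theta_1$ on $[0,\theta^{\rm cr})$ vanishing to second order at $\theta_1 = 0$; the hypersonic rescaling of the oblique-shock relations shows that the steady weak-reflection state extends smoothly up to $(M_2^{(\theta_1)})^{-2} = 0$, where the reflected shock becomes the normal shock $S_0$ of \S\ref{SubSec-202NormalShock}, so $(\rho_5^{\rm wk},O_5^{\rm wk}) \in C^\infty([0,\theta^{\rm d}))$ with $\rho_5^{\rm wk} \to \rho_0$ by \eqref{eq:normal-reflection-density-relation} and $O_5^{\rm wk} = (u_5^{\rm wk},0) \to \mathbf{0}$. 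Since $v_5 = 0$ and $\varphi_5(P_0^1) = \varphi_2(P_0^1) = -\tfrac12(\xi^{P_0^1})^2$, one has $u_5^{\rm wk}\,\xi^{P_0^1} = -k_5^{\rm wk} \to -v_2\eta_0$ by \eqref{Sol-NormalReflec0-RH}, giving the last limit in~(i). Finally, \eqref{eq:localtheorystate5-sonic-centres} is read off from the steady reflection: $v_5 = 0$ is \eqref{eq:state5slipBC}, while $u_5^{\rm wk},u_5^{\rm sg} > 0$ and $u_5 < \xi^{P_0^1}$ (equivalently $\abs{D\varphi_5(P_0^1)} = \xi^{P_0^1} - u_5 > 0$, the downstream pseudo-velocity still pointing toward the origin) and $u_5^{\rm wk} < u_5^{\rm sg}$ (the strong shock compresses more, hence slows the flow more at $P_0^1$) are the steady sonic-centre facts transcribed through the reduction of the first paragraph.

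For part~(iii) I would repeat the monotonicity scheme with $\tilde{h}(\theta_1) \defeq \theta^{\rm s}_{\rm stdy}(1, M_2^{(\theta_1)}) - \hat{\theta}_{25}(\theta_1)$ on $(0,\theta^{\rm d})$: it is strictly decreasing (again by Lemmas~\ref{lem:MonotonicityOfMach2} and~\ref{lem:MonotonicityOfSteadyAngles}), with $\tilde{h}(0^+) = \arctan\sqrt{\tfrac{2}{\gamma-1}} > 0$ by Lemma~\ref{lem:MonotonicityOfSteadyAngles}, while $\tilde{h}(\theta^{\rm d}) = \theta^{\rm s}_{\rm stdy}(1,M_2^{(\theta^{\rm d})}) - \theta^{\rm d}_{\rm stdy}(1,M_2^{(\theta^{\rm d})}) < 0$ whenever $\theta^{\rm d} < \theta^{\rm cr}$ (using $g(\theta^{\rm d}) = 0$ and the classical strict inequality $\theta^{\rm s}_{\rm stdy} < \theta^{\rm d}_{\rm stdy}$ for supersonic upstream; cf.\ \cite[\S7]{ChenFeldman-RM2018}). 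Hence $\tilde{h}$ has a unique zero $\theta^{\rm s} \in (0,\theta^{\rm d})$ if $\theta^{\rm d} < \theta^{\rm cr}$, and otherwise either such a zero in $(0,\theta^{\rm cr})$ or $\theta^{\rm s} \defeq \theta^{\rm cr} = \theta^{\rm d}$; in all cases $\theta^{\rm s} \in (0,\theta^{\rm d}]$, and, since on the steady weak branch the downstream state is pseudo-supersonic exactly when $\hat{\theta}_{25} < \theta^{\rm s}_{\rm stdy}$, i.e.\ when $\theta_1 < \theta^{\rm s}$, this gives the dichotomy for $\abs{D\varphi_5^{\rm wk}(P_0^1)}$ versus $c_5^{\rm wk}$, with equality precisely at $\theta_1 = \theta^{\rm s}$ when $\theta^{\rm s} < \theta^{\rm cr}$. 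The main obstacle I anticipate is the faithful transcription of the steady oblique-shock local theory through the reduction of the first paragraph — in particular the simple-fold structure at $\theta^{\rm d}$ and the sonic-centre inequalities in~\eqref{eq:localtheorystate5-sonic-centres} — together with the degenerate endpoint $\theta_1 \to 0^+$, where establishing the $C^\infty$ extension requires the hypersonic rescaling and one must follow the product $u_5^{\rm wk}\,\xi^{P_0^1}$ rather than $u_5^{\rm wk}$ alone; the case analysis when $\theta^+$ or $\theta^{\rm d}$ coincides with $\theta^{\rm cr}$ also requires care.
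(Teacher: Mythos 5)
Your proposal is correct and follows essentially the same route as the paper: the paper likewise reduces the structural assertions (weak/strong states, their densities, sonic centres, and the continuity/smoothness claims of (i)--(ii)) to the local theory of \cite[Theorem~7.1.1, \S7.1, \S7.4]{ChenFeldman-RM2018}, and determines $\theta^{\rm d}$ and $\theta^{\rm s}$ exactly as you do, from the strict monotonicity of $\hat{\theta}_{25}$ versus $\theta^{\rm d}_{\rm stdy}(1,M_2^{(\theta_1)})$ and $\theta^{\rm s}_{\rm stdy}(1,M_2^{(\theta_1)})$ (Lemmas~\ref{lem:MonotonicityOfMach2} and~\ref{lem:MonotonicityOfSteadyAngles}), the limits $0<\tfrac{\pi}{2}$ as $\theta_1\to0^+$, and a $\min$/$\inf$ definition covering the case where the defining equation has no solution in the admissible range. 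Your additional detail (the implicit-function/fold discussion at $\theta^{\rm d}$ and the hypersonic rescaling at $\theta_1\to0^+$) is simply a fleshing-out of what the paper delegates to the cited reference.
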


 \noindent
\textit{Proof.}
The proof
follows similarly to~\cite[Theorem~7.1.1]{ChenFeldman-RM2018}, once $\theta^{\rm d}$ and $\theta^{\rm s}$ are determined.
Thus, it remains to show the existence and uniqueness of $\theta^{\rm d}$ and $\theta^{\rm s}$.

As discussed above, \(\hat{\theta}_{25}(\theta_1)\) is strictly increasing with \(\theta_1\in (0 ,\theta^{\rm cr})\),
whilst \(\theta_{\rm stdy}^{\rm d}(1,|D\varphi_2(P_0^1)|)\) is strictly decreasing with \(\theta_1 \in (0,\theta^+)\).
Furthermore, Lemmas~\ref{lem:MonotonicityOfMach2} and~\ref{lem:MonotonicityOfSteadyAngles} show that
\begin{align*}
    \lim_{\theta_1\to 0^+} \hat{\theta}_{25}(\theta_1)
    = 0 \,\,
    < \,\, \tfrac\pi2
    = \lim_{\theta_1\to 0^+} \theta_{\rm stdy}^{\rm d}(1,|D\varphi_2(P_0^1)|)\,.
\end{align*}
Then it follows that there exists at most one solution \(\vartheta^{\rm d} \in (0, \theta^+)\) of the equation:
\begin{align} \label{eq:uniquenessOfThetaD}
    \hat{\theta}_{25}(\vartheta^{\rm d}) = \theta_{\rm stdy}^{\rm d}(1,|D\varphi_2(P_0^1|_{\theta_1 = \vartheta^{\rm d}})|)\,.
\end{align}
We define the detachment angle \(\theta^{\rm d} \in (0 , \theta^+]\) by
\begin{align*}
\theta^{\rm d} \defeq
\min\big\{\theta^+, \,\inf\{ \vartheta^{\rm d} \in (0,\theta^+) \,:\,
    \text{\(\vartheta^{\rm d}\) satisfies~\eqref{eq:uniquenessOfThetaD}}\} \big\}\,.
\end{align*}
In this sense, \(\theta^{\rm d}\) may refer to either the conventional detachment angle \(\vartheta^{\rm d}\)
that satisfies equation~\eqref{eq:uniquenessOfThetaD} when such a solution exists, or angle \(\theta^+\) otherwise.

The existence and properties of the sonic angle \(\theta^{\rm s}\) contained in statement~\eqref{Prop:localtheorystate5:6}
of Proposition~\ref{Prop:localtheorystate5} follow from a similar argument as above.
In particular, symbol \(\theta^{\rm s}\) may refer to either the conventional
sonic angle \(\vartheta^{\rm s} \in (0,\theta^{\rm d}]\) which should satisfy
\begin{align} \label{eq:uniqueness-of-theta-s}
    \hat{\theta}_{25}(\vartheta^{\rm s}) = \theta^{\rm s}_{\rm stdy}(1, \abs{D\varphi(P_0^1|_{\theta_1 = \vartheta^{\rm s}})})\,;
\end{align}
or we simply define \(\theta^{\rm s} = \theta^{\rm d}\) if no such solution exists.
That is, we define
\begin{flalign*}
&&
\theta^{\rm s} \defeq
\min\big\{\theta^{\rm d},\, \inf\{\vartheta^{\rm s} \in (0,\theta^{\rm d}) \,:\,  \text{\(\vartheta^{\rm s}\) satisfies~\eqref{eq:uniqueness-of-theta-s}} \}\big\}\,.
&& \mathllap{\qed}
\end{flalign*}

\begin{remark}
\label{remark:local-theory}
We have the following remarks about {\rm Proposition~\ref{Prop:localtheorystate5}:}
\begin{enumerate}[{\rm (i)}]
\item \label{remark:local-theory-item-i}
Fix \(\gamma \geq 1\).
For any $v_2\in(\vmin,0),$ let \(\theta^{\rm cr}\) be the critical angle given by~\eqref{Eq:DefThetaVac}{\rm,}
and let \(v_2^{\rm s},v_2^{\rm d}\in(\vmin,0)\) be the constants given by {\rm Lemma~\ref{Lem:Defv2d-v2s}}.
Then
\begin{align*}
    \theta^{\rm s} = \theta^{\rm d} = \theta^{\rm cr} \quad &\Longleftrightarrow \quad v_2 \in (v_{\min}, v_2^{\rm s}]\,,\\
    \theta^{\rm s} < \theta^{\rm d} = \theta^{\rm cr} \quad &\Longleftrightarrow \quad v_2 \in (v_2^{\rm s}, v_2^{\rm d}]\,,\\
    \theta^{\rm s} < \theta^{\rm d} < \theta^{\rm cr} \quad &\Longleftrightarrow \quad v_2 \in (v_2^{\rm d}, 0)\,.
\end{align*}

\item
\label{remark:local-theory-item-ii}
By the symmetry of the problem, the conclusions of {\rm Proposition~\ref{Prop:localtheorystate5}} immediately
apply to the regular reflection of \(S_{32}\) at \(P_0^2\) after exchanging \((\theta_1,P_0^1,(5))\) with \((\theta_2,P_0^2,(6)),\)
and reversing the inequalities in~\eqref{eq:localtheorystate5-sonic-centres}.
In particular, the same detachment angle $\theta^{\rm{d}}(\gamma, v_2)$ and
sonic angle $\theta^{\rm{s}}(\gamma, v_2)$ apply to the incident angle $\theta_2$ and state \((6)\).

\item
\label{remark:local-theory-item-iii}
The main difference between {\rm Proposition~\ref{Prop:localtheorystate5}} and~\cite[Theorem~7.1.1]{ChenFeldman-RM2018}
is that we assert the uniqueness of solutions to~\eqref{eq:uniquenessOfThetaD} and~\eqref{eq:uniqueness-of-theta-s}{\rm,}
although we cannot in general
assert the existence of such solutions due to the restriction \(\theta_1 \in (0,\theta^{\rm cr})\){\rm ;}
whereas~\cite[Theorem~7.1.1]{ChenFeldman-RM2018} asserts the existence,
but not the uniqueness of such solutions.
\end{enumerate}
\end{remark}

\begin{definition}[Admissible parameters and states \((5)\) and \((6)\)] \label{def:state5andstate6}
Fix \(\gamma \geq 1\) and \(v_2 \in (v_{\min},0)\).
Let ${\theta}^{\rm{s}}\defeq{\theta}^{\rm{s}}(\gamma,v_2)$ and ${\theta}^{\rm{d}} \defeq {\theta}^{\rm{d}}(\gamma,v_2)$
be given by {\rm Proposition~\ref{Prop:localtheorystate5}}.
Denote the set of admissible parameters
\begin{equation*}
\Theta \defeq [0,\theta^{\rm d})^2 \setminus \{\bm{0}\}
\subseteq [0,\theta^{\rm cr}]^2\,.
\end{equation*}
Hereafter, for \(\btheta \in \cl{\Theta},\)  states \((5)\) and \((6)\) refer to the weak reflection states \((5)\) and \((6)\)
from {\rm Proposition~\ref{Prop:localtheorystate5}}.
For \(j = 5, 6,\) we write \((\rho_j,c_j,u_j,v_j) \defeq (\rho_j^{\rm wk},c_j^{\rm wk},u_j^{\rm wk},v_j^{\rm wk}),\) \(O_j \defeq (u_j,v_j),\)
and \(\varphi_{j} \defeq \varphi_j^{\rm wk}\).
The pseudo-potential of state \((j)\) is given by
\begin{equation} \label{eq:def-weak-state-5-6}
    \varphi_j (\bm{\xi}) = -\frac12 \abs{\bm{\xi}}^2 + (u_j,v_j) \cdot \bm{\xi} + v_2 a_{2j}\,,
\end{equation}
where $a_{2j} \defeq - \frac{1}{v_2} \xi^{P_0^{j-4}} u_j$ for $\theta_{j-4} > 0$,
and $a_{2j} \defeq \eta_0$ when $\theta_{j-4} = 0$ $($cf. {\rm Proposition 2.6(i)}$)$.

Define the reflected shocks \(S_{25}\) and \(S_{26}\) by
\begin{equation*}
S_{2j} \defeq
\big\{\bm{\xi}\in\cl{\mathbb{R}^2_+} \,:\, \varphi_j(\bm{\xi}) = \varphi_2(\bm{\xi})\big\}
= \big\{ \bm{\xi} \in \cl{\mathbb{R}^2_+} \,:\,
\eta = \xi \tan \theta_{2j}   +   a_{2j}
\big\} \qquad\text{for}\;\; j=5,6\,,
\end{equation*}
where \(\theta_{25}\) and \(\theta_{26}\) are the angles between the reflected shocks \(S_{25}\) and \(S_{26}\)
and the positive \(\xi\)--axis, given respectively by
\begin{align} \label{eq:def-reflected-shock-angle-25-26}
    \theta_{25} \defeq \arctan{\big(\frac{u_5}{v_2}\big)} \in (\tfrac{\pi}{2},\pi]\,,
    \qquad \theta_{26} \defeq \arctan{\big( \frac{u_6}{v_2} \big)} \in [0 ,\tfrac{\pi}{2} )\,.
\end{align}
The unit tangent vectors to \(S_{25}\) and \(S_{26}\) are given respectively by
\begin{align} \label{eq:tangent-vectors-e25-e26}
    \bm{e}_{S_{25}}  \defeq (\cos\theta_{25}, \sin\theta_{25})\,, \qquad
\bm{e}_{S_{26}} \defeq (\cos\theta_{26}, \sin\theta_{26})\,.
\end{align}
\end{definition}

It follows from
{\rm Proposition~\ref{Prop:localtheorystate5}\eqref{Prop:localtheorystate5:continuity-properties}}
and {\rm Definition~\ref{def:state5andstate6}} that
\(\varphi_j = \varphi_0\), and $\theta_{2j}=(6-j)\pi$ when \(\theta_{j-4} = 0\) for \(j = 5,6\).

\begin{lemma}[Further properties of states \((5)\) and \((6)\)] \label{lem:properties-of-state-5-and-6}
Fix \(\gamma \geq 1\) and \(v_2 \in (v_{\min}, 0 )\).
Let \(\rho_5\) and \(\rho_6\) be the constant densities of states \((5)\) and \((6)\) given
by {\rm Proposition~\ref{Prop:localtheorystate5}}{\rm ,} and let \(\theta_{25}\) and \(\theta_{26}\)
be the reflected shock angles given by~\eqref{eq:def-reflected-shock-angle-25-26}.
Then, for \(j = 5,6,\) there exists a constant \(\delta_{5,6} > 0\) depending only on \((\gamma, v_2)\)
such that, for any \(\btheta \in \cl{\Theta},\)
\begin{align}
 &1 + \delta_{5,6} \leq \rho_j \leq \delta_{5,6}^{-1}\,,\label{eq:density-state-5-6-uniform-bounds} \\
 &\delta_{5,6} \leq \abs{\theta_{2j} - \tfrac\pi2}\,.\label{eq:reflected-shock-angle-upper-bound}
\end{align}
Moreover,  for any \(\bar{\theta} \in (0,\theta^{\rm d})\), there exists
\(\delta_{5,6}^{(\bar{\theta})} > 0\)
depending only on \((\gamma, v_2, \bar{\theta})\) such that
\begin{align}
\label{eq:reflected-shock-angle-lower-bound}
    \delta_{5,6}^{(\bar{\theta})} &< \abs{\theta_{2j} - (6-j)\pi}
    \qquad \mbox{whenever $\btheta \in \cl{\Theta} \cap \{\theta_{j-4} > \bar{\theta}\}$}\,.
\end{align}
\end{lemma}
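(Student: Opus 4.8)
The plan is to reduce all four estimates to elementary compactness arguments on a one-dimensional interval, exploiting the continuity and strict positivity already recorded in Proposition~\ref{Prop:localtheorystate5}. First I note that the data of state $(5)$ and of the reflected shock $S_{25}$ — in particular $\rho_5$, $O_5 = (u_5,0)$ (recall $v_5 \equiv 0$ by~\eqref{eq:localtheorystate5-sonic-centres}), and $\theta_{25}$ — depend only on $(\gamma, v_2, \theta_1)$, and not on $\theta_2$; by Remark~\ref{remark:local-theory}\eqref{remark:local-theory-item-ii} the analogous statement holds for state $(6)$ and $S_{26}$ with $\theta_1$ replaced by $\theta_2$. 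Moreover $\cl{\Theta} = [0,\theta^{\rm d}]^2$, since the closure of $[0,\theta^{\rm d})^2$ is $[0,\theta^{\rm d}]^2$ and deleting the non-isolated point $\bm{0}$ does not change it; this set is compact because $\theta^{\rm d} \leq \theta^{\rm cr} \leq \tfrac{\pi}{2} < \infty$. Hence it suffices to prove each bound uniformly as $\theta_1$ ranges over the compact interval $[0,\theta^{\rm d}]$; the case of state $(6)$ is then handled identically after interchanging the roles of the endpoints $0$ and $\pi$ of the angle $\theta_{2j}$, and the final constants are obtained by taking the minimum of the two constants so produced.

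Next, by Proposition~\ref{Prop:localtheorystate5}\eqref{Prop:localtheorystate5:continuity-properties}, the maps $\theta_1 \mapsto \rho_5$ and $\theta_1 \mapsto u_5$ are continuous on $[0,\theta^{\rm d}]$; since $v_2 < 0$ is fixed, $u_5/v_2$ then ranges over a bounded subset of $(-\infty,0]$, so $\theta_1 \mapsto \theta_{25} = \arctan(u_5/v_2) \in (\tfrac{\pi}{2},\pi]$ is continuous on $[0,\theta^{\rm d}]$ as well. For~\eqref{eq:density-state-5-6-uniform-bounds}, I would argue that $\rho_5 > 1$ on all of $[0,\theta^{\rm d}]$: on $(0,\theta^{\rm d})$ this is the entropy condition from Proposition~\ref{Prop:localtheorystate5}; at $\theta_1 = 0$ one has $\rho_5 = \rho_0 > 1$ by~\S\ref{SubSec-202NormalShock}; and at $\theta_1 = \theta^{\rm d}$ the state $(5)$ is the reflection state attached to state $(2)$, which is strictly pseudo-supersonic there by~\eqref{eq:state2SupersonicCondition}, so it is a genuine (nonzero-strength) shock and $\rho_5 > 1$ by the local reflection theory of~\cite{ChenFeldman-RM2018}. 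Since $\theta_1 \mapsto \rho_5$ is continuous on the compact set $[0,\theta^{\rm d}]$ with values in $(1,\infty)$, it attains a minimum strictly above $1$ and a finite maximum, which yields~\eqref{eq:density-state-5-6-uniform-bounds} with a constant depending only on $(\gamma,v_2)$ (through $\theta^{\rm d} = \theta^{\rm d}(\gamma,v_2)$ and the function $\theta_1 \mapsto \rho_5$).

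For~\eqref{eq:reflected-shock-angle-upper-bound}, since $v_2 \neq 0$ the slope $u_5/v_2$ is finite, so $\theta_{25} \neq \tfrac{\pi}{2}$ throughout $[0,\theta^{\rm d}]$; thus $\theta_1 \mapsto \theta_{25} - \tfrac{\pi}{2}$ is continuous and strictly positive on the compact set $[0,\theta^{\rm d}]$, hence bounded below by a positive constant. For~\eqref{eq:reflected-shock-angle-lower-bound}, I would fix $\bar{\theta} \in (0,\theta^{\rm d})$ and work on $\theta_1 \in [\bar{\theta},\theta^{\rm d}]$: there $u_5 > 0$ — on $(0,\theta^{\rm d})$ by~\eqref{eq:localtheorystate5-sonic-centres}, and at $\theta_1 = \theta^{\rm d}$ because the turning angle $\hat{\theta}_{25}(\theta^{\rm d})$ is strictly positive ($\hat{\theta}_{25}$ is strictly increasing with $\hat{\theta}_{25}(0^+) = 0$, as established in~\S\ref{SubSec-203DetachAngle}), so the reflection there is not the normal one and, by continuity from the left where $u_5 > 0$, one gets $u_5(\theta^{\rm d}) > 0$. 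Consequently $\theta_{25} < \pi$ on $[\bar{\theta},\theta^{\rm d}]$, so $\theta_1 \mapsto \pi - \theta_{25}$ is continuous and strictly positive on this compact set and is bounded below by a positive constant $\delta_{5,6}^{(\bar{\theta})}$ depending only on $(\gamma,v_2,\bar{\theta})$.

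The single delicate point — the only place where a genuine appeal to the local reflection theory of~\cite{ChenFeldman-RM2018} is needed rather than pure compactness — is the endpoint $\theta_1 = \theta^{\rm d}$: one must know that the reflection state does not degenerate there, that is, that it still satisfies $\rho_5 > 1$ and (for $\theta_1 > 0$) $u_5 > 0$. This is precisely what the strict inequality~\eqref{eq:state2SupersonicCondition} is designed to guarantee, by keeping state $(2)$ strictly pseudo-supersonic at $P_0^1$ up to and including $\theta_1 = \theta^{\rm d}$; combined with the continuity in Proposition~\ref{Prop:localtheorystate5}\eqref{Prop:localtheorystate5:continuity-properties}, it rules out any degeneration, and the remaining arguments are routine.
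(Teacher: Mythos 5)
Your proof is essentially correct, but it takes a genuinely different route from the paper's. Where you rely on pointwise strictness plus continuity on the compact interval $[0,\theta^{\rm d}]$ (from Proposition~\ref{Prop:localtheorystate5}\eqref{Prop:localtheorystate5:continuity-properties}), the paper uses quantitative identities that give explicit constants with no endpoint case analysis: for the lower bound in~\eqref{eq:density-state-5-6-uniform-bounds} it uses the Rankine--Hugoniot relation $\ell(\rho_5)=|(u_2,v_2)-(u_5,v_5)|=\sqrt{v_2^2+u_5^2}\geq|v_2|>0$ together with the strict monotonicity of $\ell$ on $[1,\infty)$, which immediately yields $\rho_5\geq\ell^{-1}(|v_2|)>1$ uniformly; for~\eqref{eq:reflected-shock-angle-upper-bound} it uses the shock-polar inequality $\tfrac{\pi}{2}+\hat{\theta}_{25}<\theta_{25}\leq\pi$ combined with the monotonicity of $\hat{\theta}_{25}$ from Lemma~\ref{lem:MonotonicityOfMach2}; and for~\eqref{eq:reflected-shock-angle-lower-bound} it invokes the strict monotonicity of $\theta_1\mapsto\theta_{25}$ (Lemma~\ref{lem:monotonicityOfTheta5}). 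Your compactness argument for~\eqref{eq:reflected-shock-angle-upper-bound} (boundedness of $u_5$ forces $\tan\theta_{25}=u_5/v_2$ to stay in a compact subset of $(-\infty,0]$, hence $\theta_{25}$ away from $\tfrac{\pi}{2}$) is, if anything, simpler than the paper's two-case split. The trade-off is that your route must establish strictness at the endpoints, which the paper's identities sidestep.

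The one step you should tighten is the claim $u_5(\theta^{\rm d})>0$ in the proof of~\eqref{eq:reflected-shock-angle-lower-bound}: "by continuity from the left where $u_5>0$" only yields $u_5(\theta^{\rm d})\geq 0$, which is not enough. The preceding remark — that $\hat{\theta}_{25}(\theta^{\rm d})>0$ excludes a normal reflection — is the correct idea but needs to be made explicit: if $u_5=0$ then $\theta_{25}=\pi$ and the line $S_{25}$, being horizontal and passing through $P_0^1\in L_{\rm sym}$, would coincide with $L_{\rm sym}$, so the flow would not turn across the shock, contradicting the fact that $D\varphi_2(P_0^1)$ makes the strictly positive angle $\hat{\theta}_{25}(\theta^{\rm d})$ with the wall while $D\varphi_5(P_0^1)$ must be parallel to it by~\eqref{eq:state5slipBC}. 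Alternatively, you can avoid the endpoint entirely by citing the strict monotonicity of $\theta_{25}$ in $\theta_1$ as the paper does. The analogous endpoint issue for $\rho_5(\theta^{\rm d})>1$ is handled more convincingly in your write-up (the detachment reflection is a genuine shock since state $(2)$ remains strictly pseudo-supersonic by~\eqref{eq:state2SupersonicCondition}), though here too the paper's identity $\ell(\rho_5)\geq|v_2|$ is the cleaner and fully uniform argument.
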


\begin{proof}
We prove the results only for state \((5)\), since the proof for state \((6)\) is similar.

Fix \(\gamma \geq 1\) and \(v_2 \in (v_{\min},0)\).
By
Proposition~\ref{Prop:localtheorystate5}(\ref{Prop:localtheorystate5:continuity-properties}),
we see that \(\rho_5\in C(\cl{\Theta})\), so that the uniform upper bound
in~\eqref{eq:density-state-5-6-uniform-bounds} is clear.
For the uniform lower bound, the entropy condition~\eqref{eq:state5entropycondition1} gives \(\rho_5 > 1\).
Furthermore, the Rankine--Hugoniot conditions between states~\((2)\) and \((5)\) imply that
\begin{equation}\label{Eq:LengthO2O5CompatiCondi}
\ell(\rho_5) = \big|(u_2,v_2) - (u_5,v_5)\big|
= \sqrt{v_2^2 + u_5^2} \geq |v_2| > 0\, \qquad \text{for any $\btheta \in \cl{\Theta}$}\,,
\end{equation}
with  \(\ell(\cdot)\) given by~\eqref{eq:sonic-centre-relations}.
In particular, \(\ell(\cdot)\) is an increasing function on \([1,\infty)\) with \(\ell(1) = 0\),
and hence~\eqref{Eq:LengthO2O5CompatiCondi} implies the existence of a small constant
\(\delta^{(1)}_{5,6} > 0\), depending only on \((\gamma, v_2)\),
such that \(\rho_5 > 1 + \delta^{(1)}_{5,6}\) for any \(\btheta \in \cl{\Theta}\).

Next, we prove~\eqref{eq:reflected-shock-angle-upper-bound}.
From~\eqref{eq:def-reflected-shock-angle-25-26} and the geometric properties of the shock polar curve for 2-D
steady potential flow, we have
\begin{align} \label{eq:simple-reflection-angle-estimate}
    \tfrac{\pi}{2} + \hat{\theta}_{25} < \theta_{25} \leq \pi \,
    \qquad\, \text{for any $\btheta \in \cl{\Theta}$}\,,
\end{align}
with \(\hat{\theta}_{25}\) given by~\eqref{eq:state5SelfSimilarTurningAngle}.
From Proposition~\ref{Prop:localtheorystate5}(i) and~\eqref{eq:def-reflected-shock-angle-25-26},
we see that \(\theta_{25} \to \pi^- \) as \(\theta_1 \to 0^+\).
By continuity, there exists a small constant \(\delta_{5,6}^{(2)} > 0\) depending only on \((\gamma, v_2)\)
such that \(\theta_{25} > \frac{3\pi}{4}\) whenever \(\theta_1 \in [ 0 , \delta_{5,6}^{(2)} ) \).
On the other hand, by \eqref{eq:state5SelfSimilarTurningAngle} and Lemma~\ref{lem:MonotonicityOfMach2},
there exists a constant \(\delta_{5,6}^{(3)} > 0 \) depending only on \((\gamma, v_2)\)
such that \(\hat{\theta}_{25} > \delta_{5,6}^{(3)}\) whenever \( \theta_1 \in [\delta_{5,6}^{(2)}, \theta^{\rm d}]\).
Together with~\eqref{eq:simple-reflection-angle-estimate}, we conclude
that \(\theta_{25} > \min\{\frac{3\pi}{4}, \frac{\pi}{2} + \delta_{5,6}^{(3)}\}\) for all \(\theta_1 \in [0, \theta^{\rm d}]\),
which leads to~\eqref{eq:reflected-shock-angle-upper-bound}.

Finally, bound~\eqref{eq:reflected-shock-angle-lower-bound} follows directly
from~\eqref{eq:simple-reflection-angle-estimate} and Lemma~\ref{lem:monotonicityOfTheta5},
which states that \(\theta_{25}\) is a strictly decreasing function of \(\theta_1 \in [0, \theta^{\rm d}]\).
\end{proof}

\subsubsection{Configurations for four-shock interactions}
\label{SubSec-204Configs}
The four-shock interaction configurations are split into three genuinely different cases,
depending on parameters \(\btheta \in \Theta\) and the critical angles \(\theta^{\rm s}\) and \(\theta^{\rm d}\).
We briefly describe the expected structure of each configuration and introduce some important notation.\\

\noindent{\mCase{I} {\it Supersonic-supersonic reflection configuration.}} \label{SubSubSec20401-case1}
When \(\btheta \in (0,{\theta}^{\rm s})^2\), the supersonic-supersonic reflection configuration is expected; see Fig.~\ref{fig4-SupSupRefConfig}.
In this case, both states \((5)\) and \((6)\) are pseudo-supersonic at the reflection points $P_0^1$ and $P_0^2$,
respectively.

\begin{figure}
\centering
\includegraphics[width=0.8\textwidth,trim={0 25 0 8},clip]{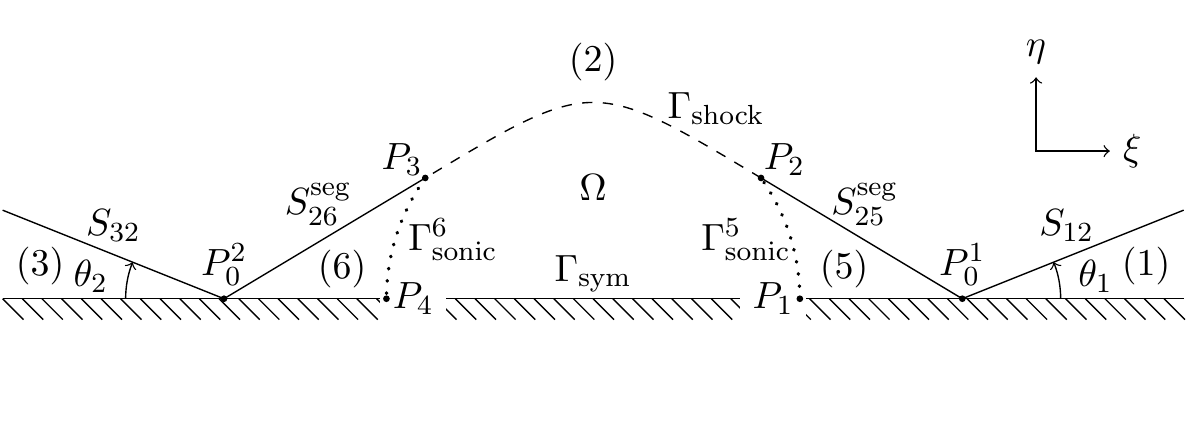}
\caption{\mCase{I} The supersonic-supersonic reflection configuration.} \label{fig4-SupSupRefConfig}
\end{figure}

\smallskip
\noindent{\mCase{II} {\it Supersonic-subsonic reflection configuration.}}\label{SubSubSec20402-case2}
When \(\btheta \in [\theta^{\rm{s}},\theta^{\rm{d}}) \times (0,\theta^{\rm{s}})\),
or \(\btheta \in (0,\theta^{\rm{s}}) \times [\theta^{\rm{s}},\theta^{\rm{d}})\), the supersonic-subsonic reflection configuration is expected;
see Fig.~\ref{fig5-SupSubRefConfig} for the case: \(\btheta \in [\theta^{\rm{s}},\theta^{\rm{d}}) \times (0,\theta^{\rm{s}})\).
In this case, either state \((5)\) is pseudo-sonic/pseudo-subsonic at $P_0^1$ and state \((6)\) is pseudo-supersonic at $P_0^2$, or vice versa.

\begin{figure}
\centering
\includegraphics[width=0.8\textwidth,trim={0 25 0 8},clip]{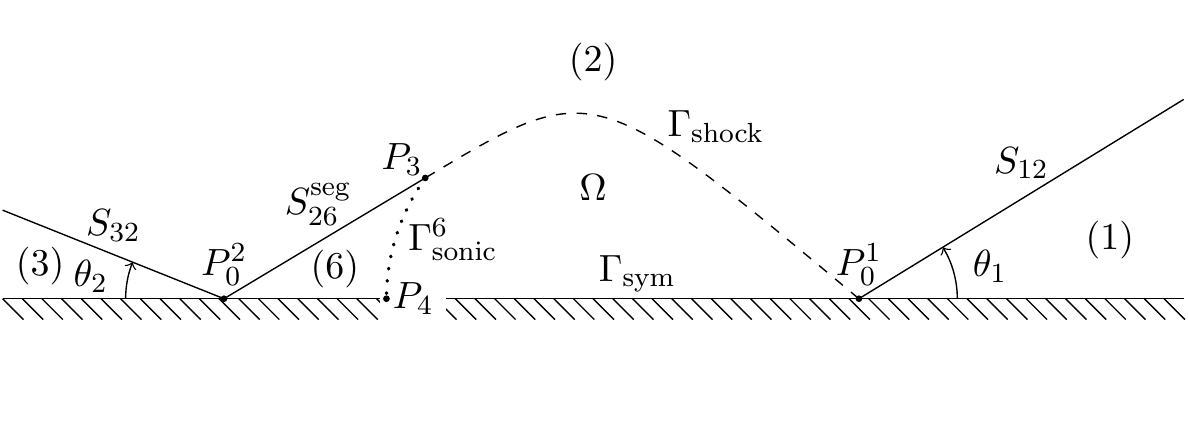}
\caption{\mCase{II} The supersonic-subsonic reflection configuration in the case:
\(\btheta \in [\theta^{\rm{s}},\theta^{\rm{d}}) \times (0,\theta^{\rm{s}})\), {\it i.e.},~state \((5)\) is pseudo-sonic/pseudo-subsonic at \(P_0^1\).}
\label{fig5-SupSubRefConfig}
\end{figure}

\smallskip
\noindent{\mCase{III} {\it Subsonic-subsonic reflection configuration.}}
When \(\btheta \in [\theta^{\rm s},\theta^{\rm d})^2\), the subsonic-subsonic reflection configuration is expected;
see Fig.~\ref{fig7-SubSubRefConfig}. In this case, both states $(5)$ and $(6)$ are pseudo-subsonic at the reflection points $P_0^1$ and $P_0^2$, respectively.

\smallskip
It follows from Remark~\ref{remark:local-theory}\eqref{remark:local-theory-item-i} that the configurations of \textbf{Case~II} and \textbf{Case~III}
are not possible unless \(v_2 \in (v_2^{\rm s},0)\), for constant \(v_2^{\rm s}\) from Lemma~\ref{Lem:Defv2d-v2s}.

\begin{figure}
\centering
\includegraphics[width=0.8\textwidth,trim={0 25 0 8},clip]{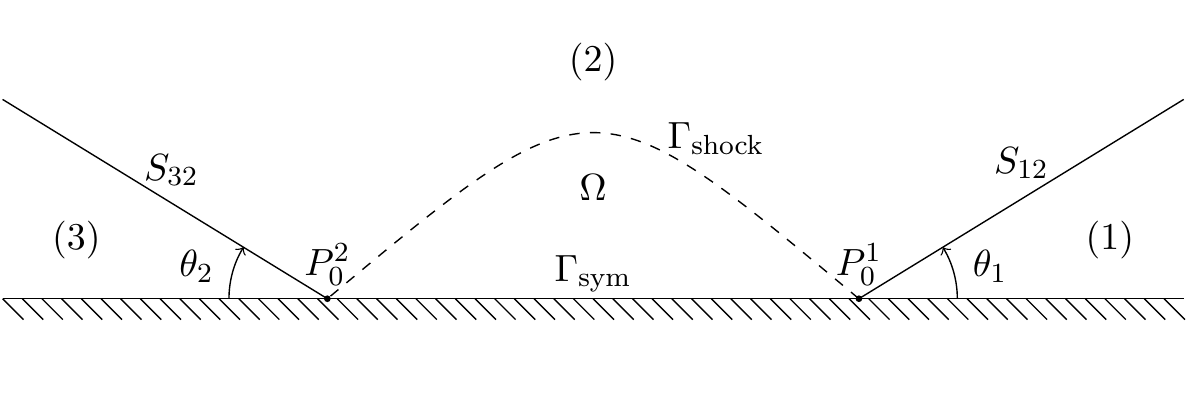}
\caption{\mCase{III} The subsonic-subsonic reflection configuration.} \label{fig7-SubSubRefConfig}
\end{figure}

\begin{definition}[The sonic boundaries and intersection points]
\label{Def:GammaSonicAndPts}
Fix $\gamma\geq1$ and $v_2\in(v_{\min},0)$.
For any $\btheta \in\cl{\Theta}${\rm ,} let $Q_5$ and $Q_6$ be the orthogonal projections of \(O_5\) and \(O_6\) onto \(S_{25}\) and \(S_{26},\) respectively.
When \(\theta_1 \in (0,\theta^{\rm s}),\) the right sonic boundary and intersection points are given by
\begin{equation*}
\begin{aligned}
&\Gamma_{\rm sonic}^5\defeq
\big\{\bm{\xi}\in\partial B_{c_5}(O_5) \cap \overline{\mathbb{R}^2_+} \,:\, \xi^{Q_5} \leq \xi \leq \xi^{P_0^1},\, \varphi_5(\bm{\xi}) \leq \varphi_2(\bm{\xi})\big\}\,,\\
&\{P_1\} \defeq \Gamma_{\rm sonic}^5 \cap L_{\rm sym}\,, \quad
\{P_2\} \defeq \Gamma_{\rm sonic}^5 \cap S_{25}\,,
\end{aligned}
\end{equation*}
whilst, for \(\theta_1 \in [\theta^{\rm s},\theta^{\rm d}),\)
\begin{align*}
    \Gamma_{\rm sonic}^5 \equiv \{P_1\} \equiv \{P_2\} \defeq \{P_0^1\}\,.
\end{align*}
Similarly, when \(\theta_2 \in (0,\theta^{\rm s}),\) the left sonic boundary and intersection points are given by
\begin{equation*}
\begin{aligned}
&\Gamma_{\rm sonic}^6\defeq \big\{\bm{\xi}\in\partial B_{c_6}(O_6) \cap \overline{\mathbb{R}^2_+} \,:\, \xi^{P_0^2}
\leq \xi \leq \xi^{Q_6}, \, \varphi_6(\bm{\xi}) \leq \varphi_2(\bm{\xi}) \big\}\,,\\
&\{P_3\} \defeq \Gamma_{\rm sonic}^6 \cap S_{26}\,,
\quad \{P_4\} \defeq \Gamma_{\rm sonic}^6 \cap L_{\rm sym}\,,
\end{aligned}
\end{equation*}
whilst, for \(\theta_2 \in [\theta^{\rm s},\theta^{\rm d}),\)
\begin{align*}
    \Gamma_{\rm sonic}^6 \equiv \{P_3\} \equiv \{P_4\} \defeq \{P_0^2\} \,.
\end{align*}
The open regions are given by
\begin{equation*}
\begin{aligned}
\Omega_5 &\defeq \big\{ \bm{\xi} \in \mathbb{R}^2_+ \setminus \cl{B_{c_5}(O_5)} \,:\, \varphi_5(\bm{\xi}) < \varphi_2(\bm{\xi}),
     \, \xi > \xi^{P_2} \big\}\,,\\
\Omega_6 &\defeq \big\{ \bm{\xi} \in \mathbb{R}^2_+ \setminus \cl{B_{c_6}(O_6)} \,:\, \varphi_6(\bm{\xi}) < \varphi_2(\bm{\xi}), \,
\xi < \xi^{P_3} \big\}\,.
\end{aligned}
\end{equation*}
Finally, define the symmetric boundary
\begin{align*}
\Gamma_{\rm sym} \defeq \big\{ (\xi, 0)\in L_{\rm sym} \,:\, \xi^{P_4} < \xi < \xi^{P_1}\big\}\,,
\end{align*}
and the line segments
\begin{equation*}
S^{\rm seg}_{25} \defeq S_{25} \cap \big\{ \xi^{P_2} \leq \xi \leq \xi^{P_0^1}\big\}\,,
\qquad
S^{\rm seg}_{26} \defeq S_{26} \cap \big\{ \xi^{P_0^2} \leq \xi \leq \xi^{P_3}\big\}\,.
\end{equation*}
\end{definition}

Note that curves $\Gamma_{\rm{sonic}}^5$, $\Gamma_{\rm{sonic}}^6$, and $\cl{\Gamma_{\rm{sym}}}$
as defined in Definition~\ref{Def:GammaSonicAndPts} above are fixed by \((\gamma,v_2,\btheta)\)
and do not share any common points except their endpoints $\{P_1,P_4\}$.
Also, note that $\Omega_j=\varnothing$ if $\theta_{j-4}\in[\theta^{\rm s},\theta^{\rm d})$ for $j=5,6$.
For a suitable simple open curve segment \(\Gamma_{\rm shock}\) (yet to be determined) with
endpoints \(P_2\) and \(P_3\),
we use $\Omega$ to denote the open bounded domain enclosed by the simple closed curve
$\Gamma_{\rm shock}\cup\Gamma_{\rm sonic}^5\cup\Gamma_{\rm sym}\cup\Gamma_{\rm sonic}^6$;
see Figs.~\ref{fig4-SupSupRefConfig}--\ref{fig7-SubSubRefConfig}.

\subsection{Admissible solutions and main theorems}
\label{SubSec-205AdmisSolu}
We reformulate the asymptotic boundary value problem, Problem~\ref{BVP},
into a free boundary problem.
Our goal is to find a suitable curved free boundary \(\Gamma_{\rm shock}\) between points \(P_2\) and \(P_3\),
and a solution \(\varphi\) of Problem~\ref{BVP} which satisfies~\eqref{Eq4PseudoPoten}--\eqref{Relat4Rho}
in the open region \(\Omega\) (as depicted in Figs.~\ref{fig4-SupSupRefConfig}--\ref{fig7-SubSubRefConfig})
and the corresponding boundary conditions
on $\partial\Omega\defeq\Gamma_{\rm shock}\cup\Gamma^5_{\rm sonic}\cup\Gamma_{\rm sym}\cup\Gamma^6_{\rm sonic}$.
Furthermore, equation~\eqref{Eq4PseudoPoten} should be elliptic in \(\Omega\),
and the free boundary \(\Gamma_{\rm shock}\) should be a transonic shock that separates the hyperbolic
and elliptic regions of the solution.

\begin{problem}[Free boundary problem]\label{FBP}
Fix $\gamma\geq1$ and $v_2\in(v_{\min},0)$.
For any $\btheta \in\overline{\Theta},$ find a free boundary $($curved reflected-diffracted shock$)$
$\Gamma_{\rm{shock}}$ in region $\mathbb{R}^2_+\cap\{\xi^{P_3}<\xi<\xi^{P_2}\}$ and a function $\varphi$
defined in region $\Omega,$ as shown
in {\rm Figs.~\ref{fig4-SupSupRefConfig}--\ref{fig7-SubSubRefConfig},} such that $\varphi$ satisfies{\rm :}
\begin{enumerate}[{\rm (i)}]
\item \label{item1-FBP} Equation~\eqref{Eq4PseudoPoten} in \(\Omega\)
so that \eqref{Eq4PseudoPoten} is elliptic inside \(\Omega\){\rm ;}
\item \label{item2-FBP} $\varphi=\varphi_2$ and $\rho D\varphi\cdot\bm{\nu}=\rho_2 D\varphi_2\cdot\bm{\nu}$
on the free boundary $\Gamma_{\rm{shock}}${\rm ;}
\item \label{item3n4-FBP} $\varphi=\varphi_j$ and $D\varphi=D\varphi_j$ on $\Gamma_{\rm{sonic}}^j$ for $j=5,6${\rm ;}
\item \label{item5-FBP}
$D\varphi\cdot(0,1)=0$ on $\Gamma_{\rm{sym}},$
\end{enumerate}
where $\bm{\nu}$ is the unit normal vector on $\Gamma_{\rm{shock}}$ pointing into $\Omega$.
\end{problem}

\begin{definition}[Admissible solutions] \label{Def:AdmisSolus}
Fix $\gamma\geq1$ and $v_2\in(\vmin,0)$. For any $\btheta \in\overline{\Theta},$ a function $\varphi\in W^{1,\infty}_{\rm loc}(\mathbb{R}^2_+)$
is called an admissible solution to the four-shock interaction problem if $\varphi$
is a solution of {\rm Problem~\ref{FBP}} and satisfies the following properties{\rm :}
\begin{enumerate}[{\rm (i)}]
\item
\label{item1-Def:AdmisSolus} There exists a relatively open curve segment $\Gamma_{\rm shock}$ $($without self-intersection$)$
with endpoints $P_2=(\xi^{P_2},\eta^{P_2})$ and $P_3=(\xi^{P_3},\eta^{P_3})$
given by {\rm Definition~\ref{Def:GammaSonicAndPts}}
such that $\Gamma_{\rm shock}$ is $C^2$ in its relative interior{\rm,}
$\Gamma_{\rm shock}^{\rm ext}\defeq S^{\rm seg}_{25} \cup \Gamma_{\rm{shock}} \cup S^{\rm seg}_{26}$
is a \(C^1\)--curve including at \(P_2\) and \(P_3,\) with $S^{\rm seg}_{25}$ and $S^{\rm seg}_{26}$
given in {\rm Definition~\ref{Def:GammaSonicAndPts},}
\begin{equation*}
\Gamma_{\rm shock} \subseteq
\big\{\bm{\xi}\in \mathbb{R}^2_+ \setminus \cl{B_{c_2}(O_2)} \,:\, \xi^{P_3} < \xi < \xi^{P_2} \big\} \,,
\end{equation*}
with $\partial B_{c_2}(O_2)$ as the sonic circle of state \((2)\){\rm,}
and
curves $\cl{\Gamma_{\rm shock}}$ and $\Gamma_{\rm sonic}^5 \cup \Gamma_{\rm sym} \cup \Gamma_{\rm sonic}^6$
share no common points except their endpoints $\{P_2,P_3\}$ so that
$\Gamma_{\rm shock} \cup \Gamma_{\rm sonic}^5 \cup \Gamma_{\rm sym} \cup \Gamma_{\rm sonic}^6$ is a closed curve without self-intersection.

\item \label{item2-Def:AdmisSolus}
For the open bounded domain $\Omega\subseteq\mathbb{R}_+^2$
enclosed
by $\Gamma_{\rm shock} \cup \Gamma_{\rm sonic}^5 \cup \Gamma_{\rm sym} \cup \Gamma_{\rm sonic}^6,$
solution $\varphi$ is in
$C^3(\Omega)\cap C^2\big(\overline{\Omega}\setminus(\Gamma_{\rm{sonic}}^5 \cup \Gamma_{\rm{sonic}}^6)\big)\cap C^1(\overline{\Omega})$
and satisfies $\varphi \in C^{0,1}_{\rm loc}(\mathbb{R}^2_+)
\cap C_{\rm loc}^1(\overline{\mathbb{R}^2_+}\setminus\overline{\Gamma_{\rm shock}^{\rm ext} \cup S_{12} \cup S_{32} })$
and
\begin{align}\label{eq:extension-of-varphi}
\varphi =
\begin{cases}
\max\{\varphi_5,\varphi_6\} &\,\,\, \text{in $\, \Omega_5\cup\Omega_6$}\,,\\
\bar{\varphi}  &\,\,\, \text{in $\, \mathbb{R}^2_+ \setminus (\Omega \cup \Omega_5 \cup \Omega_6)$}\,,
\end{cases}
\end{align}
where $\Omega_5$ and $\Omega_6$ are given in {\rm Definition~\ref{Def:GammaSonicAndPts},} and $\bar{\varphi}$ is given
by \eqref{BackGroundSolu}.

\smallskip
\item \label{item3-Def:AdmisSolus}
Equation~\eqref{Eq4PseudoPoten} is strictly elliptic in
$\cl{\Omega} \setminus ( \Gamma_{\rm{sonic}}^5 \cup \Gamma_{\rm{sonic}}^6)${\rm ,} i.e.,
solution $\varphi$ satisfies
\begin{equation*}
|D\varphi(\bm{\xi})| < c(|D\varphi(\bm{\xi})|,\varphi(\bm{\xi})) \qquad
\text{for all $\bm{\xi} \in  \cl{\Omega}\setminus(\Gamma_{\rm{sonic}}^5 \cup \Gamma_{\rm{sonic}}^6)$}\,.
\end{equation*}

\item \label{item4-Def:AdmisSolus}
In $\Omega,$ solution $\varphi$ satisfies
\begin{align}
&\max\{\varphi_5,\varphi_6\}\leq\varphi\leq\varphi_2\,,\label{EntropyIneqMutant}\\
&D (\varphi_2 - \varphi) \cdot (\cos\theta,\sin\theta) \leq 0
  \qquad\, \mbox{for all $\theta \in [\theta_{26},\theta_{25}]$}\,.
\label{DirecDerivMonotone-S25-S26}
\end{align}
\end{enumerate}
\end{definition}

\begin{remark} \label{Rmk4AdmissibleSolu}
From the definition of admissible solutions, we have
\begin{enumerate}[{\rm (i)}]
\item \label{item0-Rmk4AdmissibleSolu}
The requirement that \(\varphi\) is a solution of {\rm Problem~\ref{FBP}}
includes the condition that \(\varphi = \varphi_j\) and \(D\varphi = D\varphi_j\) on \(\Gamma_{\rm sonic}^j\)
for \(j = 5, 6,\) which becomes a one-point boundary condition in the case
\(\theta_{j-4} \in (\theta^{\rm s}, \theta^{\rm d}],\) according to the definition of \(\Gamma_{\rm sonic}^{j}\)
given by {\rm Definition~\ref{Def:GammaSonicAndPts}}.

\item \label{item1-Rmk4AdmissibleSolu}
Let \(\varphi\) be an admissible solution in the sense of {\rm Definition~\ref{Def:AdmisSolus},}
and let $\bm{\nu}$ be the unit normal vector on $\Gamma_{\rm shock}$ interior to $\Omega$.
Similarly to~\cite[Lemma~2.26]{BCF-2019}{\rm ,} it can be shown that \(\Gamma_{\rm shock}\) is a transonic shock,
since \(\varphi\) satisfies
\begin{equation*}
D\varphi_2 \cdot \bm{\nu} > D\varphi \cdot \bm{\nu} > 0\,,
\quad
0 < \frac{D \varphi \cdot \bm{\nu}}{c(\abs{D\varphi},\varphi)} < 1 < D \varphi_2 \cdot \bm{\nu}
        \qquad\,\, \text{on $\Gamma_{\rm shock}$}\,.
    \end{equation*}

\item \label{item2-Rmk4AdmissibleSolu}
It follows from
{\rm Proposition~\ref{Prop:localtheorystate5}\eqref{Prop:localtheorystate5:continuity-properties}}
that $\varphi_j$ depends continuously on \(\theta_{j-4} \in [0,\theta^{\rm d}]\) for \(j = 5,6\).
Moreover, for any \(r > 0,\)
\begin{equation*}
 \lim\limits_{\theta_{j-4} \to 0^+}\norm{\varphi_j - \varphi_0}_{C^{0,1}(B_r(\bm{0}) \cap \mathbb{R}^2_+)} = 0\,
 \qquad\,\text{for $j=5,6$}\,,
\end{equation*}
{\it i.e.},~the weak reflection state \((j)\) coincides with the normal reflection state $(0)$
when \(\theta_{j-4} = 0\). One can verify directly that the normal reflection solution
$\varphi_{\rm{norm}}\in C^{0,1}_{\rm loc}(\mathbb{R}^2_{+})$ as defined in~\eqref{eq:normal-reflection-solution}
is an admissible solution corresponding to
$\btheta=\bm{0}$ in the sense of {\rm Definition
~\ref{Def:AdmisSolus}}.
\end{enumerate}
\end{remark}

The first main theorem of this paper is to provide
the global existence of such admissible solutions in the sense of {\rm Definition~\ref{Def:AdmisSolus}}.
The proof is given in~\S\ref{subsec:proof-existence}.

\begin{theorem}[Existence of admissible solutions]\label{Thm:Existence-AdmisSolu}
Fix \(\gamma\geq1\) and  \(v_2 \in (v_{\min},0)\).
For any \(\btheta \in \Theta,\)
there exists an admissible solution \(\varphi\) of the four-shock interaction problem
corresponding to parameters \(\btheta\) in the sense of {\rm Definition~\ref{Def:AdmisSolus}}.
Moreover, the global solution $\varphi$
converges to the normal reflection solution
$\varphi_{\rm{norm}}$ in $W_{\rm loc}^{1,1}(\mathbb{R}_+^2)$ as \(\btheta \in \Theta\) converges to \(\bm{0},\)
where \(\varphi_{\rm norm}\) is given by~\eqref{eq:normal-reflection-solution}.
\end{theorem}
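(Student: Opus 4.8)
The plan is to establish Theorem~\ref{Thm:Existence-AdmisSolu} in two parts: the existence of an admissible solution for each fixed $\btheta \in \Theta$, and the convergence to the normal reflection solution as $\btheta \to \bm{0}$. For the existence part, I would set up a fixed-point argument via the Leray--Schauder degree theory, following the strategy of~\cite{ChenFeldman-RM2018, BCF-2019} but adapted to the two-incident-angle geometry described in~\S\ref{SubSec-204Configs}. The main structural steps are: (a) reformulate Problem~\ref{FBP} on the fixed domain $\Omega$ by flattening the free boundary $\Gamma_{\rm shock}$ via an appropriate change of variables, writing the shock as a graph over a reference curve connecting $P_2$ and $P_3$, so that the unknown becomes the pair $(\varphi, \text{shock function})$; (b) construct an iteration set $\mathcal{K}$ of candidate solutions that is bounded and closed in a suitable weighted H\"older space, using the \emph{a priori} estimates of Section~\ref{Sec:UniformEstiAdmiSolu} --- in particular the directional monotonicity~\eqref{DirecDerivMonotone-S25-S26}, the strict ellipticity of Definition~\ref{Def:AdmisSolus}\eqref{item3-Def:AdmisSolus}, and the uniform weighted H\"older estimates near the corners and the degenerate sonic boundaries $\Gamma_{\rm sonic}^5, \Gamma_{\rm sonic}^6$; (c) define the iteration map $\mathcal{F}$ by solving, for each candidate shock position, the fixed (now oblique-derivative, mixed-boundary-value) elliptic problem for $\varphi$ using the theory of~\cite{Gilbarg-Trudinger83, Lieberman-86, Lieberman2013}, then updating the shock position via the Rankine--Hugoniot condition~(ii) of Problem~\ref{FBP}; (d) verify that $\mathcal{F}$ has no fixed points on $\partial\mathcal{K}$ and that the degree is nonzero, e.g.\ by homotoping $\btheta$ down to a configuration (such as small angles, or the unilateral normal reflection that reduces to the Prandtl--Meyer case of~\cite{BCF-2019}) where existence and the degree computation are already known.

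The degenerate ellipticity near the two sonic arcs is what forces the weighted-norm machinery: near $\Gamma_{\rm sonic}^j$ the equation~\eqref{Eq4PseudoPoten} behaves like a degenerate elliptic equation whose coefficients vanish linearly in the distance to the arc, so I would use the anisotropic (parabolic-type) scaling referenced in the introduction to obtain the one-sided $C^{1,\alpha}$ (indeed $C^{2,\alpha}$ up to $\Gamma_{\rm sonic}^j$ with appropriate weights) estimates needed for the boundary conditions of Problem~\ref{FBP}\eqref{item3n4-FBP} to close the iteration. The case analysis over \textbf{Case I--III} of~\S\ref{SubSubSec20401-case1} enters here: when $\theta_{j-4}\in[\theta^{\rm s},\theta^{\rm d})$ the arc $\Gamma_{\rm sonic}^j$ collapses to the single point $P_0^j$ (Remark~\ref{Rmk4AdmissibleSolu}\eqref{item0-Rmk4AdmissibleSolu}), so the corresponding boundary condition degenerates to a one-point condition and the corner analysis there must instead handle the transition from the reflected-shock segment to the shock. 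I would treat the three configurations within a unified framework by allowing the sonic arcs to have (possibly zero) length and carrying the constants uniformly using Lemma~\ref{lem:properties-of-state-5-and-6}, which gives the needed uniform bounds~\eqref{eq:density-state-5-6-uniform-bounds}--\eqref{eq:reflected-shock-angle-lower-bound} on $\rho_j$ and $\theta_{2j}$.

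For the convergence statement as $\btheta \to \bm{0}$, the idea is softer: by the uniform (in $\btheta$ on compact subsets of $\cl{\Theta}$) estimates established for the existence proof, any sequence $\btheta_n \to \bm{0}$ yields admissible solutions $\varphi_n$ that are precompact in $C_{\rm loc}$ away from the degenerate sets, with uniformly controlled free boundaries $\Gamma_{\rm shock}^{(n)}$; extracting a subsequential limit, one identifies it as a weak solution of Problem~\ref{BVP} with $\btheta = \bm 0$, and by the uniqueness of the normal reflection configuration of~\S\ref{SubSec-202NormalShock} (the straight normal shock $S_0$ and state $(0)$ are uniquely determined by $(\gamma,v_2)$), the limit must be $\varphi_{\rm norm}$. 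The $W^{1,1}_{\rm loc}$ convergence then follows because $\varphi_n \to \varphi_{\rm norm}$ uniformly on compact sets together with the uniform $W^{1,\infty}_{\rm loc}$ bound gives $L^1_{\rm loc}$ convergence of $\varphi_n$ and, away from the shrinking region between $\Gamma_{\rm shock}^{(n)}$ and $S_0$, locally uniform convergence of $D\varphi_n$, while on that shrinking region (whose measure tends to $0$) the gradients stay bounded; wedge regions $\Omega_1,\Omega_3$ also shrink to empty as $\theta_1,\theta_2 \to 0^+$. The estimate of the measure of the ``bad'' region is where one must be careful: one needs that $\Gamma_{\rm shock}^{(n)}$ together with the collapsing sonic arcs converges in Hausdorff distance to $S_0$, which I expect to follow from Remark~\ref{Rmk4AdmissibleSolu}\eqref{item2-Rmk4AdmissibleSolu} and the continuity of states $(5),(6)$ in the incident angle down to $0$.

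The main obstacle is step (d): showing the iteration map is fixed-point-free on the boundary of the iteration set and computing the Leray--Schauder degree. This is the standard but delicate core of this type of argument --- one must choose the iteration set tightly enough that its boundary corresponds to genuine degeneracies (loss of ellipticity, the shock hitting the sonic circle of state $(2)$, the free boundary developing a singularity or touching $\Gamma_{\rm sym}$ at the wrong point, violation of the monotonicity~\eqref{EntropyIneqMutant}--\eqref{DirecDerivMonotone-S25-S26}), each of which must be excluded for \emph{admissible} solutions by the \emph{a priori} estimates, yet loose enough that the map stays into the set and compact. The two-parameter nature of $\btheta$ and the three distinct configuration types mean the homotopy used for the degree computation has to traverse these regimes --- e.g.\ path to the unilateral normal reflection and then invoke~\cite{BCF-2019} --- without the degree jumping, which requires the estimates to be uniform across \textbf{Case I--III}; the collapse of a sonic arc to a point along such a path is the most technically sensitive moment and will likely demand a separate argument near that transition.
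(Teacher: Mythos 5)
Your proposal follows essentially the same route as the paper: flattening of $\Omega$ and $\Gamma_{\rm shock}$ onto the standard rectangle $\qiter$, an iteration set built from the uniform estimates of Section~\ref{Sec:UniformEstiAdmiSolu} (with the sonic arcs allowed to degenerate to points across \textbf{Cases I--III}), an iteration map obtained by solving a modified mixed/oblique-derivative elliptic problem and updating the shock via the Rankine--Hugoniot condition, a Leray--Schauder degree computation by homotopy in $\btheta$, and compactness plus uniqueness of the normal reflection for the $W^{1,1}_{\rm loc}$ convergence. The only notable difference in detail is the anchor of the homotopy: the paper contracts along $t\mapsto t\btheta$ all the way to $\btheta=\bm 0$, where the iteration problem is homogeneous and the explicit bilateral normal reflection gives index $1$ directly, rather than stopping at a unilateral (Prandtl--Meyer) configuration and importing the degree from~\cite{BCF-2019}.
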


The admissible solutions have additional regularity properties, given in Theorem~\ref{RegularityThm} below.
Moreover, the transonic shocks, as free boundaries of admissible solutions,
satisfy additional convexity properties, given in Theorem~\ref{Thm3:ConvexTransonicShock} below.
The proofs of Theorems~\ref{RegularityThm}--\ref{Thm3:ConvexTransonicShock} are given
in~\S\ref{subsec:optimal-reg}--\S\ref{subsec:convexity}, respectively.

\begin{theorem}[Regularity of admissible solutions]\label{RegularityThm}
Fix $\gamma\geq1,$ $v_2 \in (v_{\min},0),$ and $\btheta \in \Theta$.
Let $\varphi$ be an admissible solution corresponding to parameters \(\btheta\) in the sense of {\rm Definition~\ref{Def:AdmisSolus},} with
transonic shock $\Gamma_{\rm{shock}}$.
Then the following properties hold{\rm:}
\begin{enumerate}[{\rm (i)}]
\item \label{ReguThm01} $\Gamma_{\rm{shock}}$ is $C^{\infty}$ in its relative interior and
\begin{enumerate}[]

\smallskip
\item \mCase{I}  When $\btheta \in [0,\theta^{\rm{s}})^2,$
$\varphi\in C^{\infty}(\overline{\Omega} \setminus ({\Gamma^5_{\rm{sonic}}}\cup{\Gamma^6_{\rm{sonic}}}) )\cap C^{1,1}(\overline{\Omega}),$ and
$\overline{S^{\rm seg}_{25}\cup\Gamma_{\rm{shock}}\cup S^{\rm seg}_{26}}$ is a $C^{2,\alpha}$--curve
for any $\alpha\in(0,1),$ including at points $P_2$ and $P_3$.

\smallskip
\item \mCase{II} When $\btheta \in [\theta^{\rm{s}},\theta^{\rm{d}}) \times [0,\theta^{\rm{s}}),$
$\varphi\in C^{\infty}(\overline{\Omega}\setminus(\Gamma^6_{\rm{sonic}} \cup\{P_1\}) )
\cap C^{1,1}(\overline{\Omega}\setminus\{P_1\})\cap C^{1,\bar{\alpha}}(\overline{\Omega}),$ and
$\overline{S^{\rm seg}_{26}\cup\Gamma_{\rm{shock}}}$ is a $C^{1,\bar{\alpha}}$--curve for some $\bar{\alpha}\in(0,1)$.
Furthermore, $\overline{S^{\rm seg}_{26} \cup \Gamma_{\rm{shock}} } \setminus {\{P_{1}\}}$ is a $C^{2,\alpha}$--curve for any $\alpha\in(0,1),$ including at $P_3$.\\
When $\btheta \in [0,\theta^{\rm{s}}) \times [\theta^{\rm{s}},\theta^{\rm{d}}),$
$\varphi\in C^{\infty}(\overline{\Omega}\setminus (\Gamma^5_{\rm{sonic}}\cup\{P_4\}) )
\cap C^{1,1}(\overline{\Omega}\setminus\{P_4\})\cap C^{1,\bar{\alpha}}(\overline{\Omega}),$ and
$\overline{S^{\rm seg}_{25}\cup\Gamma_{\rm{shock}}}$ is a $C^{1,\bar{\alpha}}$--curve for some $\bar{\alpha}\in(0,1)$.
Furthermore, $\overline{S^{\rm seg}_{25}\cup\Gamma_{\rm{shock}}}\setminus{\{P_{4}\}}$
is a $C^{2,\alpha}$--curve for any $\alpha\in(0,1),$ including at $P_2$.

\smallskip
\item \mCase{III} When $\btheta \in [\theta^{\rm{s}},\theta^{\rm{d}})^2,$
$\varphi\in C^{\infty}(\overline{\Omega}\setminus \{P_1,P_4\} )\cap C^{1,\bar{\alpha}}(\overline{\Omega}),$
and $\overline{\Gamma_{\rm{shock}}}$ is a $C^{1,\bar{\alpha}}$--curve for some $\bar{\alpha}\in(0,1)$.
Furthermore, $\overline{\Gamma_{\rm{shock}}}\setminus{\{P_{1},P_{4}\}}$ is a $C^{2,\alpha}$--curve for any $\alpha\in(0,1)$.
\end{enumerate}

\smallskip
\item \label{ReguThm02}
Let
$\mathcal{U}\defeq \{\bm{\xi}\in\mathbb{R}^2_+ \,:\, \max\{\varphi_5(\bm{\xi}),\varphi_6(\bm{\xi})\}<\varphi_2(\bm{\xi})\}$.
For any constant $\sigma>0$ and \(j=5,6,\) define domain $\mathcal{U}^j_{\sigma}$ by
\begin{equation*}
\mathcal{U}^j_{\sigma}\defeq
\begin{cases}
\mathcal{U}\cap\big\{\bm{\xi}\in\mathbb{R}^2_+ \,:\, {\rm{dist}}(\bm{\xi},\Gamma_{\rm{sonic}}^{j}) < \sigma\big\}
  \cap B_{c_j}(O_j)
&\quad \text{if $\theta_{j-4}\in [0,\theta^{\rm{s}})$}\,,\\
\varnothing\, &\quad \text{if $\theta_{j-4}\in[\theta^{\rm{s}},\theta^{\rm{d}})$}\,.
\end{cases}
\end{equation*}
Fix any point $\bm{\xi}_0\in({\Gamma^5_{\rm{sonic}}}\cup{\Gamma^6_{\rm{sonic}}})\setminus\{P_2,P_3\}$ and
denote $d\defeq \rm{dist}(\bm{\xi}_0,\Gamma_{\rm{shock}})$.
Then, for a sufficiently small constant $\varepsilon_0>0$ and for any $\alpha\in(0,1),$
there exists a constant $K<\infty$ depending on $(\gamma,v_2,\varepsilon_0,\alpha,d)$ and
$\|\varphi\|_{C^{1,1}(\Omega\cap(\mathcal{U}^5_{\varepsilon_0}\cup\,\mathcal{U}^6_{\varepsilon_0}))}$ such that
\begin{equation*}
\|\varphi\|_{2,\alpha,\overline{\Omega\cap B_{d/2}(\bm{\xi}_0)\cap(\mathcal{U}^5_{\varepsilon_0/2}\cup\, \mathcal{U}^6_{\varepsilon_0/2})}}\leq K\,;
\end{equation*}

\item \label{ReguThm03} For any $\bm{\xi}_0\in({\Gamma^5_{\rm{sonic}}}\cup{\Gamma^6_{\rm{sonic}}})\setminus\{P_2,P_3\},$
\begin{equation*}
\lim\limits_{ \bm{\xi}\to\bm{\xi}_0,\,\bm{\xi}\in \Omega}(D_{rr}\varphi-D_{rr}\max\{\varphi_5,\varphi_6\})(\bm{\xi})=\frac{1}{\gamma+1}\,,
\end{equation*}
where $r=|\bm{\xi}-O_j|$ for $\bm{\xi}$ near $\Gamma^j_{\rm{sonic}}$ for \(j=5,6\){\rm;}

\smallskip
\item \label{ReguThm04} For \(i = 1, 2,\) whenever $\theta_{i}\in [0,\theta^{\rm{s}}),$
$\lim\limits_{ \bm{\xi}\to P_{i+1},\, \bm{\xi} \in \Omega }D^2\varphi (\bm{\xi})$ does not exist.
\end{enumerate}
\end{theorem}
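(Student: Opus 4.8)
\noindent\textit{Proof proposal.}
The plan is to localize the analysis to the separate boundary pieces of $\Omega$, to treat the degeneracy near $\Gamma^5_{\rm sonic}\cup\Gamma^6_{\rm sonic}$ last, and then to deduce~\eqref{ReguThm04} from~\eqref{ReguThm01} and~\eqref{ReguThm03}. Away from the sonic arcs: in $\Omega$, equation~\eqref{Eq4PseudoPoten} is strictly elliptic with smooth coefficients by Definition~\ref{Def:AdmisSolus}\eqref{item3-Def:AdmisSolus}, so interior Schauder theory and bootstrapping give $\varphi\in C^\infty(\Omega)$. Along $\Gamma_{\rm sym}$ the slip condition is a linear oblique-derivative condition; along the relative interior of $\Gamma_{\rm shock}$ one realizes $\Gamma_{\rm shock}$ as a level set of $\varphi-\varphi_2$ (with $D(\varphi-\varphi_2)\ne0$ there by Remark~\ref{Rmk4AdmissibleSolu}\eqref{item1-Rmk4AdmissibleSolu}) and rewrites the Rankine--Hugoniot flux relation as a nonlinear oblique-derivative condition for $\varphi$, whose obliqueness follows from the transonic character of $\Gamma_{\rm shock}$ (Remark~\ref{Rmk4AdmissibleSolu}\eqref{item1-Rmk4AdmissibleSolu}). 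Lieberman's estimates~\cite{Lieberman-86,Lieberman2013} and a bootstrap (the equation, the boundary operators, and the constant states $\varphi_2,\varphi_5,\varphi_6$ being smooth wherever $\varphi$ is strictly elliptic and $\Gamma_{\rm shock}$ transonic) then give $\varphi\in C^\infty$ up to $\Gamma_{\rm sym}$ and up to the relative interior of $\Gamma_{\rm shock}$, hence $\Gamma_{\rm shock}\in C^\infty$ there. At the corners $P_1$, $P_4$ in the supersonic--subsonic and subsonic--subsonic regimes, where $\Gamma^5_{\rm sonic}$ or $\Gamma^6_{\rm sonic}$ collapses to a point and the reflection is subsonic, the corner analysis of~\cite[\S2]{BCF-2019} yields the stated $C^{1,\bar\alpha}$ regularity; the $C^{2,\alpha}$ (resp.\ $C^{1,\bar\alpha}$) regularity of $\Gamma_{\rm shock}^{\rm ext}$ at the sonic--shock corners $P_2$, $P_3$ follows by combining the shock conditions on $\Gamma_{\rm shock}$ (which encode the geometry of $\Gamma_{\rm shock}$ through $D\varphi$, not $D^2\varphi$) with the weighted estimates near the sonic arcs obtained next.

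\emph{The main obstacle: near the sonic arcs.}
Fix $j\in\{5,6\}$ with $\theta_{j-4}\in[0,\theta^{\rm s})$, so that $\Gamma^j_{\rm sonic}$ is a genuine arc of $\partial B_{c_j}(O_j)$. In polar coordinates $(r,\vartheta)$ centred at $O_j$, with the stretched variable $x\defeq c_j-r>0$ inside $\Omega$ near $\{x=0\}=\Gamma^j_{\rm sonic}$, the equation for $w\defeq\varphi-\varphi_j$ takes, to leading order near $\{x=0\}$, the degenerate form
\[
\bigl(2x-(\gamma+1)w_x\bigr)w_{xx}+c_j^{-2}w_{\vartheta\vartheta}-w_x=(\text{lower-order terms}),
\]
while $w=Dw=0$ on $\Gamma^j_{\rm sonic}$ by Problem~\ref{FBP}\eqref{item3n4-FBP}. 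Applying the anisotropic rescaling $x\mapsto\mu x$, $\vartheta\mapsto\sqrt{\mu}\,\vartheta$ and interior Schauder estimates in the rescaled variables, as in~\cite{ChenFeldman-RM2018,BCF-2019}, one obtains uniform weighted $C^{2,\alpha}$ estimates for $w$ up to $\Gamma^j_{\rm sonic}$ away from its endpoints. These yield the $C^{1,1}$ regularity of $\varphi$ up to $\Gamma^j_{\rm sonic}$ in the supersonic--supersonic regime, the local estimate~\eqref{ReguThm02}, and the refined expansion $w=\tfrac{1}{2(\gamma+1)}x^2+o(x^2)$, hence $D_{rr}w\to\tfrac1{\gamma+1}$, which is~\eqref{ReguThm03} since $D_{rr}\varphi_j=-1$. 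The remaining regularity in the supersonic--subsonic and subsonic--subsonic regimes follows from the same machinery adapted to the degeneracy at $P_1$ or $P_4$, together with the corner analysis above.

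\emph{Non-existence of $\lim D^2\varphi$ at $P_{i+1}$.}
Set $j\defeq i+4\in\{5,6\}$, so that $P_{i+1}\in\Gamma^j_{\rm sonic}\cap S_{2j}$; let $\bm{n}\defeq(P_{i+1}-O_j)/c_j$ be the unit normal to $\Gamma^j_{\rm sonic}$ at $P_{i+1}$ and let $\bm{e}_{S_{2j}}$ be the unit tangent to $S_{2j}$ from~\eqref{eq:tangent-vectors-e25-e26}. Since $\theta_{j-4}<\theta^{\rm s}$, state $(j)$ is pseudo-supersonic at $P_0^{j-4}\in S_{2j}$, so the line $S_{2j}$ crosses the circle $\partial B_{c_j}(O_j)$ transversally at $P_{i+1}$; that is, $\bm{n}\cdot\bm{e}_{S_{2j}}\ne0$ (part of the local reflection structure of~\cite{ChenFeldman-RM2018}, degenerating precisely at $\theta_{j-4}=\theta^{\rm s}$). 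The plan is to argue by contradiction: assume that $\lim_{\bm{\xi}\to P_{i+1},\,\bm{\xi}\in\Omega}D^2\varphi(\bm{\xi})$ exists. Then $w=\varphi-\varphi_j$ is $C^2$ on $\overline{\Omega}\cap B_\varepsilon(P_{i+1})$ with $w(P_{i+1})=Dw(P_{i+1})=0$; moreover, on the relative interior of $\Gamma^j_{\rm sonic}$, where $w$ is $C^2$ up to the arc by~\eqref{ReguThm02}, differentiating $Dw=0$ along the arc and invoking~\eqref{ReguThm03} gives $D^2w=\tfrac1{\gamma+1}\,\hat r\otimes\hat r$ with $\hat r$ the radial unit field, and letting the arc point tend to $P_{i+1}$ gives $D^2w(P_{i+1})=\tfrac1{\gamma+1}\,\bm{n}\otimes\bm{n}$. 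Parametrizing $\Gamma_{\rm shock}$ by arc length $s$ with $\Gamma_{\rm shock}(0)=P_{i+1}$ and tangent $\bm{e}_{S_{2j}}$ there (since $\Gamma_{\rm shock}^{\rm ext}$ is $C^1$ at $P_{i+1}$), a second-order Taylor expansion gives
\[
w\bigl(\Gamma_{\rm shock}(s)\bigr)=\tfrac12\,\bm{e}_{S_{2j}}^{\transpose}D^2w(P_{i+1})\,\bm{e}_{S_{2j}}\,s^2+o(s^2)=\tfrac{(\bm{n}\cdot\bm{e}_{S_{2j}})^2}{2(\gamma+1)}\,s^2+o(s^2),
\]
whose $s^2$-coefficient is strictly positive by transversality. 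On the other hand, $\varphi=\varphi_2$ on $\Gamma_{\rm shock}$ by Problem~\ref{FBP}\eqref{item2-FBP}, so $w|_{\Gamma_{\rm shock}}$ is the restriction of the linear function $\varphi_2-\varphi_j$, which vanishes at $P_{i+1}\in S_{2j}$ and whose gradient $O_2-O_j$ is normal to $S_{2j}$ and hence normal to $\Gamma_{\rm shock}$ at $P_{i+1}$; therefore
\[
w\bigl(\Gamma_{\rm shock}(s)\bigr)=(O_2-O_j)\cdot\bigl(\Gamma_{\rm shock}(s)-P_{i+1}\bigr)=\pm\tfrac12\,\ell(\rho_j)\,\kappa\,s^2+o(s^2),
\]
where $\kappa$ is the curvature of $\Gamma_{\rm shock}$ at $P_{i+1}$ and $\ell(\rho_j)=|O_2-O_j|>0$ by~\eqref{Eq:LengthO2O5CompatiCondi}. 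But by~\eqref{ReguThm01} the curve $\overline{S^{\rm seg}_{2j}\cup\Gamma_{\rm shock}}$ is $C^{2,\alpha}$ including at $P_{i+1}$ whenever $\theta_{j-4}<\theta^{\rm s}$, and $S^{\rm seg}_{2j}$ is a straight segment, so $\kappa=0$; the two displays are incompatible, and hence the limit cannot exist.

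\emph{Where the difficulty lies.}
The hard part is the degenerate-elliptic analysis near $\Gamma^5_{\rm sonic}\cup\Gamma^6_{\rm sonic}$: the anisotropic rescaling and the comparison-function construction of~\cite{ChenFeldman-RM2018,BCF-2019} must be carried out simultaneously near two sonic arcs whose centres and radii vary with $\btheta$, and the exact constant $\tfrac1{\gamma+1}$ in~\eqref{ReguThm03} must be tracked uniformly up to the endpoints, since it is what forces the contradiction in~\eqref{ReguThm04}. A secondary technical point, also used in~\eqref{ReguThm04}, is the transversality of $S_{2j}$ with the sonic circle of the weak state $(j)$ throughout $\theta_{j-4}\in[0,\theta^{\rm s})$, together with the $C^{2,\alpha}$ regularity of $\Gamma_{\rm shock}^{\rm ext}$ at $P_{i+1}$.
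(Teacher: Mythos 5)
Your treatment of parts~\eqref{ReguThm01}--\eqref{ReguThm03} follows essentially the paper's route: interior ellipticity and oblique-derivative bootstrapping away from the sonic arcs, the weighted estimates of Propositions~\ref{Prop:Esti4theta1a}--\ref{Prop:Esti4theta1c} and~\ref{Prop:Esti4theta1d} for the regularity up to $\Gamma^5_{\rm sonic}\cup\Gamma^6_{\rm sonic}$, and the degenerate regularity theorem (Theorem~\ref{Cite-BCF-2009:Thm3-1}) for \eqref{ReguThm02}--\eqref{ReguThm03}. The one thin spot is the $C^{2,\alpha}$ matching of $\Gamma_{\rm shock}$ with $S^{\rm seg}_{2j}$ at $P_2$, $P_3$: the paper proves this by extending $f_{5,\rm sh}$ by $f_{5,0}$ across $x=0$ and verifying $(f_{5,\rm sh}-f_{5,0})''(0)=0$ through an explicit computation of the terms $A_1,A_2,A_3$, which in particular requires the tangentially differentiated Rankine--Hugoniot condition to control $\psi_{xx}$ on the shock. ``Combining the shock conditions with the weighted estimates'' is the right recipe, but this step deserves the computation, especially since your proof of~\eqref{ReguThm04} leans on its conclusion.

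For~\eqref{ReguThm04} your argument is genuinely different from the paper's, and it is correct. The paper exhibits two sequences in $\Omega$ converging to $P_{i+1}$ with different limits of $\psi_{xx}$: one approaching interior points of the arc, where~\eqref{ReguThm03} forces $\psi_{xx}\to\frac{1}{\gamma+1}$, and one on the curve $y=f_{5,\rm sh}(x)-\frac{\omega}{10}x$, where the shock condition and the parabolic estimates give $F(x)\defeq\psi_x=o(x)$, so the mean value theorem produces points with $\psi_{xx}\to 0$. You instead assume the limit exists, deduce $D^2(\varphi-\varphi_j)(P_{i+1})=\tfrac{1}{\gamma+1}\,\bm{n}\otimes\bm{n}$ from~\eqref{ReguThm02}--\eqref{ReguThm03} on the arc, and compare the resulting strictly positive quadratic growth of $\varphi-\varphi_j$ along $\Gamma_{\rm shock}$ (positive because $S_{2j}$ meets $\partial B_{c_j}(O_j)$ transversally, which indeed holds since $c_j-q_j\geq\hat{\delta}>0$ when $\theta_{j-4}<\theta^{\rm s}$, by~\eqref{Eq:DistO5P01minusO5S25}) with the exact affine formula for $(\varphi_2-\varphi_j)|_{\Gamma_{\rm shock}}$, which is $o(s^2)$ because the curvature of $\Gamma_{\rm shock}$ at $P_{i+1}$ vanishes by the $C^{2,\alpha}$ matching with the straight segment $S^{\rm seg}_{2j}$. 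Your route is more geometric and makes transparent why the value $\tfrac{1}{\gamma+1}$ together with the $C^{2,\alpha}$ matching precludes a Hessian limit at the corner; the paper's two-sequence construction is more self-contained within the $(x,y)$ weighted-estimate framework and needs neither the transversality nor the curvature statement. Both are valid.
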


\begin{theorem}[Convexity of transonic shocks]
\label{Thm3:ConvexTransonicShock}
Fix \(\btheta \in \Theta\).
For any admissible solution
of the four-shock interaction problem in the sense of {\rm Definition~\ref{Def:AdmisSolus},} the transonic shock $\Gamma_{\rm{shock}}$ is uniformly convex
on closed subsets of its relative interior
in the sense described in {\rm Theorems~\ref{ThC1}--\ref{ThC2}}.
Furthermore, for any weak solution of~\eqref{Eq4PseudoPoten}--\eqref{Relat4Rho}
in the sense of {\rm Definition~\ref{Def:WeakSoluForPotentialFlowEq},}
satisfying also all the properties of {\rm Definition~\ref{Def:AdmisSolus}} except~\eqref{DirecDerivMonotone-S25-S26}{\rm,}
the transonic shock $\Gamma_{\rm{shock}}$ is a strictly convex graph if and only if condition~\eqref{DirecDerivMonotone-S25-S26} holds.
\end{theorem}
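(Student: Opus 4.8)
The statement has two halves: the uniform convexity of $\Gamma_{\rm shock}$ for every admissible solution, which is exactly Theorems~\ref{ThC1}--\ref{ThC2}, and, among weak solutions satisfying all conditions of Definition~\ref{Def:AdmisSolus} except possibly~\eqref{DirecDerivMonotone-S25-S26}, the equivalence between $\Gamma_{\rm shock}$ being a strictly convex graph and~\eqref{DirecDerivMonotone-S25-S26}. The common device is to set $w \defeq \varphi_2 - \varphi$, so that $w \geq 0$ in $\Omega$ by~\eqref{EntropyIneqMutant}, $\Gamma_{\rm shock} = \partial\Omega\cap\{w=0\}$, and $Dw\neq 0$ on $\Gamma_{\rm shock}$ by Remark~\ref{Rmk4AdmissibleSolu}\eqref{item1-Rmk4AdmissibleSolu}; differentiating $w|_{\Gamma_{\rm shock}}\equiv 0$ twice along $\Gamma_{\rm shock}$ gives that its signed curvature equals $-D_{\bm{\tau}\bm{\tau}}w/(Dw\cdot\bm{n})$, with $\bm{\tau},\bm{n}$ the unit tangent and interior normal, so that convexity of $\Gamma_{\rm shock}$ amounts to a fixed one-sided bound on $D_{\bm{\tau}\bm{\tau}}w$ along the shock, while~\eqref{DirecDerivMonotone-S25-S26} is the statement that the interior normal $Dw$ of $\Gamma_{\rm shock}$ lies, at each point, in the cone dual to $\{(\cos\theta,\sin\theta):\theta\in[\theta_{26},\theta_{25}]\}$.

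\emph{Convexity for admissible solutions.} I follow the strategy of Chen--Feldman--Xiang~\cite{ChenFeldmanXiang2020ARMA}. Since $\varphi$ and $\varphi_2$ both solve~\eqref{Eq4PseudoPoten}--\eqref{Relat4Rho}, their difference $w$ solves a linear second-order equation that is strictly elliptic in $\Omega$ by Definition~\ref{Def:AdmisSolus}\eqref{item3-Def:AdmisSolus}. One builds from the second derivatives of $w$ a scalar quantity $V$ measuring, up to positive factors, the curvature of the level curves of $w$, proves a differential inequality $\mathcal{L}V\leq 0$ in $\Omega$ with $\mathcal{L}$ a suitable linear elliptic operator, and computes the boundary relations for $V$: on $\Gamma_{\rm shock}$ by differentiating the Rankine--Hugoniot condition Problem~\ref{FBP}\eqref{item2-FBP} twice tangentially, on $\Gamma_{\rm sym}$ from the slip condition Problem~\ref{FBP}\eqref{item5-FBP}, and on $\Gamma_{\rm sonic}^j$ from the Dirichlet data Problem~\ref{FBP}\eqref{item3n4-FBP} together with the sonic-arc asymptotics Theorem~\ref{RegularityThm}\eqref{ReguThm03}. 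The strong maximum principle and the Hopf lemma then prevent $V$ from attaining the critical value at an interior point or at a non-corner boundary point, which upgrades the non-strict convexity already built into~\eqref{DirecDerivMonotone-S25-S26} to strict, uniform convexity on closed subsets of the relative interior of $\Gamma_{\rm shock}$; the degeneracy of~\eqref{Eq4PseudoPoten} along $\Gamma_{\rm sonic}^5\cup\Gamma_{\rm sonic}^6$ and the corner singularities at $P_1,\dots,P_4$ are handled through an anisotropic parabolic-type scaling near the sonic arcs and the corner regularity of Theorem~\ref{RegularityThm}.

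\emph{The equivalence.} Let $\psi$ be a weak solution meeting all of Definition~\ref{Def:AdmisSolus} except possibly~\eqref{DirecDerivMonotone-S25-S26}. If~\eqref{DirecDerivMonotone-S25-S26} holds, then $\psi$ is admissible, so $\Gamma_{\rm shock}$ is uniformly convex by Theorems~\ref{ThC1}--\ref{ThC2}; since~\eqref{DirecDerivMonotone-S25-S26} places the nonzero interior normal $D(\varphi_2-\psi)$ of $\Gamma_{\rm shock}$ in the cone dual to $\{(\cos\theta,\sin\theta):\theta\in[\theta_{26},\theta_{25}]\}$, which has angular width $\pi-(\theta_{25}-\theta_{26})<\pi$ because $\theta_{25}\in(\tfrac{\pi}{2},\pi]$ and $\theta_{26}\in[0,\tfrac{\pi}{2})$ by~\eqref{eq:def-reflected-shock-angle-25-26}, the tangent of $\Gamma_{\rm shock}$ stays in an arc of length below $\pi$, so $\Gamma_{\rm shock}$ is a graph over a suitable axis and hence a strictly convex graph. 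Conversely, suppose $\Gamma_{\rm shock}$ is a strictly convex graph; fix $\bm{e}=(\cos\theta,\sin\theta)$ with $\theta\in[\theta_{26},\theta_{25}]$ and set $v_{\bm{e}}\defeq D_{\bm{e}}(\varphi_2-\psi)$, which satisfies a linear elliptic equation in $\Omega$ obtained by differentiating~\eqref{Eq4PseudoPoten} (legitimate since $\psi\in C^3(\Omega)$ by Definition~\ref{Def:AdmisSolus}\eqref{item2-Def:AdmisSolus}). On $\Gamma_{\rm shock}$, convexity together with the $C^1$--matching with $S^{\rm seg}_{25}$ and $S^{\rm seg}_{26}$ at $P_2,P_3$ from Definition~\ref{Def:AdmisSolus}\eqref{item1-Def:AdmisSolus} confines the interior normal $D(\varphi_2-\psi)$ to the cone spanned by the interior normals of $S_{25},S_{26}$, which by~\eqref{eq:def-reflected-shock-angle-25-26} is this same dual cone, so $v_{\bm{e}}\leq 0$ there; on $\Gamma_{\rm sonic}^j$ one has $v_{\bm{e}}=\bm{e}\cdot(D\varphi_2-D\varphi_j)$ with $D\varphi_2-D\varphi_j$ the constant interior normal of the straight reflected shock $S_{2j}$, lying on the boundary of that dual cone, so $v_{\bm{e}}\leq 0$; and on $\Gamma_{\rm sym}$ the slip condition differentiated tangentially yields the boundary relation for $v_{\bm{e}}$ fitting the mixed-boundary-value setting of Section~\ref{Sec:UniformEstiAdmiSolu}. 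The maximum principle for this mixed problem --- with the zeroth-order term controlled via $\varphi_2-\psi\geq 0$ from~\eqref{EntropyIneqMutant} and the behaviour near the sonic arcs and corners controlled via Theorem~\ref{RegularityThm} --- then gives $v_{\bm{e}}\leq 0$ throughout $\Omega$, and since $\theta\in[\theta_{26},\theta_{25}]$ was arbitrary this is precisely~\eqref{DirecDerivMonotone-S25-S26}.

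\emph{Main obstacle.} The crux is the convexity statement of Theorems~\ref{ThC1}--\ref{ThC2}: deriving $\mathcal{L}V\leq 0$ with the correct boundary relations and, above all, ruling out degenerate behaviour at the sonic arcs --- where~\eqref{Eq4PseudoPoten} is no longer elliptic --- and at the corners $P_1,\dots,P_4$; this is exactly where the fine sonic-arc asymptotics of admissible solutions from Theorem~\ref{RegularityThm} and a carefully tuned anisotropic scaling become indispensable, since the nonlinearity of the free-boundary condition Problem~\ref{FBP}\eqref{item2-FBP} and the geometry of $\Gamma_{\rm shock}$ make a direct maximum-principle argument insufficient.
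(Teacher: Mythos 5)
There is a genuine gap. The paper's entire proof of Theorem~\ref{Thm3:ConvexTransonicShock} consists of \emph{verifying the hypotheses} of the black-box convexity framework of Chen--Feldman--Xiang, reproduced as Framework~(A) and Theorems~\ref{ThC1}--\ref{ThC2} in Appendix~\ref{Sec:Appendix-B-Convexity}: one must check that $\Omega$ has the required piecewise regularity with all corner angles in $(0,\pi)$ (which uses the tangency of $\Gamma_{\rm shock}$ to $S_{25}^{\rm seg}$ at $P_2$ from Theorem~\ref{RegularityThm}\eqref{ReguThm01} and the bound~\eqref{eq:reflected-shock-angle-lower-bound} on $\pi-\theta_{25}$ when the sonic arc degenerates); that \qeqref{Thm-C1-item1}--\qeqref{Thm-C1-item5} follow from Definition~\ref{Def:AdmisSolus} with $\bm{\tau}_A=\bm{e}_{S_{25}}$, $\bm{\tau}_B=\bm{e}_{S_{26}}$; that \qeqref{Thm-C1-item6} holds in the form (C-\ref{Thm-C1-item6-c}) with $\bm{e}=(0,1)$, which requires a strong-maximum-principle argument showing $\partial_{\bm{e}}(\varphi-\varphi_j)$ cannot attain a local minimum on $\Gamma^5_{\rm sonic}\cup\Gamma_{\rm sym}\cup\Gamma^6_{\rm sonic}$ (via Lemma~\ref{lem:monotonicityForPhi2-Phi}\eqref{item1-Lem3-2} and~\eqref{item3-Lem3-6}); and that the boundary decomposition in \qeqref{Thm-C1-item7}--\qeqref{Thm-C1-item10} can be arranged with $\hat{\Gamma}_0,\hat{\Gamma}_3$ the sonic arcs and $\hat{\Gamma}_1=\Gamma^0_{\rm sym}$. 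Your proposal does none of this. Instead it re-sketches the \emph{internal} machinery of the cited theorems (``build a scalar quantity $V$, prove $\mathcal{L}V\le 0$, compute boundary relations, apply Hopf's lemma'') without defining $V$, deriving any inequality, or performing any boundary computation. So the proposal neither invokes the framework correctly (no hypothesis verification) nor carries out the from-scratch derivation it gestures at; the hypothesis checks \qeqref{Thm-C1-item6} and \qeqref{Thm-C1-item7}--\qeqref{Thm-C1-item10}, which are the substantive analytic content here, are entirely missing.

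On the equivalence: your forward direction agrees with the paper. For the converse you propose a mixed-boundary-value maximum-principle argument on $v_{\bm{e}}=D_{\bm{e}}(\varphi_2-\psi)$, whereas the paper argues directly and geometrically: strict concavity of the shock graph in the sense of~\eqref{eqc3}--\eqref{eqc4} gives $f'(T_A)>f'(T)>f'(T_B)$ and $f(T)<f(T_1)+f'(T_1)(T-T_1)$, hence $\{P+{\rm Con}\}\cap\Omega=\varnothing$ for every $P\in\Gamma_{\rm shock}$, and then $\varphi\le\varphi_2$ in $\Omega$ with equality on $\Gamma_{\rm shock}$ yields $\partial_{\bm{e}}(\varphi_2-\varphi)\le 0$ for $\bm{e}\in{\rm Con}$. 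Your heavier route is not wrong in spirit, but the boundary conditions you assert for $v_{\bm{e}}$ on $\Gamma_{\rm sym}$ and near the sonic arcs are not derived, and the maximum principle you invoke for the degenerate-elliptic mixed problem is exactly the kind of step that needs the careful treatment you defer; as written it does not close the argument.
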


\section{Uniform Estimates for Admissible Solutions}
\label{Sec:UniformEstiAdmiSolu}

To establish the first main theorem, Theorem~\ref{Thm:Existence-AdmisSolu},
we apply the Leray-Schauder degree
theorem to a suitable iteration map, similar to~\cite{BCF-2019, ChenFeldman-RM2018}.
To achieve this, in this section, we make several essential uniform estimates of
the admissible solutions.
In \S 4,  we construct a suitable iteration map between two function spaces with weighted H{\"o}lder norms,
where any fixed point of the iteration map is an admissible solution
in the sense of Definition~\ref{Def:AdmisSolus}.
The main results we establish in this section are the following:
\begin{enumerate}[\;(a)]
\item The strict directional monotonicity of \(\varphi_2-\varphi\), and the directional monotonicity of \(\varphi-\varphi_5\) and \(\varphi-\varphi_6\);
\item The uniform positive lower bound of ${\rm dist}(\Gamma_{\rm{shock}},\partial B_1(O_2))$;
\item The uniform estimate of the ellipticity of equation~\eqref{Eq4PseudoPoten} in \(\Omega\);
\item The uniform estimates of admissible solutions in some suitable weighted H{\"o}lder norms.
\end{enumerate}

Our problem shares many similarities with the Prandtl-Meyer shock reflection problem~\cite{BCF-2019},
and the von Neumann shock reflection-diffraction problem~\cite{ChenFeldman-RM2018}.
Many of the following proofs are similar to those within~\cite{BCF-2019,ChenFeldman-RM2018},
but have been adapted to the current setting with careful estimates.

\subsection{Monotonicity and ellipticity}

\subsubsection{Directional monotonicity and uniform bounds}
\label{Sec3-1a:DirectMonotPropts4AdmisSolu}
Fix $\gamma\geq1$ and $v_2\in(v_{\min},0)$.
Let \(\varphi\) be an admissible solution corresponding to parameters \(\btheta \in \Theta\) in the sense of Definition~\ref{Def:AdmisSolus},
and let the constant states $\varphi_2$  and $\varphi_j$, $j=5,6$, be given by~\eqref{PseudoVelocity123} and~\eqref{eq:def-weak-state-5-6} respectively.
We aim to establish the directional monotonicity properties for $\phi$ that satisfies
\begin{equation}\label{Eq4phi}
(c^2-\varphi_{\xi}^2)\phi_{\xi\xi}-2\varphi_{\xi}\varphi_{\eta}\phi_{\xi\eta}+(c^2-\varphi_{\eta}^2)\phi_{\eta\eta}=0
\qquad\, \text{in $\Omega$}\,,
\end{equation}
where $\phi$ can be taken as either $\varphi_2 - \varphi$ or $\varphi - \varphi_j$, $j=5,6$, and
\begin{equation}\label{Def4SonicSpeedInSec3}
c^2(|D\varphi|,\varphi)=1+(\gamma-1)\big(B-\frac12|D\varphi|^2-\varphi\big)\,,
\end{equation}
where $B$ is the Bernoulli constant given by~\eqref{eq:bernoulli-constant-def}.

\begin{lemma}
\label{lem:monotonicityForPhi2-Phi}
Fix~\(\gamma\geq 1\) and \(v_2 \in (v_{\min},0)\).
For \(\btheta \in \Theta,\)
let \(\varphi\) be an admissible solution in the sense of {\rm Definition~\ref{Def:AdmisSolus}} corresponding to parameters \(\btheta\).
Then \(\varphi\) satisfies that, for \(j = 5,6\),
\begin{enumerate}[{\rm (i)}]
\item \label{item1-Lem3-2}
\(\partial_{\bm{e}} (\varphi - \varphi_j)\) is not constant in \(\Omega\)
for any unit vector \(\bm{e} \in \mathbb{R}^2\){\rm ;}

\item \label{item2-Lem3-2}
For vector \(\bm{e}_{S_{2j}}\) given by~\eqref{eq:tangent-vectors-e25-e26}{\rm,}
    \begin{equation*}
    \partial_{\bm{e}_{S_{2j}}}(\varphi_2 - \varphi) < 0 \qquad \text{in $\cl{\Omega} \setminus \Gamma_{\rm{sonic}}^j$}\,;
    \end{equation*}

\item \label{item3-Lem3-6}
$\partial_{\bm{e}_{S_{2j}}} (\varphi - \varphi_j)\,, \, {-\partial_{\eta}}(\varphi - \varphi_j) \geq 0\;\; $ in  $\cl{\Omega}$\,.
\end{enumerate}
\end{lemma}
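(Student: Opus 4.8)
\noindent\emph{Proof plan.} The plan is to prove all three statements by applying the maximum principle to directional derivatives of $\varphi_2-\varphi$ and of $\varphi-\varphi_j$. The starting point is that, since $\varphi_2$ and $\varphi_j$ ($j=5,6$) are both of the form $-\tfrac12|\bm{\xi}|^2+(\text{affine in }\bm{\xi})$, a direct computation shows that differentiating~\eqref{Eq4phi} in any fixed direction $\bm{e}$ produces, for $\partial_{\bm{e}}(\varphi_2-\varphi)$ and for $\partial_{\bm{e}}(\varphi-\varphi_j)$, a linear second-order equation in $\Omega$ in which all the terms involving the directional derivative itself cancel --- i.e., one with no zeroth-order term --- and which is uniformly elliptic on compact subsets of $\Omega$ because $|D\varphi|<c$ strictly in $\overline{\Omega}\setminus(\Gamma^5_{\rm sonic}\cup\Gamma^6_{\rm sonic})$ by admissibility. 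Hence the strong maximum principle and Hopf's boundary point lemma are available. I shall use the following boundary data: on $\Gamma^j_{\rm sonic}$ we have $\varphi=\varphi_j$ and $D\varphi=D\varphi_j$ (a one-point condition when $\theta_{j-4}\in[\theta^{\rm s},\theta^{\rm d})$), so that $\partial_{\bm{e}}(\varphi-\varphi_j)=0$ there; on $\Gamma_{\rm sym}$ the slip condition gives $\partial_\eta\varphi=0$, hence also $\varphi_{\xi\eta}=0$; and on $\Gamma_{\rm shock}$ the identity $\varphi=\varphi_2$ forces $D(\varphi_2-\varphi)=\lambda\bm{\nu}$ with $\lambda=D(\varphi_2-\varphi)\cdot\bm{\nu}>0$ by Remark~\ref{Rmk4AdmissibleSolu}, while differentiating the Rankine--Hugoniot conditions tangentially along $\Gamma_{\rm shock}$ yields an oblique derivative boundary condition for the directional derivatives, with strictly oblique coefficient vector.

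For~(i), suppose $\partial_{\bm{e}}(\varphi-\varphi_j)\equiv k$ in $\Omega$ for some unit vector $\bm{e}$ and constant $k$. Then $\varphi-\varphi_j=k\,(\bm{e}\cdot\bm{\xi})+h(\bm{e}^{\perp}\cdot\bm{\xi})$ for a one-variable function $h$, and substituting this into~\eqref{Eq4phi} (which $\varphi-\varphi_j$ satisfies) and using strict ellipticity forces $h''\equiv0$; hence $\varphi-\varphi_j$ is affine and $\varphi$ is itself a uniform state of the form $-\tfrac12|\bm{\xi}|^2+(\text{affine})$. Matching value and gradient with $\varphi_j$ at a point of $\Gamma^j_{\rm sonic}$ (available even in the degenerate one-point case) then forces $\varphi\equiv\varphi_j$ in $\Omega$, so that $\varphi_2=\varphi=\varphi_j$ on $\Gamma_{\rm shock}$; thus $\Gamma_{\rm shock}$ lies on the line $S_{2j}=\{\varphi_2=\varphi_j\}$, which contradicts the fact that $\Gamma^{\rm ext}_{\rm shock}=S^{\rm seg}_{25}\cup\Gamma_{\rm shock}\cup S^{\rm seg}_{26}$ is a $C^1$--curve joining segments of the distinct lines $S_{25}$ and $S_{26}$.

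For~(ii), evaluating~\eqref{DirecDerivMonotone-S25-S26} at the endpoints $\theta=\theta_{25}$ and $\theta=\theta_{26}$ of the cone $[\theta_{26},\theta_{25}]$ gives $w\defeq\partial_{\bm{e}_{S_{2j}}}(\varphi_2-\varphi)\le0$ in $\Omega$. By the strong maximum principle, either $w<0$ in $\Omega$ or $w\equiv0$; the latter would force $\bm{\nu}\cdot\bm{e}_{S_{2j}}\equiv0$ along $\Gamma_{\rm shock}$ (using $\lambda>0$), so $\Gamma_{\rm shock}$ would be a straight segment parallel to $S_{2j}$ --- again impossible by the $C^1$--joining of the distinct lines $S_{25}$ and $S_{26}$ --- so $w<0$ in $\Omega$. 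On $\Gamma^j_{\rm sonic}$, $w=\partial_{\bm{e}_{S_{2j}}}(\varphi_2-\varphi_j)=0$, since $\varphi_2-\varphi_j$ is affine and constant in the direction $\bm{e}_{S_{2j}}\parallel S_{2j}$. If $w$ attained the value $0$ at a point $P_0$ in the relative interior of $\Gamma_{\rm shock}$ or of $\Gamma_{\rm sym}$, Hopf's lemma would give $\partial_{\bm{n}}w(P_0)>0$ for the outer normal $\bm{n}$, contradicting the oblique derivative condition on $\Gamma_{\rm shock}$ obtained from the Rankine--Hugoniot relations and, on $\Gamma_{\rm sym}$, the relation derived from $\varphi_{\xi\eta}=0$ together with~\eqref{Eq4phi}; at the corner points the value of $w$ is fixed explicitly by the affine differences $\varphi_2-\varphi_5$ and $\varphi_2-\varphi_6$ and is strictly negative off $\Gamma^j_{\rm sonic}$. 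Hence $w<0$ in $\overline{\Omega}\setminus\Gamma^j_{\rm sonic}$.

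For~(iii), the first inequality is immediate from~(ii): since $D(\varphi_2-\varphi_j)=(-u_j,v_2)$ is orthogonal to $\bm{e}_{S_{2j}}\parallel(v_2,u_j)$, we have $\partial_{\bm{e}_{S_{2j}}}(\varphi-\varphi_j)=-\partial_{\bm{e}_{S_{2j}}}(\varphi_2-\varphi)\ge0$ in $\overline{\Omega}$. For the second inequality, set $\psi\defeq-\partial_\eta(\varphi-\varphi_j)$; since $v_5=v_6=0$, we have $\partial_\eta\varphi_j=-\eta$, so $\psi=-\partial_\eta\varphi-\eta$ is independent of $j\in\{5,6\}$, solves a linear elliptic equation with no zeroth-order term, and vanishes on $\Gamma_{\rm sym}$ (by the slip condition, or equivalently by the oddness of $\psi$ under reflection about $\Gamma_{\rm sym}$) as well as on $\Gamma^5_{\rm sonic}\cup\Gamma^6_{\rm sonic}$. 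On $\Gamma_{\rm shock}$ one expresses $\psi$ through the shock normal $\bm{\nu}$ using $\varphi=\varphi_2$ and the Rankine--Hugoniot relation, and verifies $\psi\ge0$ there from the localization of $\bm{\nu}$ provided by part~(ii) together with the geometry of the steady shock polar; the minimum principle then gives $\psi\ge0$ in $\overline{\Omega}$. The main obstacle throughout is precisely this boundary analysis on $\Gamma_{\rm shock}$ --- producing a strictly oblique, appropriately signed boundary condition for the directional derivatives out of the nonlinear free-boundary Rankine--Hugoniot condition, and controlling $\partial_\eta\varphi$ on $\Gamma_{\rm shock}$ --- together with the bookkeeping at the corner points $P_1,\dots,P_4$; these steps parallel the scheme of~\cite{ChenFeldman-RM2018,BCF-2019}, adapted to the present two-sided configuration.
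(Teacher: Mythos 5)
Your proposal is correct in substance and follows the same maximum-principle scheme the paper uses: the paper proves~(\ref{item1-Lem3-2}) in full and disposes of~(\ref{item2-Lem3-2})--(\ref{item3-Lem3-6}) by citing \cite[Lemmas~3.2 and~3.6]{BCF-2019}. For~(\ref{item1-Lem3-2}) your route is a harmless variant of the paper's: you keep $\partial_{\bm e}(\varphi-\varphi_j)\equiv k$ general, use strict ellipticity to force $D^2(\varphi-\varphi_j)\equiv 0$, pin the affine function down by the value-and-gradient matching on $\Gamma^j_{\rm sonic}$, and contradict the $C^1$--matching of the distinct lines $S_{25}$ and $S_{26}$ at the endpoints of $\Gamma_{\rm shock}$; the paper instead first gets $k=0$ from $D(\varphi-\varphi_5)(P_1)=\bm 0$, deduces $\bm e\parallel(0,1)$ from the value at $P_4$, and closes with $D(\varphi-\varphi_5)(P_4)=(u_6-u_5,0)\neq\bm 0$. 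Both arguments hinge on exactly the same two facts (ellipticity forces affineness; $u_5\neq u_6$ for $\btheta\neq\bm 0$). Your use of~\eqref{DirecDerivMonotone-S25-S26} at the cone endpoints, the cancellation of zeroth-order terms for directional derivatives of differences of pseudo-potentials, the corner values $D(\varphi_2-\varphi_{11-j})\cdot\bm e_{S_{2j}}<0$, and the orthogonality $D(\varphi_2-\varphi_j)\perp\bm e_{S_{2j}}$ for the first inequality of~(\ref{item3-Lem3-6}) are all accurate and at (or above) the level of detail of the paper's citation.

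The one soft spot is the second inequality of~(\ref{item3-Lem3-6}) on $\Gamma_{\rm shock}$. You propose to verify $\psi\defeq-\partial_\eta(\varphi-\varphi_j)\ge 0$ pointwise on the shock from the shock polar and then apply the minimum principle. But on $\Gamma_{\rm shock}$ one has $D(\varphi_2-\varphi)=\bm\nu\,\partial_{\bm\nu}(\varphi_2-\varphi)$, hence $\psi=-v_2+\nu_2\,\partial_{\bm\nu}(\varphi_2-\varphi)$ with $\nu_2<0$, so the boundary value of $\psi$ competes two terms of opposite sign and depends on the solution through the shock strength; it is not an a priori datum. The cited arguments avoid this by running Hopf's lemma at a putative interior minimum of $\psi$ on $\Gamma_{\rm shock}$ against the oblique boundary condition obtained by differentiating the Rankine--Hugoniot relation tangentially (exactly the mechanism you already use for~(\ref{item2-Lem3-2})). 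You should either switch to that Hopf-type argument on the shock or actually supply the shock-polar computation you allude to; as written, this single step is asserted rather than proved.
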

\begin{proof}
The proof of~\eqref{item1-Lem3-2} is given by a slight modification of~\cite[Lemma~3.1]{BCF-2019}.
Indeed, for \(\phi \defeq \varphi - \varphi_5\), if \(\partial_{\bm{e}} \phi\) is constant in \(\Omega\),
then  \(\partial_{\bm{e}} \phi = 0 \) in \(\Omega\) since \(D\phi = \bm{0}\) at \(P_1\).
Furthermore, \(\partial_{\bm{e}} \phi = \bm{e}\cdot(u_6-u_5,0) \) at \(P_4\)
so that \(\bm{e}\) must be parallel to \((0,1)\), since \(u_5-u_6 > 0\) when \(\btheta\neq \bm{0}\).
Thus, \(\partial_\eta \phi \equiv 0\) in \(\Omega\) and hence
\(\partial_{\xi\eta} \phi = \partial_{\eta\eta} \phi \equiv 0\) in \(\Omega\).
By the strict ellipticity of equation~\eqref{Eq4phi} in \(\Omega\),
\(\partial_{\xi\xi}\phi \equiv 0\) in \(\Omega\), which implies that
there exist constants \((u,v,k)\) such that \(\phi = u\xi + v\eta + k\) in \(\Omega\).
We note that \(v = 0\) since \(\partial_\eta \phi \equiv 0\).
Applying the boundary conditions: \(\phi = 0\) and \(D\phi =\bm{0}\) at \(P_1\),
we find that \(u = k = 0\), so that \(\phi \equiv 0 \) in \(\Omega\).
However, \(D\phi(P_4) = D\varphi_6(P_4) - D\varphi_5(P_4) = (u_6-u_5, 0) \neq \bm{0}\), which is a contradiction.
The proof is similar for \(\phi = \varphi - \varphi_6\).
This concludes the proof of~\eqref{item1-Lem3-2}.

The proofs of~\eqref{item2-Lem3-2} and~\eqref{item3-Lem3-6} are similar to~\cite[Lemma~3.2]{BCF-2019}
and~\cite[Lemma~3.6]{BCF-2019}, respectively.
\end{proof}

For any \(\btheta \in \cl{\Theta}\), define the following fan-shaped set by
\begin{equation*}
{\rm Cone}(\bm{e}_{S_{25}},\bm{e}_{S_{26}})\coloneqq
\big\{ r (\cos{\theta},\sin{\theta}) \,:\,  \theta_{26} \leq \theta \leq \theta_{25}, \, r \geq 0\big\}\,.
\end{equation*}
Let ${\rm Cone}^0(\bm{e}_{S_{25}},\bm{e}_{S_{26}})$ be the interior of ${\rm Cone}(\bm{e}_{S_{25}},\bm{e}_{S_{26}})$.
It follows from Lemma~\ref{lem:monotonicityForPhi2-Phi} that,
if~\(\varphi\) is an admissible solution corresponding to \(\btheta\in\Theta\), then \(\varphi\) satisfies
\begin{align}
\partial_{\bm{e}} (\varphi_2 - \varphi ) < 0
    \qquad
    \text{in $\cl{\Omega}\, $ for any $\bm{e} \in {\rm Cone}^0 (\bm{e}_{S_{25}},\bm{e}_{S_{26}})$}\,,
     \label{nStrictDirectMonotPropty}
\end{align}
and ${-}\bm{\nu}(P) \in \big\{ (\cos\theta, \sin\theta) \,:\,
    \theta_{25} - \tfrac{\pi}{2} < \theta < \theta_{26} + \tfrac{\pi}{2} \big\}$
    for all $P \in \Gamma_{\rm{shock}}$\,,
where $\bm{\nu}$ is the unit normal vector on \(\Gamma_{\rm{shock}}\) pointing into the interior of~\(\Omega\).

\begin{lemma}
\label{Prop:GammaShockExpres}
Fix $\gamma\geq1$ and $v_2\in(v_{\min},0)$.
For \(\btheta \in \Theta,\) let $\varphi$ be an admissible solution in the sense of {\rm Definition~\ref{Def:AdmisSolus}} with parameters $\btheta$.
Then there exists a $C^1$--function
\(f_{\rm sh}\) such that
\begin{equation*}
    \Gamma_{\rm{shock}}=\{ \bm{\xi}\in\mathbb{R}^2_+ \,:\,  \eta = f_{\rm{sh}}(\xi), \, \xi^{P_3} < \xi < \xi^{P_2} \}\,,
\end{equation*}
and $f'_{\rm sh}(\xi)\in(\tan\theta_{25},\tan\theta_{26})$ for $\xi\in(\xi^{P_3},\xi^{P_2})$
with $f'_{\rm{sh}}(\xi^{P_{j-3}})=\tan\theta_{2j},$ for $j=5,6$.
\end{lemma}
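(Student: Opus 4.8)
The plan is to derive the graph representation of $\Gamma_{\rm shock}$ from the strict directional monotonicity~\eqref{nStrictDirectMonotPropty} together with the geometry of the curved shock established in Definition~\ref{Def:AdmisSolus}. First I would recall that, by Remark~\ref{Rmk4AdmissibleSolu}\eqref{item1-Rmk4AdmissibleSolu}, $\Gamma_{\rm shock}$ is a transonic shock on which $\varphi = \varphi_2$, so $\Gamma_{\rm shock}$ is a level set of $\varphi_2 - \varphi$; in particular $\Gamma_{\rm shock} \subseteq \{\varphi_2 - \varphi = 0\}$ while $\varphi_2 - \varphi > 0$ in $\Omega$ by~\eqref{EntropyIneqMutant}. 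Next, from~\eqref{nStrictDirectMonotPropty} we have $\partial_{\bm{e}}(\varphi_2 - \varphi) < 0$ in $\cl{\Omega}$ for every $\bm{e} \in {\rm Cone}^0(\bm{e}_{S_{25}}, \bm{e}_{S_{26}})$. Since ${\rm Cone}^0(\bm{e}_{S_{25}}, \bm{e}_{S_{26}})$ contains directions with positive $\eta$-component (because $\theta_{26} \in [0, \tfrac\pi2)$ and $\theta_{25} \in (\tfrac\pi2, \pi]$ by~\eqref{eq:def-reflected-shock-angle-25-26}, with strict separation from $\tfrac\pi2$ by~\eqref{eq:reflected-shock-angle-upper-bound}), the function $\varphi_2 - \varphi$ is strictly decreasing along each such direction. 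This means that along any vertical line $\{\xi = \text{const}\}$ the set $\{\varphi_2 - \varphi = 0\}$ meets $\cl{\Omega}$ in at most one point for $\xi \in (\xi^{P_3}, \xi^{P_2})$: indeed $(0,1) \in {\rm Cone}^0$, so $\partial_\eta(\varphi_2 - \varphi) < 0$, and $\varphi_2 - \varphi$ is strictly monotone in $\eta$ on each vertical slice of $\cl\Omega$. Hence $\Gamma_{\rm shock}$ is the graph $\eta = f_{\rm sh}(\xi)$ of a single-valued function over $(\xi^{P_3}, \xi^{P_2})$.

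The smoothness $f_{\rm sh} \in C^1$ follows from the implicit function theorem applied to $\Psi(\xi,\eta) \defeq (\varphi_2 - \varphi)(\xi, \eta) = 0$: by Definition~\ref{Def:AdmisSolus}\eqref{item2-Def:AdmisSolus}, $\varphi \in C^1(\cl\Omega)$, so $\Psi$ is $C^1$ up to $\Gamma_{\rm shock}$; and $\partial_\eta \Psi < 0$ on $\Gamma_{\rm shock}$ by~\eqref{nStrictDirectMonotPropty} (since $(0,1) \in {\rm Cone}^0$), so $f_{\rm sh}'(\xi) = -\partial_\xi \Psi / \partial_\eta \Psi$ is well-defined and continuous on $(\xi^{P_3},\xi^{P_2})$. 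To pin down the range of $f_{\rm sh}'$, observe that at a point $P = (\xi, f_{\rm sh}(\xi)) \in \Gamma_{\rm shock}$ the tangent direction to $\Gamma_{\rm shock}$ is $(1, f_{\rm sh}'(\xi))$, and since $\varphi_2 - \varphi$ is constant along $\Gamma_{\rm shock}$, this tangent is orthogonal to $D(\varphi_2 - \varphi)(P)$; equivalently, $(1, f_{\rm sh}'(\xi))$ is parallel to $-\bm{\nu}(P)$ rotated by $\tfrac\pi2$. From~\eqref{nStrictDirectMonotPropty} we have $-\bm{\nu}(P) \in \{(\cos\theta,\sin\theta) : \theta_{25} - \tfrac\pi2 < \theta < \theta_{26} + \tfrac\pi2\}$, so the tangent angle of $\Gamma_{\rm shock}$ at $P$ lies strictly between $\theta_{25} - \pi$ and $\theta_{26}$; reducing modulo $\pi$ and taking the branch with positive first component gives a slope strictly between $\tan\theta_{25}$ and $\tan\theta_{26}$ (here one uses~\eqref{eq:reflected-shock-angle-upper-bound} to ensure this interval is well-defined and does not wrap around the vertical). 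This yields $f_{\rm sh}'(\xi) \in (\tan\theta_{25}, \tan\theta_{26})$ for $\xi \in (\xi^{P_3}, \xi^{P_2})$.

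Finally, the endpoint slope conditions $f_{\rm sh}'(\xi^{P_{j-3}}) = \tan\theta_{2j}$ for $j = 5,6$ come from the $C^1$-matching built into Definition~\ref{Def:AdmisSolus}\eqref{item1-Def:AdmisSolus}: the extended curve $\Gamma_{\rm shock}^{\rm ext} = S_{25}^{\rm seg} \cup \Gamma_{\rm shock} \cup S_{26}^{\rm seg}$ is a $C^1$-curve including at $P_2$ and $P_3$, and the segments $S_{25}^{\rm seg}$, $S_{26}^{\rm seg}$ lie on the straight reflected shocks $S_{25}$, $S_{26}$, which have slopes $\tan\theta_{25}$, $\tan\theta_{26}$ respectively. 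Since $P_2$ is the common endpoint of $\Gamma_{\rm shock}$ and $S_{25}^{\rm seg}$ (and $P_2 = (\xi^{P_2}, \eta^{P_2})$), $C^1$-continuity forces $\lim_{\xi \to (\xi^{P_2})^-} f_{\rm sh}'(\xi) = \tan\theta_{25}$, and symmetrically at $P_3$ the slope is $\tan\theta_{26}$. I expect the main obstacle to be the careful bookkeeping of angle branches in the second paragraph—translating the cone condition on $-\bm\nu$ into the slope bound $(\tan\theta_{25}, \tan\theta_{26})$ requires being attentive to the fact that $\theta_{25} > \tfrac\pi2$ while $\theta_{26} < \tfrac\pi2$, so that $\tan\theta_{25} < 0 < \tan\theta_{26}$ and the admissible slope interval is a genuine interval of $\mathbb{R}$ rather than a set wrapping through $\pm\infty$; this is exactly where the uniform separation estimate~\eqref{eq:reflected-shock-angle-upper-bound} is essential.
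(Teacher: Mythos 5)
Your proposal is correct and follows essentially the same route as the paper: $(0,1)\in{\rm Cone}^0(\bm{e}_{S_{25}},\bm{e}_{S_{26}})$ plus the strict monotonicity $\partial_\eta(\varphi_2-\varphi)<0$ and the implicit function theorem give the $C^1$ graph, Definition~\ref{Def:AdmisSolus}\eqref{item1-Def:AdmisSolus} gives the endpoint slopes, and the cone condition on $\bm{\nu}$ (equivalently, Lemma~\ref{lem:monotonicityForPhi2-Phi}\eqref{item2-Lem3-2}) gives the interior slope bounds. The only difference is cosmetic: you translate $\bm{\nu}\cdot\bm{e}_{S_{2j}}<0$ into a tangent-angle argument, whereas the paper writes the same fact algebraically as $f'_{\rm sh}(\xi)\cos\theta_{2j}-\sin\theta_{2j}<0$ and reads off $f'_{\rm sh}>\tan\theta_{25}$, $f'_{\rm sh}<\tan\theta_{26}$ using the signs of $\cos\theta_{25}$ and $\cos\theta_{26}$.
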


\begin{proof}
By Lemma~\ref{lem:properties-of-state-5-and-6}, $\bm{e}_{\eta} = (0,1) \in {\rm Cone}^0(\bm{e}_{S_{25}},\bm{e}_{S_{26}})$.
From the strict directional monotonicity~\eqref{nStrictDirectMonotPropty}, we see that
$\partial_{\bm{e}_{\eta}}(\varphi_2-\varphi)<0$
on $\overline{\Gamma_{\rm{shock}}}$,
which implies the existence of a $C^1$--function $f_{\rm{sh}}$ that solves the implicit relation $\varphi_{2}(\xi,f_{\rm sh}(\xi))=\varphi(\xi,f_{\rm sh}(\xi))$.
Taking the derivative, we have
\begin{equation*}
f'_{\rm{sh}}(\xi)=-\frac{\partial_{\bm{e}_{\xi}}(\varphi_2-\varphi)(\xi,f_{\rm{sh}}(\xi))}{\partial_{\bm{e}_{\eta}}(\varphi_2-\varphi)(\xi,f_{\rm{sh}}(\xi))}\,.
\end{equation*}
From Definition~\ref{Def:AdmisSolus}\eqref{item1-Def:AdmisSolus}, we obtain that
$f'_{\rm{sh}}(\xi^{P_2})=\tan\theta_{25}$
and $f'_{\rm{sh}}(\xi^{P_3})=\tan\theta_{26}$.
Denoting by $\bm{\nu}(P)$ the unit normal vector on \(\Gamma_{\rm shock}\) at a point $P \in \Gamma_{\rm shock}$,
interior to \(\Omega\), we have
\begin{equation*}
 \bm{\nu}(P) =\frac{D(\varphi_2-\varphi)(P)}{|D(\varphi_2-\varphi)(P)|}=\frac{(f'_{\rm{sh}}(\xi),-1)}{\sqrt{1+\big(f'_{\rm{sh}}(\xi)\big)^2}}\,.
\end{equation*}
Combining the above with property~\eqref{item2-Lem3-2} in Lemma~\ref{lem:monotonicityForPhi2-Phi}, for any \((a_1,a_2) \in [0,\infty)^2\), \((a_1,a_2)\neq (0,0)\),
\begin{equation*}\begin{aligned}
&a_1\big(f'_{\rm{sh}}(\xi)\cos\theta_{25}-\sin\theta_{25}\big)+a_2\big(f'_{\rm{sh}}(\xi)\cos\theta_{26}-\sin\theta_{26}\big)  \\
&\;\;= \sqrt{1+\big(f'_{\rm{sh}}(\xi)\big)^2} \; \bm{\nu}(P)  \cdot(a_1\bm{e}_{S_{25}}+a_2\bm{e}_{S_{26}})<0
\qquad\,\,\mbox{for any $\xi\in(\xi^{P_3},\xi^{P_2})$}\,.
\end{aligned}
\end{equation*}
Choosing $(a_1,a_2)$ as $(1,0)$ and $(0,1)$ in turn, we obtain that
$f'_{\rm{sh}}(\xi)>\tan\theta_{25}$ and $f'_{\rm{sh}}(\xi)<\tan\theta_{26}$, respectively, where we have used $\theta_{25}\in(\tfrac{\pi}{2},\pi]$ and $\theta_{26}\in [0,\frac{\pi}{2})$. \end{proof}

\begin{lemma}
\label{UniformBound-Lem}
Fix $\gamma\geq1$ and $v_2\in(v_{\min},0)$.
For any $\btheta \in \Theta,$ let $\varphi$ be an admissible solution corresponding to
parameters $\btheta \in \Theta$ in the sense of {\rm Definition~\ref{Def:AdmisSolus}},
and let $\Omega$ be the pseudo-subsonic region of \(\varphi\).
Then there exists a constant $C_{\rm ub}>0$ depending only on $(\gamma,v_2)$ such that the following properties hold{\rm:}
\begin{align}
\label{UB-01}
&\cl{\Omega} \subseteq B_{C_{\rm ub}}(\bm{0})\,, \\
\label{UB-02}
&\|\varphi\|_{C^{0,1}(\overline{\Omega})}\leq C_{\rm ub}\,, \\
\label{UB-03}
&\rho^*(\gamma)\leq\rho\leq C_{\rm ub} \quad \text{in $\Omega$}\,, \qquad 1<\rho\leq C_{\rm ub} \quad \text{on $\Gamma_{\rm{shock}}$}\,,
\end{align}
where $\rho^*(\gamma) \defeq (\frac{2}{\gamma+1})^{\frac{1}{\gamma-1}}$ when $\gamma>1,$ and $\rho^*(1) \defeq e^{-\tfrac{1}{2}}=\lim\limits_{\gamma\to1+}\rho^*(\gamma)$.
\end{lemma}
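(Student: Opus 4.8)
The plan is to derive all three estimates from the structural properties of admissible solutions, exploiting that $\varphi$ is sandwiched between $\max\{\varphi_5,\varphi_6\}$ and $\varphi_2$, together with uniform control on the geometry of $\Omega$. First I would establish \eqref{UB-01}. The boundary of $\Omega$ consists of the sonic arcs $\Gamma^5_{\rm sonic}\subseteq\partial B_{c_5}(O_5)$ and $\Gamma^6_{\rm sonic}\subseteq\partial B_{c_6}(O_6)$, the segment $\Gamma_{\rm sym}\subseteq L_{\rm sym}$ between $P_4$ and $P_1$, and the transonic shock $\Gamma_{\rm shock}$. By Lemma~\ref{lem:properties-of-state-5-and-6} the densities $\rho_5,\rho_6$ are uniformly bounded above and below away from $1$, so the sonic speeds $c_5,c_6$ and sonic centres $O_5,O_6$ lie in a fixed compact set depending only on $(\gamma,v_2)$; hence $P_1,P_2,P_3,P_4$ all lie in a fixed ball. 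Since $\Gamma_{\rm shock}$ is a graph $\eta=f_{\rm sh}(\xi)$ over $\xi\in(\xi^{P_3},\xi^{P_2})$ by Lemma~\ref{Prop:GammaShockExpres}, with slope confined to $(\tan\theta_{25},\tan\theta_{26})$ and the angles $\theta_{25},\theta_{26}$ bounded away from $\tfrac\pi2$ by \eqref{eq:reflected-shock-angle-upper-bound}, the shock cannot escape a fixed ball either: its endpoints are fixed and its slope is uniformly bounded, so $f_{\rm sh}$ is uniformly Lipschitz and its graph stays in $B_{C_{\rm ub}}(\bm 0)$. Combining these gives \eqref{UB-01}.

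Next, for \eqref{UB-02}: on $\cl\Omega$ we have $\max\{\varphi_5,\varphi_6\}\le\varphi\le\varphi_2$ by \eqref{EntropyIneqMutant}, and each of $\varphi_2,\varphi_5,\varphi_6$ has the explicit form $-\tfrac12|\bm\xi|^2+O_j\cdot\bm\xi+k_j$ with $O_j,k_j$ in a fixed compact set (again using Lemma~\ref{lem:properties-of-state-5-and-6} for $j=5,6$, and \eqref{PseudoVelocity123} together with the compatibility relations for $j=2$, recalling $\rho_1,\rho_3\in(0,1)$, $v_2\in(v_{\min},0)$). Hence $|\varphi|$ is bounded on $\cl\Omega\subseteq B_{C_{\rm ub}}(\bm 0)$. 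For the gradient bound, equation \eqref{Eq4PseudoPoten} is elliptic in $\Omega$ so $|D\varphi|<c(|D\varphi|,\varphi)$ there; from the Bernoulli relation \eqref{Relat4Rho}--\eqref{Def4SonicSpeedInSec3} this is equivalent to $\tfrac12|D\varphi|^2 < \tfrac{1}{\gamma+1}(2(B-\varphi)+ \text{const})$ — more precisely $|D\varphi|^2<c^2$ rearranges to a pointwise bound $|D\varphi|^2\le \tfrac{2}{\gamma+1}\big(1+(\gamma-1)(B-\varphi)\big)$ (for $\gamma>1$; the $\gamma=1$ case is analogous with the exponential), which is finite and uniform once $\varphi$ is bounded below and $B$ is the fixed constant \eqref{eq:bernoulli-constant-def}. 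Since $\varphi\in C^1(\cl\Omega)$ by Definition~\ref{Def:AdmisSolus}\eqref{item2-Def:AdmisSolus}, this yields \eqref{UB-02}.

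Finally, \eqref{UB-03} follows from \eqref{UB-01}--\eqref{UB-02} via the density formula \eqref{Relat4Rho}: $\rho=\big(1+(\gamma-1)(B-\tfrac12|D\varphi|^2-\varphi)\big)^{1/(\gamma-1)}$, so the upper bound on $\rho$ is immediate from the lower bound on $\varphi$ and the fact that $B$ is fixed; for the lower bound in $\Omega$, ellipticity gives $|D\varphi|^2<c^2=1+(\gamma-1)(B-\tfrac12|D\varphi|^2-\varphi)$, i.e. $\rho^{\gamma-1}=c^2>|D\varphi|^2$, and combining $\tfrac12|D\varphi|^2<\tfrac12 c^2$ with the definition of $\rho$ yields $\rho^{\gamma-1}>\tfrac{2}{\gamma+1}\cdot(\text{something}\ge1)$ — concretely one shows $c^2\ge \tfrac{2}{\gamma+1}$ in the pseudo-subsonic region where the Bernoulli quantity is controlled, giving $\rho\ge\rho^*(\gamma)=(\tfrac{2}{\gamma+1})^{1/(\gamma-1)}$, with the $\gamma=1$ value obtained as the limit. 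On $\Gamma_{\rm shock}$ the bound $\rho>1$ is exactly the entropy condition recorded in Remark~\ref{Rmk4AdmissibleSolu}\eqref{item1-Rmk4AdmissibleSolu}, and $\rho\le C_{\rm ub}$ there follows as before from the $C^1$ bound up to the boundary. The main obstacle I expect is making \eqref{UB-01} fully rigorous: one must rule out that $\Gamma_{\rm shock}$ (or the region $\Omega$ it bounds) extends unboundedly despite having fixed endpoints, which requires the uniform slope bound on $f_{\rm sh}$ from Lemma~\ref{Prop:GammaShockExpres} together with the uniform lower bound $\delta_{5,6}\le|\theta_{2j}-\tfrac\pi2|$ from Lemma~\ref{lem:properties-of-state-5-and-6}; the density estimates are then comparatively routine consequences of the Bernoulli law and ellipticity.
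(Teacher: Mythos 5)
Your proof follows essentially the same route as the paper's: boundedness of $\Omega$ from the graph representation and uniform slope bounds of Lemma~\ref{Prop:GammaShockExpres} together with Lemma~\ref{lem:properties-of-state-5-and-6}; the $C^{0,1}$ bound from the sandwich $\max\{\varphi_5,\varphi_6\}\leq\varphi\leq\varphi_2$ plus ellipticity and the Bernoulli relation; and the density bounds from \eqref{Relat4Rho}, ellipticity, and the Rankine--Hugoniot/entropy conditions on $\Gamma_{\rm shock}$. The only vague spot is the ``something $\ge 1$'' in the lower density bound, but the missing fact is simply $\varphi\leq\varphi_2\leq B$ (equivalently the paper's $\tfrac12|D\varphi|^2+h(\rho)\geq\tfrac12|D\varphi_2|^2\geq0$), so the argument is complete.
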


\noindent
\textit{Proof.}
We divide the proof into three steps.

\smallskip
\textbf{1}. To prove~\eqref{UB-01}, we use the expression of $\Gamma_{\rm{shock}}$ from Lemma~\ref{Prop:GammaShockExpres}.
In particular,
it follows from
from Definition~\ref{Def:AdmisSolus}\eqref{item1-Def:AdmisSolus} and Lemma~\ref{Prop:GammaShockExpres}
that
$0 < f_{\rm{sh}}(\xi) \leq \min\{\xi \tan \theta_{25} + a_{25}, \xi \tan \theta_{26} + a_{26}\}$
whenever \(\xi^{P_3} < \xi < \xi^{P_2}\), which implies that
\begin{equation*}
\Omega \subseteq \big\{\bm{\xi} \,:\, u_6-c_6 < \xi < u_5+c_5 , \,
0 < \eta \leq \max\{a_{25}, a_{26}\}\big\}\,.
\end{equation*}
From~\eqref{P0-Pos} and Proposition~\ref{Prop:localtheorystate5}\eqref{Prop:localtheorystate5:continuity-properties}
and Definition~\ref{def:state5andstate6},
we know that $(\rho_5,u_5, a_{25})$
depend continuously on $\theta_1\in[0,\theta^{\rm{d}}]$,
while $(\rho_6,u_6,a_{26})$
depend continuously on $\theta_2\in[0,\theta^{\rm{d}}]$,
from which~\eqref{UB-01} follows directly.

\smallskip
\textbf{2}. From Definition~\ref{Def:AdmisSolus}\eqref{item4-Def:AdmisSolus}, we see that
$\inf_{\overline{\Omega}} \max \{\varphi_5,\varphi_6\} \leq \varphi \leq \sup_{\overline{\Omega}} \varphi_2$.
By~\eqref{UB-01} and the expressions of $(\varphi_2,\varphi_5,\varphi_6)$ given in~\eqref{PseudoVelocity123}
and~\eqref{eq:def-weak-state-5-6}, there exists a constant $C_1>0$ depending only on $(\gamma,v_2)$ such that
$-C_1 \leq \min_{\cl{\Omega}}\max\{\varphi_5,\varphi_6\} \leq \max_{\cl{\Omega}} \varphi_2 \leq C_1$,
which implies that $\max\limits_{\cl{\Omega}} |\varphi| \leq C_1$.
Then the uniform bound \eqref{UB-02} follows from
\begin{equation*}
|D\varphi|^2<c^2(|D\varphi|,\varphi)=\frac{2}{\gamma+1}\big(1+(\gamma-1)(B-\varphi)\big)\,,
\end{equation*}
where we have used Definition~\ref{Def:AdmisSolus}\eqref{item3-Def:AdmisSolus} and \eqref{Def4SonicSpeedInSec3}.

\smallskip
\textbf{3}. The upper bounds in~\eqref{UB-03} follow directly from the expression of density~\eqref{Relat4Rho} along with~\eqref{UB-02}.
For the lower bound in \(\Omega\), we combine the Bernoulli law~\eqref{Relat4Rho} with Definition~\ref{Def:AdmisSolus}\eqref{item4-Def:AdmisSolus} to obtain
\begin{equation*}\begin{aligned}
\frac12|D\varphi|^2 + h(\rho)
= \frac12|D\varphi_2|^2 + (\varphi_2-\varphi) + h(\rho_2)\geq \frac12 |D\varphi_2|^2 \geq 0\,.
\end{aligned} \end{equation*}
Then, from Definition~\ref{Def:AdmisSolus}\eqref{item3-Def:AdmisSolus}, we have
\begin{equation*}
\frac{1}{\gamma-1} \big( \frac12 (\gamma+1) \rho^{\gamma-1} - 1 \big) = \frac12c^2 + h(\rho) \geq \frac12|D\varphi|^2+h(\rho)\geq0\,,
\end{equation*}
so that the lower bound of $\rho$ in $\Omega$ is obtained.
For the lower bound of $\rho$ on \(\Gamma_{\rm shock}\),
we combine Problem~\ref{FBP}\eqref{item2-FBP} and Remark~\ref{Rmk4AdmissibleSolu}\eqref{item2-Rmk4AdmissibleSolu}
to obtain
\begin{flalign*}
&& \rho = \frac{D\varphi_2\cdot\bm{\nu}}{D\varphi\cdot\bm{\nu}} > 1 \qquad \mbox{on $\Gamma_{\rm{shock}}$}\,.
&& \qed
\end{flalign*}

\subsubsection{Uniform positive lower bound of \({\rm dist}(\Gamma_{\rm{shock}}, \partial B_1 (O_2))\)}
We now obtain a uniform estimate for the positive lower bound of
\({\rm{dist}}(\Gamma_{\rm{shock}}, \partial B_1(O_2))\) for any admissible solution.
This allows us to make the uniform estimate for the ellipticity of equation~\eqref{Eq4PseudoPoten} within the pseudo-subsonic domain of admissible solutions.

\begin{proposition}
\label{prop:lowerBoundBetweenShockAndSonicCircle}
Fix \(\gamma \geq 1\) and \(v_2 \in (\vmin,0)\).
There exists a constant \(C_{\rm sh}>0\) depending only on \((\gamma, v_2)\) such that any admissible solution corresponding to parameters \(\btheta \in \Theta \) satisfies
\begin{equation}\label{Eq:Result-Prop-3-4}
    {\rm{dist}} \big( \Gamma_{\rm{shock}}, \, \partial B_1(O_2) \big) \geq C_{\rm sh}^{-1}\,.
\end{equation}
\end{proposition}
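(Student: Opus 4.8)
The plan is to reduce the geometric statement \eqref{Eq:Result-Prop-3-4} to a uniform lower bound for the gradient of $\varphi_2$ along the shock, and ultimately to a uniform lower bound for the downstream density there. First, note that by \eqref{PseudoVelocity123} one has $D\varphi_2(\bm{\xi}) = O_2 - \bm{\xi}$ with $O_2 = (0,v_2)$, so $|D\varphi_2(\bm{\xi})| = |\bm{\xi} - O_2|$ for every $\bm{\xi}$, and $\partial B_1(O_2)$ is precisely the pseudo-sonic circle of state $(2)$ (recall $c_2 = 1$). Since $\Gamma_{\rm shock} \subseteq \mathbb{R}^2_+ \setminus \cl{B_{c_2}(O_2)}$ by Definition~\ref{Def:AdmisSolus}\eqref{item1-Def:AdmisSolus}, we get ${\rm dist}(P,\partial B_1(O_2)) = |D\varphi_2(P)| - 1$ for all $P \in \Gamma_{\rm shock}$. (If $|v_2| \geq 1$, then $\partial B_1(O_2) \cap \cl{\mathbb{R}^2_+} = \varnothing$ and $|D\varphi_2(P)| = |P - O_2| \geq \eta^P + |v_2| \geq |v_2|$, so \eqref{Eq:Result-Prop-3-4} holds with $C_{\rm sh}^{-1} = |v_2| - 1$; assume $|v_2| < 1$ below.) Thus it suffices to produce $c_0 = c_0(\gamma,v_2) > 0$ with $|D\varphi_2| \geq 1 + c_0$ on $\Gamma_{\rm shock}$ for every admissible solution.

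Next I would extract this from the boundary conditions on $\Gamma_{\rm shock}$, pointwise. Fix $P \in \Gamma_{\rm shock}$, let $\bm{\nu}$ be the unit normal into $\Omega$ and $\bm{\tau} \defeq \bm{\nu}^\perp$. Since $\varphi = \varphi_2$ on $\Gamma_{\rm shock}$, we have $D(\varphi_2 - \varphi)(P) = \beta\bm{\nu}$ for some $\beta > 0$ (Remark~\ref{Rmk4AdmissibleSolu}\eqref{item1-Rmk4AdmissibleSolu}), and $D\varphi(P)\cdot\bm{\tau} = D\varphi_2(P)\cdot\bm{\tau}$. With $n \defeq D\varphi_2(P)\cdot\bm{\nu}$, the Rankine--Hugoniot condition $\rho\,D\varphi\cdot\bm{\nu} = D\varphi_2\cdot\bm{\nu}$ gives $\rho = n/(n-\beta)$, whence $n > \beta > 0$ and $\rho > 1$; using \eqref{Relat4Rho} with $B - \tfrac12|D\varphi_2(P)|^2 - \varphi_2(P) = 0$ (valid since $\rho_2 = 1$) for the downstream Bernoulli law yields $\rho^{\gamma-1} = 1 + \tfrac{\gamma-1}{2}\beta(2n-\beta) = 1 + \tfrac{\gamma-1}{2}\,n^2(\rho^2-1)/\rho^2$, and therefore
\begin{equation*}
    |D\varphi_2(P)|^2 = n^2 + (D\varphi_2(P)\cdot\bm{\tau})^2 \geq n^2 = \mathfrak{a}(\rho(P))\,, \qquad \mathfrak{a}(\rho) \defeq \frac{2\rho^2(\rho^{\gamma-1}-1)}{(\gamma-1)(\rho^2-1)}\,.
\end{equation*}
A direct computation shows that $\mathfrak{a}$ extends continuously across $\rho = 1$ with $\mathfrak{a}(1) = 1$ and is strictly increasing on $[1,\infty)$ (the sign of $\mathfrak{a}'$ reduces to the positivity of $(\gamma-1)\rho^{\gamma+1} - (\gamma+1)\rho^{\gamma-1} + 2$ on $(1,\infty)$, which vanishes at $\rho=1$ and has derivative $(\gamma^2-1)\rho^{\gamma-2}(\rho^2-1) > 0$). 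Hence it is enough to find $\delta = \delta(\gamma,v_2) > 0$ with $\rho \geq 1 + \delta$ on $\Gamma_{\rm shock}$ for every admissible solution; then $|D\varphi_2| \geq \sqrt{\mathfrak{a}(1+\delta)} > 1$ on $\Gamma_{\rm shock}$, as needed.

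For the density bound I would argue by contradiction and compactness. At the endpoints, Problem~\ref{FBP}\eqref{item3n4-FBP} forces $D\varphi(P_{j-3}) = D\varphi_j(P_{j-3})$, so $\rho(P_{j-3}) = \rho_j \geq 1 + \delta_{5,6}$ by Lemma~\ref{lem:properties-of-state-5-and-6}, $j = 5,6$, while $\rho \equiv \rho_5$ on $S^{\rm seg}_{25}$ and $\rho \equiv \rho_6$ on $S^{\rm seg}_{26}$. Suppose no such $\delta$ exists: there are $\btheta^{(k)} \in \Theta$, admissible solutions $\varphi^{(k)}$ and $P^{(k)} \in \Gamma^{(k)}_{\rm shock}$ with $\rho^{(k)}(P^{(k)}) \to 1$. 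Using the uniform bounds of Lemma~\ref{UniformBound-Lem}, the bound $|\theta_{2j} - \tfrac\pi2| \geq \delta_{5,6}$ of Lemma~\ref{lem:properties-of-state-5-and-6} (which by Lemma~\ref{Prop:GammaShockExpres} makes the $f_{\rm sh}^{(k)}$ uniformly Lipschitz), and the continuous dependence of states $(5),(6)$ on $\btheta$ from Proposition~\ref{Prop:localtheorystate5}\eqref{Prop:localtheorystate5:continuity-properties}, pass to a subsequence along which $\btheta^{(k)} \to \btheta^* \in \cl{\Theta}$, $P^{(k)} \to P^*$, $\Gamma^{(k)}_{\rm shock} \to \Gamma^*$ (uniformly as graphs), $\Omega^{(k)} \to \Omega^*$, and $\varphi^{(k)} \to \varphi^*$ locally uniformly, where $\varphi^*$ retains $\varphi^* = \varphi_2$ on $\Gamma^*$, the inequality \eqref{EntropyIneqMutant}, the non-strict pseudo-subsonicity of Definition~\ref{Def:AdmisSolus}\eqref{item3-Def:AdmisSolus}, and $\Gamma^* \subseteq \cl{\mathbb{R}^2_+ \setminus B_1(O_2)}$. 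Passing the Rankine--Hugoniot relation to the limit gives $\rho^*(P^*) = 1$, hence $c^* = 1$ there, so pseudo-subsonicity forces $|D\varphi_2(P^*)| = |D\varphi^*(P^*)| \leq 1$; together with $\Gamma^* \subseteq \cl{\mathbb{R}^2_+ \setminus B_1(O_2)}$ this yields $P^* \in \partial B_1(O_2)$, and $\mathfrak{a}(\rho^*(P^*)) = 1 = |D\varphi_2(P^*)|^2$ then rules out $P^*$ being an endpoint (there $\rho^* = \rho_j^* \geq 1+\delta_{5,6}$ would give $|D\varphi_2(P^*)|^2 \geq \mathfrak{a}(1+\delta_{5,6}) > 1$). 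So $P^*$ is an interior point of $\Gamma^*$; there $\beta = 0$, i.e. $D\varphi^*(P^*) = D\varphi_2(P^*)$, and $\Gamma^*$ is tangent to $\partial B_1(O_2)$ from outside. Consequently $\varphi^* - \varphi_2 \leq 0$ in $\Omega^*$ attains its maximum value $0$ at $P^*$ with vanishing gradient, while every neighbourhood of $P^*$ meets $\Omega^*$ in the pseudo-supersonic region $\{|D\varphi_2| > 1\}$ of state $(2)$; this is incompatible with $\varphi^*$ being pseudo-subsonic in $\Omega^*$ via a Hopf-type argument at $P^*$, giving the contradiction and hence the density bound.

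The substantive obstacle is exactly this last step: equation~\eqref{Eq4PseudoPoten} for $\varphi^*$ degenerates to pseudo-sonic precisely at $P^*$, so the classical Hopf lemma does not apply directly. I would handle this as in the sonic-region analysis of~\cite{BCF-2019, ChenFeldman-RM2018} --- comparing $\varphi^* - \varphi_2$ near $P^*$ with an explicit parabolic barrier adapted to the degenerate-elliptic structure along the sonic circle, or first upgrading the compactness (using the weighted H\"older estimates established elsewhere in this section) to enough regularity of $\varphi^*$ near $P^*$ to run a boundary-point argument. Everything else is routine: once the density bound holds, \eqref{Eq:Result-Prop-3-4} follows from the reduction above, with $C_{\rm sh}$ depending only on $(\gamma,v_2)$ because $\delta_{5,6}$, $C_{\rm ub}$ and $\mathfrak{a}$ do.
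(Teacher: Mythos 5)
There is a genuine gap, and it sits exactly where you flag it, but the problem is worse than "a technical step to be filled in": your route is circular within the paper's logical structure. Your reduction (distance $=|D\varphi_2|-1$ on $\Gamma_{\rm shock}$, the relation $|D\varphi_2\cdot\bm{\nu}|^2=\mathfrak{a}(\rho)$, monotonicity of $\mathfrak{a}$) is fine modulo the $\gamma=1$ case, but it converts the proposition into the statement that the shock strength/downstream density has a uniform positive lower bound on $\Gamma_{\rm shock}$. In the paper that uniform strength bound is precisely what is \emph{derived from} Proposition~\ref{prop:lowerBoundBetweenShockAndSonicCircle} afterwards (see \eqref{PositiveDifference} and Lemma~\ref{Lem:LocalEstimates4Interior}\eqref{Lem:LE4Int-1}, whose proofs open with an appeal to this proposition), so your suggestion to close the argument "using the weighted H\"older estimates established elsewhere in this section" cannot be used: all the up-to-shock estimates of \S\ref{SubSec-EstimatesAwaySonic5n6}--\S\ref{SubSec-EstimatesNearSonic5} depend on Proposition~\ref{prop:lowerBoundBetweenShockAndSonicCircle} (e.g.\ through $\delta'$ in the modified free-boundary function \eqref{FBFunc:gshmod}). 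Independently of that, your compactness step silently needs $C^1$ convergence of $D\varphi^{(k)}$ \emph{up to the free boundary} (to pass the Rankine--Hugoniot relation to the limit at $P^*$ and to assign the endpoint densities $\rho_j$ in the limit), while the only estimates available before this proposition are interior and $\Gamma_{\rm sym}$ estimates (Lemmas~\ref{UniformBound-Lem}--\ref{LocBddLem}); the paper's own compactness statement, Lemma~\ref{Lem:SeqResult}, deliberately gives $C^2$ convergence only on compact subsets of $\Omega^*\cup\Gamma^{*,0}_{\rm sym}$. Finally, the intended contradiction at $P^*$ is at a point where the shock strength vanishes and the equation is degenerate elliptic for \emph{both} states; the sonic-arc barrier analysis you cite treats a different configuration (the downstream uniform states $(5),(6)$ sonic on the known circles $\partial B_{c_j}(O_j)$, with nonvanishing jump), so no existing lemma delivers the Hopf-type conclusion, and you give no construction. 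Also, your "easy case" $|v_2|\geq 1$ fails at $|v_2|=1$, where $|v_2|-1=0$ and the shock actually reaches the wall at $P_0^1$ when $\theta_1\in[\theta^{\rm s},\theta^{\rm d})$.

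For contrast, the paper's proof never touches regularity or strength at the free boundary. For $v_2\leq -1$ it observes that $B_1(O_2)\subseteq\{\eta\leq 0\}$, so it suffices to separate $\Gamma_{\rm shock}$ from $\Gamma_{\rm sym}$ away from the reflection points; near $P_0^1,P_0^2$ one uses $|D\varphi_2(P_0^i)|\geq 1+d_{\rm ref}$ from Lemma~\ref{lem:MonotonicityOfMach2} and Proposition~\ref{Prop:localtheorystate5}\eqref{Prop:localtheorystate5:3n4}, and away from them the separation \eqref{Eq:ResultInStep3} is obtained by a compactness argument whose contradiction comes from testing the \emph{weak formulation} against the limiting solution on a wall segment where $\varphi^*=\varphi_2$ (Lemma~\ref{Lem:SeqResult}\eqref{Lem:SeqResult-item5}), which requires no boundary regularity at the shock; the case $v_2\in(-1,0)$ is handled by the Prandtl--Meyer argument of \cite{BCF-2019}. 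If you want to keep your strategy, you would have to prove the uniform density lower bound on $\Gamma_{\rm shock}$ from scratch (e.g.\ a quantitative entropy estimate at the free boundary uniform in $\btheta$), which is the hard content you have deferred; as written, the proposal does not constitute a proof.
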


\begin{proof} The proof of~\eqref{Eq:Result-Prop-3-4} follows the same argument
as for~\cite[Proposition 3.7]{BCF-2019} except the case that \( v_2 \leq -1\).
In the following, we focus on the case that \( v_2 \leq -1\) and give the proof
in three steps.

\smallskip
\textbf{1}. We first rewrite equation~\eqref{Eq4PseudoPoten} as
\begin{equation*}\label{Newnotation4Eq}
\mbox{div}\mathcal{A}(D\varphi,\varphi)+\mathcal{B}(D\varphi,\varphi) = 0\,,
\end{equation*}
where
$\mathcal{A}(\bm{p},z)\defeq \rho(|\bm{p}|,z)\bm{p}$ and
$\mathcal{B}(|\bm{p}|,z)\defeq 2\rho(|\bm{p}|,z)$
with $\bm{p}=(p_1,p_2)\in\mathbb{R}^2$, $z\in\mathbb{R}$, and
\begin{equation}\label{Eq:DefRhopzSec302}
\rho(|\bm{p}|,z)=
\begin{cases}
\big(1+(\gamma-1)(\frac{1}{2}v_2^2-\frac{1}{2}|\bm{p}|^2-z)\big)^{\frac{1}{\gamma-1}}
&\quad\text{if} \; \gamma>1\,,\\
\,\exp(\frac{1}{2}v_2^2-\frac{1}{2}|\bm{p}|^2-z)  &\quad \text{if} \;\gamma = 1\,.\\
\end{cases}
\end{equation}
We denote the sonic speed as $c(|\bm{p}|,z) \defeq \rho^{(\gamma-1)/2}$ for $\gamma\geq1$.
For a constant $R>1$, define
\begin{equation}\label{KRSet}
\mathcal{K}_R \defeq
\Big\{(\bm{p},z) \in \mathbb{R}^2 \times \mathbb{R} \,:\, |\bm{p}| + |z| \leq R, \,
 \rho(|\bm{p}|,z) \geq R^{-1}, \,\frac{|\bm{p}|^2}{c^2(|\bm{p}|,z)} \leq 1-R^{-1}
\Big\}\,.
\end{equation}
For each $R>1$, there exists a constant $\lambda_R>0$ depending only on $(\gamma,v_2,R)$ such that,
for any $\bm{\kappa}=(\kappa_1,\kappa_2)\in\mathbb{R}^2$,
\begin{equation*}
\sum_{i,j=1}^2\partial_{p_j}\mathcal{A}_i(\bm{p},z)\kappa_i\kappa_j\geq\lambda_R|\bm{\kappa}|^2
\qquad\, \text{for any $(\bm{p},z)\in\mathcal{K}_R$}\,.
\end{equation*}
The next two lemmas follow directly from~\cite[Lemmas~3.8--3.9]{BCF-2019}.

\begin{lemma}\label{CoeffExtdLem}
For $R>2,$ let $\mathcal{K}_R$ be given by~\eqref{KRSet}.
Then there exist functions $(\tilde{\mathcal{A}},\tilde{\mathcal{B}})(\bm{p},z)$ in $\mathbb{R}^2\times\mathbb{R}$ satisfying the following properties{\rm :}
\begin{enumerate}[{\rm (i)}]
\item \label{CoeffExtdLem-item1}
For any $(\bm{p}_0,z_0)\in\mathcal{K}_R,$
$\,(\tilde{\mathcal{A}},\tilde{\mathcal{B}})(\bm{p},z)
= (\mathcal{A},\mathcal{B})(\bm{p},z)$
for all $(\bm{p},z)\in B_{\varepsilon}((\bm{p}_0,z_0))${\rm ;}

\item \label{CoeffExtdLem-item2}
For any $(\bm{p},z) \in\mathbb{R}^2\times\mathbb{R}$ and $\bm{\kappa}=(\kappa_1,\kappa_2)\in\mathbb{R}^2,$
$\,\,\sum_{i,j=1}^2\partial_{p_j}\tilde{\mathcal{A}}_i(\bm{p},z)\kappa_i\kappa_j\geq\lambda_R|\bm{\kappa}|^2${\rm ;}

\item \label{CoeffExtdLem-item3}
$|\tilde{\mathcal{B}}(\bm{p},z)|\leq C_0$
and
$|D^m_{(\bm{p},z)}(\tilde{\mathcal{A}},\tilde{\mathcal{B}})(\bm{p},z)|\leq C_m$
in $\mathbb{R}^2\times\mathbb{R}$ for each $m=1,2,\cdots,$
\end{enumerate}
where the positive constants
$\varepsilon$ and $\lambda_{R}$ depend only on $(\gamma,v_2,R)${\rm ,}
and $C_m$ depends on $(\gamma,v_2,R,m)$.
\end{lemma}

\begin{lemma}\label{LocBddLem}
Fix $\gamma\geq1$ and $v_2\in(v_{\min},0)$.
For any given constants $\alpha\in(0,1),$ $m\in\mathbb{N},$ and $r>0,$ there exist constants $C,C_m>0$
depending only on $(\gamma,v_2,\alpha,r),$ with $C_m$ depending additionally on $m,$
such that any admissible solution $\varphi$ corresponding to parameters
$\btheta\in\Theta$ satisfies the following estimates{\rm :}
\begin{enumerate}[{\rm (i)}]
\item \label{LocBddLem-item1}
For any $B_{4r}(P)\subseteq\Omega,$
\begin{align*}
\|\varphi\|_{2,\alpha,\overline{B_{2r}(P)}} \leq C\,, \qquad
\|\varphi\|_{m,\overline{B_{r}(P)}} \leq C_m\,;
\end{align*}

\item  \label{LocBddLem-item2}
If $P\in\Gamma_{\rm{sym}},$ and $B_{4r}(P)\cap\Omega$ is the half-ball $B^+_{4r}(P) \defeq B_{4r}(P)\cap\{\eta>0\},$ then
\begin{flalign*}
&&
\|\varphi\|_{2,\alpha,\overline{B_{2r}(P)}\cap\Omega} \leq C\,, \qquad
\|\varphi\|_{m,\overline{B_{r}(P)}\cap\Omega} \leq C_m\,.
&&
\end{flalign*}
\end{enumerate}
\end{lemma}

\textbf{2.}
Fix $\varepsilon>0$ small enough such that $2\xi^{P_3}<\xi^{P_0}$ for all $\theta_2\in[0,\varepsilon)$,
where $\xi^{P_0} < 0$ is the $\xi$--coordinate of $P_0\defeq\lim\limits_{\theta_2\to0^+}P_3$, which depends only on $(\gamma,v_2)$.
We define
\begin{equation}\label{Def4dsep}
d_{\rm sep}\defeq \min{\big\{ {-}\frac{\xi^{P_0}}{2}, \, \inf\limits_{\theta_2\in[\varepsilon,\theta^{\rm d}]}|u_6|\big\}}\,.
\end{equation}
It is direct to verify that \(d_{\rm sep} > 0\) by using \(u_6 = v_2\tan\theta_{26}\)
and property~\eqref{eq:reflected-shock-angle-lower-bound}, and $d_{\rm sep}$ depends only on $(\gamma,v_2)$.
Then, for any $\btheta\in\Theta$,  ${\rm dist}(P_2,P_3)\geq 2 d_{\rm sep}$.
We also define
\begin{equation}\label{eq:defOfr1}
    r_1 \defeq \frac{1}{2} \inf_{\btheta \in \Theta}
    \big\{ | D\varphi_5 (P_0^1 ) | , | D\varphi_6 (P_0^2) | \big\}\,.
\end{equation}
By~\eqref{eq:state5RHcondition1} and Lemma~\ref{lem:properties-of-state-5-and-6},
it is direct to verify that $r_1>0$ depends only on $(\gamma,v_2)$.

For any $r\in(0,r_1)$ and for $i=1,2$, we define two sets:
\begin{equation*}
    \begin{aligned}
        &   \mathscr{A}\defeq \big\{\theta_i\in(0,\theta^{\rm d}] \,:\, |P_0^i-P_{i+1}|
        \geq \frac{r}{20}\big\}\cup\{0\}\,,\quad  \mathscr{B}\defeq \big\{\theta_i\in[0,\theta^{\rm d}] \,:\,
         |P_{3i-2}-P_{i+1}| \geq \frac{r}{20}\big\}\,.
    \end{aligned}
\end{equation*}
From the continuous dependence of $P_0^i,P_{3i-2}$, and $P_{i+1}$ on $\theta_i\in(0,\theta^{\rm d}]$,
we know that $\mathscr{A}$ and $\mathscr{B}$ both are closed sets.
Then there exists a constant $C_1>0$ such that
\begin{equation*}
    {\rm dist}(P_{i+1},\Gamma_{\rm{sym}})\geq\frac{2}{C_1}
    \qquad \text{for all $\theta_i\in \mathscr{A}\cup\mathscr{B}$}\,.
\end{equation*}
If ${\rm dist}(P_{i+1},\Gamma_{\rm{sym}})\leq\frac{1}{C_1}$,
then $\theta_i\notin \mathscr{A}\cup \mathscr{B}$, which implies that
\begin{equation*}
    \max\{|P_0^i-P_{i+1}|, |P_{3i-2}-P_{i+1}|\} < \frac{r}{20}\,.
\end{equation*}

We now seek a constant $C_r > 0$ depending only on $(\gamma,v_2,r)$ such that,
for any admissible solution of {\rm Problem~\ref{FBP}} corresponding to parameters $\btheta\in\Theta$,
\begin{equation}\label{Eq:Lem942a}
\sup\limits_{P\in\Gamma_{\rm{shock}}\cap B_{r/2}(P_{3i-2})}{\rm{dist}}(P,\Gamma_{\rm{sym}})
>C_r^{-1}\qquad  \text{if $|P_{3i-2}-P_{i+1}|\leq\frac{r}{10}$}\,.
\end{equation}
To obtain the above result, we follow the proof as in~\cite[Lemma 9.4.2]{ChenFeldman-RM2018},
in which we need to take a limit of parameters $\btheta^{(n)}$.

In order to take the limit, we use a
compactness lemma below.
For concreteness, in the following, we use the notation defined in {\rm Definition~\ref{Def:GammaSonicAndPts}},
and append superscripts $(n)$ and $*$ to indicate that the corresponding quantities are related
to parameters $\btheta^{(n)}$ and $\btheta^{*}$, respectively.

\begin{lemma}
\label{Lem:SeqResult}
Fix $\gamma\geq1$ and $v_2\in(v_{\min},0)$.
Let $\big\{ \btheta^{(n)}\big\}_{n \in \mathbb{N}} \subseteq \Theta$ be any sequence of parameters satisfying
$\, \lim_{n\to\infty}\btheta^{(n)} = \btheta^{*}$
for some $\btheta^{*} \in \cl{\Theta}$.
For each \(n\in\mathbb{N},\) let
$\varphi^{(n)}$ be an admissible solution corresponding to parameters $\btheta^{(n)}$.
Then there exists a subsequence $\{\varphi^{(n_j)}\}_{j\in\mathbb{N}}$ converging uniformly on
any compact subset of $\mathbb{R}^2_+$ to a function $\varphi^*\in C^{0,1}_{\rm loc}(\overline{\mathbb{R}^2_+})$.
Moreover, $\varphi^*$ is a weak solution of equation~\eqref{Eq4PseudoPoten}
in $\mathbb{R}^2_+$ and satisfies the following properties{\rm :}
\begin{enumerate}[{\rm (i)}]
\item \label{Lem:SeqResult-item1}
For the corner points $P_m,$ $m=1,2,3,4,$ and the reflection points $P_0^i,$ $i=1,2,$
\begin{eqnarray*}
\lim\limits_{j\to\infty}P_m^{(n_j)}=P_m^*\,, \qquad
\lim\limits_{j\to\infty}P_0^{i,(n_j)}=P_0^{i,*}\,.
\end{eqnarray*}
Note that $\xi^{P_0^{1,*}} = \infty$ in the case that $\theta^*_1=0,$ and $\xi^{P_0^{2,*}}=-\infty$
in the case that $\theta^*_2=0$.

\item \label{Lem:SeqResult-item2}
Let $\eta=f_{2l}^{(n_j)}(\xi),$ $l=5,6,$ and $\eta=f^{(n_j)}_{\rm{sh}}(\xi)$ be the expressions
of $S^{(n_j)}_{2l}$ and $\Gamma^{(n_j)}_{\rm shock}$
from {\rm Definition~\ref{def:state5andstate6}} and {\rm Lemma~\ref{Prop:GammaShockExpres},} respectively.
Extend $f_{\rm{sh}}^{(n_j)}$ by
\begin{equation*}\label{Express4ShSeq}
f^{(n_j)}_{\rm{sh}}(\xi) =
\begin{cases}
\, f_{25}^{(n_j)}(\xi) &\quad \text{for $\xi^{P^{(n_j)}_2}< \xi < \xi^{P_0^{1,(n_j)}}$}\,,\\
\, f_{26}^{(n_j)}(\xi)  &\quad \text{for $\xi^{P_0^{2,(n_j)}} < \xi < \xi^{P_3^{(n_j)}}$}\,.
\end{cases}
\end{equation*}
Then
$\{f^{(n_j)}_{\rm{sh}}\}_{j\in\mathbb{N}}$ converges uniformly
to a function $f^{*}_{\rm{sh}}\in C^{0,1}([\xi^{P_0^{2,*}},\xi^{P_0^{1,*}}])$.

\item \label{Lem:SeqResult-item3}
Let smooth functions $\eta = g^{(n_j)}_{l,\rm{so}}(\xi)$
be the expressions of $\Gamma^{l,(n_j)}_{\rm{sonic}}$ for $l=5,6$.
Then $\{g^{(n_j)}_{5,\rm{so}}\}_{j\in\mathbb{N}}$ converges pointwise
to $g^{*}_{5,\rm{so}}$ on $(\xi^{P^{*}_2},\xi^{P^{*}_1}),$ and $\{g^{(n_j)}_{6,\rm{so}}\}_{j\in\mathbb{N}}$
converges pointwise to $g^{*}_{6,\rm{so}}$ on $(\xi^{P^{*}_4},\xi^{P^{*}_3})$.

\item \label{Lem:SeqResult-item4}
Denote
$\widehat{\Omega^*}\defeq \big\{\bm{\xi}\in[\xi^{P^*_4},\xi^{P^*_1}]\times\overline{\mathbb{R}_+}\,:\,
0\leq\eta\leq f^*_{\rm{bd}}(\xi)\big\},$
where $f^*_{\rm{bd}}(\cdot)$ is given by
\begin{equation*}
f^*_{\rm{bd}}(\xi) =
\begin{cases}
\, g^{*}_{6,\rm{so}}(\xi)  &\quad \text{for $\xi^{P^*_4}\leq\xi\leq\xi^{P^*_3}$}\,,\\
\, f^*_{\rm{sh}}(\xi) \qquad  &\quad \text{for $\xi^{P^*_3}<\xi\leq\xi^{P_2}$}\,,\\
\, g^*_{5,\rm{so}}(\xi) \qquad  &\quad \text{for $\xi^{P^*_2}<\xi\leq\xi^{P^*_1}$}\,.
\end{cases}
\end{equation*}
Denote by $\Omega^*$ the interior of $\widehat{\Omega^*},$
$\Gamma^*_{\rm{shock}} \defeq \{(\xi,f^*_{\rm{sh}}(\xi)) \,:\, \xi\in(\xi^{P^*_3},\xi^{P^*_2})\},$
and $\Gamma^*_{\rm{sym}}\defeq \{(\xi,0) \,:\, \xi\in(\xi^{P^*_4},\xi^{P^*_1})\},$
and denote by $\Gamma^{*,0}_{\rm{sym}}$ the relative interior of $\Gamma^*_{\rm{sym}}\setminus\Gamma^*_{\rm{shock}}$.
Then $\varphi^* \in C^\infty(\Omega^*\cup\Gamma^{*,0}_{\rm{sym}}),$ and
\begin{enumerate}[{\rm (\ref{Lem:SeqResult-item4}.1)}]
\item \label{Lem:SeqResult-item4a}
$\lim\limits_{j\to\infty}\|\varphi^{(n_j)}-\varphi^*\|_{C^2(K)}=0\,$ for any $K \Subset \Omega^* \cup \Gamma^{*,0}_{\rm{sym}}${\rm,}

\item \label{Lem:SeqResult-item4b}
$D (\varphi_2 - \varphi^*) \cdot (\cos\theta,\sin\theta) \leq 0\,$ for all $\theta \in (\theta^*_{26},\theta^*_{25})${\rm,}

\item \label{Lem:SeqResult-item4c}
Equation~\eqref{Eq4PseudoPoten} is strictly elliptic in $\Omega^*\cup\Gamma^{*,0}_{\rm{sym}}${\rm,}

\item \label{Lem:SeqResult-item4d}
In $\mathbb{R}^2_+\setminus\Omega^*,$ $\varphi^*$ is equal to the constant states $\bar{\varphi}^*,$ $\varphi^*_5,$
and $\varphi^*_6$ in their respective domains as given
in {\rm Definition~\ref{Def:AdmisSolus}\eqref{item2-Def:AdmisSolus},}
where $\bar{\varphi}^*$ is given by~\eqref{BackGroundSolu} with $\btheta = \btheta^*$.
\end{enumerate}

\item \label{Lem:SeqResult-item5}
$\Gamma_{\rm{sym}}^{*,0}=\{\varphi_2>\varphi^*\}\cap\Gamma_{\rm{sym}}^{*}$ is dense in $\Gamma_{\rm{sym}}^{*}$.
\end{enumerate}
\end{lemma}

Using Lemmas~\ref{UniformBound-Lem} and~\ref{LocBddLem},
properties~\eqref{Lem:SeqResult-item1}--\eqref{Lem:SeqResult-item4} in Lemma~\ref{Lem:SeqResult}
can be proved by
similar arguments
as for~\cite[Corollary~3.10]{BCF-2019}.

For property~\eqref{Lem:SeqResult-item5}, if it is not true,
we can choose $\xi_1,\xi_2\in(\xi^{P_3^*},\xi^{P^*_2})$ such that
$\{(\xi,0) \,:\, \xi_1<\xi<\xi_2\}\subseteq\{\varphi_2\leq\varphi^*\}\cap\Gamma_{\rm{sym}}^{*}$.
By Definition~\ref{Def:AdmisSolus}\eqref{item4-Def:AdmisSolus},
$\varphi^{(n_j)}\leq\varphi_2$ in $\Omega^{(n_j)}$ so that
$\varphi^{*}\leq\varphi_2$ in $\overline{\Omega^{*}}$.
Combining with property (iv.4), we obtain that $\varphi^*(\bm{\xi})=\varphi_2(\bm{\xi})$ on $\{\bm{\xi}\in\overline{\mathbb{R}_+^2}: \xi_1<\xi_2\}$.
We take any non-negative function $\zeta\in C^{\infty}_{\rm c}(\overline{\mathbb{R}^2_+})$
satisfying that ${\rm supp} \, \zeta\subseteq \{\bm{\xi}\in\overline{\mathbb{R}^2_+} \,:\, \xi_1 < \xi < \xi_2 \}$
and $\zeta > 0$ on some subset of $\{(\xi,0) \,:\, \xi_1 < \xi < \xi_2 \}$ with positive measure.
Then
\begin{align*}
&\int_{\mathbb{R}^2_+} (\rho(|D\varphi^*|,\varphi^*) D\varphi^* \cdot D\zeta(\bm{\xi}) - 2\rho(|D\varphi^*|,\varphi^*) \zeta(\bm{\xi})) \, {\rm d}\bm{\xi}\\
&\;\;= \int_{\mathbb{R}^2_+} ( \rho_2 D\varphi_2 \cdot D\zeta(\bm{\xi}) - 2 \rho_2 \zeta(\bm{\xi})) \, {\rm d} \bm{\xi}
= -v_2 \int_{\xi_1}^{\xi_2} \zeta(\xi,0) \, {\rm d} \xi > 0\,,
\end{align*}
which contradicts the fact that $\varphi^*$ is a weak solution of
equation~\eqref{Eq4PseudoPoten} in $\mathbb{R}^2_+$.
Then Lemma~\ref{Lem:SeqResult}\eqref{Lem:SeqResult-item5} is verified.

\smallskip
We prove~\eqref{Eq:Lem942a} by
contradiction. If it is not true, we can choose a sequence of parameters $\{\btheta^{(n)}\}_{n\in\mathbb{N}}$ such that
\begin{equation*}
    \sup\limits_{P\in\Gamma^{(n)}_{\rm{shock}}\cap B_{r/2}(P^{(n)}_{3i-2})}{\rm{dist}}(P,\Gamma^{(n)}_{\rm{sym}})\leq\frac{1}{n} \qquad
    \text{if $|P^{(n)}_{3i-2}-P^{(n)}_{i+1}|\leq\frac{r}{10}$}\,.
\end{equation*}
Then, by Lemma~\ref{Lem:SeqResult}, we can further choose
a subsequence $\{\btheta^{(n_j)}\}_{j\in\mathbb{N}}$ and take a limit
to some $\btheta^{*}\in\overline{\Theta}$, which provides a contradiction
to Lemma~\ref{Lem:SeqResult}\eqref{Lem:SeqResult-item5}.

From~\eqref{Eq:Lem942a}, we can choose $\hat{Q}_i\in\overline{\Gamma_{\rm{shock}}\cap B_{r/2}(P_{3i-2})}$
with ${\rm dist}(\hat{Q}_i,\Gamma_{\rm{sym}})\geq C_r^{-1}$,
so that
$\hat{Q}_i\in \overline{\Gamma_{\rm shock} \cap B_{3r/4}(P_0^i)}$.
For $i = 1,2$, we take $Q_i=P_{i+1}\in\overline{\Gamma_{\rm shock}}$
if ${\rm dist}(P_{i+1},\Gamma_{\rm sym})> C_1^{-1}$;
otherwise, we take $Q_i=\hat{Q}_{i}\in\overline{\Gamma_{\rm shock}\cap B_{3r/4}(P_0^i)}$ as chosen above.
Then it follows that \begin{equation*}
    \min\{{\rm dist}(Q_1,Q_2), \, {\rm dist}(Q_1,\Gamma_{\rm sym}),\, {\rm dist}(Q_2,\Gamma_{\rm sym})\}
    \geq \min\{d_{\rm sep}, \, C_1^{-1}, \, C_{r}^{-1}\} \eqdef a\,.
\end{equation*}
Combining with Lemma~\ref{Lem:SeqResult} and following a similar proof
as for~\cite[Lemmas~9.4.1 and~15.4.1]{ChenFeldman-RM2018}, we obtain that,
for
$a>0$ defined above, there exists a constant $C_2>0$ depending only on $(\gamma,v_2,a)$
such that, for any admissible solution
corresponding to parameters $\btheta\in\Theta$,
\begin{equation*}
    {\rm dist}(\Gamma_{\rm shock}[Q_1,Q_2], \,\Gamma_{\rm sym})\geq C_2^{-1}\,,
\end{equation*}
where $\Gamma_{\rm shock}[Q_1,Q_2]$ represents the segment on $\Gamma_{\rm shock}$ between points $Q_1$ and $Q_2$.
Therefore,
\begin{equation}\label{Eq:ResultInStep3}
{\rm dist}(\Gamma_{\rm shock} \setminus ( B_r (P_0^1) \cup B_r (P_0^2) ) , \, \Gamma_{\rm{sym}} ) > C_2^{-1}\,.
\end{equation}

\textbf{3.} Since \( v_2 \leq -1\), \(B_1(O_2) \subseteq \mathbb{R}\times(-\infty,0)\).
By Lemma~\ref{lem:MonotonicityOfMach2} and property~\eqref{Prop:localtheorystate5:3n4} of Proposition~\ref{Prop:localtheorystate5},
\(d_{\rm ref}\defeq |D\varphi_2(P_0^1|_{\theta_1 = \theta^{\rm d}})| - 1 > 0\) depending only on \((\gamma, v_2)\) satisfies
\begin{align*}
    {\rm dist}(P_0^1, \,B_1(O_2))
    = |D\varphi_2(P_0^1)| - 1 \geq d_{\rm ref}\,.
\end{align*}
By symmetry, we also see that ${\rm dist}(P_0^2, \, B_1(O_2)) \geq d_{\rm ref}$.
Denote \(\bar{r} \defeq \frac12 \min \{r_1, d_{\rm ref}\} > 0\)
with \(r_1\) defined by~\eqref{eq:defOfr1}. Then
\begin{equation*}
    {\rm dist}(\Gamma_{\rm shock} \cap (B_{\bar{r}}(P_0^1) \cup B_{\bar{r}}(P_0^2)) , \, B_1(O_2) ) \geq \bar{r}\,.
\end{equation*}
Meanwhile, from the argument in Step~\textbf{2}, there exists a constant \(C_2>0\)
depending only on \((\gamma, v_2, \bar{r})\) such that~\eqref{Eq:ResultInStep3} holds
with $r$ replaced by $\bar{r}$.
Thus, for any admissible solution corresponding to parameters \(\btheta \in \Theta\), we have
\begin{align*}
    {\rm dist}(\Gamma_{\rm shock}, \, B_1(O_2) ) \geq \min\{ \bar{r}, \, C_2^{-1}\} \eqdef C_{\rm sh}^{-1}\,.
\end{align*}
This completes the proof of~\eqref{Eq:Result-Prop-3-4}.
\end{proof}

\subsubsection{Uniform estimate for the ellipticity of equation~\eqref{Eq4phi}}
We now show a uniform estimate for the ellipticity of equation~\eqref{Eq4phi},
which may be degenerate near $\Gamma_{\rm{sonic}}^{5}\cup\Gamma_{\rm{sonic}}^{6}$.
Fix a function \(h^\ast \in C^\infty(\mathbb{R})\) such that
\begin{equation*}
    h^\ast (s) =
    \begin{cases}
        \, s &\quad \text{for $s\leq \frac12$}\,, \\[1mm]
        \, 1 &\quad \text{for $s\geq1$}\,,
    \end{cases}
\end{equation*}
satisfying $0 \leq (h^\ast)'(\cdot)\leq 2$ on $\mathbb{R}$.
For each \( \btheta \in \cl{\Theta}\), define
\begin{equation}
\label{Eq:Sec3-Def4hatcj}
    \hat{c}_j \defeq \min{\{ c_{j}, \, \abs{D\varphi_{j} (P_0^{j-4})} \}}\,,
    \quad
    q_j  \defeq {\rm dist}(O_j, \, S_{2j}) \qquad\, \mbox{for $j=5,6$}\,.
\end{equation}
By Lemma~\ref{lem:properties-of-state-5-and-6} and~\cite[Lemma~6.1.2]{ChenFeldman-RM2018},
we see that \(\hat{c}_j > q_j\) for every \(\btheta \in \overline{\Theta}\), so that
there exists a constant \(\hat{\delta}>0\) depending only on \((\gamma,v_2)\) such that
\begin{equation}\label{Eq:DistO5P01minusO5S25}
\inf_{\btheta \in \cl{\Theta}}\min\{\hat{c}_5 - q_5, \, \hat{c}_6 - q_6\} \geq \hat{\delta} \,.
\end{equation}
For $j=5,6$, define \(g^\ast_j:\mathbb{R}^2 \to \mathbb{R}_+\) by
\begin{equation*}
g^\ast_j(\bm{\xi}) \defeq \frac{1}{2} (\hat{c}_j - q_j)
    h^\ast (\frac{{\rm{dist}}(\bm{\xi}, \, \partial B_{\hat{c}_j}(O_j))}{\hat{c}_j - q_j})
+\big(1-\frac{\hat{c}_j^2}{c^2_j}\big)\,.
\end{equation*}
We choose a function \(\chi\in C^{\infty}(\mathbb{R})\) such that
\begin{equation*}
    \chi(\xi) =
    \begin{cases}
       \,1  & \quad \text{for} \;\xi \leq -\tfrac{d_{\rm sep}}{4}\,, \\
       \,0  &  \quad \text{for} \;\xi \geq \tfrac{ d_{\rm sep}}{4}\,,
    \end{cases}
\end{equation*}
satisfying  $-\frac{4}{d_{\rm sep}} \leq\chi'(\cdot)\leq 0$ on $\mathbb{R}$,
where $d_{\rm sep}>0$ is given by~\eqref{Def4dsep} depending only on \((\gamma,v_2)\).
Finally, define a function \(g:\mathbb{R}^2 \to \mathbb{R}_+\) by
\begin{equation}  \label{eq:g_theta1theta2}
g(\bm{\xi}) \defeq \chi(\xi) g_6^\ast(\bm{\xi})  + (1 - \chi(\xi))g_5^\ast(\bm{\xi})\,.
\end{equation}

By {\rm Definition~\ref{Def:AdmisSolus}} and {\rm Lemma~\ref{UniformBound-Lem}},
there exist constants \(d>0\) and \(C>1\) depending only on \((\gamma, v_2)\) such that,
if \(\varphi\) is an admissible solution corresponding to \(\btheta\in\Theta\)
with pseudo-subsonic region \(\Omega\),
then \(g\) given by~\eqref{eq:g_theta1theta2} satisfies the following properties:
\begin{enumerate}[\quad(a)]
\item
\label{g-property-a}
For \(\bm{\xi}\in\Omega\) satisfying \({\rm{dist}}(\bm{\xi},\Gamma^j_{\rm{sonic}}) < d\),
     $C^{-1} {\rm{dist}}_{j}(\bm{\xi}, \,\Gamma_{\rm{sonic}}^j) \leq
        g(\bm{\xi}) \leq
        C\,{\rm{dist}}_{j}(\bm{\xi}, \,\Gamma_{\rm{sonic}}^j)$,
    where ${\rm{dist}}_{j}(\bm{\xi}, \,\Gamma_{\rm{sonic}}^j)
    \defeq {\rm{dist}}(\bm{\xi}, \,\Gamma_{\rm{sonic}}^j) + ( c_j - \hat{c}_j
    )$ for $j=5,6$.

\item
\label{g-property-b}
For each \(\varepsilon>0\), there exists a constant \(C_{\varepsilon} > 1\) depending only
on \((\gamma, v_2,\varepsilon)\) such that, if a point \(\bm{\xi}\in\cl{\Omega}\)
satisfies \({\rm{dist}}(\bm{\xi}, \, \Gamma_{\rm{sonic}}^5 \cup \Gamma_{\rm{sonic}}^6) > \varepsilon\),
then \(C_{\varepsilon}^{-1}\leq g(\bm{\xi})\leq C_{\varepsilon}\).
\end{enumerate}

In fact, for property~\eqref{g-property-a} above, it suffices
to choose $d\defeq\min\{\tfrac{d_{\rm sep}}{2}, \hat{\delta}\}$,
with constants $d_{\rm sep}$ and $\hat{\delta}$ given by~\eqref{Def4dsep} and~\eqref{Eq:DistO5P01minusO5S25}
respectively.
Then, choosing \(\varepsilon \defeq \tfrac{d}{2}\) in property~\eqref{g-property-b},
we deduce that there exists a constant \({C}_{\flat}>1\) depending only on \((\gamma, v_2)\) such that,
for any \(\btheta\in\Theta\),
\begin{equation}\label{Eq:FuncgEquivDistb}
    {C}_{\flat}^{-1} {\rm{dist}}^\flat(\bm{\xi}, \, \Gamma^5_{\rm{sonic}}\cup \Gamma^6_{\rm{sonic}} ) \leq
    g(\bm{\xi}) \leq
    {C}_{\flat} \, {\rm{dist}}^\flat(\bm{\xi}, \, \Gamma^5_{\rm{sonic}}\cup \Gamma^6_{\rm{sonic}} )
    \qquad
    \text{for all $\bm{\xi}\in\cl{\Omega}$}\,,
\end{equation}
where
\begin{equation} \label{eq:dist-flat}
    {\rm{dist}}^\flat(\bm{\xi}, \, \Gamma^5_{\rm{sonic}}\cup \Gamma^6_{\rm{sonic}} ) \defeq
    \min \big\{ \frac{d}{2}, \, {\rm{dist}}_{5}(\bm{\xi}, \,\Gamma_{\rm{sonic}}^5), \,{\rm{dist}}_{6}(\bm{\xi}, \, \Gamma_{\rm{sonic}}^6) \big\}\,.
\end{equation}
\begin{proposition}
\label{prop:ellipticDegeneracyNearSonicBoundary}
There exists a constant \(\mu>0\) such that, if \(\varphi\) is an admissible solution corresponding
to \(\btheta\in\Theta\) and \(\Omega\) is its pseudo-subsonic region, then the pseudo-Mach number
\begin{equation*}
    M(\bm{\xi})\defeq \frac{|D\varphi(\bm{\xi})|}{c(|D\varphi(\bm{\xi})|,\varphi(\bm{\xi})) }
\end{equation*}
satisfies
\begin{equation} \label{eq:MachNumberNearSonicBoundary}
    M^2(\bm{\xi}) \leq 1 - \mu g(\bm{\xi}) \qquad \text{in $\cl{\Omega}$}\,,
\end{equation}
and there exists a constant \(C>1\) such that
\begin{equation}\label{Eq:Ellipticity4PsedoPF}
C^{-1} {\rm{dist}}^\flat(\bm{\xi}, \, \Gamma^5_{\rm{sonic}}\cup \Gamma^6_{\rm{sonic}}) |\bm{\kappa}|^2
\leq \sum_{i,j=1}^2 \mathcal{A}^i_{p_j}(D\varphi(\bm{\xi}),\varphi(\bm{\xi}))\kappa_i\kappa_j \leq C|\bm{\kappa}|^2
\end{equation}
for all \(\bm{\xi}\in\overline{\Omega}\) and \(\bm{\kappa}=(\kappa_1,\kappa_2)\in\mathbb{R}^2\),
where \(\mathcal{A}(\bm{p},z) = \rho(\abs{\bm{p}},z)\bm{p},\)
\({\rm{dist}}^\flat(\blank, \blank)\) is given by~\eqref{eq:dist-flat}{\rm,} and
constants \(\mu\) and \(C\) depend only on \((\gamma, v_2)\).
\end{proposition}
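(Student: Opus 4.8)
The plan is to reduce the ellipticity estimate~\eqref{Eq:Ellipticity4PsedoPF} to the pointwise Mach-number bound~\eqref{eq:MachNumberNearSonicBoundary}. For the potential-flow nonlinearity $\mathcal{A}(\bm p,z)=\rho(|\bm p|,z)\bm p$ with $\rho$ as in~\eqref{Relat4Rho}, a direct differentiation gives $\mathcal{A}^i_{p_j}=\rho\big(\delta_{ij}-\tfrac{p_ip_j}{c^2}\big)$, a symmetric matrix with eigenvalues $\rho$ (on $\bm p^{\perp}$) and $\rho(1-M^2)$ (on $\mathbb{R}\bm p$). Hence, once~\eqref{eq:MachNumberNearSonicBoundary} is available, the lower bound in~\eqref{Eq:Ellipticity4PsedoPF} follows from $\rho(1-M^2)\ge\mu\,\rho\, g(\bm\xi)$ together with the uniform bounds $\rho^\ast(\gamma)\le\rho\le C_{\rm ub}$ from Lemma~\ref{UniformBound-Lem} and the equivalence~\eqref{Eq:FuncgEquivDistb} of $g$ with ${\rm dist}^\flat(\blank,\Gamma^5_{\rm sonic}\cup\Gamma^6_{\rm sonic})$; the upper bound follows from $\rho\le C_{\rm ub}$. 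Thus everything reduces to~\eqref{eq:MachNumberNearSonicBoundary}, which I would establish separately in the region away from the sonic boundaries and in a one-sided neighbourhood of each $\Gamma^j_{\rm sonic}$.

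For the region $\{\bm\xi\in\cl\Omega:{\rm dist}(\bm\xi,\Gamma^5_{\rm sonic}\cup\Gamma^6_{\rm sonic})\ge\varepsilon\}$ with $\varepsilon>0$ fixed, I would argue by contradiction and compactness: if there were admissible solutions $\varphi^{(n)}$ with parameters $\btheta^{(n)}\in\Theta$ and points $\bm\xi_n$ in that region with $M^2(\bm\xi_n)\to1$, then, passing to the subsequential limit supplied by Lemma~\ref{Lem:SeqResult}, the limiting weak solution $\varphi^\ast$ would be pseudo-sonic at a point $\bm\xi^\ast\in\cl{\Omega^\ast}$ at distance $\ge\varepsilon$ from $\Gamma^{5,\ast}_{\rm sonic}\cup\Gamma^{6,\ast}_{\rm sonic}$. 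By Lemma~\ref{Lem:SeqResult}\eqref{Lem:SeqResult-item4c}, equation~\eqref{Eq4PseudoPoten} is strictly elliptic in $\Omega^\ast\cup\Gamma^{\ast,0}_{\rm sym}$, which rules out $\bm\xi^\ast$ being there; the remaining possibilities ($\bm\xi^\ast$ on $\cl{\Gamma^\ast_{\rm shock}}$ or at a corner) are excluded by a separate uniform near-shock estimate---using that $|D\varphi_2|={\rm dist}(\blank,O_2)$ is bounded below by $1+C_{\rm sh}^{-1}$ on $\Gamma_{\rm shock}$ by Proposition~\ref{prop:lowerBoundBetweenShockAndSonicCircle}, the Rankine--Hugoniot relations on the shock, the monotonicity cone~\eqref{nStrictDirectMonotPropty}, and the shock-polar geometry for $2$-D steady potential flow as in~\cite{ChenFeldman-RM2018}---together with the slip condition on $\Gamma_{\rm sym}$ and the corner analysis of~\cite{BCF-2019,ChenFeldman-RM2018}. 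This yields a constant $\mu_\varepsilon>0$ depending only on $(\gamma,v_2,\varepsilon)$ with $M^2\le1-\mu_\varepsilon$ on that region, hence $M^2\le1-\mu g$ there after shrinking $\mu$ in view of property (b) of $g$.

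It remains to estimate $M^2$ in $\cl\Omega\cap\{{\rm dist}(\blank,\Gamma^j_{\rm sonic})<d\}$, where~\eqref{Eq4phi} degenerates; this is the main obstacle. When $\theta_{j-4}\in[\theta^{\rm s},\theta^{\rm d})$, $\Gamma^j_{\rm sonic}$ degenerates to the point $\{P_0^j\}$ with $\hat c_j=|D\varphi_j(P_0^j)|<c_j$, so the constant $1-\hat c_j^2/c_j^2$ entering $g_j^\ast$ is bounded below by a positive constant depending only on $(\gamma,v_2)$ by Lemma~\ref{lem:properties-of-state-5-and-6}, no genuine degeneracy occurs, and the bound follows from the previous paragraph. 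When $\theta_{j-4}\in(0,\theta^{\rm s})$, so that $\Gamma^j_{\rm sonic}$ is a genuine arc of $\partial B_{c_j}(O_j)$ and $\hat c_j=c_j$, I would work in polar coordinates $(r,\theta)$ centred at $O_j$, set $x\defeq c_j-r$ and $\psi\defeq\varphi-\varphi_j$, and use that $\varphi_j=-\tfrac12 r^2+{\rm const}$, so that $c_j$ is constant and $c^2-|D\varphi|^2=(c_j^2-r^2)+(\gamma+1)r\psi_r-(\gamma-1)\psi-\tfrac{\gamma+1}{2}\big(\psi_r^2+r^{-2}\psi_\theta^2\big)$ with $c_j^2-r^2=2c_jx-x^2$; the boundary conditions in Problem~\ref{FBP}\eqref{item3n4-FBP} give $\psi=0$ and $D\psi=0$ on $\Gamma^j_{\rm sonic}$, while Definition~\ref{Def:AdmisSolus}\eqref{item4-Def:AdmisSolus} gives $\psi\ge0$ in $\Omega$ near $\Gamma^j_{\rm sonic}$. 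Since the $(\gamma+1)r\psi_r$ term has the same order as the leading term $2c_jx$, the estimate reduces to the one-sided bound $\psi_r\ge-\big(\tfrac{2}{\gamma+1}-c_0\big)x+O(x^{1+\sigma})$ near the sonic circle, for a uniform $c_0>0$ and some $\sigma>0$; I would obtain this by comparison with barriers adapted to the anisotropic scaling of the degenerate equation near $\partial B_{c_j}(O_j)$ (under which the radial variable scales like the square of the tangential one), using the interior estimates of Lemma~\ref{LocBddLem}, as in~\cite[Ch.~11]{ChenFeldman-RM2018} and~\cite[\S3]{BCF-2019}. This produces $c^2-|D\varphi|^2\ge c_0' x$, and comparing $x$ with ${\rm dist}_j(\blank,\Gamma^j_{\rm sonic})$ via property (a) of $g$ gives $c^2-|D\varphi|^2\ge c_0'' g(\bm\xi)$ there. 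Finally, taking $\mu$ to be the minimum of the constants produced in the two regions and recalling the cutoff construction~\eqref{eq:g_theta1theta2}, one obtains~\eqref{eq:MachNumberNearSonicBoundary}, and hence~\eqref{Eq:Ellipticity4PsedoPF}, with $\mu$ and $C$ depending only on $(\gamma,v_2)$.
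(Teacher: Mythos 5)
Your reduction of \eqref{Eq:Ellipticity4PsedoPF} to the Mach-number bound \eqref{eq:MachNumberNearSonicBoundary} via the eigenvalues $\rho$ and $\rho(1-M^2)$ of $D_{\bm p}\mathcal{A}$, together with Lemma~\ref{UniformBound-Lem} and \eqref{Eq:FuncgEquivDistb}, is correct and is exactly what the paper leaves implicit. However, your proof of \eqref{eq:MachNumberNearSonicBoundary} itself diverges from the paper's argument and has genuine gaps at the hard points. The paper proves \eqref{eq:MachNumberNearSonicBoundary} by a maximum-principle argument on the single function $M^2+\mu g$: the ellipticity principle \cite[Theorem~5.3.1]{ChenFeldman-RM2018} (using $\partial_\eta g=0$ on $\Gamma_{\rm sym}$) excludes a maximum in $\Omega\cup\Gamma_{\rm sym}$ above a threshold; the possibility $P_{\rm max}\in\Gamma_{\rm shock}$ is excluded as in Steps~2--3 of \cite[Proposition~3.15]{BCF-2019}, using Proposition~\ref{prop:lowerBoundBetweenShockAndSonicCircle} and the Rankine--Hugoniot condition through a boundary (Hopf-type) computation; and on $\Gamma^5_{\rm sonic}\cup\Gamma^6_{\rm sonic}$ one has $M^2+\mu g\le 1$ by the definition of $g$. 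Your compactness argument away from the sonic arcs defers precisely this shock case to ``a separate uniform near-shock estimate'' from the Rankine--Hugoniot relations and shock-polar geometry; but no pointwise/algebraic argument at the shock can give $M^2\le 1-\mu_\varepsilon$ there, because the shock polar does allow sonic downstream states (indeed $M=1$ at $P_2$ when $\theta_1<\theta^{\rm s}$) --- ruling out near-sonic behaviour at interior shock points requires the interplay with the interior equation (the Hopf-lemma step on $M^2+\mu g$), which is the very content you are trying to bypass. Moreover, passing gradient information to the limit at shock points is not supplied by Lemma~\ref{Lem:SeqResult} (which gives $C^2$ convergence only on compact subsets of $\Omega^\ast\cup\Gamma^{\ast,0}_{\rm sym}$); the uniform regularity up to $\Gamma_{\rm shock}$ that would fix this (Lemma~\ref{Lem:LocalEstimates4Interior}, Corollary~\ref{Corol:Esti-AwaySonic5n6}) is established in the paper \emph{after} and \emph{using} this proposition, so that route is circular.

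Two further points fail as stated. First, for $\theta_{j-4}\in[\theta^{\rm s},\theta^{\rm d})$ you claim $1-\hat{c}_j^2/c_j^2$ is bounded below by a positive constant depending only on $(\gamma,v_2)$; this is false, since $\hat{c}_j=|D\varphi_j(P_0^{j-4})|\to c_j$ as $\theta_{j-4}\downarrow\theta^{\rm s}$ (Lemma~\ref{lem:properties-of-state-5-and-6} bounds $\rho_j$ and the reflected-shock angles, not $c_j-|D\varphi_j(P_0^{j-4})|$), so the degeneracy does not disappear uniformly in this range, and in any case your ``away'' estimate only covers points at distance $\ge\varepsilon$ from $\Gamma^j_{\rm sonic}=\{P_0^{j-4}\}$, not a neighbourhood of it. Second, near a genuine sonic arc ($\theta_{j-4}\in(0,\theta^{\rm s})$), the one-sided bound $\psi_x\le\big(\tfrac{2}{\gamma+1}-c_0\big)x+O(x^{1+\sigma})$ is not a reduction of \eqref{eq:MachNumberNearSonicBoundary} but a restatement of it in the $(x,y)$--coordinates (admissibility alone only gives the non-quantitative $\psi_x<\tfrac{2}{\gamma+1}x+\text{h.o.t.}$), and the barrier/anisotropic-scaling machinery you invoke (\cite[Ch.~11]{ChenFeldman-RM2018}, \cite[\S3]{BCF-2019}, cf.\ \eqref{Eq:PrepLemASP05} here) is in those sources derived \emph{from} the uniform ellipticity estimate, not independently of it. As written, the key quantitative step near both the shock and the sonic boundary is therefore missing, and the argument should instead follow the paper's $M^2+\mu g$ maximum-principle scheme.
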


\begin{proof}
Let \(\varphi\) be any admissible solution corresponding to \(\btheta\in\Theta\)
with pseudo-subsonic region \(\Omega\) and curved transonic shock \(\Gamma_{\rm{shock}}\).

\smallskip
\textbf{1.}
From Lemma~\ref{UniformBound-Lem}, there exists a constant \(R>1\)
depending only on $(\gamma, v_2)$
such that
\begin{equation*}
    \Omega \subseteq B_{R}(\bm{0})\,,
    \qquad
    \| c(|D\varphi|,\varphi) \|_{C^0(\cl{\Omega})}
    +
    \| g \|_{C^2(\cl{\Omega})}
    \leq R\,,
\end{equation*}
where \(g(\cdot)\) is given by~\eqref{eq:g_theta1theta2}.
Since \(O_5, O_6 \in \Gamma_{\rm sym}\),
\(\partial_\eta g = 0\) on \(\Gamma_{\rm{sym}}\).
Then, by~\cite[Theorem~5.3.1]{ChenFeldman-RM2018}, we can choose
constants $C_0>1$, $\delta\in(0,\frac{3}{4C_0})$, and \(\mu_1 \in (0,1)\)
depending only on $(\gamma,v_2)$ such that, for any $\mu \in(0,\mu_1]$,
either inequality \(M^2+\mu g \leq C_0 \delta <1\) holds in \(\Omega\)
or the maximum of \(M^2 + \mu g\) over \(\cl{\Omega}\) cannot
be attained in \(\Omega \cup \Gamma_{\rm{sym}}\).

In the first case,~\eqref{eq:MachNumberNearSonicBoundary} follows immediately.
Thus, we focus on the second case that the maximum of \(M^2 + \mu g\) must
be attained on \(\partial \Omega \setminus \Gamma_{\rm{sym}}\).

\smallskip
\textbf{2.}
We claim that there exists a constant \(\mu_2 \in (0,\mu_1]\) depending only
on \((\gamma,v_2)\) such that, whenever \(\mu \in (0,\mu_2]\),
\((M^2 + \mu g) \leq 1\) holds in \(\Omega\), from which~\eqref{eq:MachNumberNearSonicBoundary} follows.

Let the maximum of \(M^2 + \mu g\) be attained
at \(P_{\rm{max}}\in \partial \Omega \setminus \Gamma_{\rm{sym}}\),
and let \((M^2 + \mu g)(P_{\rm{max}}) > 1\).
Then, using Proposition~\ref{prop:lowerBoundBetweenShockAndSonicCircle}, we may proceed as in Steps~\textbf{2}--\textbf{3}
of the proof of~\cite[Proposition~3.15]{BCF-2019}
to find \(\mu_2 \in (0,\mu_1]\) depending only on \((\gamma,v_2)\)
such that \(P_{\rm max} \notin \Gamma_{\rm shock}\) when
\(\mu \in (0,\mu_2]\).
Therefore, \(P_{\rm max} \in \Gamma^5_{\rm sonic} \cup \Gamma^6_{\rm sonic}\).
However, reducing \(\mu_2\) depending only on \((\gamma,v_2)\) if necessary, we have \begin{equation*}
    (M^2 + \mu g)(P_{\rm max}) = \sup_{\bm{\xi} \in \Gamma^5_{\rm sonic} \cup \Gamma^6_{\rm sonic}} (M^2 + \mu g)(\bm{\xi}) \leq 1\,,
\end{equation*}
which is a contradiction.
In the final inequality above,
we have used that \(g|_{\Gamma^j_{\rm sonic}} = 0\) for \(\theta_{j-4} \in [0,\theta^{\rm s})\),
whilst \(g|_{\Gamma^j_{\rm sonic}} \leq C_{\flat} (c_j - \abs{D\varphi_j(P_0^{j-4})})\)
for \(\theta_{j-4} \in [\theta^{\rm s},\theta^{\rm d})\), for \(j = 5,6\).
\end{proof}

\subsection{Interior H{\"o}lder estimates away from the sonic boundaries}
\label{SubSec-EstimatesAwaySonic5n6}
For any set $U\subseteq\mathbb{R}^2$ and any constant $\varepsilon>0$, define
the $\varepsilon$-neighbourhood of $U$ as
\begin{equation}\label{Eq:Def-N-eps}
\mathcal{N}_{\varepsilon}(U)\defeq \{\bm{\xi}\in\mathbb{R}^2 \,:\, {\rm dist}(\bm{\xi},\, U)<\varepsilon\}\,.
\end{equation}
From Proposition~\ref{prop:lowerBoundBetweenShockAndSonicCircle},
there exists a constant $C_{\rm sh}>0$ depending only on $(\gamma, v_2)$ such that
\begin{equation*}
{\rm dist}(\Gamma_{\rm shock}, \, \partial B_1(O_2))\geq C_{\rm sh}^{-1}
\end{equation*}
for any admissible solution and parameters \(\btheta\in\Theta\).
It follows that
\begin{equation*}
|D\varphi_2(\bm{\xi})|^2\geq1+d_0 \qquad
\text{for any $\bm{\xi}\in\mathcal{N}_{\tfrac{1}{2 C_{\rm sh}}}(\Gamma_{\rm{shock}})$}
\end{equation*}
for some constant $d_0>0$ depending only on $(\gamma, v_2)$.
Subsequently, we claim that there exist constants \(d_0'\in(0,d_0)\) and \(\varepsilon \in (0,\frac{1}{2 C_{\rm sh}})\)
depending only on \((\gamma,v_2)\) such that, for any admissible solution \(\varphi\),
\begin{equation}\label{PositiveDifference}
 |D\varphi_2(\bm{\xi})|-|D\varphi(\bm{\xi})|\geq d_0'
 \qquad\,\text{for any $\bm{\xi}\in\overline{\Omega}\cap\mathcal{N}_{\varepsilon}(\Gamma_{\rm shock})$}\,.
\end{equation}
Indeed, in the case: $\gamma>1$, from Definition~\ref{Def:AdmisSolus}\eqref{item3-Def:AdmisSolus}--\eqref{item4-Def:AdmisSolus}
and the Bernoulli law:
\[
\frac{1}{2}|D\varphi|^2+\varphi+\frac{\rho^{\gamma-1}-1}{\gamma-1}=B=\frac{1}{2}|D\varphi_2|^2+\varphi_2\,,
\]
we have
\[\begin{aligned} \frac{\gamma+1}{2}\left(\rho^{\gamma-1}-1\right)
\geq\, & \left(\rho^{\gamma-1}-1\right)+\frac{\gamma-1}{2}\left(|D\varphi|^2-1\right)\\
=\, & (\gamma-1)(\varphi_2-\varphi)+\frac{\gamma-1}{2}\left(|D\varphi_2|^2-1\right)\geq\frac{\gamma-1}{2}d_0\,,
\end{aligned}\]
and
\begin{equation*}
|D\varphi_2|^2-|D\varphi|^2 = \frac{2\left(\rho^{\gamma-1}-1\right)}{\gamma-1}+(\varphi-\varphi_2)\geq\frac{2d_0}{\gamma+1}+(\varphi-\varphi_2)\,.
\end{equation*}
Since $\varphi$ is Lipschitz continuous in $\overline{\Omega}$ from Lemma~\ref{UniformBound-Lem},
and $\varphi=\varphi_2$ on $\Gamma_{\rm shock}$, we obtain
\begin{equation*}
|D\varphi_2(\bm{\xi})|^2-|D\varphi(\bm{\xi})|^2 \geq\frac{d_0}{\gamma+1}
\qquad\text{for any $\bm{\xi}\in\overline{\Omega}\cap\mathcal{N}_{\varepsilon}(\Gamma_{\rm shock})$}\,,
\end{equation*}
with some $\varepsilon\in(0,\frac{1}{2C_{\rm sh}})$ sufficiently small.
Then~\eqref{PositiveDifference} follows from the above inequality and the uniform boundedness property in Lemma~\ref{UniformBound-Lem}.
In the case: $\gamma=1$, it is straightforward to obtain~\eqref{PositiveDifference}
by using Definition~\ref{Def:AdmisSolus}\eqref{item3-Def:AdmisSolus}:
for any admissible solution $\varphi$,
$|D\varphi|^2\leq c^2(|D\varphi|,\varphi)\equiv1$ in $\overline{\Omega}$, since
the sonic speed is a constant.

For \(\bm{\xi} = (\xi,\eta) \in \mathbb{R}^2 \setminus \{O_2\}\), define the polar coordinates $(r,\theta)$ centred at \(O_2=(u_2,v_2)\) by
\begin{align*}
    r(\cos\theta,\sin\theta) \defeq (\xi,\eta) - O_2\,.
\end{align*}
It follows from~\eqref{PositiveDifference} that
\begin{equation}\label{PositiveDifferenceDeriv2r}
\partial_r(\varphi_2-\varphi)\leq-(|D\varphi_2|-|D\varphi|)\leq -d_0' \qquad
\text{in $\overline{\Omega}\cap\mathcal{N}_{\varepsilon}(\Gamma_{\rm shock})$}\,.
\end{equation}
Therefore, we may apply the implicit function theorem to
express $\Gamma_{\rm shock}$ under these polar coordinates as
\begin{equation*}
\Gamma_{\rm shock}=\{\bm{\xi}(r,\theta)\,:\, r=f_{O_2,\rm{sh}}(\theta), \, \theta_{P_2} < \theta < \theta_{P_3} \}\,,
\end{equation*}
where $(f_{O_2,\rm{sh}}(\theta_{P_i}),\theta_{P_i})$ are the polar coordinates of points $P_i$, for $i=2,3$.
By Lemma~\ref{UniformBound-Lem} and~\eqref{PositiveDifferenceDeriv2r},
there exists a constant $C_1>0$ depending only on $(\gamma, v_2)$ such that
\begin{equation*}
\|f_{O_2,\rm{sh}}\|_{C^{0,1}\left([\theta_{P_2},\theta_{P_3}]\right)}\leq C_1\,.
\end{equation*}

From the above, with the help of Lemma~\ref{CoeffExtdLem},
we deduce the following properties; the proof is similar to that of~\cite[Lemmas~3.17--3.18]{BCF-2019}.

\begin{lemma}\label{Lem:LocalEstimates4Interior}
Fix $\gamma\geq1$ and $v_2\in(v_{\min},0)$.
Let $\varphi$ be an admissible solution corresponding to parameters $\btheta\in \Theta$.
\begin{enumerate}[{\rm (i)}]
\item  \label{Lem:LE4Int-1}
There exists a constant $\delta^{\prime}>0$ depending only on $(\gamma, v_2)$ such that
\begin{equation*}
\partial_{\bm{\nu}}\varphi_2-\delta^{\prime}>\partial_{\bm{\nu}}\varphi\geq\delta^{\prime}
\qquad \text{on $\overline{\Gamma_{\rm shock}}$}\,,
\end{equation*}
where $\bm{\nu}=\frac{D(\varphi_2-\varphi)}{|D(\varphi_2-\varphi)|}$
is the unit normal vector on $\Gamma_{\rm shock}$ pointing into the interior of $\Omega$.

\item \label{Lem:LE4Int-2}
For each $d>0$ and $m=2,3,\cdots,$ there exist positive constants $s$ and $C_m$
depending only on $(\gamma, v_2,d)$ such that,
whenever $P = \bm{\xi}(r_P,\theta_{P}) \in \Gamma_{\rm shock}$ satisfies
${\rm{dist}}(P,\Gamma_{\rm sonic}^{5}\cup\Gamma_{\rm sonic}^{6})\geq d,$
\begin{flalign*}
&&
|f^{(m)}_{O_2,\rm{sh}}(\theta_P)| + \sup_{B_s(P)\cap\Omega} |D^m\varphi|\leq C_m\,.
&&
\end{flalign*}
\end{enumerate}
\end{lemma}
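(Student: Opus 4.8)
The plan is to prove the two assertions separately, following the strategy of \cite[Lemmas~3.17--3.18]{BCF-2019}: part~(i) is an elementary consequence of the estimates already established in this subsection, and part~(ii) is a bootstrap built on the uniform ellipticity of Proposition~\ref{prop:ellipticDegeneracyNearSonicBoundary} together with the regularity theory for oblique derivative free boundary problems.

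For part~(i), recall from Remark~\ref{Rmk4AdmissibleSolu}\eqref{item1-Rmk4AdmissibleSolu} that $\partial_{\bm\nu}\varphi_2 > \partial_{\bm\nu}\varphi > 0$ on $\Gamma_{\rm shock}$, so the only issue is the \emph{uniformity} of the gap. Since $\varphi=\varphi_2$ on $\Gamma_{\rm shock}$, the tangential derivatives of $\varphi$ and $\varphi_2$ agree there, so the velocity jump is purely normal and $|D\varphi_2|^2-|D\varphi|^2 = (\partial_{\bm\nu}\varphi_2)^2-(\partial_{\bm\nu}\varphi)^2$ on $\Gamma_{\rm shock}$. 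I would combine~\eqref{PositiveDifference} with $|D\varphi_2|^2\geq 1+d_0$ on $\mathcal N_\varepsilon(\Gamma_{\rm shock})$ and the uniform $C^{0,1}$-bound of Lemma~\ref{UniformBound-Lem} to get $|D\varphi_2|^2-|D\varphi|^2 \geq c_1>0$ on $\Gamma_{\rm shock}$; since $\partial_{\bm\nu}\varphi_2 + \partial_{\bm\nu}\varphi \leq 2|D\varphi_2| \leq C$, this yields $\partial_{\bm\nu}\varphi_2-\partial_{\bm\nu}\varphi \geq c_1/C$. For the lower bound on $\partial_{\bm\nu}\varphi$, write $D\varphi_2(\bm\xi) = O_2-\bm\xi = -r\,\bm{e}_r$ with $\bm{e}_r\defeq(\cos\theta,\sin\theta)$ in the polar frame centred at $O_2$; then~\eqref{PositiveDifferenceDeriv2r}, together with $D(\varphi_2-\varphi)=|D(\varphi_2-\varphi)|\bm\nu$ on $\Gamma_{\rm shock}$ and $|D(\varphi_2-\varphi)|\leq C$, gives $\bm\nu\cdot\bm{e}_r \leq -d_0'/C < 0$, hence $\partial_{\bm\nu}\varphi_2 = -r\,\bm\nu\cdot\bm{e}_r \geq (1+C_{\rm sh}^{-1})d_0'/C$ by Proposition~\ref{prop:lowerBoundBetweenShockAndSonicCircle}; finally the Rankine--Hugoniot relation $\partial_{\bm\nu}\varphi = \rho^{-1}\partial_{\bm\nu}\varphi_2$ with $1<\rho\leq C_{\rm ub}$ (Lemma~\ref{UniformBound-Lem}) gives $\partial_{\bm\nu}\varphi \geq \delta'_2 > 0$. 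Taking $\delta' \defeq \min\{c_1/C,\,\delta'_2\}$ completes part~(i); every constant above depends only on $(\gamma,v_2)$.

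For part~(ii), fix $d>0$ and $P\in\Gamma_{\rm shock}$ with ${\rm dist}(P,\Gamma^5_{\rm sonic}\cup\Gamma^6_{\rm sonic})\geq d$. First I would use Proposition~\ref{prop:lowerBoundBetweenShockAndSonicCircle} and its proof — in particular~\eqref{Eq:ResultInStep3}, which bounds ${\rm dist}(\Gamma_{\rm shock}\setminus(B_r(P_0^1)\cup B_r(P_0^2)),\Gamma_{\rm sym})$ below for every $r$ — together with the fact that $\Gamma_{\rm shock}$ is a graph contained in $\{\xi^{P_3}<\xi<\xi^{P_2}\}$ with $\xi^{P_2}\leq \xi^{P_0^1}$ and $\xi^{P_0^2}\leq\xi^{P_3}$, to check that ${\rm dist}(P,\Gamma_{\rm sym})\geq c$ for some $c=c(\gamma,v_2,d)>0$; then, choosing $s\defeq\tfrac14\min\{d,c\}$, the ball $B_s(P)$ meets $\partial\Omega$ only along $\Gamma_{\rm shock}$ (if in a borderline geometry it also meets the flat piece $\Gamma_{\rm sym}$, one simply adds the linear slip condition $D\varphi\cdot(0,1)=0$ there and uses the standard corner estimate as in Lemma~\ref{LocBddLem}\eqref{LocBddLem-item2}). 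In $B_s(P)\cap\Omega$, equation~\eqref{Eq4PseudoPoten} is uniformly elliptic by Proposition~\ref{prop:ellipticDegeneracyNearSonicBoundary}, its coefficients are smooth functions of $(D\varphi,\varphi)$ bounded in $C^m$ by~\eqref{UB-02}--\eqref{UB-03} and Lemma~\ref{CoeffExtdLem}, and $\Gamma_{\rm shock}$ is a uniformly Lipschitz graph by Lemma~\ref{Prop:GammaShockExpres}. On $\Gamma_{\rm shock}$ the two conditions $\varphi=\varphi_2$ and $\rho D\varphi\cdot\bm\nu=\rho_2 D\varphi_2\cdot\bm\nu$, after eliminating $\bm\nu=D(\varphi_2-\varphi)/|D(\varphi_2-\varphi)|$, combine into the single first-order nonlinear condition $(\rho(|D\varphi|,\varphi)D\varphi-D\varphi_2)\cdot D(\varphi_2-\varphi)=0$, which by Remark~\ref{Rmk4AdmissibleSolu}\eqref{item1-Rmk4AdmissibleSolu} and part~(i) is uniformly oblique. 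Lieberman's $C^{1,\alpha}$ estimate for nonlinear oblique derivative problems on Lipschitz domains~\cite{Lieberman-86,Lieberman2013} then gives $\|\varphi\|_{C^{1,\alpha}(\overline{B_{s/2}(P)\cap\Omega})}\leq C$; differentiating $\varphi_2-\varphi=0$ implicitly, using $\partial_r(\varphi_2-\varphi)\leq -d_0'$ from~\eqref{PositiveDifferenceDeriv2r}, upgrades $f_{O_2,\rm sh}$ to a $C^{1,\alpha}$ graph locally. With a $C^{1,\alpha}$ free boundary and $C^{1,\alpha}$ coefficients, the Schauder theory for the oblique derivative problem~\cite{Gilbarg-Trudinger83,Lieberman2013} upgrades $\varphi$ to $C^{2,\alpha}$, hence $f_{O_2,\rm sh}$ to $C^{2,\alpha}$; differentiating equation~\eqref{Eq4PseudoPoten} and the boundary condition and iterating this loop yields, for every $m\geq 2$, bounds $|f^{(m)}_{O_2,\rm sh}(\theta_P)|+\sup_{B_{s_m}(P)\cap\Omega}|D^m\varphi|\leq C_m$ with $s_m,C_m$ depending only on $(\gamma,v_2,d,m)$; shrinking $s$ once (which may be taken $\btheta$-independent) and relabelling gives the stated form, and uniformity in $\btheta\in\Theta$ is automatic since every input is uniform.

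The main obstacle is part~(i): securing the uniform lower bound $\partial_{\bm\nu}\varphi\geq\delta'$ and the uniform gap $\partial_{\bm\nu}(\varphi_2-\varphi)\geq\delta'$ rests squarely on the transversality estimates~\eqref{PositiveDifference}--\eqref{PositiveDifferenceDeriv2r} and on Proposition~\ref{prop:lowerBoundBetweenShockAndSonicCircle}; once obliqueness is available, part~(ii) is a routine, if lengthy, application of the oblique derivative and free boundary regularity machinery as in~\cite{BCF-2019,ChenFeldman-RM2018}. The only genuinely new point is the geometric bookkeeping near the shock endpoints $P_2,P_3$, where the two independent incident angles — and hence two sonic boundaries — of the present problem interact; this is what forces the case distinction in the choice of the radius $s$ above, whereas in~\cite{BCF-2019} there is a single sonic boundary.
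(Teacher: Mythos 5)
Your proposal is correct and follows essentially the same route as the paper, which simply defers to \cite[Lemmas~3.17--3.18]{BCF-2019}: obliqueness from the transversality estimates~\eqref{PositiveDifference}--\eqref{PositiveDifferenceDeriv2r}, the shock--sonic-circle separation of Proposition~\ref{prop:lowerBoundBetweenShockAndSonicCircle}, and the Rankine--Hugoniot relation, followed by a local oblique-derivative/Schauder bootstrap using the uniform ellipticity away from $\Gamma^5_{\rm sonic}\cup\Gamma^6_{\rm sonic}$ and Lemma~\ref{CoeffExtdLem}. In fact you supply more detail than the paper does (in particular the geometric separation of $P$ from $\Gamma_{\rm sym}$ near the reflection points), and these details are consistent with the estimates already established in \S\ref{SubSec-EstimatesAwaySonic5n6}.
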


From Lemmas~\ref{LocBddLem}, \ref{prop:ellipticDegeneracyNearSonicBoundary}, and \ref{Lem:LocalEstimates4Interior}\eqref{Lem:LE4Int-2},
we have the following {\it a priori} interior estimates.

\begin{corollary}
\label{Corol:Esti-AwaySonic5n6}
Fix $\gamma\geq1$ and $v_2\in(v_{\min},0)$.
For each $d>0$ and $m=2,3,\cdots,$
there exists a constant $C_{m,d}>0$ depending only on $(\gamma, v_2,m,d)$ such that
any admissible solution $\varphi$ corresponding to $\btheta\in\Theta$ satisfies
\begin{flalign*}
&&
\|\varphi\|_{m,\overline{\Omega\cap\{{\rm dist}(\bm{\xi},\Gamma_{\rm sonic}^{5}\cup\Gamma_{\rm{sonic}}^{6})>d\}}} \leq C_{m,d}\,.
&&
\end{flalign*}
\end{corollary}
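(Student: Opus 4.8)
The proof will be a covering argument on a compact set. The plan is to fix $d>0$ and an integer $m\ge 2$, let $\varphi$ be an admissible solution for parameters $\btheta\in\Theta$ with pseudo-subsonic region $\Omega$ and transonic shock $\Gamma_{\rm shock}$, and establish a uniform local $C^m$--bound on small balls covering
\[
K_d \defeq \cl{\Omega \cap \{\bm{\xi}\in\mathbb{R}^2_+ \,:\, {\rm dist}(\bm{\xi},\,\Gamma_{\rm sonic}^5\cup\Gamma_{\rm sonic}^6)>d\}}\,,
\]
which is compact because $\cl{\Omega}\subseteq B_{C_{\rm ub}}(\bm{0})$ by Lemma~\ref{UniformBound-Lem}, with $C_{\rm ub}$ depending only on $(\gamma,v_2)$. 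First I would fix the covering radius: applying Lemma~\ref{Lem:LocalEstimates4Interior}\eqref{Lem:LE4Int-2} with $\tfrac{d}{2}$ in place of $d$ produces a radius $s=s(\gamma,v_2,d)>0$ such that $|D^k\varphi|\le C_k$ on $B_s(P')\cap\Omega$ for every $P'\in\Gamma_{\rm shock}$ with ${\rm dist}(P',\Gamma_{\rm sonic}^5\cup\Gamma_{\rm sonic}^6)\ge\tfrac{d}{2}$ and every $2\le k\le m$; I would then take $r_\ast$ to be a small fixed fraction of $\min\{s,d\}$, so $r_\ast=r_\ast(\gamma,v_2,d)$. The uniform strict ellipticity of equation~\eqref{Eq4PseudoPoten} at a positive distance from $\Gamma_{\rm sonic}^5\cup\Gamma_{\rm sonic}^6$, furnished by Proposition~\ref{prop:ellipticDegeneracyNearSonicBoundary}, is what guarantees that all the constants in the local estimates below depend only on $(\gamma,v_2,m,d)$, and not on $\btheta$.

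Next I would classify each $P\in K_d$ into one of three types and invoke the corresponding local bound. Since $r_\ast$ is fixed so that several fixed multiples of $r_\ast$ remain smaller than $d$, and since $P\in\cl\Omega$ satisfies ${\rm dist}(P,\Gamma_{\rm sonic}^5\cup\Gamma_{\rm sonic}^6)\ge d$, no relevant ball around $P$ ever meets a sonic boundary, so only the following situations occur. \emph{(a) Interior points}: if ${\rm dist}(P,\partial\Omega)\ge 4r_\ast$, then $B_{4r_\ast}(P)\subseteq\Omega$ and Lemma~\ref{LocBddLem}\eqref{LocBddLem-item1} gives $\|\varphi\|_{m,\cl{B_{r_\ast}(P)}}\le C_m$. \emph{(b) Points near the transonic shock}: if ${\rm dist}(P,\Gamma_{\rm shock})$ is at most a fixed multiple of $r_\ast$, take $P'\in\Gamma_{\rm shock}$ realizing ${\rm dist}(P,\Gamma_{\rm shock})$; then ${\rm dist}(P',\Gamma_{\rm sonic}^5\cup\Gamma_{\rm sonic}^6)>\tfrac{d}{2}$, so the choice of $s$ above, combined with the $C^0$-- and first-order bounds of Lemma~\ref{UniformBound-Lem}, controls $\|\varphi\|_{m,\cl{B_{r_\ast}(P)}\cap\Omega}$ by $C_m$ (note that Lemma~\ref{Lem:LocalEstimates4Interior}\eqref{Lem:LE4Int-2} bounds $\sup_{B_s(P')\cap\Omega}|D^k\varphi|$ directly, so it handles the case that $\Gamma_{\rm sym}$ also enters the ball). \emph{(c) Points near the symmetry line}: in the remaining case $P$ is close to $\partial\Omega$ but far from $\Gamma_{\rm shock}$ and from the sonic boundaries, so a suitable ball around $P$ meets $\partial\Omega$ only in $\Gamma_{\rm sym}$; because $\Gamma_{\rm sym}$ is a straight segment of $L_{\rm sym}=\{\eta=0\}$ whose endpoints $P_1\in\Gamma_{\rm sonic}^5$ and $P_4\in\Gamma_{\rm sonic}^6$ lie on the sonic boundaries and hence at distance $\ge d$ from $P$, that ball's intersection with $\Omega$ is a half-ball centred on $L_{\rm sym}$, and Lemma~\ref{LocBddLem}\eqref{LocBddLem-item2} applies (centred at the orthogonal projection of $P$ onto $L_{\rm sym}$, which lies in $\Gamma_{\rm sym}$). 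A routine check of the radii, using only the $d$--gap and the value of $s$, confirms that $r_\ast$ can be fixed depending only on $(\gamma,v_2,d)$ so that one of (a)--(c) holds at every $P\in K_d$.

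Finally, since $K_d$ is compact and contained in $B_{C_{\rm ub}}(\bm{0})$, it can be covered by at most $N=N(\gamma,v_2,d)$ balls $B_{r_\ast}(P_i)$ of the kinds above; summing the resulting local estimates yields $\|\varphi\|_{m,K_d}\le C_{m,d}$ with $C_{m,d}$ depending only on $(\gamma,v_2,m,d)$, which is the assertion of Corollary~\ref{Corol:Esti-AwaySonic5n6}. I do not expect a genuinely hard step here: the only real care is in verifying that the covering radius $r_\ast$ — and therefore the number of covering balls and all the constants entering the three local estimates — can be chosen uniformly over every $\btheta\in\Theta$, even though $\Omega$, $\Gamma_{\rm shock}$, $\Gamma_{\rm sym}$, and the two sonic boundaries all vary with $\btheta$. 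This reduces entirely to the uniform geometric facts already in hand, namely Lemma~\ref{UniformBound-Lem}, Proposition~\ref{prop:lowerBoundBetweenShockAndSonicCircle}, and Lemma~\ref{lem:properties-of-state-5-and-6}, together with the observation that in the degenerate configurations $\theta_{j-4}\in[\theta^{\rm s},\theta^{\rm d})$ the collapsed sonic ``boundary'' $\Gamma_{\rm sonic}^j=\{P_0^{j-4}\}$ still lies outside $K_d$ by the margin $d$, so the classification (a)--(c) remains exhaustive.
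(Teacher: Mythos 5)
Your argument is correct and is essentially the paper's own proof: the paper deduces this corollary directly from Lemma~\ref{LocBddLem}, Proposition~\ref{prop:ellipticDegeneracyNearSonicBoundary}, and Lemma~\ref{Lem:LocalEstimates4Interior}\eqref{Lem:LE4Int-2}, which are exactly the three ingredients you combine. You have merely written out the routine covering/compactness details (using Lemma~\ref{UniformBound-Lem} for the uniform diameter bound) that the paper leaves implicit.
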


\subsection{Weighted H{\"o}lder estimates near the sonic boundaries}
\label{SubSec-EstimatesNearSonic5}
From Definition~\ref{Def:GammaSonicAndPts}, the sonic boundary $\Gamma_{\rm sonic}^{5}$ depends continuously
on $\theta_1\in[0,\theta^{\rm d})$ and shrinks into a single point $P_0^1=P_1=P_2$
when $\theta_1\in[0,\theta^{\rm s})$ tends to $\theta^{\rm s}$ from below (if additionally $v_2 \in [v_2^{\rm s},0)$).
Moreover, from Proposition~\ref{prop:ellipticDegeneracyNearSonicBoundary},
the ellipticity of equation~\eqref{Eq4phi} degenerates on the sonic boundary
$\Gamma_{\rm sonic}^5$ for $\theta_1\in[0,\theta^{\rm s})$,
while equation~\eqref{Eq4phi} is uniformly elliptic in the case that
$\theta_1\in(\theta^{\rm s},\theta^{\rm d})$ up to $\Gamma_{\rm sonic}^5$ away from $\Gamma_{\rm sonic}^6$.

The above reasons lead us to consider the following four cases separately
for the weighted H{\"o}lder estimates of the admissible solutions near $\Gamma^5_{\rm sonic}$.
Since $\theta^{\rm s}$ can be equal to $\theta^{\rm cr}$ depending on \((\gamma,v_2)\),
some of the following cases may not
occur (\textit{cf.}~Remark~\ref{remark:local-theory}\eqref{remark:local-theory-item-i}):
\begin{enumerate}[]
\item {\rm Case~1.} \textit{ ${\theta}_{1}\in[0,{\theta}^{\rm{s}})$ away from ${\theta}^{\rm{s}}$};
\item {\rm Case~2.} \textit{ ${\theta}_{1}\in[0,{\theta}^{\rm{s}})$ close to ${\theta}^{\rm{s}}$};
\item {\rm Case~3.} \textit{ ${\theta}_{1}\in[{\theta}^{\rm{s}},{\theta}^{\rm{d}})$ close to ${\theta}^{\rm{s}}$};
\item {\rm Case~4.} \textit{ ${\theta}_{1}\in[{\theta}^{\rm{s}},{\theta}^{\rm{d}})$  away from ${\theta}^{\rm{s}}$}.
\end{enumerate}
Corresponding weighted $C^{2,\alpha}$--estimates near \(\Gamma^{6}_{\rm sonic}\) for the four cases with respect
to \(\theta_2 \in [0,\theta^{\rm d})\) are obtained immediately due to the symmetry of our problem.

For any \(\btheta \in \Theta\), define an open set $\Omega^5 \subseteq \Omega$
by
\begin{equation*}
\Omega^5 \defeq
\big(\Omega\cap\{\bm{\xi}\in\mathbb{R}^2_+ \,:\, \xi > u_5 + q_5\sin\theta_{25} \}\big)\setminus \overline{B_{c_5^*}(O_5)}\,,
\end{equation*}
where $q_5={\rm dist}(O_5,S_{25})$ is given by~\eqref{Eq:Sec3-Def4hatcj}, and
$c_5^* \defeq \frac{1}{2} \big(|O_5P_2|+{\rm dist}(O_5,S_{25})\big)$.
For \(\bm{\xi}\in \cl{\Omega^5}\), we introduce the modified polar coordinates \((x,y)\) centred at \(O_5\) defined by
\begin{equation}\label{PolarCoordiNearO5}
(x,y) \defeq \rcal (\xi,\eta)
\quad \Longleftrightarrow
\quad
(\xi,\eta)-O_5 = (c_5-x)(\cos y,\sin y) \qquad \text{with $y \in [0,\tfrac\pi2]$}\,.
\end{equation}
It is clear that $\rcal(\Omega^5) \subseteq \{(x,y) \,:\, 0<x<c_5-c_5^*\}$.
For the pseudo-potential $\varphi_5$ given by~\eqref{eq:def-weak-state-5-6}, we have
\begin{equation*}
\varphi_5 \circ \rcal^{-1} (x,y) = -\frac12 (c_5-x)^2 + \frac12 u_5^2 - u_5 \xi^{P_0^1}
\qquad\text{for $(x,y) \in
\rcal(\Omega^{5})$}\,.
\end{equation*}
Note that \(u_5\xi^{P_0^1} \to -v_2 \eta_0 \) as \(\theta_1 \to 0^+\), where \(\eta_0 >0 \)
is defined in \S\ref{SubSec-202NormalShock}.

For any admissible solution $\varphi$ corresponding to parameters $\btheta\in \Theta$, let $\psi$ be given by
\begin{equation*}
\psi(x,y) \defeq (\varphi-\varphi_5) \circ \rcal^{-1}(x,y) \qquad \text{for $(x,y) \in \rcal(\Omega^5)$}\,.
\end{equation*}
Then $\psi$ satisfies a free boundary problem for an elliptic equation:
\begin{enumerate}[{\rm (i)}]
\item \textit{Equation for $\psi$ in $\rcal(\Omega^5)$}:
Direct calculation gives
\begin{equation}\label{Eq4PsiInNewCoordi}
\mathcal{N}(\psi)=\big(2x-(\gamma+1)\psi_x+\co_1\big)\psi_{xx}+\co_2\psi_{xy}
+\big(\frac{1}{c_5}+\co_3\big)\psi_{yy}-\big(1+\co_4\big)\psi_x+\co_5\psi_y=0\,,
\end{equation}
with $\co_j=\co_j(\psi_x, \psi_y,\psi,x,c_5)$ for $j=1,\cdots,5$, defined as follows:
\begin{equation}\label{Eq:Def-co-k}
\begin{aligned}
\co_1(p_x,p_y,z,x,c) &= -\frac{x^2}{c} + \frac{\gamma+1}{2c} (2x-p_x) p_x - \frac{\gamma-1}{c} \Big( z + \frac{p_y^2}{2(c-x)^2} \Big)\,,\\
\co_2(p_x,p_y,z,x,c) &= -\frac{2(p_x+c-x)p_y}{c(c-x)^2}\,,\\
\co_3(p_x,p_y,z,x,c) &= \frac{1}{c(c-x)^2} \Big( (2c-x)x
- (\gamma-1)\big( z+(c-x)p_x - \frac{p_x^2}{2}\big) + \frac{(\gamma-1)p_y^2}{2(c-x)^2} \Big)\,,\\
\co_4(p_x,p_y,z,x,c) &= \frac{1}{c-x} \Big( x-\frac{\gamma-1}{c} \big( z+(c-x)p_x + \frac{p_x^2}{2}
+ \frac{(\gamma+1)p_y^2}{2(\gamma-1)(c-x)^2} \big) \Big)\,,\\
\co_5(p_x,p_y,z,x,c) &= -\frac{2(p_x+c-x)p_y}{c(c-x)^3}\,.
\end{aligned}
\end{equation}

\smallskip
\item \textit{Boundary condition for $\psi$ on $\rcal(\Gamma_{\rm shock} \cap \partial\Omega^5)$}:
Denote the Rankine--Hugoniot condition for \(\varphi\) on $\Gamma_{\rm shock}$ as
\begin{equation}\label{Def4FreeBdryFunc}
g^{\rm sh}(D\varphi,\varphi,\bm{\xi})\defeq \rho(|D\varphi|,\varphi)D\varphi\cdot\bm{\nu}-D\varphi_{2}\cdot\bm{\nu}
=(\mathcal{A}(D\varphi,\varphi)-D\varphi_2)\cdot\bm{\nu}=0\,,
\end{equation}
where $\mathcal{A}(\bm{p},z)=\rho(|\bm{p}|,z)\bm{p}$,
and $\bm{\nu}=\frac{D(\varphi_2-\varphi)}{|D(\varphi_2-\varphi)|}$
is the unit interior normal vector on $\Gamma_{\rm shock}$.
For constant $\delta^{\prime}>0$ given in Lemma~\ref{Lem:LocalEstimates4Interior}, we define a
function $\zeta \in C^{\infty}(\mathbb{R})$ by
\begin{equation*}
\zeta(t) = \begin{cases}

t &\quad  \text{for} \; t \geq \tfrac{3\delta^{\prime}}{4}\,,\\
\tfrac{\delta^{\prime}}{2}
& \quad \text{for} \; t < \tfrac{\delta^{\prime}}{2}\,,
\end{cases}
\end{equation*}
satisfying $\zeta'(t)\geq0$ on $\mathbb{R}$. Also, we define an extension of
$g^{\rm sh}(\bm{p},z,\bm{\xi})$ onto $\mathbb{R}^2\times\mathbb{R} \times \overline{\Omega^{5}}$:
\begin{equation}\label{FBFunc:gshmod}
g_{\rm mod}^{\rm sh}(\bm{p},z,\bm{\xi})\defeq \big(\tilde{\mathcal{A}}(\bm{p},z)-D\varphi_2(\bm{\xi})\big)
\cdot\frac{D\varphi_2(\bm{\xi})-\bm{p}}{\zeta(|D\varphi_2(\bm{\xi})-\bm{p}|)}\,,
\end{equation}
where $\tilde{\mathcal{A}}(\bm{p},z)$ is given by Lemma~\ref{CoeffExtdLem}, and
$g_{\rm mod}^{\rm sh}(\bm{p},z,\bm{\xi})=g^{\rm sh}(\bm{p},z,\bm{\xi})$ if $|D\varphi_2(\bm{\xi})-\bm{p}|\geq\frac{3\delta_1}{4}$.
Then we define
\begin{equation*}
M^{(\theta_1)}(\bm{p},z,\xi)\defeq g_{\rm mod}^{\rm sh}(\bm{p}+D\varphi_5,z+\varphi_5, \xi,
\xi \tan \theta_{25} + a_{25}
+\frac{z}{v_2})
\end{equation*}
with $(D\varphi_5,\varphi_5)$ evaluated at $\bm{\xi}=(\xi,\,\xi \tan \theta_{25} + a_{25}
+\frac{z}{v_2})$.
Note that, by Proposition~\ref{Prop:localtheorystate5}\eqref{Prop:localtheorystate5:continuity-properties} and
Definition~\ref{def:state5andstate6},
$(\theta_{25},\,a_{25})$
depend continuously on $\theta_1\in[0,\theta^{\rm d}]$
with the limit:
$\lim\limits_{\theta_1\to0^+}(\theta_{25}, a_{25})
=(\pi, \eta_0)$,
where $\eta_0>0$ is given by~\eqref{Sol-NormalReflec0-RH}.

We prescribe the free boundary condition for $\psi$ to be
\begin{equation*}
\mathcal{B}^{(\theta_1)}(\psi_x,\psi_y,\psi,x,y)=0 \qquad
\text{on $\rcal(\Gamma_{\rm shock} \cap \partial \Omega^5)$}\,,
\end{equation*}
where $\mathcal{B}^{(\theta_1)}$ is given by
\begin{equation}\label{DefBTheta1}
\mathcal{B}^{(\theta_1)}(p_x,p_y,z,x,y)\defeq \abs{D\bar{\phi}_{25}-(p_1,p_2)} M^{(\theta_1)}(p_1,p_2,z,\xi)\,,
\end{equation}
with function $\bar{\phi}_{25}(\bm{\xi}) \defeq (\varphi_2-\varphi_5)(\bm{\xi})
= v_2(\eta-\xi\tan\theta_{25}-a_{25})$,
and
\begin{equation*}
\xi = v_2\tan\theta_{25} + (c_5-x) \cos y\,, \qquad
\begin{pmatrix}p_1\\p_2\end{pmatrix}
=\begin{pmatrix}\cos y & \sin y \\
\sin y & -\cos y\end{pmatrix}
\begin{pmatrix}-p_x \\ -\frac{p_y}{c_5-x}\end{pmatrix}.
\end{equation*}

\item \textit{Other conditions for $\psi$}: From Problem~\ref{FBP}\eqref{item3n4-FBP}--\eqref{item5-FBP}
and Definition~\ref{Def:AdmisSolus}\eqref{item4-Def:AdmisSolus}, $\psi$ satisfies
\begin{equation*}
\psi \geq 0 \,\,\,\, \text{in $\rcal(\Omega^5)$}\,,
\qquad \;\;
\psi = 0 \,\,\,\,  \text{on $\rcal(\Gamma_{\rm sonic}^{5})$}\,,
\qquad \;\;
\psi_y = 0 \,\,\,\, \text{on $\rcal(\Gamma_{\rm sym}\cap\partial\Omega^5)$}\,.
\end{equation*}
\end{enumerate}

\begin{definition}[Parabolic norm]
\label{Def:ParabolicNorm01}
Fix \(\alpha \in (0,1)\).
Define the $\alpha$-th parabolic distance between points
$\bm{x}=(x,y), \; \tilde{\bm{x}}=(\tilde{x},\tilde{y})\in (0,\infty) \times \mathbb{R}$ to be
\begin{equation*}
\delta^{({\rm par})}_{\alpha}(\bm{x},\tilde{\bm{x}})\defeq \left(|x-\tilde{x}|^2+\max\{x,\tilde{x}\}|y-\tilde{y}|^2\right)^{\frac{\alpha}{2}}.
\end{equation*}
Then, given constants $\sigma>0$ and $m\in\mathbb{Z}_{+},$ the parabolic norms are defined as follows{\rm :}
\begin{enumerate}[{\rm (i)}]

\item \label{DefPNorm04}
For any open set $\mathcal{D}\subseteq (0,\infty) \times \mathbb{R}$
and any function $u\in C^2(\mathcal{D})$ in the $(x,y)$--coordinates, define
\begin{equation*} \begin{aligned}
	& \|u\|^{(\sigma),({\rm par})}_{m,0,\mathcal{D}}
\defeq \sum\limits_{0\leq k+l\leq m}\sup_{\bm{x}\in\mathcal{D}}
   \Big(x^{k+\frac{l}{2}-\sigma}|\partial^{k}_{x}\partial^{l}_y u(\bm{x})|\Big)\,,\\
	& [u]^{(\sigma),({\rm par})}_{m,\alpha,\mathcal{D}}
 \defeq \sum\limits_{k+l=m}\sup_{\substack{\bm{x},\tilde{\bm{x}}\in\mathcal{D}\\
 \bm{x}\neq\tilde{\bm{x}}}}\Big(\min\big\{x^{\alpha+k+\frac{l}{2}-\sigma},\tilde{x}^{\alpha+k+\frac{l}{2}-\sigma}\big\}
 \frac{|\partial^{k}_{x}\partial^{l}_y u(\bm{x})-\partial^{k}_{x}\partial^{l}_y
   u(\tilde{\bm{x}})|}{\delta^{({\rm par})}_{\alpha}(\bm{x},\tilde{\bm{x}})}\Big)\,,\\
    &\|u\|^{(\sigma),({\rm par})}_{m,\alpha,\mathcal{D}}
    \defeq\|u\|^{(\sigma),({\rm par})}_{m,0,\mathcal{D}}+[u]^{(\sigma),({\rm par})}_{m,\alpha,\mathcal{D}}\,.
    \end{aligned}
\end{equation*}

\item \label{DefPNorm03}
For any \(a>0,\) fix an open interval $I\defeq (0,a)$. For a function $f\in C^2(I),$ define
\begin{equation*}
    \begin{aligned}
	&\|f\|^{(\sigma),({\rm par})}_{m,0,I}\defeq \sum\limits_{k=0}^{m}\sup_{x\in I} \Big(x^{k-\sigma}|\partial^{k}_{x}f(x)|\Big)\,,\\
	&[f]^{(\sigma),({\rm par})}_{m,\alpha,I}\defeq \sup_{\substack{x,\tilde{x}\in I\\
 x\neq\tilde{x}}}\Big(\min\big\{x^{\alpha+m-\sigma},\tilde{x}^{\alpha+m-\sigma}\big\} \frac{|\partial^{m}_{x}f(x)-\partial^{m}_{x}f(\tilde{x})|}{|x-\tilde{x}|^{\alpha}}\Big)\,,\\
    &\|f\|^{(\sigma),({\rm par})}_{m,\alpha,I}\defeq \|f\|^{(\sigma),({\rm par})}_{m,0,I}+[f]^{(\sigma),({\rm par})}_{m,\alpha,I}\,.
\end{aligned} \end{equation*}

\item
In the case \(\sigma = 2,\) write $\|u\|^{({\rm par})}_{2,\alpha,\mathcal{D}} \defeq \|u\|^{(2),({\rm par})}_{2,\alpha,\mathcal{D}}$ and
$\|f\|^{({\rm par})}_{2,\alpha,I} \defeq \|f\|^{(2),({\rm par})}_{2,\alpha,I}$.
\end{enumerate}
\end{definition}

\smallskip
\subsubsection*{{\rm Case~1}. Admissible solutions for ${\theta}_{1}<\theta^{\rm s}$ away from $\theta^{\rm s}$.}

\begin{proposition}\label{Prop:Esti4theta1a}
Fix $\gamma\geq1$ and $v_2\in(v_{\min},0)$.
For some constant \(\bar{\varepsilon} > 0\) depending only on \((\gamma,v_2),\) any \(\sigma \in (0,\theta^{\rm s}),\)
and any $\alpha \in(0,1),$ there exist both $\varepsilon \in(0, \bar{\varepsilon}]$ depending only on $(\gamma,v_2,\sigma)$
and $C>0$ depending only on $(\gamma,v_2,\alpha)$
such that any admissible solution $\varphi = \psi \circ \rcal + \varphi_5$ corresponding to
$\btheta\in \Theta \cap \{0 \leq \theta_1 \leq \theta^{\rm s}-\sigma\}$ satisfies
\begin{equation}\label{Result4theta1a}
\norm{\psi}_{2, \alpha, \rcal(\Omega^5_{\varepsilon})}^{(\mathrm{par})} +
\norm{f_{5, \mathrm{sh}}-f_{5,0}}_{2, \alpha,(0, \varepsilon)}^{(\rm par)} \leq C\,,
\end{equation}
where \(\Omega^{5}_{\varepsilon} \subseteq \Omega\) is a neighbourhood of \(\Gamma^5_{\rm sonic},\)
and functions ${f}_{5, \mathrm{sh}}$ and ${f}_{5,0}$ represent $\Gamma_{\rm shock}$ and $S_{25}$ respectively
to be defined in {\rm Step~\textbf{1}} below.
\end{proposition}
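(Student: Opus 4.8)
The plan is to follow the weighted Hölder estimate strategy of~\cite[\S3.5]{BCF-2019} and~\cite[Chapters~11--12]{ChenFeldman-RM2018}, adapted to the present two-sonic-boundary geometry, but uniformly over the relevant range of incident angles $\theta_1 \in [0,\theta^{\rm s}-\sigma]$. First I would fix the set-up in \textbf{Step~1}: choose a small $\bar\varepsilon>0$ depending only on $(\gamma,v_2)$ so that the modified polar coordinates $\mathcal{R}$ centred at $O_5$ are well-defined and non-degenerate on the $\bar\varepsilon$-neighbourhood $\Omega^5_{\bar\varepsilon}$ of $\Gamma^5_{\rm sonic}$, using the uniform bounds of Lemma~\ref{UniformBound-Lem}, the uniform separation in Proposition~\ref{prop:lowerBoundBetweenShockAndSonicCircle}, and the continuity of $(\rho_5,O_5,c_5,\theta_{25},a_{25})$ in $\theta_1\in[0,\theta^{\rm d}]$ from Proposition~\ref{Prop:localtheorystate5}\eqref{Prop:localtheorystate5:continuity-properties}. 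Here the condition $\theta_1 \le \theta^{\rm s}-\sigma$ guarantees, via Proposition~\ref{prop:ellipticDegeneracyNearSonicBoundary} and the definition of $\mathrm{dist}^\flat$, that $c_5 - |D\varphi_5(P_0^1)|$ is bounded below by a positive constant depending on $\sigma$, so that $\Gamma^5_{\rm sonic}$ is a genuine arc of definite length and $O_5$ stays a definite distance from $S_{25}$; this is what lets $\varepsilon$ be chosen depending on $(\gamma,v_2,\sigma)$ rather than degenerating. In these coordinates I would record equation~\eqref{Eq4PsiInNewCoordi} for $\psi$ and verify that it is of the parabolic/degenerate-elliptic type treated in~\cite{ChenFeldman-RM2018}: the coefficient of $\psi_{xx}$ is $2x - (\gamma+1)\psi_x + \co_1$ with $\co_1 = O(x)$ near $x=0$, and the lower-order structure matches that of the model equation, so the standard weighted Schauder machinery applies.

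Next, in \textbf{Step~2} I would linearise the free boundary condition $\mathcal{B}^{(\theta_1)}(\psi_x,\psi_y,\psi,x,y)=0$ at the degenerate point $x=0$. Using the entropy/obliqueness bound of Lemma~\ref{Lem:LocalEstimates4Interior}\eqref{Lem:LE4Int-1} together with~\eqref{PositiveDifference} and the geometric bounds on $\theta_{25}$ from Lemma~\ref{lem:properties-of-state-5-and-6}, I would check that $\mathcal{B}^{(\theta_1)}$ is a genuinely oblique, $C^{1,\alpha}$-in-the-$\psi$-arguments boundary operator with all constants uniform over the admissible $\theta_1$-range, and express $\Gamma_{\rm shock}$ near $P_2$ as a graph $x = f_{5,\mathrm{sh}}(y)$ (or the analogous graph in $(x,y)$, with $f_{5,0}$ the corresponding graph of the straight shock $S_{25}$); the difference $f_{5,\mathrm{sh}}-f_{5,0}$ vanishes at the corner. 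The directional-monotonicity property~\eqref{DirecDerivMonotone-S25-S26}, the sign of $\psi$ ($\psi\ge 0$, $\psi=0$ on $\Gamma^5_{\rm sonic}$), and the Neumann condition $\psi_y=0$ on $\mathcal{R}(\Gamma_{\rm sym}\cap\partial\Omega^5)$ give the needed structural inputs. Then \textbf{Step~3} is the a priori estimate itself: an $L^\infty$ bound on $\psi$ of size $O(x^2)$ near $x=0$ by a comparison argument with an explicit supersolution (as in~\cite[Lemma~11.2.1]{ChenFeldman-RM2018}), followed by the weighted $C^{2,\alpha}$ estimate obtained by a rescaling (blow-up) argument — dividing $\Omega^5_\varepsilon$ into dyadic subregions in $x$, rescaling each to unit size where the equation becomes uniformly elliptic with a classical oblique-derivative boundary condition, applying standard Schauder estimates, and summing with the parabolic weights of Definition~\ref{Def:ParabolicNorm01}; simultaneously one gets $\|f_{5,\mathrm{sh}}-f_{5,0}\|^{(\rm par)}_{2,\alpha,(0,\varepsilon)}\le C$ from the free boundary relation. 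The interior estimates of Corollary~\ref{Corol:Esti-AwaySonic5n6} and Lemma~\ref{Lem:LocalEstimates4Interior}\eqref{Lem:LE4Int-2} patch this to the part of $\Omega^5_\varepsilon$ bounded away from $\Gamma^5_{\rm sonic}$.

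The main obstacle I expect is making \emph{all} constants uniform as $\theta_1$ ranges over $[0,\theta^{\rm s}-\sigma]$, and in particular handling the endpoint $\theta_1=0$, where state $(5)$ degenerates to the normal-reflection state $(0)$, $\theta_{25}\to\pi$, and the geometry of $\Omega^5_\varepsilon$ changes character (cf.~Remark~\ref{Rmk4AdmissibleSolu}\eqref{item2-Rmk4AdmissibleSolu}): one must verify that the degenerate-ellipticity ratios, the obliqueness constant in $\mathcal{B}^{(\theta_1)}$, and the non-degeneracy of the coordinate change $\mathcal{R}$ all stay bounded below by constants depending only on $(\gamma,v_2)$, using the uniform estimates of Section~\ref{Sec:UniformEstiAdmiSolu} rather than anything specific to a fixed $\theta_1$. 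A secondary technical point is the compatibility of the free boundary condition with the boundary condition $\psi=0$ on $\Gamma^5_{\rm sonic}$ at the corner point $P_2$, which governs the precise weighted regularity attainable there; this is dealt with exactly as in~\cite[\S12.3]{ChenFeldman-RM2018}, exploiting that $\Gamma^{\rm ext}_{\rm shock}=S^{\rm seg}_{25}\cup\Gamma_{\rm shock}\cup S^{\rm seg}_{26}$ is $C^1$ through $P_2$ by Definition~\ref{Def:AdmisSolus}\eqref{item1-Def:AdmisSolus}.
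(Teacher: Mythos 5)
Your overall architecture is the same as the paper's: represent $S_{25}$ and $\Gamma_{\rm shock}$ near $\Gamma^5_{\rm sonic}$ as graphs in the $(x,y)$--coordinates, establish the pointwise bounds $0\le\psi\le Lx^2$, $0\le\psi_x\le\frac{2-\delta}{\gamma+1}x$, $|\psi_y|\le Lx$, check that equation~\eqref{Eq4PsiInNewCoordi} has the anisotropic degenerate ellipticity with weights $x^{2-\frac{i+j}{2}}$ and that $\mathcal{B}^{(\theta_1)}$ is uniformly oblique with $\mathcal{B}^{(\theta_1)}(\mathbf 0,0,x,y)=0$, extend the coefficients, and then run a parabolic rescaling argument (here via \cite[Theorems~4.2.3, 4.2.8, 4.2.10]{ChenFeldman-RM2018}) and scale back to get~\eqref{Result4theta1a}. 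Two small points: the shock must be written as $y=f_{5,{\rm sh}}(x)$ (the norm in the statement is taken over $x\in(0,\varepsilon)$, so your primary phrasing $x=f_{5,{\rm sh}}(y)$ is the wrong orientation), and the nondegeneracy of the $\psi_{xx}$--coefficient $2x-(\gamma+1)\psi_x+\mathcal{O}_1\ge \delta x$ needs the gradient bound $\psi_x\le\frac{2-\delta}{\gamma+1}x$, not merely $\psi=O(x^2)$; you gesture at this but should state it as an input.

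The one place where your justification would not go through as written is the mechanism behind the $\sigma$--dependence of $\varepsilon$. You attribute it to a positive lower bound on $c_5-|D\varphi_5(P_0^1)|$ and to nondegeneracy of $\rcal$ / the distance from $O_5$ to $S_{25}$. The sign is wrong: for $\theta_1<\theta^{\rm s}$ state $(5)$ is pseudo-supersonic at $P_0^1$, so $c_5-|D\varphi_5(P_0^1)|<0$ (indeed $\hat c_5=c_5$ there), and Proposition~\ref{prop:ellipticDegeneracyNearSonicBoundary} does not yield such a bound; moreover ${\rm dist}(O_5,S_{25})<\hat c_5-\hat\delta$ and the bounds on $\rcal$ are uniform over all $\btheta\in\cl\Theta$ by~\eqref{Eq:DistO5P01minusO5S25} and Lemma~\ref{lem:properties-of-state-5-and-6}, so none of this is where $\sigma$ enters. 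The paper's actual input is the strict monotonicity of $\eta^{P_2}$ in $\theta_1$ (Lemma~\ref{lem:monotonicityOfTheta5}\eqref{item2-lem:monotonicityOfTheta5}), which gives a constant $l_{\rm so}>0$ depending on $(\gamma,v_2,\sigma)$ with $f_{5,{\rm sh}}\ge l_{\rm so}$ on $[0,\bar\varepsilon]$ when $\theta_1\le\theta^{\rm s}-\sigma$; one then sets $\varepsilon_*=\min\{\tfrac{\bar\varepsilon}{2},l_{\rm so}^2\}$ precisely so that the parabolic boxes $(x,y)=z_0+\tfrac14(x_0S,\sqrt{x_0}T)$ centred at points of $\rcal(\overline{\Omega^5_\varepsilon})$ never meet $\rcal(\Gamma_{\rm shock})$ and $\rcal(\Gamma_{\rm sym})$ simultaneously — this ``fitting'' is exactly what fails as $\theta_1\to\theta^{\rm s}$ and forces the separate argument of Proposition~\ref{Prop:Esti4theta1b}. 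Your dyadic-in-$x$ rescaling faces the same issue, so you need this quantitative lower bound on the domain height (equivalently on $y_{P_2}$), stated and proved via the monotonicity lemma, rather than the supersonicity gap at $P_0^1$.
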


The proof of this proposition is similar to that of~\cite[Propositions~3.26 and~3.30]{BCF-2019}.
We sketch the main steps of the proof, while omitting most
details.

\smallskip
\textbf{1.} \textit{The depiction and the properties of the reflected shock $S_{25}$ and
the free boundary $\Gamma_{\rm shock}$ in a small neighbourhood of the sonic boundary $\Gamma^5_{\rm sonic}$.}
It follows from the geometric properties of $\overline{\Omega^5}$ that
there exist positive constants $(\varepsilon_1,\varepsilon_0,\omega_0)$ depending only on $(\gamma,v_2)$,
with $\varepsilon_0<\varepsilon_1$ and \(\omega_0 \in (0,1)\),
and a unique function ${f}_{5,0}\in C^{\infty}([-\varepsilon_0,\varepsilon_0])$ satisfying
\begin{align*}\label{Eq:PreLemBSP5}
\rcal \big( S_{25}\cap\mathcal{N}_{\varepsilon_1}(\Gamma^5_{\rm sonic})
\big)
\cap\{|x|<\varepsilon_0\}
=\big\{(x,y) \,:\, |x|<\varepsilon_0, \, y={f}_{5,0}(x)\big\}\,,
\end{align*}
where $\mathcal{N}_{\varepsilon_1}(\Gamma^5_{\rm sonic})$ is defined by~\eqref{Eq:Def-N-eps},
and $2\omega_0\leq{f}'_{5,0}(x)\leq\omega_0^{-1}$ for $x\in(-\varepsilon_0,\varepsilon_0)$.
Then, for any $\varepsilon\in(0,\varepsilon_1]$, we define a set $\Omega^5_{\varepsilon} \subseteq \Omega$ by
\begin{equation}\label{Eq:Def4Omega5epsilon}
 \Omega^5_{\varepsilon}\defeq \Omega\cap\mathcal{N}_{\varepsilon_1}(\Gamma^{5}_{\rm sonic})\cap \rcal^{-1} \big(
 \{x<\varepsilon\}
 \big).
\end{equation}

Using~\eqref{Eq:Ellipticity4PsedoPF} and~\eqref{Eq4PsiInNewCoordi},
we can show that there exist $\bar{\varepsilon}\in(0,\varepsilon_0]$, $L\geq1$, $\delta\in(0,\frac12)$,
and $\omega\in(0,\omega_0)$ depending only on $(\gamma,v_2)$ such that,
for all $(x,y)\in \rcal(\Omega^5_{\bar{\varepsilon}})$,
\begin{equation}\label{Eq:PrepLemASP05}
0 \leq \psi(x,y) \leq Lx^2\,, \qquad 0\leq\psi_x\leq\frac{2-\delta}{1+\gamma}x\leq Lx\,,
\qquad
|\psi_y(x,y)|\leq Lx\,,
\end{equation}
and there exists a unique function ${f}_{5,{\rm sh}}\in C^1([0,\bar{\varepsilon}])$ such that
\begin{align*}
&\rcal(\Omega^5_{\bar{\varepsilon}})
=\big\{
(x,y) \,:\, 0 < x < \bar{\varepsilon}, \, 0<y<{f}_{5,{\rm sh}}(x)
\big\}\,,\\
&\rcal(\Gamma_{\rm shock}\cap \partial\Omega^5_{\bar{\varepsilon}})
=\big\{
(x,y) \,:\, 0 < x < \bar{\varepsilon}, \, y={f}_{5,{\rm sh}}(x)
\big\}\,,
\end{align*}
and $\omega\leq{f}'_{5,{\rm sh}}(x)\leq L$ for any $x\in(0,\bar{\varepsilon})$.

\smallskip
\textbf{2.} \textit{Ellipticity for the equation and the free boundary condition.}
Equation~\eqref{Eq4PsiInNewCoordi} for $\psi$
in $\rcal(\Omega^5)$ can be written as
\begin{equation}\label{QuasilinearEq4Theta1}
\sum_{i,j=1}^{2}\hat{A}_{ij}^{(\theta_1)}(\psi_x,\psi_y,\psi,x)
\partial_{i} \partial_{j} \psi+\sum_{i,j=1}^{2}\hat{A}_{i}^{(\theta_1)} \partial_i \psi=0\,,
\end{equation}
with $(\partial_1,\partial_2) =(\partial_x,\partial_y)$
and $\hat{A}^{(\theta_1)}_{12}=\hat{A}^{(\theta_1)}_{21}$.
Then there exist $\varepsilon_5\in(0,\frac{\bar{\varepsilon}}{4}]$ and $\lambda_5>0$
depending only on $(\gamma,v_2)$ such that, for all $(x,y)\in \rcal( \overline{\Omega^5_{4\varepsilon_5}})$,
\begin{equation*}\label{PreLemBCP0}
\frac{\lambda_5}{2}|\bm{\kappa}|^2
\leq \sum_{i,j=1}^{2}\hat{A}^{(\theta_1)}_{ij}(\psi_x (x,y), \psi_y (x,y), \psi(x,y),x)\frac{\kappa_i\kappa_j}{x^{2-\frac{i+j}{2}}}
\leq\frac{2}{\lambda_5}|\kappa|^2 \qquad \text{for all $\bm{\kappa}\in\mathbb{R}^2$}\,.
\end{equation*}
Moreover, $\mathcal{B}^{(\theta_1)}$ defined by~\eqref{DefBTheta1}
satisfies $\mathcal{B}^{(\theta_1)}(\mathbf{0},0,x,y)=0$ for all $(x,y)\in\mathbb{R}^2$.
Let $y_{P_2}$ be the $y$--coordinate of point $P_2$ defined in Definition~\ref{Def:GammaSonicAndPts}.
For each $m=2,3,\cdots$, there exist constants $\delta_{\rm bc}>0$ and $C>1$ depending only on $(\gamma,v_2,m)$
such that, whenever $|(p_x,p_y,z,x)|\leq\delta_{\rm bc}$ and $|y-y_{P_2}|\leq\delta_{\rm bc}$,
\begin{equation*}\label{PreLemBCP2}
 \big|D^{m}\mathcal{B}^{(\theta_1)}(p_x,p_y,z,x,y)\big|\leq C\,.
\end{equation*}
Furthermore, there exist  constants $\hat{\delta}_{\rm bc}>0$, $C>1$,
and $\varepsilon'>0$ depending only on $(\gamma,v_2)$ such that,
whenever $|(p_x,p_y,z,x)|\leq\hat{\delta}_{\rm bc}$ and $|y-y_{P_2}|\leq\hat{\delta}_{\rm bc}$,
\begin{equation*}
\begin{aligned}
& \partial_{a}\mathcal{B}^{(\theta_1)}(p_x,p_y,z,x,y) \leq -C^{-1}
  &&\quad \text{for $\partial_a = \partial_{p_x}, \partial_{p_y}$, or $\partial_z$}\,,\\
& D_{(p_x,p_y)}\mathcal{B}^{(\theta_1)}(p_x,p_y,z,x,y)\cdot\bm{\nu}^{(x,y)}_{\rm sh} \geq C^{-1}
 &&\quad \text{on $\rcal(\Gamma_{\rm shock}\cap\partial\Omega_{\varepsilon'}^{5})$}\,,
 \end{aligned}
\end{equation*}
where
the vector field $\bm{\nu}^{(x,y)}_{\rm sh}$ is the unit normal vector to $\rcal(\Gamma_{\rm shock})$
expressed in the $(x,y)$--coordinates and oriented towards the interior of $\rcal(\Omega)$.

\smallskip
\textbf{3.} \textit{Extension of the domain for the coefficients to equation~\eqref{QuasilinearEq4Theta1}.}
Let $\varepsilon_0>0$ and $L\geq1$ be the constants defined as above.
Then there exist constants $\varepsilon\in(0,\frac{\varepsilon_0}{2}]$
and $C>0$ depending only on $(\gamma,v_2)$ such that,
for any admissible solution $\varphi$
corresponding to parameters $\btheta\in \Theta$, function \(\psi = (\varphi - \varphi_5)\circ\rcal^{-1}\) satisfies
\begin{equation}\label{ModQuasilinearEq4Theta1}
\sum\limits_{i,j=1}^{2}\hat{A}_{ij}^{(\rm mod)}(\psi_x, \psi_y, \psi,x) \partial_{i} \partial_{j} \psi
+\sum\limits_{i,j=1}^2\hat{A}_{i}^{(\rm mod)}(\psi_x ,\psi_y, \psi,x) \partial_{i} \psi=0\qquad \text{in $\rcal(\Omega^5_{\varepsilon})$}\,,
\end{equation}
with $(\partial_1,\partial_2)\defeq (\partial_x,\partial_y)$ and
coefficients $(\hat{A}_{ij}^{(\rm mod)},\hat{A}_{i}^{(\rm mod)})$ satisfying
\begin{equation*}
(\hat{A}_{ij}^{(\rm mod)},\hat{A}_{i}^{(\rm mod)})=(\hat{A}_{ij}^{(\theta_1)},\hat{A}_{i}^{(\theta_1)})
\quad\,\, \text{in $\{(p_x,p_y,z,x) \,:\, |(p_x,p_y)|\leq Lx, \, |z|\leq Lx^2, \, 0 < x < \varepsilon \}$}\,.
\end{equation*}
Moreover, the modified coefficients satisfy
\begin{align*}
& \big|(\hat{A}_{11}^{(\rm mod)},\hat{A}_{12}^{(\rm mod)},\hat{A}_{2}^{(\rm mod)})(p_x,p_y,z,x)\big|
  \leq Cx \qquad\text{in $\mathbb{R}^2\times\mathbb{R}\times(0,\varepsilon)$}\,,\\
& \big\|(\hat{A}_{22}^{(\rm mod)},\hat{A}_{1}^{(\rm mod)})\big\|_{0,\mathbb{R}^2\times\mathbb{R}\times(0,\varepsilon)}
+\big\|D_{(p_x,p_y,z,x)}(\hat{A}_{ij}^{(\rm mod)},\hat{A}_{i}^{(\rm mod)})\big\|_{0,\mathbb{R}^2\times\mathbb{R}\times(0,\varepsilon)}\leq C\,.
\end{align*}

\smallskip
\textbf{4.} \textit{Rescaling method in the local domain to obtain the weighted H{\"o}lder estimate.}
Using the strictly decreasing property of $\eta^{P_2}$ with respect
to $\theta_1 \in [0,\theta^{\rm s}]$ which is shown in Lemma~\ref{lem:monotonicityOfTheta5}\eqref{item2-lem:monotonicityOfTheta5},
we obtain a constant $l_{\rm so}>0$ depending only on $(\gamma,v_2,\sigma)$ such that,
for any admissible solution $\varphi$ corresponding to
$\btheta\in \Theta \cap \{0 \leq \theta_1 \leq \theta^{\rm s}-\sigma\}$,
\begin{equation*}
y_{P}={f}_{5,{\rm sh}}(x_P)\geq l_{\rm so} \qquad \text{for any $x_P\in[0,\bar{\varepsilon}]$}\,.
\end{equation*}
Subsequently, we define $\varepsilon_*\defeq\min\{\tfrac{\bar{\varepsilon}}{2}, \, l^2_{\rm so}\}$.

For fixed $\varepsilon\in(0,\varepsilon_*]$
and any $z_0=(x_0,y_0) \in \rcal(\overline{\Omega^5_{\varepsilon}}\setminus\overline{\Gamma_{\rm sonic}^{5}})$,
define the rescaled coordinates $(S,T)$ by $(x,y) \eqdef z_0+\frac14(x_0S,\sqrt{x_0}T)$, which take values in a local domain
\begin{equation*}
Q_r^{(z_0)}\defeq\big\{
(S,T)\in(-1,1)^2 \,:\, z=z_0+\frac{r}{4}(x_0S,\sqrt{x_0}T)\in \rcal(\Omega_{2\varepsilon}^{5})
\big\}
\qquad \text{for any $r\in(0,1]$}\,.
\end{equation*}
Considering property~\eqref{Eq:PrepLemASP05}, we define a rescaled function $\psi^{(z_0)}$  by
\begin{equation*}
\psi^{(z_0)}(S,T)\defeq\frac{1}{x_0^2}\psi(x_0+\frac{x_0}{4}S,y_0+\frac{\sqrt{x_0}}{4}T)
\qquad \text{for $(S,T)\in Q_1^{(z_0)}$}\,.
\end{equation*}
With a suitable rescaling on the coefficients of the modified equation~\eqref{ModQuasilinearEq4Theta1},
we obtain a uniformly elliptic equation for $\psi^{(z_0)}$
under the $(S,T)$--coordinates.

Note that, for $z_0\in \rcal( \overline{\Omega^5_{\varepsilon}}\cap\Gamma_{\rm shock} )$,
we can express $\Gamma_{\rm shock}$ locally under the rescaled coordinates
as $\Gamma^{(z_0)}_{\rm shock}\defeq \big\{(S,T)\in(-1,1)^2 \,:\, T=F^{(z_0)}(S) \big\}\subseteq\partial Q^{(z_0)}_1$
with $F^{(z_0)}(S)$ defined by
\begin{equation*}
F^{(z_0)}(S)\defeq \frac{4}{\sqrt{x_0}}\big({f}_{5,{\rm sh}}(x_0+\frac{x_0}{4}S)-{f}_{5,{\rm sh}}(x_0)\big)
\qquad \text{for $S\in(-1,1)$}\,.
\end{equation*}
Then, in the local domain $Q_1^{(z_0)}$,
we apply~\cite[Theorem 4.2.3]{ChenFeldman-RM2018} for $z_0\in \rcal( \Omega_{\varepsilon}^{5})$,
\cite[Theorem~4.2.8]{ChenFeldman-RM2018} for
$z_0\in \rcal( \Gamma_{\rm shock} \cap \partial\Omega_{\varepsilon}^{5} )$,
and~\cite[Theorem~4.2.10]{ChenFeldman-RM2018}
for $z_0 \in \rcal(\Gamma_{\rm sym}\cap\partial\Omega_{\varepsilon}^{5}) $, respectively,
to obtain the uniform bound:
\begin{equation*}
\sup\limits_{z_0\in \rcal(\Omega_{\varepsilon}^{5})} \big\|\psi^{(z_0)}\big\|_{2,\alpha,\overline{Q^{(z_0)}_{1/4}}}\,
+\sup\limits_{z_0\in \rcal( \Gamma_{\rm shock}\cap\partial\Omega_{\varepsilon}^{5})}\frac{1}{\sqrt{x_0}}
\big\|F^{(z_0)}\big\|_{2,\alpha,[-\frac{1}{4},\frac{1}{4}]}\leq C\,.
\end{equation*}
Scaling back into the $(x,y)$--coordinates,
we obtain the weighted H{\"o}lder estimates~\eqref{Result4theta1a}.

\subsubsection*{{\rm Case~2}.  Admissible solutions for ${\theta}_{1}<\theta^{\rm s}$ close to $\theta^{\rm s}$.}

\begin{proposition}
\label{Prop:Esti4theta1b}
Fix $\gamma\geq1$ and $v_2\in [v_2^{\rm s},0)$.
Let $\bar{\varepsilon}>0$ be the constant from {\rm Proposition~\ref{Prop:Esti4theta1a}}.
For any $\alpha \in(0,1),$ there exist constants $\varepsilon \in(0, \bar{\varepsilon}]$
and $\sigma_1 \in(0,{\theta}^{\rm{s}})$ depending only on $(\gamma,v_2),$ and $C>0$ depending only on $(\gamma, v_2, \alpha)$
such that any admissible solution $\varphi = \psi \circ \rcal + \varphi_5$
corresponding
to $\btheta\in \Theta \cap \{ \theta^{\rm s}-\sigma_1 \leq \theta_1 < \theta^{\rm s}\}$ satisfies
\begin{equation*}
\norm{\psi}_{2, \alpha, \rcal(\Omega^5_{\varepsilon})}^{(\mathrm{par})}
+\norm{{f}_{5, \mathrm{sh}}-{f}_{5,0}}_{2, \alpha,(0, \varepsilon)}^{(\operatorname{par})} \leq C\,,
\end{equation*}
where \(\Omega_{\varepsilon}^{5} \subseteq \Omega,\)
and functions ${f}_{5, \mathrm{sh}}$ and ${f}_{5,0}$ are defined in {\rm Step}~{\bf 1} of {\rm Case~1} above.
\end{proposition}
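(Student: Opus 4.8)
The plan is to reuse the entire scheme of the proof of Proposition~\ref{Prop:Esti4theta1a}. Steps~\textbf{1}--\textbf{3} there --- the modified polar coordinates $\rcal$, the equation~\eqref{Eq4PsiInNewCoordi} for $\psi=(\varphi-\varphi_5)\circ\rcal^{-1}$, the graph representation $y=f_{5,\mathrm{sh}}(x)$ of $\Gamma_{\rm shock}$ with $\omega\le f'_{5,\mathrm{sh}}\le L$, the pointwise bounds~\eqref{Eq:PrepLemASP05} on $(\psi,\psi_x,\psi_y)$, the ellipticity of~\eqref{QuasilinearEq4Theta1} and the oblique-derivative structure of the free boundary condition $\mathcal{B}^{(\theta_1)}=0$, and the domain extension of the coefficients --- all hold with constants $(\bar\varepsilon,L,\delta,\omega,\dots)$ depending only on $(\gamma,v_2)$, and none of these used a positive lower bound on $\theta^{\rm s}-\theta_1$; hence they remain available, uniformly, for all $\theta_1<\theta^{\rm s}$. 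The only place where the proof of Case~1 genuinely degenerates is Step~\textbf{4}: there one needed $y_{P_2}=f_{5,\mathrm{sh}}(0)\ge l_{\rm so}>0$, which came from the strict monotonicity of $\eta^{P_2}$ in Lemma~\ref{lem:monotonicityOfTheta5}\eqref{item2-lem:monotonicityOfTheta5}; but $l_{\rm so}\to 0$ as $\theta_1\uparrow\theta^{\rm s}$. Indeed, by the hypothesis $v_2\in[v_2^{\rm s},0)$, Remark~\ref{remark:local-theory}\eqref{remark:local-theory-item-i}, and Proposition~\ref{Prop:localtheorystate5}\eqref{Prop:localtheorystate5:continuity-properties}--\eqref{Prop:localtheorystate5:6}, the quantity $\mu(\theta_1):=|D\varphi_5(P_0^1)|-c_5>0$ tends to $0$ as $\theta_1\uparrow\theta^{\rm s}$, so by Definition~\ref{Def:GammaSonicAndPts} the arc $\Gamma^5_{\rm sonic}$ --- of length $c_5\,y_{P_2}$ --- shrinks to the single point $P_0^1|_{\theta_1=\theta^{\rm s}}\in L_{\rm sym}$, and the corners $P_1$ and $P_2$ collide there. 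Consequently the parabolic cubes $Q^{(z_0)}_1$ used in Step~\textbf{4} of Case~1 no longer fit in $\rcal(\Omega^5_\varepsilon)$ near that collapsing corner, and near such a $z_0$ they may touch the symmetry boundary and the shock at once.

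First I would fix $\sigma_1\in(0,\theta^{\rm s})$, depending only on $(\gamma,v_2)$, small enough that $\mu(\theta_1)$ (equivalently $y_{P_2}$, up to a uniform factor) is as small as needed for $\theta_1\in[\theta^{\rm s}-\sigma_1,\theta^{\rm s})$; this is possible by the limit just described. Next I would split $\rcal(\Omega^5_\varepsilon)$ at $x=\delta_0$ for a fixed $\delta_0\in(0,\varepsilon)$ depending only on $(\gamma,v_2)$. On $\{x\ge\delta_0\}$ the distance to $\Gamma^5_{\rm sonic}$ is $\ge\delta_0$ and to $\Gamma^6_{\rm sonic}$ is bounded below, so equation~\eqref{Eq4phi} is uniformly elliptic there and Corollary~\ref{Corol:Esti-AwaySonic5n6}, together with the rescaling argument of Case~1 applied locally on $\{x\ge\delta_0\}$ (which now does fit), gives the estimate on that part of $\rcal(\Omega^5_\varepsilon)$ with a constant depending only on $(\gamma,v_2,\alpha)$ (since $\delta_0$ is fixed by $(\gamma,v_2)$); as $x$ is bounded below there, the parabolic weights are all comparable to $1$. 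It then remains to bound $\psi$ on the corner region $\rcal(\Omega^5_\varepsilon)\cap\{x<\delta_0\}$, which for $\sigma_1$ small is a thin domain bounded by the short sonic arc $\{x=0,\,0\le y\le y_{P_2}\}$, the symmetry segment $\{y=0\}$, the shock graph $y=f_{5,\mathrm{sh}}(x)$, and $\{x=\delta_0\}$.

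On the corner region I would introduce the anisotropic blow-up $(x,y)=(\mu^2\hat x,\mu\hat y)$, $\psi=\mu^4\hat\psi$, centred at $P_0^1$; this is the scaling consistent with the degenerate structure $2x\psi_{xx}\sim\psi_x\sim\psi_{yy}$ of~\eqref{Eq4PsiInNewCoordi}, so that the rescaled equation again has the form~\eqref{QuasilinearEq4Theta1} in $\hat x$, with all coefficient perturbations of size $O(\mu^2)$ (hence uniformly small), and the rescaled free boundary condition preserves its oblique-derivative sign structure. In the $(\hat x,\hat y)$-variables the sonic arc becomes $\{\hat x=0,\,0\le\hat y\le 1\}$ --- of unit length --- the shock becomes a graph $\hat y=\hat f_{5,\mathrm{sh}}(\hat x)$ with $\hat f_{5,\mathrm{sh}}(0)=1$ and slope $O(\mu)$, and the symmetry boundary stays $\{\hat y=0\}$. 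The rescaled versions of the first two inequalities in~\eqref{Eq:PrepLemASP05}, $0\le\hat\psi\le L\hat x^2$ and $0\le\hat\psi_{\hat x}\le\tfrac{2-\delta}{1+\gamma}\hat x$, are inherited unchanged, while the bound on $\hat\psi_{\hat y}$ requires the refined estimate $|\psi_y|\lesssim\mu\,x$ near the collapsing corner, which I would obtain from $\psi_y=0$ on $\Gamma_{\rm sym}\cap\partial\Omega^5$ combined with the uniform bounds of Step~\textbf{3} and Lemma~\ref{Lem:LocalEstimates4Interior}. Since the sonic arc now has unit length, the rescaled corners $\hat P_1$ and $\hat P_2$ are separated by a fixed distance, so the Step~\textbf{4} rescaling of Case~1 --- \cite[Theorems~4.2.3, 4.2.8, 4.2.10]{ChenFeldman-RM2018} at interior, shock and symmetry points --- applies to $\hat\psi$ with uniform constants on the renormalized sonic neighbourhood, except in a fixed neighbourhood of the point $\hat P_1=\hat P_2$, where the equation is uniformly elliptic and one invokes the corner estimate between the shock (oblique-derivative) and symmetry (Neumann) boundaries from \cite{BCF-2019, ChenFeldman-RM2018} used also for Cases~3--4, noting that the opening angle $\arctan\hat f_{5,\mathrm{sh}}'(0)\in[\arctan\omega,\arctan L]$ is bounded away from $0$ and $\pi$. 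Scaling back and patching with the estimate on $\{x\ge\delta_0\}$ over the overlap yields~\eqref{Result4theta1a} with $C=C(\gamma,v_2,\alpha)$.

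I expect the main obstacle to be exactly this collapsing-corner analysis: verifying that the anisotropic blow-up at $P_0^1$ simultaneously renormalizes the degenerate sonic boundary into a fixed-length arc and the shock and symmetry lines into a fixed configuration with uniformly controlled coefficients and data --- in particular establishing the $\mu$-gaining bound on $\psi_y$ near the corner so that the rescaled a priori bounds survive --- and then matching the resulting weighted Schauder estimate on the renormalized sonic neighbourhood with the genuinely elliptic corner estimate between the shock and the symmetry boundary, all with constants independent of $\theta_1\in[\theta^{\rm s}-\sigma_1,\theta^{\rm s})$.
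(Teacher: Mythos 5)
You correctly identify the obstruction (the collapse $y_{P_2}\to 0$ as $\theta_1\to\theta^{\rm s-}$, so the parabolic cubes of Step~\textbf{4} of Case~1 no longer fit), and your blow-up $(x,y)=(\mu^2\hat x,\mu\hat y)$, $\psi=\mu^4\hat\psi$ with $\mu\sim y_{P_2}$ is consistent with the degenerate structure of~\eqref{Eq4PsiInNewCoordi} (the parabolic norm of Definition~\ref{Def:ParabolicNorm01} is in fact invariant under it). But the decomposition $\{x<\delta_0\}\cup\{x\ge\delta_0\}$ with a \emph{single} blow-up on the first piece leaves a genuine gap: the Case~1 machinery applied to $\hat\psi$ only yields the estimate on a rescaled sonic neighbourhood $\{\hat x<\hat\varepsilon\}$ for some fixed $\hat\varepsilon$, i.e.\ on $\{x\lesssim y_{P_2}^2\}$, while your outer estimate only covers $\{x\ge\delta_0\}$ with $\delta_0$ fixed. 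The intermediate scales $y_{P_2}^2\lesssim x\lesssim\delta_0$ — where the domain is still thin (height $\approx y_{P_2}+O(x)$, so neither the standard parabolic cube of $y$-extent $\sim\sqrt{x_0}$ nor a fixed-size ball fits) — are covered by neither piece, and running ``Case~1'' on the blown-up domain out to $\hat x\sim\delta_0/\mu^2$ is exactly as hard as the original problem there. This is precisely where the paper's proof needs its two new ingredients: the refined decay $0\le\psi\le C^*x^4$ in $\rcal(\Omega^5_{2\hat\varepsilon})\cap\{x>\tfrac1{10}y_{P_2}^2\}$ (see~\eqref{PropertyForPhiInCase2}) and a \emph{finer local} rescaling $(x,y)=z_0+\tfrac{\sqrt{x_0}}{10k}(x_0S,\sqrt{x_0}T)$ with normalization $\psi^{(z_0)}=x_0^{-4}\psi$, whose cubes have $y$-extent $x_0/(10k)$ and hence fit inside the thin domain $\{0<y<y_{P_2}+kx\}$. (The paper's near-sonic piece is simply $\Omega^5_{\rm fc}=\Omega^5_{\bar\varepsilon}\cap\rcal^{-1}(\{x<y_{P_2}^2\})$ with the unchanged Case~1 scaling, which is your blown-up region in different clothing.)

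A second, independent gap is the bound $|\psi_y|\lesssim\mu x$ that your blow-up needs so that the third inequality of~\eqref{Eq:PrepLemASP05} survives rescaling. Integrating $\psi_{yy}$ from $\Gamma_{\rm sym}$ gives only $|\psi_y|\le y\,\sup|\psi_{yy}|\lesssim(\mu+x)\sup|\psi_{yy}|$, so you would already need $\sup|\psi_{yy}|\lesssim x$ near the corner — which is part of the parabolic estimate being proved — and Lemma~\ref{Lem:LocalEstimates4Interior} only controls derivatives at fixed distance from the sonic arcs; the claim is therefore circular as stated. Finally, a small point of geometry: after the blow-up, $\hat P_1$ and $\hat P_2$ are separated by unit distance and the only corners are sonic--symmetry and sonic--shock (exactly those of Case~1); there is no shock--symmetry corner in $\Omega^5$, so the Case~3--4 corner estimate you invoke has no role here.
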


The proof is similar to that of~\cite[Proposition 3.32]{BCF-2019}.
The admissible solutions for {\rm Case~2} share the same properties as
in Steps~\textbf{1}--\textbf{3} of {\rm Case~1}. We omit the details of the proof
and only draw the main differences compared with the proof
of Proposition~\ref{Prop:Esti4theta1a} here.

\smallskip
\textbf{1.} \textit{Weighted H{\"o}lder estimate near $\rcal(P_0^1)$.}
Because
\(v_2 \in [v_2^{\rm s}, 0)\),
$y_{P_2}>0$ tends to zero as $\theta_1\rightarrow\theta^{\rm s-}$.
Therefore, for the same rescaled coordinates $(S,T)$
and local domain $Q_r^{(z_0)}$ with $z_0=(x_0,y_0)\in \rcal(\Omega^{5}_{\varepsilon})$ as given in
Step~\textbf{4} of Case~1 above,
it is possible that ``$Q_r^{(z_0)}$ does not fit into $\Omega^{5}_{\varepsilon}$'',
meaning that the image of  $Q_r^{(z_0)}$ under
transform \((S,T)\mapsto(x,y)\) intersects the boundaries:
$\rcal(\Gamma_{\rm shock}\cap\partial\Omega_{\varepsilon}^{5})$
and $\rcal(\Gamma_{\rm sym}\cap\partial\Omega_{\varepsilon}^{5})$ simultaneously.

For that reason, we define $\Omega^5_{\rm fc} \defeq \Omega^{5}_{\bar{\varepsilon}} \cap \rcal^{-1}(\{x < y^2_{P_2}\})$.
It follows that \(Q_r^{(z_0)}\) fits into \(\Omega^{5}_{\rm fc}\) for any \(z_0 \in \rcal(\Omega^{5}_{\rm fc})\)
due to similar considerations as the choice of $\varepsilon_*$ above.
In this case, we follow the same idea as for Proposition~\ref{Prop:Esti4theta1a}
to obtain that there exists a constant $C>0$ depending only on $(\gamma,v_2,\alpha)$ such that,
for $\btheta\in\Theta \cap \{\theta^{\rm s}-\sigma^* \leq \theta_1 < \theta^{\rm s}\},$
$\norm{\psi}^{({\rm par})}_{2,\alpha,\rcal(\Omega^{5}_{\rm fc})}\leq C$
for some small constant $\sigma^*>0$ depending only on $(\gamma,v_2)$.

\smallskip
\textbf{2.} \textit{Weighted H{\"o}lder estimate away from $\rcal(P_0^1)$.}
Note that there exists sufficiently large $k>1$ depending only on $(\gamma,v_2)$ such that the
following geometric relations hold:
\begin{equation*}
  \{0<x<2\bar{\varepsilon} , \, 0<y<y_{P_2}+\frac{x}{k}\}
  \; \subseteq \;
  \rcal(\Omega^5_{2\bar{\varepsilon}})
  \; \subseteq \;
  \{0<x<2\bar{\varepsilon} , \, 0<y<y_{P_2}+kx\}\,.
\end{equation*}
In the rest of the domain, there exist constants $\hat{\varepsilon} \in (0, \frac{\bar{\varepsilon}}{2}]$,
$\sigma'\in(0,\sigma^*]$, and $C^*>0$ depending only on $(\gamma,v_2)$ such that,
for $\btheta\in \Theta \cap \{\theta^{\rm s}-\sigma' \leq \theta_1 < \theta^{\rm s}\}$,
\begin{equation}
\label{PropertyForPhiInCase2}
0\leq\psi(x,y)\leq C^*x^4 \qquad \text{in $\rcal(\Omega^{5}_{2\hat{\varepsilon}}) \cap \{x>\tfrac{1}{10}{y^2_{P_2}}\}$}\,.
\end{equation}
Then, for any $z_0=(x_0,y_0)\in \rcal( \overline{\Omega^5_{\hat{\varepsilon}}})\cap \{ x > \frac{1}{5} y^2_{P_2}\}$,
we introduce the rescaled coordinates $(S,T)$ as $(x,y)\eqdef z_0+\frac{\sqrt{x_0}}{10k}(x_0S,\sqrt{x_0}T)$,
which take values in a local domain $Q_r^{(z_0)}$ defined by
\begin{equation*}
Q_r^{(z_0)}\defeq \big\{(S,T)\in(-1,1)^2 \,:\,
z=z_0+\frac{r\sqrt{x_0}}{10k}(x_0S,\sqrt{x_0}T)\in \rcal(\Omega_{2\hat{\varepsilon}}^{5}) \big\}\qquad\text{for any $r\in(0,1]$}\,.
\end{equation*}
Thus, the local domain $Q_r^{(z_0)}$ fits into  $\Omega_{2\hat{\varepsilon}}^{5}$, meaning that the image of
$Q_r^{(z_0)}$ under transform \((S,T) \mapsto (x,y)\) does not intersect both $\rcal(\Gamma_{\rm shock})$
and $\rcal(\Gamma_{\rm sym})$ simultaneously.
Considering property~\eqref{PropertyForPhiInCase2}, we define a rescaled function $\psi^{(z_0)}$ by
\begin{equation*}
\psi^{(z_0)}(S,T)\defeq\frac{1}{x_0^4}\psi(x_0+\frac{x^{3/2}_0}{10k}S,y_0+\frac{x_0}{10k}T)
\qquad \text{for $(S,T)\in Q_1^{(z_0)}$}\,.
\end{equation*}
The rescaled function $\psi^{(z_0)}$ satisfies a uniformly elliptic equation
in the $(S,T)$--coordinates after a suitable rescaling on the coefficients of the modified equation~\eqref{ModQuasilinearEq4Theta1}.
The rest of the proof follows the same idea as in Step~\textbf{4} of the proof of Proposition~\ref{Prop:Esti4theta1a}.

\subsubsection*{{\rm Case~3}.  Admissible solutions
for ${\theta}_{1} \geq \theta^{\rm s}$ close to $\theta^{\rm s}$.}
\begin{proposition}
\label{Prop:Esti4theta1c}
Fix $\gamma\geq1$ and $v_2\in(v_2^{\rm s},0)$.
Let $\bar{\varepsilon}>0$ be the constant
from {\rm Proposition~\ref{Prop:Esti4theta1a}}.
For each $\alpha\in(0,1),$  there exist constants $\varepsilon \in(0,\bar{\varepsilon}]$
and $\sigma_3\in(0,\theta^{\rm d}-\theta^{\rm s})$ depending only on $(\gamma,v_2)$,
and a constant $C>0$ depending only on $(\gamma,v_2,\alpha)$ such that
any admissible solution $\varphi$
corresponding to
$\btheta \in \Theta \cap \{{\theta}^{\rm{s}} \leq \theta_1 < {\theta}^{\rm{s}}+\sigma_3\}$ satisfies
\begin{equation}\begin{aligned}
&\|\varphi - \varphi_5\|_{2,\alpha,\overline{\Omega_{\varepsilon}^{5}}}
+ \|{f}_{5, \mathrm{sh}}-{f}_{5,0}\|_{2, \alpha,(0, \varepsilon)} \leq C \,,\\
&|D^m_{\bm{\xi}}(\varphi - \varphi_5) (P_0^1)|= \frac{{\rm d}^m}{{\rm d}x^m}\left( {f}_{5, \mathrm{sh}}-{f}_{5,0} \right)(0)
= 0  \qquad \text{for $m=0, 1, 2$}\,,
\end{aligned}\end{equation}
where \(\Omega_{\varepsilon}^{5} \subseteq \Omega,\) and functions ${f}_{5, \mathrm{sh}}$ and ${f}_{5,0}$ are defined in {\rm Step}~{\bf 1} below.
\qed
\end{proposition}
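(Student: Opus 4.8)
The plan is to argue as in the previous two cases and in \cite[\S4.2]{ChenFeldman-RM2018}, the new feature being that the estimate must be uniform as $\theta_1$ approaches $\theta^{\rm s}$ from above, where the sonic boundary has already collapsed to the point $P_0^1 = P_1 = P_2$ and state~$(5)$ becomes pseudo-sonic there. In Step~1 I would establish the local geometry near $P_0^1$: since $\theta_1 \ge \theta^{\rm s}$, a neighbourhood of $P_0^1$ in $\Omega$ is the corner region bounded above by $\Gamma_{\rm shock}$ and below by $\Gamma_{\rm sym}\subseteq L_{\rm sym}$, meeting at $P_0^1$; using Lemma~\ref{lem:properties-of-state-5-and-6}, Lemma~\ref{Prop:GammaShockExpres}, and the implicit function theorem (applied to $\varphi = \varphi_2$ on $\Gamma_{\rm shock}$, solvable by the directional monotonicity), I would represent the reflected shock line $S_{25}$ and the free boundary $\Gamma_{\rm shock}$ as graphs $y = f_{5,0}(x)$ and $y = f_{5,\mathrm{sh}}(x)$ over a common interval $x\in(0,\varepsilon)$ in coordinates adapted to $S_{25}$, and define $\Omega^5_\varepsilon\subseteq\Omega$ as the corresponding corner neighbourhood. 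For $\psi \defeq \varphi - \varphi_5$ I would then derive the quasilinear equation (the analogue of~\eqref{Eq4PsiInNewCoordi} in the modified polar coordinates centred at $O_5$), together with the slip condition $\psi_y = 0$ on $\Gamma_{\rm sym}$, the Rankine--Hugoniot condition recast as an oblique-derivative condition $\mathcal{B}(D\psi,\psi,\bm{\xi}) = 0$ on $\Gamma_{\rm shock}$ as in~\eqref{Def4FreeBdryFunc}--\eqref{DefBTheta1}, and the one-point matching $\psi(P_0^1) = 0$, $D\psi(P_0^1) = 0$ from Problem~\ref{FBP}\eqref{item3n4-FBP}.

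The quantitative input is that, by Proposition~\ref{prop:ellipticDegeneracyNearSonicBoundary} and Definition~\ref{def:state5andstate6}, the ellipticity of the equation near $P_0^1$ is bounded below by a constant multiple of ${\rm dist}(\bm{\xi}, P_0^1) + \tilde\delta_1$, where $\tilde\delta_1 \defeq c_5 - |D\varphi_5(P_0^1)| \ge 0$ satisfies $\tilde\delta_1\to 0$ as $\theta_1\to\theta^{{\rm s}+}$ by Proposition~\ref{Prop:localtheorystate5}\eqref{Prop:localtheorystate5:6}; thus for $\theta_1 > \theta^{\rm s}$ the problem is uniformly elliptic up to $P_0^1$ but only on the scale $\tilde\delta_1$, and at $\theta_1 = \theta^{\rm s}$ it degenerates at $P_0^1$ exactly as near a sonic point. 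Accordingly I would first establish the a priori bounds $0 \le \psi \le C\,({\rm dist}(\bm{\xi}, P_0^1) + \tilde\delta_1)^2$ and $|D\psi| \le C\,({\rm dist}(\bm{\xi}, P_0^1) + \tilde\delta_1)$ near $P_0^1$ (the analogues of~\eqref{Eq:PrepLemASP05}), then run a two-scale rescaling: on the outer annular range $\{c\,\tilde\delta_1 \le {\rm dist}(\bm{\xi},P_0^1) \le \bar\varepsilon\}$ the problem at each dyadic distance is degenerate exactly as in Case~1, so I would use the parabolic anisotropic rescaling and the interior, oblique-derivative, and mixed estimates of \cite[Theorems~4.2.3, 4.2.8, 4.2.10]{ChenFeldman-RM2018} as in Proposition~\ref{Prop:Esti4theta1a}; on the inner range $\{{\rm dist}(\bm{\xi},P_0^1) \le c\,\tilde\delta_1\}$ (empty when $\theta_1 = \theta^{\rm s}$) the equation is uniformly elliptic with ellipticity comparable to $\tilde\delta_1$, so I would rescale isotropically by $\tilde\delta_1$ to obtain a uniformly elliptic mixed boundary value problem on an $O(1)$ corner domain, to which the estimates of \cite[\S4.2]{ChenFeldman-RM2018} and \cite{Lieberman-86, Lieberman2013} apply with constants independent of $\theta_1$. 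Patching the two families of scaled estimates, scaling back, and combining with Corollary~\ref{Corol:Esti-AwaySonic5n6} for the part of $\Omega^5_\varepsilon$ away from $P_0^1$, yields the first inequality of the proposition in the ordinary $C^{2,\alpha}$ norm, together with the corresponding bound for $f_{5,\mathrm{sh}} - f_{5,0}$.

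Given the $C^{2,\alpha}$ estimate up to $P_0^1$, the vanishing identities follow from a compatibility analysis at the corner. For $m = 0, 1$ they are the matching conditions $\varphi = \varphi_5$, $D\varphi = D\varphi_5$ at $P_0^1$ together with the slope identity $f'_{5,\mathrm{sh}}(0) = f'_{5,0}(0) = \tan\theta_{25}$ from Lemma~\ref{Prop:GammaShockExpres}. For $m = 2$ I would argue that the degree-$2$ Taylor polynomial of $\psi$ at $P_0^1$ must simultaneously satisfy the linearized equation evaluated at $P_0^1$, the first tangential derivatives at $P_0^1$ of the homogeneous slip condition and of the linearized Rankine--Hugoniot condition, and have vanishing $1$-jet at the vertex; a direct computation as in \cite[Proposition~3.32]{BCF-2019} shows these constraints force the polynomial to be identically zero, hence $D^2(\varphi - \varphi_5)(P_0^1) = 0$, and the analogous argument on the shock graph gives $\tfrac{{\rm d}^2}{{\rm d}x^2}(f_{5,\mathrm{sh}} - f_{5,0})(0) = 0$.

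I expect the main obstacle to be the uniformity of all constants across the transition $\theta_1 \to \theta^{{\rm s}+}$: one must track how the degeneration scale $\tilde\delta_1(\theta_1)$ enters each rescaled subproblem, verify that the inner and outer regimes overlap robustly, and ensure that the resulting estimate on the fixed neighbourhood $\Omega^5_\varepsilon$ has a constant depending only on $(\gamma, v_2, \alpha)$ with $\varepsilon, \sigma_3$ depending only on $(\gamma, v_2)$, and that it matches continuously with the regime of Case~4 where $\theta_1$ is bounded away from $\theta^{\rm s}$. A secondary difficulty is the mixed boundary value problem at the corner $P_0^1$ between the free boundary $\Gamma_{\rm shock}$ and the symmetry line: one must check the obliqueness of $\mathcal{B}$ and the admissible corner-angle condition — using Lemma~\ref{lem:properties-of-state-5-and-6} and the local reflection theory of Proposition~\ref{Prop:localtheorystate5} — that make the mixed problem well-posed and supply both the $C^{2,\alpha}$ regularity and the $2$-jet vanishing used above.
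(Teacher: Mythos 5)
Your overall architecture (corner neighbourhood at $P_0^1=P_1=P_2$, modified polar coordinates, obliqueness of the Rankine--Hugoniot operator, rescaled Schauder estimates uniform as $\theta_1\to\theta^{\rm s+}$) matches the paper's, but there is a genuine gap in the quantitative input. You propose to run the rescaling from the bounds $0\le\psi\le C({\rm dist}(\bm{\xi},P_0^1)+\tilde\delta_1)^2$ and $|D\psi|\le C({\rm dist}+\tilde\delta_1)$. A quadratic decay is too weak to produce a uniform \emph{unweighted} $C^{2,\alpha}(\overline{\Omega^5_\varepsilon})$ bound: if $\sup_{B_{r}(z_0)}|\psi|\le M$ with $r\sim d\defeq{\rm dist}(z_0,P_0^1)$, the rescaled Schauder estimate gives $[D^2\psi]_{\alpha,B_{r/2}(z_0)}\le CMd^{-2-\alpha}$, so $M\sim d^2$ yields a local H\"older seminorm of order $d^{-\alpha}$, which blows up at the corner when $\theta_1=\theta^{\rm s}$ (where $\tilde\delta_1=0$). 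One needs decay strictly faster than $d^{2+\alpha}$ for every $\alpha<1$. This is exactly what the paper supplies: the key estimate~\eqref{Claim4Esti1c}, $0\le\psi\le C(x-x_{P_2})^5$ in $\rcal(\Omega^5_{2\varepsilon})$ (with $x_{P_2}$ equal to your $\tilde\delta_1$), established \emph{before} the rescaling. The rescaled function is then normalized by $(x_0-x_{P_2})^{-5}$, the Schauder iteration closes for all $\alpha\in(0,1)$, and the vanishing of the $2$-jet of $\varphi-\varphi_5$ and of $f_{5,{\rm sh}}-f_{5,0}$ at $P_0^1$ falls out of the quintic decay directly; it is not derived by a separate corner argument.

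Two secondary issues. First, your ``compatibility analysis at the corner'' for $m=2$ presupposes the uniform $C^{2,\alpha}$ regularity up to $P_0^1$ that you have not yet obtained, so the logic risks circularity; moreover, for $\theta_1>\theta^{\rm s}$ the three linear relations (equation at $P_0^1$, tangential derivative of the slip condition, tangential derivative of the Rankine--Hugoniot condition) determine $D^2\psi(P_0^1)=0$ only if the resulting $3\times3$ system is non-degenerate, which requires a computation you do not carry out. Second, your inner-regime \emph{isotropic} rescaling by $\tilde\delta_1$ produces an equation whose ellipticity ratio is still $\sim\tilde\delta_1$ (the $\psi_{xx}$ coefficient is $\sim 2x\sim 2x_{P_2}$ while the $\psi_{yy}$ coefficient is $\sim 1/c_5$), so the Schauder constants would degenerate as $\theta_1\to\theta^{\rm s+}$. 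The paper avoids both regimes and the patching altogether by using the single anisotropic family $(x,y)=z_0+\tfrac{x_0-x_{P_2}}{10k}(\sqrt{x_0}\,S,T)$, which is uniformly elliptic in $(S,T)$ across the whole range $\theta_1\in[\theta^{\rm s},\theta^{\rm s}+\sigma_3]$.
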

The proof is similar to that of~\cite[Proposition~3.39]{BCF-2019},
so we omit most details here and only emphasize the difference compared with the previous two cases.

\smallskip
\textbf{1.} \textit{The depiction and the properties of the reflected shock $S_{25}$ and
the free boundary $\Gamma_{\rm shock}$ in a small neighbourhood of the sonic boundary $\Gamma^5_{\rm sonic}$.}
Let $\varepsilon_1, \varepsilon_0,\omega_0>0$ be chosen as
in Step~\textbf{1} of the proof of Proposition~\ref{Prop:Esti4theta1a}.
For the coordinates: \((x,y) = \rcal(\bm{\xi})\) given by~\eqref{PolarCoordiNearO5},
we define $\hat{x}\defeq x-x_{P_2}$.
Note that $x_{P_2}=0$ if $\theta_1\in[0,\theta^{\rm s}]$ and $x_{P_2}>0$ if $\theta_1\in(\theta^{\rm s},\theta^{\rm s}+\sigma_2]$.
Then there exists a unique function ${f}_{5,0}\in C^{\infty}([-\varepsilon_0,\varepsilon_0])$ satisfying
\begin{align*}
\rcal \big(
S_{25}\cap\mathcal{N}_{\varepsilon_1}(\Gamma^5_{\rm sonic})
\big)
\cap\{(x,y) \,:\, |\hat{x}|<\varepsilon_0\}
=\big\{(x,y) \,:\, |\hat{x}|<\varepsilon_0,\, y={f}_{5,0}(\hat{x})\big\}\,,
\end{align*}
and $2\omega_0\leq{f}'_{5,0}(\hat{x})\leq\omega_0^{-1}$ for $\hat{x}\in(-\varepsilon_0,\varepsilon_0)$.

Reducing $\bar{\varepsilon}\in(0,\varepsilon_0]$ further,
we can choose $\sigma_2\in(0,\theta^{\rm d} - \theta^{\rm s})$ depending only on $(\gamma,v_2,\bar{\varepsilon})$ such that,
for any admissible solution $\varphi$ corresponding to
$\btheta\in \Theta \cap \{0 \leq \theta_1 \leq \theta^{\rm s}+\sigma_2\}$,
$\Omega^5_{\bar{\varepsilon}}$ defined by~\eqref{Eq:Def4Omega5epsilon} is nonempty
due to the monotonicity of the pseudo-Mach number in Lemma~\ref{lem:MonotonicityOfMach2}, from which we can generalize the definition of $\Omega^5_{\bar{\varepsilon}}$ to
\begin{equation}\label{Def4Omega5epsilon-New}
 \Omega^5_{\bar{\varepsilon}}
 \defeq \Omega\cap\mathcal{N}_{\varepsilon_1}(\Gamma^{5}_{\rm sonic})\cap
 \rcal^{-1} \big(
 \{ (x,y) \,:\, 0 < \hat{x} < \bar{\varepsilon}\}
 \big)\,.
\end{equation}
There exist $L\geq1$, $\delta\in(0,\frac12)$,
and $\omega\in(0,\omega_0)$
depending only on $(\gamma,v_2)$ such that,
for all $(x,y)\in \rcal(\Omega^5_{\bar{\varepsilon}})$,
\begin{equation}\label{Eq:PrepLemASP05-Case3}
0 \leq \psi(x,y) \leq Lx^2\,, \qquad
0\leq\psi_x\leq\frac{2-\delta}{1+\gamma}x\leq Lx\,,
\qquad
|\psi_y(x,y)|\leq Lx\,,
\end{equation}
for \(\psi \defeq (\varphi-\varphi_5) \circ \rcal^{-1}\),
and there exists a unique function ${f}_{5,{\rm sh}}\in C^1([0,\bar{\varepsilon}])$ such that
\begin{align*}
& \rcal(\Omega^5_{\bar{\varepsilon}}) =
\{(x,y) \,:\, 0 < \hat{x} < \bar{\varepsilon}, \,
 0 < y < {f}_{5,{\rm sh}}(\hat{x})\}\,,\\
& \rcal(\Gamma_{\rm shock}\cap\partial\Omega^5_{\bar{\varepsilon}})=
\{ (x,y) \,:\, 0 < \hat{x} < \bar{\varepsilon}, \, y={f}_{5,{\rm sh}}(\hat{x})\}\,,
\end{align*}
and $\omega\leq {f}'_{5,{\rm sh}}(\hat{x})\leq L$ for any $\hat{x}\in(0,\bar{\varepsilon})$.

\smallskip
\textbf{2.} \textit{The free boundary condition and the modification on equation~\eqref{QuasilinearEq4Theta1}.}
Similar to~\cite[Lemma~3.37 and Corollary~3.38]{BCF-2019},
it follows that, for $\sigma_2\in(0,\theta^{\rm d}-\theta^{\rm s})$ defined in Step~\textbf{1},
there exists a constant $\mu_0>0$ depending only on $(\gamma,v_2,\sigma_2)$ such that,
for any $\btheta\in\Theta\cap \{\theta^{\rm s} \leq \theta_1 \leq \theta^{\rm d}-\sigma_2\}$,
$g^{\rm sh}_{\rm mod}$ defined by~\eqref{FBFunc:gshmod} satisfies the following properties:
\begin{equation*}
\partial_{a} g^{\rm sh}_{\rm mod}(D\varphi_5(P_2),\varphi_5(P_2),P_2)\leq-\mu_0\,,
\end{equation*}
where $\partial_a = \partial_{p_1}, \partial_{p_2}$, or $\partial_z$.
From these properties above, we can show that $\mathcal{B}^{(\theta_1)}$ defined by~\eqref{DefBTheta1}
satisfies the same properties as in Step~\textbf{2} of {\rm Case~1}.
The extension of the domain for the coefficients of equation~\eqref{QuasilinearEq4Theta1} is the same as Step~\textbf{3}
of \textit{\rm Case~1}.

\smallskip
\textbf{3.} \textit{Rescaling method in the local domain to obtain the weighted H{\"o}lder estimate.}
Note that, from the properties of ${f}_{5,0}$ and ${f}_{5,{\rm sh}}$ in Step~\textbf{1},
there exists $k>1$ depending only on $(\gamma,v_2)$ such that
\begin{equation*}
  \{0<\hat{x}<\bar{\varepsilon}, \, 0<y<\frac{\hat{x}}{k}\}
  \; \subseteq \;
  \rcal( \Omega^5_{\bar{\varepsilon}} )
  \; \subseteq \;
  \{0<\hat{x}<\bar{\varepsilon} , \, 0<y<k\hat{x}\}\,.
\end{equation*}
Moreover, there exist constants $\varepsilon\in(0,\frac{\bar{\varepsilon}}{2}], \sigma_3\in(0,\sigma_2]$,
and $C>1$ depending only on $(\gamma,v_2)$ such that,
for $\btheta \in \Theta \cap \{{\theta}^{\rm{s}} \leq \theta_1 \leq {\theta}^{\rm{s}}+\sigma_3\}$,
\begin{equation}\label{Claim4Esti1c}
x_{P_2}\leq \frac{\varepsilon}{10}\,, \qquad\,
0\leq\psi(x,y)\leq C (x-x_{P_2})^5 \quad \text{in $\rcal(\Omega^5_{2\varepsilon})$}\,.
\end{equation}

Then, for any $z_0=(x_0,y_0)\in \rcal( \overline{\Omega^5_{\varepsilon}}\setminus\{P_2\})$,
we introduce the rescaled coordinates $(S,T)$ as $(x,y) \eqdef z_0+\frac{x_0-x_{P_2}}{10k}(\sqrt{x_0}S,T)$,
taking values in a local domain $Q_r^{(z_0)}$ defined by
\begin{equation*}
Q_r^{(z_0)}\defeq
\big\{(S,T)\in(-1,1)^2 \,:\,  z=z_0+\frac{r(x_0-x_{P_2})}{10k}(\sqrt{x_0}S,T)\in \rcal( \Omega_{2\varepsilon}^{5})\big\}
\qquad \text{for any $r\in(0,1]$}\,.
\end{equation*}
Then the local domain $Q_r^{(z_0)}$ fits into $\Omega_{2\varepsilon}^{5}$, meaning that the image of $Q_r^{(z_0)}$
under transform $(S,T)\mapsto (x,y)$ does not intersect both \(\rcal(\Gamma_{\rm shock})\) and \(\rcal(\Gamma_{\rm sym})\) simultaneously.
Considering property~\eqref{Claim4Esti1c}, we define a rescaled function $\psi^{(z_0)}$ by
\begin{equation*}
\psi^{(z_0)}(S,T)\defeq\frac{1}{(x_0-x_{P_2})^5}\psi(x_0+\frac{x_0-x_{P_2}}{10k}\sqrt{x_0}S,\,y_0+\frac{x_0-x_{P_2}}{10k}T)
\qquad \text{for $(S,T)\in Q_1^{(z_0)}$}.
\end{equation*}
The rescaled function $\psi^{(z_0)}$ satisfies a uniformly elliptic equation in the $(S,T)$--coordinates
after a suitable rescaling of the coefficients of the modified equation~\eqref{ModQuasilinearEq4Theta1}.
The rest of the proof follows
the same idea as Step~\textbf{4} of {\rm Case~1}.

\subsubsection*{{\rm Case~4}.
Admissible solutions for ${\theta}_{1} \geq \theta^{\rm s}$ away from $\theta^{\rm s}$.} \phantom{abcs}

Let $\sigma_3>0$ be from Proposition~\ref{Prop:Esti4theta1c}.
By Proposition~\ref{prop:ellipticDegeneracyNearSonicBoundary}, there exists a constant  $\delta\in(0,1)$ depending
only on $(\gamma,v_2)$ such that any admissible solution $\varphi$ corresponding to
$\btheta\in\Theta \cap \{\theta^{\rm s}+\frac{\sigma_3}{2} \leq \theta_1 < \theta^{\rm d}\}$ satisfies
\begin{equation}\label{Case4UniformSubsonicConst}
\frac{|D\varphi|^2}{c^2(|D\varphi|,\varphi)}\leq1-\delta \qquad\, \text{in $\overline{\Omega}\cap\{\xi\geq0\}$}\,,
\end{equation}
where $c(|D\varphi|,\varphi)$ is defined by~\eqref{Def4SonicSpeedInSec3}.
By Lemma~\ref{UniformBound-Lem} and~\eqref{Case4UniformSubsonicConst},
$(D\varphi(\bm{\xi}),\varphi(\bm{\xi}))\in\mathcal{K}_{R_*}$ for some constant $R_*\geq2$
depending only on $(\gamma,v_2)$,
where $\mathcal{K}_{R}$ is defined by~\eqref{KRSet}.
In particular, there exist $\lambda_*>0$ and $r_*>0$, depending only on $(\gamma,v_2)$,
such that any admissible solution $\varphi$ corresponding to
$\btheta\in\Theta \cap \{\theta^{\rm s}+\frac{\sigma_3}{2} \leq \theta_1 < \theta^{\rm d}\}$ satisfies
\begin{equation}\label{EllipticityConstCase4}
\lambda_*|\bm{\kappa}|^2\leq \sum_{i,j=1}^2\partial_{p_j}\mathcal{A}_{i}(D\varphi(\bm{\xi}),\varphi(\bm{\xi}))\kappa_i\kappa_j
\leq \lambda^{-1}_*|\bm{\kappa}|^2
\end{equation}
for any $\bm{\xi}\in\overline{\Omega}\cap B_{2r_*}(P_0^1)$ and any $\bm{\kappa}=(\kappa_1,\kappa_2)\in\mathbb{R}^2$.

Set $\bar{\phi}(\bm{\xi})\defeq \varphi_2(\bm{\xi})-\varphi(\bm{\xi})$.
Recalling equation~\eqref{Eq4phi} for function $\phi=\varphi-\varphi_5$, the free boundary condition~\eqref{Def4FreeBdryFunc}
on $\Gamma_{\rm shock}$, and the slip boundary condition
defined in Problem~\ref{FBP}\eqref{item5-FBP} on $\Gamma_{\rm sym}$,
it follows that $\bar{\phi}$ satisfies the system:
\begin{equation}\label{MixedBrdyCondisSystem}
\begin{cases}
\, \sum\limits_{i,j=1}^2 \bar{A}_{ij}(D\bar{\phi},\bar{\phi},\bm{\xi}) \partial_{i} \partial_{j} \bar{\phi}=0
  &\quad \text{in} \; \Omega\cap B_{r_*}(P_0^1) \,,\\
\, \bar{g}^{\rm sym}(D\bar{\phi},\bar{\phi},\bm{\xi})=0 &\quad \text{on} \; \Gamma_{\rm sym} \,,\\
\, \bar{g}^{\rm sh}(D\bar{\phi},\bar{\phi},\bm{\xi})=0 &\quad \text{on} \; \Gamma_{\rm shock} \,,
\end{cases}
\end{equation}
with coefficients $\bar{A}_{ij}$ and the boundary functions $\bar{g}^{\rm sym}$ and $\bar{g}^{\rm sh}$ defined by
\begin{equation}\label{Def4CoeffiBdryFuncs}
\begin{aligned}
& \bar{A}_{ij}(\bm{p},z,\bm{\xi})\defeq c^2(|D\varphi_2-\bm{p}|,\varphi_2-z)\delta_{ij}-(\partial_i\varphi_2-p_i)(\partial_j\varphi_2-p_j) \,,\\
&  \bar{g}^{\rm sym}(\bm{p},z,\bm{\xi})\defeq  v_2-p_2-\eta \,,\\
& \bar{g}^{\rm sh}(\bm{p},z,\bm{\xi})\defeq  -g^{\rm sh}(D\varphi_2-\bm{p},\varphi_2-z,\bm{\xi})\,,
\end{aligned}
\end{equation}
for $g^{\rm sh}$ given by~\eqref{Def4FreeBdryFunc}, where $(\partial_1,\partial_2)=(\partial_{\xi},\partial_{\eta})$.

We define a vector $\bm{e}$ as
\begin{equation*}
\bm{e}\defeq\frac{D\varphi_2(P_0^1)}{|D\varphi_2(P_0^1)|}=\frac{O_2-P_0^1}{|O_2-P_0^1|}\,,
\end{equation*}
and choose $\bm{e}^{\perp}$ as the clockwise rotation of $\bm{e}$ by $\frac{\pi}{2}$.
Note that $\bm{e}$ depends only on $(\gamma,v_2,\theta_1)$ and satisfies that,
for any $\bm{\xi}\in\Gamma_{\rm shock}\cap B_{r_*}(P_0^1)$,
\begin{equation}\label{NonDegeneracyPropety}
\partial_{\bm{e}}(\varphi_2-\varphi)(\bm{\xi})=-\partial_r(\varphi_2-\varphi)
+\big(\bm{e}-\frac{D\varphi(\bm{\xi})}{|D\varphi(\bm{\xi})|}\big)\cdot D(\varphi_2-\varphi)\geq\frac{d'_0}{2}\,,
\end{equation}
where $d'_0>0$ is the constant from~\eqref{PositiveDifferenceDeriv2r}, depending only on $(\gamma,v_2)$.
Then we define the new coordinates $(S,T)$ by
\begin{equation}\label{Def:LocalCoordiForCase4}
\bm{\xi} \eqdef P_0^1+ S\bm{e}+T\bm{e}^{\perp}\,.
\end{equation}
Under these $(S,T)$--coordinates,
we can express $S_{25}, \Gamma_{\rm shock}, \Gamma_{\rm sym}$, and the domain near $P_0^1$ as
\begin{equation} \label{NewnotationsUnderST}
\begin{aligned}
&S_{25}\cap B_{r_*}(P_0^1)=\{\bm{\xi}(S,T) \,:\, S=a_{\rm ref}T, \, T>0, \, (S,T)\in B_{r_*}(\bm{0}) \}\,,\\
&\Gamma_{\rm shock}\cap B_{r_*}(P_0^1)=\{\bm{\xi}(S,T) \,:\, S=f_{\bm{e}}(T), \, T>0, \,  (S,T)\in B_{r_*}(\bm{0}) \}\,,\\
&\Gamma_{\rm sym}\cap B_{r_*}(P_0^1)=\{\bm{\xi}(S,T) \,:\, S=a_{\rm sym}T, \, T>0, \, (S,T) \in B_{r_*}(\bm{0}) \}\,,\\
&\Omega \cap B_{r_*}(P_0^1)=\{\bm{\xi}(S,T) \,:\, a_{\rm ref}T \leq f_{\bm{e}}(T) < S < a_{\rm sym}T, \, T>0, \,  (S,T) \in B_{r_*}(\bm{0}) \}\,,
\end{aligned} \end{equation}
where the positive constants $a_{\rm ref} = \tan({\theta_{25} - \hat{\theta}_{25} - \frac\pi2})$
and $a_{\rm sym} = \cot\hat{\theta}_{25}$ depend continuously
on $\theta_1\in (0,\theta^{\rm d})$,
and there exists a constant $C>0$ depending only on $(\gamma,v_2)$
such that $C^{-1} \leq a_{\rm ref} < a_{\rm sym} \leq C$ for all $\theta_1\in[\theta^{\rm s},\theta^{\rm d})$.
Note that $\hat{\theta}_{25}$ is defined by~\eqref{eq:state5SelfSimilarTurningAngle},
and we have used Proposition~\ref{Prop:localtheorystate5}, including~\eqref{eq:simple-reflection-angle-estimate}.

\begin{definition}[Weighted H{\"o}lder norm]
\label{Def:WeightedHolder-Sec3}
For any $\sigma\in\mathbb{R},$ $\alpha\in(0,1),$ and $m\in\mathbb{N},$
the weighted H{\"o}lder norms are defined as follows{\rm :}
\begin{enumerate}[{\rm (i)}]
\item \label{Def:WeightedHolder-Sec3-item1}
For any open bounded connected set $U\subseteq\mathbb{R}^2,$ let $\Gamma$ be a closed portion of $\partial U$.
Write
\begin{equation*}
\delta_{\bm{\xi}_1} \defeq {\rm dist}(\bm{\xi}_1,\Gamma)\,, \quad\, \delta_{\bm{\xi}_1,\bm{\xi}_2} \defeq \min\{\delta_{\bm{\xi}_1},\delta_{\bm{\xi}_2}\} \qquad\,
\text{for any $\bm{\xi}_1,\bm{\xi}_2\in U$}\,.
\end{equation*}
For any $u\in C^{m}(U),$ define
\begin{equation*}
\begin{aligned}
&\norm{u}^{(\sigma),\Gamma}_{m,0,U} \defeq \sum_{0\leq|\bm{\beta}|\leq m}\sup_{\bm{\xi}_1\in U}
\Big(\delta_{\bm{\xi}_1}^{\max\{|\bm{\beta}|+\sigma,0\}}\big|D^{\bm{\beta}}u(\bm{\xi}_1)\big|\Big) \,,\\
&[u]^{(\sigma),\Gamma}_{m,\alpha,U} \defeq \sum_{|\bm{\beta}|=m}
\sup_{\substack{ \bm{\xi}_1,\bm{\xi}_2\in U \\ \bm{\xi}_1\neq\bm{\xi}_2}}  \Big(\delta_{\bm{\xi}_1,\bm{\xi}_2}^{\max\{m+\alpha+\sigma,0\}}
\frac{|D^{\bm{\beta}}u(\bm{\xi}_1)-D^{\bm{\beta}}u(\bm{\xi}_2)|}{|\bm{\xi}_1-\bm{\xi}_2|^{\alpha}}\Big)\,,\\
&\norm{u}^{(\sigma),\Gamma}_{m,\alpha,U} \defeq \norm{u}^{(\sigma),\Gamma}_{m,0,U} + [u]^{(\sigma),\Gamma}_{m,\alpha,U}\,,
\end{aligned}\end{equation*}
where $D^{\bm{\beta}} \defeq \partial_{\xi}^{\beta_1}\partial_{\eta}^{\beta_2}$
for $\bm{\beta}\defeq(\beta_1,\beta_2)$ with $\beta_1,\beta_2\in\mathbb{N}$
and $|\bm{\beta}|=\beta_1+\beta_2$.

\item \label{Def:WeightedHolder-Sec3-item2}
For any open bounded interval $I\subseteq\mathbb{R},$ let $x_0\in \partial I$ be an endpoint of $I$.
For any $f\in C^{m}(I),$ define
\begin{equation*}
\begin{aligned}
& \norm{f}^{(\sigma),\{x_0\}}_{m,0,I} \defeq \sum_{k=0}^{m}\sup_{x\in I}\Big(|x-x_0|^{\max\{k+\sigma,0\}}\big|f^{(k)}(x)\big|\Big)\,,\\
& [f]^{(\sigma),\{x_0\}}_{m,\alpha,I} \defeq \sup_{\substack{ x_1,x_2\in I \\ x_1\neq x_2}}  \Big(\min\big\{|x_1-x_0|,|x_2-x_0|\big\}^{\max\{m+\alpha+\sigma,0\}} \frac{|f^{(m)}(x_1)-f^{(m)}(x_2)|}{|x_1-x_2|^{\alpha}}\Big)\,,\\
&\norm{f}^{(\sigma),\{x_0\}}_{m,\alpha,I} \defeq \norm{f}^{(\sigma),\{x_0\}}_{m,0,I} + [f]^{(\sigma),\{x_0\}}_{m,\alpha,I}\,.
\end{aligned}\end{equation*}
\end{enumerate}
\end{definition}

\begin{proposition}
\label{Prop:Esti4theta1d}
Fix $\gamma\geq1$ and $v_2 \in (v_2^{\rm s},0)$.
Let $\sigma_3>0$ be the constant from {\rm Proposition~\ref{Prop:Esti4theta1c}}.
For small constants $\sigma_{\rm{s}} \in(0,\frac{\sigma_3}{2}]$
and $\sigma_{\rm{d}}\in(0,\frac{\theta^{\rm d}}{10}),$
there exist $r \in(0,r_*),$ $\alpha \in(0,1),$ and $C>0$ depending only
on $(\gamma,v_2,\sigma_{\rm{s}},\sigma_{\rm{d}})$ such that any admissible solution $\varphi$
corresponding to
$\btheta\in\Theta \cap\{{\theta}^{\rm{s}}+\sigma_{\rm{s}} \leq \theta_1 \leq {\theta}^{\rm{d}}-\sigma_{\rm{d}}\}$ satisfies the estimates{\rm :}
\begin{flalign}\label{ResultCase4}
\big\|\varphi\big\|_{2, \alpha, \Omega\cap B_r(P_0^1)}^{(-1-\alpha),\{P_0^1\}}
+\big\|f_{\bm{e}}\big\|_{2, \alpha,(0,r)}^{(-1-\alpha),\{0\}} \leq C\,, \qquad
|D^m_{\bm{\xi}}(\varphi-\varphi_5)(P_0^1)|=0 \,\,\,\, \text{for $m=0,1$}\,.
\end{flalign}
\end{proposition}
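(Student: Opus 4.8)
The plan is to localise near the corner point $P_0^1$ and treat the problem there as a mixed oblique-derivative boundary value problem for a \emph{uniformly} elliptic equation in a corner domain, which is exactly the setting that is new compared with Cases~1--3. The one-point relation $|D^m_{\bm\xi}(\varphi-\varphi_5)(P_0^1)|=0$ for $m=0,1$ in~\eqref{ResultCase4} is immediate: since $\theta_1\ge\theta^{\rm s}$, Definition~\ref{Def:GammaSonicAndPts} gives $\Gamma^5_{\rm sonic}=\{P_0^1\}$, so Problem~\ref{FBP}\eqref{item3n4-FBP} forces $\varphi(P_0^1)=\varphi_5(P_0^1)$ and $D\varphi(P_0^1)=D\varphi_5(P_0^1)$. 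The content is therefore the weighted $C^{2,\alpha}$--bound, which must be obtained with weight $-1-\alpha$ at $\{P_0^1\}$, i.e.\ with $\varphi\in C^{1,\alpha}$ up to $P_0^1$ and second derivatives allowed to grow like ${\rm dist}(\cdot,P_0^1)^{-(1-\alpha)}$ --- the typical corner singularity of a mixed problem. In the $(S,T)$--coordinates of~\eqref{Def:LocalCoordiForCase4}, by~\eqref{NewnotationsUnderST} the set $\Omega\cap B_{r_*}(P_0^1)$ is a corner domain between the graphs $S=f_{\bm e}(T)$ (free boundary) and $S=a_{\rm sym}T$ (symmetry line), meeting at the origin with opening angle bounded strictly between $0$ and $\pi$ uniformly in $\btheta$: from $C^{-1}\le a_{\rm ref}<a_{\rm sym}\le C$ and the continuity of $(a_{\rm ref},a_{\rm sym})$ in $\theta_1$ over the compact set $[\theta^{\rm s}+\sigma_{\rm s},\theta^{\rm d}-\sigma_{\rm d}]$ one also gets $a_{\rm sym}-a_{\rm ref}\ge C^{-1}$.

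\textbf{Step 1.} Put $\psi\defeq\varphi-\varphi_5$. Since $v_5=0$ by~\eqref{eq:localtheorystate5-sonic-centres}, $\partial_\eta\varphi_5=-\eta$ vanishes on $\Gamma_{\rm sym}$, so the slip condition becomes the homogeneous Neumann condition $\partial_\eta\psi=0$ on $\Gamma_{\rm sym}$; on $\Gamma_{\rm shock}$, $\psi$ satisfies the nonlinear oblique condition coming from~\eqref{Def4FreeBdryFunc}, which is uniformly oblique by Lemma~\ref{Lem:LocalEstimates4Interior}\eqref{Lem:LE4Int-1} and~\eqref{NonDegeneracyPropety}. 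Moreover, $\psi$ solves the linear equation~\eqref{Eq4phi}, uniformly elliptic in $\Omega\cap B_{r_*}(P_0^1)$ by~\eqref{Case4UniformSubsonicConst}--\eqref{EllipticityConstCase4}. \textbf{Step 2.} Prove an a priori $C^{1,\alpha}$--estimate up to the corner. Linearising the Rankine--Hugoniot condition around the reflected-shock state and using the uniform ellipticity, the two uniformly oblique boundary conditions, and the uniform control of the corner opening angle, one applies the corner regularity theory for oblique-derivative problems (cf.~\cite{Lieberman-86,Lieberman2013} and~\cite[Chapter~4]{ChenFeldman-RM2018}): the linearised corner problem satisfies a Lopatinskii-type complementing condition, which together with the one-point data $D^m\psi(P_0^1)=0$ ($m=0,1$) gives $\psi\in C^{1,\alpha}(\overline{\Omega\cap B_r(P_0^1)})$ for some $\alpha\in(0,1)$, with $\alpha$ and $\|\psi\|_{1,\alpha}$ controlled by the ellipticity ratio, the obliqueness constants, and the angle bounds --- hence only by $(\gamma,v_2,\sigma_{\rm s},\sigma_{\rm d})$. \textbf{Step 3.} Upgrade to the weighted $C^{2,\alpha}$--estimate. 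With $D\varphi\in C^\alpha$ from Step~2, the coefficients of~\eqref{Eq4phi} and of the boundary functions in~\eqref{Def4CoeffiBdryFuncs} are Hölder; interior and boundary Schauder estimates on balls $B_{d/2}(P)\cap\Omega$ with $d\defeq\tfrac12{\rm dist}(P,P_0^1)$, summed with the weights of Definition~\ref{Def:WeightedHolder-Sec3}, yield $\|\varphi\|^{(-1-\alpha),\{P_0^1\}}_{2,\alpha,\Omega\cap B_r(P_0^1)}\le C$. \textbf{Step 4.} Finally, since $\Gamma_{\rm shock}=\{\varphi_2-\varphi=0\}$ and $\partial_{\bm e}(\varphi_2-\varphi)\ge\tfrac{d_0'}{2}>0$ there by~\eqref{NonDegeneracyPropety}, the implicit function theorem writes $\Gamma_{\rm shock}$ near $P_0^1$ as $S=f_{\bm e}(T)$ and transfers the weighted bound on $\varphi$ to $\|f_{\bm e}\|^{(-1-\alpha),\{0\}}_{2,\alpha,(0,r)}\le C$.

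Uniformity of all constants in $\btheta$ follows because, over the compact range $\theta_1\in[\theta^{\rm s}+\sigma_{\rm s},\theta^{\rm d}-\sigma_{\rm d}]$ (and symmetrically for $\theta_2$), the ellipticity constants of~\eqref{EllipticityConstCase4}, the obliqueness constants, the non-degeneracy constant $d_0'$, and the corner-angle bounds all depend continuously on $\theta_1$; this is precisely where staying away from $\theta^{\rm s}$ (so the sonic boundary has already collapsed and the equation is uniformly elliptic up to $P_0^1$) and away from $\theta^{\rm d}$ (so the reflected-shock state stays non-degenerate) is used. The overall layout parallels the analogous near-corner estimate in the subsonic regime of~\cite{BCF-2019}, adapted to the present two-sided geometry.

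The main obstacle is Step~2: the corner estimate at $P_0^1$, where the Neumann arc $\Gamma_{\rm sym}$ and the nonlinear-oblique arc $\Gamma_{\rm shock}$ meet while the degenerate sonic boundary has collapsed to this single point carrying the Dirichlet-type data $D^m\psi(P_0^1)=0$. One must show that the linearised mixed corner problem has a \emph{positive} regularity exponent $\alpha$ and that $\alpha$ and the constant are uniform over the parameter range --- i.e.\ verify a Lopatinskii/complementing condition uniformly in $\btheta$ --- then control the nonlinearity of the Rankine--Hugoniot condition through its linearisation and the uniform obliqueness bound, and finally transfer the scalar corner conclusion to the curved free boundary. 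Everything else (ellipticity, global bounds, the implicit-function step) is routine given Lemmas~\ref{UniformBound-Lem} and~\ref{Lem:LocalEstimates4Interior} and Proposition~\ref{prop:ellipticDegeneracyNearSonicBoundary}.
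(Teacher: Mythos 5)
Your overall architecture (one-point conditions from $\Gamma^5_{\rm sonic}=\{P_0^1\}$, a H\"older gradient estimate at the corner, then a weighted Schauder upgrade and an implicit-function step for $f_{\bm e}$, with uniformity by compactness in $\theta_1\in[\theta^{\rm s}+\sigma_{\rm s},\theta^{\rm d}-\sigma_{\rm d}]$) matches the paper's. But the step you yourself flag as the main obstacle --- the corner estimate where the Neumann arc $\Gamma_{\rm sym}$ meets the free boundary carrying the nonlinear Rankine--Hugoniot condition --- is asserted rather than proved, and this is exactly where the work lies. For a corner at which two oblique-derivative conditions meet, a positive H\"older exponent for $D\psi$ is \emph{not} automatic: it depends on the opening angle and the obliqueness directions, and ``verify a Lopatinskii-type complementing condition uniformly in $\btheta$'' is precisely the unverified claim. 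Moreover, near $P_0^1$ one of the two boundary arcs is the unknown free boundary, whose regularity there is only $C^1$ a priori, so you cannot directly invoke fixed-boundary corner theory (Lieberman, or \cite[Ch.~4]{ChenFeldman-RM2018}) in the physical domain and then ``linearise the Rankine--Hugoniot condition''; the coefficients and the domain are coupled to the solution.

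The paper's proof supplies the missing mechanism. Using the uniform monotonicity $\partial_{\bm e}(\varphi_2-\varphi)\ge d_0'/2$ from~\eqref{NonDegeneracyPropety}, it performs a partial hodograph transform $\bm y=(\bar\phi^{(h)}(S,T),T)$ with $\bar\phi=\varphi_2-\varphi$, which maps $\Gamma_{\rm shock}$ to the flat fixed boundary $\{y_1=0\}$ and converts the problem into the mixed system~\eqref{Eqs:TransfMixedBrdyCondisSystem} for $v$. The corner estimate is then obtained not from an angle-dependent Lopatinskii analysis but from the algebraic nondegeneracy $\abs{\det D_{\bm p}(g_h^{\rm sym},g_h^{\rm sh})}>0$ at the corner (checked via the uniform obliqueness on $\Gamma^{(h)}_{\rm shock}$, $\abs{D_{\bm p}g_h^{\rm sym}}\ge d_0'/2$, and the bound $\partial_{q_1}g^{\rm sh}_{\rm mod}\le -M^{-1}$ valid for $\theta_1\le\theta^{\rm d}-\sigma_{\rm d}$), which is the hypothesis of \cite[Propositions 4.3.7 and 4.3.9]{ChenFeldman-RM2018}; these yield the pointwise H\"older estimates~\eqref{Eq:ResultInStep2Case4}--\eqref{Eq:HolderPropertyOnBdryD2}, and then~\eqref{Eq:Result4Step3Case4}. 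Finally, the passage to the weighted $C^{2,\alpha}$ bound for both $\varphi$ and $f_{\bm e}$ is done by \cite[Proposition 4.3.11(i)--(ii)]{ChenFeldman-RM2018}, which handles the free-boundary graph and the nonlinear oblique conditions simultaneously --- roughly your Steps 3--4, but with the shock regularity produced jointly rather than after the fact. Without the hodograph transform and the Jacobian nondegeneracy (or an explicit verification of a complementing condition with a uniform positive exponent), your Step 2 does not close, so the proposal has a genuine gap at its central step.
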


The proof is similar to the proof of~\cite[Proposition~3.42]{BCF-2019}.
We omit the details of the proof and sketch only the main ideas here.

\smallskip
\textbf{1}. \textit{Hodograph transform.} Define \(\bar{\phi}^{(h)}(S,T) \defeq \bar{\phi}(\bm{\xi}(S,T))\)
for function $\bar{\phi} = \varphi_2 - \varphi$ and the \((S,T)\)--coordinates given by~\eqref{Def:LocalCoordiForCase4}.
By property~\eqref{NonDegeneracyPropety}, we can apply the hodograph transform
$\bm{y}=(y_1,y_2)=(\bar{\phi}^{(h)}(S,T),T)$ to $\bar{\phi}$ for the original system~\eqref{MixedBrdyCondisSystem}.
We write $S \eqdef v(y_1,y_2)$ if it satisfies $\bar{\phi}^{(h)}(S,y_2)=y_1$ for any
\begin{equation*}
\bm{y}\in\mathcal{D}^{(\theta_1)}_{r_*}\defeq \big\{ (\bar{\phi}^{(h)}(S,T),T) \,:\,  \bm{\xi}(S,T)\in \Omega\cap B_{r_*}(P_0^1)\big\}\,,
\end{equation*}
from which we obtain
\begin{equation*}
\partial_{y_1}v=\frac{1}{\partial_{S} \bar{\phi}^{(h)}}\,, \qquad
\quad \partial_{y_2}v=-\frac{\partial_{T}\bar{\phi}^{(h)}}{\partial_{S}\bar{\phi}^{(h)}}\,.
\end{equation*}
It follows from Lemma~\ref{UniformBound-Lem} and property~\eqref{NonDegeneracyPropety} that
there exists a constant $K>1$ depending only on $(\gamma,v_2)$ such that
\begin{equation}\label{Eq:UniformBound4v}
\frac{1}{K}\leq\partial_{y_1}v\leq\frac{2}{d_0'}\, , \qquad |v|+|D_{\bm{y}}v|\leq2K\,.
\end{equation}
Under the hodograph transform, system~\eqref{MixedBrdyCondisSystem} is equivalent to
\begin{equation}\label{Eqs:TransfMixedBrdyCondisSystem}
\begin{cases}
\, \sum\limits_{i,j=1}^2 a_{ij}(D_{\bm{y}}v,v,\bm{y}) \partial_{y_iy_j} v=0 &\quad \text{in}\; \mathcal{D}^{(\theta_1)}_{r_*}\,,\\
\, g_h^{\rm sym}(D_{\bm{y}}v,v,\bm{y})=0 &\quad \text{on}\; \Gamma^{(h)}_{\rm sym}\,,\\
\, g_h^{\rm sh}(D_{\bm{y}}v,v,\bm{y})=0 &\quad \text{on}\; \Gamma^{(h)}_{\rm shock}\,,
\end{cases}
\end{equation}
for suitable functions $(a_{ij},g_h^{\rm sym},g_h^{\rm sh})$, with the boundary sets given by
\begin{equation*}
    \begin{aligned}
    & \Gamma^{(h)}_{\rm sym}\defeq \big\{(\bar{\phi}^{(h)}(S,T),T) \,:\, \bm{\xi} (S,T)\in\Gamma_{\rm sym}\cap B_{r_*}(P_0^1)\big\} \,,\\
    & \Gamma^{(h)}_{\rm shock}\defeq \big\{ (0,T) \,:\, \bm{\xi}(S,T)\in\Gamma_{\rm shock}\cap B_{r_*}(P_0^1)\big\}\,.
    \end{aligned}
\end{equation*}
Due to~\eqref{Eq:UniformBound4v}, we extend the domain of functions $(a_{ij},g_h^{\rm sym},g_h^{\rm sh})$ to the set:
\begin{equation*}
U\defeq \big\{(\bm{p},z,\bm{y})\in\mathbb{R}^2\times\mathbb{R}\times\mathcal{D}^{(\theta_1)}_{r_*} \,:\,
|\bm{p}|+|z|\leq 2K\big\}\,,
\end{equation*}
and continue to use the same notation as in~\eqref{Eqs:TransfMixedBrdyCondisSystem} for this modified system.

\smallskip
\textbf{2}. \textit{H{\"o}lder continuity of ${g}_h^{\rm sym}$ at the corner.}
From Lemma~\ref{UniformBound-Lem}, there exists a constant $\tau_0>0$ depending only on $(\gamma,v_2)$ such that
\begin{equation*}
   \Gamma^{(h)}_{\rm sym}\cup \Gamma^{(h)}_{\rm shock} \subseteq \{\bm{y}\in\mathbb{R}^2 \,:\, y_2>\tau_0|y_1|\}\,.
\end{equation*}
Moreover, $\Gamma^{(h)}_{\rm shock}$ is in $C^{2}$ up to endpoint $\{\bm{0}\}$.

From the definition of admissible solutions and the hodograph transform,
we obtain that
$v\in C^1(\overline{\mathcal{D}^{(\theta_1)}_{r_*}})\cap C^2(\mathcal{D}^{(\theta_1)}_{r_*}\cup\Gamma^{(h)}_{\rm shock})\cap C^3(\mathcal{D}^{(\theta_1)}_{r_*})$
satisfies~\eqref{Eq:UniformBound4v}.
From~\eqref{Def4CoeffiBdryFuncs} and the notation defined in Step~\textbf{1},
we can obtain the boundedness of coefficients $({a}_{ij},{g}_h^{\rm sym},{g}_h^{\rm sh})$ in the modified system~\eqref{Eqs:TransfMixedBrdyCondisSystem}.
For any $\btheta\in \Theta\cap\{\theta^{\rm s}+\sigma_{\rm s}\leq \theta_1 <\theta^{\rm d}\}$
with $\sigma_{\rm s}\in(0,\frac{\sigma_3}{2}]$, we may reduce $\lambda_*>0$ depending only on $(\gamma,v_2,\sigma_{\rm s})$
 in~\eqref{EllipticityConstCase4} to obtain the uniform ellipticity of the modified system~\eqref{Eqs:TransfMixedBrdyCondisSystem} in $\mathcal{D}^{(\theta_1)}_{r_*}$
by using~\eqref{Eq:UniformBound4v} and the hodograph transform.
It follows from Lemma~\ref{UniformBound-Lem}, \eqref{EllipticityConstCase4}, and~\eqref{Eq:UniformBound4v}
that there exists a constant $\lambda_1>0$ depending only on $(\gamma,v_2,\sigma_{\rm s})$ such that,
for any $y\in \Gamma^{(h)}_{\rm shock}$,
\begin{equation*}\begin{aligned}
\bm{\nu}\cdot D_{\bm{p}} g_{h}^{\rm sh}(D_{\bm{y}}v,v,\bm{y})
=\frac{1}{v_{y_1}}D\bar{\phi} \cdot D_{\bm{q}}g^{\rm sh}(D\varphi,\varphi,\bm{\xi}) \geq \lambda_1\,,
\end{aligned}\end{equation*}
where we have used the expression of $D_{\bm{q}}g^{\rm sh}(D\varphi,\varphi,\bm{\xi})$
on $\Gamma^{(h)}_{\rm shock}$, and $\bm{\nu}=(1,0)$ is the unit normal vector to $\Gamma^{(h)}_{\rm shock}$.
From~\eqref{Eq:UniformBound4v}, $|D_{\bm{p}} {g}_{h}^{\rm sym}(D_{\bm{y}}v,v,\bm{y})| \geq\frac{d_0'}{2}>0$.

Similar to the proof of~\cite[Lemma 3.40]{BCF-2019},
one can show that, for small $\sigma_{\rm{d}}\in(0,\frac{\theta^{\rm d}}{10})$,
there exist constants $M>0$ and $\bar{r}\in(0,r_*)$ depending only on $(\gamma,v_2,\sigma_{\rm{d}})$ such that
any admissible solution $\varphi$ corresponding to
$\btheta\in\Theta\cap\{\theta^{\rm s} \leq \theta_1 < \theta^{\rm d}-\sigma_{\rm{d}}\}$ satisfies
\begin{equation*}\label{Eq:Case4PreLem3:40}
\partial_{q_1}g_{\rm mod}^{\rm sh}(D\varphi,\varphi,\bm{\xi})\leq- M^{-1}  \qquad
\text{for all $\bm{\xi}\in\Gamma_{\rm shock}\cap B_{\bar{r}}(P_0^1)$}\,.
\end{equation*}
Then, for any $\bm{y}\in\Gamma^{(h)}_{\rm shock}$,
$|{\rm det}D_{\bm{p}}( {g}_{h}^{\rm sym},{g}_{h}^{\rm sh})|>0$ holds.

It follows from~\cite[Proposition 4.3.7]{ChenFeldman-RM2018} that
there exist constants $\alpha_1\in(0,1)$, $C>0$,
and $r_0\in(0,\bar{r}]$
depending only on $(\gamma,v_2,\sigma_{\rm s},\sigma_{\rm d})$ such that,
for any $\bm{y}\in\overline{\mathcal{D}^{(\theta_1)}_{\bar{r}}\cap B_{r_0}(\bm{0})}$,
\begin{equation}\label{Eq:ResultInStep2Case4}
\big|{g}^{\rm sym}_{h}(D_{\bm{y}}v(\bm{y}),v(\bm{y}),\bm{y}) - {g}^{\rm sym}_{h}(D_{\bm{y}}v(\bm{0}),v(\bm{0}),\bm{0})\big|
\leq C |\bm{y}|^{\alpha_1}\,.
\end{equation}

\smallskip
\textbf{3}. \textit{H{\"o}lder continuity of $v(\bm{y})$ at the corner.}
Notice that both ${g}_{h}^{\rm sym}$ and ${g}_{h}^{\rm sh}$ are Lipschitz continuous on $(p,z,\bm{y})$.
Since $v\in C^1(\overline{\mathcal{D}^{(\theta_1)}_{\bar{r}}})$,
inequality~\eqref{Eq:ResultInStep2Case4} also holds for any  $\bm{y}\in\Gamma^{(h)}_{\rm shock}$ after replacing ${g}_{h}^{\rm sym}$
by ${g}_{h}^{\rm sh}$.
Moreover, functions $({g}_{h}^{\rm sym},{g}_{h}^{\rm sh})(\cdot,v(\bm{0}),\bm{0})$
are bounded in  $C^{1,\alpha_1}(\overline{B_{2K}(D_{\bm{y}}v(\bm{0}))})$,
and $\abs{{\rm det} D_{\bm{p}} ({g}_{h}^{\rm sym},{g}_{h}^{\rm sh}) (D_{\bm{y}}v(\bm{0}),v(\bm{0}),\bm{0})}>0$ follows from Step~\textbf{2}.
Then, from~\cite[Proposition~4.3.9]{ChenFeldman-RM2018}, there exist $r_1\in(0,r_0)$ and $C_1>0$
depending only on $(\gamma,v_2,\sigma_{\rm s},\sigma_{\rm d})$ such that
\begin{equation}\label{Eq:ResultInStep3Case4}
|D_{\bm{y}}v(\bm{y})-D_{\bm{y}}v(\bm{0})|\leq C_1|\bm{y}|^{\alpha_1} \qquad
\text{for any $\bm{y}\in\Gamma^{(h)}_{\rm shock}\cap B_{r_1}(\bm{0})$}\,.
\end{equation}
By the hodograph transform and the geometric properties of the domain near corner $P_0^1$,
it follows from~\eqref{Eq:ResultInStep2Case4}--\eqref{Eq:ResultInStep3Case4} that
\begin{equation}
\label{Eq:HolderPropertyOnBdryD2}
\begin{aligned}
&|\partial_\eta \varphi (\bm{\xi}) - \partial_\eta \varphi (P_0^1)|\leq C|\bm{\xi}-P_0^1|^{\alpha_1}
\quad &&\text{for any $\bm{\xi}\in \overline{\Omega\cap B_{r_1}(P_0^1)}$} \,,\\
&|D\varphi(\bm{\xi})-D\varphi(P_0^1)|\leq C|\bm{\xi}-P_0^1|^{\alpha_1}
\quad &&\text{for any $\bm{\xi}\in \Gamma_{\rm shock}\cap B_{r_1}(P_0^1)$}\,.
\end{aligned}\end{equation}

The open domain $\Omega_{r_1}\defeq \Omega\cap B_{r_1}(P_0^1)$ contains two Lipschitz boundaries
$\Gamma^1\defeq \Gamma_{\rm shock}\cap B_{r_1}(P_0^1)$
and $\Gamma^2\defeq \Gamma_{\rm sym}\cap B_{r_1}(P_0^1)$ intersecting at corner $P_0^1$.
We also take
\begin{equation*}
B^{(1)}(\bm{p},z,\bm{\xi})\defeq (1,0)\cdot(\bm{p}-D\bar{\phi}(P_0^1))\,, \qquad
B^{(2)}(\bm{p},z,\bm{\xi})\defeq \bar{g}_{\rm sym}(\bm{p},z,\bm{\xi})\,.
\end{equation*}
Now we update system~\eqref{MixedBrdyCondisSystem}
for $\bar{\phi}\in C^1(\overline{\Omega_{r_1}})\cap C^2(\Omega_{r_1}\cup\Gamma^{1})\cap C^3(\Omega_{r_1})$
into domain $\Omega_{r_1}$,  replacing the boundary conditions by
\begin{equation*}
 B^{(1)}(D\bar{\phi},\bar{\phi},\bm{\xi}) = \bar{h}(\bm{\xi}) \,\,\,\, \text{on} \; \Gamma^1\,,
 \qquad\,\,
\, B^{(2)}(D\bar{\phi},\bar{\phi},\bm{\xi})=0 \,\,\,\, \text{on} \; \Gamma^2\,,
\end{equation*}
where $\bar{h}(\bm{\xi})\defeq (1,0)\cdot(D\bar{\phi}(\bm{\xi})-D\bar{\phi}(P_0^1))$, which satisfies \begin{equation*}
|\bar{h}(\bm{\xi})-\bar{h}(P_0^1)|\leq  C|\bm{\xi}-P_0^1|^{\alpha_1}
\qquad \text{for any $\bm{\xi}\in\Gamma_{\rm shock}\cap B_{r_1}(P_0^1)$}\,.
\end{equation*}
We can also extend the domain of functions $(\bar{A}_{ij},B^{(1)},B^{(2)})$ for the updated system to the set:
\begin{equation*}
V\defeq \{(\bm{p},z,\bm{\xi})\in\mathbb{R}^2\times\mathbb{R}\times\overline{\Omega_{r_1}} \,:\, |\bm{p}|+|z|\leq 2K_1\}\,.
\end{equation*}
Using~\cite[Proposition 4.3.7]{ChenFeldman-RM2018}
again, we obtain that there exist $\alpha\in(0,\alpha_1]$, $r_2\in(0,r_1]$, and $C>0$
depending only on $(\gamma,v_2,\sigma_{\rm s},\sigma_{\rm d})$ such that,
for any $\bm{\xi}\in\overline{\Omega\cap B_{r_2}(P_0^1)}$,
\begin{equation*}
\big|B^{(1)}(D\bar{\phi}(\bm{\xi}),\bar{\phi}(\bm{\xi}),\bm{\xi})-B^{(1)}(D\bar{\phi}(P_0^1),\bar{\phi}(P_0^1),P_0^1)\big|
\leq C\big|\bm{\xi}-P_0^1\big|^{\alpha}\,,
\end{equation*}
which, combined with~\eqref{Eq:HolderPropertyOnBdryD2},
implies that
\begin{equation}\label{Eq:Result4Step3Case4}
\big|D\varphi(\bm{\xi})-D\varphi(P_0^1)\big|\leq C\big|\bm{\xi}-P_0^1\big|^{\alpha}
\qquad \text{for any $\bm{\xi}\in\overline{\Omega\cap B_{r_2}(P_0^1)}$}\,.
\end{equation}

\textbf{4}. \textit{Weighted H{\"o}lder estimate near corner $P_0^1$.}
From Lemma~\ref{Prop:GammaShockExpres},
Remark~\ref{Rmk4AdmissibleSolu}\eqref{item0-Rmk4AdmissibleSolu},
and~\eqref{Eq:Result4Step3Case4},
there exists a constant $\varepsilon_{\rm bd}>0$ depending only on $(\gamma,v_2)$ such that,
for any $\bm{\xi}_0\in \Gamma_{\rm bd}\cap B_{r_3}(P_0^1)$ for some $r_3\in(0,r_2]$
with either $\Gamma_{\rm bd}\defeq \Gamma_{\rm shock}$ or $\Gamma_{\rm bd}\defeq\Gamma_{\rm sym}$,
\begin{equation*}
   \big\{ \bm{\xi} \in\partial\Omega \,:\, |\bm{\xi}-\bm{\xi}_0|<\varepsilon_{\rm bd}|\bm{\xi}_0-P_0^1| \big\}
   \subseteq \Gamma_{\rm bd}\,.
\end{equation*}

We now consider the following problem
for $\varphi\in C^{1}(\overline{\Omega\cap B_{r_3}(P_0^1)})\cap C^3({\Omega\cap B_{r_3}(P_0^1)})$:
\begin{equation*}\begin{cases}
\, \mbox{div}\tilde{\mathcal{A}}(D\varphi,\varphi)+\tilde{\mathcal{B}}(D\varphi,\varphi) = 0
&\quad \text{in}\;{\Omega\cap B_{r_3}(P_0^1)}\,,\\
\, g_{\rm mod}^{\rm sh}(D\varphi,\varphi,\bm{\xi})=0 &\quad \text{on} \; \Gamma_{\rm shock}\cap B_{r_3}(P_0^1)\,,\\
\, g_{\rm sym}(D\varphi,\varphi,\bm{\xi})=0 &\quad \text{on} \; \Gamma_{\rm sym}\cap B_{r_3}(P_0^1)\,,
\end{cases}
\end{equation*}
where $(\tilde{\mathcal{A}},\tilde{\mathcal{B}})(\bm{q},w)$ are defined on
$\mathbb{R}^2\times\mathbb{R}$
from Lemma~\ref{CoeffExtdLem}, $g_{\rm mod}^{\rm sh}(\bm{q},w,\bm{\xi})$ is defined
on $\mathbb{R}^2\times\mathbb{R}\times\overline{\Omega\cap B_{r_3}(P_0^1)}$ by~\eqref{FBFunc:gshmod},
and $g_{\rm sym}(\bm{q},w,\bm{\xi})=(0,1)\cdot \bm{q}$.
Combining the above with property~\eqref{Eq:Result4Step3Case4} and
using~\cite[Proposition 4.3.11(i)]{ChenFeldman-RM2018},
we obtain that there exist $\tilde{\alpha}\in(0,\alpha]$
and $C>0$ depending only on $(\gamma,v_2,\sigma_{\rm s},\sigma_{\rm d})$ such that
$\varphi\in C^{1,\tilde{\alpha}}(\Omega\cap B_{r}(P_0^1))$ satisfies
$\|\varphi\|_{1,\,\tilde{\alpha},\,\Omega \cap B_{r}(P_0^1)}\leq C$
and, for $\Gamma_{\rm shock}$ given by~\eqref{NewnotationsUnderST},
$\|f_{\bm{e}}\|_{1,\tilde{\alpha},(0,r)}\leq C$.
Finally, using~\cite[Proposition 4.3.11(ii)]{ChenFeldman-RM2018}, we obtain~\eqref{ResultCase4}.

\subsection{Compactness of the set of admissible solutions}
\label{Sec3-5:Compactness4AdmisSoluSet}
Fix $\gamma\geq 1$ and $v_2 \in (v_{\min},0)$.
For any $\theta_{\ast} \in (0,\theta^{\rm d})$,
the arguments in \S\ref{SubSec-EstimatesNearSonic5} are also valid
for the weighted $C^{2,\alpha}$--estimates near $\Gamma^6_{\rm sonic}$ with respect to $\theta_2\in[0,\theta^{\rm d})$.
According to all the {\it a priori} estimates obtained in Lemma~\ref{Lem:LocalEstimates4Interior},
Corollary~\ref{Corol:Esti-AwaySonic5n6}, and  Propositions~\ref{Prop:Esti4theta1a}--\ref{Prop:Esti4theta1c}
and~\ref{Prop:Esti4theta1d},  there exists a constant $\bar{\alpha}\in(0,1)$ depending only on $(\gamma, v_2,\theta_{\ast})$ such that the set:
\begin{equation*}
\Big\{ \, \|\varphi\|_{1,\bar{\alpha}, \overline{\Omega}}+\|f_{\rm sh}\|_{1,\bar{\alpha},[\xi^{P_3},\xi^{P_2}]} \,:\, \;
\parbox{14em}{$\varphi$ is an admissible solution \\ corresponding to $\btheta\in\Theta \cap [0,\theta_{\ast}]^2$ } \;\;
\Big\}
\end{equation*}
is bounded, where $\eta = f_{\rm{sh}}(\xi)$ is the expression of $\Gamma_{\rm shock}$ as a graph,
given by Lemma~\ref{Prop:GammaShockExpres}.
For each admissible solution, the corresponding pseudo-subsonic region $\Omega$ is a bounded domain
enclosed by $\Gamma^5_{\rm sonic}$, $\Gamma_{\rm shock}$, $\Gamma^6_{\rm sonic}$,
and $\Gamma_{\rm sym}$.
These four curves intersect only at $P_i$ for $i = 1,2,3,4$.
Note that $\Gamma^5_{\rm sonic}, O_5, P_1$, and $P_2$ depend continuously
on $\theta_1\in[0,\theta^{\rm d}]$, while $\Gamma^6_{\rm sonic}, O_6, P_3$, and $P_4$
depend continuously on $\theta_2\in[0,\theta^{\rm d}]$.
Combining the above results with \eqref{Eq:ResultInStep3} and Lemma~\ref{Lem:SeqResult},
we have the following compactness result;
the details of the proof can be found in~\cite[Proposition~11.6.1]{ChenFeldman-RM2018}.

\begin{lemma}\label{Lem:Compactness4AdmiSoluSet}
Fix $\gamma\geq1,$ $v_2\in(v_{\min},0),$ and $\theta_{\ast}\in(0,\theta^{\rm d})$.
Let $\big\{\btheta^{(j)}\big\}_{j\in\mathbb{N}} \subseteq \Theta \cap [0,\theta_{\ast}]^2$
be a sequence of parameters satisfying that
$\lim_{j\to\infty} \btheta^{(j)} = \btheta^{(\infty)}$
for some $\btheta^{(\infty)} \in [0,\theta_{\ast}]^2$.
For each $j\in\mathbb{N},$ let $\varphi^{(j)}$ be an admissible solution corresponding to
parameters $\btheta^{(j)},$
with pseudo-subsonic region $\Omega^{(j)}$
and curved transonic shock $\Gamma^{(j)}_{\rm shock}$.
Then there exists a subsequence $\big\{\varphi^{(j_k)}\big\}_{k\in\mathbb{N}}$ such that
the following properties hold{\rm :}
\begin{enumerate}[{\rm (i)}]
\item
\label{Lem:Compactness4AdmiSoluSet-item1}
$\big\{\varphi^{(j_k)}\big\}_{k\in\mathbb{N}}$ converges uniformly on any compact subset of $\overline{\mathbb{R}^2_+}$ to a function
$\varphi^{(\infty)}\in C^{0,1}_{\rm loc}\big(\overline{\mathbb{R}^2_+}\big),$
and the limit function $\varphi^{(\infty)}$ is an admissible solution corresponding
to $\btheta^{(\infty)}${\rm ;}
\item
\label{Lem:Compactness4AdmiSoluSet-item2}
$\Omega^{(j_k)} \to \Omega^{(\infty)}$ in the Hausdorff metric{\rm ;}
\item
\label{Lem:Compactness4AdmiSoluSet-item3}
If $\bm{\xi}^{(j_k)} \in \overline{\Omega^{(j_k)}}$ for each \(k \in \mathbb{N},\) and $\big\{ \bm{\xi}^{(j_k)}\big\}_{k\in\mathbb{N}}$
converges to $\bm{\xi}^{(\infty)}\in\overline{\Omega^{(\infty)}},$ then
\begin{equation*}
\varphi^{(j_k)}(\bm{\xi}^{(j_k)}) \to \varphi^{(\infty)}(\bm{\xi}^{(\infty)})\,,\quad
D\varphi^{(j_k)}(\bm{\xi}^{(j_k)}) \to D\varphi^{(\infty)}(\bm{\xi}^{(\infty)})
\qquad\,\,\mbox{as $k\to\infty$}\,,
\end{equation*}	
 where, in the case that $\bm{\xi}^{(j_k)}\in\Gamma^{(j_k)}_{\rm shock},$
 the derivative of $\varphi^{(j_k)}$ at $\bm{\xi}^{(j_k)}$ is defined as the limit of
 the derivative from the interior of $\Omega^{(j_k)}${\rm :}
\begin{equation*}
\qquad D\varphi^{(j_k)}(\bm{\xi}^{(j_k)}) \defeq \lim_{ \bm{\xi}\to\bm{\xi}^{(j_k)},\,\bm{\xi}\in\Omega^{(j_k)}} D\varphi^{(j_k)}(\bm{\xi})\,,
\end{equation*}
and, similarly, for $\bm{\xi}^{(\infty)}\in \Gamma^{(\infty)}_{\rm shock},$
\begin{equation*}
\qquad D\varphi^{(\infty)}(\bm{\xi}^{(\infty)}) \defeq \lim_{ \bm{\xi}\to\bm{\xi}^{(\infty)},\,\bm{\xi}\in\Omega^{(\infty)}} D\varphi^{(\infty)}(\bm{\xi})\,.
\end{equation*}
\end{enumerate}
\end{lemma}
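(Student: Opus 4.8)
The plan is to follow the compactness scheme of~\cite[Proposition~11.6.1]{ChenFeldman-RM2018}, built on the uniform {\it a priori} estimates established above. First I would assemble the uniform bounds that are at our disposal: the $C^{1,\bar{\alpha}}(\cl{\Omega})$--bound on admissible solutions and the $C^{1,\bar{\alpha}}$--bound on the graph $f_{\rm sh}$ of $\Gamma_{\rm shock}$ established just above Lemma~\ref{Lem:Compactness4AdmiSoluSet}, the uniform interior $C^m$--estimates of Lemma~\ref{Lem:LocalEstimates4Interior} and Corollary~\ref{Corol:Esti-AwaySonic5n6}, and the uniform weighted $C^{2,\alpha}$--estimates near the sonic boundaries of Propositions~\ref{Prop:Esti4theta1a}--\ref{Prop:Esti4theta1c} and~\ref{Prop:Esti4theta1d} together with their $\Gamma^6_{\rm sonic}$--analogues; all of these constants depend only on $(\gamma, v_2, \theta_\ast)$. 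Combining these with the uniform $L^\infty$--bounds of Lemma~\ref{UniformBound-Lem}, an Arzel\`a--Ascoli argument and a diagonal extraction produce a subsequence $\{\varphi^{(j_k)}\}$ converging uniformly on every compact subset of $\cl{\mathbb{R}^2_+}$ to some $\varphi^{(\infty)} \in C^{0,1}_{\rm loc}(\cl{\mathbb{R}^2_+})$, with $C^m_{\rm loc}$--convergence inside the limiting pseudo-subsonic region. This is essentially the content of Lemma~\ref{Lem:SeqResult}, applied along $\btheta^{(j_k)} \to \btheta^{(\infty)}$.

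Next I would pin down the geometric convergence. The objects $\Gamma^5_{\rm sonic}, \Gamma^6_{\rm sonic}$, the corner points $P_1,\dots,P_4$, the reflection points $P_0^1, P_0^2$, and the reflected-shock segments $S^{\rm seg}_{25}, S^{\rm seg}_{26}$ all depend continuously on $\btheta \in [0,\theta^{\rm d}]^2$ by Proposition~\ref{Prop:localtheorystate5}\eqref{Prop:localtheorystate5:continuity-properties} and Definition~\ref{Def:GammaSonicAndPts}, while the graphs $f^{(j_k)}_{\rm sh}$ of $\Gamma^{(j_k)}_{\rm shock}$ (Lemma~\ref{Prop:GammaShockExpres}), being uniformly bounded in $C^{1,\bar{\alpha}}$, converge along a further subsequence after the extension of domains described in Lemma~\ref{Lem:SeqResult}\eqref{Lem:SeqResult-item2}. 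This gives Hausdorff convergence $\Omega^{(j_k)} \to \Omega^{(\infty)}$, hence property~(ii), and, combined with the uniform gradient bounds up to $\Gamma_{\rm shock}$ and $\Gamma_{\rm sym}$, the convergence of $\varphi^{(j_k)}$ and $D\varphi^{(j_k)}$ along any converging sequence of points (with the one-sided interior derivative on the shock), which is property~(iii).

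It then remains to verify that $\varphi^{(\infty)}$ is an admissible solution corresponding to $\btheta^{(\infty)}$. Passing to the limit in the weak formulation of~\eqref{Eq4PseudoPoten} and in the boundary conditions of Problem~\ref{FBP}\eqref{item1-FBP}--\eqref{item5-FBP} shows $\varphi^{(\infty)}$ solves Problem~\ref{FBP}; strict ellipticity of~\eqref{Eq4PseudoPoten} in $\cl{\Omega^{(\infty)}} \setminus (\Gamma^5_{\rm sonic} \cup \Gamma^6_{\rm sonic})$ follows from Proposition~\ref{prop:ellipticDegeneracyNearSonicBoundary}, whose constant $\mu$ is uniform in $\btheta$; the bound $\mathrm{dist}(\Gamma_{\rm shock}, \partial B_1(O_2)) \geq C_{\rm sh}^{-1}$ from Proposition~\ref{prop:lowerBoundBetweenShockAndSonicCircle} keeps $\Gamma^{(\infty)}_{\rm shock}$ strictly outside the sonic circle of state $(2)$, so it is still a genuine transonic shock (Remark~\ref{Rmk4AdmissibleSolu}\eqref{item1-Rmk4AdmissibleSolu}); and the inequalities~\eqref{EntropyIneqMutant}--\eqref{DirecDerivMonotone-S25-S26}, the extension formula~\eqref{eq:extension-of-varphi}, and the regularity classes in Definition~\ref{Def:AdmisSolus}\eqref{item2-Def:AdmisSolus} are all closed under the convergence just obtained. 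In the degenerate case $\btheta^{(\infty)} = \bm{0}$, the limit is identified with the normal reflection solution $\varphi_{\rm norm}$ via Remark~\ref{Rmk4AdmissibleSolu}\eqref{item2-Rmk4AdmissibleSolu}.

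The main obstacle I anticipate is ruling out degeneration of the limiting domain and free boundary: one must ensure that the corners $P_1,\dots,P_4$ do not collapse onto each other, that $\Gamma^{(\infty)}_{\rm shock}$ neither self-intersects nor meets $\Gamma^5_{\rm sonic} \cup \Gamma_{\rm sym} \cup \Gamma^6_{\rm sonic}$ except at $\{P_2, P_3\}$, and that it still reaches the sonic arcs with the slopes prescribed by Lemma~\ref{Prop:GammaShockExpres}. This needs the uniform separation $\mathrm{dist}(P_2, P_3) \geq 2 d_{\rm sep}$, the distance estimate~\eqref{Eq:ResultInStep3} between $\Gamma_{\rm shock}$ and $\Gamma_{\rm sym}$ away from the reflection points, and Lemma~\ref{Lem:SeqResult}, all invoked uniformly along the sequence. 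The borderline subcase where some $\theta_i^{(j_k)} < \theta^{\rm s}$ but $\theta_i^{(\infty)} = \theta^{\rm s}$ — in which a nondegenerate sonic arc $\Gamma^i_{\rm sonic}$ shrinks to the single point $P_0^i$ — is the delicate one, and is handled precisely by the weighted estimates of Cases~2 and~3 (Propositions~\ref{Prop:Esti4theta1b}--\ref{Prop:Esti4theta1c}), which provide control of $\varphi^{(j_k)}$ uniform across the transition.
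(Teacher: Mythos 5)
Your proposal is correct and follows essentially the same route as the paper's proof: assemble the uniform $C^{1,\bar{\alpha}}$ and weighted $C^{2,\alpha}$ estimates (Lemma~\ref{Lem:LocalEstimates4Interior}, Corollary~\ref{Corol:Esti-AwaySonic5n6}, Propositions~\ref{Prop:Esti4theta1a}--\ref{Prop:Esti4theta1c} and~\ref{Prop:Esti4theta1d}), use the continuous dependence of the sonic arcs and corner points on $\btheta$, extract the limit via Lemma~\ref{Lem:SeqResult} together with the separation estimate~\eqref{Eq:ResultInStep3}, and verify admissibility of the limit as in~\cite[Proposition~11.6.1]{ChenFeldman-RM2018}. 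Your identification of the non-degeneration of the limiting free boundary and the $\theta_i \to \theta^{\rm s}$ transition as the delicate points matches where the paper relies on $d_{\rm sep}$, \eqref{Eq:ResultInStep3}, and the Case~2/Case~3 estimates.
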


\section{Iteration Method and Existence of Admissible Solutions}
\label{Sec:IterationMethod-Existence}
In this section, we carefully construct the iteration set and the iteration map based
on the uniform estimates obtained in Section~\ref{Sec:UniformEstiAdmiSolu}.
Then we apply the Leray-Schauder degree theorem to show the existence of admissible solutions.
The construction of the iteration set follows closely the process
described in~\cite[Chapter 4]{BCF-2019} with the main difference
that both sonic arcs \(\Gamma^5_{\rm sonic}\) and \(\Gamma^6_{\rm sonic}\) may
degenerate into single points.
In this section, we always fix
\(\gamma \geq 1\) and \(v_2 \in (v_{\min},0)\).

\medskip
\subsection{Mapping between the elliptic and standard domains}
\subsubsection{Mapping into the standard domain}
Define the standard domain to be the rectangle \(\qiter \defeq (-1,1)\times(0,1)\).
Before constructing a map between $\Omega$ and $\qiter$,
we introduce some useful notation and basic geometric properties.
Let \(\varphi_2\) be given by~\eqref{PseudoVelocity123},
and let \((\varphi_5,\varphi_6)\) be given by~\eqref{eq:def-weak-state-5-6}.
For any $\delta_0>0$, we define
$S_{2j}^{\delta_0}
\defeq \{ \bm{\xi} \in \mathbb{R}^2 : (\varphi_2 - \varphi_j)(\bm{\xi}) = -\delta_0 \}$ and
$q_j^{\delta_0} \defeq {\rm dist}(O_j,S_{2j}^{\delta_0})$ for \(j = 5,6\), and also write
\begin{align*}
    u_5^{\delta_0} \defeq u_5 + q_5^{\delta_0} \sin{\theta_{25}} \geq 0\,, \qquad
    u_6^{\delta_0} \defeq u_6 - q_6^{\delta_0} \sin{\theta_{26}} \leq 0 \,.
\end{align*}
For constant $\hat{\delta}>0$ from~\eqref{Eq:DistO5P01minusO5S25}, we set $\tilde{\delta}_0 \defeq\frac{\hat{\delta}}{4}$,
which depends only on $(\gamma,v_2)$.
Then, for $\hat{c}_j$ given by~\eqref{Eq:Sec3-Def4hatcj}, a direct calculation gives that,
for any \(\btheta \in \cl{\Theta}\),
\begin{equation*}\label{Eq:S2jdelta0-2pts}
    \hat{c}_j-q_j^{\delta_0}\geq\hat{\delta}-\frac{\delta_0}{\ell(\rho_j)}\geq2\tilde{\delta}_0\,,
\end{equation*}
after choosing $\delta_0>0$ small enough, depending only on $(\gamma,v_2)$, and
using {\rm Lemma~\ref{lem:properties-of-state-5-and-6}}.
Therefore, \(S_{2j}^{\delta_0}\) and \(\partial B_{\hat{c}_j}(O_j)\) intersect
at two distinct points for any $\btheta\in\cl{\Theta}$.

\begin{definition}[Extended domain \(Q^{\btheta}\)] \label{def:Q-theta1-theta2}
Let $\delta_0>0$ be chosen as above.
Define \(P_2'\) and \(P_3'\) by
\begin{equation*}
\{P'_2\} \defeq S_{25}^{\delta_0} \cap \partial B_{\hat{c}_5}(O_5)\cap\{\xi\geq u_5^{\delta_0}\}\,,
\qquad
\{P'_3\} \defeq S_{26}^{\delta_0} \cap \partial B_{\hat{c}_6}(O_6)\cap\{\xi\leq u_6^{\delta_0}\}\,.
\end{equation*}
Denote by \(\Gamma_{\rm sonic}^{5, \delta_0}\) the arc of \(\partial B_{\hat{c}_5}(O_5)\cap\{\xi\geq u_5^{\delta_0}\}\)
with endpoints \(\{P_1, P'_2\},\)
and denote by \(\Gamma_{\rm sonic}^{6, \delta_0}\) the arc of \(\partial B_{\hat{c}_6}(O_6)\cap\{\xi\leq u_6^{\delta_0}\}\) with endpoints \(\{P_4, P'_3\}\).
Define the extended domain \(\qtheta\) as the open bounded region enclosed by
\(\Gamma_{\rm sonic}^{5,\delta_0}, \Gamma_{\rm sonic}^{6,\delta_0}, S_{25}^{\delta_0}, S_{26}^{\delta_0},\)
and \(\Gamma_{\rm sym};\) see {\rm Fig.~\ref{Fig04Sec4RS}}.
\end{definition}

\begin{figure}[b]
\centering
\includegraphics[width=11cm]{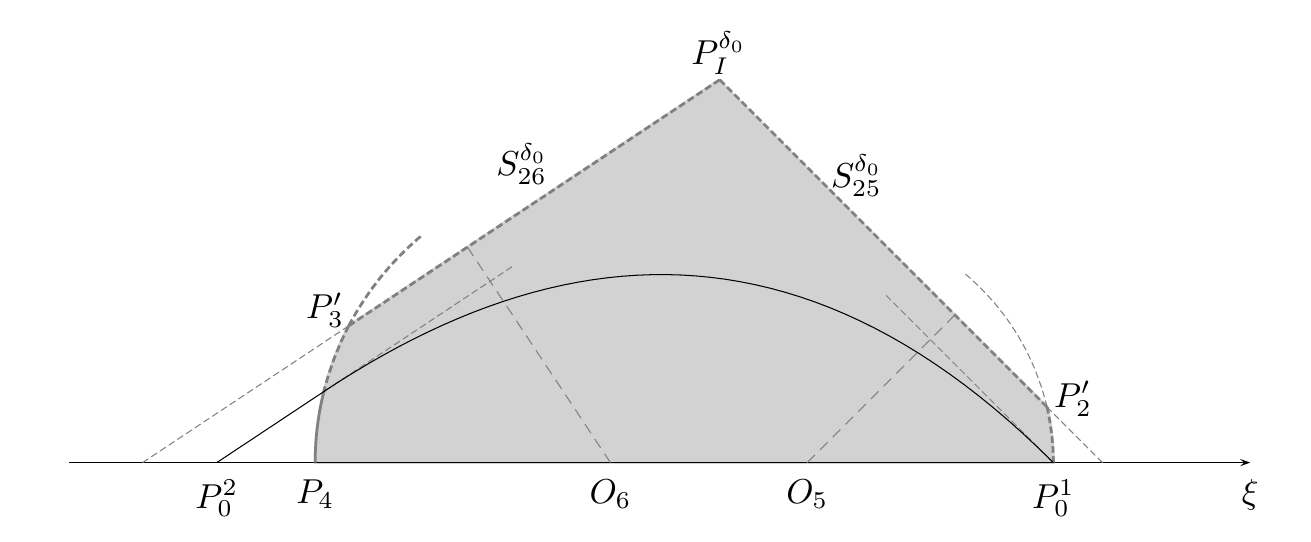}
\vspace{-0.5em}
\caption{The extended domain \(\qtheta\) in grey for the case: \(\btheta \in [\theta^{\rm s},\theta^{\rm d}) \times (0,\theta^{\rm s})\).} \label{Fig04Sec4RS}
\end{figure}

For any $\bm{\xi}= (\xi,\eta) \in (\mathbb{R} \setminus [u_6^{\delta_0},u_5^{\delta_0}]) \times [0,\infty)$, define the $(x,y)$--coordinates by $(x,y) \defeq \rcal (\bm{\xi})$ if and only if
\begin{equation}\label{Eq:DefPolarCoordiSec4}
\bm{\xi} = \begin{cases}
    O_6+({c}_6-x)\left(\cos(\pi-y),\,\sin(\pi-y)\right) &\quad  \text{if}\; \xi< u_6^{\delta_0}\,,\\
    O_5+({c}_5-x)\left(\cos y,\,\sin y\right) &\quad  \text{if}\; \xi> u_5^{\delta_0}\,,
\end{cases}
\end{equation}
with $y \in (0,\pi)\,$. For any constant \(\epsilon > 0\), we define two sets \(\dfive_\epsilon, \dsix_{\epsilon} \subseteq Q^{\btheta}\) by
\begin{equation}\label{eq:domain-D5n6epsilon}
    \dfive_\epsilon \defeq
    \big\{\bm{\xi}\in\qtheta \,:\, \xi > u_5^{\delta_0}\big\} \setminus \cl{B_{\hat{c}_5 - \epsilon}(O_5)}\,,
    \qquad
     \dsix_\epsilon \defeq
      \big\{\bm{\xi}\in\qtheta \,:\, \xi < u_6^{\delta_0}\big\} \setminus \cl{B_{\hat{c}_6 - \epsilon}(O_6)}\,.
\end{equation}
Let $\omega_0\in(0,\frac{\pi}{2})$ be the solution of
$(q^{\delta_0} + \tilde{\delta}_0)\cos\omega_0=q^{\delta_0}$, where \(q^{\delta_0} \defeq \max\{q_5^{\delta_0},q_6^{\delta_0}\}\).
Then
\begin{equation*}
\tilde{\omega}_0
\defeq
\frac12 \inf_{\btheta\in\cl{\Theta}}\big\{\theta_{25}-\tfrac{\pi}{2},\,\tfrac{\pi}{2}-\theta_{26},\,\omega_0\big\}
> 0
\end{equation*}
by \eqref{Eq:Sec3-Def4hatcj}--\eqref{Eq:DistO5P01minusO5S25} and Lemma~\ref{lem:properties-of-state-5-and-6}.
A direct computation shows that, for any \(\epsilon \in (0,\tilde{\delta}_0)\),
\begin{equation}\label{Eq:D5n6epsilonSetBdd}
\begin{aligned}
& \dfive_{\epsilon} \subseteq
\big\{ \bm{\xi} \,:\, x_{P_2} < x < x_{P_2} + \epsilon, \, 0 < y< \theta_{25} - \tfrac\pi2 - \tilde{\omega}_0 \big\}
 \cap \big\{ \xi > u_5^{\delta_0} + \tilde{\delta}_0 \sin{\tilde{\omega}_0} \big\} \,,\\
& \dsix_{\epsilon} \subseteq \big\{ \bm{\xi} \,:\, x_{P_3} < x < x_{P_3} + \epsilon, \,
 0 < y < \tfrac{\pi}{2} - \theta_{26} - \tilde{\omega}_0 \big\}
 \cap \big\{ \xi < u_6^{\delta_0} - \tilde{\delta}_0 \sin{\tilde{\omega}_0} \big\} \,.
\end{aligned}
\end{equation}
In particular, we fix a constant \(k > 4\) sufficiently large, depending only on \((\gamma, v_2)\),
such that $\max\{\hat{c}_5,\hat{c}_6\}\leq \frac{k\tilde{\delta}_0}{4} \sin{\tilde{\omega}_0}$
for any \(\btheta \in \cl{\Theta}\) and
\begin{equation}\label{Eq:D5n6epsSetBdd4k}
\dfive_{\frac{3\hat{c}_5}{k}}
\subseteq \big\{ \xi > u_5^{\delta_0} + \frac{3\hat{c}_5}{k} \big\}\,,
\qquad
\dsix_{\frac{3\hat{c}_6}{k}}  \subseteq \big\{ \xi < u_6^{\delta_0} - \frac{3\hat{c}_6}{k} \big\}\,.
\end{equation}

For each \(\btheta \in \cl{\Theta}\), we define an invertible map
\(\gtheta : \cl{\qtheta} \to [-1,1] \times [0,\infty) \),
which flattens the extended sonic arcs \(\Gamma^{5,\delta_0}_{\rm sonic}\)
and \(\Gamma^{6,\delta_0}_{\rm sonic}\).
For each admissible solution \(\varphi\) corresponding to parameters \(\btheta \in \cl{\Theta}\),
we further construct a map \(G_{2,\gsh}\) from \(\gtheta(\Omega)\) to the standard domain \(\qiter\),
where \(\Omega\) is the pseudo-subsonic domain associated with \(\varphi\).
Finally, we present the \textit{a priori} estimates in the standard domain for admissible solutions
under
map \(G_{2,\gsh} \circ \gtheta\).

\smallskip
\noindent
\textbf{Construction of \(\gtheta\).}
For $r_0\in\mathbb{R}$ and $r_{\rm d}>0$, we fix a basic cut-off function
$\tilde{\zeta}( \blank; r_0, r_{\rm d})\in C^4(\mathbb{R})$
such that
$\tilde{\zeta}'( \blank; r_0, r_{\rm d})\in[0,\frac{2}{r_{\rm d}}]$ and
\begin{equation}\label{Eq:Basic-Cut-off-zeta}
    \tilde{\zeta}(r;\,r_0,r_{\rm d})=
    \begin{cases}
        \, 0  &\quad \text{for} \;r \leq r_0\,,\\
        \, 1  &\quad \text{for} \;r  \geq r_0+r_{\rm d}\,.
    \end{cases}
\end{equation}
For $j=5,6$, let the cut-off functions \(( \zeta_j,\chi_j)\) be defined by
\begin{equation*}\label{Eq:Def-Chi-Zeta-j}
\begin{aligned}
& \zeta_j(r) \defeq \tilde{\zeta}(r;\,(1-\frac{3}{k})\hat{c}_j, \frac{\hat{c}_j}{k})\,,
\qquad
\chi_j(\xi) \defeq \tilde{\zeta}((-1)^{j-1}\xi;\,(-1)^{j-1}u_j^{\delta_0},\frac{2\hat{c}_j}{k})\,.
\end{aligned}
\end{equation*}
Then we define
\(F_1 : \cl{\qtheta} \to \mathbb{R}^2\)
by $F_1 (\xi , \eta ) \defeq ( h_1(\xi,\eta), \eta)$, where
\begin{equation*}
     h_1(\bm{\xi}) \defeq \zeta_{\rm d}(\bm{\xi}) u_r +  \big(1-\zeta_{\rm d}(\bm{\xi})\big) \xi\,,
\end{equation*}
and $\zeta_{\rm d}$ and $u_r$ are given by
\begin{align*}
&
\zeta_{\rm d}(\bm{\xi})
\defeq \chi_5(\xi) \zeta_5(|\bm{\xi}-O_5|) + \chi_6(\xi) \zeta_6(|\bm{\xi}-O_6|)\,,\\
&
u_r \defeq (u_5+|\bm{\xi}-O_5|) \chi_5(\xi) + (u_6-|\bm{\xi}-O_6|) \chi_6(\xi)\,.
\end{align*}

For each \(\btheta \in \cl{\Theta}\),
we write $s_5 \defeq u_5 + \hat{c}_5 > 0$ and $s_6 \defeq u_6 - \hat{c}_6 < 0$,
which depend continuously on $\btheta \in \cl{\Theta}$.
Let $\mathcal{N}_{\varepsilon}(\Gamma)$ be defined by~\eqref{Eq:Def-N-eps}.
By construction of
map \(F_1\), we observe that
\begin{equation*}
h_1(\bm{\xi})=u_j+(-1)^{j-1}{\rm dist}(\bm{\xi},O_j)
\qquad\, \text{for any
$\bm{\xi}\in\qtheta\cap\mathcal{N}_{\frac{2\hat{c}_j}{k}}(\Gamma^{j,\delta_0}_{\rm sonic})$}\,,
\end{equation*}
and $\cl{F_1(\qtheta)} \subseteq [s_6, s_5 ] \times [0 , \infty)$.
Let the linear map $L_{\btheta}:[s_6,s_5]\to[-1,1]$ be given by
\begin{align}\label{Eq:LinearMap-Ltheta}
L_{\btheta}(s') \defeq d_{L} (s' - s_6) - 1
\qquad\,\text{with}\; d_{L}\defeq\frac{2}{s_5 - s_6}\,,
\end{align}
which maps
interval \([s_6,s_5]\) to the standard interval \([-1,1]\).
For
\(k > 4\) given by~\eqref{Eq:D5n6epsSetBdd4k},
define the cut-off functions $\tilde{\chi}_j$
by
\begin{equation*}
\tilde{\chi}_j(s') \defeq
\tilde{\zeta}((-1)^{j-1}s';\,(-1)^{j-1}u_j+(1-\frac{1}{k})\hat{c}_j,\frac{\hat{c}_j}{2k}) \qquad\, \mbox{for $j=5,6$}\,.
\end{equation*}
We define a function \(h_2 : \cl{F_1(\qtheta)} \to [0,\infty)\) by
\begin{align*}
    h_2(s',t'') \defeq \tilde{\chi}_5 \arcsin{\big( \frac{t''}{s' - u_5} \big)}
        + \tilde{\chi}_6 \arcsin{\big( \frac{t''}{u_6 - s'} \big)}
        + (1 - \tilde{\chi}_5-\tilde{\chi}_6)\, t''\,,
\end{align*}
where \(\tilde{\chi}_5\) and \(\tilde{\chi}_6\) are evaluated at \(s'\).
For maps \(F_1\) and \(h_2\) constructed above, we see that
\begin{align*}
    h_2\circ F_1(\bm{\xi}) = y \qquad\,
    \text{for any $\bm{\xi}\in\qtheta\cap \big(\mathcal{N}_{\frac{\hat{c}_5}{2k}}(\Gamma^{5,\delta_0}_{\rm sonic}) \cup \mathcal{N}_{\frac{\hat{c}_6}{2k}}(\Gamma^{6,\delta_0}_{\rm sonic})\big)$}\,,
\end{align*}
for the \((x,y)\)--coordinates given by~\eqref{Eq:DefPolarCoordiSec4}.

Finally, we define
map \(\gtheta : \cl{\qtheta} \to [-1,1] \times [0,\infty)\) by
\begin{align}\label{Eq:Def-gtheta}
    \gtheta(\bm{\xi}) \defeq (L_{\btheta}\circ h_1(\bm{\xi}), \, h_2 \circ F_1(\bm{\xi}))
    \qquad\, \mbox{for any $\bm{\xi}\in\overline{\qtheta}$}\,,
\end{align}
 and write the coordinates \((s,t') \defeq \gtheta(\bm{\xi})\).

It can be verified directly that $\gtheta(\Gamma_{\rm sym}) = \{(s, 0) \,:\, -1 < s < 1 \}$.
Moreover,
map $\mathcal{G}^{\btheta}_1:\overline{\qtheta}\to[-1,1]\times[0,\infty)$ constructed above satisfies the following properties by using~\eqref{nStrictDirectMonotPropty}, \eqref{Eq:Basic-Cut-off-zeta}, and Lemma~\ref{Lem:Compactness4AdmiSoluSet};
the proof is similar to~\cite[Lemmas 4.3 and 4.5]{BCF-2019}.

\begin{lemma}\label{Lem:Propt4gtheta}
There exists a constant \(\mu_{\mathcal{G}_1} \in (0,1)\) depending only on \((\gamma, v_2)\) such that,
for each \(\btheta \in \cl{\Theta},\)
map \(\gtheta\) defined by~\eqref{Eq:Def-gtheta} satisfies the following{\rm:}
\begin{enumerate}[{\rm (i)}]
\item \(\norm{\gtheta}_{C^4(\cl{\qtheta})} + \norm{(\gtheta)^{-1}}_{C^4(\cl{\gtheta(\qtheta)})}
\leq \mu_{\mathcal{G}_1}^{-1},\)
and \(|{\rm det }(D\gtheta) | \geq \mu_{\mathcal{G}_1} \) in \(\cl{\qtheta}\){\rm ;}

\item \label{lem:properties-of-map-Gtheta:phi56-monotonicity}
For \(j = 5,6,\) and for any $(s,t') \in \cl{\gtheta(\qtheta)}${\rm,} \;
$\partial_{t'}\big( (\varphi_2 - \varphi_j) \circ (\gtheta)^{-1}\big) (s, t') \leq  - \mu_{\mathcal{G}_1}${\rm;}

\item \label{Lem:Propt4gtheta-item2}
For any \(\theta_{\ast} \in (0, \theta^{\rm d}),\) there exists a constant \(\mu_\ast > 0\)
depending only on \((\gamma, v_2,\theta_{\ast})\) such that any admissible solution
\(\varphi\) corresponding to
\(\btheta \in \Theta\cap\{ \theta_1,\theta_2 \leq \theta_{\ast} \}\) satisfies
\begin{flalign*}
 &&\partial_{t'} \big( (\varphi_2 - \varphi) \circ (\gtheta)^{-1}\big)
 (s,t') \leq - \mu_\ast < 0
 \qquad \text{for all $(s,t') \in \cl{\gtheta(\Omega)}$}\,.
    &&
    \end{flalign*}
\end{enumerate}
\end{lemma}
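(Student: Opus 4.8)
\textbf{Proof proposal for Lemma~\ref{Lem:Propt4gtheta}.}
The plan is to establish the three assertions in order, reducing each to properties of the constituent maps $L_{\btheta}$, $F_1$, $h_2$ already constructed above, together with the uniform bounds collected in Lemma~\ref{lem:properties-of-state-5-and-6}, the strict directional monotonicity~\eqref{nStrictDirectMonotPropty}, and the compactness in Lemma~\ref{Lem:Compactness4AdmiSoluSet}. The overall strategy mirrors~\cite[Lemmas~4.3 and~4.5]{BCF-2019}, but with the extra bookkeeping forced by the possible degeneration of $\Gamma^5_{\rm sonic}$ and $\Gamma^6_{\rm sonic}$, which is why the maps are built on the \emph{extended} sonic arcs $\Gamma^{5,\delta_0}_{\rm sonic}$, $\Gamma^{6,\delta_0}_{\rm sonic}$ (of uniformly bounded length, by the choice of $\delta_0$) rather than on $\Gamma^j_{\rm sonic}$ themselves.

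First I would prove (i). The norm bound and the lower bound on $|{\rm det}(D\gtheta)|$ follow by tracking regularity and nondegeneracy through the composition $\gtheta = (L_{\btheta}\circ h_1, h_2\circ F_1)$. The basic cut-offs $\tilde\zeta(\,\cdot\,; r_0,r_{\rm d})$ are fixed $C^4$ functions with $C^4$-norm controlled once $r_{\rm d}$ is bounded below; the relevant radii $\hat c_j/k$, $\hat c_j/(2k)$, $2\hat c_j/k$ are uniformly bounded above and below on $\cl\Theta$ by Lemma~\ref{lem:properties-of-state-5-and-6} and the definition of $k$ in~\eqref{Eq:D5n6epsSetBdd4k}, so $\zeta_j,\chi_j,\tilde\chi_j$ have uniformly bounded $C^4$-norms. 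The centres $O_j$, speeds $c_j$, the quantities $u_j^{\delta_0}$, $s_j$, and $d_L$ in~\eqref{Eq:LinearMap-Ltheta} depend continuously on $\btheta\in\cl\Theta$ (Proposition~\ref{Prop:localtheorystate5}\eqref{Prop:localtheorystate5:continuity-properties} and Definition~\ref{def:state5andstate6}), hence are uniformly bounded; this gives $\|\gtheta\|_{C^4}\le\mu_{\mathcal G_1}^{-1}$. For the Jacobian: $F_1$ is a triangular map $(\xi,\eta)\mapsto(h_1(\xi,\eta),\eta)$, so $\det DF_1=\partial_\xi h_1$; one checks $\partial_\xi h_1>0$ uniformly because $\zeta_{\rm d}\in[0,1]$ and, on the region where the cut-offs are active, $h_1$ reduces to $\xi\mapsto u_j+(-1)^{j-1}{\rm dist}(\bm\xi,O_j)$ whose $\xi$-derivative has a sign that agrees with that of $1-\zeta_{\rm d}$'s contribution; away from the arcs $h_1=\xi$. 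Similarly $h_2$ is triangular in $(s',t'')$ with $\partial_{t''}h_2>0$ bounded below (the $\arcsin$ derivatives $1/\sqrt{(s'-u_j)^2-(t'')^2}$ are bounded below on $F_1(\qtheta)$ by the geometry, since $t''$ stays away from $|s'-u_j|$ there), and $L_{\btheta}$ is affine with slope $d_L$ bounded below. Multiplying these gives $|{\rm det}(D\gtheta)|\ge\mu_{\mathcal G_1}$ after possibly shrinking $\mu_{\mathcal G_1}$; the inverse estimate then follows from the inverse function theorem with uniform constants. The invertibility itself comes from the triangular structure: each of $L_{\btheta}$, $F_1$, $h_2$ is invertible, the first and third explicitly, and $F_1$ because $\xi\mapsto h_1(\xi,\eta)$ is strictly increasing for each fixed $\eta$.

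For (ii), compute $\partial_{t'}\big((\varphi_2-\varphi_j)\circ(\gtheta)^{-1}\big)$ via the chain rule. Since $\varphi_2-\varphi_j$ is an affine function of $\bm\xi$ (see~\eqref{PseudoVelocity123},~\eqref{eq:def-weak-state-5-6}), with gradient $D(\varphi_2-\varphi_j)=O_2-O_j$ of uniformly bounded, nonzero length (Lemma~\ref{lem:properties-of-state-5-and-6} gives $\rho_j$ bounded away from $1$, hence $\ell(\rho_j)=|O_2-O_j|$ bounded below), one gets $\partial_{t'}\big((\varphi_2-\varphi_j)\circ(\gtheta)^{-1}\big)=D(\varphi_2-\varphi_j)\cdot\partial_{t'}(\gtheta)^{-1}$. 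It remains to show the $t'$-derivative of the inverse map points into the half-plane on which $(\varphi_2-\varphi_j)$ strictly decreases; this is exactly the statement that $\bm e_\eta=(0,1)\in{\rm Cone}^0(\bm e_{S_{25}},\bm e_{S_{26}})$ combined with the monotonicity of $\varphi_2-\varphi_j$ along $\bm e_{S_{2j}}$, transported through the uniform bounds on $D(\gtheta)^{-1}$ from (i). The uniform constant $\mu_{\mathcal G_1}$ absorbs the resulting product of lower bounds. Assertion (iii) is then proved in the same way, replacing $\varphi_j$ by an admissible solution $\varphi$: here one uses the strict directional monotonicity~\eqref{nStrictDirectMonotPropty} which gives $\partial_{\bm e}(\varphi_2-\varphi)<0$ in $\cl\Omega$ for every $\bm e\in{\rm Cone}^0(\bm e_{S_{25}},\bm e_{S_{26}})$, together with Lemma~\ref{lem:properties-of-state-5-and-6} ensuring $(0,1)$ lies well inside that cone; the uniform lower bound $\mu_\ast$ on $\cl{\Theta}\cap\{\theta_1,\theta_2\le\theta_\ast\}$ comes from the compactness of the admissible-solution set (Lemma~\ref{Lem:Compactness4AdmiSoluSet}), which rules out the monotonicity degenerating along any sequence of parameters in that compact set.

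The main obstacle I anticipate is (i): verifying the uniform lower bound on $|{\rm det}(D\gtheta)|$ and the $C^4$-bound on $(\gtheta)^{-1}$ uniformly in $\btheta\in\cl\Theta$, because this requires checking that the transition regions where the various cut-offs interpolate do not create near-singular Jacobians, and that the $\arcsin$-type terms in $h_2$ stay uniformly nondegenerate as $\btheta$ varies and as the sonic arcs possibly shrink (where one must use that the construction is anchored on the extended arcs of uniformly positive length). Once the geometric separation estimates~\eqref{Eq:Sec3-Def4hatcj}--\eqref{Eq:DistO5P01minusO5S25}, \eqref{Eq:D5n6epsilonSetBdd}--\eqref{Eq:D5n6epsSetBdd4k} and the continuity of all background quantities in $\btheta$ are in hand, the remaining computations are routine and parallel to~\cite[Lemmas~4.3 and~4.5]{BCF-2019}.
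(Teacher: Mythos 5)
Your proposal is correct and follows essentially the same route as the paper, which itself only sketches the argument by invoking the cut-off construction~\eqref{Eq:Basic-Cut-off-zeta}, the strict directional monotonicity~\eqref{nStrictDirectMonotPropty}, and the compactness Lemma~\ref{Lem:Compactness4AdmiSoluSet}, and refers to \cite[Lemmas~4.3 and~4.5]{BCF-2019} for the details. Your identification of the uniform geometric margins (the angular bound with $\tilde\omega_0$ in~\eqref{Eq:D5n6epsilonSetBdd} and the separation~\eqref{Eq:DistO5P01minusO5S25}) as the inputs keeping the Jacobian and the $\arcsin$-terms nondegenerate, and of compactness as the source of the uniform constant $\mu_\ast$ in (iii), matches the intended proof.
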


Using Lemma~\ref{Lem:Propt4gtheta}\eqref{lem:properties-of-map-Gtheta:phi56-monotonicity},
there exists a unique function \(f_{\btheta} \in C^{0,1}([-1,1])\) such that
\begin{equation*}
\gtheta\big(\{\bm{\xi}\in\mathbb{R}^2_+ \,:\, \varphi_2=\max\{\varphi_5,\varphi_6\}-\delta_0\}\big) =
\{(s,f_{\btheta}(s)) \,:\, -1 < s < 1\}\,,
\end{equation*}
which follows from the implicit function theorem, and
\begin{equation} \begin{aligned}
\label{eq:def-f-theta}
&\gtheta(\qtheta) = \big\{(s,t')\in\mathbb{R}^2_+ \,:\,  -1 < s < 1 , \, 0 < t' < f_{\btheta}(s) \big\}\,.
\end{aligned} \end{equation}

Using \eqref{Eq:ResultInStep3}, Definition~\ref{Def:AdmisSolus}\eqref{item2-Def:AdmisSolus},
Lemma~\ref{Lem:LocalEstimates4Interior}\eqref{Lem:LE4Int-2}, Corollary~\ref{Corol:Esti-AwaySonic5n6},
Propositions~\ref{Prop:Esti4theta1a}--\ref{Prop:Esti4theta1c} and \ref{Prop:Esti4theta1d},
and Lemma~\ref{Lem:Propt4gtheta},
we introduce a function \(\gsh : [-1,1] \to [0,\infty) \) representing \(\Gamma_{\rm shock}\)
in the \((s,t')\)--coordinates
and state its important properties; the proof is similar to~\cite[Proposition 4.6]{BCF-2019}.

\begin{proposition}
\label{prop:properties-of-gshock}
For each admissible solution \(\varphi\) corresponding to \(\btheta\in\Theta,\)
there exists a unique function \(\gsh : [-1,1] \to [0,\infty)\) satisfying the following properties{\rm:}
\begin{enumerate}[{\rm (i)}]
\item \label{prop:properties-of-gshock:1n2}
In the \((s,t')\)--coordinates{\rm:}
    \begin{equation*}\begin{aligned}
        &\gtheta(\Omega) = \big\{ (s,t') \,:\, -1 < s < 1, \, 0 < t' < \gsh(s)\big\}\,, \\
        &\gtheta(\Gamma_{\rm shock}) = \big\{ (s,\gsh(s)) \,:\, -1 < s < 1,\, t'=\gsh(s)\big\}\,,
    \end{aligned} \end{equation*}
    and, for any interval $I_1\Subset(-1,1),$
$\norm{\gsh}_{C^3(\overline{I_1})} \leq C_{_{I_1}}$
for some constant \(C_{I_1} > 0 \) depending only on \((\gamma, v_2, I_1)\).

    \item \label{prop:properties-of-gshock:3}
    Let \(\qtheta_0\) be the bounded region enclosed by
    \(\Gamma^5_{\rm sonic}, \Gamma^6_{\rm sonic}, S_{25}, S_{26},\)
    and \(\Gamma_{\rm sym},\) which satisfies $\Omega \subseteq \qtheta_0 \subseteq \qtheta$.
    For \(\dfive_{\epsilon}\) and \(\dsix_{\epsilon}\) given by~\eqref{eq:domain-D5n6epsilon},
    there exist unique functions \(\gfive\) and \(\gsix\) with
    \begin{equation*}
    \gtheta(\qtheta_0 \cap \mathcal{D}^j_\epsilon)
    = \big\{ (s,t')\in\qiter \,:\, 1+(-1)^j s<d_{L}\epsilon, \, 0<t'<\mathfrak{g}_{j}(s)\big\}
    \qquad\,\text{for $j=5,6$}\,.
    \end{equation*}
  Let \(\epsilon^\ast_0 > 0\) be the smallest
  constant
  \(\varepsilon_0\) in {\rm Step~\textbf{1}} of the proofs of {\rm Propositions~\ref{Prop:Esti4theta1a}}
  and~{\rm\ref{Prop:Esti4theta1c}}.
  For $d_L>0$ given by~\eqref{Eq:LinearMap-Ltheta}{\rm,} set $\hat{\epsilon}^*_0\defeq d_L\epsilon^*_0$.
  There exists a constant \(C> 0\) depending only on \((\gamma, v_2)\) such that
  \begin{align*}
  \norm{\gfive}_{C^3([1 - \hat{\epsilon}^*_0,1])} + \norm{\gsix}_{C^3([-1, -1 + \hat{\epsilon}^*_0])} \leq C\,.
  \end{align*}

  \item \label{prop:properties-of-gshock:4}
  For each \(\theta_{\ast} \in (0, \theta^{\rm d}),\) there exist \(\bar{\alpha} \in (0,1)\)
  and \(C_{\theta_{\ast}}>0\) depending only on \((\gamma, v_2,\theta_{\ast})\) such that,
  for any admissible solution corresponding to
  \(\btheta\in\Theta \cap \{ \theta_1, \theta_2 \leq \theta_{\ast}\},\)
  \begin{align*}
  &\norm{\gsh}^{(-1-\bar{\alpha}),\{-1\}}_{2, \bar{\alpha}, (-1,-1+\hat{\epsilon}_0^\ast)}
  + \norm{\gsh}^{(-1-\bar{\alpha}),\{1\}}_{2, \bar{\alpha}, (1-\hat{\epsilon}_0^\ast, 1)} \leq C_{\theta_{\ast}}\,, \\
  & \frac{{\rm d}^m}{{\rm d}s^m}(\gsh - \mathfrak{g}_{j})((-1)^{j-1}) = 0\qquad
    \text{for $m=0,1,$ and $j=5,6$}\,.
    \end{align*}
where
\(\norm{\cdot}^{(\sigma),\{x_0\}}_{2, \alpha, I}\) is given by {\rm Definition~\ref{Def:WeightedHolder-Sec3}\eqref{Def:WeightedHolder-Sec3-item2}}.
Note that the properties above are equivalent to
    \begin{align*}
        \norm{\gsh - \gfive}^{(1+\bar{\alpha}),({\rm par})}_{2,\bar{\alpha},(1 - \hat{\epsilon}_0^\ast,1)} +
        \norm{\gsh - \gsix}^{(1+\bar{\alpha}),({\rm par})}_{2,\bar{\alpha},(- 1, -1 + \hat{\epsilon}_0^\ast)}
        \leq C'_{\theta_{\ast}}
    \end{align*}
for some constant \(C'_{\theta_{\ast}} > 0\) depending only on \((\gamma, v_2, \theta_{\ast}),\)
where
\(\norm{\cdot}^{(1+\alpha),({\rm par})}_{2,\alpha,I}\) is given
by {\rm Definition~\ref{Def:ParabolicNorm01}\eqref{DefPNorm03}} with replacement of \(x\) by \(1 - |s|\).

\smallskip
\item \label{prop:properties-of-gshock:5}
    For each \(\theta_{\ast}\in (0, \theta^{\rm d}),\) there exists a constant \(\hat{k} >1\)
    depending only on \((\gamma, v_2,\theta_{\ast})\) such that, for any admissible solution \(\varphi\)
    corresponding to \(\btheta \in \Theta \cap \{ \theta_1,\theta_2 \leq \theta_{\ast} \}\)
    and any \(s \in [-1,1],\)
\begin{equation*}
\begin{split}
    &\min \{ \gsh(-1) + \frac{s+1}{\hat{k}} , \, \gsh(1) + \frac{1 - s}{\hat{k}}, \, \frac{1}{\hat{k}} \}
    \leq \gsh(s) \\
    &\hspace{4em}
    \leq \min \{ \gsh(-1) + \hat{k}(s+1), \,
    \gsh(1) + \hat{k}(1-s), \, f_{\btheta}(s) -\frac{1}{\hat{k}}\}\,.
\end{split}
    \end{equation*}
\end{enumerate}
\end{proposition}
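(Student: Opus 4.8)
The plan is to prove Proposition~\ref{prop:properties-of-gshock} by first establishing the existence and basic structure of \(\gsh\) (statement~\eqref{prop:properties-of-gshock:1n2}), then deriving each of the remaining properties from the uniform estimates of Section~\ref{Sec:UniformEstiAdmiSolu} transported through the map \(\gtheta\). First I would note that, since \(\varphi\) is admissible, Lemma~\ref{Prop:GammaShockExpres} gives \(\Gamma_{\rm shock}\) as a \(C^1\)--graph \(\eta = f_{\rm sh}(\xi)\) with \(f_{\rm sh}'\in(\tan\theta_{25},\tan\theta_{26})\), and the strict directional monotonicity~\eqref{nStrictDirectMonotPropty} together with Lemma~\ref{Lem:Propt4gtheta}\eqref{Lem:Propt4gtheta-item2} shows that \(\partial_{t'}\big((\varphi_2-\varphi)\circ(\gtheta)^{-1}\big) < 0\) on \(\cl{\gtheta(\Omega)}\). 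The implicit function theorem applied to \((\varphi_2-\varphi)\circ(\gtheta)^{-1}(s,t') = 0\) then yields a unique \(C^1\)--function \(\gsh\) whose graph is \(\gtheta(\Gamma_{\rm shock})\), and the description of \(\gtheta(\Omega)\) as a region under this graph follows from~\eqref{eq:def-f-theta} and the fact that \(\Omega = \{\varphi < \varphi_2\}\) within \(\qtheta_0\). For the interior \(C^3\)--bound on compact subintervals \(I_1 \Subset (-1,1)\), I would combine Corollary~\ref{Corol:Esti-AwaySonic5n6} (interior estimates away from the sonic boundaries) and Lemma~\ref{Lem:LocalEstimates4Interior}\eqref{Lem:LE4Int-2} (higher derivative bounds on \(f_{O_2,\rm sh}\) away from the sonic arcs), transported through \(\gtheta\) using the uniform \(C^4\)--control of \(\gtheta\) and its inverse and the lower bound on \(|\det D\gtheta|\) from Lemma~\ref{Lem:Propt4gtheta}(i); the positive distance from \(\Gamma_{\rm shock}\) to the sonic arcs on \(I_1\), needed to invoke those estimates, comes from~\eqref{Eq:ResultInStep3} and the continuous dependence of the corner points.

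Next, for statement~\eqref{prop:properties-of-gshock:3}, I would unwind Definition~\ref{def:state5andstate6} and~\eqref{eq:domain-D5n6epsilon}: within the neighbourhoods of the extended sonic arcs where \(\gtheta\) acts by the explicit polar flattening~\eqref{Eq:DefPolarCoordiSec4}, \(\gtheta(\qtheta_0 \cap \mathcal{D}^j_\epsilon)\) is a rectangle-like region under a graph \(\mathfrak{g}_j\) coming from \(S_{2j}\), and the uniform \(C^3\)--control of \(\mathfrak{g}_5, \mathfrak{g}_6\) on \([1-\hat\epsilon^*_0,1]\) and \([-1,-1+\hat\epsilon^*_0]\) follows from the smoothness of the straight reflected shocks together with Lemma~\ref{lem:properties-of-state-5-and-6} and Proposition~\ref{Prop:localtheorystate5}\eqref{Prop:localtheorystate5:continuity-properties}. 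For statement~\eqref{prop:properties-of-gshock:4}, the weighted \(C^{2,\bar\alpha}\)--estimates on \(\gsh\) near \(s = \pm 1\) are the images of the estimates on \(f_{5,\rm sh} - f_{5,0}\) (resp. \(f_{6,\rm sh} - f_{6,0}\)) obtained in Propositions~\ref{Prop:Esti4theta1a}--\ref{Prop:Esti4theta1c} (Cases~1--3) and of \(f_{\bm e}\) in Proposition~\ref{Prop:Esti4theta1d} (Case~4). The key point is that these four propositions cover a covering of \([0,\theta^{\rm d} - \sigma]\) for any \(\sigma > 0\) once we restrict to \(\btheta \in \Theta \cap \{\theta_1,\theta_2 \le \theta_\ast\}\); one then chooses a single \(\bar\alpha\) and a single constant by taking minima/maxima over the (finitely many) cases, and uses the compactness Lemma~\ref{Lem:Compactness4AdmiSoluSet} to handle the transitions between cases and to ensure the constants depend only on \((\gamma, v_2, \theta_\ast)\). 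The vanishing conditions \(\frac{\mathrm d^m}{\mathrm ds^m}(\gsh - \mathfrak{g}_j)((-1)^{j-1}) = 0\) for \(m = 0,1\) (and \(m = 2\) in the supersonic case) are exactly the matching conditions \(|D^m_{\bm\xi}(\varphi - \varphi_j)(P_0^{j-4})| = 0\) from those propositions, rewritten in the \((s,t')\)--coordinates using that \(\gtheta\) is \(C^4\) and, near the sonic arcs, acts by the explicit formula~\eqref{Eq:DefPolarCoordiSec4}; the stated equivalence with the parabolic-norm bound on \(\gsh - \gfive\) and \(\gsh - \gsix\) is then a change of variables \(x \leftrightarrow 1 - |s|\) as in Definition~\ref{Def:ParabolicNorm01}\eqref{DefPNorm03}, which I would carry out following~\cite[Proposition~4.6]{BCF-2019}.

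Finally, for statement~\eqref{prop:properties-of-gshock:5}, the two-sided bound on \(\gsh(s)\) is a geometric consequence of the graph structure and the uniform bounds already in place. The upper bound \(\gsh(s) \le f_\btheta(s) - \frac{1}{\hat k}\) says that \(\Gamma_{\rm shock}\) stays a definite distance (in the \((s,t')\)--coordinates) below the top boundary of \(\gtheta(\qtheta)\); in the original coordinates this is the statement that \(\Gamma_{\rm shock}\) lies at bounded distance from the line \(\{\varphi_2 = \max\{\varphi_5,\varphi_6\} - \delta_0\}\), which follows from \(\varphi \le \varphi_2\) in \(\Omega\) (Definition~\ref{Def:AdmisSolus}\eqref{item4-Def:AdmisSolus}), the uniform Lipschitz bound in Lemma~\ref{UniformBound-Lem}, and the uniform \(C^1\)--control of \(\gtheta\). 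The upper bounds \(\gsh(s) \le \gsh(\pm 1) + \hat k(1 \mp s)\) near the endpoints follow from the uniform Lipschitz bound on \(\gsh\) obtained from \(f_{\rm sh}'\in(\tan\theta_{25},\tan\theta_{26})\) (Lemma~\ref{Prop:GammaShockExpres}) plus~\eqref{eq:reflected-shock-angle-upper-bound}, together with~\eqref{Eq:D5n6epsSetBdd} for the geometry near the corners; the matching lower bounds follow similarly, using the lower bound \(\omega \le f_{5,\rm sh}'\) from Step~\textbf{1} of Proposition~\ref{Prop:Esti4theta1a} near \(P_2\) (and its analogue near \(P_3\)) for the slope near the endpoints, the positive distance \(\mathrm{dist}(\Gamma_{\rm shock},\Gamma_{\rm sym}) \ge C_2^{-1}\) away from the corners from~\eqref{Eq:ResultInStep3}, and \({\rm dist}(P_2,P_3) \ge 2 d_{\rm sep}\) for the global lower bound \(\frac{1}{\hat k}\). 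I expect the main obstacle to be statement~\eqref{prop:properties-of-gshock:4}: organizing the four separate weighted-estimate propositions into a single \(\bar\alpha\) and a single \(\theta_\ast\)-dependent constant requires carefully tracking how the \((x,y)\)-- and \((S,T)\)--coordinate systems of those propositions relate to the \((s,t')\)--coordinates defined by \(\gtheta\), and verifying that the parabolic weight \(x\) near the sonic arc transforms consistently to \(1 - |s|\) under the composition; the compactness argument needed to make the constants uniform across the (overlapping) ranges of \(\theta_1\) in Cases~1--4 must also be set up so as not to lose the endpoint vanishing conditions. All remaining steps are routine transcriptions of the Section~\ref{Sec:UniformEstiAdmiSolu} estimates through a fixed, smooth, uniformly invertible change of variables, following~\cite[Proposition~4.6]{BCF-2019} closely.
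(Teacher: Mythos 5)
Your proposal is correct and follows essentially the same route as the paper, which itself only lists the ingredients — the directional monotonicity through \(\gtheta\) (Lemma~\ref{Lem:Propt4gtheta}), the implicit function theorem, the interior estimates of Lemma~\ref{Lem:LocalEstimates4Interior} and Corollary~\ref{Corol:Esti-AwaySonic5n6}, the weighted estimates of Propositions~\ref{Prop:Esti4theta1a}--\ref{Prop:Esti4theta1d}, and the geometric bounds~\eqref{Eq:ResultInStep3} — and defers the details to \cite[Proposition~4.6]{BCF-2019}. The only quibble is your parenthetical attributing the \(m=2\) vanishing to the ``supersonic case'': that extra order of matching comes from Proposition~\ref{Prop:Esti4theta1c} (the case \(\theta_{j-4}\ge\theta^{\rm s}\), i.e.\ state \((j)\) sonic/subsonic at the reflection point), but this does not affect the argument since only \(m=0,1\) is claimed uniformly.
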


\begin{remark}
By {\rm Propositions~\ref{Prop:Esti4theta1a}}--{\rm\ref{Prop:Esti4theta1b},} for each $\alpha\in(0,1),$
there exist $\hat{\epsilon}_3>0$ and $C_{\alpha}>0$ depending only on $(\gamma, v_2,\alpha)$ such that,
for any admissible solution corresponding to $\btheta \in \Theta,$
\begin{equation*}
\norm{\gsh-\mathfrak{g}_{j}}^{(\rm{par})}_{2,\alpha,I_{j}} \leq C_{\alpha} \qquad \text{whenever $\theta_{j-4} \leq \theta^{\rm s}$}\,,
\end{equation*}
for $j = 5, 6,$
where $I_j\defeq\{s\in(-1,1) \,:\, 1+(-1)^{j}s<\hat{\epsilon}_3\},$
and
\(\norm{\cdot}^{({\rm par})}_{2, {\alpha},I_j}\) is given
by {\rm Definition~\ref{Def:ParabolicNorm01}\eqref{DefPNorm03}} with replacement of \(x\) by \(1 - |s|\).

By {\rm Proposition~\ref{Prop:Esti4theta1c},} for each $\alpha\in(0,1),$ there exist constants $\hat{\epsilon}_4>0$
and $C'_{\alpha}>0$ depending only on $(\gamma, v_2,\alpha)$ such that, for any admissible solution corresponding to $\btheta \in \Theta,$
\begin{equation*}\begin{aligned}
\norm{\gsh-\mathfrak{g}_{j}}_{2,\alpha,\overline{I'_j}} \leq C'_{\alpha}\,,
\qquad\;
\frac{{\rm d}^m}{{\rm d}s^m}(\gsh-\mathfrak{g}_{j})((-1)^{j-1})=0 \quad\, \text{for $m=0,1,2$}\,,
\end{aligned}\end{equation*}
whenever $\theta_{j-4}\in[\theta^{\rm s},\theta^{\rm s}+\sigma_3]$ for $j=5, 6,$
where $I'_j\defeq\{s\in(-1,1)\,:\,1+(-1)^{j}s<\hat{\epsilon}_4\}$.
\end{remark}

\smallskip
\noindent \textbf{Construction of $u^{(\varphi,\btheta)}$.}
For each \(\btheta \in \cl{\Theta}\setminus \{\bm{0}\}\),
it can be verified directly
that \(S_{25}\)
and \(S_{26}\)
given by Definition~\ref{def:state5andstate6}
intersect at the unique point \(P_I \defeq (\xi^{P_I}, \eta^{P_I})\) with
\begin{align} \label{eq:intersection-of-S25-S26}
   \xi^{P_I} \defeq \frac{u_5 \xi^{P_0^1} - u_6 \xi^{P_0^2}}{u_5 - u_6}\,, \qquad
    \eta^{P_I} \defeq \frac{u_5 u_6}{u_5 - u_6} \frac{\xi^{P_0^1} - \xi^{P_0^2}}{v_2}\,.
\end{align}
Similarly, \(S_{25}^{\delta_0}\) and \(S_{26}^{\delta_0}\) intersect at the unique point \(P_I^{\delta_0} \defeq (\xi^{P_I} , \eta^{P_I^{\delta_0}})\) with
    $\eta^{P_I^{\delta_0}} \defeq \eta^{P_I} - \tfrac{\delta_0}{v_2} > \eta^{P_I}$.
In Lemma~\ref{lem:intersection-point-P_I}, we show the limits:
\begin{align*}
    \lim_{ \btheta \to \bm{0},\,\btheta \in \cl{\Theta}\setminus\{\bm{0}\}} (\xi^{P_I}, \eta^{P_I})
    = ( 0 , \eta_0 )\,,
\end{align*}
where \(\eta_0\) is given by~\eqref{Sol-NormalReflec0-RH}.
Then we define \(P_I|_{\btheta = \bm{0}} \defeq (0, \eta_0)\) such that \(P_I\) is \(C(\cl{\Theta})\).
By~\eqref{eq:intersection-of-S25-S26} and Lemma~\ref{lem:monotonicityOfTheta5}\eqref{item2-lem:monotonicityOfTheta5},
\(\xi^{P_I} \geq 0 \) for any \(\btheta \in \cl{\Theta} \cap \{\theta_2 = 0\}\).
Therefore, there exists a constant \(\sigma_{\rm sep} >0\) small enough, depending only on \((\gamma, v_2)\), such that
\begin{align*}
    \xi^{P_3} \leq -d_{\rm sep} < - \tfrac{d_{\rm sep}}{2} \leq \xi^{P_I} \qquad
    \text{for any $\btheta \in \cl{\Theta} \cap \{ 0 \leq \theta_2  \leq \sigma_{\rm sep} \}$}\,,
\end{align*}
where \(d_{\rm sep} > 0\) is given by~\eqref{Def4dsep}.
Also, by Lemma~\ref{lem:monotonicityOfTheta5}\eqref{item2-lem:monotonicityOfTheta5},
there exists a small constant \(\delta_{\rm sep} > 0\) depending only on \((\gamma, v_2,\sigma_{\rm sep})\) such that
\begin{align*}
    \eta^{P_3} \leq \eta_0 - \delta_{\rm sep} < \eta_0 \leq \min\{ a_{25} , a_{26} \}
    \leq \eta^{P_I} \qquad\,
    \text{for any $\btheta \in \cl{\Theta} \cap \{ \sigma_{\rm sep} \leq \theta_2 \leq  \theta^{\rm d} \}$}\,.
\end{align*}
From the above two inequalities, using \(P_3, P_I \in S_{26}\) and  Lemma~\ref{lem:properties-of-state-5-and-6}, we obtain the uniform positive lower bound:
\begin{align*}
    \inf_{\btheta \in \cl{\Theta}} ( \xi^{P_I} - \xi^{P_3} ) > 0 \,.
\end{align*}

\begin{definition} \label{def:varphi-theta-star}
Let $\tilde{\zeta} (r;r_0,r_{\rm d})$ be given by~\eqref{Eq:Basic-Cut-off-zeta}{\rm,}
and let \(\xi^{P_I}\) be given by~\eqref{eq:intersection-of-S25-S26}.
For \(\btheta \in \cl{\Theta},\) define
\begin{equation*}
\chi_{\btheta}^*(\xi)\defeq \tilde{\zeta} ( {-{}\xi};-\xi^{P_I},\,\frac{\xi^{P_I}-\xi^{P_3}}{5C_{\rm bg}})
\end{equation*}
for some constant $C_{\rm bg}>1$.
The smooth background function $\vphithetastar (\bm{\xi})$ is defined by
\begin{align*}
\vphithetastar (\bm{\xi}) \defeq \varphi_5(\bm{\xi}) \, (1 - \chi_{\btheta}^\ast(\xi)) + \varphi_6(\bm{\xi})\, \chi_{\btheta}^\ast(\xi)\,.
\end{align*}
For constant \( \delta_0 > 0\) from {\rm Definition~\ref{def:Q-theta1-theta2},}
constant $C_{\rm bg}$ is fixed sufficiently large, depending only on \((\gamma,v_2),\)
such that $\max\{\varphi_5,\varphi_6\} -\vphithetastar\leq\frac{\delta_0}{2}$.
\end{definition}

From the construction above,
it can be checked that $\vphithetastar \leq \max\{\varphi_5,\varphi_6\}$ in $\mathbb{R}^2$
and there exists a constant \(\bar{k} > 1\) sufficiently large,
depending only on \((\gamma, v_2)\), such that, for $j=5,6$,
\begin{align} \label{eq:def-of-k-bar}
    \vphithetastar = \varphi_\btheta = \varphi_j \, \qquad \text{in $\mathcal{D}^j_{\hat{c}_j/\bar{k}}$}\,,
\end{align}
where $\hat{c}_j$ is given by~\eqref{Eq:Sec3-Def4hatcj}.
Then
\begin{equation}\label{Eq:Sec4-Propty4Frakg2}
\big\{ \bm{\xi} \in\mathbb{R}^2_+ \,:\,  \xi^{P_3} < \xi < \xi^{P_2},
\,(\varphi_2 - \vphithetastar)(\bm{\xi}) = 0 \big\}
\subseteq \qtheta\,.
\end{equation}

Using Lemma~\ref{Lem:Propt4gtheta}\eqref{lem:properties-of-map-Gtheta:phi56-monotonicity}
and Definition~\ref{def:varphi-theta-star},
we can directly obtain that, for any $\btheta\in\overline{\Theta}$,
there exists a constant \(\bar{\mu} > 0\) depending only on \((\gamma, v_2)\) such that
\begin{equation}\label{Eq:LemMonotone-phi2ast}
 \partial_{t'} \big(  ( \varphi_2 - \vphithetastar ) \circ (\gtheta)^{-1} \big) (s,t')
 \leq - \bar{\mu}
 \qquad\, \text{for any $(s,t') \in \cl{\gtheta(\qtheta)}$}\,.
\end{equation}

For any admissible solution \(\varphi\) corresponding to parameters \(\btheta \in \Theta\),
let \(\gsh: [-1,1] \to [0,\infty) \) be the unique function given by Proposition~\ref{prop:properties-of-gshock}.
Then
\(G_{2,\gsh} : \gtheta(\qtheta) \to \mathbb{R}^2\) is defined by
\begin{align} \label{eq:map-G2}
    (s,t)=G_{2,\gsh}(s,t')\defeq (s , \frac{t'}{\gsh(s)})\qquad\, \text{for any $(s,t')\in \gtheta(\qtheta)$}\,.
\end{align}
By Proposition~\ref{prop:properties-of-gshock}\eqref{prop:properties-of-gshock:5}, map \(G_{2,\gsh}\)
is well defined and invertible with $G_{2,\gsh}^{-1}(s,t) = (s,t\gsh(s))$.
Then
$G_{2,\gsh} \circ \gtheta (\Omega) = \qiter=(-1,1)\times(0,1)$
by Proposition~\ref{prop:properties-of-gshock}\eqref{prop:properties-of-gshock:1n2}.
Therefore, \(u^{(\varphi,\btheta)}\), defined as
\begin{align} \label{eq:phi-to-u}
     u^{(\varphi,\btheta)}(s,t) \defeq (\varphi - \vphithetastar) \circ (G_{2,\gsh} \circ \gtheta)^{-1} (s,t)\,,
\end{align}
is well defined for all $(s,t)\in \qiter$.

\smallskip
\noindent
\textbf{A priori estimates for \(u^{(\varphi,\btheta)}\).}
For \(\epsilon^\ast_0 > 0\) from Proposition~\ref{prop:properties-of-gshock}\eqref{prop:properties-of-gshock:3},
define a constant $\epsilon'_0>0$ depending only on $(\gamma,v_2)$ as
\begin{align} \label{eq:def-epsilon-prime}
    \epsilon'_0\defeq \min_{\btheta \in \cl{\Theta}}d_{L}\epsilon^\ast_0 \,\, \leq \,\,
    \hat{\epsilon}^*_0\,,
\end{align}
where $d_L>0$ is given by~\eqref{Eq:LinearMap-Ltheta}.
For $r\in(0,1)$ and for $j=5,6$, we define
\begin{equation}\label{Eq:def-q-set}
\begin{aligned}
 \qint_{r}\defeq\big\{(s,t)\in\qiter \,:\, |s|<1-r\big\} \,, \qquad
 \mathcal{Q}^j_{r} \defeq \big\{(s,t)\in\qiter \,:\,  1+(-1)^{j}\,s<r\big\}\,.
\end{aligned}\end{equation}

\begin{definition}[Weighted H{\"o}lder norms in the standard domain] \label{def:weighted-holder-norm-qiter}
For \(\alpha \in (0,1)\) and any \(u \in C^2(\qiter),\) introduce the norm{\rm :}
\begin{align*}
        \norm{u}^{(\ast)}_{2 ,\alpha, \qiter} \defeq
        \norm{u}_{2,\alpha,\cl{\qint_{\epsilon'_0/4}}} +
        \sum_{j = 5,6} \Big(
        \norm{u}^{(1+\alpha),({\rm par})}_{2, \alpha , \mathcal{Q}^{j}_{\epsilon'_0}}
        + \norm{u}^{(1+\alpha),({\rm subs})}_{1, \alpha , \mathcal{Q}^{j}_{\epsilon'_0}}
        \Big) \, ,
        \vspace{-8pt}
    \end{align*}
    where \(\norm{\cdot}_{2,\alpha,U}\) is the standard H{\"o}lder norm, and the weighted H{\"o}lder
    norms \( \norm{\cdot}^{(\sigma),({\rm subs})}_{m,\alpha,U}\) and \(\norm{\cdot}^{(\sigma),({\rm par})}_{m,\alpha,U},\)
    for an open set \(U \subseteq \qiter,\) $\sigma>0${\rm ,} and $m\in\mathbb{N},$ are given as follows{\rm :}
\begin{enumerate}[{\rm (i)}]
\item
For any $\bm{s}=(s,t), \, \tilde{\bm{s}}=(\tilde{s},\tilde{t})\in\qiter,$ define
\begin{align*}
&\delta^{({\rm subs})}_{\alpha}(\bm{s},\tilde{\bm{s}})\defeq
\big((s-\tilde{s})^2+(\max\{1-|s|,1-|\tilde{s}|\})^2(t-\tilde{t})^2\big)^{\frac{\alpha}{2}}\,.
\end{align*}
The H{\"o}lder norms with subsonic scaling are given by
\begin{equation*}\begin{aligned}
& \|u\|^{(\sigma),({\rm subs})}_{m,0,U}\defeq \sum_{0\leq k+l\leq m}\sup_{\bm{s}\in U}\big((1-|s|)^{k-\sigma}|\partial^{k}_s\partial^{l}_t u(\bm{s})|\big)\,,\\
& [u]^{(\sigma),({\rm subs})}_{m,\alpha,U} \defeq
\sum_{k+l = m}
\sup\limits_{\substack{\bm{s},\tilde{\bm{s}}\in U\\
\bm{s}\neq\tilde{\bm{s}}}}\Big(\min\big\{(1-|s|)^{\alpha+k-\sigma},(1-|\tilde{s}|)^{\alpha+k-\sigma}\big\}
\frac{|\partial^{k}_s\partial^{l}_t u(\bm{s})-\partial^{k}_s\partial^{l}_t u(\tilde{\bm{s}})|}{\delta^{({\rm subs})}_{\alpha}(\bm{s},\tilde{\bm{s}})}\Big)\,,\\
& \|u\|^{(\sigma),({\rm subs})}_{m,\alpha,U}=\|u\|^{(\sigma),({\rm subs})}_{m,0,U}+[u]^{(\sigma),({\rm subs})}_{m,\alpha,U}\,.
\end{aligned}\end{equation*}

\item
For any $\bm{s}=(s,t), \tilde{\bm{s}}=(\tilde{s},\tilde{t})\in\qiter,$ define
\begin{equation*}
\delta^{({\rm par})}_{\alpha}(\bm{s},\tilde{\bm{s}})
\defeq \big((s-\tilde{s})^2+ \max\{1-|s|,1-|\tilde{s}|\} (t-\tilde{t})^2\big)^{\frac{\alpha}{2}}\,.
\end{equation*}
The H{\"o}lder norms with parabolic scaling are given by
\begin{equation*}\begin{aligned}
& \|u\|^{(\sigma),({\rm par})}_{m,0,U}\defeq \sum_{0\leq k+l\leq m}\sup_{\bm{s}\in U}\big((1-|s|)^{k+\frac{l}{2}-\sigma}|\partial^{k}_s\partial^{l}_t u(\bm{s})|\big)\,,\\
& [u]^{(\sigma),({\rm par})}_{m,\alpha,U} \defeq
\sum_{k+l = m}
\sup\limits_{\substack{\bm{s},\tilde{\bm{s}}\in U\\
\bm{s}\neq\tilde{\bm{s}}}}\Big(\min\big\{(1-|s|)^{\alpha+k+\frac{l}{2}-\sigma},(1-|\tilde{s}|)^{\alpha+k+\frac{l}{2}-\sigma}\big\}
\frac{|\partial^{k}_s\partial^{l}_t u(\bm{s})-\partial^{k}_s\partial^{l}_t u(\tilde{\bm{s}})|}{\delta^{({\rm par})}_{\alpha}(\bm{s},\tilde{\bm{s}})}\Big)\,,\\
& \|u\|^{(\sigma),({\rm par})}_{m,\alpha,U}=\|u\|^{(\sigma),({\rm par})}_{m,0,U}+[u]^{(\sigma),({\rm par})}_{m,\alpha,U}\,.
\end{aligned}\end{equation*}
\end{enumerate}
Denote by \(C^{2, \alpha}_{(\ast)}(\qiter)\) the set of
all functions in \(C^2(\qiter)\) whose \(\norm{\cdot}^{(\ast)}_{2,\alpha,\qiter}\)--norms are finite.
\end{definition}

Note that \(C^{2,\alpha}_{(\ast)}(\qiter)\) is compactly embedded into \(C^{2,\tilde{\alpha}}_{(\ast)}(\qiter)\)
whenever \(0\leq \tilde{\alpha}<\alpha < 1\), which follows from the properties
given in~\cite[Lemma~4.6.3 and Corollary~17.2.7]{ChenFeldman-RM2018}.

 Using Corollary~\ref{Corol:Esti-AwaySonic5n6} and Propositions~\ref{Prop:Esti4theta1a}--\ref{Prop:Esti4theta1c} and~\ref{Prop:Esti4theta1d},
we have the following estimate for admissible solutions $\varphi$ after mapping the pseudo-subsonic domain $\Omega$
to the standard domain $\qiter$; the proof is similar to~\cite[Proposition 4.12]{BCF-2019}.

\begin{proposition}
\label{prop:apriori-estimates-on-u-prop4.12}
For each \(\theta_{\ast} \in (0,\theta^{\rm d}),\)
there exist constants \(M>0\) and \(\bar{\alpha} \in (0, \frac13]\) depending only on \((\gamma, v_2,\theta_{\ast})\)
such that, for any admissible solution \(\varphi\) corresponding to \(\btheta\in\Theta\cap\{\theta_1,\theta_2
\leq \theta_{\ast}\},\) \(u = u^{(\varphi,\btheta)} : \qiter \to \mathbb{R}\),
defined by~\eqref{eq:phi-to-u}, satisfies \(u \in C^{2,\bar{\alpha}}_{(\ast)}(\qiter)\) and
\begin{flalign*}
&&
    \norm{u}^{(\ast)}_{2 ,\bar{\alpha}, \qiter} = \norm{u}_{2,\bar{\alpha},\qint_{\epsilon'_0/4}}
    + \sum_{j=5,6} \Big(
    \norm{u}^{(1+\bar{\alpha}),({\rm par})}_{2,\bar{\alpha},\mathcal{Q}^j_{\epsilon'_0}}
    + \norm{u}^{(1+\bar{\alpha}),({\rm subs})}_{1,\bar{\alpha},\mathcal{Q}^j_{\epsilon'_0}} \Big)
    \leq M\,.
    &&
\end{flalign*}
\end{proposition}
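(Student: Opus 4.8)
The plan is to transport all the \textit{a priori} estimates of Section~\ref{Sec:UniformEstiAdmiSolu} --- the interior bound of Corollary~\ref{Corol:Esti-AwaySonic5n6} and the four near-sonic/near-corner bounds of Propositions~\ref{Prop:Esti4theta1a}--\ref{Prop:Esti4theta1d} --- through the composite change of variables $G_{2,\gsh}\circ\gtheta:\Omega\to\qiter$ and to reassemble them into the three pieces of $\norm{\cdot}^{(\ast)}_{2,\bar\alpha,\qiter}$ from Definition~\ref{def:weighted-holder-norm-qiter}; this follows the scheme of~\cite[Proposition~4.12]{BCF-2019}. First I would fix $\theta_\ast\in(0,\theta^{\rm d})$ and record the uniform control of the transformation: by Lemma~\ref{Lem:Propt4gtheta}(i) the map $\gtheta$ is uniformly $C^4$ with $|\det D\gtheta|$ bounded below; by Proposition~\ref{prop:properties-of-gshock} the shock function $\gsh$, hence the map $G_{2,\gsh}$ of~\eqref{eq:map-G2}, is uniformly $C^3$ on compact subsets of $(-1,1)$ and carries the uniform weighted-parabolic bounds of item~\eqref{prop:properties-of-gshock:4} near $s=\pm1$; and by Definition~\ref{def:varphi-theta-star} together with Proposition~\ref{Prop:localtheorystate5}\eqref{Prop:localtheorystate5:continuity-properties} the background $\vphithetastar$ is smooth with $C^m$ norms bounded in terms of $(\gamma,v_2,m)$ and, crucially, $\vphithetastar\equiv\varphi_j$ on $\mathcal{D}^j_{\hat{c}_j/\bar{k}}$ by~\eqref{eq:def-of-k-bar}. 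It will therefore suffice to establish, with $M,\bar\alpha$ depending only on $(\gamma,v_2,\theta_\ast)$: (a) the interior bound $\norm{u}_{2,\bar\alpha,\cl{\qint_{\epsilon'_0/4}}}\le M$; and (b), for $j=5,6$, the near-corner bound $\norm{u}^{(1+\bar\alpha),({\rm par})}_{2,\bar\alpha,\mathcal{Q}^{j}_{\epsilon'_0}}+\norm{u}^{(1+\bar\alpha),({\rm subs})}_{1,\bar\alpha,\mathcal{Q}^{j}_{\epsilon'_0}}\le M$, where $u=u^{(\varphi,\btheta)}$ is given by~\eqref{eq:phi-to-u}.

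For (a) I would note that, by Proposition~\ref{prop:properties-of-gshock}\eqref{prop:properties-of-gshock:1n2} and~\eqref{prop:properties-of-gshock:5}, the preimage $(G_{2,\gsh}\circ\gtheta)^{-1}(\cl{\qint_{\epsilon'_0/4}})$ is a subdomain of $\cl\Omega$ at a uniform positive distance $d=d(\gamma,v_2,\theta_\ast)$ from $\Gamma^5_{\rm sonic}\cup\Gamma^6_{\rm sonic}$ --- this is precisely where the hypothesis $\theta_1,\theta_2\le\theta_\ast<\theta^{\rm d}$ enters. On this region Corollary~\ref{Corol:Esti-AwaySonic5n6} (with $m=2,3$) gives $\|\varphi\|_{2,\alpha}\le C_{\alpha,d}$ for every $\alpha\in(0,1)$; subtracting the smooth $\vphithetastar$ and composing with the uniformly $C^4$ map $\gtheta$ and the (there) uniformly $C^3$ map $G_{2,\gsh}$ then yields (a) for any $\bar\alpha\in(0,1)$.

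For (b), fix $j=5$ (the case $j=6$ being symmetric under the reflection of the problem). I would split $[0,\theta^{\rm d})$ into finitely many overlapping subintervals on which Propositions~\ref{Prop:Esti4theta1a}--\ref{Prop:Esti4theta1d} respectively apply, with the threshold parameters $\sigma,\sigma_1,\sigma_3,\sigma_{\rm s},\sigma_{\rm d}$ chosen once and compatibly with $\theta_\ast<\theta^{\rm d}$, so that all the constants depend only on $(\gamma,v_2,\theta_\ast)$. In the sonic Cases~1--3 the output is a weighted-parabolic $C^{2,\alpha}$ bound on $\psi=(\varphi-\varphi_5)\circ\rcal^{-1}$ in the modified polar coordinates $(x,y)$ of~\eqref{PolarCoordiNearO5}, $x$ being the distance to the sonic circle. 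The key geometric point is that, near the corner $s=1$, the composite map $(x,y)\mapsto(s,t)=G_{2,\gsh}\circ\gtheta(\bm\xi)$ is, up to uniformly bounded factors and an additive constant, affine in $x$ in the $s$-variable --- since $h_1=u_j+\mathrm{dist}(\cdot,O_j)$ and $h_2=y$ near $\Gamma^{j,\delta_0}_{\rm sonic}$ --- and equals $y/\gsh(s)$ in the $t$-variable, with $\gsh$ parabolically regular by Proposition~\ref{prop:properties-of-gshock}\eqref{prop:properties-of-gshock:4}. Hence the two parabolic distances $\delta^{({\rm par})}_\alpha$ of Definitions~\ref{Def:ParabolicNorm01} and~\ref{def:weighted-holder-norm-qiter} are mutually comparable (each weights its second variable by $\max\{x,\tilde x\}\sim\max\{1-|s|,1-|\tilde s|\}$), so the weighted-parabolic bound on $\psi$ transfers to one on $u$ --- the contributions of $\vphithetastar-\varphi_5$ vanishing close to the sonic arc by~\eqref{eq:def-of-k-bar}, and the Jacobian contributions being controlled by the bounds recorded above; the subsonic $C^{1,\bar\alpha}$ piece then follows because for first derivatives the subsonic weight $(1-|s|)$ is weaker than the parabolic weight $(1-|s|)^{1/2}$ carried by the $t$-variable. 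In Case~4 the equation is uniformly elliptic and Proposition~\ref{Prop:Esti4theta1d} gives a standard corner-weighted H\"older bound centred at $P_0^1$, with weight $(-1-\alpha)$ in the notation of Definition~\ref{Def:WeightedHolder-Sec3}; since $G_{2,\gsh}\circ\gtheta$ maps a neighbourhood of $P_0^1$ in $\cl\Omega$ onto a corner neighbourhood of $s=1$ in $\qiter$, where $\Gamma^5_{\rm sonic}$ has collapsed to the point $P_0^1$ at which $\Gamma_{\rm shock}$ and $\Gamma_{\rm sym}$ meet, the corner-weighted distances transform into the subsonic distances $\delta^{({\rm subs})}_\alpha$ up to bounded factors, yielding the $C^{1,\bar\alpha}$ subsonic piece directly and, \emph{a fortiori}, the parabolic $C^{2,\bar\alpha}$ piece with the same constants. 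Finally, taking $\bar\alpha$ to be the smallest of the finitely many H\"older exponents arising in (a), in the four cases of (b), and in Proposition~\ref{prop:properties-of-gshock}, and decreasing it if necessary so that $\bar\alpha\in(0,\tfrac13]$, completes the argument.

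I expect the main obstacle to be the bookkeeping near the corner points $P_0^1$ and $P_0^2$ in Cases~3 and~4: verifying that the two genuinely different scalings --- the anisotropic parabolic one, adapted to the sonic degeneracy of equation~\eqref{Eq4phi} along $\Gamma^j_{\rm sonic}$, and the isotropic subsonic one, adapted to the genuine corner at $P_0^{j-4}$ --- match up under the non-affine map $G_{2,\gsh}\circ\gtheta$ with \emph{uniform} constants, precisely because $\gsh$ is only parabolically (not smoothly) regular at $s=\pm1$ and because the nature of the endpoint changes from a one-sided sonic arc to a two-sided corner as $\theta_{j-4}$ crosses $\theta^{\rm s}$. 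A secondary delicate point is the uniformity over $\btheta\in\Theta\cap\{\theta_1,\theta_2\le\theta_\ast\}$ of all the constants in Propositions~\ref{Prop:Esti4theta1a}--\ref{Prop:Esti4theta1d} and~\ref{prop:properties-of-gshock}, which is what forces the restriction $\theta_\ast<\theta^{\rm d}$ away from the detachment angle.
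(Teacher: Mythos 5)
Your overall route is the one the paper itself takes: the paper offers no independent argument for this proposition but obtains it by transporting the interior estimate of Corollary~\ref{Corol:Esti-AwaySonic5n6} and the near-sonic/near-corner estimates of Propositions~\ref{Prop:Esti4theta1a}--\ref{Prop:Esti4theta1d} through the map $G_{2,\gsh}\circ\gtheta$, exactly as in the cited Proposition~4.12 of~\cite{BCF-2019}, using Lemma~\ref{Lem:Propt4gtheta}, Proposition~\ref{prop:properties-of-gshock}, and the identity $\vphithetastar\equiv\varphi_j$ on $\mathcal{D}^j_{\hat{c}_j/\bar{k}}$; your parts (a) and (b) reproduce this decomposition faithfully, and your closing remarks correctly identify where the uniformity in $\btheta$ and the restriction $\theta_\ast<\theta^{\rm d}$ are used.

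There is, however, one step that is wrong as stated: the claim that in Cases~1--3 the subsonic piece $\norm{u}^{(1+\bar\alpha),({\rm subs})}_{1,\bar\alpha,\mathcal{Q}^j_{\epsilon'_0}}$ follows from the parabolic piece ``because the subsonic weight $(1-|s|)$ is weaker than the parabolic weight $(1-|s|)^{1/2}$.'' With the conventions of Definition~\ref{def:weighted-holder-norm-qiter}, the subsonic exponent is $k-\sigma$, independent of $l$, so the term with $k=0$, $l=1$ demands $|\partial_t u|\lesssim(1-|s|)^{1+\bar\alpha}$, whereas the parabolic norm only demands $|\partial_t u|\lesssim(1-|s|)^{1/2+\bar\alpha}$; since $1-|s|\leq 1$, the subsonic requirement is strictly \emph{stronger}, and the same direction holds at the seminorm level because $\delta^{({\rm subs})}_{\alpha}\leq\delta^{({\rm par})}_{\alpha}$ while the subsonic prefactor is larger. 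So the subsonic component carries genuinely additional information and must be extracted from the $(x,y)$-estimates themselves: the $\sigma=2$ parabolic bounds of Propositions~\ref{Prop:Esti4theta1a}--\ref{Prop:Esti4theta1b} give $|\psi_y|\lesssim x^{3/2}$, and since near $\Gamma^{j}_{\rm sonic}$ one has $1-s= d_L x$ and $\partial_t u=\gsh(s)\,\psi_y$ with $\gsh$ bounded, this yields $|\partial_t u|\lesssim(1-|s|)^{3/2}\leq(1-|s|)^{1+\bar\alpha}$ precisely because $\bar\alpha\leq\tfrac12$ (the seminorm terms force the further cap); thus $\bar\alpha\in(0,\tfrac13]$ is dictated by this step, not an optional final adjustment as your last sentence suggests. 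Likewise in Case~4 the needed decay of $\partial_t u$ does not come ``directly'' from the $(-1-\alpha)$-weighted corner norm (which by itself only gives $|Du|\leq C$ and the vanishing $D^m(\varphi-\varphi_j)(P_0^{j-4})=0$, $m=0,1$); you must combine it with the factor $\gsh(s)\lesssim 1-|s|$ available there since $\gsh(\pm1)=0$ in the subsonic case. With these repairs, which use only ingredients already on your table, the argument is complete and coincides with the paper's.
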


\subsubsection{Mapping to approximate admissible solutions}
For each \(\btheta \in \cl{\Theta}\), let the extended domain \(\qtheta\) be given by Definition~\ref{def:Q-theta1-theta2},
and let \(\vphithetastar\) be given by Definition~\ref{def:varphi-theta-star}.
For each \(s^\ast \in (-1,1)\), define
$\qtheta(s^\ast) \defeq \qtheta \cap (\gtheta)^{-1} ( \{ s = s^\ast \} )$.
Then, for \(i=1,2\),
\begin{align*}
    \inf_{\qtheta( (-1)^{i-1}) } (\varphi_2 - \vphithetastar) < 0 \leq \sup_{\qtheta( (-1)^{i-1})} (\varphi_2 - \vphithetastar)\,,
\end{align*}
where the inequality on the right above becomes a strict inequality when \(\theta_i \in [0,\theta^{\rm s})\)
and an equality when \(\theta_i \in [\theta^{\rm s},\theta^{\rm d}]\).
Moreover, by the choice of $C_{\rm bg}>0$ in Definition~\ref{def:varphi-theta-star},
\begin{equation*}\label{Eq:Propts4VarphiThetaStar}
\sup_{\qtheta(s)} (\varphi_2-\vphithetastar) - \inf_{\qtheta(s)} (\varphi_2-\vphithetastar)  \geq \frac{\delta_0}{2} >0
\qquad\, \text{for any $s\in(-1,1)$}\,.
\end{equation*}

\begin{definition}
\label{def:u-to-phi}
Fix \(\alpha \in (0,1), \,\theta_{\ast} \in (0,\theta^{\rm d}), \)
and \(\btheta\in\Theta \cap \{ \theta_1,\theta_2 \leq \theta_{\ast}\}\).
Let \(u \in C^{1,\alpha}(\cl{\qiter})\) be a function satisfying that
\begin{align} \label{eq:approx-admissible-condition-u}
    \inf_{\qtheta(s)} (\varphi_2 - \vphithetastar) < u(s,1) < \sup_{\qtheta(s)} (\varphi_2 - \vphithetastar)
    \qquad\, \text{for any $s\in(-1,1)$}\,.
\end{align}
\begin{enumerate}[{\rm (i)}]
    \item \label{item1-def:u-to-phi}
    For each \(s \in (-1,1),\) let
    \(\bar{t}' > 0\) be the unique solution
    of $ (\varphi_2 - \vphithetastar) \circ  (\gtheta)^{-1}  (s,\bar{t}') = u(s,1),$
    which exists by~\eqref{Eq:LemMonotone-phi2ast} and~\eqref{eq:approx-admissible-condition-u}.
    Define \(\gsh^{(u,\btheta)}:(-1,1) \to (0,\infty)\) by
     $\,\gsh^{(u,\btheta)}(s) \defeq \bar{t}'$.

    \item \label{item2-def:u-to-phi}
    For \(\gtheta\) and \(G_{2,\gsh^{(u,\btheta)}}\) given by~\eqref{Eq:Def-gtheta} and~\eqref{eq:map-G2} respectively,
    define
    \(\Futheta:\qiter \to \qtheta\) by
    $\,\Futheta \defeq (\gtheta)^{-1} \circ G_{2,\gsh^{(u,\btheta)}}^{-1}$.

    \item \label{item3-def:u-to-phi}
    Define sets \(
    \Omega(u,\btheta)\defeq \Futheta(\qiter)\) and
    \(\Gamma_{\rm shock}(u,\btheta)\defeq \Futheta( (-1,1) \times \{1\})\).
    Finally, define function \(\varphi^{(u,\btheta)}(\bm{\xi}) : \Omega(u,\btheta) \to \mathbb{R}\) by
     $\,\varphi^{(u,\btheta)}(\bm{\xi}) \defeq (u \circ \Futheta^{-1})(\bm{\xi}) + \vphithetastar(\bm{\xi})$
     for all $\bm{\xi} \in \Omega(u,\btheta)$.
\end{enumerate}
\end{definition}

We call $\varphi^{(u,\btheta)}$ above an approximate admissible solution
if it satisfies the same boundary conditions on $\Gamma_{\rm sonic}^5\cup\Gamma_{\rm sonic}^6$
as the admissible solutions in Definition~\ref{Def:AdmisSolus}.
For \(\alpha \in (0,1)\) and \(\theta_{\ast} \in (0, \theta^{\rm d})\),
define the space of such approximate admissible solutions:
\begin{align} \label{eq:def-G-alpha-theta}
\mathfrak{G}^{\theta_{\ast}}_{\alpha}
\defeq \bigg\{
(u,\btheta) \in C^{1,\alpha}(\cl{\qiter}) \times [0,\theta_{\ast}]^2 \,:\, \,
\parbox{11em}{$(u,\btheta)$ satisfies~\eqref{eq:approx-admissible-condition-u} and\\ $(u,Du)(\pm1,\blank) = (0,\mathbf{0})$} \,\,
    \bigg\}.
\end{align}
By construction, for any \((u,\btheta) \in \mathfrak{G}^{\theta_{\ast}}_{\alpha}\),
map \(\Futheta\) satisfies
that \(P_1 = \Futheta(1,0)\), \(P_2 = \Futheta(1,1)\), \(P_3 = \Futheta(-1,1)\), and \(P_4 = \Futheta(-1,0)\)
for points \(P_i\), \(i = 1,2,3,4\), given by Definition~\ref{Def:GammaSonicAndPts}.
Furthermore, $\varphi^{(u,\btheta)}=\varphi_2$ on $\Gamma_{\rm shock}(u,\btheta)$.
Other properties of set \(\mathfrak{G}^{\theta_{\ast}}_{\alpha}\) are given below;
the proof is similar to~\cite[Lemmas 12.2.7 and 17.2.13]{ChenFeldman-RM2018}.

\begin{lemma}\label{lem:properties-of-G-alpha-theta}
Fix \(\alpha \in (0,1)\) and \(\theta_{\ast} \in (0,\theta^{\rm d})\).
For each \((u,\btheta) \in \mathfrak{G}^{\theta_{\ast}}_{\alpha},\) \(\gsh^{(u,\btheta)} \in C^{1,\alpha}([-1,1])\)
and the following properties hold{\rm :}
\begin{enumerate}[{\rm (i)}]
\item \label{lem:properties-of-G-alpha-theta-2}
$\Omega(u,\btheta) \cup \Gamma_{\rm shock}(u,\btheta) \subseteq \qtheta \subseteq \mathbb{R}^2_{+}$.
 Moreover, \(\Gamma_{\rm shock}(u,\btheta)\) is a \(C^{1,\alpha}\)--curve up to
 its endpoints \(P_2\) and \(P_3,\)
 and is tangential to \(S_{25}\) and \(S_{26}\) at \(P_2\) and \(P_3\) respectively.
 If \({f}_{j,0}, j=5,6,\) are given as in {\rm Step~\textbf{1}} of the proofs
 of {\rm Propositions~\ref{Prop:Esti4theta1a}} and~{\rm\ref{Prop:Esti4theta1c},} then
\begin{equation*}
    \big( \gsh^{(u,\btheta)}\,, \frac{ {\rm d} }{{\rm d}s} \gsh^{(u,\btheta)} \big) \big|_{s=(-1)^{j-1}}
    = \big( {f}_{j,0}(0)\,,  \frac{(-1)^{j}}{d_{L}} {f}'_{j,0} (0)\big) \qquad\mbox{for $j=5,6$}\,,
\end{equation*}
 where $d_{L}$ is given by~\eqref{Eq:LinearMap-Ltheta}.

\smallskip
\item \label{lem:properties-of-G-alpha-theta-3}
    Let \(\mathcal{N}_{\varepsilon}(\Gamma)\) denote the open \(\varepsilon\)-neighbourhood of a set \(\Gamma,\) as given by~\eqref{Eq:Def-N-eps}.
    Let \(\delta_0\) be from {\rm Definition~\ref{def:Q-theta1-theta2},} and
    let the coordinates \((x,y) = \rcal(\bm{\xi})\) be given by~\eqref{Eq:DefPolarCoordiSec4}.
    Then there exists a constant \(\epsilon_0 > 0\) depending only on \((\gamma, v_2)\) such that,
    for $j=5,6,$ and any \(\epsilon \in(0,\epsilon_0),\)
    \begin{equation*}\begin{aligned}
        &\hat{\Omega}^j_\epsilon \defeq \mathcal{N}_{\epsilon_0} (\Gamma^{j,\delta_0}_{\rm sonic})
        \cap \big\{\bm{\xi}= \rcal^{-1}(x,y) \,:\,  x_{P_{j-3}} < x < x_{P_{j-3}} + \epsilon \big\}
        \cap \Omega(u,\btheta) \\
        &\hspace{1.33em}= \big\{\bm{\xi}= \rcal^{-1}(x,y) \,:\,  x_{P_{j-3}} < x < x_{P_{j-3}}+\epsilon, \, 0 < y < \hat{f}_{j,{\rm sh}}(x)\big\}\,,\\
        & \Gamma_{\rm shock}(u,\btheta) \cap \partial \hat{\Omega}^j_\epsilon   = \big\{\bm{\xi}
          = \rcal^{-1}(x, \hat{f}_{j,{\rm sh}}(x)) \,:\, x_{P_{j-3}} < x < x_{P_{j-3}}+\epsilon \big\}\,,\\
        &\Gamma^j_{\rm sonic} \cap \partial \hat{\Omega}^j_\epsilon = \big\{\bm{\xi}
          =\rcal^{-1}(x_{P_{j-3}},y) \,:\, 0 < y < \hat{f}_{j,{\rm sh}}(x_{P_{j-3}}) \big\}= \Gamma^j_{\rm sonic}\,,\\
        & \Gamma_{\rm sym} \cap \partial \hat{\Omega}^j_\epsilon   = \big\{\bm{\xi}= \rcal^{-1}(x,0) \,:\, x_{P_{j-3}} < x < x_{P_{j-3}} + \epsilon \big\}\,,
    \end{aligned}\end{equation*}
    where $\hat{f}_{j,{\rm sh}}(x) \defeq \gsh^{(u,\btheta)} \circ L_\btheta (u_j +(-1)^{j-1} (c_j-x))$
    and \(L_\btheta\) is given by~\eqref{Eq:LinearMap-Ltheta}.

\item \label{lem:properties-of-G-alpha-theta-4}
Let \((u,\btheta), (\tilde{u},\tilde{\btheta}) \in \mathfrak{G}_\alpha^{\theta_{\ast}}\)
satisfy \(\norm{(u,\tilde{u})}_{C^{1,\alpha}(\cl{\qiter})} < M\) for some constant \(M >0\).
Then there exists a constant \(C>0\) depending only on \((\gamma, v_2,\theta_{\ast},M,\alpha)\)
such that
    \begin{align*}
        &\norm{\gsh^{(u,\btheta)}}_{C^{1,\alpha}([-1,1])} + \norm{\Futheta}_{C^{1,\alpha}(\cl{\qiter})} \leq C\,, \\
        &\norm{ \gsh^{(u,\btheta)} - \gsh^{(\tilde{u},\tilde{\btheta})} }_{C^{1,\alpha}([-1,1])}
         +\norm{ \Futheta - \mathfrak{F}_{(\tilde{u},\tilde{\btheta})} }_{C^{1,\alpha}(\cl{\qiter})}\\
        &\,\,\,+\norm{ \varphi^{(u,\btheta)} \circ \Futheta
         - \varphi^{(\tilde{u},\tilde{\btheta})} \circ \mathfrak{F}_{(\tilde{u},\tilde{\btheta})} }_{C^{1,\alpha}(\cl{\qiter})}
        +\norm{  \vphithetastar \circ \Futheta -  \varphi_{\tilde{\btheta}}^\ast \circ \mathfrak{F}_{(\tilde{u},\tilde{\btheta})} }_{C^{1,\alpha}(\cl{\qiter})}\\
         &\,\,\, \leq C \big( \norm{u - \tilde{u}}_{C^{1,\alpha}(\cl{\qiter})} + \abs{\btheta - \tilde{\btheta}} \big)\,.
    \end{align*}

\item \label{lem:properties-of-G-alpha-theta-6}
 Let \(\epsilon_0 > 0\) be the constant from~\eqref{lem:properties-of-G-alpha-theta-3}{\rm,}
 and let \(\hat{\epsilon}_0\defeq d_L\epsilon_0\) with $d_L>0$ given by~\eqref{Eq:LinearMap-Ltheta}.
 Assume that, for constants \(\alpha\in(0,1), \, \sigma \in (1,2],\) and \(M>0,\)
    \begin{align} \label{eq:properties-of-G-alpha-theta-bounded-assumption}
        \norm{u}_{2,\alpha,\qiter\cap\{\abs{s} < 1 - \frac{\hat{\epsilon}_0}{10}\}}
        + \norm{u}^{(\sigma),({\rm par})}_{2,\alpha,\qiter\cap\{\abs{s}> 1 - \hat{\epsilon}_0\}}
        \leq M\,.
    \end{align}
 Then there exists a constant \(C>0\) depending only on \((\gamma, v_2,\theta_{\ast},\alpha,\sigma)\) such that
    \begin{align*}
        \norm{\gsh^{(u,\btheta)}}_{2,\alpha,[-1+\frac{\hat{\epsilon}_0}{10}, 1 - \frac{\hat{\epsilon}_0}{10}]}
        + \norm{\gsh^{(u,\btheta)} - \gfive}^{(\sigma),({\rm par})}_{2, \alpha, (1, 1 - \hat{\epsilon}_0) }
        + \norm{\gsh^{(u,\btheta)} - \gsix}^{(\sigma),({\rm par})}_{2, \alpha, (-1, -1 + \hat{\epsilon}_0) }
        \leq C M\,,
    \end{align*}
    where $\mathfrak{g}_{5}$ and $\mathfrak{g}_{6}$ are defined in {\rm Proposition~\ref{prop:properties-of-gshock}\eqref{prop:properties-of-gshock:3}}.

    Furthermore, for
    $I_{\rm so} \defeq (-1,-1+\hat{\epsilon}_0) \cup (1-\hat{\epsilon}_0,1),$
    define \(\mathfrak{F}_{(0,\btheta)} : I_{\rm so} \times (0,\infty) \to \mathbb{R}^2 \) by
   \begin{align*}
       \mathfrak{F}_{(0,\btheta)}(s,t') \defeq \begin{cases}
       \, (G_{2,\gfive} \circ \gtheta)^{-1}(s,t') &\quad \text{if} \;\; s\in
       I_{\rm so} \cap \{ s > 0\}\,, \\[1mm]
       \, (G_{2,\gsix} \circ \gtheta)^{-1}(s,t') &\quad  \text{if} \;\; s \in
       I_{\rm so} \cap \{s < 0\}\,.
       \end{cases}
   \end{align*}
    Then there exists a constant \(C_0 > 0\)
    depending only on \((\gamma, v_2,\theta_{\ast})\)
    such that
    \begin{align*}
        \norm{ \mathfrak{F}_{(0,\btheta)} }_{C^3( \overline{I_{\rm so}}\times[0,1]) }
        +\norm{\mathfrak{F}_{(u,\btheta)}}_{2,\alpha,\qiter\cap\{\abs{s} < 1 - \frac{\hat{\epsilon}_0}{10}\}} +
        \norm{\mathfrak{F}_{(u,\btheta)} - \mathfrak{F}_{(0,\btheta)}}^{(\sigma),({\rm par})}_{2,\alpha,I_{\rm so}\times(0,1)} \leq C_0\,.
    \end{align*}

    \item \label{lem:properties-of-G-alpha-theta-7}
    Let \(f_\btheta\) be given by~\eqref{eq:def-f-theta}.
    For constants \(M>0\) and \(\delta_{\rm sh}>0,\) assume that \((u,\btheta)\in\mathfrak{G}_{\alpha}^{\theta_{\ast}}\)
    satisfies~\eqref{eq:properties-of-G-alpha-theta-bounded-assumption} and, for any \(s\in[-1,1],\)
    \begin{align*}
        &\min\big\{ \gsh^{(u,\btheta)}(-1) + \frac{s+1}{M},\,\gsh^{(u,\btheta)}(1) + \frac{1-s}{M}, \,\delta_{\rm sh}\big\}
        \leq \gsh^{(u,\btheta)}(s) \\
        &\hspace{4em}
        \leq \min\big\{ \gsh^{(u,\btheta)}(-1) + M(s+1), \,\gsh^{(u,\btheta)}(1) + M(1-s), f_\btheta(s) - \frac{1}{M}\big\} \,.
    \end{align*}
    Then, for any \(\epsilon \in (0,\frac{1}{4}\min{\{s_5, -s_6\}}),\)  there exists a constant \(C_\epsilon > 0\)
    depending only on \((\gamma, v_2, \theta_{\ast}, \alpha, \delta_{\rm sh}, \epsilon, M )\) such that
    \begin{align*}
\norm{(\Futheta^{-1},\varphi - \vphithetastar)}_{2,\alpha,\Omega(u,\btheta)\setminus(\cl{\hat{\Omega}^5_\epsilon \cup \hat{\Omega}^6_\epsilon})} \leq C_\epsilon\,.
    \end{align*}

\item \label{lem:properties-of-G-alpha-theta-8}
Let
\((u,\btheta), (\tilde{u},\tilde{\btheta}) \in \mathfrak{G}_\alpha^{\theta_{\ast}}\) satisfy \(\norm{(u,\tilde{u})}_{C^{1,\alpha}(\cl{\qiter})} < M\) for some constant \(M >0\).
For any open set \(K \Subset \qiter\) with $\delta={\rm dist}(K,\overline{\qiter}\cap\{|s|=1\}),$
    there exists a constant \(C_\delta > 0\) depending only
    on \((\gamma, v_2, \theta_{\ast}, \alpha, \sigma, \delta,M)\) such that
\begin{equation*}\begin{aligned}
        &\norm{ \Futheta - \mathfrak{F}_{(\tilde{u},\tilde{\btheta})}}_{C^{2,\alpha}(\cl{K})}
        \leq C_\delta \big( \norm{(u - \tilde{u})(\cdot,1)}_{C^{2,\alpha}([-1 + \delta, 1 - \delta])} + \abs{\btheta - \tilde{\btheta}}\big)\,, \\
        &\norm{(\varphi^{(u,\btheta)} \circ \Futheta - \varphi^{(\tilde{u},\tilde{\btheta})}
        \circ \mathfrak{F}_{(\tilde{u},\tilde{\btheta})},\,\psi^{(u,\btheta)} \circ \Futheta - \psi^{(\tilde{u},\tilde{\btheta})}
        \circ \mathfrak{F}_{(\tilde{u},\tilde{\btheta})}}_{C^{2,\alpha}(\cl{K})}\\
       &\,\,\, \leq C_\delta\big( \norm{u - \tilde{u}}_{C^{2,\alpha}(\cl{K})} + \abs{\btheta - \tilde{\btheta}}\big)\,,
\end{aligned}
\end{equation*}
    where \(\psi^{(u,\btheta)}\) is given by \(\psi^{(u,\btheta)} \defeq \varphi^{(u,\btheta)} - \vphithetastar\) for each \((u,\btheta) \in \mathfrak{G}_\alpha^{\theta_{\ast}}\).
\end{enumerate}
\end{lemma}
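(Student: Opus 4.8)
The plan is to follow the scheme of~\cite[Lemmas~12.2.7 and~17.2.13]{ChenFeldman-RM2018}, adapted to the possible degeneracy of the sonic arcs $\Gamma^5_{\rm sonic}$ and $\Gamma^6_{\rm sonic}$ into single points, carrying out items~\eqref{lem:properties-of-G-alpha-theta-2}--\eqref{lem:properties-of-G-alpha-theta-8} in order. First I would establish the regularity of $\gsh^{(u,\btheta)}$ and item~\eqref{lem:properties-of-G-alpha-theta-2}. By Definition~\ref{def:u-to-phi}, the function $\gsh^{(u,\btheta)}(s)$ solves $(\varphi_2 - \vphithetastar)\circ(\gtheta)^{-1}(s,\gsh^{(u,\btheta)}(s)) = u(s,1)$; since $\gtheta \in C^4(\cl{\qtheta})$ with $|\det D\gtheta|$ bounded below by Lemma~\ref{Lem:Propt4gtheta} and $\partial_{t'}\big((\varphi_2 - \vphithetastar)\circ(\gtheta)^{-1}\big)$ is bounded away from $0$ by~\eqref{Eq:LemMonotone-phi2ast}, the implicit function theorem gives $\gsh^{(u,\btheta)} \in C^{1,\alpha}([-1,1])$ with a bound depending only on $\|u(\cdot,1)\|_{C^{1,\alpha}}$ and $(\gamma,v_2)$. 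Differentiating the defining relation at $s = \pm1$ and substituting $(u,Du)(\pm1,\cdot) = (0,\mathbf{0})$ from~\eqref{eq:def-G-alpha-theta} pins down the endpoint values of $\gsh^{(u,\btheta)}$ and of its first derivative, which I would then identify with $\big(f_{j,0}(0), \tfrac{(-1)^j}{d_L}f'_{j,0}(0)\big)$ by comparison with the $(x,y)$--representations of $S_{25}$ and $S_{26}$ from Step~\textbf{1} of the proofs of Propositions~\ref{Prop:Esti4theta1a} and~\ref{Prop:Esti4theta1c}. The inclusions $\Omega(u,\btheta) \cup \Gamma_{\rm shock}(u,\btheta) \subseteq \qtheta \subseteq \mathbb{R}^2_+$ then follow from $\Futheta = (\gtheta)^{-1}\circ G_{2,\gsh^{(u,\btheta)}}^{-1}$, condition~\eqref{eq:approx-admissible-condition-u}, and Definition~\ref{def:Q-theta1-theta2}, while the $C^{1,\alpha}$--regularity of $\Gamma_{\rm shock}(u,\btheta)$ and its tangency to $S_{25}$, $S_{26}$ at $P_2$, $P_3$ follow from the regularity of $\gtheta$ together with the endpoint derivative values just found.

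Next I would prove item~\eqref{lem:properties-of-G-alpha-theta-3} by exploiting that, in a fixed neighbourhood of $\Gamma^{5,\delta_0}_{\rm sonic} \cup \Gamma^{6,\delta_0}_{\rm sonic}$, the cut-off constructions of $h_1$ and $h_2$ force $\gtheta$ to coincide with the polar flattening $\bm\xi \mapsto \big(L_\btheta(u_j + (-1)^{j-1}{\rm dist}(\bm\xi, O_j)),\, y\big)$ in the $(x,y)$--coordinates~\eqref{Eq:DefPolarCoordiSec4}; hence $\Gamma_{\rm shock}(u,\btheta)$ near $\Gamma^j_{\rm sonic}$ is the graph $y = \gsh^{(u,\btheta)}\circ L_\btheta\big(u_j + (-1)^{j-1}(c_j - x)\big)$, and the description of $\hat{\Omega}^j_\epsilon$ and of its boundary pieces is then read off directly from the definitions of $\Gamma^j_{\rm sonic}$ and $\Gamma_{\rm sym}$.

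For the Lipschitz dependence in items~\eqref{lem:properties-of-G-alpha-theta-4} and~\eqref{lem:properties-of-G-alpha-theta-8}, the idea is that every building block ($L_\btheta$, $h_1$, $h_2$, $\gtheta$, $(\gtheta)^{-1}$, $\vphithetastar$, $G_{2,\gsh}$) depends smoothly on $\btheta$ with bounds uniform on $\{\theta_1,\theta_2 \leq \theta_\ast\}$ and linearly, hence smoothly, on $\gsh^{(u,\btheta)}$; so by the chain rule it suffices to control $\gsh^{(u,\btheta)} - \gsh^{(\tilde u,\tilde\btheta)}$. Subtracting the two defining implicit relations and invoking the uniform positive lower bound on $|\partial_{t'}\big((\varphi_2 - \vphithetastar)\circ(\gtheta)^{-1}\big)|$ together with the uniform $C^{1,\alpha}$--bounds (resp.\ the $C^{2,\alpha}$--estimates on compact subsets of $\{|s| < 1\}$ for item~\eqref{lem:properties-of-G-alpha-theta-8}) yields the required estimates in terms of $\|u - \tilde u\|$ and $|\btheta - \tilde\btheta|$.

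Finally, for items~\eqref{lem:properties-of-G-alpha-theta-6} and~\eqref{lem:properties-of-G-alpha-theta-7} the plan is to transport the weighted parabolic bound~\eqref{eq:properties-of-G-alpha-theta-bounded-assumption} on $u$ through the composition $G_{2,\gsh^{(u,\btheta)}}\circ\gtheta$: on compact subsets of $\{|s| < 1\}$ all maps are uniformly $C^{2,\alpha}$, so the interior estimate is immediate, whereas near $|s| = 1$ the map $\gtheta$ is the polar flattening above and $G_{2,\gsh^{(u,\btheta)}}$ merely rescales $t'$ by the $C^{2,\alpha}$--function $\gsh^{(u,\btheta)}(s)$, both of which preserve the parabolic scaling $\delta^{({\rm par})}_\alpha$; subtracting the reference objects $\mathfrak{F}_{(0,\btheta)}$, $\gfive$, $\gsix$ (which correspond to $u \equiv 0$, i.e.\ to $S_{25}$ and $S_{26}$) then produces the weighted bounds, and the vanishing of the first derivatives at $s = \pm1$ is inherited from item~\eqref{lem:properties-of-G-alpha-theta-2}. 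I expect the main obstacle to be precisely this last step: showing that the weighted parabolic and corner norms are preserved with constants that remain uniform as $\theta_{j-4} \to \theta^{\rm s-}$ and $\Gamma^j_{\rm sonic}$ collapses onto $P_0^{j-4}$. Handling this requires combining the uniform geometric bounds on the sonic arcs and reflected shocks from Lemma~\ref{lem:properties-of-state-5-and-6} with the uniform separation estimates in~\eqref{Claim4Esti1c} and~\eqref{Eq:ResultInStep3}, so that the rescaled local domains $Q_r^{(z_0)}$ built in the proofs of Propositions~\ref{Prop:Esti4theta1a}--\ref{Prop:Esti4theta1d} fit uniformly into $\hat{\Omega}^j_\epsilon$; this uniformity is the essential new point compared with~\cite{BCF-2019, ChenFeldman-RM2018}.
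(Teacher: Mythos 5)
Your proposal is correct and follows essentially the same route as the paper, which itself only remarks that the proof is similar to \cite[Lemmas~12.2.7 and~17.2.13]{ChenFeldman-RM2018}: the implicit-function-theorem construction of $\gsh^{(u,\btheta)}$ from the monotonicity bound~\eqref{Eq:LemMonotone-phi2ast}, the coincidence of $\gtheta$ with the polar flattening near the sonic arcs, and the chain-rule transport of the (weighted) norms through $G_{2,\gsh}\circ\gtheta$ are exactly the intended ingredients. One small correction: the uniform non-degeneracy needed in items~\eqref{lem:properties-of-G-alpha-theta-6}--\eqref{lem:properties-of-G-alpha-theta-7} should come from the stated hypotheses on $(u,\btheta)$ (the bound~\eqref{eq:properties-of-G-alpha-theta-bounded-assumption} and the two-sided bound on $\gsh^{(u,\btheta)}$), not from~\eqref{Claim4Esti1c} and~\eqref{Eq:ResultInStep3}, which are established only for admissible solutions rather than for general elements of $\mathfrak{G}^{\theta_\ast}_{\alpha}$.
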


\smallskip
\subsection{Definition and properties of the iteration set}\label{Sec4:Def-IterSet}

For any \(\theta_\ast\in(0,\theta^{\rm d})\), we now define the iteration set
\(\mathcal{K} \subseteq C^{1,\alpha}(\cl{\qiter}) \times [0,\theta_\ast]^2\)
for some \(\alpha \in (0,1)\), as a set of functions satisfying the properties similar to the admissible solutions.
In the definition of \(\mathcal{K}\), we construct a nonlinear boundary value problem in a fixed domain
and obtain the well-posedness and \textit{a priori} estimates for the solutions.

\subsubsection{Definition of the iteration set} \label{Sec4-3:DefIterationSet}
Before giving the definition of the iteration set,
we collect some useful estimates for admissible solutions.

\begin{lemma}\label{Lem:UsefulEst4AdmisSolu}
Fix $\theta_{\ast}\in(0,\theta^{{\rm d}})$.
For any admissible solution $\varphi$ corresponding
to $\btheta\in\overline{\Theta}\cap\{\theta_1,\theta_2\leq\theta_\ast\}${\rm ,}
denote $\psi \defeq (\varphi - \max\{\varphi_5,\varphi_6\})\circ \rcal^{-1}$.
Then there exist constants $C_1, \,\bar{\epsilon} > 0$ depending only on $(\gamma,v_2)$,
and $C^{\ast}_2>0$ depending on $(\gamma,v_2,\theta_{\ast})$ such that,
for any \(\epsilon \in (0,\bar{\epsilon}],\)
\begin{equation*}\begin{aligned}
	& \norm{\varphi-\varphi_5}_{C^{0,1}(\overline{\Omega})}+
    \norm{\varphi-\varphi_6}_{C^{0,1}(\overline{\Omega})}
    \leq C_1\,,\\
	& |D_{(x,y)} \psi (x,y)|\leq C^\ast_2 x
    \qquad \text{for}\; (x,y) \in \rcal \big( \overline{\Omega}\cap(\dfive_\epsilon \cup \dsix_\epsilon) \big)\,.
\end{aligned}\end{equation*}
\end{lemma}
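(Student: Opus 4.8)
The plan is to split the two bounds and handle each with the tools already assembled in Section~\ref{Sec:UniformEstiAdmiSolu}. For the first bound, note that the pseudo-potentials $\varphi_5$ and $\varphi_6$ are quadratic polynomials of the form~\eqref{eq:def-weak-state-5-6}, whose coefficients $(u_j, v_2 a_{2j})$ depend continuously on $\theta_{j-4}\in[0,\theta^{\rm d}]$ by Proposition~\ref{Prop:localtheorystate5}\eqref{Prop:localtheorystate5:continuity-properties} and Definition~\ref{def:state5andstate6}, and hence are uniformly bounded (together with their gradients) on the compact set $\cl{\Omega}\subseteq B_{C_{\rm ub}}(\bm{0})$ given by Lemma~\ref{UniformBound-Lem}\eqref{UB-01}. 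Combining this with the uniform Lipschitz bound $\norm{\varphi}_{C^{0,1}(\cl{\Omega})}\leq C_{\rm ub}$ from~\eqref{UB-02} gives $\norm{\varphi-\varphi_j}_{C^{0,1}(\cl{\Omega})}\leq C_1$ for $j=5,6$ with $C_1$ depending only on $(\gamma,v_2)$ (the dependence on $\theta_\ast$ can in fact be dropped here since the constants in Lemma~\ref{UniformBound-Lem} are uniform over $\Theta$). This step is routine.

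First I would set up the second bound near $\Gamma^5_{\rm sonic}$; the estimate near $\Gamma^6_{\rm sonic}$ then follows by the symmetry of the problem. On $\dfive_\epsilon$ (for $\epsilon$ small) we have $\max\{\varphi_5,\varphi_6\}=\varphi_5$ by~\eqref{eq:def-of-k-bar}, so $\psi=(\varphi-\varphi_5)\circ\rcal^{-1}$ in the $(x,y)$--coordinates of~\eqref{PolarCoordiNearO5}, which is exactly the function $\psi$ appearing in~\S\ref{SubSec-EstimatesNearSonic5}. The desired gradient bound $|D_{(x,y)}\psi|\leq C_2^\ast x$ is then read off from the estimates established in the four cases: in Cases~1--2 (i.e.\ $\theta_1<\theta^{\rm s}$) it is precisely the content of~\eqref{Eq:PrepLemASP05} (and~\eqref{Eq:PrepLemASP05-Case3} in the intermediate regime), which bounds $0\leq\psi_x\leq Lx$ and $|\psi_y|\leq Lx$ with $L$ depending only on $(\gamma,v_2)$; in Cases~3--4 (i.e.\ $\theta_1\geq\theta^{\rm s}$), where $\Gamma^5_{\rm sonic}$ has degenerated to the point $P_0^1$ and the coordinate $x_{P_2}$ may be positive, one uses $|D^m_{\bm\xi}(\varphi-\varphi_5)(P_0^1)|=0$ for $m=0,1$ from Propositions~\ref{Prop:Esti4theta1c}--\ref{Prop:Esti4theta1d} together with the weighted $C^{1,\alpha}$ (or $C^{2,\alpha}$) bounds there, interpolating to get a linear-in-distance bound on the gradient. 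The only point requiring care is the change of variables between $\bm\xi$ and $(x,y)$: since $\rcal$ is a smooth diffeomorphism with uniformly bounded derivatives on $\cl{\Omega^5}$ (its Jacobian is $c_5-x$, bounded above and below by Lemma~\ref{lem:properties-of-state-5-and-6}), the Cartesian bound $|D_{\bm\xi}(\varphi-\varphi_5)|\leq C|\bm\xi-P_0^1|$ and the $(x,y)$--bound $|D_{(x,y)}\psi|\leq Cx$ are equivalent up to a harmless constant, using that $x$ is comparable to ${\rm dist}(\bm\xi,\Gamma^5_{\rm sonic})$ (respectively to $|\bm\xi-P_0^1|$ when the sonic arc is a point). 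Finally, on the overlap region one checks that the constant $\bar\epsilon$ can be taken uniform by combining the $\varepsilon$'s from Propositions~\ref{Prop:Esti4theta1a}--\ref{Prop:Esti4theta1d}; the resulting $C_2^\ast$ depends on $\theta_\ast$ only through the $\sigma_{\rm d}$-type buffer near $\theta^{\rm d}$ in Proposition~\ref{Prop:Esti4theta1d}.

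The main obstacle I expect is bookkeeping rather than analysis: one must verify that the various small constants $\varepsilon_0,\bar\varepsilon,\varepsilon_5,r_*,\dots$ and the Lipschitz-type constants $L$ produced separately in Cases~1--4 of~\S\ref{SubSec-EstimatesNearSonic5} patch together to give a single pair $(\bar\epsilon, C_2^\ast)$ valid on all of $\rcal(\cl\Omega\cap(\dfive_\epsilon\cup\dsix_\epsilon))$ for every admissible solution with $\btheta\in\cl\Theta\cap\{\theta_1,\theta_2\leq\theta_\ast\}$ — in particular one must confirm that $\dfive_\epsilon$ is nonempty and contained in the neighbourhoods where those estimates apply (cf.~\eqref{Def4Omega5epsilon-New}), and that the $\theta_1\geq\theta^{\rm s}$ cases genuinely yield a linear, not merely Hölder, decay of $|D\psi|$, which is why the vanishing of $(\varphi-\varphi_5)$ to second order at $P_0^1$ in Proposition~\ref{Prop:Esti4theta1c} (and the $C^{1,\alpha}$ weighted bound with weight $(-1-\alpha)$ in Proposition~\ref{Prop:Esti4theta1d}) is essential. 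Once this patching is carried out, the lemma follows immediately, so I would phrase the proof as: (i) the $C^{0,1}$ bounds from Lemma~\ref{UniformBound-Lem}; (ii) reduction to $\Gamma^5_{\rm sonic}$ by symmetry; (iii) invocation of the case-by-case estimates with the change of variables; (iv) uniform choice of $\bar\epsilon$.
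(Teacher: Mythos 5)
Your treatment of the first bound and of the regimes $\theta_1<\theta^{\rm s}$ and $\theta_1$ near $\theta^{\rm s}$ matches the paper: the $C^{0,1}$ estimate is exactly Lemma~\ref{UniformBound-Lem} plus the continuity of the state-$(5)$, $(6)$ parameters, and for $\btheta$ with $\max$-component at most $\theta^{\rm s}+\sigma$ the linear gradient bound is read off from \eqref{Eq:PrepLemASP05} and \eqref{Eq:PrepLemASP05-Case3}, with $\bar\epsilon$ the minimum of the $\varepsilon$'s in Propositions~\ref{Prop:Esti4theta1a}--\ref{Prop:Esti4theta1c}. However, there is a genuine gap in your handling of Case~4 ($\theta_1\geq\theta^{\rm s}+\tfrac{\sigma}{2}$). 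There, Proposition~\ref{Prop:Esti4theta1d} supplies only the weighted bound $\|\varphi\|^{(-1-\alpha),\{P_0^1\}}_{2,\alpha,\Omega\cap B_r(P_0^1)}\leq C$ together with first-order vanishing $D^m(\varphi-\varphi_5)(P_0^1)=0$, $m=0,1$. Unwinding the weight, $|D^2(\varphi-\varphi_5)(\bm\xi)|\lesssim |\bm\xi-P_0^1|^{-(1-\alpha)}$, so integrating from $P_0^1$ gives only $|D(\varphi-\varphi_5)(\bm\xi)|\lesssim|\bm\xi-P_0^1|^{\alpha}$ with $\alpha<1$; no interpolation with the available data can upgrade this to a linear decay, since second-order vanishing at $P_0^1$ is simply not asserted (and not true in general) in that regime. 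So the step "interpolating to get a linear-in-distance bound on the gradient" fails as stated.

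The correct resolution, which is what the paper does, is that in Case~4 one never needs decay at $x=0$: by \eqref{Eq:D5n6epsilonSetBdd} the set $\rcal(\overline{\Omega}\cap\dfive_\epsilon)$ is contained in $\{x>x_{P_2}\}=\{x>x_{P_0^1}\}$, and $x_{P_0^1}$ is strictly increasing in $\theta_1$ (this is the computation \eqref{Eq:xP01MonotoneWrtBeta}, which uses $\ell(\rho_5)\cos\theta_{25}=v_2$ together with Lemmas~\ref{lem:monotonicityOfTheta5} and~\ref{lem:MonotonicityOfMach2}; alternatively continuity plus positivity on the compact set $[\theta^{\rm s}+\tfrac{\sigma}{2},\theta_\ast]$ would do). Hence $x\geq x_{P_0^1}|_{\theta_1=\theta^{\rm s}+\sigma/2}>0$ on the whole region, and the H\"older bound converts into the linear one via
$x^{\bar\alpha}=x^{\bar\alpha-1}x\leq \big(x_{P_0^1}|_{\theta_1=\theta^{\rm s}+\sigma/2}\big)^{\bar\alpha-1}x$.
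This is also precisely where $C_2^\ast$ acquires its dependence on $\theta_\ast$, rather than (as you suggest) only through the $\sigma_{\rm d}$-buffer in Proposition~\ref{Prop:Esti4theta1d}. With this ingredient inserted, the rest of your patching argument goes through.
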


\begin{proof}
The first result follows directly from \eqref{eq:def-weak-state-5-6},
{\rm Proposition~ \ref{Prop:localtheorystate5}},  and {\rm Lemma~\ref{UniformBound-Lem}}.

Without loss of generality, we assume that $\theta_*\in(\theta^{\rm s},\theta^{\rm d})$.
Then, for sufficiently small $\sigma>0$ and any $\btheta \in [0,\theta^{\rm s}+\sigma] \times [0,\theta_\ast]$,
\begin{equation*}
   \abs{D_{(x,y)} \psi (x,y)} \leq C_2 x
    \qquad\, \text{for $(x,y) \in \rcal \big(\overline{\Omega}\cap\dfive_\epsilon\big)$}\,,
\end{equation*}
whenever \(\epsilon \in (0,\bar{\epsilon}]\), where \(\bar{\epsilon} > 0\)
is the minimum of the constants, \(\varepsilon > 0\),
in {\rm Propositions~\ref{Prop:Esti4theta1a}--\ref{Prop:Esti4theta1c}}.

From the relations: $\ell(\rho_5) \cos\theta_{25}=v_2$ and \(M_2^{(\theta_1)} = \abs{(-\xi^{P_0^1},v_2)}\),
and {\rm Lemmas~\ref{lem:monotonicityOfTheta5}\eqref{item1-lem:monotonicityOfTheta5}}
and~{\rm\ref{lem:MonotonicityOfMach2}},
\begin{equation}\label{Eq:xP01MonotoneWrtBeta}
\frac{{\rm d}x_{P_0^1}}{{\rm d}\theta_1}
= \frac{{\rm d}}{{\rm d}\theta_1} \big(c_5 - (\xi^{P_0^1} - u_5) \big)
= \frac{{\rm d}}{{\rm d}\theta_1} \big(c_5 -\xi^{P_0^1} + v_2\tan\theta_{25}\big) > 0\,.
\end{equation}
Then there exists $C'_2>0$ depending only on $(\gamma,v_2,\theta_{\ast})$ such that,
for any $\btheta \in [\theta^{\rm s}+\frac{\sigma}{2},\theta_\ast] \times [0,\theta_\ast]$,
\begin{equation*}
    |D_{(x,y)}\psi (x,y)|\leq C'_2x^{\bar{\alpha}}
    \leq C'_2\big(x_{P_0^1}|_{\theta_1=\theta^{\rm s}+\frac{\sigma}{2}}\big)^{\bar{\alpha}-1}x
    \qquad\, \text{for $(x,y) \in \rcal\big( \overline{\Omega}\cap\dfive_\epsilon\big)$}\,,
\end{equation*}
after possibly reducing \(\bar{\epsilon}\) further so that \(\bar{\epsilon} < r\), for \(r > 0\)
from Proposition~\ref{Prop:Esti4theta1d} and  $\bar{\alpha}\in(0,\frac{1}{3}]$
from {\rm Proposition~\ref{prop:apriori-estimates-on-u-prop4.12}}.
From the symmetry of the four-shock interaction problem, we can repeat the above discussion for $\psi(x,y)$ in
$\rcal(\overline{\Omega}\cap\dsix_\epsilon)$.
\end{proof}

Define \(u^{({\rm norm})} \in C^3(\cl{\qiter})\) by~\eqref{eq:phi-to-u} with \(\btheta =\bm{0}\)
and \(\varphi = \varphi_0\) from {\rm \S\ref{SubSec-202NormalShock}}.
    Note that \(\vphithetastar \equiv \varphi_0\) in \(\qtheta\) by {\rm Definition~\ref{def:varphi-theta-star}}
     because \(\varphi_5 = \varphi_6 = \varphi_0\) when \(\btheta = \bm{0}\), which yields that
    \begin{align*}
        u^{({\rm norm})} \equiv 0 \qquad \text{in $\qiter$}\,.
    \end{align*}

Let \(\bar{\alpha} \in (0,\frac13]\) be the constant from Proposition~\ref{prop:apriori-estimates-on-u-prop4.12}, \(\epsilon_0 > 0\) from Lemma~\ref{lem:properties-of-G-alpha-theta}\eqref{lem:properties-of-G-alpha-theta-3}, and
\(\bar{\epsilon} >0\) from  Lemma~\ref{Lem:UsefulEst4AdmisSolu}.
For constants $\alpha \in (0,\frac{\bar{\alpha}}{2}]$, $\delta_1$, $\delta_2$, $\delta_3$,
$\epsilon\in (0,\tfrac{1}{2}\min\{\epsilon_0,\bar{\epsilon}\})$, and \(N_1 > 1\) to be specified later,
we now define the iteration set
\(\mathcal{K}\subseteq C^{2,\alpha}_{(\ast)}(\qiter) \times [0,\theta_\ast ]^2\),
where space \(C^{2,\alpha}_{(\ast)}(\qiter)\) is given by Definition~\ref{def:weighted-holder-norm-qiter}.
For simplicity, in the following,
we use $\big(\gsh, \varphi, \Omega, \Gamma_{\rm shock} \big)$
to denote $\big(\gsh^{(u,\btheta)}, \varphi^{(u,\btheta)}, \Omega(u,\btheta),  \Gamma_{\rm shock}(u,\btheta)\big)$
that are given by {\rm Definition~\ref{def:u-to-phi}}.

\begin{definition}[Iteration set] \label{def:iteration-set}
Fix \(\theta_\ast \in (0,\theta^{\rm d})\).
The iteration set \(\mathcal{K} \subseteq C^{2,\alpha}_{(\ast)}(\qiter) \times [0,\theta_\ast ]^2\)
is the set of all \((u,\btheta)\) satisfying the following properties{\rm :}
\begin{enumerate}[{\rm (i)}]
    \item \label{def:iteration-set-1}
    \(\norm{u - u^{({\rm norm})}}^{(\ast)}_{2, \alpha,\qiter} < \mathscr{K}_1(\max\{\theta_1,\theta_2\})\)
    for \(\mathscr{K}_1 \in C^{0,1}(\mathbb{R})\) given by
    \begin{align*}
        \mathscr{K}_1(\theta) = \begin{cases}
        \delta_1  &\quad \text{if} \;\; \theta \leq \frac{\delta_1}{N_1}\,, \\
        N_0 &\quad \text{if} \;\; \theta \geq \frac{2\delta_1}{N_1}\,,\\
        \text{linear}
        &\quad \text{otherwise}\,,
        \end{cases}
    \end{align*}
    with \(N_0 \defeq \max\{10M,1\}\) for
    \(M=M(\gamma,v_2,\theta_*)>0\) from {\rm Proposition~\ref{prop:apriori-estimates-on-u-prop4.12}}.

    \smallskip
    \item \label{def:iteration-set-2}
    \((u,\btheta) \in \mathfrak{G}_\alpha^{\theta_\ast}\){\rm ,} where
    \(\mathfrak{G}_\alpha^{\theta_\ast}\) is defined by~\eqref{eq:def-G-alpha-theta}.

    \smallskip
    \item  \label{def:iteration-set-3}
    \(\Gamma_{\rm shock}\) satisfies ${\rm dist}(\Gamma_{\rm shock}, B_1(O_2)) > N_2^{-1},$ and
    \(\gsh\) satisfies \(\gsh(\pm 1) \geq 0\) and
    \begin{equation*}\begin{aligned}
        &\min_{i=1,2}\big\{ \gsh((-1)^{i-1})  + \frac{1+(-1)^{i}s}{N_3}, \,\frac{1}{N_3}\big\}
        \leq \gsh(s) \\
        &\hspace{4em}
        \leq \min_{i=1,2} \big\{ \gsh((-1)^{i-1}) + N_3 (1+(-1)^{i}s), \,f_\btheta(s)- \frac{1}{N_3}\big\}
        \qquad\mbox{for all \(s \in (-1,1)\)},
    \end{aligned}\end{equation*}
with \(N_2 \defeq 2C_{\rm sh}\) for \(C_{\rm sh}=C_{\rm sh}(\gamma,v_2)>0\) from
{\rm Proposition~\ref{prop:lowerBoundBetweenShockAndSonicCircle},} \(N_3 \defeq 2 \hat{k}\)
for \(\hat{k}=\hat{k}(\gamma, v_2,\theta_\ast)>0\) from {\rm Proposition~\ref{prop:properties-of-gshock}\eqref{prop:properties-of-gshock:5},}
and \(f_\btheta\) given by~\eqref{eq:def-f-theta}.

\smallskip
\item \label{def:iteration-set-4}
Let coordinates \((x,y) = \rcal(\bm{\xi})\) be given by~\eqref{Eq:DefPolarCoordiSec4} and,
for \(r > 0,\) let \(\dfive_r\) and \(\dsix_r\) be given by~\eqref{eq:domain-D5n6epsilon}.
Then \(\varphi\) and \(\psi \defeq (\varphi - \max{\{\varphi_5,\varphi_6\}})\circ\rcal^{-1}\) satisfy
    \begin{equation}\label{eq:iteration-set-4-1n2}
    \begin{aligned}
    & \varphi - \max{\{\varphi_5,\varphi_6\}} > \mathscr{K}_2(\max\{\theta_1,\theta_2\})
    \qquad &&\text{in $\cl{\Omega} \setminus (\dfive_{\epsilon/10} \cup \dsix_{\epsilon/10})$}\,, \\
    & \partial_{\bm{e}_{S_{2j}}} (\varphi_2 - \varphi) <  -\mathscr{K}_2(\max\{\theta_1,\theta_2\})
       \quad && \text{in $\cl{\Omega} \setminus \mathcal{D}^{j}_{\epsilon/10}$ for $j=5,6$}\,,
\end{aligned}
\end{equation}
and
\begin{equation}\label{eq:iteration-set-4-3to7}
\begin{aligned}
    & \abs{ \partial_x \psi (x,y) } < \mathscr{K}_3(\theta_{j-4}) x \quad &&
    \text{in $\rcal \big( \cl{\Omega} \cap (\mathcal{D}^{j}_{\epsilon_0} \setminus \mathcal{D}^{j}_{\epsilon/10}) \big)$ for $j=5,6$}\,,\\
    & \abs{\partial_y \psi (x,y) } < N_4 x \quad
     &&\text{in $\rcal \big( \cl{\Omega} \cap \big( (\dfive_{\epsilon_0} \setminus \dfive_{\epsilon/10} )
        \cup (\dsix_{\epsilon_0} \setminus \dsix_{\epsilon/10} ) \big) \big)$}\,,\\
    &\abs{ D_{(x,y)} \psi} < N_4 \epsilon \quad
       &&\text{in $\rcal \big( \cl{\Omega} \cap (\cl{\dfive_\epsilon} \cup \cl{\dsix_\epsilon}) \big)$}\,,\\
    & \min\left\{ \partial_{\bm{\nu}} (\varphi_2 - \varphi), \, \partial_{\bm{\nu}} \varphi \right\} > \mu_1  \quad
      && \text{on $\cl{\Gamma_{\rm shock}}$}\,,\\
   & \norm{\varphi - \varphi_j}_{C^{0,1}(\cl{\Omega})} < N_5 && \text{for $j = 5,6$}\,,
\end{aligned}
\end{equation}
for $\bm{e}_{S_{25}}$ and $\bm{e}_{S_{26}}$ given by~\eqref{eq:tangent-vectors-e25-e26}{\rm,}
and the unit normal vector \(\bm{\nu}\) on \(\Gamma_{\rm shock}\) towards the interior of \(\Omega\).
In the above conditions, functions \(\mathscr{K}_2,\mathscr{K}_3 \in C^{0,1}(\mathbb{R})\) are defined by
\begin{align*}
        \mathscr{K}_2(\theta) \defeq \delta_2 \min\big\{ \theta - \frac{\delta_1}{N_1^2},\,\frac{\delta_1}{N_1^2}\big\}\,,
        \qquad
        \mathscr{K}_3( \theta) \defeq \begin{cases}
        \frac{2- \mu_0}{1+\gamma} \quad &\quad \text{if} \;\; 0 \leq \theta \leq \theta^{\rm s} + \frac{ \sigma_2}{2}\,,\\
        N_4 \quad &\quad \text{if} \;\;  \theta \geq \theta^{\rm s} + \sigma_2\,,\\
        \text{linear } \quad &\quad \text{otherwise}\,,
        \end{cases}
    \end{align*}
    for constants \(\mu_1, \, \sigma_2, \, \mu_0, \, \epsilon_0, \, N_4,\) and \(N_5\) chosen as follows{\rm :}

    \smallskip
    \begin{itemize}
    \item Let \(\delta^{\prime}=\delta^{\prime}(\gamma,v_2)>0\) be from {\rm Lemma~\ref{Lem:LocalEstimates4Interior}}.
     Choose \(\mu_1 \defeq \frac{ \delta^{\prime}}{2}\).
    \item Let \(\sigma_2=\sigma_2(\gamma,v_2)>0\) and \(\delta=\delta(\gamma,v_2) > 0\) be from {\rm Step~\textbf{1}}
    of the proof
    of {\rm Proposition~\ref{Prop:Esti4theta1c}}. Choose \(\mu_0 \defeq \tfrac{\delta}{2}\).

    \item Let \(\epsilon_0=\epsilon_0(\gamma,v_2) > 0\) be from {\rm Lemma~\ref{lem:properties-of-G-alpha-theta}\eqref{lem:properties-of-G-alpha-theta-3}}.

    \item Let $C_1=C_1(\gamma,v_2)>0$ and $C_2^\ast = C_2^\ast (\gamma,v_2,\theta_*)>0$ be from {\rm Lemma~\ref{Lem:UsefulEst4AdmisSolu}}.
    Choose $N_4 \defeq 10C^\ast_2$ and $N_5\defeq 10C_1$.
    \end{itemize}

\smallskip
    \item \label{def:iteration-set-6}
    The density function $\rho(\abs{D\varphi},\varphi)$ defined by \eqref{Eq:DefRhopzSec302}
    satisfies
    \begin{align*}
        \frac{\rho^\ast(\gamma)}{2} < \rho(\abs{D\varphi},\varphi) < 2C_{\rm ub} \qquad
        \text{in $\cl{\Omega} \setminus (\mathcal{D}^5_{\epsilon/10} \cup \mathcal{D}^6_{\epsilon/10})$}
    \end{align*}
    for constants \(\rho^\ast(\gamma)>0\) and \(C_{\rm ub}=C_{\rm ub}(\gamma,v_2) > 0\) given by~\eqref{UB-03} in {\rm Lemma~\ref{UniformBound-Lem}}.

\item  \label{def:iteration-set-5}
    Let the sound speed \(c(\abs{D\varphi},\varphi)\) as defined in \eqref{Def4SonicSpeedInSec3}.
Function \(\varphi\) satisfies
    \begin{align*}
        \frac{\abs{D\varphi(\bm{\xi})}^2}{c^2(\abs{D\varphi(\bm{\xi})},\varphi(\bm{\xi}))}
        <
        1 - \tilde{\mu} \, {\rm dist}^\flat (\bm{\xi}, \Gamma^5_{\rm sonic} \cup \Gamma^6_{\rm sonic} )
        \qquad
\text{for $\bm{\xi} \in \cl{\Omega} \setminus \big(\mathcal{D}^5_{\epsilon/10} \cup \mathcal{D}^6_{\epsilon/10}\big)$}\,,
\end{align*}
where \(\tilde{\mu} \defeq \frac{\mu_{\rm el}}{2}\) for \(\mu_{\rm el} =\frac{\mu}{C_{\flat}}> 0\) with $C_{\flat}=C_{\flat}(\gamma,v_2)>0$
from~\eqref{Eq:FuncgEquivDistb}{\rm,} $\mu=\mu(\gamma,v_2)>0$ is from {\rm Proposition~\ref{prop:ellipticDegeneracyNearSonicBoundary},}
and  ${\rm dist}^{\flat}(\cdot, \cdot)$ is defined in \eqref{eq:dist-flat}.

\smallskip
 \item \label{def:iteration-set-7}
    The iteration boundary value problem
    \begin{align} \label{eq:iteration-BVP}
        \begin{cases}
        \, \mathcal{N}_{(u,\btheta)} (\hat{\phi}) = A_{11} \hat{\phi}_{\xi\xi} + 2 A_{12} \hat{\phi}_{\xi\eta}
        + A_{22} \hat{\phi}_{\eta\eta} = 0 \quad &\quad \text{in $\Omega$}\,, \\
        \, \mathcal{M}_{(u,\btheta)}(D\hat{\phi},\hat{\phi}, \bm{\xi}) = 0 \quad
          &\quad \text{on $\Gamma_{\rm shock}$}\,, \\
        \, \hat{\phi} = \max\{ \varphi_5, \varphi_6\} + \frac12\abs{\bm{\xi}}^2 \quad
          &\quad \text{on $\Gamma^5_{\rm sonic} \cup \Gamma^6_{\rm sonic}$}\,,\\
        \, \partial_\eta \hat{\phi} = 0 \quad &\quad \text{on $\Gamma_{\rm wedge}$}\,,
        \end{cases}
    \end{align}
    has a unique solution \(\hat{\phi} \in C^2(\Omega) \cap C^1(\cl{\Omega}),\)
    where the nonlinear operators \(\mathcal{N}_{(u,\btheta)}\) and \(\mathcal{M}_{(u,\btheta)}\) are determined later.
    Moreover, solution $\hat{\phi}${\rm ,} under the transformation{\rm :}
    \begin{align} \label{eq:def-u-hat}
        \hat{u} \defeq (\hat{\phi} - \frac12\abs{\bm{\xi}}^2 - \vphithetastar) \circ \Futheta \qquad \text{in} \; \qiter\,,
    \end{align}
    satisfies the estimate{\rm :}
    \begin{align}
    \label{eq:iteration-u-uhat-estimate}
        \norm{\hat{u} - u}^{(\ast)}_{2,\alpha,\qiter} < \delta_3\,.
    \end{align}
    \end{enumerate}
\end{definition}
\begin{definition}[Extended iteration set]
Define the extended iteration set \(\mathcal{K}^{\rm ext}\) as
\begin{align*}
\mathcal{K}^{\rm ext}
\defeq \big\{ (u,\btheta) \in  C^{2,\alpha}_{(\ast)}(\qiter) \times [0,\theta_\ast]^2 \,:\, (u,\btheta)
\text{ \rm satisfies {\rm Definition~\ref{def:iteration-set}\eqref{def:iteration-set-1}--\eqref{def:iteration-set-6}}} \big\} \,.
\end{align*}
Write \(\cl{\mathcal{K}}\) and \(\cl{\mathcal{K}^{\rm ext}}\) for the closures
of \(\mathcal{K}\) and \(\mathcal{K}^{\rm ext}\)
in  \(C^{2,\alpha}_{(\ast)}(\qiter) \times [0,\theta_\ast]^2\) respectively.
\end{definition}

To complete Definition~\ref{def:iteration-set}, it remains to construct a suitable iteration boundary value
problem~\eqref{eq:iteration-BVP} through the definition of the nonlinear differential
operators \(\mathcal{N}_{(u,\btheta)}\) and \(\mathcal{M}_{(u,\btheta)}\) for each \((u,\btheta) \in \cl{\mathcal{K}^{\rm ext}}\).
The construction of these operators is similar to~\cite[\S4.4]{BCF-2019}.

In the following, we write
\(\Gamma_{\rm sonic} \defeq \Gamma_{\rm sonic}^5 \cup \Gamma_{\rm sonic}^6\)
and \(\mathcal{D}_r \defeq \mathcal{D}_r^5 \cup \mathcal{D}_r^6\) for any \(r>0\)
with \(\mathcal{D}^5_r\) and \(\mathcal{D}^6_r\) given by~\eqref{eq:domain-D5n6epsilon}, respectively.
Moreover, for any \((u,\btheta) \in \cl{\mathcal{K}^{\rm ext}}\), we define
\begin{align*}
    \phi \defeq \varphi + \frac12\abs{\bm{\xi}}^2\,,
    \qquad
    \psi \defeq (\varphi - \vphithetastar) \circ \rcal^{-1}
    \equiv
    (\phi - \phi_\btheta^\ast) \circ \rcal^{-1} \,,
\end{align*}
where \(\varphi = \varphi^{(u,\btheta)}\) is given by Definition~\ref{def:u-to-phi}, \(\vphithetastar\) is given by Definition~\ref{def:varphi-theta-star}, \(\phi_\btheta^\ast \defeq \vphithetastar + \frac12\abs{\bm{\xi}}^2\),
and coordinates \((x,y) = \rcal(\bm{\xi})\) are given by~\eqref{Eq:DefPolarCoordiSec4}.

\medskip
\noindent \textbf{Construction of \(\mathcal{N}_{(u,\btheta)}\) in~\eqref{eq:iteration-BVP}.}
For \((u,\btheta) \in \cl{\mathcal{K}^{\rm ext}}\),
the nonlinear operator \(\mathcal{N}_{(u,\btheta)}\) in~\eqref{eq:iteration-BVP} is defined by
\begin{equation} \label{eq:nonlinear-operator-N}
    \mathcal{N}_{(u,\btheta)}(\hat{\phi}) \defeq \sum_{i,j=1}^{2} A_{ij}(D\hat{\phi},\bm{\xi}) \partial_{i} \partial_{j} \hat{\phi}
    \qquad\, \mbox{for any \(\hat{\phi}\in C^2(\Omega)\)}\,,
\end{equation}
with \((\partial_1,\partial_2) \defeq (\partial_\xi, \partial_\eta)\)
and coefficients \(A_{ij}(\bm{p},\bm{\xi})\), \(i,j = 1,2\), determined below such that
equation \(\mathcal{N}_{(u,\btheta)}(\hat{\phi}) = 0\)
is strictly elliptic in \(\cl{\Omega} \setminus \Gamma_{\rm sonic} \) and,
if \(\phi\) solves~\eqref{eq:iteration-BVP}, then
\(\mathcal{N}_{(u,\btheta)}(\phi)=0\) coincides with~\eqref{Eq4phi}.
Coefficients \(A_{ij}(\bm{p},\bm{\xi})\) are constructed
similarly to~\cite[\S4.4.1]{BCF-2019}, the main difference being that both sonic boundaries
may now degenerate into single points.
We briefly describe the construction in the following four steps
and highlight where the differences arise.

\smallskip
\textbf{1. Away from \(\Gamma_{\rm sonic}\).}
Let \(\epsilon_0 > 0 \) be the constant from Lemma~\ref{lem:properties-of-G-alpha-theta}\eqref{lem:properties-of-G-alpha-theta-3},
and let \(\epsilon_{\rm eq} \in (0,\frac{ \epsilon_0}{2})\) be chosen later.
Away from \(\Gamma_{\rm sonic}\), coefficients \(A_{ij}(\bm{p},\bm{\xi})\) are independent of \(\bm{p}\).
Indeed, for \(i, j = 1, 2\), and \(\bm{\xi}\in \Omega \setminus \mathcal{D}_{\epsilon_{\rm eq}/10}\), we define \(A^{(1)}_{ij}(\bm{\xi})\) by
\begin{align} \label{eq:coefficient-A^(1)_ij}
    A^{(1)}_{ij} (\bm{\xi}) \defeq A^{\rm pot}_{ij} (D\phi(\bm{\xi}),\phi(\bm{\xi}),\bm{\xi})\,,
\end{align}
where
$A^{\rm pot}_{ij} (\bm{p} , z , \bm{\xi}) \defeq \delta_{ij} c^2 - (p_i - \bm{\xi}\cdot \bm{e}_i) (p_j - \bm{\xi}\cdot \bm{e}_j)$
for \(c = c( \abs{\bm{p} - \bm{\xi}}, z - \frac12 \abs{\bm{\xi}}^2 )\)
given in Definition~\ref{def:iteration-set}\eqref{def:iteration-set-5}, where $\bm{e}_1 \defeq (1,0)$ and $\bm{e}_2 \defeq (0,1)$.

\smallskip
\textbf{2. Supersonic near \(\Gamma_{\rm sonic}\).}
For constant \(\mu_0 > 0\) from Definition~\ref{def:iteration-set}(\ref{def:iteration-set-4}),
fix an odd function of a single variable \(\zeta_1 \in C^3(\mathbb{R})\) such that,
for all \(s \in \mathbb{R}\),
\begin{align*}
 \zeta_1(s) = \begin{cases}
        s &\quad \text{if} \; \abs{s} \leq \frac{2 - \frac{\mu_0}{5}}{1 + \gamma}\,,\\
        \frac{(2 - \frac{\mu_0}{10}) \sgn (s)}{1 + \gamma} &\quad \text{if} \; \abs{s} > \frac{2}{1+ \gamma}\,,
    \end{cases}
    \quad
    0 \leq \zeta_1'(s) \leq 10\,,
    \quad
    -\frac{20(1+\gamma)}{\mu_0} \leq \zeta_1''(\abs{s}) \leq 0\,.
\end{align*}
For any \(\hat{\phi} \in C^2(\Omega)\), set
\begin{align*}
    \hat{\psi} \defeq \big( \hat{\phi} - ( \vphithetastar + \frac12 \abs{\bm{\xi}}^2 ) \big) \circ \rcal^{-1}
    \equiv \big( \hat{\phi} - \phi_\btheta^\ast \big) \circ \rcal^{-1}
    \qquad \text{in $\rcal ( \Omega \cap \mathcal{D}_{2\epsilon_{\rm eq}}) $}\,,
\end{align*}
and define the nonlinear operator \(\mathcal{N}^{\rm polar}_{(u,\btheta)} (\hat{\phi})\)
in \(\Omega \cap \mathcal{D}_{2 \epsilon_{\rm eq}}\) by
\begin{align*}
\begin{split}
\mathcal{N}^{\rm polar}_{(u,\btheta)} (\hat{\phi})
\defeq
\Big(
\big(2x - (\gamma+1)x \, \zeta_1(\frac{\hat{\psi}_x}{x}) +\co^{\rm mod}_1\big) \hat{\psi}_{xx}
    &+
    \co^{\rm mod}_2 \hat{\psi}_{xy}
    + \big(\frac{1}{c_\btheta}  + \co^{\rm mod}_3\big) \hat{\psi}_{yy} \\
    &\quad
    -{\,} \big(1 + \co^{\rm mod}_4\big) \hat{\psi}_x
    +
    \co^{\rm mod}_5 \hat{\psi}_y \Big) \circ \rcal\,,
\end{split}
\end{align*}
with the modified remainder terms \(\co_j^{\rm mod}\), \(j = 1, \cdots ,5\), given by
\begin{align*}
    \co^{{\rm mod}}_j \defeq
    \co_j \big( x^{3/4} \zeta_1 \big( \frac{\hat{\psi}_x}{x^{3/4}} \big),
        N_4 (\gamma+1) x \,\zeta_1 \big( \frac{\hat{\psi}_y}{N_4 (\gamma+1) x} \big),
        \psi (x,y),
        x,
        c_\btheta
        \big)\,,
\end{align*}
where
\(\co_j (p_x,p_y,z,x,c)\) are given by~\eqref{Eq:Def-co-k},
constant \(N_4 > 0\) is from Definition~\ref{def:iteration-set}(\ref{def:iteration-set-4}),
and \(c_\btheta \defeq c_5 (1 - \chi_\btheta^\ast) + c_6 \chi_\btheta^\ast\) with
function \(\chi_\btheta^\ast\) from Definition~\ref{def:varphi-theta-star}.

For
\((\bm{p},\bm{\xi}) \in \mathbb{R}^2 \times ( \Omega \cap  \mathcal{D}_{2\epsilon_{\rm eq}} )\),
coefficients \(A^{(2)}_{ij}(\bm{p},\bm{\xi}),\, i,j=1,2,\) are defined to be the second-order coefficients
of operator \(c_\btheta \mathcal{N}^{\rm polar}_{(u,\btheta)}(\hat{\phi})\) in the \(\bm{\xi}\)--coordinates,
that is,
\begin{align*}
    c_\btheta \mathcal{N}^{\rm polar}_{(u,\btheta)} (\hat{\phi})
    \eqdef \sum_{i,j = 1}^{2} A^{(2)}_{ij} (D_{\bm{\xi}}\hat{\phi},\bm{\xi}) \partial_{i} \partial_{j} \hat{\phi}
    \qquad\,\,
    \text{in $\Omega \cap \mathcal{D}_{2 \epsilon_{\rm eq}}$}\,.
\end{align*}
In particular, there are no lower order terms in the above expression,
since they vanish identically in \(\Omega \cap \mathcal{D}_{2 \epsilon_{\rm eq}}\),
which can be verified by direct computation.

\smallskip
\textbf{3. Subsonic cut-off near \(\Gamma_{\rm sonic}\).}
For any \(\sigma \in (0,1)\), there exists a constant \(C_\sigma>0\)
depending only on \((\gamma, v_2,\theta_\ast,\sigma)\) such that,
for any \((u,\btheta)\in \cl{\mathcal{K}^{\rm ext}}\), there is a function
\(v_\sigma^{(u,\btheta)}\in C^4(\cl{\Omega})\) with
\begin{enumerate}[\quad(a)]
\item
\(\norm{v_\sigma^{(u,\btheta)} - \phi}_{C^1(\cl{\Omega})}
\leq \sigma^2\) and \(\norm{v_\sigma^{(u,\btheta)}}_{C^4(\cl{\Omega})} \leq C_\sigma\);

\item
For any sequence \(\{(u_k,\btheta_k)\} \subset \cl{\mathcal{K}^{\rm ext}}\)
converging to \((u,\btheta)\) in \(C^{1,\alpha}(\cl{\qiter}) \times [0,\theta_\ast]^2\),
\begin{align*}
    v_\sigma^{(u_k,\btheta_k)} \circ \mathfrak{F}_{(u_k,\btheta_k)} \to v_\sigma^{(u,\btheta)} \circ \Futheta
    \qquad
    \text{in $C^{1,\alpha}(\cl{\qiter})$}\,.
\end{align*}
\end{enumerate}
The proof of this statement can be found in~\cite[Lemma~4.26]{BCF-2019}.

For any \(\sigma > 0\), fix a family of cut-off functions \(\varsigma_\sigma \in C^\infty(\mathbb{R})\) satisfying
\begin{align} \label{eq:varsigma-cut-off}
    \varsigma_\sigma(t) \defeq \varsigma_1 \big(\frac{t}{\sigma} \big) \qquad\,\,\, \text{with} \,\,\,
    \varsigma_1 (t) =
    \begin{cases}
        1 &\,\,\text{for} \; t < 1\,, \\
        0 &\,\, \text{for} \; t > 2\,,
    \end{cases}
    \quad\mbox{and}\,\,\,    0 \leq \varsigma_1 \leq 1 \,\,\, \text{on $\mathbb{R}$}\,.
\end{align}
For a constant \(\sigma_{\rm cf} \in (0,1)\) to be specified later,
we define the subsonic coefficients:
\begin{align*}
    A^{\rm subs}_{ij} (\bm{p},\bm{\xi}) \defeq
    \varsigma_{\sigma_{\rm cf}}
    A^{\rm pot}_{ij}( \bm{p}, \phi(\bm{\xi}) , \bm{\xi} )
    +
    (1 - \varsigma_{\sigma_{\rm cf}} )
    A^{\rm pot}_{ij} ( Dv_{\sigma_{\rm cf}}^{(u,\btheta)}(\bm{\xi}) , \phi(\bm{\xi}), \bm{\xi} )
\qquad\mbox{for \(i, j = 1, 2\)}\,,
\end{align*}
with \(\varsigma_{\sigma_{\rm cf}} = \varsigma_{\sigma_{\rm cf}} (\abs{\bm{p} - Dv_{\sigma_{\rm cf}}^{(u,\btheta)}(\bm{\xi})})\).
In particular, from now on,  we choose \(\sigma_{\rm cf} \defeq \sqrt{\delta_1}\).

Fix a cut-off function \(\chi_{\rm eq} \in C^\infty(\mathbb{R})\) satisfying
\begin{align*}
    \chi_{\rm eq}(\theta) =
    \begin{cases}
    1  &\,\,\,\text{for} \; \theta \leq \theta^{\rm s} + \frac{\sigma_3}{4}\,, \\
    0  &\,\,\, \text{for} \; \theta \geq \theta^{\rm s} + \frac{\sigma_3}{2}\,,
    \end{cases}
    \qquad \chi'_{\rm eq}(\theta) \leq 0 \,\,\,\, \text{on $\mathbb{R}$}\,.
\end{align*}
Then, for
\((\bm{p},\bm{\xi}) \in \mathbb{R}^2 \times ( \Omega \cap  \mathcal{D}_{2\epsilon_{\rm eq}} )\),
coefficients \(A^{(3)}_{ij}(\bm{p},\bm{\xi}),\,i,j=1,2,\) are defined by
\begin{align*}
A^{(3)}_{ij}(\bm{p},\bm{\xi}) \defeq
    \chi_{\rm eq}(\theta_{l-4}) A_{ij}^{(2)}(\bm{p},\bm{\xi})
    + (1 - \chi_{\rm eq}(\theta_{l-4})) A_{ij}^{{\rm subs}}(\bm{p},\bm{\xi})
    \qquad \text{for $\bm{\xi} \in \Omega \cap \mathcal{D}^{l}_{2 \epsilon_{\rm eq}},\, l=5,6$}\,.
\end{align*}

\smallskip
\textbf{4. Coefficients \(A_{ij}(\bm{p},\bm{\xi})\). }
We now complete the definition of coefficients \(A_{ij}(\bm{p},\bm{\xi})\)
for the nonlinear differential operator \(\mathcal{N}_{(u,\btheta)}\) given by~\eqref{eq:nonlinear-operator-N}.
Let \(\epsilon_0\) be from Lemma~\ref{lem:properties-of-G-alpha-theta}\eqref{lem:properties-of-G-alpha-theta-3}.
For each \(\epsilon \in (0, \tfrac{ \epsilon_0}{2})\), fix a family of cut-off functions
\(\zeta_2^{(\epsilon,\btheta)} \in C^4(\cl{Q^{\btheta}})\) with the following properties:
\begin{enumerate}[\quad(a)]
    \item
    Function
    \(\zeta_2^{(\epsilon, \, \mathclap{\cdot} \,)} (\cdot): (\bm{\xi}, \btheta) \mapsto \zeta_2^{(\epsilon,\btheta)}(\bm{\xi}) \)
    is continuous on \(\cup_{\btheta \in [0, \theta_\ast]^2} Q^\btheta \times \{ \btheta \}\);

    \item
     There exists a constant \(C_\epsilon > 0\)  depending only on \((\gamma, v_2,\epsilon)\)
     such that \(\norm{\zeta_2^{(\epsilon,\btheta)}}_{C^4(\cl{Q^{\btheta}})} \leq C_\epsilon\);

     \item
     $\,
    \zeta_2^{(\epsilon,\btheta)}(\bm{\xi}) = \begin{cases}
    1 &\quad \text{for} \; \bm{\xi} \in \Omega \setminus \mathcal{D}_{\epsilon}\,, \\
    0 &\quad \text{for} \; \bm{\xi} \in \Omega \cap \mathcal{D}_{\epsilon/2} \,.
    \end{cases}
$
\end{enumerate}
Such a family of cut-off functions is constructed in~\cite[Definition~4.28]{BCF-2019}.

Coefficients \(A_{ij}(\bm{p},\bm{\xi}), \, i,j=1,2,\) are defined by
\begin{align} \label{eq:def-of-Aij}
    A_{ij}(\bm{p},\bm{\xi}) \defeq \zeta_2^{(\epsilon_{\rm eq},\btheta)}(\bm{\xi}) A_{ij}^{(1)}(\bm{\xi})
    + (1 - \zeta_2^{(\epsilon_{\rm eq},\btheta)}(\bm{\xi}) ) A_{ij}^{(3)}(\bm{p},\bm{\xi})
    \qquad\mbox{for \((\bm{p},\bm{\xi}) \in \mathbb{R}^2 \times \Omega\)}\,.
\end{align}
We have the following lemma concerning the properties of coefficients \(A_{ij}\); see~\cite[Lemma~4.30]{BCF-2019}.

\begin{lemma}\label{Lem:Property4ModifyEq}
There exist positive constants
\(\epsilon^{(1)}, \delta_1^{(1)}, \epsilon_{\rm eq} \in (0, \frac{\epsilon_0}{2}),
\lambda_0 \in (0,1),
N_{\rm eq} \geq 1,\)
and \(C > 0\)
with \((\epsilon^{(1)}, \delta_1^{(1)}, \lambda_0)\) depending only on \((\gamma, v_2),\)
\((\epsilon_{\rm eq}, N_{\rm eq})\) depending only on \((\gamma, v_2, \theta_{\ast}),\)
and \(C > 0\) depending only on \((\gamma, v_2, \theta_\ast, \alpha)\)
such that,
whenever \((\epsilon, \delta_1)\) from {\rm Definition~\ref{def:iteration-set}}
are chosen from \((0,\epsilon^{(1)}]\times (0,\delta_1^{(1)}],\)
for each \((u,\btheta)\in\cl{\mathcal{K}^{\rm ext}},\) coefficients \(A_{ij}(\bm{p},\bm{\xi})\)
of \(\mathcal{N}_{(u,\btheta)}\) given by~\eqref{eq:def-of-Aij} satisfy the following properties{\rm :}
\begin{enumerate}[{\rm (i)}]
 \item \label{item1-Lem:Property4ModifyEq}
 \(A_{12}(\bm{p},\bm{\xi}) = A_{21}(\bm{p},\bm{\xi})\)
 for all \((\bm{p},\bm{\xi}) \in \mathbb{R}^2 \times \Omega\){\rm ,}
 and \(A_{ij}(\bm{p},\bm{\xi}) = A_{ij}^{(1)}(\bm{\xi})\)
 if \(\bm{\xi}\in \Omega \setminus \mathcal{D}_{\epsilon_{\rm eq}}\).
 Furthermore, \(D_{\bm{p}}^m A_{ij} \in C^{1,\alpha} (\mathbb{R}^2 \times (\cl{\Omega} \setminus \Gamma_{\rm sonic}))\)
 for \(m = 0, 1, 2,\)
 \begin{equation*}
    \lambda_0 \, {\rm dist}(\bm{\xi}, \,\Gamma_{\rm sonic} ) \abs{\bm{\kappa}}^2 \leq
        \sum_{i,j = 1}^2 A_{ij}(\bm{p},\bm{\xi}) \kappa_i \kappa_j \leq \lambda_0^{-1} \abs{\bm{\kappa}}^2
    \qquad\,\mbox{for any \(\bm{\kappa}=(\kappa_1, \kappa_2) \in \mathbb{R}^2\)}\,,\\
\end{equation*}
for any $(\bm{p},\bm{\xi}) \in \mathbb{R}^2\times \Omega,$ and
\begin{equation*}
\begin{aligned}
   &\norm{A_{ij}}_{L^\infty(\mathbb{R}^2 \times \Omega)} + \norm{A_{ij}(\bm{p}, \cdot)}_{C^{3/4}(\cl{\Omega})}
   + \norm{ D_{\bm{p}} A_{ij} (\bm{p}, \cdot)}_{L^{\infty}(\Omega)} \leq N_{\rm eq}\qquad
   \mbox{for any $\bm{p}\in\mathbb{R}^2$}\,, \\
    &\norm{A_{ij}}_{C^{1,\alpha}(\mathbb{R}^2\times\,\cl{\Omega \setminus \mathcal{D}_{\epsilon_{\rm eq}}}\,)} +
        s^5\norm{D_{\bm{p}}^m A_{ij}}_{C^{1,\alpha}(\mathbb{R}^2 \times (\cl{\Omega} \setminus  \mathcal{N}_s (\Gamma_{\rm sonic}) ) )}
         \leq C \qquad \text{for each $s \in (0, \frac{\epsilon_0}{2})$}\,.
 \end{aligned}
 \end{equation*}

 \item \label{item2-Lem:Property4ModifyEq}
 \(A_{ij} (\bm{p}, \bm{\xi} ) = A_{ij}^{(3)} (\bm{p},\bm{\xi})\)
  for any
 \((\bm{p},\bm{\xi}) \in \mathbb{R}^2 \times (\Omega \cap \mathcal{D}_{\epsilon_{\rm eq}/2}^l),\, l=5,6\)\,.
  Furthermore, if \(\theta_{l-4} \leq \theta^{\rm s} + \frac{\sigma_3}{4},\)
 then \(A_{ij}(\bm{p},\bm{\xi}) = A^{(2)}_{ij}(\bm{p},\bm{\xi})\){\rm;}
 whereas, if \(\theta_{l-4} \in [\theta^{\rm s} + \delta, \theta_\ast]\)
 for some \(\delta \in (0,\frac{\sigma_3}{2}),\)
 then
 \begin{equation*}\begin{aligned}
  &\quad\lambda_0 ( {\rm dist}(\bm{\xi}, \,\Gamma_{\rm sonic}) + \delta ) \abs{\bm{\kappa}}^2
    \leq \sum_{i,j=1}^2 A_{ij}(\bm{p}, \bm{\xi}) \kappa_i \kappa_j
     \leq \lambda_0^{-1} \abs{\bm{\kappa}}^2
     \qquad\mbox{for any $\bm{\xi}\in\Omega\cap\mathcal{D}^{l}_{\epsilon_{\rm eq}/2}$}\,, \\
 &\quad\sup_{\bm{p}\in\mathbb{R}^2} \norm{D_{\bm{p}}^m A_{ij}(\bm{p}, \cdot)}_{C^{1,\alpha}(\cl{\Omega \cap\mathcal{D}^{l}_{\epsilon_{\rm eq}/2}})}
 \leq C \qquad \text{for $m = 0, 1, 2$}\,.
 \end{aligned}\end{equation*}

\item \label{item3-Lem:Property4ModifyEq}
Suppose that \(\epsilon\) from {\rm Definition~\ref{def:iteration-set}}
satisfies \(\epsilon \in (0, \frac{\epsilon_{\rm eq}}{2})\).
Then the equation{\rm :} \(\mathcal{N}_{(u,\btheta)}(\phi) = 0\)
coincides with~\eqref{Eq4phi} in \(\Omega \setminus \mathcal{D}_{\epsilon/10}\).
In addition, for \(l = 5, 6,\) if \(x_{P_0^{l-4}} \geq \frac{\epsilon}{10}\)
or \(\theta_{l-4} \geq \theta^{\rm s} + \frac{\sigma_3}{2},\)
then the equation{\rm :} \(\mathcal{N}_{(u,\btheta)}(\phi) = 0\) coincides with~\eqref{Eq4phi} in \(\Omega \setminus \mathcal{D}^{11-l}_{\epsilon/10}\).
\end{enumerate}
\end{lemma}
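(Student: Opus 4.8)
\textbf{Proof plan for Lemma~\ref{Lem:Property4ModifyEq}.}
The plan is to follow the structure of~\cite[Lemma~4.30]{BCF-2019}, tracking carefully the one essential difference of our setting: either or both sonic boundaries \(\Gamma_{\rm sonic}^5, \Gamma_{\rm sonic}^6\) may degenerate to a single point \(P_0^{j-4}\) when \(\theta_{j-4}\in[\theta^{\rm s},\theta^{\rm d})\), so the cut-off functions and the ellipticity estimates near \(\Gamma_{\rm sonic}\) must be handled uniformly across this degeneration. First I would fix the constants in the order in which they are constrained: \(\epsilon_0\) comes from Lemma~\ref{lem:properties-of-G-alpha-theta}\eqref{lem:properties-of-G-alpha-theta-3}; \(\epsilon_{\rm eq}\in(0,\frac{\epsilon_0}{2})\) and \(N_{\rm eq}\) are chosen (depending on \(\theta_\ast\)) so that \(A^{(1)}_{ij}\) given by~\eqref{eq:coefficient-A^(1)_ij} is uniformly elliptic with the weight \({\rm dist}(\bm{\xi},\Gamma_{\rm sonic})\) on \(\Omega\setminus\mathcal{D}_{\epsilon_{\rm eq}/10}\) — this uses Definition~\ref{def:iteration-set}\eqref{def:iteration-set-5} (the pseudo-Mach bound with \({\rm dist}^\flat\)) together with~\eqref{Eq:FuncgEquivDistb} and the uniform \(C^0\)-bound on \(c(|D\varphi|,\varphi)\) from Lemma~\ref{UniformBound-Lem}. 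The \(C^{3/4}\)-regularity in \(\bm{\xi}\) and the \(C^{1,\alpha}\)-bounds away from \(\mathcal{D}_{\epsilon_{\rm eq}}\) follow from Corollary~\ref{Corol:Esti-AwaySonic5n6} applied to \(\varphi^{(u,\btheta)}\), via Lemma~\ref{lem:properties-of-G-alpha-theta}\eqref{lem:properties-of-G-alpha-theta-7}.

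Next I would treat the supersonic region near \(\Gamma_{\rm sonic}\) (Step~2 of the construction): the coefficients \(A^{(2)}_{ij}\) are read off from \(c_\btheta\mathcal{N}^{\rm polar}_{(u,\btheta)}\). The key points to verify are that the principal part is \(\big(2x-(\gamma+1)x\,\zeta_1(\tfrac{\hat\psi_x}{x})+\co_1^{\rm mod}\big)\) in the \(x\)-direction and \(\tfrac{1}{c_\btheta}+\co_3^{\rm mod}\) in the \(y\)-direction, with the remainder terms \(\co_j^{\rm mod}\) small multiples of \(x\) on \(\mathcal{D}_{2\epsilon_{\rm eq}}\) — this is where Lemma~\ref{Lem:UsefulEst4AdmisSolu} and Definition~\ref{def:iteration-set}\eqref{def:iteration-set-4} (the bounds \(|\partial_x\psi|<\mathscr{K}_3 x\), \(|\partial_y\psi|<N_4 x\)) enter, ensuring that \(x^{3/4}\zeta_1(\hat\psi_x/x^{3/4})\) and \(N_4(\gamma+1)x\,\zeta_1(\hat\psi_y/(N_4(\gamma+1)x))\) stay in the range where the \(\co_j\)-formulae~\eqref{Eq:Def-co-k} are controlled. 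The cut-off \(\zeta_1\) guarantees the coefficient \(2x-(\gamma+1)x\,\zeta_1(\cdot)\geq \tfrac{\mu_0}{10(1+\gamma)}\,x\cdot(1+\gamma)>0\) stays bounded below by a multiple of \(x\equiv{\rm dist}(\bm{\xi},\Gamma^j_{\rm sonic})\) even when \(\hat\psi_x/x\) is large, which after dividing by \(c_\btheta\) and undoing the polar-coordinate change of variables yields the degenerate ellipticity estimate with weight \({\rm dist}(\bm{\xi},\Gamma_{\rm sonic})\). The vanishing of the lower-order terms of \(c_\btheta\mathcal{N}^{\rm polar}_{(u,\btheta)}\) in the \(\bm{\xi}\)-coordinates is a direct computation; the point is that the first-order terms \(-(1+\co_4^{\rm mod})\hat\psi_x+\co_5^{\rm mod}\hat\psi_y\), when \(\hat\psi\) is re-expressed as \(\hat\phi-\phi_\btheta^\ast\) in Cartesian coordinates, are absorbed into the second-order coefficients because \(\hat\phi\mapsto\tfrac12|\bm\xi|^2\) has constant Hessian.

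Then I would handle the subsonic cut-off (Step~3): with \(\sigma_{\rm cf}=\sqrt{\delta_1}\) and the approximating functions \(v^{(u,\btheta)}_{\sigma_{\rm cf}}\) from~\cite[Lemma~4.26]{BCF-2019}, the coefficients \(A^{\rm subs}_{ij}\) interpolate between \(A^{\rm pot}_{ij}(\bm p,\phi,\bm\xi)\) and \(A^{\rm pot}_{ij}(Dv^{(u,\btheta)}_{\sigma_{\rm cf}},\phi,\bm\xi)\); the uniform-ellipticity-with-weight-\(({\rm dist}+\delta)\) claim of part~\eqref{item2-Lem:Property4ModifyEq} when \(\theta_{l-4}\in[\theta^{\rm s}+\delta,\theta_\ast]\) follows because in that regime \(\Omega\cap\mathcal{D}^l_{\epsilon_{\rm eq}/2}\) is at positive distance \(\gtrsim\delta\) from \(\Gamma^l_{\rm sonic}=\{P_0^{l-4}\}\) (using the strict monotonicity of the pseudo-Mach number, Lemma~\ref{lem:MonotonicityOfMach2}, and~\eqref{eq:state2SupersonicCondition}), so equation~\eqref{Eq4phi} is uniformly elliptic there by Proposition~\ref{prop:ellipticDegeneracyNearSonicBoundary}, and the cut-off only perturbs coefficients by \(O(\sigma_{\rm cf}^2)=O(\delta_1)\), absorbed by shrinking \(\delta_1^{(1)}\). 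Part~\eqref{item3-Lem:Property4ModifyEq} is then a bookkeeping check: when \(\epsilon<\tfrac{\epsilon_{\rm eq}}{2}\), on \(\Omega\setminus\mathcal{D}_{\epsilon/10}\) we have \(\zeta_2^{(\epsilon_{\rm eq},\btheta)}\equiv\) whatever value, but the point is that both \(A^{(1)}\) and \(A^{(3)}\) reduce to the genuine potential-flow coefficients there because \(\zeta_1\) and \(\varsigma_{\sigma_{\rm cf}}\) act as the identity on the relevant ranges (using Definition~\ref{def:iteration-set}\eqref{def:iteration-set-4}--\eqref{def:iteration-set-5}), and the refined statement for \(x_{P_0^{l-4}}\geq\tfrac{\epsilon}{10}\) or \(\theta_{l-4}\geq\theta^{\rm s}+\tfrac{\sigma_3}{2}\) uses that then \(\mathcal{D}^{l}_{\epsilon/10}\cap\Omega=\varnothing\). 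Finally I would combine everything via the cut-off \(\zeta_2^{(\epsilon_{\rm eq},\btheta)}\) in~\eqref{eq:def-of-Aij} and collect constants.

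The main obstacle I expect is the uniformity of the weighted-ellipticity constant \(\lambda_0\) as \(\theta_{j-4}\uparrow\theta^{\rm s}\) from below, i.e.\ in the transition between Case~2 and Case~3 of \S\ref{SubSec-EstimatesNearSonic5}: one must ensure that the polar-coordinate operator \(\mathcal{N}^{\rm polar}_{(u,\btheta)}\), which is built for the degenerate (supersonic-reflection) case, continues to produce a \emph{single} lower bound of the form \(\lambda_0\,{\rm dist}(\bm\xi,\Gamma_{\rm sonic})|\bm\kappa|^2\) even when the sonic arc is shrinking to a point and \(x_{P_2}=x_{P_0^{j-4}}>0\) is small but positive — here the role of the shifted variable \(\hat x=x-x_{P_2}\) from Case~3 and the bound~\eqref{Claim4Esti1c} must be reconciled with the unshifted polar coordinates used in the construction, and the cut-off function \(\chi_{\rm eq}(\theta_{l-4})\) must switch off the polar operator before this becomes incompatible. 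Checking that \(\sigma_3/4\) versus \(\sigma_3/2\) in the definition of \(\chi_{\rm eq}\) leaves enough room for a uniform estimate is the delicate quantitative point; everything else is a faithful adaptation of~\cite[\S4.4]{BCF-2019}.
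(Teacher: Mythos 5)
Your overall route---fixing the constants in the order dictated by the four-step construction of \(A_{ij}\) in~\eqref{eq:def-of-Aij} and adapting \cite[Lemma~4.30]{BCF-2019} while tracking the possible degeneration of \(\Gamma^5_{\rm sonic},\Gamma^6_{\rm sonic}\) to single points---is the same as the paper's, which gives no independent proof and simply cites that lemma; your plans for parts (i)--(ii) are broadly consistent with it. However, your plan for part~\eqref{item3-Lem:Property4ModifyEq} contains a genuine error: you justify the second statement (the case \(x_{P_0^{l-4}}\geq\frac{\epsilon}{10}\) or \(\theta_{l-4}\geq\theta^{\rm s}+\frac{\sigma_3}{2}\)) by asserting that then \(\Omega\cap\mathcal{D}^{l}_{\epsilon/10}=\varnothing\). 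This is false. In the subsonic-reflection regime \(\Gamma^l_{\rm sonic}=\{P_0^{l-4}\}\) lies on \(\partial B_{\hat{c}_l}(O_l)\) with \(\hat{c}_l=|D\varphi_l(P_0^{l-4})|<c_l\); by~\eqref{eq:domain-D5n6epsilon}, \(\mathcal{D}^l_{\epsilon/10}\) is, in the coordinates~\eqref{Eq:DefPolarCoordiSec4}, the band \(\{x<x_{P_0^{l-4}}+\frac{\epsilon}{10}\}\) adjacent to \(\partial B_{\hat{c}_l}(O_l)\), while \(\Omega\) always contains points arbitrarily close to its corner \(P_0^{l-4}\) (it lies locally inside \(B_{\hat{c}_l}(O_l)\) there, cf.\ the sets \(\Omega^5_{\bar\varepsilon}\) with \(0<\hat{x}<\bar\varepsilon\) in Case~3 of \S\ref{SubSec-EstimatesNearSonic5}); hence \(\Omega\cap\mathcal{D}^l_{\epsilon/10}\) is a nonempty neighbourhood of that corner, however large \(x_{P_0^{l-4}}\) is. Were your claim true, the second statement of (iii) would be a vacuous restatement of the first, whereas it is precisely the nontrivial content invoked in Step~\textbf{3} of the proof of Proposition~\ref{Prop:FixedPt-eqiv-AdmisSolu}.

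What is actually needed is that on this nonempty region the modifications are inactive when the coefficients are evaluated along \(\phi=\varphi^{(u,\btheta)}+\frac12|\bm{\xi}|^2\): if \(\theta_{l-4}\geq\theta^{\rm s}+\frac{\sigma_3}{2}\), then \(\chi_{\rm eq}(\theta_{l-4})=0\), so \(A^{(3)}_{ij}=A^{\rm subs}_{ij}\) there, and \(\varsigma_{\sigma_{\rm cf}}\big(|D\phi-Dv^{(u,\btheta)}_{\sigma_{\rm cf}}|\big)=1\) because \(\|v^{(u,\btheta)}_{\sigma_{\rm cf}}-\phi\|_{C^1(\cl{\Omega})}\leq\sigma_{\rm cf}^2<\sigma_{\rm cf}\), whence \(A^{\rm subs}_{ij}(D\phi,\bm{\xi})=A^{\rm pot}_{ij}(D\phi,\phi,\bm{\xi})\) and the equation coincides with~\eqref{Eq4phi}; if instead only \(x_{P_0^{l-4}}\geq\frac{\epsilon}{10}\) holds, one must additionally verify that the cut-offs \(\zeta_1\) inside \(A^{(2)}_{ij}\) act as the identity at the actual values of \(D_{(x,y)}\psi\), using the gradient bounds of Definition~\ref{def:iteration-set}\eqref{def:iteration-set-4} together with \(x\geq x_{P_0^{l-4}}\) on \(\Omega\cap\mathcal{D}^l_{\epsilon/10}\); your plan addresses neither mechanism. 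Two smaller corrections: the H\"older bounds on \(A^{(1)}_{ij}\) for general \((u,\btheta)\in\cl{\mathcal{K}^{\rm ext}}\) cannot be drawn from Corollary~\ref{Corol:Esti-AwaySonic5n6}, which is an a priori estimate for admissible solutions only; they follow from the bound on \(\norm{u}^{(\ast)}_{2,\alpha,\qiter}\) in Definition~\ref{def:iteration-set}\eqref{def:iteration-set-1} transferred through Lemma~\ref{lem:properties-of-G-alpha-theta}. And the tension you anticipate between the shifted variable \(\hat{x}\) of Case~3 and the polar operator does not arise, since the construction uses the unshifted \(x\) throughout; the uniform lower bound \(\lambda_0\,{\rm dist}(\bm{\xi},\Gamma_{\rm sonic})\) in the degenerate case rests instead on the wedge-like geometry of \(\Omega(u,\btheta)\) near \(P_0^{l-4}\) enforced by Definition~\ref{def:iteration-set}\eqref{def:iteration-set-3} and Lemma~\ref{lem:properties-of-G-alpha-theta}\eqref{lem:properties-of-G-alpha-theta-2}, which keeps \({\rm dist}(\bm{\xi},\{P_0^{l-4}\})\) comparable to \(x-x_{P_0^{l-4}}\) there.
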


\medskip
\noindent \textbf{Construction of \(\mathcal{M}_{(u,\btheta)}\) in~\eqref{eq:iteration-BVP}.}
We turn our attention to the definition of the nonlinear linear operator
\(\mathcal{M}_{(u,\btheta)}(\bm{p},z,\bm{\xi})\) in~\eqref{eq:iteration-BVP}.
The construction is presented in the following four steps that
follow closely~\cite[\S4.4.2]{BCF-2019}.

\smallskip
\textbf{1.}
For \(g^{\rm sh}\) given by~\eqref{Def4FreeBdryFunc}, set
\begin{align*}
    \mathcal{M}_0(\bm{p}, z, \bm{\xi}) \defeq g^{\rm sh}( \bm{p} - \bm{\xi}, z - \frac12\abs{\bm{\xi}}^2, \bm{\xi} )\,,
\qquad
    \mathcal{M}_1(\bm{p}, z, \xi) \defeq \mathcal{M}_0 (\bm{p}, z, (\xi, \frac{z}{v_2}) )\,.
\end{align*}
For \(j=5,6\), write \(\phi_j \defeq \varphi_j + \frac12\abs{\bm{\xi}}^2\),
 which is independent of the \(\eta\)--coordinate since \(O_j \in \{\eta = 0\}\).
Then \((\xi,\frac{1}{v_2} \phi_j(\bm{\xi})) \in \{\varphi_j = \varphi_2\}\)
for any \(\bm{\xi}  \in \mathbb{R}^2\), and \(\mathcal{M}_1\) is homogeneous in the sense that
\begin{align*}
    \mathcal{M}_1 ( D \phi_j (\bm{\xi}), \phi_j (\bm{\xi}), \xi ) = 0 \qquad
    \text{for all $\bm{\xi} = (\xi,\eta) \in \mathbb{R}^2$ and $j=5,6$}\,.
\end{align*}
Furthermore, observe that \(\bm{\xi} = (\xi, \eta) \in \Gamma_{\rm shock}\)
if and only if \(\bm{\xi} \in \cl{\Omega}\) with \(\eta = \frac{1}{v_2}\phi(\bm{\xi})\), and hence
\begin{align} \label{eq:M0-and-M1-relation-gamma-shock}
    \begin{cases}
    \mathcal{M}_1 ( D \phi(\bm{\xi}), \phi(\bm{\xi}), \xi )
    = \mathcal{M}_0 ( D\phi(\bm{\xi}), \phi(\bm{\xi}), \bm{\xi} ) \, , \\[1mm]
    D_{\bm{p}} \mathcal{M}_1 ( D \phi(\bm{\xi}), \phi(\bm{\xi}), \xi) = D_{\bm{p}} \mathcal{M}_0 ( D\phi(\bm{\xi}), \phi(\bm{\xi}), \bm{\xi} ) \, ,
    \end{cases}
    \quad \text{for} \; \bm{\xi} \in \Gamma_{\rm shock}\,.
\end{align}

\smallskip
\textbf{2.}
For the cut-off function \(\varsigma_\sigma\) given by~\eqref{eq:varsigma-cut-off}
and a constant \(\sigma_{\rm bc} > 0\) to be determined later, we set
\begin{align} \label{eq:def-BC-M}
\begin{split}
\mathcal{M}(\bm{p}, z,\bm{\xi})
    \defeq &\,\big( \varsigma_{\sigma_{\rm bc}}^{(5)} \varsigma_{\sigma_{\rm bc}}^{(6)}
      + ( 1 - \varsigma_{\sigma_{\rm bc}}^{(5)}) \varsigma_{\sigma_{\rm bc}}^{(6)}
       + \varsigma_{\sigma_{\rm bc}}^{(5)} ( 1 - \varsigma_{\sigma_{\rm bc}}^{(6)})\big)\mathcal{M}_1 (\bm{p}, z, \xi)\\
    &\;+ ( 1- \varsigma_{\sigma_{\rm bc}}^{(5)})(1 - \varsigma_{\sigma_{\rm bc}}^{(6)})
    \mathcal{M}_0 (\bm{p}, z, \bm{\xi})\,,
\end{split}
\end{align}
where \(\varsigma_{\sigma_{\rm bc}}^{(j)}
\defeq \varsigma_{\sigma_{\rm bc}}( \abs{ (\bm{p},z) - (D\phi_j, \phi_j(\bm{\xi}))  }  )\) for \(j = 5, 6 \).
There exist constants \(\sigma_{\rm bc}, \delta_{\rm bc}, \bar{\epsilon}_{\rm bc} > 0\)
depending only on \((\gamma, v_2)\) such that,
if \(\epsilon\) from Definition~\ref{def:iteration-set}
satisfies \(\epsilon \in (0, \bar{\epsilon}_{\rm bc})\), then
\(\mathcal{M}(\bm{p},z,\bm{\xi})\) satisfies that, for all \(\bm{\xi} \in \Gamma_{\rm shock}\),
\begin{align*}
    \delta_{\rm bc} \leq D_{\bm{p}} \mathcal{M} (D\phi(\bm{\xi}),\phi(\bm{\xi}),\bm{\xi}) \cdot \bm{\nu}_{\rm sh}(\bm{\xi})
    \leq \delta_{\rm bc}^{-1}\,,\qquad
    D_z \mathcal{M} (D\phi(\bm{\xi}),\phi(\bm{\xi}),\bm{\xi}) \leq - \delta_{\rm bc}\,,
\end{align*}
where \(\bm{\nu}_{\rm sh}\) is the unit normal vector to
\(\Gamma_{\rm shock}\) towards the interior of \(\Omega\).
These results follow from the calculations in~\cite[Lemma~4.32]{BCF-2019}.
Indeed, the first follows from a direct calculation
by using~\eqref{eq:M0-and-M1-relation-gamma-shock}, \cite[Eq.~A.18]{BCF-2019},
the fact that \(\mathcal{M}_0\) is \(C^1\),
and the properties
in Definition~\ref{def:iteration-set}(\ref{def:iteration-set-5})--(\ref{def:iteration-set-6}).
For the second, direct computation gives
\begin{align*}
    D_z \mathcal{M}_0 (D \phi(\bm{\xi}), \phi(\bm{\xi}), \bm{\xi} )
    &= - \rho^{2-\gamma} D \varphi \cdot \bm{\nu}_{\rm sh} (\bm{\xi}) < 0\qquad \text{for $\bm{\xi} \in \Gamma_{\rm shock}$}\,,
\end{align*}
whilst, for \(j = 5,6\), and any \(\bm{\xi} = (\xi,\eta) \in Q^\btheta\), we have
\begin{align*}
    D_z \mathcal{M}_1 ( D\phi_j(\bm{\xi}), \phi_j(\bm{\xi}), \xi)
    &= -\rho_j^{2-\gamma}
    \, {\rm dist}(O_j, S_{2j}) + (-1)^j \frac{\rho_j - 1}{v_2} \cos{\theta_{2j}} < 0\,,
\end{align*}
after which an argument similar to~\cite[Lemma~4.32]{BCF-2019} applies.
The constant, \(\sigma_{\rm bc} > 0\), is now fixed.

\smallskip
\textbf{3.}
For \(j = 5,6\), set
\begin{align} \label{eq:def-M-j}
    \mathcal{M}^{(j)} (\bm{q} , z , \bm{\xi} ) \defeq \mathcal{M} ( \bm{q} + D \phi_j , z + \phi_j(\bm{\xi}) , \bm{\xi} )\,.
\end{align}
Then, for the coordinates  \((x,y) = \bm{x} = \rcal(\bm{\xi})\) given by~\eqref{Eq:DefPolarCoordiSec4}, define
\begin{align} \label{eq:def-M-hat-j}
\hat{\mathcal{M}}^{(j)} ( q_x, q_y, z, \bm{x} )
 \defeq \mathcal{M}^{(j)} \big( (-1)^{j}( q_x \cos{y} + \frac{q_y \sin{y}}{c_j - x}), -q_x \sin{y} + \frac{q_y \cos{y}}{c_j - x} , z , \rcal^{-1}(\bm{x}) \big)\,.
\end{align}

\smallskip
\textbf{4.}
Finally, we extend the definition of \(\mathcal{M}\) in~\eqref{eq:def-BC-M}
to all \((\bm{p},z, \bm{\xi}) \in \mathbb{R}^2\times \mathbb{R} \times \cl{\Omega}\).
For each \((u,\btheta) \in \cl{\mathcal{K}^{\rm ext}}\) and a constant \(\sigma >0\), let \(v^{(u,\btheta)}_{\sigma} \in C^4(\cl{\Omega})\)
be given as in Step~{\bf3} in the construction of \(\mathcal{N}_{(u,\btheta)}\)
above.
We define a linear operator \(\mathcal{L}_\sigma^{(u,\btheta)} (\bm{p},z,\bm{\xi})\) by
\begin{align*}
    \begin{split}
    \mathcal{L}_\sigma^{(u,\btheta)} (\bm{p},z,\bm{\xi})
    \defeq & \,\mathcal{M} (
    Dv_\sigma^{(u,\btheta)} (\bm{\xi}) ,
    v_\sigma^{(u,\btheta)}(\bm{\xi}) ,
    \bm{\xi}
    )
    +
    D_{\bm{p}} \mathcal{M} (
    Dv_\sigma^{(u,\btheta)} (\bm{\xi}) ,
    v_\sigma^{(u,\btheta)}(\bm{\xi}) ,
    \bm{\xi}
    ) \cdot \bm{p} \\
    &\;\; +
    D_z \mathcal{M} (
    Dv_\sigma^{(u,\btheta)} (\bm{\xi}) ,
    v_\sigma^{(u,\btheta)}(\bm{\xi}) ,
    \bm{\xi}
    ) z\,,
    \end{split}
\end{align*}
The operator, \(\mathcal{M}_{(u,\btheta)}(\bm{p},z,\bm{\xi})\), in~\eqref{eq:iteration-BVP}
is then defined by
\begin{align} \label{eq:def-of-M-u-theta}
    \mathcal{M}_{(u,\btheta)}(\bm{p},z,\bm{\xi})
    \defeq \varsigma_\sigma \mathcal{M}(\bm{p},z,\bm{\xi}) + (1 - \varsigma_\sigma)
    \mathcal{L}^{(u,\btheta)}_\sigma (\bm{p} - D v^{(u,\btheta)}_\sigma(\bm{\xi}) , z - v^{(u,\btheta)}_\sigma (\bm{\xi}), \bm{\xi} )
   \end{align}
for \(\varsigma_\sigma \defeq \varsigma_\sigma ( \abs{ (\bm{p},z) - (D v^{(u,\btheta)}_\sigma(\bm{\xi}), v^{(u,\btheta)}_\sigma (\bm{\xi}) ) } )\)
given by~\eqref{eq:varsigma-cut-off}.
We have the following lemma concerning the properties of the nonlinear operator \(\mathcal{M}_{(u,\btheta)} (\bm{p},z,\bm{\xi}) \);
see~\cite[Lemma~4.34]{BCF-2019}.

\begin{lemma}
\label{Lem:Property4ModifyBdryCondi}
Let \(\bar{\epsilon}_{\rm bc}\) be the constant from {\rm Step \textbf{2}} above.
Then there exist positive constants \(\epsilon_{\rm bc} \in (0,\bar{\epsilon}_{\rm bc}),
\delta_1^{(2)}, N^{(1)}_1, \delta_{\rm bc}, C, C_{\theta_\ast},\)
and \(\epsilon_{\mathcal{M}}\in (0, \epsilon_{\rm bc}]\)
with \((\epsilon_{\rm bc},\delta^{(2)}_1, N^{(1)}_1, \delta_{\rm bc}, C)\)
depending only on \((\gamma, v_2),\)
\(\epsilon_\mathcal{M}\) depending on \((\gamma, v_2,\theta_\ast),\)
and \(C_{\theta_\ast}\) depending on \((\gamma, v_2, \theta_\ast, \alpha) \) such that,
whenever parameters \((\epsilon, \delta_1, N_1)\)
from {\rm Definition~\ref{def:iteration-set}} are chosen from
\((0, \bar{\epsilon}_{\rm bc}] \times (0, \delta_1^{(2)}] \times [N_1^{(1)}, \infty),\)
the following statements hold{\rm :}
For each \((u,\btheta) \in \cl{\mathcal{K}^{\rm ext}},\)
operator \(\mathcal{M}_{(u,\btheta)} : \mathbb{R}^2 \times \mathbb{R} \times \cl{\Omega} \to \mathbb{R}\)
given by~\eqref{eq:def-of-M-u-theta} with \(\sigma =\sigma_{\rm cf} \equiv \sqrt{\delta_1}\) satisfies
\begin{enumerate}[{\rm (i)}]
\item \label{item1-Lem:Property4ModifyBdryCondi}
$\mathcal{M}_{(u,\btheta)}\in C^3(\mathbb{R}^2\times\mathbb{R}\times\overline{\Omega})$ and,
for all $(\bm{p},z,\bm{\xi})\in\mathbb{R}^2\times\mathbb{R}\times\overline{\Omega}$ and $\bm{\xi}_{\rm sh}\in\overline{\Gamma_{\rm shock}},$
\begin{align*}
&\norm{ ( \mathcal{M}_{(u,\btheta)}(\bm{0}, 0, \cdot),
 D^m_{(\bm{p},z)} \mathcal{M}_{(u,\btheta)}(\bm{p},z,\cdot) ) }_{C^3(\cl{\Omega})}
 \leq C_{\theta_\ast} \qquad \text{for $m = 1, 2, 3\,$}, \\
        &\mathcal{M}_{(u,\btheta)}(\bm{p}, z  ,\bm{\xi}) = \mathcal{M}(\bm{p},z,\bm{\xi})
        \qquad \text{for $\left|(\bm{p},z)-(D\phi(\bm{\xi}),\phi(\bm{\xi}))\right| < \frac12 {\sqrt{\delta_1}}$}\,,\\
        &|D_{(\bm{p}, z)}\mathcal{M}_{(u,\btheta)}(\bm{p},z,\bm{\xi})-D_{(\bm{p}, z)}
        \mathcal{M}(D\phi(\bm{\xi}),\phi(\bm{\xi}),\bm{\xi})|\leq C\sqrt{\delta_1}\,,\\
        &\delta_{\rm bc}\leq D_{\bm{p}}\mathcal{M}_{(u,\btheta)}(\bm{p},z,\bm{\xi}_{\rm sh})\cdot\bm{\nu}_{\rm sh} \leq \frac{1}{\delta_{\rm bc}},
        \qquad D_z\mathcal{M}_{(u,\btheta)}(\bm{p},z,\bm{\xi}_{\rm sh})\leq-\delta_{\rm bc}\,,
    \end{align*}
    where $\mathcal{M}(\bm{p},z,\bm{\xi})$ is defined by~\eqref{eq:def-BC-M}{\rm,}
    and $\bm{\nu}_{\rm sh}$ is the unit normal vector to $\Gamma_{\rm shock}$ pointing into $\Omega$.

    \smallskip
    \item \label{item2-Lem:Property4ModifyBdryCondi}
    Denote \(\mathcal{B}_{\sigma,\Gamma_{\rm shock}}^{(u,\btheta)}(\bm{p},z,\bm{\xi})
    \defeq \mathcal{L}_{\sigma}^{(u,\btheta)}(\bm{p}-Dv_\sigma^{(u,\btheta)}(\bm{\xi}),z-v_\sigma^{(u,\btheta)}(\bm{\xi}),\bm{\xi})\) and set
    \[\mathcal{B}_{\sigma,\Gamma_{\rm shock}}^{(u,\btheta)}(\bm{p},z,\bm{\xi}) \eqdef
    (b_1^{\rm sh}(\bm{\xi}),b_2^{\rm sh}(\bm{\xi}),b_0^{\rm sh}(\bm{\xi}),h^{\rm sh}(\bm{\xi}))\cdot(p_1,p_2,z,1)\,.\]
    Then
    $\norm{(b_i^{\rm sh},h^{\rm sh})}_{C^{3}(\cl{\Gamma_{\rm shock}})} \leq C_{\theta_*}$
    for $i=0,1,2$,
    and, for all \((\bm{p},z,\bm{\xi})\in\mathbb{R}^2\times\mathbb{R}\times\overline{\Omega},\)
    \begin{equation*}\begin{aligned}
    &\|\big(\mathcal{M}_{(u,\btheta)}-\mathcal{B}_{\sigma_{\rm cf},\Gamma_{\rm shock}}\big)(\bm{p},z,\bm{\xi})\|\leq C\sqrt{\delta_1}\,
    \tabs{
    (\bm{p},z) - \big(Dv_{\sigma_{\rm cf}}^{(u,\btheta)}(\bm{\xi}),v_{\sigma_{\rm cf}}^{(u,\btheta)}(\bm{\xi})\big)
    } \,,\\
    &\left|D_{(\bm{p},z)}\big(\mathcal{M}_{(u,\btheta)}-\mathcal{B}_{{\sigma_{\rm cf}},\Gamma_{\rm shock}}\big)(\bm{p},z,\bm{\xi})\right|\leq C\sqrt{\delta_1}\,.
    \end{aligned} \end{equation*}

    \item \label{item3-Lem:Property4ModifyBdryCondi}
    $\mathcal{M}_{(u,\btheta)}$ is homogeneous in the sense that, for \(j = 5, 6,\)
    \begin{align*}
      \qquad  \mathcal{M}_{(u,\btheta)} ( D \phi_j (\bm{\xi}), \phi_j (\bm{\xi}), \bm{\xi} ) = 0
        \qquad \text{for all} \;
        \begin{cases}
            \, \bm{\xi} \in \Gamma_{\rm shock} \cap \mathcal{D}_{\epsilon_{\mathcal{M}}}
               &\,\, \text{for any $\btheta \in [0,\theta_\ast]^2$}\,,\\
            \, \bm{\xi} \in \Gamma_{\rm shock}
              &\,\, \text{when $\max\{\theta_1,\theta_2\} \in [0, \frac{\delta_1}{N_1}]$}\,.
        \end{cases}
    \end{align*}

    \item \label{item4-Lem:Property4ModifyBdryCondi}
    For coordinates $(x,y) = \rcal(\bm{\xi})$ given by~\eqref{Eq:DefPolarCoordiSec4}{\rm,}
    \(j = 5, 6,\) and \(\bm{\xi} \in \cl{\Gamma_{\rm shock} \cap \mathcal{D}^j_{\epsilon_{\rm bc}}},\)
    define \(\hat{\mathcal{M}}_{(u,\btheta)}^{(j)}(q_x,q_y,z,x,y)\) by~\eqref{eq:def-M-hat-j} with \(\mathcal{M}\)
    replaced by \(\mathcal{M}_{(u,\btheta)}\) in~\eqref{eq:def-M-j}.
    Then, for \(j = 5,6,\) \(\hat{\mathcal{M}}_{(u,\btheta)}^{(j)}\) satisfies the following properties
    when \(\Gamma_{\rm shock} \cap \mathcal{D}^j_{\epsilon_{\rm bc}}\) is non-empty{\rm:}
    \begin{align*}
    &\norm{
    \hat{\mathcal{M}}_{(u,\btheta)}^{(j)}
    }_{ C^3( \mathbb{R}^2 \times \mathbb{R} \times \rcal(\cl{ \Gamma_{\rm shock} \cap \mathcal{D}^j_{ \epsilon_{\rm bc}}}))}
    \leq C_{\theta_\ast} \,,\\
    &\hat{\mathcal{M}}^{(j)}_{(u,\btheta)} (\bm{q},z,x,y)
       = \hat{\mathcal{M}}^{(j)}(\bm{q},z,x,y) \qquad\mbox{in
       $\rcal(\Gamma_{\rm shock} \cap \mathcal{D}^j_{\epsilon_{\rm bc}})\,$
       for any \(\abs{(\bm{q},z)} \leq \frac{\delta_{\rm bc}}{C} \,\)}{\rm ,}\\
    &    \partial_{a} \hat{\mathcal{M}}^{(j)}_{(u,\btheta)} (\bm{q},z,x,y) \leq - \delta_{\rm bc}
        \qquad \text{in $\rcal(\Gamma_{\rm shock} \cap \mathcal{D}^j_{\epsilon_{\mathcal{M}}})\,$
        for any \((\bm{q},z) \in \mathbb{R}^2 \times \mathbb{R},\)}
    \end{align*}
    with $\partial_a = \partial_{q_x}, \,\partial_{q_y}$ or $\partial_{z},$
    provided that \(\Gamma_{\rm shock} \cap \mathcal{D}^j_{\epsilon_{\mathcal{M}}}\) is non-empty.

    \smallskip
    \item \label{item5-Lem:Property4ModifyBdryCondi}
    \(\mathcal{M}_{(u,\btheta)}(D\phi(\bm{\xi}),\phi(\bm{\xi}),\bm{\xi}) = 0 \) on \(\Gamma_{\rm shock}\) if and only if \(\varphi = \phi - \frac12 \abs{\bm{\xi}}^2\) satisfies the Rankine--Hugoniot jump condition~\eqref{Def4FreeBdryFunc} on \(\Gamma_{\rm shock} = \{\varphi = \varphi_2\}\).
\end{enumerate}
\end{lemma}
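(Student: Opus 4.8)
The plan is to verify statements (i)--(v) by following the four-step construction of $\mathcal{M}_{(u,\btheta)}$ and transporting the estimates for the intermediate operators $\mathcal{M}_0$, $\mathcal{M}_1$, and $\mathcal{M}$ through the cut-off gluings and the final linearization, closely paralleling~\cite[Lemma~4.34]{BCF-2019}; I will flag the one place where the possible degeneration of $\Gamma^5_{\rm sonic}$ or $\Gamma^6_{\rm sonic}$ into a single point forces a small change of argument.

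First I would establish, uniformly over $(u,\btheta)\in\cl{\mathcal{K}^{\rm ext}}$, the basic properties of the operator $\mathcal{M}$ from~\eqref{eq:def-BC-M} \emph{on} $\Gamma_{\rm shock}$. Since $\mathcal{M}_0(\bm{p},z,\bm{\xi}) = g^{\rm sh}(\bm{p}-\bm{\xi},z-\tfrac12|\bm{\xi}|^2,\bm{\xi})$ with $g^{\rm sh}$ given by~\eqref{Def4FreeBdryFunc}, a direct computation with the Bernoulli relation yields $D_z\mathcal{M}_0(D\phi,\phi,\bm{\xi}) = -\rho^{2-\gamma}\,D\varphi\cdot\bm{\nu}_{\rm sh} < 0$ on $\Gamma_{\rm shock}$, while the oblique derivative $D_{\bm{p}}\mathcal{M}_0\cdot\bm{\nu}_{\rm sh}$ is bounded above and below by explicit functions of $\rho$, $D\varphi\cdot\bm{\nu}_{\rm sh}$, $D\varphi_2\cdot\bm{\nu}_{\rm sh}$, and the subsonic ratio; these are controlled, with a single constant $\delta_{\rm bc}$, by the normal-derivative bound $\min\{\partial_{\bm{\nu}}(\varphi_2-\varphi),\partial_{\bm{\nu}}\varphi\}>\mu_1$ in Definition~\ref{def:iteration-set}\eqref{def:iteration-set-4}, the density bounds in Definition~\ref{def:iteration-set}\eqref{def:iteration-set-6}, and the uniform subsonicity in Definition~\ref{def:iteration-set}\eqref{def:iteration-set-5} (this is~\cite[Eq.~A.18]{BCF-2019} together with bookkeeping). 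For $\mathcal{M}_1$ the key point is homogeneity: because $(\xi,\tfrac{1}{v_2}\phi_j(\bm{\xi}))$ lies on the line $S_{2j}$ and, by the local reflection theory of Proposition~\ref{Prop:localtheorystate5}, the Rankine--Hugoniot conditions between states $(2)$ and $(j)$ hold along all of $S_{2j}$, translation invariance gives $\mathcal{M}_1(D\phi_j,\phi_j,\xi)=0$ for every $\bm{\xi}$, and the identity $D_z\mathcal{M}_1(D\phi_j,\phi_j,\xi) = -\rho_j^{2-\gamma}\,{\rm dist}(O_j,S_{2j}) + (-1)^j\tfrac{\rho_j-1}{v_2}\cos\theta_{2j} < 0$ follows from $\rho_j>1$, $v_2<0$, and the sign of $\cos\theta_{2j}$. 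Relation~\eqref{eq:M0-and-M1-relation-gamma-shock}, namely that $\mathcal{M}_0$ and $\mathcal{M}_1$ agree together with their $D_{\bm{p}}$-derivatives on $\Gamma_{\rm shock}$, is precisely what lets the $\varsigma_{\sigma_{\rm bc}}^{(5)},\varsigma_{\sigma_{\rm bc}}^{(6)}$-combination in~\eqref{eq:def-BC-M} inherit the oblique-derivative and monotonicity bounds on $\Gamma_{\rm shock}$, and makes (iii) hold in a neighbourhood of $\Gamma_{\rm sonic}$ once $\sigma_{\rm bc}$ is taken so small that $|(D\phi,\phi)-(D\phi_j,\phi_j)|<\sigma_{\rm bc}$ there; the smallness thresholds $\bar{\epsilon}_{\rm bc},\epsilon_{\mathcal{M}}$ must be chosen uniformly in $\btheta\in[0,\theta_\ast]^2$, and when $\theta_{j-4}\ge\theta^{\rm s}$ the set $\Gamma^j_{\rm sonic}\cap\mathcal{D}^j_r$ is empty, so the relevant clauses are vacuous --- this is how the collapsing sonic arc is handled.

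Next I would pass to polar coordinates for (iv): $\hat{\mathcal{M}}^{(j)}$ and $\hat{\mathcal{M}}^{(j)}_{(u,\btheta)}$ are compositions of $\mathcal{M}^{(j)}$, respectively $\mathcal{M}^{(j)}_{(u,\btheta)}$, from~\eqref{eq:def-M-j} with the change of variables~\eqref{Eq:DefPolarCoordiSec4} together with the linear substitution on $(q_x,q_y)$ in~\eqref{eq:def-M-hat-j}, and both transformations are smooth and uniformly nondegenerate once $c_j-x$ is bounded away from $0$, which holds by $\hat{c}_j>q_j$ from Lemma~\ref{lem:properties-of-state-5-and-6}; the $C^3$ bounds then reduce to the $C^3$ bounds on $\mathcal{M}$ (coming from the smoothness of $\mathcal{A},\rho,c$ away from vacuum, the $C^3$ control of $\varphi_2$ and of $D\phi_j,\phi_j$, and the structural bounds defining $\cl{\mathcal{K}^{\rm ext}}$), while the sign $\partial_a\hat{\mathcal{M}}^{(j)}_{(u,\btheta)}\le-\delta_{\rm bc}$ follows from the part-(i) bounds $D_{\bm{p}}\mathcal{M}_{(u,\btheta)}\cdot\bm{\nu}_{\rm sh}\ge\delta_{\rm bc}$ and $D_z\mathcal{M}_{(u,\btheta)}\le-\delta_{\rm bc}$ once one checks that, in the polar frame, $\bm{\nu}_{\rm sh}$ has both components bounded below in absolute value along $\Gamma_{\rm shock}\cap\mathcal{D}^j_{\epsilon_{\mathcal{M}}}$, using the graph representation and slope bounds of $\Gamma_{\rm shock}$ from Lemma~\ref{Prop:GammaShockExpres} and the shock-angle bound~\eqref{eq:reflected-shock-angle-upper-bound}. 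Finally, for the last gluing in~\eqref{eq:def-of-M-u-theta}: since $v_\sigma^{(u,\btheta)}$ satisfies $\|v_\sigma^{(u,\btheta)}-\phi\|_{C^1(\cl{\Omega})}\le\sigma^2=\delta_1$ and $\|v_\sigma^{(u,\btheta)}\|_{C^4(\cl{\Omega})}\le C_\sigma$, for $\delta_1$ small the cut-off $\varsigma_\sigma(|(\bm{p},z)-(Dv_\sigma,v_\sigma)|)$ equals $1$ whenever $|(\bm{p},z)-(D\phi,\phi)|<\tfrac12\sqrt{\delta_1}$, so $\mathcal{M}_{(u,\btheta)}=\mathcal{M}$ there; this gives the second identity of (i), transfers (iii), (v), and the identities in (iv) verbatim from $\mathcal{M}$, and makes (v) itself immediate because on $\Gamma_{\rm shock}$ one has $\mathcal{M}_{(u,\btheta)}(D\phi,\phi,\bm{\xi})=\mathcal{M}_0(D\phi,\phi,\bm{\xi})=g^{\rm sh}(D\varphi,\varphi,\bm{\xi})$. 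Away from this neighbourhood $\mathcal{M}_{(u,\btheta)}$ is a $\varsigma_\sigma$-combination of $\mathcal{M}$ and the operator $\mathcal{L}_\sigma^{(u,\btheta)}$ that is \emph{affine} in $(\bm{p},z)$ with coefficients $D_{(\bm{p},z)}\mathcal{M}$ frozen at the $C^4$-point $(Dv_\sigma,v_\sigma,\bm{\xi})$, so the global $C^3$ bounds of (i), the coefficient identification and $C^3$ bounds of (ii), and the $\sqrt{\delta_1}$-closeness estimates in (i)--(ii) all follow from first-order Taylor expansion of $\mathcal{M}$ around $(Dv_\sigma,v_\sigma)$ using $\|D^2_{(\bm{p},z)}\mathcal{M}\|\le C$, and the oblique-derivative and monotonicity signs persist since $D_{(\bm{p},z)}\mathcal{M}_{(u,\btheta)}$ stays within $C\sqrt{\delta_1}$ of $D_{(\bm{p},z)}\mathcal{M}(D\phi,\phi,\cdot)$.

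The hard part will be the uniform oblique-derivative estimate: obtaining one constant $\delta_{\rm bc}$ for which $\delta_{\rm bc}\le D_{\bm{p}}\mathcal{M}\cdot\bm{\nu}_{\rm sh}\le\delta_{\rm bc}^{-1}$ and $D_z\mathcal{M}\le-\delta_{\rm bc}$ hold on $\Gamma_{\rm shock}$ for every $(u,\btheta)\in\cl{\mathcal{K}^{\rm ext}}$ simultaneously across the supersonic, subsonic, and collapsed-sonic-arc regimes, and compatibly with the $\varsigma_{\sigma_{\rm bc}}^{(j)}$-gluing of $\mathcal{M}_0$ and $\mathcal{M}_1$. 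This is exactly where the structural inputs of Definition~\ref{def:iteration-set}\eqref{def:iteration-set-4}--\eqref{def:iteration-set-6} and the uniform bounds on states $(5)$ and $(6)$ from Lemma~\ref{lem:properties-of-state-5-and-6} are indispensable; the remaining arguments are the bookkeeping of~\cite[Lemmas~4.32 and~4.34]{BCF-2019}.
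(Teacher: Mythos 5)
Your overall plan follows the same route as the paper, which proves this lemma by carrying over \cite[Lemmas~4.32 and~4.34]{BCF-2019}: you correctly reproduce the key Step-\textbf{2} computations ($D_z\mathcal{M}_0(D\phi,\phi,\bm{\xi})=-\rho^{2-\gamma}D\varphi\cdot\bm{\nu}_{\rm sh}<0$ on $\Gamma_{\rm shock}$, the explicit negative value of $D_z\mathcal{M}_1$ at the constant states, the role of~\eqref{eq:M0-and-M1-relation-gamma-shock} in the $\varsigma_{\sigma_{\rm bc}}$-gluing, and the structural inputs from Definition~\ref{def:iteration-set}), and your treatment of the outer gluing with $v_{\sigma_{\rm cf}}^{(u,\btheta)}$ via Taylor expansion, of part~(ii), of~(v), and of the vacuity of the sonic-arc clauses when $\theta_{j-4}\geq\theta^{\rm s}$ is the intended argument.

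There is, however, one step whose stated justification does not work: the third property in part~(iv), namely $\partial_a\hat{\mathcal{M}}^{(j)}_{(u,\btheta)}\leq-\delta_{\rm bc}$ for each of $\partial_a=\partial_{q_x},\partial_{q_y},\partial_z$ separately. You propose to deduce this from the obliqueness bound $D_{\bm p}\mathcal{M}_{(u,\btheta)}\cdot\bm{\nu}_{\rm sh}\geq\delta_{\rm bc}$ together with the fact that both components of $\bm{\nu}_{\rm sh}$ are bounded away from zero in the polar frame; but positivity of one directional derivative plus nondegeneracy of the direction does not constrain the signs of the individual components of $D_{\bm p}\mathcal{M}$ (e.g.\ $D_{\bm p}\mathcal{M}=(10,-1)$ and $\bm{\nu}=(0.6,0.8)$ satisfy the obliqueness but not componentwise negativity). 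The componentwise sign is genuinely a separate fact: it is obtained by computing $D_{(\bm q,z)}\hat{\mathcal{M}}^{(j)}$ explicitly at the background state $(\bm q,z)=(\bm 0,0)$, i.e.\ the derivatives $\partial_{p_x},\partial_{p_y},\partial_z$ of $g^{\rm sh}_{\rm mod}$ at $(D\varphi_j,\varphi_j)$ in the rotated $(x,y)$--frame (exactly the computations recorded in Step~\textbf{2} of the proofs of Propositions~\ref{Prop:Esti4theta1a} and~\ref{Prop:Esti4theta1c}, following \cite[Lemma~4.32]{BCF-2019}), and then transferring it to general $(\bm q,z)$ using the closeness of $(D\phi,\phi)$ to $(D\phi_j,\phi_j)$ on $\Gamma_{\rm shock}\cap\mathcal{D}^j_{\epsilon_{\mathcal{M}}}$, the $C\sqrt{\delta_1}$--closeness of $D_{(\bm p,z)}\mathcal{M}_{(u,\btheta)}$ to $D_{(\bm p,z)}\mathcal{M}(D\phi,\phi,\cdot)$, and the affine structure outside the cut-off region. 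Relatedly, for part~(iii) you only argue the clause near $\Gamma_{\rm sonic}$; the second clause (homogeneity on all of $\Gamma_{\rm shock}$ when $\max\{\theta_1,\theta_2\}\leq\delta_1/N_1$) additionally requires the global closeness of $\phi$ to $\phi_j$ supplied by Definition~\ref{def:iteration-set}\eqref{def:iteration-set-1} in that parameter regime, which should be stated explicitly.
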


\subsubsection{Well-posedness of the boundary value problem~\eqref{eq:iteration-BVP}}
Since the complete definition of the iteration boundary value problem~\eqref{eq:iteration-BVP} has been given,
we now consider its well-posedness.

\begin{lemma}
\label{Lem:Wellpose4iterBVP}
Fix \(\gamma \geq 1, \, v_2 \in (v_{\min},0)\), and \(\theta_\ast \in (0, \theta^{\rm d})\).
Let constant $\sigma_2$ be from {\rm Step~\textbf{1}} of the proof
of {\rm Proposition~\ref{Prop:Esti4theta1c}}.
Let $\bar{\alpha}\in(0,\frac13]$ and $\epsilon_0>0$ be from {\rm Proposition~\ref{prop:apriori-estimates-on-u-prop4.12}}
and {\rm Lemma~\ref{lem:properties-of-G-alpha-theta}\eqref{lem:properties-of-G-alpha-theta-3}} respectively,
and let $\alpha\in(0,\frac{\bar{\alpha}}{2}]$ be from {\rm Definition~\ref{def:iteration-set}}.
Then there exist constants $\epsilon^{\rm (w)}\in(0,\epsilon_0],$
$\delta_1^{\rm (w)}\in(0,1),$ $N_1^{\rm (w)}\geq1,$
and $\alpha_1^\ast\in(0,\bar{\alpha}]$ depending only on $(\gamma, v_2,\theta_{\ast})$ such that,
whenever parameters $(\epsilon,\delta_1,N_1)$ in {\rm Definition~\ref{def:iteration-set}} are chosen
with $\epsilon\in(0,\epsilon^{\rm (w)}],$ $\delta_1\in(0,\delta_1^{\rm (w)}],$ and $N_1\geq N_1^{\rm (w)},$
the following statements hold{\rm :}

\smallskip
{\rm Case (i).}
If $\max\{\theta_1,\theta_2\} \leq \theta^{\rm s}+\sigma_2,$
then the boundary value problem~\eqref{eq:iteration-BVP} associated
with $(u,\btheta)\in\overline{\mathcal{K}^{\rm ext}}\cap\{\max\{\theta_1,\theta_2\} \leq \theta^{\rm s}+\sigma_2\}$
has a unique solution
$\hat{\phi}\in C^2(\Omega)\cap C^1(\overline{\Omega}\setminus({\Gamma^5_{\rm sonic}}\cup{\Gamma^6_{\rm sonic}}))\cap C^0(\overline{\Omega})$.
Moreover, there exists a constant $C>0$ depending only on $(\gamma, v_2,\theta_{\ast},\alpha)$ such that solution $\hat{\phi}$ satisfies
\begin{equation}\label{Eq:Sec4WP-iterBVP-onBdry}
\norm{\hat{\phi}}_{L^{\infty}(\Omega)}\leq C\,,\qquad\,\,
|\hat{\phi}(\bm{\xi})-\phi_{\btheta} (\bm{\xi})|\leq C \, {\rm dist}(\bm{\xi}, \, \Gamma^5_{\rm sonic}\cup\Gamma^6_{\rm sonic}) \,\,\,\,\, \text{in $\Omega$}\,,
\end{equation}
for $\phi_{\btheta} \defeq\max\{\varphi_5,\varphi_6\}+\frac12|\bm{\xi}|^2$.
Furthermore, for each $d\in(0,\epsilon_0),$ there exists a constant $C_d>0$ depending only on $(\gamma, v_2,\theta_{\ast},\alpha,d)$ such that
\begin{equation}\label{Eq:Sec4WP-iterBVP-interior}
\norm{\hat{\phi}}_{2,\alpha_1^{\ast},\Omega\setminus\mathcal{D}_d} \leq C_d\,.
\end{equation}

{\rm Case (ii).}
For each $\delta\in(0,\frac{\sigma_2}{2}),$ whenever $\max\{\theta_1,\theta_2\} \in [\theta^{\rm s}+\delta,\theta_{\ast}],$
the boundary value problem~\eqref{eq:iteration-BVP} associated
with $(u,\btheta)\in\overline{\mathcal{K}^{\rm ext}}\cap\{\max{\{\theta_1,\theta_2\}} \geq \theta^{\rm s}+\delta\}$
has a unique solution $\hat{\phi}\in C^2(\Omega)\cap C^1(\overline{\Omega}\setminus({\Gamma^5_{\rm sonic}}\cup{\Gamma^6_{\rm sonic}})) \cap C^0(\overline{\Omega})$.
Moreover, there exists a constant $C'>0$ depending only on $(\gamma, v_2,\theta_{\ast},\alpha,\delta)$ such that solution $\hat{\phi}$
satisfies~\eqref{Eq:Sec4WP-iterBVP-onBdry}{\rm,} while there exists a constant $C'_d>0$ depending only
on $(\gamma, v_2,\theta_{\ast},\alpha,\delta,d)$ such that~\eqref{Eq:Sec4WP-iterBVP-interior} holds.
\end{lemma}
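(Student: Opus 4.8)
The plan is to treat~\eqref{eq:iteration-BVP} as a quasilinear mixed boundary value problem of degenerate elliptic type and to solve it by the same scheme as in~\cite[\S4.5]{BCF-2019} and~\cite[Chapter~12]{ChenFeldman-RM2018}, the only new feature being that one or both of the sonic arcs $\Gamma^j_{\rm sonic}$ may now collapse to a single point. The key observation is that the coefficients $A_{ij}$ and the boundary operator $\mathcal{M}_{(u,\btheta)}$ were cut off in Lemmas~\ref{Lem:Property4ModifyEq}--\ref{Lem:Property4ModifyBdryCondi} precisely so that, uniformly over $(u,\btheta)\in\cl{\mathcal{K}^{\rm ext}}$ in the relevant range: the equation $\mathcal{N}_{(u,\btheta)}(\hat\phi)=0$ is strictly elliptic in $\cl\Omega\setminus\Gamma_{\rm sonic}$ with ellipticity comparable to $\mathrm{dist}(\bm\xi,\Gamma_{\rm sonic})$, and is uniformly elliptic up to any sonic boundary that has collapsed to a point; near each genuine sonic arc it reduces, in the polar coordinates~\eqref{Eq:DefPolarCoordiSec4}, to the model degenerate equation~\eqref{Eq4PsiInNewCoordi} with controlled lower order terms; the boundary condition on $\Gamma_{\rm shock}$ is uniformly oblique with $D_z\mathcal{M}_{(u,\btheta)}\le-\delta_{\rm bc}<0$; and $\mathcal{N}_{(u,\btheta)}(\phi_j)=0$, $\mathcal{M}_{(u,\btheta)}(D\phi_j,\phi_j,\cdot)=0$ hold near $\Gamma^j_{\rm sonic}$. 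Thus the proof amounts to checking that the hypotheses of the abstract solvability and \emph{a priori} estimate theorems for such problems are met, with constants uniform in the two ranges of $\max\{\theta_1,\theta_2\}$ singled out in Cases~(i) and~(ii).

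First I would settle uniqueness and the bounds in~\eqref{Eq:Sec4WP-iterBVP-onBdry}. If $\hat\phi_1,\hat\phi_2$ both solve~\eqref{eq:iteration-BVP}, their difference $w$ satisfies a linear homogeneous equation in $\Omega$, $w=0$ on $\Gamma_{\rm sonic}$, $\partial_\eta w=0$ on $\Gamma_{\rm sym}$, and a linear homogeneous oblique-derivative condition on $\Gamma_{\rm shock}$ whose zeroth-order coefficient is $\le-\delta_{\rm bc}$; by the maximum principle for mixed Dirichlet / oblique-derivative / Neumann problems (combining~\cite{Gilbarg-Trudinger83,Lieberman-86} with the treatment of the degenerate sonic boundary in~\cite{ChenFeldman-RM2018}), $w\equiv0$. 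For the uniform $L^\infty$ bound and the linear decay $|\hat\phi-\phi_\btheta|\le C\,\mathrm{dist}(\bm\xi,\Gamma_{\rm sonic})$, I would use $\phi_j$ as a comparison state in a neighbourhood of $\Gamma^j_{\rm sonic}$ (where $\mathcal{N}_{(u,\btheta)}(\phi_j)=0$) together with barriers linear in the polar variable $x$ of~\eqref{Eq:DefPolarCoordiSec4}, as in the corresponding barrier construction in~\cite{BCF-2019}; away from $\Gamma_{\rm sonic}$ the bound is the classical maximum estimate for uniformly elliptic oblique-derivative problems, using $D_z\mathcal{M}_{(u,\btheta)}<0$ and the slip condition.

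For existence I would freeze the principal coefficients at $D\bar\phi$ for a given $\bar\phi$, solve the resulting linear mixed problem, and run a fixed-point argument (method of continuity or Leray--Schauder degree) in a weighted Hölder space adapted to the degeneracy. In Case~(i), that space is built from the parabolic-type weighted norms of Definition~\ref{Def:ParabolicNorm01} near any genuine sonic arc and from the standard Hölder norm elsewhere, transplanted to the fixed rectangle $\qiter$ by $G_{2,\gsh}\circ\gtheta$; solvability of the linear problem near a sonic arc comes from the degenerate elliptic Schauder theory of~\cite[Chapter~4]{ChenFeldman-RM2018} applied to the model equation~\eqref{Eq4PsiInNewCoordi} after the anisotropic rescaling used in the proofs of Propositions~\ref{Prop:Esti4theta1a}--\ref{Prop:Esti4theta1b}, and elsewhere from the standard theory of oblique-derivative problems. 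In Case~(ii) the subsonic state renders the equation uniformly elliptic up to the collapsed sonic point, but one must add corner estimates at the corner $P_0^{l-4}$ where $\Gamma_{\rm shock}$ meets $\Gamma_{\rm sym}$; these come from the hodograph transform together with the corner oblique-derivative estimates of~\cite[\S4.3]{ChenFeldman-RM2018}, exactly as in the proof of Proposition~\ref{Prop:Esti4theta1d}. In both cases the resulting \emph{a priori} bounds supply the compactness and the a priori control needed to close the fixed-point argument; the interior estimate~\eqref{Eq:Sec4WP-iterBVP-interior} on $\Omega\setminus\mathcal{D}_d$ is then the uniformly elliptic Schauder estimate, valid since there $A_{ij}(\bm p,\cdot)\in C^{3/4}$ and $\mathcal{M}_{(u,\btheta)}$ is uniformly oblique.

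The hard part is the \emph{uniformity} of all constants as a genuine sonic arc shrinks to a point: in Case~(i) as $\max\{\theta_1,\theta_2\}\to(\theta^{\rm s})^-$ and in Case~(ii) as $\max\{\theta_1,\theta_2\}\downarrow\theta^{\rm s}+\delta$, the near-degenerate region becomes small and changes shape, so the fixed-arc estimates cannot be invoked directly. This is exactly why the two overlapping cases are introduced, and why the anisotropic / parabolic rescalings of Cases~2--4 in \S\ref{SubSec-EstimatesNearSonic5} — which, together with the monotonicity of the reflected shock data in $\theta_1$, produce scale-invariant local problems — are the technical engine of the argument. A secondary technical nuisance is that near $\Gamma_{\rm sonic}$ the coefficients $A_{ij}(\bm p,\cdot)$ are merely Hölder (not Lipschitz) in $\bm\xi$, so the degenerate Schauder theory must be invoked in the form that needs only this regularity.
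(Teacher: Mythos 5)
Your plan is workable, but it is not how the paper argues: the paper's proof of this lemma contains essentially no new degenerate-elliptic analysis. For each fixed $(u,\btheta)\in\cl{\mathcal{K}^{\rm ext}}$ it transplants~\eqref{eq:iteration-BVP} to the standard domain of Appendix~\ref{Sec:Appendix-B1} by setting $v\defeq\hat{\phi}\circ(L_h\circ\gtheta)^{-1}$, with mixed ``sonic'' data $g_{\rm so}$ obtained by interpolating $\phi_5$ and $\phi_6$ through the cut-off~\eqref{eq:varsigma-cut-off}, and then verifies --- using Lemma~\ref{Lem:Propt4gtheta} and the structural Lemmas~\ref{Lem:Property4ModifyEq}--\ref{Lem:Property4ModifyBdryCondi} --- that the hypotheses of the prepackaged well-posedness results are met once $\epsilon,\delta_1$ are small and $N_1$ is large: Proposition~\ref{Prop:AppendixB01} in Case~(i), Proposition~\ref{Prop:AppendixB02} when exactly one of $\theta_1,\theta_2$ lies in $[\theta^{\rm s}+\delta,\theta_\ast]$, and Proposition~\ref{Prop:AppendixB03} when both do. Existence, uniqueness, the bounds~\eqref{Eq:Sec4WP-iterBVP-onBdry}, and the interior estimate~\eqref{Eq:Sec4WP-iterBVP-interior} are then read off from those propositions (which allow $t_0,t_h=0$, so the uniformity as a sonic arc shrinks to a point is already built into their constants). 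Your route --- maximum principle and barriers for uniqueness and the distance bound, frozen coefficients plus a continuity/fixed-point argument with degenerate Schauder theory near the arcs and hodograph corner estimates at a collapsed sonic point --- is, in effect, a re-derivation of the interior of those cited results (\cite{ChenFeldman-RM2018}, Propositions~4.7.2 and~4.8.7); it buys self-containedness at the cost of redoing a long analysis that the paper deliberately quotes, and it loses the economy of having the shrinking-arc uniformity absorbed into a black box whose structural constants are supplied once and for all by Lemmas~\ref{Lem:Property4ModifyEq}--\ref{Lem:Property4ModifyBdryCondi}.

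Two adjustments if you do carry your route out. First, the conclusions of this lemma stop at~\eqref{Eq:Sec4WP-iterBVP-interior}, i.e.\ away from the sonic neighbourhoods $\mathcal{D}_d$; the weighted parabolic near-sonic estimates and the anisotropic rescalings of Cases~1--4 in \S\ref{SubSec-EstimatesNearSonic5} that you call the ``technical engine'' belong to the later \emph{a priori} estimates for $\hat{u}$ (Step~\textbf{3} of the proof of Lemma~\ref{lem:BCF-cor4.45}), not to the well-posedness statement here, where the near-sonic information needed is only the barrier-type bound $|\hat{\phi}-\phi_\btheta|\leq C\,{\rm dist}(\bm{\xi},\Gamma^5_{\rm sonic}\cup\Gamma^6_{\rm sonic})$. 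Second, your assertion that the modified equation is uniformly elliptic up to any collapsed sonic point is accurate only when the corresponding $\theta_{j-4}\geq\theta^{\rm s}+\delta$, by Lemma~\ref{Lem:Property4ModifyEq}\eqref{item2-Lem:Property4ModifyEq}; for $\theta_{j-4}\in[\theta^{\rm s},\theta^{\rm s}+\sigma_2]$ the coefficients are still of the degenerate type $A^{(2)}_{ij}$, which is exactly why Case~(i) carries the degenerate framework up to $\theta^{\rm s}+\sigma_2$ and why the constants in Case~(ii) depend on $\delta$.
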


\begin{proof}
For {\rm Case (i)}, we fix
$(u,\btheta)\in\overline{\mathcal{K}^{\rm ext}}\cap\{\max\{\theta_1,\theta_2\}\leq \theta^{\rm s}+\sigma_2\}$
and fix a constant $h>0$.
Using $\gtheta$ defined by~\eqref{Eq:Def-gtheta}, we rewrite the boundary value problem~\eqref{eq:iteration-BVP} associated with $(u,\btheta)$
as a new boundary value problem for the unknown function:
\begin{equation*}
v(\bm{x}) \defeq \hat{\phi} \circ (L_h\circ\gtheta)^{-1}(\bm{x})\qquad\,
\text{for any $\bm{x} \in
L_h\circ\gtheta(\Omega(u,\btheta))$}\,,
\end{equation*}
where $\bm{x} \defeq L_{h}(s,t')=(\frac{s+1}{2}h,t')$ and $(s,t')=\gtheta(\bm{\xi})$ for any $\bm{\xi}\in\Omega(u,\btheta)$.
We write
\begin{equation*}
\Gamma_0\defeq L_h\circ\gtheta(\Gamma^6_{\rm sonic}),\,\,\,
\Gamma_1\defeq L_h\circ\gtheta(\Gamma_{\rm shock}(u,\btheta)),\,\,\,
\Gamma_2\defeq L_h\circ\gtheta(\Gamma^5_{\rm sonic}),\,\,\,
\Gamma_3\defeq L_h\circ\gtheta(\Gamma_{\rm sym})\,.
\end{equation*}
Moreover, we take
\begin{equation*}
g_{\rm so}(\bm{x}) \defeq (1-\varsigma_{h}) \phi_5 \circ (L_h\circ\gtheta)^{-1}(\bm{x})+\varsigma_{h} \phi_6 \circ (L_h\circ\gtheta)^{-1}(\bm{x})\,,
\end{equation*}
where $\varsigma_{h}\defeq\varsigma_1(\frac12+\frac{2}{h}x_1)$, and the cut-off function
$\varsigma_1(\cdot)\in C^{\infty}(\mathbb{R})$ is given by~\eqref{eq:varsigma-cut-off}.
Using Lemmas~\ref{Lem:Propt4gtheta} and~\ref{Lem:Property4ModifyEq}--\ref{Lem:Property4ModifyBdryCondi},
we can choose suitable constants $\epsilon^{(\rm w)}\in(0,\epsilon]$, $\delta_1^{(\rm w)}>0$,
and $N_1^{(\rm w)}\geq1$ such that the new boundary value problem for $v$ satisfies the conditions in Proposition~\ref{Prop:AppendixB01} when
$\epsilon\in(0,\epsilon^{(\rm w)}]$, $\delta_1\in(0,\delta_1^{(\rm w)}]$,
and $N_1 \geq N_1^{(\rm w)}$.
Therefore, Proposition~\ref{Prop:AppendixB01} gives  the existence and uniqueness
of a solution
$\hat{\phi}\in C(\overline{\Omega})\cap C^{2,\alpha_1}(\overline{\Omega}\setminus({\Gamma^5_{\rm sonic}}
\cup{\Gamma^6_{\rm sonic}}))$ of the boundary value problem~\eqref{eq:iteration-BVP}
satisfying~\eqref{Eq:Sec4WP-iterBVP-onBdry}--\eqref{Eq:Sec4WP-iterBVP-interior}.

For Case (ii), fix any $\delta\in(0,\frac{\sigma_2}{2})$.
In the subcase: $\min\{\theta_1,\theta_2\}<\theta^{\rm s}+\delta \leq \max\{\theta_1,\theta_2\}$
(without loss of generality, we consider the case: $\theta_1<\theta^{\rm s}+\delta \leq \theta_2$),
one can follow the same analysis as above to check all the conditions for Proposition~\ref{Prop:AppendixB02}, from which we obtain the existence of a unique $\hat{\phi}$ satisfying
\eqref{Eq:Sec4WP-iterBVP-onBdry}--\eqref{Eq:Sec4WP-iterBVP-interior}
for any $(u,\btheta)\in\overline{\mathcal{K}^{\rm ext}}\cap\{\theta_1<\theta^{\rm s}+\delta \leq \theta_2\}$.
In the other subcase: $\min\{\theta_1,\theta_2\}\in [\theta^{\rm s}+\delta,\theta_{*}]$,
we can apply Proposition~\ref{Prop:AppendixB03} to prove the same results.
\end{proof}

For each $(u,\btheta)\in\overline{\mathcal{K}^{\rm ext}}$, the corresponding pseudo-subsonic
region $\Omega=\Omega(u,\btheta)$ depends continuously on $(u,\btheta)$.
We rewrite~\eqref{eq:iteration-BVP} as a boundary value problem for
\begin{equation}\label{Eq:phiHat2uHhat}
\hat{u}(s,t)\defeq (\hat{\phi}-\frac12|\bm{\xi}|^2-\varphi_{\btheta}^{\ast})\circ\mathfrak{F}_{(u,\btheta)}(s,t)
\qquad\, \text{in $\qiter$}\,,
\end{equation}
where $\varphi_{\btheta}^{\ast}$ is given by Definition~\ref{def:varphi-theta-star},
and $\mathfrak{F}=\mathfrak{F}_{(u,\btheta)}$ is given by Definition~\ref{def:u-to-phi}\eqref{item2-def:u-to-phi}.
We obtain
\begin{equation}\label{Eq:iterBVP4uHat}
\begin{cases}
\begin{aligned}
\,& \sum_{i,j=1}^2\mathcal{A}_{ij}^{(u,\btheta)}(D_{(s,t)}\hat{u},s,t)\partial_{i} \partial_{j}\hat{u}
+\sum_{i=1}^2\mathcal{A}_{i}^{(u,\btheta)}(D_{(s,t)}\hat{u},s,t)\partial_{i}\hat{u} = f^{(u,\btheta)}
&& \quad\text{in $\qiter$}\,,\\
& \mathscr{M}_{(u,\btheta)}(D_{(s,t)}\hat{u},\hat{u},s) = 0
&& \quad\text{on $\partial_{\rm sh}\qiter$}\,,\\[1mm]
& \hat{u} = 0
&& \quad\text{on $\partial_{\rm so}\qiter$}\,,\\[-1mm]
& \mathscr{B}^{\rm (w)}_{(u,\btheta)}(D_{(s,t)}\hat{u},s)\defeq \sum_{i=1}^{2}b^{\rm (w)}_{i}(s)\partial_{i}\hat{u} = 0
&& \quad\text{on $\partial_{\rm w}\qiter$}\,,
\end{aligned}\end{cases}
\end{equation}
where $(\partial_1,\partial_2) \defeq (\partial_s,\partial_t)$
and
\begin{equation*}
\partial_{\rm sh}\qiter\defeq (-1,1)\times\{1\}\,,\quad
\partial_{\rm so} \qiter \defeq \{-1,1\}\times(0,1)\,,\quad  \partial_{\rm w}\qiter \defeq (-1,1)\times\{0\}\,.
\end{equation*}

From Lemmas~\ref{lem:properties-of-G-alpha-theta} and~\ref{Lem:Property4ModifyEq}--\ref{Lem:Wellpose4iterBVP},
we can obtain the following results; see~\cite[Lemma~4.36]{BCF-2019}.

\begin{lemma}\label{Lem:UniformlyConvergeSeq}
For every $(u,\btheta)\in\overline{\mathcal{K}^{\rm ext}},$
let $\mathcal{A}_{ij}^{(u,\btheta)}, \mathcal{A}_{i}^{(u,\btheta)}, f^{(u,\btheta)},  \mathscr{M}_{(u,\btheta)}, \mathscr{B}^{\rm (w)}_{(u,\btheta)},$
and $b^{\rm (w)}_{i}$ be given as in~\eqref{Eq:iterBVP4uHat}.
Then the following properties hold{\rm :}
\begin{enumerate}[{\rm (i)}]
\item $\mathcal{A}_{ij}^{(u,\btheta)}, \mathcal{A}_{i}^{(u,\btheta)}\in C(\mathbb{R}^2\times \qiter),$ $f^{(u,\btheta)}\in C(\qiter),$
$\mathscr{M}_{(u,\btheta)}\in C(\mathbb{R}^2\times\mathbb{R}\times\partial_{\rm sh}\qiter),$
and $\mathscr{B}^{\rm (w)}_{(u,\btheta)}\in C(\mathbb{R}^2\times\mathbb{R}\times\partial_{\rm w}\qiter)${\rm ;}
	
\item If a sequence $\{(u_k,\btheta_k)\}_{k\in\mathbb{N}}\subseteq\overline{\mathcal{K}^{\rm ext}}$
converges to $(u,\btheta)\in\overline{\mathcal{K}^{\rm ext}}$
in
$C^{2,\alpha}_{(*)}(\qiter)\times[0,\btheta_*]^2$,
then the following sequences converge uniformly{\rm :}
	\begin{itemize}
		\item $(\mathcal{A}_{ij}^{(u_k,\btheta_k)}, \mathcal{A}_{i}^{(u_k,\btheta_k)})\to(\mathcal{A}_{ij}^{(u,\btheta)}, \mathcal{A}_{i}^{(u,\btheta)})$
on compact subsets of $\mathbb{R}^2\times \qiter${\rm ,}
		\item $f^{(u_k,\btheta_k)}\to f^{(u,\btheta)}$ on compact subsets of $\qiter${\rm ,}
		\item $\mathscr{M}_{(u_k,\btheta_k)}\to \mathscr{M}_{(u,\btheta)}$ on compact subsets
           of $\mathbb{R}^2\times\mathbb{R}\times\partial_{\rm sh}\qiter${\rm ,}
		\item $\mathscr{B}^{\rm (w)}_{(u_k,\btheta_k)}\to \mathscr{B}^{\rm (w)}_{(u,\btheta)}$ on compact subsets of $\mathbb{R}^2\times\mathbb{R}\times\partial_{\rm w}\qiter$.
	\end{itemize}
\end{enumerate}
\end{lemma}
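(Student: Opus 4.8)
The statement to prove, Lemma~\ref{Lem:UniformlyConvergeSeq}, asserts continuity of the coefficient functions of the reformulated iteration boundary value problem~\eqref{Eq:iterBVP4uHat} with respect to $(\bm{p}, s, t)$ (resp.\ $(\bm{p}, z, s)$) for each fixed $(u,\btheta)$, together with uniform-on-compacts convergence of these coefficients whenever $(u_k,\btheta_k) \to (u,\btheta)$ in $C^{2,\alpha}_{(\ast)}(\qiter) \times [0,\theta_\ast]^2$. The plan is to trace how each coefficient is assembled and then invoke the regularity and continuity properties already established for the building blocks. First I would recall that the coefficients of~\eqref{Eq:iterBVP4uHat} arise from~\eqref{eq:iteration-BVP} by the change of variables $\hat{u} = (\hat{\phi} - \tfrac12|\bm{\xi}|^2 - \vphithetastar)\circ\Futheta$ in~\eqref{Eq:phiHat2uHhat}; thus $\mathcal{A}_{ij}^{(u,\btheta)}$, $\mathcal{A}_i^{(u,\btheta)}$, and $f^{(u,\btheta)}$ are built from the coefficients $A_{ij}(\bm{p},\bm{\xi})$ of $\mathcal{N}_{(u,\btheta)}$ in~\eqref{eq:def-of-Aij}, composed with $\Futheta$ and with the Jacobian factors $D\Futheta$, $D^2\Futheta$, while $\vphithetastar$ enters through $D\vphithetastar$ and $D^2\vphithetastar$. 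Similarly, $\mathscr{M}_{(u,\btheta)}$ is $\mathcal{M}_{(u,\btheta)}$ from~\eqref{eq:def-of-M-u-theta} composed with $\Futheta|_{\partial_{\rm sh}\qiter}$ and adjusted by $D(\vphithetastar\circ\Futheta)$, and $b_i^{\rm (w)}$ comes from the slip condition on $\Gamma_{\rm sym}$ after the same transformation.

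Next, for property (i), I would argue that $A_{ij}(\bm{p},\bm{\xi})$ is continuous (indeed $C^{1,\alpha}$ in $\bm{p}$ and $C^{3/4}$ in $\bm{\xi}$ away from $\Gamma_{\rm sonic}$, and $L^\infty$ up to it) by Lemma~\ref{Lem:Property4ModifyEq}, that $\Futheta \in C^{1,\alpha}(\cl{\qiter})$ by Lemma~\ref{lem:properties-of-G-alpha-theta}\eqref{lem:properties-of-G-alpha-theta-4} (and $C^3$ near the sonic arcs, $C^{2,\alpha}$ in the interior, by~\eqref{lem:properties-of-G-alpha-theta-6}), that $\vphithetastar$ is smooth by construction in Definition~\ref{def:varphi-theta-star}, and that $\mathcal{M}_{(u,\btheta)} \in C^3(\mathbb{R}^2\times\mathbb{R}\times\cl{\Omega})$ by Lemma~\ref{Lem:Property4ModifyBdryCondi}\eqref{item1-Lem:Property4ModifyBdryCondi}. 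Composition and products of such functions are continuous on $\mathbb{R}^2 \times \qiter$ (resp.\ the appropriate boundary portions), and the standard H\"older norm bounds on $\Futheta$ keep everything finite. One point requiring care: the second-derivative coefficients of the transformed operator involve $D^2\Futheta$, which is only controlled in the weighted norms near $\partial_{\rm so}\qiter$; but the conclusion only claims continuity, not uniform bounds up to $\{|s|=1\}$, so the weighted estimates of Proposition~\ref{prop:properties-of-gshock} and Lemma~\ref{lem:properties-of-G-alpha-theta} suffice to guarantee that the coefficients are well-defined and continuous on the open rectangle $\qiter$.

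For property (ii), the key input is the continuous dependence of all the building blocks on $(u,\btheta)$ when $(u_k,\btheta_k)\to(u,\btheta)$ in $C^{2,\alpha}_{(\ast)}(\qiter)\times[0,\theta_\ast]^2$. I would cite: Lemma~\ref{lem:properties-of-G-alpha-theta}\eqref{lem:properties-of-G-alpha-theta-4} and~\eqref{lem:properties-of-G-alpha-theta-8}, which give convergence $\Futheta[u_k,\btheta_k] \to \Futheta[u,\btheta]$ in $C^{1,\alpha}(\cl{\qiter})$ and in $C^{2,\alpha}(\cl{K})$ on interior compacta, together with convergence of $\varphi^{(u_k,\btheta_k)}\circ\mathfrak{F}_{(u_k,\btheta_k)}$ and $\varphi_{\btheta_k}^\ast\circ\mathfrak{F}_{(u_k,\btheta_k)}$; the continuity of $\gsh^{(u,\btheta)}$ in $(u,\btheta)$; the continuity of the cut-off families $\zeta_2^{(\epsilon,\btheta)}$ in $\btheta$ and of $v_\sigma^{(u,\btheta)}\circ\Futheta$ (property (b) in Step~\textbf{3} of the construction of $\mathcal{N}_{(u,\btheta)}$, cited from~\cite[Lemma~4.26]{BCF-2019}); and the $C^3$-dependence of $\mathcal{M}_{(u,\btheta)}$ on its data. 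Assembling these, on any compact subset of $\mathbb{R}^2\times\qiter$ the composed/product expressions converge uniformly, which is exactly the claim. The main obstacle I anticipate is bookkeeping rather than conceptual: one must verify that the rescaled subsonic coefficients $A^{\rm subs}_{ij}$ and the polar-coordinate operator $\mathcal{N}^{\rm polar}_{(u,\btheta)}$ near $\Gamma_{\rm sonic}$ — which involve the cut-offs $\varsigma_{\sigma_{\rm cf}}$, $\zeta_1$, and the remainder terms $\co_j^{\rm mod}$ from~\eqref{Eq:Def-co-k} — transform and converge correctly, since these are where the degeneracy of the equation concentrates and where both sonic arcs may collapse to single points; but since these are all built from explicit smooth functions of $(\psi_x,\psi_y,\psi,x,c_\btheta)$ and of $Dv_{\sigma_{\rm cf}}^{(u,\btheta)}$, and since $c_\btheta$ and $x = x(\bm{\xi})$ depend continuously on $\btheta$, the argument reduces to the same composition-continuity principle, exactly as in~\cite[Lemma~4.36]{BCF-2019}, to which I would defer for the routine details.
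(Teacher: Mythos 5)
Your proposal is correct and follows essentially the same route as the paper, which proves this lemma by exactly the citation chain you assemble — the regularity and $(u,\btheta)$-dependence of $\Futheta$, $\vphithetastar$, and $\gsh^{(u,\btheta)}$ from Lemma~\ref{lem:properties-of-G-alpha-theta}, the properties of $A_{ij}$ and $\mathcal{M}_{(u,\btheta)}$ from Lemmas~\ref{Lem:Property4ModifyEq}--\ref{Lem:Property4ModifyBdryCondi}, and a deferral to \cite[Lemma~4.36]{BCF-2019} for the routine composition-continuity bookkeeping. Your observation that only interior (compact-subset) control of $D^2\Futheta$ is needed, so the weighted norms near $\partial_{\rm so}\qiter$ suffice, is the right reading of the statement.
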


\begin{corollary}\label{corol:ConvergeInHolder4uHat}
Let $\bar{\alpha}\in(0,1)$ be from {\rm Proposition~\ref{prop:apriori-estimates-on-u-prop4.12},}
and let
$\alpha_1^*\in(0,\bar{\alpha}],$
$\epsilon^{({\rm w})}, \delta_1^{({\rm w})}$, and $N_1^{({\rm w})}$
be from {\rm Lemma~\ref{Lem:Wellpose4iterBVP}}.
Let
$\epsilon,\delta_1,$ and $N_1$ from {\rm Definition~\ref{def:iteration-set}} satisfy
 $\epsilon\in(0,\epsilon^{({\rm w})}],$ $\delta_1\in(0,\delta_1^{({\rm w})}],$ and $N_1\geq N_1^{({\rm w})}$.
\begin{enumerate}[{\rm (i)}]
\item For each $(u,\btheta)\in\overline{\mathcal{K}^{\rm ext}},$ $\hat{\phi}$ solves
the boundary value problem~\eqref{eq:iteration-BVP} if and only if $\hat{u}$ given
by~\eqref{Eq:phiHat2uHhat} solves the boundary value problem~\eqref{Eq:iterBVP4uHat}.
Thus,~\eqref{Eq:iterBVP4uHat} has a unique solution
$\hat{u}\in C^2(\qiter)\cap C^1(\overline{\qiter}\setminus\overline{\partial_{\rm so}\qiter})\cap C(\overline{\qiter})$.
	Furthermore, there exists a constant $C\geq1$ depending on $(\gamma, v_2,\theta_*,\alpha)$ such that
	\begin{equation*}
	    |\hat{u}(s,t)|\leq C(1-|s|) \qquad\text{in $\qiter$} \, .
  \end{equation*}
  For each $\hat{d}\in(0,\frac12),$ there exists $C_{\hat{d}}>0$ depending
  on $(\gamma, v_2,\theta_*,\hat{d},\alpha)$ such that
  \begin{equation*}
		\|\hat{u}\|_{2,\alpha_1^*,\qiter\cap\{1-|s|>\hat{d}\}}
	   \leq C_{\hat{d}} \, .
	\end{equation*}
	
\item Let $\{(u_k,\btheta_k)\}_{k\in\mathbb{N}} \subseteq \overline{\mathcal{K}^{\rm ext}}$ converge
to $(u,\btheta)\in\overline{\mathcal{K}^{\rm ext}}$
in $C^1(\overline{\qiter})\times[0,\theta_*]^2,$ and let $\hat{u}_k$ be the solution of
the boundary value problem~\eqref{Eq:iterBVP4uHat} associated with $(u_k,\btheta_k)$.
Then there exists a unique solution
$\hat{u}\in C^{2}(\qiter)\cap C^1(\overline{\qiter}\setminus\overline{\partial_{\rm so}\qiter})\cap C(\overline{\qiter})$
of the boundary value problem~\eqref{Eq:iterBVP4uHat}
associated with $(u,\btheta)$.
Moreover, $\hat{u}_k$ converges to $\hat{u}$ uniformly in $\overline{\qiter}$ and,
for any $\alpha'\in[0,\alpha_1^*)${\rm ,}
	\begin{itemize}
		\item $\hat{u}_k\to \hat{u}$ in $C^{1,\alpha'}(K)$ for any compact subset
$K\subseteq\overline{\qiter}\setminus\overline{\partial_{\rm so}\qiter}${\rm ,}
		\item $\hat{u}_k\to \hat{u}$ in $C^{2,\alpha'}(K)$ for any compact subset $K\subseteq\qiter$.

	\end{itemize}

    \smallskip
    \item If $(u,\btheta)\in\overline{\mathcal{K}},$ then $(u,\btheta)$ satisfies
    {\rm Definition~\ref{def:iteration-set}\eqref{def:iteration-set-7}} with nonstrict inequality in~\eqref{eq:iteration-u-uhat-estimate}.
\end{enumerate}
\end{corollary}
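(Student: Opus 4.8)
The plan is to transfer everything from the physical domain $\Omega=\Omega(u,\btheta)$ to the fixed standard domain $\qiter$ by means of the diffeomorphism $\Futheta=(\gtheta)^{-1}\circ G_{2,\gsh^{(u,\btheta)}}^{-1}$, and then invoke the well-posedness already proved in Lemma~\ref{Lem:Wellpose4iterBVP}. Concretely, for each $(u,\btheta)\in\cl{\mathcal{K}^{\rm ext}}$, I would first record the routine fact that a function $\hat\phi$ solves the boundary value problem~\eqref{eq:iteration-BVP} in $\Omega$ if and only if $\hat u$ defined by~\eqref{Eq:phiHat2uHhat} solves~\eqref{Eq:iterBVP4uHat} in $\qiter$; this is a change of variables under $\Futheta$, legitimate because $\Futheta$ is invertible with $\norm{\Futheta}_{C^{1,\alpha}(\cl{\qiter})}$ bounded and $\norm{\Futheta^{-1}}_{C^{2,\alpha}}$ bounded on compact subsets of $\qiter$, by Lemma~\ref{lem:properties-of-G-alpha-theta}\eqref{lem:properties-of-G-alpha-theta-4},\eqref{lem:properties-of-G-alpha-theta-6}. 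Existence and uniqueness of $\hat\phi$, hence of $\hat u$, then follow at once from Lemma~\ref{Lem:Wellpose4iterBVP}, whose two cases $\max\{\theta_1,\theta_2\}\le\theta^{\rm s}+\sigma_2$ and $\max\{\theta_1,\theta_2\}\ge\theta^{\rm s}+\delta$ already cover all of $\cl{\mathcal{K}^{\rm ext}}$. The quantitative bounds in part~(i) are obtained by pulling back~\eqref{Eq:Sec4WP-iterBVP-onBdry}--\eqref{Eq:Sec4WP-iterBVP-interior}: from the construction of $\gtheta$ and $G_{2,\gsh}$ the distance ${\rm dist}(\bm\xi,\Gamma^5_{\rm sonic}\cup\Gamma^6_{\rm sonic})$ is comparable, uniformly in $(u,\btheta)$, to $1-|s|$, so the second estimate in~\eqref{Eq:Sec4WP-iterBVP-onBdry} becomes $|\hat u(s,t)|\le C(1-|s|)$ on $\qiter$, while~\eqref{Eq:Sec4WP-iterBVP-interior} on $\Omega\setminus\mathcal{D}_d$, combined with the uniform $C^{2,\alpha}$-bounds on $\Futheta$ over compact subsets of $\qiter$, transforms into $\norm{\hat u}_{2,\alpha_1^\ast,\qiter\cap\{1-|s|>\hat d\}}\le C_{\hat d}$ with $\hat d=\hat d(d)$.

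For part~(ii) I would combine these uniform estimates with the continuity of the coefficients. Given $(u_k,\btheta_k)\to(u,\btheta)$ in $C^1(\cl{\qiter})\times[0,\theta_\ast]^2$, Lemma~\ref{Lem:UniformlyConvergeSeq} yields uniform convergence of $\mathcal{A}^{(u_k,\btheta_k)}_{ij}$, $\mathcal{A}^{(u_k,\btheta_k)}_i$, $f^{(u_k,\btheta_k)}$, $\mathscr{M}_{(u_k,\btheta_k)}$, and $\mathscr{B}^{\rm(w)}_{(u_k,\btheta_k)}$ on compact subsets of their domains, while the solutions $\hat u_k$ satisfy the uniform bounds from part~(i): $|\hat u_k(s,t)|\le C(1-|s|)$ on $\qiter$ and $\norm{\hat u_k}_{2,\alpha_1^\ast,\qiter\cap\{1-|s|>\hat d\}}\le C_{\hat d}$. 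By the Arzel\`a--Ascoli theorem applied on an exhaustion of $\qiter$ by compact sets and a diagonal argument, a subsequence of $\{\hat u_k\}$ converges in $C^{2,\alpha'}$ on compact subsets of $\qiter$ and in $C^{1,\alpha'}$ on compact subsets of $\cl{\qiter}\setminus\cl{\partial_{\rm so}\qiter}$ for every $\alpha'\in[0,\alpha_1^\ast)$; the decay bound $|\hat u_k|\le C(1-|s|)$ upgrades this to uniform convergence on all of $\cl{\qiter}$. Passing to the limit in~\eqref{Eq:iterBVP4uHat}, using the convergence of the coefficients, identifies the limit as a solution of~\eqref{Eq:iterBVP4uHat} associated with $(u,\btheta)$ (the decay bound guaranteeing that $\hat u=0$ on $\partial_{\rm so}\qiter$ is inherited). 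By the uniqueness from part~(i), the limit is the solution $\hat u$; since every subsequence has a further subsequence converging to the same $\hat u$, the full sequence converges as claimed.

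Part~(iii) I would deduce by a density and lower-semicontinuity argument. If $(u,\btheta)\in\cl{\mathcal{K}}$, pick $(u_k,\btheta_k)\in\mathcal{K}$ with $(u_k,\btheta_k)\to(u,\btheta)$ in $C^{2,\alpha}_{(\ast)}(\qiter)\times[0,\theta_\ast]^2$, hence in particular in $C^1(\cl{\qiter})\times[0,\theta_\ast]^2$. Each $(u_k,\btheta_k)$ satisfies Definition~\ref{def:iteration-set}\eqref{def:iteration-set-7}, so the corresponding solution $\hat u_k$ of~\eqref{Eq:iterBVP4uHat} obeys $\norm{\hat u_k-u_k}^{(\ast)}_{2,\alpha,\qiter}<\delta_3$ by~\eqref{eq:iteration-u-uhat-estimate}; together with the uniform bound $\norm{u_k-u^{({\rm norm})}}^{(\ast)}_{2,\alpha,\qiter}\le\mathscr{K}_1(\theta_\ast)$ from Definition~\ref{def:iteration-set}\eqref{def:iteration-set-1}, this gives a uniform bound on $\norm{\hat u_k}^{(\ast)}_{2,\alpha,\qiter}$. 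Since $C^{2,\alpha}_{(\ast)}(\qiter)$ embeds compactly into $C^{2,\tilde\alpha}_{(\ast)}(\qiter)$ for $\tilde\alpha<\alpha$, and by part~(ii) $\hat u_k\to\hat u$ locally in $C^{2,\alpha'}$, it follows that $\hat u\in C^{2,\alpha}_{(\ast)}(\qiter)$ and $\hat u_k-u_k\to\hat u-u$ in $C^{2,\tilde\alpha}_{(\ast)}(\qiter)$. By lower semicontinuity of the weighted norm $\norm{\cdot}^{(\ast)}_{2,\alpha,\qiter}$ under $C^{2,\tilde\alpha}_{(\ast)}$-convergence, $\norm{\hat u-u}^{(\ast)}_{2,\alpha,\qiter}\le\liminf_k\norm{\hat u_k-u_k}^{(\ast)}_{2,\alpha,\qiter}\le\delta_3$, which is exactly Definition~\ref{def:iteration-set}\eqref{def:iteration-set-7} with the nonstrict inequality in~\eqref{eq:iteration-u-uhat-estimate}.

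The hard part will be the bookkeeping in the change of variables: the map $\Futheta$ itself depends on the unknown through $\gsh^{(u,\btheta)}$, so one must lean carefully on the uniform $C^{1,\alpha}$ and local $C^{2,\alpha}$ control of $\Futheta$ (and of its dependence on $(u,\btheta)$) from Lemma~\ref{lem:properties-of-G-alpha-theta}, and must check that the weighted norm of Definition~\ref{def:weighted-holder-norm-qiter} on the standard domain is equivalent, uniformly in $(u,\btheta)\in\cl{\mathcal{K}^{\rm ext}}$, to the pulled-back physical norm controlled in Lemma~\ref{Lem:Wellpose4iterBVP}; the equivalence ${\rm dist}(\bm\xi,\Gamma_{\rm sonic})\sim 1-|s|$ is the linchpin here. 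The degeneracy of the equation near $\partial_{\rm so}\qiter$ demands care when passing to the limit in part~(ii), but the decay estimate $|\hat u_k|\le C(1-|s|)$ resolves it. Beyond these points the argument parallels~\cite[Lemma~4.36]{BCF-2019} and~\cite[Corollary~12.3.2]{ChenFeldman-RM2018}.
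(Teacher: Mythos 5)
Your proposal is correct and follows essentially the route the paper intends: the paper defers the proof to the analogue in \cite[Lemma~4.36]{BCF-2019}, and your argument — equivalence of \eqref{eq:iteration-BVP} and \eqref{Eq:iterBVP4uHat} under $\Futheta$, pull-back of the estimates of Lemma~\ref{Lem:Wellpose4iterBVP}, compactness plus uniqueness of the limit for (ii), and density plus lower semicontinuity of $\norm{\cdot}^{(\ast)}_{2,\alpha,\qiter}$ for (iii) — is exactly that argument. One small caution: the two-sided comparability ${\rm dist}(\bm{\xi},\Gamma^5_{\rm sonic}\cup\Gamma^6_{\rm sonic})\sim 1-|s|$ fails when a sonic arc degenerates to the single point $P_0^{j-4}$; only the one-sided bound ${\rm dist}(\bm{\xi},\Gamma_{\rm sonic})\leq C(1-|s|)$ survives there (via $\gsh((-1)^{j-1})=0$ and the Lipschitz control of $\gsh$ from Definition~\ref{def:iteration-set}\eqref{def:iteration-set-3}, or directly from the $\min\{x_1,h-x_1\}$ form of the estimates in Propositions~\ref{Prop:AppendixB02}--\ref{Prop:AppendixB03}), but that is the only direction your derivation of $|\hat{u}(s,t)|\leq C(1-|s|)$ actually uses.
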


\begin{remark} \label{rem:BCF-remark-4.38}
For a constant $M>0,$ define a set $\mathcal{K}^{E}_{M}$ by
\begin{equation*}
\mathcal{K}^{E}_{M}\defeq\left\{(u,\btheta)\in C^{2,\alpha}_{(*)}(\qiter)\times[0,\btheta_*]^2 \,:\,\,
\parbox{16.3em}{\rm $(u,\btheta)$ satisfies
Definition~\ref{def:iteration-set}\eqref{def:iteration-set-2}--\eqref{def:iteration-set-6}\, \\
\hspace{2mm} and
$\|u\|^{(*)}_{2,\alpha,\qiter}\leq M$ $\quad$} \right\}.
\end{equation*}
Let $\overline{\mathcal{K}^{E}_{M}}$ be the closure of $\mathcal{K}^{E}_{M}$ in $C^{2,\alpha}_{(*)}(\qiter)\times[0,\btheta_*]^2$.
Then {\rm Lemma~\ref{Lem:UniformlyConvergeSeq}} and {\rm Corollary~\ref{corol:ConvergeInHolder4uHat}}
still hold when $\overline{\mathcal{K}^{\rm ext}}$ is replaced by $\overline{\mathcal{K}^{E}_{M}}$ for some $M>0$.
\end{remark}

\subsubsection{Properties of the iteration set}
From the well-posedness of the iteration boundary value problem~\eqref{eq:iteration-BVP},
we can obtain the following {\it a priori} estimates on the iteration set.
Let \(\bar{\alpha} \in (0,\frac13]\) be the constant from {\rm Proposition~\ref{prop:apriori-estimates-on-u-prop4.12}},
and let \((\epsilon^{\rm (w)},\delta_1^{\rm (w)},N_1^{\rm (w)})\) be the constants from Lemma~\ref{Lem:Wellpose4iterBVP}.

\begin{lemma}[{\it A priori} estimates]\label{lem:BCF-cor4.45}
There exist positive constants \(\alpha^{\rm (ap)} \in (0,\frac{\bar{\alpha}}{2}),\) \(\epsilon^{\rm (ap)} \in (0 , \epsilon^{\rm (w)}],\)
\(\delta_1^{\rm (ap)} \in (0 , \delta_1^{\rm (w)}],\) \(N_1^{(\rm adm)} \geq N_1^{\rm (w)},\)
and \(\delta_3^{(\rm ap)}\) with \((\alpha^{\rm (ap)},\epsilon^{\rm (ap)}, \delta_1^{\rm (ap)})\)
depending only on \((\gamma, v_2,\theta_\ast),\) \(N_1^{\rm (adm)}\) depending only
on \((\gamma, v_2,\theta_\ast,\delta_1),\)
and \(\delta_3^{\rm (ap)}\) depending only on \((\gamma, v_2,\theta_\ast, \delta_1, \delta_2, N_1)\)
such that, whenever parameters \((\alpha, \epsilon, \delta_1,\delta_3, N_1)\)
in {\rm Definition~\ref{def:iteration-set}} are chosen according to
\begin{itemize}
\item
\((\alpha, \epsilon, \delta_1) \in (0, \alpha^{\rm (ap)} ] \times
    (0, \epsilon^{\rm (ap)} ] \times (0, \delta_1^{\rm (ap)}] \,,\)
\item
    \( N_1 \in [ N_1^{\rm (adm)}(\delta_1) , \infty)\,, \)
\item
    \( \delta_3 \in ( 0 , \delta_3^{\rm (ap)}(\delta_1, \delta_2, N_1)] \,,\)
\end{itemize}
with parameter \(\delta_2 > 0\) to be determined later in {\rm \S\ref{sec:iteration-mapping-existence}},
the following statements hold{\rm :}
\begin{enumerate}[{\rm (i)}]
\item \label{lem:BCF-cor4.45-item-i}
For any admissible solution \(\varphi\) corresponding to
parameter \(\btheta \in \Theta \cap \{\theta_1,\theta_2 \leq \theta_\ast\},\)
function \(u = u^{(\varphi, \btheta)}\)
given by~\eqref{eq:phi-to-u} satisfies that \((u,\btheta) \in \mathcal{K}\).

\item \label{lem:BCF-cor4.45-item-ii}
There exists a constant \(C > 0\) depending only on \((\gamma, v_2,\theta_\ast)\) such that,
for each \((u,\btheta) \in \cl{\mathcal{K}^{\rm ext}},\)
the unique solution
\(\hat{\phi}\in C^2(\Omega) \cap C^1(\cl{\Omega}\setminus \Gamma_{\rm sonic} )\cap C^0(\cl{\Omega})\)
of the boundary value problem~\eqref{eq:iteration-BVP} associated with \((u,\btheta)\) satisfies
\begin{align} \label{eq:uniform-estimate-u-hat}
    \norm{\hat{u}}^{(\ast)}_{2,2\alpha,\qiter} \leq C\,,
\end{align}
for \(\hat{u} : \cl{\qiter} \to \mathbb{R}\) given by~\eqref{eq:def-u-hat}{\rm,}
whenever
\((u,\btheta) \in \cl{\mathcal{K}^{\rm ext}}\) satisfies
\begin{align} \label{eq:delta-sharp}
     \norm{u^\sharp - u}_{C^1(\cl{\qiter})} + \abs{\btheta^\sharp - \btheta} \leq \delta^\sharp
\end{align}
for some \((u^\sharp,\btheta^\sharp) \in \cl{\mathcal{K}}\)
and some sufficiently small positive constant \(\delta^\sharp\)
depending only on \((\gamma, v_2,\theta_\ast,\delta_2,\delta_3,u^\sharp, \btheta^\sharp)\).
\end{enumerate}
\end{lemma}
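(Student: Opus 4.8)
The plan is to verify part~\eqref{lem:BCF-cor4.45-item-i} by checking, one by one, that an admissible solution transported to the standard domain via~\eqref{eq:phi-to-u} satisfies each of the seven conditions in Definition~\ref{def:iteration-set}, and to prove part~\eqref{lem:BCF-cor4.45-item-ii} by combining the well-posedness of the iteration boundary value problem~\eqref{eq:iteration-BVP} from Lemma~\ref{Lem:Wellpose4iterBVP} with the anisotropic rescaling estimates of \S\ref{SubSec-EstimatesNearSonic5}. The constants will be fixed in the forced order: first $\alpha^{\rm (ap)}$ small (to match the exponents produced near the sonic boundaries), then $\epsilon^{\rm (ap)}$, then $\delta_1^{\rm (ap)}$, then $N_1^{\rm (adm)}(\delta_1)$ large, and finally $\delta_3^{\rm (ap)}(\delta_1,\delta_2,N_1)$ small, $\delta_2$ being left free for \S\ref{Sec:IterationMethod-Existence}. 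Throughout I would use the uniform estimates of Section~\ref{Sec:UniformEstiAdmiSolu} and the mapping properties of $\gtheta$, $G_{2,\gsh}$, and $\Futheta$.

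For part~\eqref{lem:BCF-cor4.45-item-i}, fix an admissible solution $\varphi$ with $\btheta\in\Theta\cap\{\theta_1,\theta_2\le\theta_\ast\}$ and set $u=u^{(\varphi,\btheta)}$. Condition~\eqref{def:iteration-set-2} is immediate from the $C^1$-matching of $\varphi$ and $\varphi_j$ on $\Gamma^j_{\rm sonic}$ in Problem~\ref{FBP}\eqref{item3n4-FBP} and the location of $\Gamma_{\rm shock}$ in Definition~\ref{Def:AdmisSolus}. Conditions~\eqref{def:iteration-set-3}, \eqref{def:iteration-set-6}, \eqref{def:iteration-set-5} are, respectively, Proposition~\ref{prop:lowerBoundBetweenShockAndSonicCircle} together with Proposition~\ref{prop:properties-of-gshock}\eqref{prop:properties-of-gshock:5}, Lemma~\ref{UniformBound-Lem}\eqref{UB-03}, and Proposition~\ref{prop:ellipticDegeneracyNearSonicBoundary} combined with~\eqref{Eq:FuncgEquivDistb}, the safety factors in $N_2=2C_{\rm sh}$, $N_3=2\hat k$, $\rho^\ast/2$, $2C_{\rm ub}$, $\tilde\mu=\mu_{\rm el}/2$ being built in. The pointwise bounds on $\psi_x,\psi_y,\psi$ near the sonic boundaries and the matching against $\mathscr{K}_3$ come from the rescaling Propositions~\ref{Prop:Esti4theta1a}--\ref{Prop:Esti4theta1d}, which already produce $0\le\psi_x\le\tfrac{2-\delta}{1+\gamma}x$ and the analogous inequalities; $\abs{D_{(x,y)}\psi}<N_4\epsilon$ and the $C^{0,1}$ bounds follow from Lemma~\ref{Lem:UsefulEst4AdmisSolu} with $N_4=10C_2^\ast$, $N_5=10C_1$; and $\min\{\partial_{\bm\nu}(\varphi_2-\varphi),\partial_{\bm\nu}\varphi\}>\mu_1=\tfrac{\delta'}{2}$ is Lemma~\ref{Lem:LocalEstimates4Interior}\eqref{Lem:LE4Int-1}. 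The strict lower bounds $\varphi-\max\{\varphi_5,\varphi_6\}>\mathscr{K}_2$ and $\partial_{\bm e_{S_{2j}}}(\varphi_2-\varphi)<-\mathscr{K}_2$ off the sonic arcs follow from the strong maximum principle for~\eqref{Eq4phi} applied to $\varphi-\varphi_j$ and from Lemma~\ref{lem:monotonicityForPhi2-Phi}, upgraded to uniform positive bounds on the compact set $\cl\Omega\setminus(\dfive_{\epsilon/10}\cup\dsix_{\epsilon/10})$ by the compactness Lemma~\ref{Lem:Compactness4AdmiSoluSet}; since these bounds degenerate as $\btheta\to\bm0$ at a compactness-controlled rate, enlarging $N_1^{\rm (adm)}(\delta_1)$ and adjusting $\delta_2$ forces~\eqref{eq:iteration-set-4-3to7} and~\eqref{def:iteration-set-1} against the thresholds $\delta_1/N_1$, $\delta_1/N_1^2$. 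For~\eqref{def:iteration-set-1} itself, one has $\|u-u^{({\rm norm})}\|^{(\ast)}_{2,\alpha,\qiter}=\|u\|^{(\ast)}_{2,\alpha,\qiter}\le M<N_0$ by Proposition~\ref{prop:apriori-estimates-on-u-prop4.12} in the range $\max\{\theta_1,\theta_2\}\ge 2\delta_1/N_1$, and $<\delta_1$ in the range $\le\delta_1/N_1$ by the convergence $u^{(\varphi,\btheta)}\to u^{({\rm norm})}\equiv0$ as $\btheta\to\bm0$ from Theorem~\ref{Thm:Existence-AdmisSolu} and Lemma~\ref{Lem:Compactness4AdmiSoluSet}, the linear interpolation range being absorbed by a larger $N_1^{\rm (adm)}$. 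Finally, for~\eqref{def:iteration-set-7} I would show that $\phi=\varphi+\tfrac12\abs{\bm\xi}^2$ is itself the unique solution of~\eqref{eq:iteration-BVP} (unique by Lemma~\ref{Lem:Wellpose4iterBVP}): the modified coefficients reduce to the genuine potential flow operator off $\mathcal{D}_{\epsilon/10}$ by Lemma~\ref{Lem:Property4ModifyEq}\eqref{item3-Lem:Property4ModifyEq}, the polar cut-offs near $\Gamma_{\rm sonic}$ are inactive because $\varphi$ already obeys the $\psi_x,\psi_y,\psi$ bounds, the free boundary condition collapses to~\eqref{Def4FreeBdryFunc} by Lemma~\ref{Lem:Property4ModifyBdryCondi}\eqref{item5-Lem:Property4ModifyBdryCondi}, and the sonic and wedge conditions hold by Problem~\ref{FBP}\eqref{item3n4-FBP}--\eqref{item5-FBP}; hence $\hat\phi=\phi$, $\hat u=u$, and $\|\hat u-u\|^{(\ast)}_{2,\alpha,\qiter}=0<\delta_3$.

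For part~\eqref{lem:BCF-cor4.45-item-ii}, fix $(u,\btheta)\in\cl{\mathcal{K}^{\rm ext}}$ satisfying~\eqref{eq:delta-sharp} for some $(u^\sharp,\btheta^\sharp)\in\cl{\mathcal{K}}$. Lemma~\ref{Lem:Wellpose4iterBVP} gives the unique solution $\hat\phi$ with the interior bound~\eqref{Eq:Sec4WP-iterBVP-interior} and the boundary decay~\eqref{Eq:Sec4WP-iterBVP-onBdry}; passing to $\hat u$ by~\eqref{eq:def-u-hat} and using Lemma~\ref{lem:properties-of-G-alpha-theta} controls the interior part $\|\hat u\|_{2,2\alpha,\cl{\qint_{\epsilon'_0/4}}}$ and the subsonic-scaled part of the weighted norm. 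The remaining, principal task is the parabolic-weighted $C^{2,2\alpha}$ estimate near $\mathcal{Q}^j_{\epsilon'_0}$: I would run the rescaling scheme of \S\ref{SubSec-EstimatesNearSonic5}, split into the four cases according to the position of $\theta_{j-4}$ relative to $\theta^{\rm s}$, so that near $\Gamma^j_{\rm sonic}$ the linearized problem for $\hat\phi$ is either degenerate elliptic with leading coefficient $2x-(\gamma+1)x\,\zeta_1(\cdot)+\co^{\rm mod}_1$ or uniformly elliptic up to $P_0^{j-4}$; the anisotropic rescaling $(x,y)\mapsto(x_0+\tfrac{x_0}{4}S,\,y_0+\tfrac{\sqrt{x_0}}{4}T)$ turns~\eqref{eq:iteration-BVP} into a uniformly elliptic problem on a unit domain, and \cite[Theorems~4.2.3, 4.2.8, 4.2.10]{ChenFeldman-RM2018} (interior, shock, wedge) applied to the rescaled problem, followed by scaling back, produce the weighted bound. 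The point that needs care is uniformity of $C$ in $(u,\btheta)$: the ellipticity constants, the obliqueness $\delta_{\rm bc}$ of $\mathcal{M}_{(u,\btheta)}$, and the homogeneity near $\Gamma_{\rm sonic}$ are uniform over $\cl{\mathcal{K}^{\rm ext}}$ by Lemmas~\ref{Lem:Property4ModifyEq}--\ref{Lem:Property4ModifyBdryCondi}, while the closeness~\eqref{eq:delta-sharp} is exactly what transfers the non-degeneracy that is part of $\mathcal{K}$ (conditions~\eqref{def:iteration-set-4}--\eqref{def:iteration-set-5})---in particular the strict transversality at $\Gamma_{\rm shock}$ and the degeneracy structure near $\Gamma_{\rm sonic}$---to the solution $\hat\phi$, which is $C^1$-close to $\phi^\sharp$ by the continuous dependence in Corollary~\ref{corol:ConvergeInHolder4uHat}. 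Combining the three pieces yields $\|\hat u\|^{(\ast)}_{2,2\alpha,\qiter}\le C(\gamma,v_2,\theta_\ast)$; note this is a statement local around each $(u^\sharp,\btheta^\sharp)\in\cl{\mathcal{K}}$, which together with compactness of $\cl{\mathcal{K}}$ will later give the estimate on a full neighbourhood.

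I expect the genuine obstacle to lie in part~\eqref{lem:BCF-cor4.45-item-ii}: securing the weighted H\"older estimate near the sonic boundaries with a constant independent of $(u,\btheta)$ and of $\delta^\sharp$, especially in the degenerate regime where $\Gamma^j_{\rm sonic}$ shrinks to the single point $P_0^{j-4}$ as $\theta_{j-4}\to\theta^{\rm s-}$, which is precisely where the careful design of the modified operators $\mathcal{N}_{(u,\btheta)}$ and $\mathcal{M}_{(u,\btheta)}$ and the smallness of $\delta^\sharp$ must be used. By contrast, part~\eqref{lem:BCF-cor4.45-item-i} is essentially bookkeeping against the uniform estimates already in hand, the only delicate calibration being the choice of $N_1^{\rm (adm)}(\delta_1)$ and $\delta_2$ so that the $\btheta\to\bm0$ degeneracy of $\mathscr{K}_1,\mathscr{K}_2,\mathscr{K}_3$ is dominated.
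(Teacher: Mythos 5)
Your treatment of part~\eqref{lem:BCF-cor4.45-item-i} follows the paper's own route: conditions~\eqref{def:iteration-set-2}--\eqref{def:iteration-set-5} of Definition~\ref{def:iteration-set} are read off from the uniform estimates of Section~\ref{Sec:UniformEstiAdmiSolu} with the built-in safety factors, condition~\eqref{def:iteration-set-1} from the continuity of $u^{(\varphi,\btheta)}$ at $\btheta=\bm{0}$ together with a large choice of $N_1^{\rm (adm)}(\delta_1)$, and condition~\eqref{def:iteration-set-7} from the observation that $\phi=\varphi+\tfrac12|\bm{\xi}|^2$ itself solves~\eqref{eq:iteration-BVP} because the cut-offs in $\mathcal{N}_{(u,\btheta)}$ and $\mathcal{M}_{(u,\btheta)}$ are inactive on an admissible solution, so that $\hat{u}=u$ and~\eqref{eq:iteration-u-uhat-estimate} holds trivially. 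This part is essentially correct.

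In part~\eqref{lem:BCF-cor4.45-item-ii}, however, there is a concrete missing step. Before any of the anisotropic rescalings of \S\ref{SubSec-EstimatesNearSonic5} can be applied to the iteration solution $\hat{\phi}$, one must first establish the sign and growth control $\hat{\phi}-(\vphithetastar+\tfrac12|\bm{\xi}|^2)>0$ in $\Omega$, i.e.\ $\hat{\psi}\geq 0$ near $\Gamma_{\rm sonic}$: the rescaled functions $\hat{\psi}^{(z_0)}$ and the degenerate leading coefficient $2x-(\gamma+1)x\,\zeta_1(\hat{\psi}_x/x)+\co_1^{\rm mod}$ require $0\leq\hat{\psi}\leq Lx^2$, and the lower bound is not automatic for a solution of the modified problem. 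This is exactly what the paper's Step~\textbf{2} is devoted to; it is proved by a comparison/maximum-principle argument split into the cases $\max\{\theta_1,\theta_2\}\gtrless 2\delta_1/N_1^2$, using both the closeness~\eqref{eq:delta-sharp} to some $(u^\sharp,\btheta^\sharp)\in\cl{\mathcal{K}}$ and the smallness of $\delta_3$. Your proposal never states this claim, and consequently it also never explains why $\delta_3^{\rm (ap)}$ must depend on $\delta_2$ --- a dependence that is part of the statement being proved. In the paper that dependence is forced precisely here: one needs $\delta_3\leq\delta_1\delta_2/(2N_1^2)$ so that the perturbation allowed by~\eqref{eq:iteration-u-uhat-estimate} cannot destroy the lower bound $\varphi-\max\{\varphi_5,\varphi_6\}>\mathscr{K}_2$ inherited from Definition~\ref{def:iteration-set}\eqref{def:iteration-set-4}. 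Saying that the closeness ``transfers the non-degeneracy'' is the right instinct, but without the explicit positivity of $\hat{\psi}$ the weighted estimates near $\Gamma^j_{\rm sonic}$ do not get off the ground and the constant structure of the lemma is unaccounted for.
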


\noindent
\textit{Proof.}
We describe the proof in four steps,
which follow~\cite[Corollary~4.40, Lemma~4.42, Lemma~4.44, Corollary~4.45]{BCF-2019} closely.

\smallskip
\textbf{1.}
The proof of statement~\eqref{lem:BCF-cor4.45-item-i} above follows similarly to~\cite[Corollary~4.40]{BCF-2019}.
Indeed, let \(u = u^{(\varphi,\btheta)}\) be given by~\eqref{eq:phi-to-u} for any admissible
solution \(\varphi\) corresponding to \(\btheta \in \Theta \cap [0,\theta_*]^2\).
Then \((u,\btheta)\) automatically satisfies properties~(\ref{def:iteration-set-2})--(\ref{def:iteration-set-6})
of Definition~\ref{def:iteration-set} through the choices of constants
\(\{N_i\}_{i=2}^5, \sigma_2,  \mu_0, \mu_1, \tilde{\mu}, \rho^\ast(\gamma)\), and \(C_{\rm ub}\)
in Definition~\ref{def:iteration-set}\eqref{def:iteration-set-3}--\eqref{def:iteration-set-6}.

Property~(\ref{def:iteration-set-1}) of Definition~\ref{def:iteration-set} follows from the choice of \(N_0\)
in Definition~\ref{def:iteration-set}\eqref{def:iteration-set-1} and the choice of \(N_1\) below, in
which we use an argument
similar to~\cite[Lemma~4.39]{BCF-2019}.
Indeed, by Lemma~\ref{Lem:UniformlyConvergeSeq}, Corollary~\ref{corol:ConvergeInHolder4uHat},
and Remark~\ref{rem:BCF-remark-4.38}, for any \(\alpha \in (0, \frac{\bar{\alpha}}{2}]\), we have the following continuity property of admissible solutions:
For any sequence $\{\btheta_{k}\}_{k\in\mathbb{N}}\subseteq\Theta$
with $\btheta_{k}\to\mathbf{0}$ as $k\to\infty,$
let $\varphi^{(\btheta_k)}$ be any admissible solution corresponding to $\btheta_k,$ and let $u^{(\btheta_k)}$
be given by \eqref{eq:phi-to-u} with \(\varphi = \varphi^{(\btheta_k)}\) and \(\btheta = \btheta_k\).
Then there exists a subsequence of $\{u^{(\btheta_k)}\}_{k\in\mathbb{N}}$
converging in $C^{2,\alpha}_{(\ast)}(\qiter)$ to $u^{({\rm norm})} = 0$.
Using this continuity property, for any \(\delta_1 \in (0,\delta_1^{\rm(w)}]\),
there exists \(N_1^{\rm(adm)} \geq N_1^{\rm (w)}\) depending on \((\gamma, v_2, \theta_\ast, \delta_1)\) such that
$\norm{u -u^{\rm(norm)}}^{(\ast)}_{2,\alpha,\qiter} <  \tfrac{\delta_1}{2}$
whenever $\max\{\theta_1,\theta_2\} \in (0,\tfrac{2\delta_1}{N_1^{\rm(adm)}}]$.

Therefore, we have shown that \((u,\btheta) \in \mathcal{K}^{\rm ext}\).
Finally, property~\eqref{def:iteration-set-7} of Definition~\ref{def:iteration-set}
now follows from Corollary~\ref{corol:ConvergeInHolder4uHat} because \((u,\btheta) \in \mathcal{K}^{\rm ext}\).
We conclude that \((u,\btheta) \in \mathcal{K}\).

\smallskip
\textbf{2.} It remains to prove statement~\eqref{lem:BCF-cor4.45-item-ii}.
Fix any \((u^\sharp,\btheta^\sharp) \in \cl{\mathcal{K}}\).
We claim that there exist constants \(\epsilon^{\rm (lb)} \in (0, \epsilon^{\rm (w)})\)
depending only on \((\gamma, v_2,\theta_\ast)\),
\(\delta_3^{\rm (ap)} > 0\) depending only on \((\gamma, v_2,\theta_\ast,\delta_1,\delta_2,N_1)\),
and \(\delta^\sharp\) depending only on \((\gamma, v_2,\theta_\ast,\delta_2,\delta_3,u^\sharp, \btheta^\sharp)\)
such that
$\hat{\phi} - ( \vphithetastar + \frac12 \abs{\bm{\xi}}^2 ) > 0$
in $\Omega$,
whenever \((u,\btheta)\in \cl{\mathcal{K}^{\rm ext}}\) satisfies~\eqref{eq:delta-sharp}.
The proof of this claim follows from that for ~\cite[Lemma~4.42]{BCF-2019}, which is split into two cases:
(i) \(\max\{\theta_1,\theta_2\} \in [\frac{2\delta_1}{N_1^2}, \theta_\ast]\),
and (ii) \(\max\{\theta_1,\theta_2\} \in [0,\frac{2\delta_1}{N_1^2}]\).
In particular, we can choose \(\delta_3^{\rm (ap)} = \frac{\delta_1 \delta_2}{2 N_1^2}\)
and \(\epsilon^{\rm (lb)} = \bar{k}^{-1} \inf_{\btheta \in \cl{\Theta}}\{\hat{c}_5,\hat{c}_6\}\)
for constant \(\bar{k} > 1\) that has been fixed after
Definition~\ref{def:varphi-theta-star} such that~\eqref{eq:def-of-k-bar} holds.

\smallskip
\textbf{3.}
We obtain the {\it a priori} estimates for \(\hat{\phi}\) near \(\Gamma_{\rm sonic}\).
Let coordinates \((x,y) = \rcal(\bm{\xi})\) be given by~\eqref{Eq:DefPolarCoordiSec4}.
For \(j = 5, 6\), we have the following estimates (with estimates (b)--(c) below only
valid when  \(\theta_\ast \in (\theta^{\rm s},\theta^{\rm d})\)):
\begin{enumerate}[{\rm (a)}]
    \item \label{item1-Lem:AprioriIterBVP}
    Whenever \(\theta_{j-4} \in [0,\theta^{\rm s})\), for each \(\alpha' \in (0,1)\),
    there exist positive constants \(\epsilon_{\rm p} \in (0,\epsilon_0]\) and \(C_{\alpha'}\)
    depending only on \((\gamma, v_2,\theta_\ast,\alpha')\) with
    \begin{align*}
\norm{(\hat{\phi} - \phi_j)\circ \rcal^{-1} }_{2,\alpha',\rcal( \Omega \cap \mathcal{D}^j_{\epsilon_{\rm p}}) }^{(2),(\rm par)}
 \leq C_{\alpha'}\,;
    \end{align*}

    \item \label{item2-Lem:AprioriIterBVP}
    There exists a constant \(\delta_{\rm p} \in (0,\theta_\ast - \theta^{\rm s})\)
    depending only on \((\gamma, v_2,\theta_\ast)\) such that,
    whenever \(\theta_{j-4} \in [\theta^{\rm s}, \theta^{\rm s} + \delta_{\rm p}]\),
    for each \(\alpha' \in (0,1)\), there exist positive constants \(\epsilon_{\rm p} \in (0,\epsilon_0]\)
    depending only on \((\gamma, v_2,\theta_\ast)\)
    and \(C_{\alpha'}\) depending only on \((\gamma, v_2,\theta_\ast,\alpha')\) so that
    \begin{align*}
        \norm{\hat{\phi} - \phi_j}_{C^{2,\alpha'}(\Omega \cap \mathcal{D}^j_{\epsilon_{\rm p}})} \leq C_{\alpha'}\,,
        \qquad\,\,
        D^m (\hat{\phi} - \phi_j)(P_0^{j-4}) = 0 \quad \text{for $m = 0, 1, 2$}\,;
    \end{align*}

    \item \label{item3-Lem:AprioriIterBVP}
    For \(\delta_{\rm p}>0\) as above, there exist constants $\hat{\alpha}\in(0,\frac13]$ and \(C>0\)
    depending only on \((\gamma, v_2,\theta_\ast)\) such that,
    whenever \(\theta_{j-4} \in [\theta^{\rm s} + \frac12 \delta_{\rm p} , \theta_\ast ]\),
    \begin{align*}
        \norm{\hat{\phi} - \phi_j}_{2,\hat{\alpha}, \Omega \cap \mathcal{D}^j_{\epsilon_0}}^{(-1 - \hat{\alpha}),\{P_0^{j-4}\}} \leq C\,,
        \qquad\,\,
        D^m (\hat{\phi} - \phi_j)(P_0^{j-4}) = 0 \quad \text{for $m = 0, 1$}\,.
    \end{align*}
\end{enumerate}
Using Step~\textbf{2}, the proof of these {\it a priori} estimates follows from that for~\cite[Lemma~4.44]{BCF-2019},
whereby it is necessary to further reduce constants \(\epsilon\) and \(\delta_1\)
depending only on \((\gamma,v_2,\theta_{\ast})\),
and we define $\alpha^{\rm (ap)}\defeq\frac12\min\{\alpha_1^*,\hat{\alpha}\}$ with $\alpha_1^*$ given in Lemma~\ref{Lem:Wellpose4iterBVP}.
In particular, the estimate in (i) is similar to that
for Propositions~\ref{Prop:Esti4theta1a}--\ref{Prop:Esti4theta1b},
with the free boundary replaced by a fixed boundary.
The estimate in (ii) is similar to that for Proposition~\ref{Prop:Esti4theta1c},
with the free boundary replaced again by a fixed boundary.
Finally, the estimate in (iii) is similar to that for Proposition~\ref{Prop:Esti4theta1d}.

\smallskip
\textbf{4.}
Following the method from~\cite[Proposition~4.12]{BCF-2019}, we are able to combine the estimates
from Step~\textbf{3} with the interior estimates from Lemma~\ref{Lem:Wellpose4iterBVP}
to obtain~\eqref{eq:uniform-estimate-u-hat}.
\qed

\smallskip
Let parameters \((\alpha, \epsilon, \delta_1, \delta_3, N_1)\) in Definition~\ref{def:iteration-set} be chosen as in Lemma~\ref{lem:BCF-cor4.45}.
It is simple to verify that \(\mathcal{K}^{\rm ext} \subseteq C^{2,\alpha}_{(\ast)}(\qiter) \times [0,\theta_\ast]^2\)
is relatively open.
Indeed, the argument is similar to that for~\cite[Lemmas~12.8.1 and 17.5.1]{ChenFeldman-RM2018}
and~\cite[Lemma~4.41]{BCF-2019}.
Furthermore, following the proofs of~\cite[Propositions~12.8.2 and~17.5.3]{ChenFeldman-RM2018}
and~\cite[Lemma~4.46]{BCF-2019},
and applying the {\it a priori} estimates from Lemma~\ref{lem:BCF-cor4.45},
we conclude that the iteration set
\(\mathcal{K} \subseteq C^{2,\alpha}_{(\ast)}(\qiter) \times [0,\theta_\ast]^2\) is relatively open.

\begin{proposition}[Openness of the iteration set]
\label{Prop:openness-of-iteration-set}
For the choice of parameters \((\alpha,\epsilon,\delta_1,\delta_3,N_1)\) given in {\rm Lemma~\ref{lem:BCF-cor4.45},}
the iteration set \(\mathcal{K}\) defined by {\rm Definition~\ref{def:iteration-set}} is a relatively open subset
of \(C^{2,\alpha}_{(\ast)}(\qiter) \times [0,\theta_\ast]^2\).
\end{proposition}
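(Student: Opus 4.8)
The plan is to prove this in two stages, following the strategy used for~\cite[Lemmas~12.8.1 and~17.5.1, Propositions~12.8.2 and~17.5.3]{ChenFeldman-RM2018} and~\cite[Lemmas~4.41 and~4.46]{BCF-2019}: first I would show that the extended iteration set $\mathcal{K}^{\rm ext}$, cut out by conditions~\eqref{def:iteration-set-1}--\eqref{def:iteration-set-6} of Definition~\ref{def:iteration-set}, is a relatively open subset of $C^{2,\alpha}_{(\ast)}(\qiter)\times[0,\theta_\ast]^2$; then I would upgrade to the full set $\mathcal{K}$ by adding the stability of condition~\eqref{def:iteration-set-7}.

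\emph{Stage one.} All the geometric and analytic quantities attached to $(u,\btheta)$ — the shock function $\gsh^{(u,\btheta)}$, the map $\Futheta$, the domain $\Omega(u,\btheta)$, the solution $\varphi^{(u,\btheta)}$ and its first derivatives, and the composed functions $\varphi^{(u,\btheta)}\circ\Futheta$, $\psi^{(u,\btheta)}\circ\Futheta$, and $\vphithetastar\circ\Futheta$ — depend continuously on $(u,\btheta)$ in the relevant topologies by Lemma~\ref{lem:properties-of-G-alpha-theta}\eqref{lem:properties-of-G-alpha-theta-4} and~\eqref{lem:properties-of-G-alpha-theta-8}. Each of conditions~\eqref{def:iteration-set-1}, \eqref{def:iteration-set-3}--\eqref{def:iteration-set-6} is then either a strict inequality for such a continuously varying quantity, or (condition~\eqref{def:iteration-set-2}, i.e.\ membership in $\mathfrak{G}_\alpha^{\theta_\ast}$) the $C^{1,\alpha}$--continuity of the shock together with the compatibility relations at $s = \pm 1$, so each defines a relatively open set; intersecting finitely many of them shows $\mathcal{K}^{\rm ext}$ is relatively open. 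The only point requiring care is the degeneracy at $\Gamma_{\rm sonic}^5\cup\Gamma_{\rm sonic}^6$: the monotonicity, density, and ellipticity inequalities in~\eqref{def:iteration-set-3}--\eqref{def:iteration-set-6} are imposed on $\cl{\Omega}\setminus(\mathcal{D}^5_{\epsilon/10}\cup\mathcal{D}^6_{\epsilon/10})$ and on $\Gamma_{\rm shock}$, i.e.\ away from the degeneracy, where the perturbation of all relevant quantities is controlled in $C^1$ by $\norm{u-\tilde u}_{C^{1,\alpha}(\cl{\qiter})}+|\btheta-\tilde\btheta|$, while the weighted-norm conditions built into~\eqref{def:iteration-set-1} through the $\norm{\cdot}^{(\ast)}_{2,\alpha,\qiter}$ norm are strict and continuous in the topology of $C^{2,\alpha}_{(\ast)}(\qiter)\times[0,\theta_\ast]^2$.

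\emph{Stage two.} Fix $(u^\sharp,\btheta^\sharp)\in\mathcal{K}$. By Stage one there is a relative neighborhood $U\subseteq\mathcal{K}^{\rm ext}$ of $(u^\sharp,\btheta^\sharp)$, and after shrinking $U$ I may assume~\eqref{eq:delta-sharp} holds, with the $\delta^\sharp$ of Lemma~\ref{lem:BCF-cor4.45}\eqref{lem:BCF-cor4.45-item-ii}, for every $(u,\btheta)\in U$. For each such $(u,\btheta)$ the iteration boundary value problem~\eqref{eq:iteration-BVP} has a unique solution $\hat\phi$ by Lemma~\ref{Lem:Wellpose4iterBVP}, so $\hat u^{(u,\btheta)}$ defined by~\eqref{eq:def-u-hat} is well defined and, by Lemma~\ref{lem:BCF-cor4.45}\eqref{lem:BCF-cor4.45-item-ii}, satisfies the uniform bound $\norm{\hat u^{(u,\btheta)}}^{(\ast)}_{2,2\alpha,\qiter}\le C$ on $U$. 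The key claim is that $(u,\btheta)\mapsto\hat u^{(u,\btheta)}$ is continuous from $U$ into $C^{2,\alpha}_{(\ast)}(\qiter)$: given $(u_k,\btheta_k)\to(u^\sharp,\btheta^\sharp)$ in $U$, Corollary~\ref{corol:ConvergeInHolder4uHat}(ii) (see also Remark~\ref{rem:BCF-remark-4.38}) gives $\hat u^{(u_k,\btheta_k)}\to\hat u^{(u^\sharp,\btheta^\sharp)}$ uniformly on $\cl{\qiter}$ and in $C^{2,\alpha'}$ on compact subsets of $\qiter$; combining the uniform bound at the higher exponent $2\alpha$ with the compact embedding $C^{2,2\alpha}_{(\ast)}(\qiter)\hookrightarrow C^{2,\alpha}_{(\ast)}(\qiter)$, every subsequence has a further subsequence converging in $C^{2,\alpha}_{(\ast)}(\qiter)$, and the limit must equal $\hat u^{(u^\sharp,\btheta^\sharp)}$ by uniqueness of the pointwise limit, so the whole sequence converges in $C^{2,\alpha}_{(\ast)}(\qiter)$. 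Hence $(u,\btheta)\mapsto\norm{\hat u^{(u,\btheta)}-u}^{(\ast)}_{2,\alpha,\qiter}$ is continuous on $U$; since $(u^\sharp,\btheta^\sharp)\in\mathcal{K}$ obeys the strict inequality~\eqref{eq:iteration-u-uhat-estimate}, so does a relative neighborhood $U'\subseteq U$ of $(u^\sharp,\btheta^\sharp)$, and therefore $U'\subseteq\mathcal{K}$.

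The main obstacle I anticipate is the $C^{2,\alpha}_{(\ast)}$--continuity of $(u,\btheta)\mapsto\hat u^{(u,\btheta)}$ up to the sonic boundaries, where equation~\eqref{eq:iteration-BVP} is degenerate elliptic: the weighted (parabolic-scaled) a priori estimates near $\Gamma^5_{\rm sonic}$ and $\Gamma^6_{\rm sonic}$ established in Step~\textbf{3} of the proof of Lemma~\ref{lem:BCF-cor4.45} (the three cases with $\theta_{j-4}$ below, at, and above $\theta^{\rm s}$), together with the interior estimates of Lemma~\ref{Lem:Wellpose4iterBVP}, are exactly what yields the uniform bound at exponent $2\alpha$, and the gain of $\alpha$ over the working exponent is precisely what furnishes the compactness used above. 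The only genuinely new feature relative to~\cite{BCF-2019,ChenFeldman-RM2018} is that one or both sonic arcs may degenerate to a single point, but this has already been absorbed into the constructions of $\mathcal{N}_{(u,\btheta)}$ and $\mathcal{M}_{(u,\btheta)}$ and into Lemmas~\ref{Lem:Wellpose4iterBVP} and~\ref{lem:BCF-cor4.45}, so no further argument is needed there.
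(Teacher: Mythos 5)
Your proposal is correct and takes essentially the same route as the paper: the paper likewise first records that \(\mathcal{K}^{\rm ext}\) is relatively open by the arguments of the cited works (your Stage one), and then obtains openness of \(\mathcal{K}\) by stabilizing the strict inequality in Definition~\ref{def:iteration-set}\eqref{def:iteration-set-7} through the well-posedness of the iteration problem together with the uniform estimate~\eqref{eq:uniform-estimate-u-hat} at exponent \(2\alpha\) from Lemma~\ref{lem:BCF-cor4.45}\eqref{lem:BCF-cor4.45-item-ii}, the convergence statements of Corollary~\ref{corol:ConvergeInHolder4uHat}, and the compact embedding \(C^{2,2\alpha}_{(\ast)}(\qiter)\hookrightarrow C^{2,\alpha}_{(\ast)}(\qiter)\). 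Your Stage two spells out precisely this continuity-plus-compactness argument, so no essential difference or gap remains.
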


\subsection{Definition of the iteration map}\label{sec:iteration-mapping-existence}
Fix $\theta_*\in(0,\theta^{\rm d})$.
For the iteration set $\mathcal{K}$ given by Definition~\ref{def:iteration-set},
let parameters $(\alpha,\epsilon,\delta_1,\delta_3,N_1)$ be chosen as in Lemma~\ref{lem:BCF-cor4.45}.
Define
\begin{equation*}
\mathcal{K}(\btheta) \defeq \big\{ u\in C^{2,\alpha}_{(*)}(\qiter)  \,:\,  (u,\btheta)\in\mathcal{K} \big\}
\qquad \text{for each} \; \btheta\in[0,\theta_*]^2 \,,
\end{equation*}
and similarly for \(\cl{\mathcal{K}}(\btheta)\).
We now define an iteration map $\mathcal{I} : \overline{\mathcal{K}} \rightarrow C^{2,\alpha}_{(*)}(\qiter)$
satisfying the following properties:
\begin{enumerate}[\quad(a)]
\item
For each $\btheta\in[0,\theta_*]^2$, there exists $u\in\mathcal{K}(\btheta)$
satisfying $\mathcal{I}(u,\btheta)=u$;

\item
If $u$ satisfies $\mathcal{I}(u,\btheta)=u$, then $\varphi^{(u,\btheta)}$ defined
in Definition~\ref{def:u-to-phi}\eqref{item3-def:u-to-phi} is an admissible solution
corresponding to $\btheta$.
\end{enumerate}

Let $\hat{\phi}=\hat{\varphi}^{(u,\btheta)}+\frac{1}{2}|\bm{\xi}|^2\in C^2(\Omega)\cap C^1(\overline{\Omega})$
solve the iteration boundary value problem~\eqref{eq:iteration-BVP} associated with $(u,\btheta)$ in Definition~\ref{def:iteration-set}.
Accordingly, from the definition of  $\hat{u}:\overline{\qiter}\to\mathbb{R}$ given by~\eqref{eq:def-u-hat},
we have
\begin{equation*}
\hat{\varphi}^{(u,\btheta)}(\bm{\xi})=\hat{u}\circ(\Futheta)^{-1}(\bm{\xi}) + \varphi^*_{\btheta}(\bm{\xi})\,,
\end{equation*}
where $\varphi^*_{\btheta}(\bm{\xi})$ is given by Definition~\ref{def:varphi-theta-star}.
We also define functions $(w, w_2, \hat{w})$ by
\begin{equation}\label{Eq:Sec5-Def-3w-funcs}
(w,w_2,\hat{w})(s,t') \defeq
(\varphi-\varphi^*_\btheta,\varphi_2-\varphi^*_\btheta,
\hat{\varphi}^{(u,\btheta)}-\varphi^*_\btheta)\circ (\gtheta)^{-1}(s,t')\,.
\end{equation}

From~\eqref{Eq:LemMonotone-phi2ast} and the implicit function theorem,
for any $\btheta\in\overline{\Theta}$, there exists a unique function
$\mathfrak{g}_2:[-1,1]\to \overline{\mathbb{R}_+}$ satisfying $w_2(s,\mathfrak{g}_2(s))=0$ on $[-1,1]$.
Then, for $\qtheta$ defined by {\rm Definition~\ref{def:Q-theta1-theta2}},
\begin{equation*}
\left\{(s,t') \,:\,  -1 < s < 1, \, t'=\mathfrak{g}_2(s)\right\} \subseteq \gtheta(\qtheta)\,.
\end{equation*}
It follows from~\eqref{Eq:Sec4-Propty4Frakg2}, Lemma~\ref{Lem:Propt4gtheta},
and Definition~\ref{def:varphi-theta-star} that $\|\mathfrak{g}_2\|_{C^3([-1,1])}\leq C$
for some constant $C>0$ depending only on $(\gamma, v_2)$.

For any $g \in C^{0,1}([-1,1])$ satisfying $g(s)>0$ for all $s\in(-1,1)$, introduce the following sets:
$R_{\infty}\defeq(-1,1)\times\mathbb{R}_+$ and
\begin{equation}\label{Eq:DefRgDomain}
\begin{aligned}
& R_g \defeq \big\{(s,t')\in\mathbb{R}^2_+ \,:\, -1<s<1, \, 0<t'<g(s) \big\}\,,\\
& \Sigma_g\defeq \big\{(s,t')\in\mathbb{R}^2_+ \,:\, -1<s<1, \, t'=g(s)\big\}\,.
\end{aligned}
\end{equation}
For each $(u,\btheta)\in\overline{\mathcal{K}}$,
$\gsh=\gsh^{(u,\btheta)}:[-1,1]\to\overline{\mathbb{R}_+}$ given by Definition~\ref{def:u-to-phi}
is a Lipschitz function on $[-1,1]$
and is positive on $(-1,1)$.
Then we can define sets $R_{\gsh}$ and $\Sigma_{\gsh}$ with $g=\gsh$ in~\eqref{Eq:DefRgDomain}.
Note that $w$ and $\hat{w}$ are defined on $R_{\gsh}$, while $w_{2}$ is defined on $R_{\infty}$.

The \textit{regularized distance} function
$\delta_{\gsh}\in C^{\infty}(\overline{R_{\infty}}\setminus\overline{R_{\gsh}})$ is given by Lemma~\ref{Lem:RegularizedDist-13-9-1}.
Let $C_{\rm rd}>0$ be the constant from Lemma~\ref{Lem:RegularizedDist-13-9-1},
which depends only on ${\rm Lip}[\gsh]$.
We define
\begin{equation}\label{Eq:Def-RD-star}
\delta^*_{\gsh}(s,t')\defeq C_{\rm rd}^{-1}\delta_{\gsh}(s,t')
\qquad\text{for any $(s,t')\in\overline{R_{\infty}}\setminus\overline{R_{\gsh}}$}\,.
\end{equation}
Then, for all $(s,t')\in R_{(1+\kappa)\gsh}\setminus\overline{R_{\gsh}}$ and all $\lambda\in[1,2]$,
\begin{equation}\label{Eq:GeometryRela-RD-star}
(s,t'-\lambda\delta^*_{\gsh}(s,t')) \in \{s\}\times \big[ \, \frac{\gsh(s)}{3}, \, \gsh(s)-(t'-\gsh(s)) \big] \Subset R_{\gsh}\,,
\end{equation}
where constant $\kappa\in(0,\frac{1}{3}]$ depends only on ${\rm Lip}[\gsh]$.

Let $\left\{(u_k,\btheta_k)\right\}_{k\in\mathbb{N}} \subseteq \overline{\mathcal{K}^{\rm ext}}$
converge to $(u,\btheta)$ in $C^{2,\alpha}_{(*)}(\qiter)\times[0,\theta_*]^2$.
Note that
\begin{equation*}
\|\gsh^{(u,\btheta)}\|^{(-1-\alpha),\{-1,1\}}_{2,\alpha,(-1,1)} \leq C N_0\,,
\end{equation*}
where $N_0>0$ is from Definition~\ref{def:iteration-set}\eqref{def:iteration-set-1},
and $C>0$ depends only on $(\gamma, v_2,\alpha)$. Moreover, for $\gfive$ and $\gsix$ defined in
{\rm Proposition~\ref{prop:properties-of-gshock}\eqref{prop:properties-of-gshock:3}},
\begin{equation*}
 \frac{{\rm d}^m}{{\rm d} s^m}(\gsh^{(u,\btheta)}-\gfive)(1)=\frac{{\rm d}^m}{{\rm d} s^m}(\gsh^{(u,\btheta)}-\gsix)(-1)=0
 \qquad \text{for $m=0,1$}\,.
\end{equation*}
Using Lemma~\ref{lem:properties-of-G-alpha-theta}\eqref{lem:properties-of-G-alpha-theta-4}
and~\eqref{lem:properties-of-G-alpha-theta-7}--\eqref{lem:properties-of-G-alpha-theta-8},
we see that $\gsh^{(u_k,\btheta_k)}$ converges to $\gsh^{(u,\btheta)}$ in $C^{0,1}([-1,1])$ and
\begin{equation*}
\lim\limits_{k\to\infty} \|\delta_{\gsh^{(u_k,\btheta_k)}}-\delta_{\gsh^{(u,\btheta)}}\|_{C^m(K)}=0
\end{equation*}
for any compact set $K\subseteq \overline{R_{\infty}}\setminus\overline{R_{\gsh}}$ and  $m\in\mathbb{N}\cup\{0\}$,
which follows by Lemma~\ref{Lem:RegularizedDist-13-9-1}.

From~\cite[Lemma~13.9.2]{ChenFeldman-RM2018}, there exists a function $\Psi\in C^{\infty}_{\rm c}(\mathbb{R})$
satisfying
\begin{equation*}
\supp \Psi\subseteq[1,2]\,, \qquad\,
\int^{\infty}_{-\infty}\lambda^{m} \Psi(\lambda) \,{\rm d}\lambda = 1-\sgn (m) \quad \text{for $m=0,1,2$}\,.
\end{equation*}

\begin{definition}[Extension operator]
\label{Def:ExtenMap4gsh}
For each $(u,\btheta)\in\overline{\mathcal{K}^{\rm ext}},$
write $\gsh=\mathfrak{g}^{(u,\btheta)}_{\rm sh},$ and let $\delta^*_{\gsh}$ be defined by~\eqref{Eq:Def-RD-star} satisfying~\eqref{Eq:GeometryRela-RD-star}
with constants $C_{\rm rd}>0$ and $\kappa\in(0,\frac{1}{3}]$ depending only on $(\gamma, v_2,\theta_*)$.
For any $v\in C^{0}(\overline{R_{\gsh}})\cap C^2(R_{\gsh} \cup \Sigma_{\gsh}),$
define the extension of $v(s,t')$ by
\begin{equation*}
\mathcal{E}_{\gsh}(v)(s,t') \defeq \begin{cases}\begin{aligned}
\,& v(s,t')\quad && \text{for $(s,t')\in\overline{R_{\gsh}}$}\,,\\
& \int_{-\infty}^{\infty}v(s,t'-\lambda \delta^*_{\gsh}(s,t'))\Psi(\lambda) \, {\rm d}\lambda \quad &&
\text{for $(s,t')\in R_{(1+\kappa)\gsh}\setminus\overline{R_{\gsh}}$}\,.
\end{aligned}\end{cases}
\end{equation*}
\end{definition}

For simplicity, we use the following notation:
For any $a,b\in[-1,1]$, denote
\begin{equation*}
R^{(u,\btheta)}_{\gsh}[a,b] \defeq
\big\{(s,t')\in\mathbb{R}^2_+\,:\, a<s<b\,, 0<t'<\gsh^{(u,\btheta)}(s)\big\}\,.
\end{equation*}
We often use the notation  $R_{\gsh}$ to represent $R^{(u,\btheta)}_{\gsh}$.

Combining the above with Lemma~\ref{lem:properties-of-G-alpha-theta}\eqref{lem:properties-of-G-alpha-theta-4}
and the property that the uniform bound of ${\rm Lip}[\gsh]$ depends only on $(\gamma, v_2,\theta_*)$,
we obtain the following proposition, for which the details of the proof can be found
in~\cite[Lemma~13.9.6, and Theorems~13.9.5 and~13.9.8]{ChenFeldman-RM2018} for each case, respectively.

\begin{proposition}[Properties of the extension operator $\mathcal{E}$]
\label{Prop:Propty4ExtenOper}
Fix $\alpha\in(0,1)$.
For each $(u,\btheta)\in\overline{\mathcal{K}^{\rm ext}},$
let the extension operator
$\mathcal{E}_{\gsh}: C^2(R_{\gsh}\cup\Sigma_{\gsh}) \to C^2(R_{(1+\kappa)\gsh})$
be given by {\rm Definition~\ref{Def:ExtenMap4gsh}}.
We introduce the notation for the following three different cases{\rm :}
\begin{enumerate}[{\rm (a)}]
\item \label{item-a-Prop:Propty4ExtenOper} Fix any $(b_1, b_2)$ with $-1<b_1 < b_2<1,$
and $C=C_{\rm int}>0$ depending only on $(\gamma, v_2,\theta_*,\alpha)$.
For any $\alpha'\in(0,\alpha),$ denote the function spaces{\rm :}
\begin{equation*}
X\defeq C^{2,\alpha}({R^{(u,\btheta)}_{\gsh}[b_1,b_2]}),
\quad Y\defeq C^{2,\alpha}(R^{(u,\btheta)}_{(1+\kappa)\gsh}[b_1,b_2]),
\quad
Y^-\defeq C^{2,\alpha'}(R^{(u,\btheta)}_{(1+\frac{\kappa}{2})\gsh}[b_1,b_2]) \, ;
\end{equation*}

\item \label{item-b-Prop:Propty4ExtenOper}
Fix $\sigma>0$ and $\epsilon\in(0,\frac14]$.
Fix $(b_1,b_2)=
(-1,-1+\epsilon)$ or
$(1-\epsilon,1),$
and $C=C_{\rm par}>0$ depending only on $(\gamma, v_2,\theta_*,\alpha,\sigma)$.
For any $\alpha'\in(0,\alpha)$ and $\sigma'\in(0,\sigma),$ denote the function spaces{\rm :}
\begin{equation*}
\begin{aligned}
&X\defeq C_{(\sigma),({\rm par})}^{2,\alpha}({R^{(u,\btheta)}_{\gsh}[b_1,b_2]})\,,
\qquad
Y\defeq C_{(\sigma),({\rm par})}^{2,\alpha}({R^{(u,\btheta)}_{(1+\kappa)\gsh}[b_1,b_2]})\,,\\
&Y^-\defeq C_{(\sigma'),({\rm par})}^{2,\alpha'}({R^{(u,\btheta)}_{(1+\frac{\kappa}{2})\gsh}[b_1,b_2]})\,;
\end{aligned}
\end{equation*}

\item \label{item-c-Prop:Propty4ExtenOper}
Fix $(b_1, b_2)$ with either $(b_1, b_2)=(b^{(1)}_1,b^{(1)}_2)=(\frac{1}{2},1)$
or $(b_1, b_2)=(b^{(2)}_1,b^{(2)}_2)=(-1,-\frac{1}{2}),$ and $C=C_{\rm sub}>0$
depending only on $(\gamma, v_2,\theta_*,\alpha)$.
For any $\alpha'\in(0,\alpha),$ denote the function spaces{\rm :}
\begin{equation*}\begin{aligned}
& X\defeq C^{2,\alpha}_{(-1-\alpha),\{s=(-1)^{i-1}\}}({R^{(u,\btheta)}_{\gsh}[b^{(i)}_1,b^{(i)}_2]})\,,
\quad
Y\defeq C^{2,\alpha}_{(-1-\alpha),\{s=(-1)^{i-1}\}}({R^{(u,\btheta)}_{(1+\kappa)\gsh}[b^{(i)}_1,b^{(i)}_2]})\,, \hspace{-1.6em} \\
& Y^-\defeq C^{2,\alpha'}_{(-1-\alpha'),\{s=(-1)^{i-1}\}}({R^{(u,\btheta)}_{(1+\frac{\kappa}{2})\gsh}[b^{(i)}_1,b^{(i)}_2]})\,.
\end{aligned}\end{equation*}
\end{enumerate}

\noindent Then the extension operator $\mathcal{E}_{\gsh}$ satisfies the following{\rm :}
\begin{enumerate}[{\rm (i)}]
\item \label{Prop:P4EO-1}
There exists $C>0$ for each case above such that
$\|\mathcal{E}_{\gsh}(v)\|_{Y} \leq C \|v\|_{X}$\,.
Furthermore, for {\rm Case}~${\rm (c)},$ if $(v,Dv)=(0,\mathbf{0})$
on $\overline{R^{(u,\btheta)}_{\gsh}}\cap\{x_1=(-1)^{i-1}\}$ for $i=1,2,$ then
	\begin{equation*}
		(\mathcal{E}_{\gsh}(v),D\mathcal{E}_{\gsh}(v))=(0,\mathbf{0})
		\qquad \text{on $\overline{R^{(u,\btheta)}_{(1+\kappa)\gsh}}\cap\{x_1=(-1)^{i-1}\}$}\,.
	\end{equation*}
	
\item \label{Prop:P4EO-2}
For each case, $\mathcal{E}_{\gsh}: X \rightarrow Y$ is linear and continuous.

\item \label{Prop:P4EO-3}
Suppose that $\{(u_k,\btheta_k)\}_{k\in\mathbb{N}} \subseteq \overline{\mathcal{K}^{\rm ext}}$
converges to $(u,\btheta)$ in $C^{2,\tilde{\alpha}}_{(*)}(\qiter)\times[0,\theta_*]^2$
for some $\tilde{\alpha}\in(0,1)$.
Write $X_{k}$ for the function space $X$ with $(u,\btheta)$ replaced by $(u_k,\btheta_k)$.
If sequence $\{v_k\}_{k\in\mathbb{N}}\subseteq X_{k}$ satisfies
		$\,\,\|v_k\|_{X_{k}} \leq M$ for all $k\in\mathbb{N}$
for some constant $M>0$ and $\{v_k\}_{k\in\mathbb{N}}$ converges uniformly
to $v$ on any compact set $K\Subset R^{(u,\btheta)}_{\gsh}$ for some $v \in X,$ then
	\begin{equation*}
		\mathcal{E}_{\gsh^{(u_k,\btheta_k)}}(v_k) \rightarrow
		\mathcal{E}_{\gsh^{(u,\btheta)}}(v)
		\qquad\text{in $Y^-$}\,,
	\end{equation*}
where $\mathcal{E}_{\gsh^{(u_k,\btheta_k)}}(v_k)$ is well defined on $\overline{R^{(u,\btheta)}_{(1+\frac{\kappa}{2})\gsh}[b_1,b_2]}$ for large enough $k$.
\end{enumerate}
\end{proposition}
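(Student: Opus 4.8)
\textbf{Proof proposal for Proposition~\ref{Prop:Propty4ExtenOper}.}
The plan is to reduce all three cases to the corresponding statements in~\cite[Lemma~13.9.6, Theorems~13.9.5 and~13.9.8]{ChenFeldman-RM2018} by verifying that the only quantitative input those results require — a uniform bound on ${\rm Lip}[\gsh^{(u,\btheta)}]$ over $(u,\btheta)\in\cl{\mathcal{K}^{\rm ext}}$, together with the uniform geometric relations~\eqref{Eq:GeometryRela-RD-star} for the rescaled regularized distance $\delta^\ast_{\gsh}$ — holds in our setting with constants depending only on $(\gamma,v_2,\theta_\ast)$. First I would record that, by Definition~\ref{def:iteration-set}\eqref{def:iteration-set-1} and~\eqref{def:iteration-set-3} together with Lemma~\ref{lem:properties-of-G-alpha-theta}\eqref{lem:properties-of-G-alpha-theta-4}, the Lipschitz constant of $\gsh^{(u,\btheta)}$ is bounded by a constant $L_0=L_0(\gamma,v_2,\theta_\ast)$ uniformly over $\cl{\mathcal{K}^{\rm ext}}$; hence, by Lemma~\ref{Lem:RegularizedDist-13-9-1}, the constants $C_{\rm rd}$ and $\kappa\in(0,\tfrac13]$ appearing in~\eqref{Eq:Def-RD-star}--\eqref{Eq:GeometryRela-RD-star} may themselves be taken to depend only on $(\gamma,v_2,\theta_\ast)$, which is what makes the extension operator $\mathcal{E}_{\gsh}$ of Definition~\ref{Def:ExtenMap4gsh} well defined with uniform constants.

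With this uniformity in hand, the boundedness estimate~\eqref{Prop:P4EO-1} in each of Cases~(a), (b), (c) is obtained exactly as in the cited references: the extension is an integral average of translates $v(s,t'-\lambda\delta^\ast_{\gsh}(s,t'))$ against the fixed kernel $\Psi$, and the moment conditions $\int\lambda^m\Psi=1-\sgn(m)$ for $m=0,1,2$ guarantee that $\mathcal{E}_{\gsh}(v)$ matches $v$ to second order across $\Sigma_{\gsh}$, so the relevant (standard, parabolic, or subsonic-weighted) $C^{2,\alpha}$ norm of $\mathcal{E}_{\gsh}(v)$ on the enlarged region is controlled by the same norm of $v$ on $R_{\gsh}$, with the constant depending only on $L_0$ and on $\|\delta^\ast_{\gsh}\|_{C^{3}}$ on compact subsets of $\cl{R_\infty}\setminus\cl{R_{\gsh}}$ — all uniform by the above. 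The vanishing-to-first-order property at $\{s=\pm1\}$ in Case~(c) follows because, when $(v,Dv)=(0,\bm 0)$ on $\cl{R_{\gsh}}\cap\{s=(-1)^{i-1}\}$, the weighted norm forces the same decay on translates, and the $\{s=(-1)^{i-1}\}$ boundary is preserved by the translation $(s,t')\mapsto(s,t'-\lambda\delta^\ast_{\gsh})$. Linearity and continuity of $\mathcal{E}_{\gsh}:X\to Y$ in~\eqref{Prop:P4EO-2} are immediate from the explicit integral formula, once~\eqref{Prop:P4EO-1} is known.

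The substantive point is the stability statement~\eqref{Prop:P4EO-3} under convergence $(u_k,\btheta_k)\to(u,\btheta)$ in $C^{2,\tilde\alpha}_{(\ast)}(\qiter)\times[0,\theta_\ast]^2$. Here I would first invoke Lemma~\ref{lem:properties-of-G-alpha-theta}\eqref{lem:properties-of-G-alpha-theta-4} and~\eqref{lem:properties-of-G-alpha-theta-7}--\eqref{lem:properties-of-G-alpha-theta-8} to conclude that $\gsh^{(u_k,\btheta_k)}\to\gsh^{(u,\btheta)}$ in $C^{0,1}([-1,1])$, and then Lemma~\ref{Lem:RegularizedDist-13-9-1} to upgrade this to $\delta_{\gsh^{(u_k,\btheta_k)}}\to\delta_{\gsh^{(u,\btheta)}}$ in $C^m(K)$ for every compact $K\Subset\cl{R_\infty}\setminus\cl{R_{\gsh}}$ and every $m$; consequently $\delta^\ast_{\gsh^{(u_k,\btheta_k)}}\to\delta^\ast_{\gsh^{(u,\btheta)}}$ and the admissible range of the translation parameter stabilizes, so $\mathcal{E}_{\gsh^{(u_k,\btheta_k)}}(v_k)$ is well defined on $\cl{R^{(u,\btheta)}_{(1+\kappa/2)\gsh}[b_1,b_2]}$ for $k$ large. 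Given the uniform bound $\|v_k\|_{X_k}\le M$ and the uniform estimate~\eqref{Prop:P4EO-1}, the sequence $\{\mathcal{E}_{\gsh^{(u_k,\btheta_k)}}(v_k)\}$ is bounded in $Y$; since $Y$ embeds compactly into $Y^-$ (the index drops in both the Hölder exponent, $\alpha'<\alpha$, and — in Cases~(b),(c) — in the weight, and the region shrinks from $(1+\kappa)\gsh$ to $(1+\tfrac\kappa2)\gsh$), we extract a convergent subsequence in $Y^-$, and the pointwise convergence $v_k\to v$ on compact subsets of $R^{(u,\btheta)}_{\gsh}$ together with the convergence of $\delta^\ast_{\gsh}$ identifies every subsequential limit with $\mathcal{E}_{\gsh^{(u,\btheta)}}(v)$; hence the whole sequence converges. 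The main obstacle I anticipate is purely bookkeeping: one must check that the translated argument $(s,t'-\lambda\delta^\ast_{\gsh^{(u_k,\btheta_k)}}(s,t'))$ lands, for all large $k$, inside the domain $R^{(u,\btheta)}_{\gsh}$ on which $v_k$ is controlled — this requires the geometric inclusion~\eqref{Eq:GeometryRela-RD-star} to hold with $k$-independent margin, which in turn relies on the $k$-uniformity of $C_{\rm rd}$, $\kappa$, and $L_0$ established at the outset — and that the weighted-norm bookkeeping near the corners $\{s=\pm1\}$ in Cases~(b) and~(c) is compatible with the translation, exactly as in~\cite[Theorems~13.9.5 and~13.9.8]{ChenFeldman-RM2018}; since all of these steps are already carried out there in an abstract form requiring only the two uniform inputs we have verified, no new difficulty arises.
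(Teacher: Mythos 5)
Your proposal follows essentially the same route as the paper: establish the uniform bound on ${\rm Lip}[\gsh^{(u,\btheta)}]$ over $\cl{\mathcal{K}^{\rm ext}}$ (hence uniform $C_{\rm rd}$ and $\kappa$) and the $C^m$-convergence of the regularized distances, then reduce the three cases to \cite[Lemma~13.9.6, Theorems~13.9.5 and~13.9.8]{ChenFeldman-RM2018}, which is exactly what the paper does. The extra detail you supply (the moment conditions on $\Psi$, the compact embedding $Y\hookrightarrow Y^-$ plus subsequence identification for part~(iii)) is consistent with how those cited results are proved, so the argument is correct.
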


The proof of the following result is similar to~\cite[Lemma 5.5]{BCF-2019}.

\begin{lemma}\label{Lem:hat-g-sh-exist}
Let parameters $(\alpha,\epsilon,\delta_1,\delta_3,N_1)$ in {\rm Definition~\ref{def:iteration-set}}
be chosen as in {\rm Lemma~\ref{lem:BCF-cor4.45},}
with $\delta_2>0$ to be specified later.
Then there exists a constant $\delta_3^{({\rm imp})}>0$ depending only on $(\gamma, v_2,\theta_*,\delta_2)$
such that, if $\delta_3\in(0,\delta^{({\rm imp})}_3),$
then, for each $(u,\btheta)\in\overline{\mathcal{K}},$
there is a unique function $\hat{\mathfrak{g}}_{\rm sh}:[-1,1]\to\overline{\mathbb{R}_+}$ such that
\begin{equation}\label{Eq:Given-hat-g-sh}
\big( w_2-\mathcal{E}_{\gsh^{(u,\btheta)}}(\hat{w}) \big) (s,\hat{\mathfrak{g}}_{\rm sh}(s))=0
\qquad\, \text{for all $s\in[-1,1]$}\,.
\end{equation}
Furthermore, there exists a constant $C>0$ depending only on $(\gamma, v_2,\theta_*)$ such that
$\hat{\mathfrak{g}}_{\rm sh}$ satisfies
\begin{flalign}
&&
\left\{\,
\begin{aligned}
&\norm{\hat{\mathfrak{g}}_{\rm sh}-\mathfrak{g}_{2}}_{2,2\alpha,(-1,1)}^{(-1-2\alpha),\{-1,1\}}\leq C\,, \qquad
\norm{\hat{\mathfrak{g}}_{\rm sh}-\gsh^{(u,\btheta)}}_{2,\frac{\alpha}{2},(-1,1)} \leq C\delta_3\,,\\
& \frac{{\rm d}^m }{{\rm d}s^m}(\hat{\mathfrak{g}}_{\rm sh}-\mathfrak{g}_2)(\pm1)=
\frac{{\rm d}^m }{{\rm d}s^m}(\hat{\mathfrak{g}}_{\rm sh}-\gsh^{(u,\btheta)})(\pm1)=0 \qquad\,
\text{for $m=0,1$}\,.
\end{aligned}
\right.
&&
\end{flalign}
\end{lemma}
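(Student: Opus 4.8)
The plan is to construct $\hat{\mathfrak{g}}_{\rm sh}$ via the implicit function theorem applied to the function $(s,t') \mapsto (w_2 - \mathcal{E}_{\gsh^{(u,\btheta)}}(\hat{w}))(s,t')$, and then to establish the quantitative estimates by a perturbation argument around $\gsh^{(u,\btheta)}$ and $\mathfrak{g}_2$. First I would recall that by~\eqref{Eq:LemMonotone-phi2ast} one has $\partial_{t'} w_2 = \partial_{t'}\big((\varphi_2 - \vphithetastar)\circ(\gtheta)^{-1}\big) \leq -\bar{\mu} < 0$ on $\cl{\gtheta(\qtheta)}$, so that $w_2$ is strictly monotone decreasing in $t'$ with a uniform rate. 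For the extension term, by Corollary~\ref{corol:ConvergeInHolder4uHat}(i) applied to the solution $\hat{\phi}$ of the iteration boundary value problem~\eqref{eq:iteration-BVP} (and the definition~\eqref{eq:def-u-hat}--\eqref{Eq:Sec5-Def-3w-funcs} of $\hat{w}$), together with the estimate~\eqref{eq:iteration-u-uhat-estimate} in Definition~\ref{def:iteration-set}\eqref{def:iteration-set-7} bounding $\|\hat{u} - u\|^{(\ast)}_{2,\alpha,\qiter} < \delta_3$, we get that $\hat{w}$ is $C^1$-close (in a suitable weighted sense) to $w = (\varphi - \vphithetastar)\circ(\gtheta)^{-1}$. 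Using Proposition~\ref{Prop:Propty4ExtenOper}\eqref{Prop:P4EO-1}--\eqref{Prop:P4EO-2} (linearity and continuity of $\mathcal{E}_{\gsh}$ in the three weighted function spaces of Cases~(a)--(c)), the extension $\mathcal{E}_{\gsh^{(u,\btheta)}}(\hat{w})$ is correspondingly close to $\mathcal{E}_{\gsh^{(u,\btheta)}}(w) \equiv w$ on $\cl{R_{\gsh}}$, which by construction equals $\gsh^{(u,\btheta)}$ at $t' = \gsh^{(u,\btheta)}(s)$. Hence $\partial_{t'}\big(w_2 - \mathcal{E}_{\gsh^{(u,\btheta)}}(\hat{w})\big) \leq -\tfrac{\bar{\mu}}{2}$ in a neighbourhood of the curve $\{t' = \gsh^{(u,\btheta)}(s)\}$, provided $\delta_3$ is small enough depending only on $(\gamma, v_2, \theta_\ast)$; this gives existence and uniqueness of $\hat{\mathfrak{g}}_{\rm sh}(s)$ solving~\eqref{Eq:Given-hat-g-sh} for each $s$, as well as the $C^{0,1}$ bound via the implicit function theorem.

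Next I would upgrade the regularity and obtain the two quantitative estimates. Since $w_2 - \mathcal{E}_{\gsh^{(u,\btheta)}}(\hat{w})$ vanishes on the graph of $\hat{\mathfrak{g}}_{\rm sh}$ with transversal $t'$-derivative bounded below, differentiating the identity~\eqref{Eq:Given-hat-g-sh} expresses derivatives of $\hat{\mathfrak{g}}_{\rm sh}$ in terms of derivatives of $w_2$ and of $\mathcal{E}_{\gsh^{(u,\btheta)}}(\hat{w})$. The bound $\|\hat{\mathfrak{g}}_{\rm sh} - \mathfrak{g}_2\|^{(-1-2\alpha),\{-1,1\}}_{2,2\alpha,(-1,1)} \leq C$ follows by comparing~\eqref{Eq:Given-hat-g-sh} with the defining relation $w_2(s,\mathfrak{g}_2(s)) = 0$ of $\mathfrak{g}_2$: the difference is controlled by $\mathcal{E}_{\gsh^{(u,\btheta)}}(\hat{w})$ evaluated on the graph, whose weighted $C^{2,2\alpha}$-norm (with the appropriate parabolic/subsonic/interior scaling near $s = \pm 1$) is bounded by Proposition~\ref{Prop:Propty4ExtenOper}\eqref{Prop:P4EO-1} applied to $\hat{w}$, using the uniform estimate~\eqref{eq:uniform-estimate-u-hat} from Lemma~\ref{lem:BCF-cor4.45}\eqref{lem:BCF-cor4.45-item-ii} and the fact that $\|\mathfrak{g}_2\|_{C^3([-1,1])} \leq C$. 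The bound $\|\hat{\mathfrak{g}}_{\rm sh} - \gsh^{(u,\btheta)}\|_{2,\frac{\alpha}{2},(-1,1)} \leq C\delta_3$ follows similarly: $\gsh^{(u,\btheta)}$ satisfies $\big(w_2 - \mathcal{E}_{\gsh^{(u,\btheta)}}(w)\big)(s,\gsh^{(u,\btheta)}(s)) = 0$ (since $\mathcal{E}_{\gsh}(w) = w$ on $\cl{R_{\gsh}}$ and $w = w_2$ is forced to vanish there by $\varphi = \varphi_2$ on $\Gamma_{\rm shock}$, cf. Definition~\ref{def:u-to-phi}), so the difference of the two implicit relations is governed by $\mathcal{E}_{\gsh^{(u,\btheta)}}(\hat{w} - w) = \mathcal{E}_{\gsh^{(u,\btheta)}}(\hat{w}) - w$, whose size is $O(\delta_3)$ by~\eqref{eq:iteration-u-uhat-estimate} and the continuity of $\mathcal{E}_{\gsh}$; the loss from $\alpha$ to $\tfrac{\alpha}{2}$ absorbs the interpolation needed to pass from the weighted norms to a plain Hölder norm on all of $(-1,1)$.

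Finally, the vanishing conditions $\tfrac{{\rm d}^m}{{\rm d}s^m}(\hat{\mathfrak{g}}_{\rm sh} - \mathfrak{g}_2)(\pm 1) = \tfrac{{\rm d}^m}{{\rm d}s^m}(\hat{\mathfrak{g}}_{\rm sh} - \gsh^{(u,\btheta)})(\pm 1) = 0$ for $m = 0,1$ follow from the endpoint compatibility already built into the construction: by Lemma~\ref{lem:properties-of-G-alpha-theta}\eqref{lem:properties-of-G-alpha-theta-2} and Proposition~\ref{prop:properties-of-gshock}\eqref{prop:properties-of-gshock:4}, $\gsh^{(u,\btheta)}$ matches $\gfive, \gsix$ to first order at $s = \pm1$, and $\hat{\phi} = \phi_\btheta$ with $D\hat{\phi} = D\phi_\btheta$ on $\Gamma^5_{\rm sonic} \cup \Gamma^6_{\rm sonic}$ forces $\hat{w}$ and hence $\mathcal{E}_{\gsh^{(u,\btheta)}}(\hat{w})$ to have the same first-order behaviour as $w$ at the sonic corners (using Proposition~\ref{Prop:Propty4ExtenOper}\eqref{Prop:P4EO-1}, the last assertion of Case~(c)); combined with the first-order data of $w_2$ this pins down $\hat{\mathfrak{g}}_{\rm sh}$ and its first derivative at $s = \pm1$ to agree with both $\mathfrak{g}_2$ and $\gsh^{(u,\btheta)}$. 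I expect the main obstacle to be the careful bookkeeping of the weighted norms near the corners $s = \pm 1$ — in particular, verifying that the extension operator preserves the correct scaling (parabolic near sonic boundaries that have degenerated to points, versus the standard or subsonic scalings) so that the implicit function theorem can be applied uniformly up to the boundary, and that the weight exponents match between the $C^{2,2\alpha}$ estimate for $\hat{\mathfrak{g}}_{\rm sh} - \mathfrak{g}_2$ and the plain $C^{2,\alpha/2}$ estimate for $\hat{\mathfrak{g}}_{\rm sh} - \gsh^{(u,\btheta)}$.
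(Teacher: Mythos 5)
Your overall architecture (solve \eqref{Eq:Given-hat-g-sh} by an implicit-function/transversality argument near the graph of $\gsh^{(u,\btheta)}$, then read off the two weighted estimates from the extension operator bounds of Proposition~\ref{Prop:Propty4ExtenOper}, the a priori bound~\eqref{eq:uniform-estimate-u-hat}, and the $\delta_3$-closeness~\eqref{eq:iteration-u-uhat-estimate}) is the intended one (the paper defers to the analogous lemma in~\cite{BCF-2019}). However, there is a genuine gap at the key transversality step. You deduce $\partial_{t'}\big(w_2-\mathcal{E}_{\gsh^{(u,\btheta)}}(\hat{w})\big)\leq -\tfrac{\bar\mu}{2}$ near $\{t'=\gsh^{(u,\btheta)}(s)\}$ from \eqref{Eq:LemMonotone-phi2ast} (i.e.\ $\partial_{t'}w_2\leq-\bar\mu$) together with the closeness of $\mathcal{E}_{\gsh^{(u,\btheta)}}(\hat w)$ to $w$. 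That does not follow: $\partial_{t'}w$, hence $\partial_{t'}\mathcal{E}_{\gsh^{(u,\btheta)}}(\hat w)$, is \emph{not} small near the shock ($w=(\varphi-\vphithetastar)\circ(\gtheta)^{-1}$ has no smallness of its gradient away from the sonic arcs), so from your premises one only gets $\partial_{t'}\big(w_2-\mathcal{E}(\hat w)\big)\approx\partial_{t'}(w_2-w)$, whose sign and size are exactly what remains to be controlled. Indeed $\partial_{t'}w_2\leq-\bar\mu$ is perfectly consistent with $\partial_{t'}(w_2-w)=0$ at some point of the graph, in which case both the implicit function theorem and the uniqueness of the zero in~\eqref{Eq:Given-hat-g-sh} would fail.

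The missing ingredient is a uniform negative bound on $\partial_{t'}(w_2-w)$ (equivalently on $\partial_{t'}(w_2-\hat w)$, after using $\delta_3$-closeness) along the graph of $\gsh^{(u,\btheta)}$, valid for \emph{every} $(u,\btheta)\in\overline{\mathcal K}$, not only for admissible solutions: Lemma~\ref{Lem:Propt4gtheta}\eqref{Lem:Propt4gtheta-item2} cannot simply be cited here because it is stated for admissible solutions. Instead one must derive it from the iteration-set conditions, namely the transversality $\partial_{\bm\nu}(\varphi_2-\varphi)\geq\mu_1$ on $\overline{\Gamma_{\rm shock}}$ in Definition~\ref{def:iteration-set}\eqref{def:iteration-set-4} (noting that $D(\varphi_2-\varphi)$ is parallel to $\bm\nu$ on the level set $\Gamma_{\rm shock}$), combined with the uniform $C^1$-bounds and Jacobian lower bound for $\gtheta$ from Lemma~\ref{Lem:Propt4gtheta}(i) and the uniform Lipschitz bound on $\gsh^{(u,\btheta)}$ (Lemma~\ref{lem:properties-of-G-alpha-theta}), which together show that the $t'$-coordinate curves cross the graph with uniformly controlled angle; one then propagates the bound to a two-sided neighbourhood of the graph, on the outside using the structure of $\mathcal{E}_{\gsh}$ (the moment conditions on $\Psi$ and the bounds on $\delta^\ast_{\gsh}$ in Definition~\ref{Def:ExtenMap4gsh}) so that $\partial_{t'}\mathcal{E}_{\gsh}(\hat w)$ matches the interior derivative up to a controlled error. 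With this input your remaining steps (the $C^{2,2\alpha}$ weighted bound versus $\mathfrak g_2$ from~\eqref{eq:uniform-estimate-u-hat}, the $O(\delta_3)$ bound versus $\gsh^{(u,\btheta)}$, and the first-order endpoint matching, which should be justified by the weighted estimates of Lemma~\ref{lem:BCF-cor4.45} rather than by a gradient boundary condition that the Dirichlet problem~\eqref{eq:iteration-BVP} does not impose) go through as you outline.
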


\begin{definition}[Iteration map]
\label{Def:IterMap-I-u-btheta}
Let parameters $(\alpha,\epsilon,\delta_1,\delta_3,N_1)$ in {\rm Definition~\ref{def:iteration-set}}
be chosen as in {\rm Lemma~\ref{lem:BCF-cor4.45},}
with $\delta_2>0$ to be specified later.
Furthermore, let \(\delta_3 \in (0, \delta_3^{({\rm imp})})\) for constant $\delta_3^{({\rm imp})}>0$ from {\rm Lemma~\ref{Lem:hat-g-sh-exist}}.
For each $(u,\btheta)\in\overline{\mathcal{K}},$ let $\tilde{u}:\overline{\qiter}\to\mathbb{R}$ be given by
\begin{equation*}
\tilde{u}\defeq\mathcal{E}_{\gsh^{(u,\btheta)}}(\hat{w})\circ(G_{2,\hat{\mathfrak{g}}_{\rm sh}})^{-1}\,,
\end{equation*}
where $G_{2,\hat{\mathfrak{g}}_{\rm sh}}$ is given by~\eqref{eq:map-G2} with $\hat{\mathfrak{g}}_{\rm sh}:[-1,1]\to\mathbb{R}_+$ from~\eqref{Eq:Given-hat-g-sh}.
Then define the iteration map $\mathcal{I}:\overline{\mathcal{K}}\to C^{2,\alpha}_{(*)}(\qiter)$ by
\begin{equation*}
\mathcal{I}(u,\btheta)=\tilde{u}\,.
\end{equation*}
\end{definition}

Note that map $\mathcal{I}:\overline{\mathcal{K}}\to C^{2,\alpha}_{(*)}(\qiter)$ defined as above
is reasonable, since
\begin{equation}\label{Eq:UniformBdd-tilde-u}
\|\mathcal{I}(u,\btheta)\|^{(*)}_{2,2\alpha,\qiter}\leq C
\qquad\text{for all $(u,\btheta)\in\mathcal{\mathcal{K}}$}
\end{equation}
for $C>0$ depending only on $(\gamma, v_2,\theta_*)$,
which follows from Propositions~\ref{Prop:openness-of-iteration-set} and \ref{Prop:Propty4ExtenOper},
and Lemma~\ref{Lem:hat-g-sh-exist}.

\subsection{Proof of Theorem 2.1: Existence of admissible solutions} \label{subsec:proof-existence}
Fix $\theta_*\in(0,\theta^{\rm d})$. For the iteration map $\mathcal{I}$ defined in
Definition~\ref{Def:IterMap-I-u-btheta}, we now prove that,
for any fixed point $u\in\overline{\mathcal{K}}(\btheta)$ of \(\mathcal{I}(\cdot,\btheta)\) for
some $\btheta\in\Theta \cap [0,\theta_*]^2$,
then $\varphi=\varphi^{(u,\btheta)}$ defined in Definition~\ref{def:u-to-phi}\eqref{item3-def:u-to-phi}
is an admissible solution corresponding to $\btheta$ in the sense of Definition~\ref{Def:AdmisSolus}.

\begin{proposition}[Fixed points of map \(\idot\)]
\label{Prop:FixedPt-eqiv-AdmisSolu}
Let parameters $(\alpha,\epsilon,\delta_1,\delta_3,N_1)$ in {\rm Definition~\ref{def:iteration-set}}
be chosen as in {\rm Definition~\ref{Def:IterMap-I-u-btheta}}.
Then parameters $(\epsilon,\delta_1)$ can be chosen even smaller, depending only on $(\gamma, v_2,\theta_*),$
such that, for each $\btheta\in\Theta \cap [0,\theta_*]^2,$
$u\in\mathcal{K}(\btheta)$ is a fixed point of
$\mathcal{I}(\cdot,\btheta):\overline{\mathcal{K}}(\btheta)\to C^{2,\alpha}_{(*)}(\qiter)$
if and only if $\varphi=\varphi^{(u,\btheta)}$ defined in
{\rm Definition~\ref{def:u-to-phi}\eqref{item3-def:u-to-phi}} is an admissible solution
corresponding to $\btheta$ in the sense of {\rm Definition~\ref{Def:AdmisSolus}}
after extending $\varphi$ to $\mathbb{R}^2_+$ via~\eqref{eq:extension-of-varphi}.
\end{proposition}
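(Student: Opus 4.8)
The plan is to establish both implications: (1) if $\varphi = \varphi^{(u,\btheta)}$ is an admissible solution, then $u$ is a fixed point of $\idot$; and (2) if $u$ is a fixed point, then $\varphi = \varphi^{(u,\btheta)}$ (suitably extended via~\eqref{eq:extension-of-varphi}) is an admissible solution. For direction (1), I would start from the fact that an admissible solution $\varphi$ satisfies~\eqref{Eq4PseudoPoten} in $\Omega$ together with the Rankine--Hugoniot condition on $\Gamma_{\rm shock}$, the sonic boundary conditions on $\Gamma^5_{\rm sonic}\cup\Gamma^6_{\rm sonic}$, and the slip condition on $\Gamma_{\rm sym}$; by Lemma~\ref{lem:BCF-cor4.45}\eqref{lem:BCF-cor4.45-item-i}, $(u,\btheta) \in \mathcal{K}$. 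Since the equation $\mathcal{N}_{(u,\btheta)}(\hat{\phi})=0$ coincides with~\eqref{Eq4phi} on $\Omega\setminus\mathcal{D}_{\epsilon/10}$ (Lemma~\ref{Lem:Property4ModifyEq}\eqref{item3-Lem:Property4ModifyEq}), and on $\mathcal{D}_{\epsilon/10}$ the supersonic polar form matches~\eqref{Eq4PsiInNewCoordi} for admissible solutions by the a priori bounds in Definition~\ref{def:iteration-set}\eqref{def:iteration-set-4}, and similarly $\mathcal{M}_{(u,\btheta)}$ reduces to $g^{\rm sh}$ on $\Gamma_{\rm shock}$ by Lemma~\ref{Lem:Property4ModifyBdryCondi}\eqref{item5-Lem:Property4ModifyBdryCondi}, the function $\phi = \varphi + \frac12|\bm{\xi}|^2$ itself solves the iteration boundary value problem~\eqref{eq:iteration-BVP}. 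By uniqueness (Lemma~\ref{Lem:Wellpose4iterBVP}), $\hat{\phi}=\phi$, hence $\hat{w}=w$ and $\hat{\mathfrak{g}}_{\rm sh} = \gsh^{(u,\btheta)}$ by~\eqref{Eq:Given-hat-g-sh} since $(w_2 - w)(s,\gsh(s))=0$ on $\Gamma_{\rm shock}$; then $\mathcal{E}_{\gsh}(\hat{w})=\mathcal{E}_{\gsh}(w)$ restricted to $R_{\gsh}$ equals $w$, so $\tilde{u} = w\circ(G_{2,\gsh})^{-1} = u$.

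For direction (2), suppose $\idot(u,\btheta)=u$. Then $\tilde{u}=u$, which by Definition~\ref{Def:IterMap-I-u-btheta} forces $\hat{\mathfrak{g}}_{\rm sh}=\gsh^{(u,\btheta)}$ (comparing the domains $\qiter = G_{2,\hat{\mathfrak{g}}_{\rm sh}}(R_{\hat{\mathfrak{g}}_{\rm sh}}) = G_{2,\gsh}(R_{\gsh})$) and $\hat{w} = w$ on $R_{\gsh}$, i.e., $\hat{\phi} = \phi$ on $\Omega$. Thus $\phi$ solves~\eqref{eq:iteration-BVP}. I would then verify each clause of Definition~\ref{Def:AdmisSolus} in turn. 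The geometric/regularity properties in~\eqref{item1-Def:AdmisSolus}--\eqref{item2-Def:AdmisSolus} follow from $(u,\btheta)\in\mathcal{K}\subseteq\mathfrak{G}_\alpha^{\theta_*}$ together with Lemma~\ref{lem:properties-of-G-alpha-theta} and the a priori estimates of Lemma~\ref{lem:BCF-cor4.45}\eqref{lem:BCF-cor4.45-item-ii}, which upgrade $\hat{u}$ to the required weighted H\"older spaces so that $\Gamma_{\rm shock}$ and $\varphi$ have the stated smoothness; the boundary conditions of Problem~\ref{FBP} are exactly encoded in~\eqref{eq:iteration-BVP} via Lemma~\ref{Lem:Property4ModifyBdryCondi}\eqref{item5-Lem:Property4ModifyBdryCondi} and Lemma~\ref{Lem:Property4ModifyEq}\eqref{item3-Lem:Property4ModifyEq}. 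The strict ellipticity in~\eqref{item3-Def:AdmisSolus} and the bound $\rho>1$ follow from Definition~\ref{def:iteration-set}\eqref{def:iteration-set-5}--\eqref{def:iteration-set-6} once we know $\varphi$ actually solves~\eqref{Eq4PseudoPoten} (not merely the modified equation) — this is where the cut-offs must be shown inactive. The inequalities~\eqref{EntropyIneqMutant}--\eqref{DirecDerivMonotone-S25-S26} follow from Definition~\ref{def:iteration-set}\eqref{def:iteration-set-4} (the strict positivity $\varphi - \max\{\varphi_5,\varphi_6\} > \mathscr{K}_2 > 0$ away from the sonic neighbourhoods, combined with Definition~\ref{def:u-to-phi} giving $\varphi = \vphithetastar + u\circ\Futheta^{-1} \leq \varphi_2$ on $\Omega$ by~\eqref{eq:approx-admissible-condition-u}, and the directional monotonicity $\partial_{\bm{e}_{S_{2j}}}(\varphi_2-\varphi) < 0$).

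The main obstacle, as in~\cite[\S5]{BCF-2019} and~\cite[\S12, \S17]{ChenFeldman-RM2018}, is a \emph{bootstrap/consistency argument} showing that a fixed point satisfies the original (unmodified) equation and boundary condition, i.e., that none of the cut-off modifications built into $\mathcal{N}_{(u,\btheta)}$ and $\mathcal{M}_{(u,\btheta)}$ are active at $\varphi=\varphi^{(u,\btheta)}$. Concretely: since $u = \hat{u}$ and $\hat{u}$ satisfies the sharper estimate~\eqref{eq:uniform-estimate-u-hat}, one improves the bounds on $D\psi$, $\psi$, and the density/Mach number so that the defining inequalities of Definition~\ref{def:iteration-set}\eqref{def:iteration-set-4}--\eqref{def:iteration-set-5} hold with \emph{strict} inequality and room to spare — forcing $\varsigma_{\sigma_{\rm cf}}=1$, $\zeta_1(\cdot)=\mathrm{id}$, $\varsigma_{\sigma_{\rm bc}}^{(j)}$ to take the appropriate values, and the subsonic/linearized cut-offs $v_\sigma^{(u,\btheta)}$ to be irrelevant — and then Lemma~\ref{Lem:Property4ModifyEq}\eqref{item3-Lem:Property4ModifyEq} and Lemma~\ref{Lem:Property4ModifyBdryCondi}\eqref{item5-Lem:Property4ModifyBdryCondi} give that $\varphi$ solves~\eqref{Eq4PseudoPoten} with the true Rankine--Hugoniot condition on $\Gamma_{\rm shock}$. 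This requires choosing $\epsilon$ and $\delta_1$ sufficiently small depending only on $(\gamma,v_2,\theta_*)$, which is the content of the final sentence of the statement. Once the unmodified PDE and free boundary condition are recovered, the extension~\eqref{eq:extension-of-varphi} of $\varphi$ to $\mathbb{R}^2_+$ is a weak solution of~\eqref{Eq4PseudoPoten}--\eqref{Relat4Rho} by a standard matching-of-traces argument across $\Gamma_{\rm sonic}^j$, $S_{25}^{\rm seg}$, $S_{26}^{\rm seg}$, $S_{12}$, and $S_{32}$ (using the $C^1$ matching on the sonic arcs from Problem~\ref{FBP}\eqref{item3n4-FBP} and the Rankine--Hugoniot conditions elsewhere), completing the verification that $\varphi$ is admissible.
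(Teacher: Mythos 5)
Your overall architecture matches the paper's: direction (1) (admissible $\Rightarrow$ fixed point) is handled exactly as in the paper, and you correctly identify the central obstacle in direction (2) as showing that the cut-offs in $\mathcal{N}_{(u,\btheta)}$ and $\mathcal{M}_{(u,\btheta)}$ are inactive at a fixed point. However, direction (2) contains genuine gaps where you replace the paper's maximum-principle arguments with shortcuts that do not work. First, you assert $\varphi\leq\varphi_2$ in $\Omega$ ``by~\eqref{eq:approx-admissible-condition-u}''; that condition only constrains the boundary values $u(s,1)$ on the shock and says nothing about the interior. The paper proves $\varphi\leq\varphi_2$ by applying the strong maximum principle and Hopf's lemma to $v=\hat{\varphi}^{(u,\btheta)}-\varphi_2$, which solves a strictly elliptic equation with $v\leq 0$ on $\Gamma_{\rm shock}\cup\Gamma_{\rm sonic}$ and $\partial_\eta v=-v_2>0$ on $\Gamma_{\rm sym}$. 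Second, the inequalities~\eqref{EntropyIneqMutant} and~\eqref{DirecDerivMonotone-S25-S26} do \emph{not} simply follow from Definition~\ref{def:iteration-set}\eqref{def:iteration-set-4}: those iteration-set conditions hold only in $\cl{\Omega}\setminus\mathcal{D}^j_{\epsilon/10}$, i.e.\ \emph{away} from the sonic neighbourhoods, and when $\max\{\theta_1,\theta_2\}<\delta_1/N_1^2$ one has $\mathscr{K}_2<0$, so they give nothing at all. The paper closes both gaps by deriving an elliptic equation for $w=\partial_{\bm{e}_{S_{25}}}(\varphi_2-\varphi)$ (and for $\varphi-\max\{\varphi_5,\varphi_6\}$) inside $\Omega\cap\mathcal{D}^5_{\epsilon}$, checking the oblique boundary conditions~\eqref{Eq:BdryCondi4w}, and applying the maximum principle and Hopf's lemma; this is also where $\epsilon$ and $\delta_1$ must be reduced.

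Third, for the key step that $|\psi_x|\leq \frac{2-\mu_0/5}{\gamma+1}\,x$ near $\Gamma^5_{\rm sonic}$ (so that $\zeta_1$ acts as the identity), your suggestion that the improved estimate~\eqref{eq:uniform-estimate-u-hat} alone yields this ``with room to spare'' is insufficient: the parabolic a priori estimate gives $|\psi_x|\lesssim x$ with a constant that need not be below the cut-off threshold $A=\frac{2-\mu_0/5}{1+\gamma}$, and inside $\mathcal{D}^j_{\epsilon/10}$ the iteration set only provides the weak bound $|D_{(x,y)}\psi|<N_4\epsilon$. The paper instead runs a maximum-principle argument for $\tilde{w}=Ax-\partial_x\psi$, using the a priori estimates only to control the boundary values of $\tilde w$ on $\rcal(\Gamma_{\rm shock})$ and the right-hand side $E(x,y)$, and obtains the lower bound $\psi_x\geq -Ax$ from the already-established directional monotonicity. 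Finally, you omit the verification that $\Gamma_{\rm shock}\subseteq\mathbb{R}^2_+\setminus\cl{B_{c_2}(O_2)}$ and $\Gamma_{\rm shock}\subseteq\{\xi^{P_3}<\xi<\xi^{P_2}\}$ (the paper's Step 4, via the algebraic relation between the pseudo-Mach numbers on either side of the shock and the argument of Lemma~\ref{Prop:GammaShockExpres}). These are all fixable, but as written the proof of direction (2) is incomplete.
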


\begin{proof}
Since the proof is similar to that for~\cite[Proposition 5.8]{BCF-2019},
we omit the details of the proof here and sketch only the main ideas in the following four steps.

\smallskip
\textbf{1.}
For $\btheta\in\Theta \cap [0,\theta_*]^2$, let $\varphi$ be an admissible solution corresponding to $\btheta$,
and let $u\defeq u^{(\varphi,\btheta)}$ be given by~\eqref{eq:phi-to-u},
 which satisfies \(u \in \mathcal{K}(\btheta)\) by Lemma~\ref{lem:BCF-cor4.45}\eqref{lem:BCF-cor4.45-item-i}.
It follows directly from Definition~\ref{def:u-to-phi}\eqref{item1-def:u-to-phi}
and the boundary condition on $\Gamma_{\rm shock}$ that $\gsh^{(u,\btheta)}=\gsh$.
It is clear that $\hat{\phi}\defeq\phi=\varphi+\frac12|\bm{\xi}|^2$ solves the iteration boundary value problem~\eqref{eq:iteration-BVP},
which indicates that $\hat{w}=w$ in~\eqref{Eq:Sec5-Def-3w-funcs}.
Then $\hat{\mathfrak{g}}_{\rm sh}=\gsh$ by solving~\eqref{Eq:Given-hat-g-sh}. We conclude that
\begin{equation*}
\mathcal{I}(u,\btheta)=\mathcal{E}_{\gsh^{(u,\btheta)}}(\hat{w})\circ(G_{2,\hat{\mathfrak{g}}_{\rm sh}})^{-1}
=\mathcal{E}_{\gsh}({w})\circ(G_{2,\gsh})^{-1}=u\,,
\end{equation*}
so \(u \in \mathcal{K}(\btheta)\) is a fixed point of \(\mathcal{I}(\cdot,\btheta)\).

\smallskip
\textbf{2.}
On the other hand, for any fixed point $u$ of
map $\mathcal{I}(\cdot,\btheta):\overline{\mathcal{K}}(\btheta)\to C^{2,\alpha}_{(*)}(\qiter)$, let $\varphi^{(u,\btheta)}$
be given by Definition~\ref{def:u-to-phi}\eqref{item3-def:u-to-phi}.
We now show that $\varphi^{(u,\btheta)}$
is an admissible solution in the sense of Definition~\ref{Def:AdmisSolus} corresponding to $\btheta$.
Using Definition~\ref{def:u-to-phi}\eqref{item1-def:u-to-phi}, \eqref{Eq:Given-hat-g-sh},
and Definition~\ref{Def:IterMap-I-u-btheta}, we have
\begin{equation*}
w_2(s,\gsh^{(u,\btheta)})=u(s,1)=\mathcal{E}_{\gsh^{(u,\btheta)}}(\hat{w})(s,\hat{\mathfrak{g}}_{\rm sh})=w_2(s,\hat{\mathfrak{g}}_{\rm sh})\,,
\end{equation*}
from which we obtain that $\hat{\mathfrak{g}}_{\rm sh}=\gsh^{(u,\btheta)}$, $\hat{u}=u$,
and $\hat{\varphi}^{(u,\btheta)}=\varphi^{(u,\btheta)}$.

Define $v\defeq\hat{\varphi}^{(u,\btheta)}-\varphi_2$.
Then $v$ solves the following strictly elliptic equation:
\begin{equation*}
\mathcal{L}_{(u,\btheta)}(v)=\sum_{i,j=1}^{2}A_{ij}(D\hat{\phi}(\bm{\xi}),\bm{\xi})\partial_{i} \partial_{j} v = 0\,.
\end{equation*}
From the boundary conditions of the iteration boundary value problem~\eqref{eq:iteration-BVP} and $\hat{u}=u$ above,
we see that $v\leq0$ on $\Gamma_{\rm shock}\cup\Gamma_{\rm sonic}$
and  $\partial_{\eta}v=-v_2>0$ on $\Gamma_{\rm sym}$.
From the maximum principle and
Hopf's lemma, we obtain that $\hat{\varphi}^{(u,\btheta)}\leq\varphi_2$ on $\overline{\Omega}$.

When
$\max\{\theta_1,\theta_2\}\in[\frac{2\delta_1}{N_1^2},\theta_*]$, we see that
$\hat{\varphi}^{(u,\btheta)}\geq\max\{\varphi_5,\varphi_6\}$ in $\Omega$,
which follows directly from~\eqref{eq:iteration-set-4-1n2} in {\rm Definition~\ref{def:iteration-set}}
and Step~\textbf{1} of Lemma~\ref{lem:BCF-cor4.45}.

When
$\max\{\theta_1,\theta_2\}\in(0,\frac{2\delta_1}{N_1^2}]$, we can show the same result
by applying the maximum principle as in the proof of~\cite[Lemma 4.42]{BCF-2019}.
Then inequality~\eqref{EntropyIneqMutant} in Definition~\ref{Def:AdmisSolus}\eqref{item4-Def:AdmisSolus} is proved.

Similarly, for inequality~\eqref{DirecDerivMonotone-S25-S26} in
Definition~\ref{Def:AdmisSolus}\eqref{item4-Def:AdmisSolus}, we also split into two cases.
Denote $w\defeq\partial_{\bm{e}_{S_{25}}}(\varphi_2-{\varphi}^{(u,\btheta)})$.
We introduce $(X_1,X_2)$--coordinates by $\bm{\xi} \eqdef X_1\bm{e}_{S_{25}}+X_2\bm{e}^{\perp}_{S_{25}}$.
From the equation for $v$, one can derive the following strictly elliptic equation for $w=-\partial_{X_1}v$:
\begin{equation*}
\hat{\mathcal{L}}_{(u,\btheta)}(w)\defeq
\sum_{i,j=1}^{2}\hat{A}_{ij}\partial_{X_iX_j}w+\sum_{i=1}^{2}\hat{A}_{i}\partial_{X_i}w=0
\qquad\,\, \text{in $\Omega$}\,.
\end{equation*}

When $\max\{\theta_1,\theta_2\}\in[\frac{2\delta_1}{N_1^2},\theta_*]$,
it follows from~\eqref{eq:iteration-set-4-1n2} in {\rm Definition~\ref{def:iteration-set}} that
$w<0$ in $\overline{\Omega}\setminus\mathcal{D}^5_{\epsilon/10}$.
In domain $\Omega\cap\mathcal{D}^5_{\epsilon}$, one can check
\begin{equation}\label{Eq:BdryCondi4w}
\begin{cases}
\, w=0 \qquad &\quad \text{on $\Gamma^5_{\rm sonic}$}\,,\\
\, \bm{b}_{\rm w}\cdot Dw=0 \qquad &\quad\text{with $\bm{b}_{\rm w}\cdot\bm{\nu}>0$ on $\Gamma_{\rm sym}$}\,,\\
\, \bm{b}_{\rm sh}\cdot Dw=0 \qquad &\quad\text{with $\bm{b}_{\rm sh}\cdot\bm{\nu}>0$ on $\Gamma_{\rm shock}$}\,,
\end{cases}
\end{equation}
where $\bm{\nu}$ is the unit normal vector on $\Gamma_{\rm sym}\cup\Gamma_{\rm shock}$ pointing to $\Omega$.
Note that we have used~\cite[Lemma 13.4.5]{ChenFeldman-RM2018} to derive the boundary condition
on $\Gamma_{\rm shock}$.
By the maximum principle and Hopf's lemma, we obtain that $w\leq0$ in ${\Omega}\cap\mathcal{D}^5_{\epsilon}$
when $\epsilon\in(0,\epsilon_{\rm sh}]$, for some sufficiently small $\epsilon_{\rm sh}>0$ depending only on $(\gamma,v_2,\theta_*)$.

When $\max\{\theta_1,\theta_2\}\in(0,\frac{2\delta_1}{N_1^2}]$,
using property \eqref{def:iteration-set-1} of {\rm Definition~\ref{def:iteration-set}},
one can follow the same procedure as the first case to derive that
$w\leq0$ in $\Omega$ when $\delta_1\in(0,\delta_{\rm sh}]$, for some $\delta_{\rm sh}>0$
depending only on $(\gamma,v_2,\theta_*)$.
Therefore, $\hat{\varphi}^{(u,\btheta)}$ satisfies~\eqref{DirecDerivMonotone-S25-S26}
when $\epsilon\in(0,\epsilon_{\rm sh}]$ and $\delta_1\in(0,\delta_{\rm sh}]$,
since the argument above also works for $\partial_{\bm{e}_{S_{26}}}(\varphi_2-{\varphi}^{(u,\btheta)})$.

\smallskip
\textbf{3.} Now we check that \(\mathcal{N}_{(u,\btheta)}(\phi) = 0\)
in {\rm Definition~\ref{def:iteration-set}\eqref{def:iteration-set-7}}
coincides with~\eqref{Eq4phi} in \(\Omega\).
From Lemma~\ref{Lem:Property4ModifyEq}\eqref{item3-Lem:Property4ModifyEq},
this property holds in \(\Omega \setminus \mathcal{D}_{\epsilon/10}\),
so it remains to show that this property holds in \(\Omega \cap \mathcal{D}_{\epsilon/2}\).
We only consider domain $\Omega\cap \mathcal{D}^5_{\epsilon/2}$,
since the proof is the same for domain $\Omega\cap \mathcal{D}^6_{\epsilon/2}$
due to the symmetry between $\theta_1$ and $\theta_2$ in the problem.

For any \(\bm{\xi}\in \Omega \cap \mathcal{D}_{\epsilon}\), let coordinates $(x,y) = \rcal(\bm{\xi})$ be given by~\eqref{Eq:DefPolarCoordiSec4}.
Using the monotone property~\eqref{Eq:xP01MonotoneWrtBeta}, we choose a constant
$\epsilon'_{\rm sh}\in(0,\epsilon_{\rm sh}]$ sufficiently small, depending only on $(\gamma,v_2,\theta_*)$,
such that
$\theta_1\leq\theta^{\rm s}+\min\{\frac{\sigma_3}{2},\delta_{\rm p}\}$ whenever $x_{P_0^1}<\frac{\epsilon'_{\rm sh}}{10}$,
for $\delta_{\rm p}>0$ the constant from Step~\textbf{3}\eqref{item2-Lem:AprioriIterBVP}
in the proof of Lemma~\ref{lem:BCF-cor4.45}.

Choose \(\epsilon \in ( 0 ,\epsilon'_{\rm sh}]\), and suppose $x_{P_0^1}<\frac{\epsilon}{10}$.
Denote $\tilde{w}\defeq Ax-\partial_x\psi(x,y)$ for $A\defeq\frac{1}{1+\gamma}(2-\frac{\mu_0}{5})$
and $\psi= (\varphi^{(u,\btheta)}-\varphi_5) \circ \rcal^{-1}$.
It is clear that $\tilde{w}=0$ on $\rcal(\Gamma_{\rm sonic}^5)$,
and $\tilde{w}_y=0$ on $\rcal(\Gamma_{\rm sym}\cap\partial\mathcal{D}_{\epsilon/2}^5)$.
From Lemma~\ref{Lem:Property4ModifyBdryCondi}\eqref{item1-Lem:Property4ModifyBdryCondi}
and~\eqref{item3-Lem:Property4ModifyBdryCondi}--\eqref{item4-Lem:Property4ModifyBdryCondi},
the boundary condition: $\mathcal{M}_{(u,\btheta)}(D\hat{\phi},\hat{\phi},\bm{\xi})=0$
in~\eqref{eq:iteration-BVP} can be written as
\begin{equation*}
(b_1,b_2,b_0)\cdot(D\hat{\phi},\hat{\phi})=0 \qquad
\text{on $\Gamma_{\rm shock}\cap\mathcal{D}_{\epsilon}^5$}\,,
\end{equation*}
for some $b_0,b_1,b_2\in[-\frac{1}{\delta_{\rm bc}},-\delta_{\rm bc}]$.
Using the {\it a priori} estimates~\eqref{eq:uniform-estimate-u-hat} in the case:
$\theta_1\leq\theta^{\rm s}+\delta_{\rm p}$,
we see that $|\psi_x|\leq \frac{1}{\delta^2_{\rm bc}}(|\psi_y|+|\psi|)\leq C x^{\frac{3}{2}}$
for any $(x,y)\in \rcal(\Gamma_{\rm shock}\cap\mathcal{D}_{\epsilon}^5)$.
Reducing $\epsilon'_{\rm sh}>0$ further, depending only on \((\gamma,v_2)\), we obtain that $\tilde{w}>0$ on
$\rcal(\Gamma_{\rm shock}\cap\mathcal{D}_{\epsilon}^5)$ for any $\epsilon\in(0,\epsilon'_{\rm sh}]$.

By Lemma~\ref{Lem:Property4ModifyEq}\eqref{item2-Lem:Property4ModifyEq} and the fact that $\hat{\phi}=\phi$,
we have
\begin{equation*}
\sum_{i,j=1}^2A_{ij}(D\hat{\phi},\bm{\xi})\partial_{i} \partial_{j}\hat{\phi}
=\sum_{i,j=1}^2A^{(3)}_{ij}(D\hat{\phi},\bm{\xi})\partial_{i} \partial_{j}\hat{\phi}
=  c_\btheta \mathcal{N}^{\rm polar}_{(u,\btheta)} (\hat{\phi})\,,
\end{equation*}
from where we can derive the following equation for $\tilde{w}$:
\begin{equation*}
a_{11}\tilde{w}_{xx}+2a_{12}\tilde{w}_{xy}+a_{22}\tilde{w}_{yy}+a_1\tilde{w}_x+a_2\tilde{w}_y=-A((\gamma+1)A-1)+E(x,y) \qquad
\text{in $\rcal(\Omega\cap\mathcal{D}_{\epsilon/2}^5)$}\,.
\end{equation*}
From expression~\eqref{Eq:Def-co-k} and the {\it a priori} estimates~\eqref{eq:uniform-estimate-u-hat}
in Lemma~\ref{lem:BCF-cor4.45}, reducing $\epsilon'_{\rm sh}>0$ further if necessary,
we obtain directly that $E(x,y)<A((\gamma+1)A-1)$
in $\rcal(\Omega\cap\mathcal{D}_{\epsilon/2}^5)$ for any $\epsilon\in(0,\epsilon'_{\rm sh}]$.
Therefore, the property: $\tilde{w}=Ax-\partial_x\psi(x,y)\geq0$ holds
in $\rcal(\Omega\cap \mathcal{D}^5_{\epsilon/2})$, which follows from
Lemma~\ref{Lem:Property4ModifyEq}\eqref{item1-Lem:Property4ModifyEq}, the maximum principle, and Hopf's lemma.

Now we show the inequality: $\partial_x\psi(x,y)\geq-Ax$.
From Step~\textbf{2}, we have
\begin{equation*}
\big( \partial_{e_{S_{25}}}(\varphi_2-\varphi) \big) \circ \rcal^{-1} = \psi_x\cos(\theta_{25}-y)-\frac{\sin(\theta_{25}-y)}{c_5-x}\psi_y \leq0
\qquad \text{in $\rcal(\Omega\cap\mathcal{D}^5_{\epsilon/2})$}\,.
\end{equation*}
This implies that
\begin{equation*}
\psi_x\geq-\frac{\tan(\frac{\pi}{2}-\tilde{\omega}_0)}{c_5-x}\psi_{y}\geq -Cx^{\frac32}\qquad
\text{in $\rcal(\Omega\cap\mathcal{D}^5_{\epsilon/2})$}\,,
\end{equation*}
by using property~\eqref{Eq:D5n6epsilonSetBdd} and
the {\it a priori} estimates~\eqref{eq:uniform-estimate-u-hat} in Lemma~\ref{lem:BCF-cor4.45}.
Then $\partial_x\psi(x,y)\geq-Ax$ after further reducing $\epsilon'_{\rm sh}>0$.
Thus, $|\psi_x|\leq Ax$ is proved, which indicates that
\(\mathcal{N}_{(u,\btheta)}(\phi) = 0\) in {\rm Definition~\ref{def:iteration-set}\eqref{def:iteration-set-7}}
coincides with~\eqref{Eq4phi} in \(\Omega\cap\mathcal{D}^5_{\epsilon/2}\).
Moreover, if $x_{P_0^{1}} \geq \frac{\epsilon}{10}$,
the same conclusion holds in
\(\Omega\cap\mathcal{D}^5_{\epsilon/2}\subseteq \Omega \setminus \mathcal{D}^{6}_{\epsilon/10}\)
by Lemma~\ref{Lem:Property4ModifyEq}\eqref{item3-Lem:Property4ModifyEq}.

\smallskip
\textbf{4.} Definition~\ref{Def:AdmisSolus}\eqref{item3-Def:AdmisSolus} follows directly
from  Lemma~\ref{Lem:Property4ModifyEq}\eqref{item1-Lem:Property4ModifyEq} and the fact proved in Step~{\bf 3}.
Let the sound speed $c(|D\varphi|,\varphi)$ be given by \eqref{Def4SonicSpeedInSec3}.
The strict ellipticity implies that
\begin{equation*}
M\defeq\frac{|\partial_{\bm{\nu}}\varphi(\bm{\xi})|}{c(|D\varphi(\bm{\xi})|,\varphi(\bm{\xi}))}
<1 \qquad \text{on $\Gamma_{\rm shock}$}\,.
\end{equation*}
Define $M_{2}\defeq|\partial_{\bm{\nu}}\varphi_2(\bm{\xi})|$.
Similar to~\cite[Eq.~$(2.4.9)$]{BCF-2019}, we have the following equality:
\begin{equation*}
(1+\tfrac{\gamma-1}{2}M^2)M^{-\tfrac{2(\gamma-1)}{\gamma+1}}=(1+\tfrac{\gamma-1}{2}M_2^2)M_2^{-\tfrac{2(\gamma-1)}{\gamma+1}}\,.
\end{equation*}
Then $M_2>1$, and so $|D\varphi_2|>1=c(|D\varphi_2|,\varphi_2)$ holds
on $\Gamma_{\rm shock}$,
\textit{i.e.}, $\Gamma_{\rm shock}\subseteq \mathbb{R}^2_+\setminus\overline{B_{c_2}(O_2)}$.
Finally, for the property: $\Gamma_{\rm shock}\subseteq \{\xi^{P_3}<\xi<\xi^{P_2}\}$,
we follow the same proof as in Lemma~\ref{Prop:GammaShockExpres} by using all the properties above.
Therefore, all parts of Definition~\ref{Def:AdmisSolus} have been verified,
which implies that $\varphi^{(u,\btheta)}$ is an admissible solution
in the sense of {\rm Definition~\ref{Def:AdmisSolus}}.
\end{proof}

\medskip
\noindent \textbf{Proof of Theorem~\ref{Thm:Existence-AdmisSolu}.}
We now complete the proof of Theorem~\ref{Thm:Existence-AdmisSolu} in five steps.

\smallskip
\textbf{1.} We prove that the iteration map $\mathcal{I}:\overline{\mathcal{K}}\to C^{2,\alpha}_{(*)}(\qiter)$
is a compact map in the sense of {\rm Definition~\ref{Def:CompactThm}}.
Let $\{(u_k,\btheta_k)\}_{k\in\mathbb{N}}\subseteq\mathcal{K}$
converge to $(u,\btheta)$ in $C^{2,\alpha}_{(*)}(\qiter)\times[0,\theta_*]^2$.
For each $k\in\mathbb{N}$, write $\Omega^{(k)} \defeq \Omega(u_k,\btheta_k)$
and $\mathfrak{g}_{\rm sh}^{(k)} \defeq \mathfrak{g}_{\rm sh}^{(u_k,\btheta_k)}$,
as given by Definition~\ref{def:u-to-phi},
and $\Gamma_{\rm sonic}^{(k)}\defeq\Gamma^{5}_{\rm sonic}\cup\Gamma^6_{\rm sonic}$
for the sonic arcs
related to parameters $\btheta_k$.
By Lemma~\ref{Lem:Wellpose4iterBVP}, there exists a unique solution
$\hat{\phi}^{(k)}\in C^2(\Omega^{(k)})\cap C^1(\overline{\Omega^{(k)}}\setminus \Gamma_{\rm sonic}^{(k)})
\cap C^0(\overline{\Omega^{(k)}})$ of the iteration boundary value problem~\eqref{eq:iteration-BVP} related to $(u_k,\btheta_k)$.
For any $(s,t')\in R_{\mathfrak{g}^{(k)}_{\rm sh}}$, define
\begin{equation*}
\hat{w}^{(k)}(s,t')\defeq \big(\hat{\phi}^{(k)}+\frac{1}{2}|\bm{\xi}|^2-\varphi^*_{\btheta_k}\big) \circ (\mathcal{G}_1^{\btheta_k})^{-1}(s,t')\,,
\end{equation*}
where $R_{\mathfrak{g}^{(k)}_{\rm sh}}$, $\varphi^*_{\btheta_k}$,
and $\mathcal{G}_1^{\btheta_k}$ are given by~\eqref{Eq:DefRgDomain},
Definition~\ref{def:varphi-theta-star}, and~\eqref{Eq:Def-gtheta}, respectively.
Then
\begin{equation*}
\hat{w}^{(k)}=\hat{u}^{(k)} \circ G_{2,\mathfrak{g}^{(k)}_{\rm sh}}
\end{equation*}
for $\hat{u}^{(k)}:\overline{\qiter}\to\mathbb{R}$ given by~\eqref{eq:def-u-hat}
with $\hat{\phi}=\hat{\phi}^{(k)}$, and
$G_{2,\mathfrak{g}^{(k)}_{\rm sh}}$ given by~\eqref{eq:map-G2}.
Subsequently, we obtain functions $\hat{\mathfrak{g}}^{(k)}_{\rm sh}:[-1,1]\to\overline{\mathbb{R}_+}$ via Lemma~\ref{Lem:hat-g-sh-exist}.

Since $({u}_k,\btheta_k)$ converges to $(u,\btheta)$ in $C^{1,\alpha}(\qiter)\times[0,\theta_*]^2$,
we find that
$\mathfrak{g}_{\rm sh}^{(k)}\to \mathfrak{g}_{\rm sh}$ in $C^{1,\alpha}([-1,1])$
by Lemma~\ref{lem:properties-of-G-alpha-theta}\eqref{lem:properties-of-G-alpha-theta-4}.
Fix any compact subset $K\subseteq R_{\mathfrak{g}_{\rm sh}}$.
From~\eqref{eq:map-G2}, Definition~\ref{def:iteration-set}\eqref{def:iteration-set-3},
and the convergence of $\{\mathfrak{g}_{\rm sh}^{(k)}\}_{k\in\mathbb{N}}$, we see that
$G_{2,\mathfrak{g}^{(k)}_{\rm sh}} \to G_{2,\mathfrak{g}_{\rm sh}}$ in $C^{1,\alpha}(K)$.
Then, by Corollary~\ref{corol:ConvergeInHolder4uHat} and the convergence of $\{\hat{u}^{(k)}\}_{k\in\mathbb{N}}$,
we obtain that
$\hat{w}^{(k)}\to\hat{w}$ in $C^{1,\alpha}(K)$.

By Lemma~\ref{lem:BCF-cor4.45}, Proposition~\ref{Prop:Propty4ExtenOper}\eqref{Prop:P4EO-3},
and the local convergence of $\{\hat{w}^{(k)}\}_{k\in\mathbb{N}}$, for any $b_1,b_2\in[-1,1]$ chosen as in each case of Proposition~\ref{Prop:Propty4ExtenOper}\eqref{item-a-Prop:Propty4ExtenOper}--\eqref{item-c-Prop:Propty4ExtenOper},
 $\{\mathcal{E}_{\mathfrak{g}_{\rm sh}^{(k)}}(\hat{w}^{(k)})\}_{k\in\mathbb{N}}$ converges to $\mathcal{E}_{\mathfrak{g}_{\rm sh}}(\hat{w})$ in the corresponding function space.
Combining the statement above with the convergence of $\{\hat{\mathfrak{g}}^{(k)}_{\rm sh}\}_{k\in\mathbb{N}}$ from Lemma~\ref{Lem:hat-g-sh-exist}, we conclude that
\begin{equation*}
\mathcal{E}_{\mathfrak{g}^{(k)}_{\rm sh}}(\hat{w}^{(k)}) \circ (G_{2,\hat{\mathfrak{g}}^{(k)}_{\rm sh}})^{-1} \to \mathcal{E}_{\mathfrak{g}_{\rm sh}}(\hat{w}) \circ (G_{2,\hat{\mathfrak{g}}_{\rm sh}})^{-1}
\qquad\,\, \text{in $C^{2,\alpha}_{(*)}(\qiter)$}\,.
\end{equation*}
Since $C^{2,2\alpha}_{(*)}(\qiter)$ is compactly embedded into $C^{2,\alpha}_{(*)}(\qiter)$,
we obtain from~\eqref{Eq:UniformBdd-tilde-u} that $\mathcal{I}(U)$ is precompact
in $C^{2,\alpha}_{(*)}(\qiter)$ for any bounded
subset $U\subseteq \overline{\mathcal{K}}$ so
that
the iteration map $\mathcal{I}$ is compact.

\smallskip
\textbf{2.} Next, we show that constants $N_1,\delta_2>0$ in {\rm Definition~\ref{def:iteration-set}}
can be chosen, depending only on $(\gamma, v_2,\theta_*)$,
such that, for any $\btheta\in\Theta\cap[0,\theta_*]^2$, no fixed point of $\mathcal{I}(\cdot,\btheta)$
lies on boundary $\partial\mathcal{K}(\btheta)$,
where $\partial\mathcal{K}(\btheta)$ is considered relative to
$C^{2,\alpha}_{(*)}(\qiter)$.

Suppose that $(u,\btheta)\in\overline{\mathcal{K}}$ satisfies $\mathcal{I}(u,\btheta)=u$.
For $\varphi=\varphi^{(u,\btheta)}$ defined in $\Omega = \Omega(u,\btheta)$
by Definition~\ref{def:u-to-phi}\eqref{item3-def:u-to-phi}, extend \(\varphi\) to \(\mathbb{R}^2_+\)
via~\eqref{eq:extension-of-varphi}.
Then $\varphi$ is an admissible solution corresponding to parameters $\btheta\in\Theta\cap[0,\theta_*]^2$ by Proposition~\ref{Prop:FixedPt-eqiv-AdmisSolu}.

From
\eqref{Eq:FuncgEquivDistb},
Propositions~\ref{prop:lowerBoundBetweenShockAndSonicCircle}, \ref{prop:ellipticDegeneracyNearSonicBoundary},
and~\ref{prop:properties-of-gshock}\eqref{prop:properties-of-gshock:5},
and Lemma~\ref{UniformBound-Lem},
constants $(N_2,\tilde{\mu},,\rho^{*}(\gamma),C_{\rm ub})$ are fixed so that
any admissible solution satisfies the strict inequalities in
Definition~\ref{def:iteration-set}\eqref{def:iteration-set-3} and
\eqref{def:iteration-set-5}--\eqref{def:iteration-set-6}.
Moreover, from Lemma~\ref{lem:BCF-cor4.45}\eqref{lem:BCF-cor4.45-item-i},
$u$ satisfies the strict inequality given in Definition~\ref{def:iteration-set}\eqref{def:iteration-set-1}
when
$N_1\geq N_1^{({\rm adm})}$.

For condition~\eqref{def:iteration-set-4} of Definition~\ref{def:iteration-set},
constants $(\mu_0,\mu_1,N_4,N_5, \sigma_2)$ are fixed such that any admissible solution
corresponding to $\btheta\in\Theta\cap[0,\theta_*]^2$ satisfies
the strict inequalities~\eqref{eq:iteration-set-4-3to7}.
Next, we focus on the strict inequalities~\eqref{eq:iteration-set-4-1n2}.
It is direct to see that the strict inequalities~\eqref{eq:iteration-set-4-1n2}
hold when
$\max\{\theta_1,\theta_2\}<\frac{\delta_1}{N_1^2}$,
since
$\mathscr{K}_2(\max\{\theta_1,\theta_2\})<0$
and inequality~\eqref{EntropyIneqMutant}
in Definition~\ref{Def:AdmisSolus}\eqref{item4-Def:AdmisSolus}
and Lemma~\ref{lem:monotonicityForPhi2-Phi}\eqref{item2-Lem3-2} hold
for any admissible solution corresponding to $\btheta\in\Theta\cap[0,\theta_*]^2$.
When $\max\{\theta_1,\theta_2\}\geq\frac{\delta_1}{N_1^2}$,
we similarly have
\begin{equation*}\begin{aligned}
& \varphi-\max\{\varphi_5,\varphi_6\}>0 \quad &&\text{in} \;\overline{\Omega}\setminus(\mathcal{D}^{5}_{\epsilon/10}\cup\mathcal{D}^{6}_{\epsilon/10})\,,\\
& \partial_{e_{S_{2j}}}(\varphi_2-\varphi)<0 \quad &&\text{in} \;\overline{\Omega}\setminus\mathcal{D}^{j}_{\epsilon/10} \;\text{for} \;j=5,6\,.
\end{aligned}\end{equation*}
By Corollary~\ref{Corol:Esti-AwaySonic5n6}, and Propositions~\ref{Prop:Esti4theta1a}--\ref{Prop:Esti4theta1c} and \ref{Prop:Esti4theta1d},
we obtain that $\varphi\in C^{1,\alpha}(\Omega)$.
Combining this with Lemma~\ref{Lem:Compactness4AdmiSoluSet},
there exist both
$\tau_0>0$ depending only on $(\gamma, v_2,\theta_*)$
and
$\tau_1>0$ depending only on $(\gamma, v_2,\theta_*,\delta_1,N_1)$
such that, for all $\btheta\in\Theta\cap \{\frac{\delta_1}{N_1^2} \leq \max\{\theta_1,\theta_2\}
\leq \theta_* \}$,
\begin{equation*}\begin{aligned}
& \psi=\varphi-\max\{\varphi_5,\varphi_6\}\geq\tau_0 \quad &&\text{in} \;\overline{\Omega}\setminus(\mathcal{D}^{5}_{\epsilon/10}\cup\mathcal{D}^{6}_{\epsilon/10})\,,\\
& \partial_{e_{S_{2j}}}(\varphi_2-\varphi)\leq-\tau_1 \quad &&\text{in} \;\overline{\Omega}\setminus\mathcal{D}^{j}_{\epsilon/10} \;\text{for} \;j=5,6\,.
\end{aligned}\end{equation*}
For any fixed $\delta_1>0$,
we can choose $\delta_2>0$ small enough such that
\begin{equation*}
\mathscr{K}_2(\max\{\theta_1,\theta_2\})\leq\frac{\delta_1\delta_2}{N_1^2}<\min\{\tau_0,\tau_1\}
\qquad\,\, \text{for all
$\btheta\in\Theta\cap \{\frac{\delta_1}{N_1^2} \leq \max\{\theta_1,\theta_2\} \leq \theta_*\}$}\,,
\end{equation*}
which implies that the strict inequalities~\eqref{eq:iteration-set-4-1n2} hold
when
$\max\{\theta_1,\theta_2\}\geq\frac{\delta_1}{N_1^2}$.

Finally, let $(\alpha,\epsilon,\delta_1,\delta_3,N_1)$ be chosen as
in Lemma~\ref{lem:BCF-cor4.45} with $\delta_2$ chosen as above.
Then the strict inequality~\eqref{eq:iteration-u-uhat-estimate} of
Definition~\ref{def:iteration-set}\eqref{def:iteration-set-7} holds
by Proposition~\ref{Prop:openness-of-iteration-set}.
Therefore, $u\in\mathcal{K}(\btheta)$ for any $\btheta\in\Theta\cap[0,\theta_*]^2$,
which completes this step.

\smallskip
\textbf{3.} Now, we can fix constants $ (\alpha,\epsilon,\delta_1,\delta_2,\delta_3,N_1)$ in Definition~\ref{def:iteration-set} as follows:
\begin{itemize}
    \item Choose constants $(\alpha,\epsilon,\delta_1,\delta_3,N_1)$ as in Lemma~\ref{lem:BCF-cor4.45}
    and reduce $(\epsilon,\delta_1)$ as in Proposition~\ref{Prop:FixedPt-eqiv-AdmisSolu},
    depending only on $(\gamma, v_2,\theta_*)$;
    \item Fix $\delta_2$ as in Step~\textbf{2}, depending only on $(\gamma, v_2,\theta_*,\delta_1,N_1)$;
    \item Adjust $\delta_3>0$ as in Definition~\ref{Def:IterMap-I-u-btheta},
    depending only on $(\gamma, v_2,\theta_*,\delta_2)$.
\end{itemize}

\smallskip
\textbf{4.} \textit{Leray-Schauder degree theory.}
When $\btheta=\mathbf{0}$,
$\varphi_5=\varphi_6=\varphi_0$ with $\varphi_0$ defined in \S\ref{SubSec-202NormalShock},
and the iteration boundary value problem
in the $(s,t')$--coordinates is
a homogeneous problem.
From Lemma~\ref{Lem:Wellpose4iterBVP}, the iteration boundary value
problem~\eqref{eq:iteration-BVP} corresponding to any $(u,\mathbf{0}) \in \overline{\mathcal{K}}$
has a unique solution $\hat{\phi} = \frac12|\bm{\xi}|^2 + \varphi_{\mathbf{0}}^{*} = v_2 \eta_0$,
where $\eta_0 > 0$ is defined in \S\ref{SubSec-202NormalShock}.
It follows from Definition~\ref{def:iteration-set} that
the unique fixed point $u \equiv 0$ of $\mathcal{I}(\cdot,\mathbf{0})$ lies in $\mathcal{K}(\mathbf{0})$,
and $\textbf{Ind}(\mathcal{I}(\cdot,\mathbf{0}),\mathcal{K}(\mathbf{0})) = 1$.
From the arguments above,
map $\mathcal{I}(\cdot,\btheta):\overline{\mathcal{K}(\btheta)}\to C^{2,\alpha}_{(*)}(\qiter)$
satisfies $\mathcal{I}(\cdot,\btheta)\in V(\mathcal{K}(\btheta),C^{2,\alpha}_{(*)}(\qiter))$
from Definition~\ref{Def:VertexSet} in Appendix~\ref{Sec:Appendix-B}.
From Theorem~\ref{Thm:HomotopyInvariance4FPI} and for any $t\in[0,1]$, we have
\begin{equation*}
\textbf{Ind}(\mathcal{I}(\cdot,t\btheta),\mathcal{K}(t\btheta))
=\textbf{Ind}(\mathcal{I}(\cdot,\mathbf{0}),\mathcal{K}(\mathbf{0})) \qquad \text{for all $\btheta\in[0,\theta_*]^2$}\,.
\end{equation*}
We obtain the existence of a fixed point of map $\mathcal{I}(\cdot,\btheta)$,
which is equivalent to the existence of an admissible solution corresponding
to $\btheta\in[0,\theta_*]^2$ in the sense of Definition~\ref{Def:AdmisSolus}
by Proposition~\ref{Prop:FixedPt-eqiv-AdmisSolu}.
The arbitrary choice of $\theta_*\in(0,\theta^{\rm d})$ completes
the proof of the existence of admissible solutions for any $\btheta\in[0,\theta^{\rm d})^2$.

\smallskip
\textbf{5.} \textit{Proof of the continuity at $\btheta=\mathbf{0}$.}
Let $\varphi_{\rm norm}$ be the admissible solution for $\btheta=\mathbf{0}$,
which is given by \eqref{eq:normal-reflection-solution},
and let $\Omega^{(\bm{0})}$ be the corresponding pseudo-subsonic domain.
For any sequence $\{\btheta_k\}_{k\in\mathbb{N}} \subseteq \Theta$
with \(\btheta_k \to \bm{0}\) as \(k\to \infty\), let  $\varphi^{(\btheta_k)}$
be an admissible solution corresponding to  $\btheta_k$,
and let $\Omega^{(k)}$ be the corresponding pseudo-subsonic domain.

Let $G\subseteq  \mathbb{R}^2_+$ be any open set satisfying $\overline{G}\Subset \mathbb{R}^2_+$.
Choose any
compact subset $K \Subset G \setminus \partial \Omega^{(\bm{0})}$.
Then, by {\rm Lemma~\ref{Lem:Compactness4AdmiSoluSet}\eqref{Lem:Compactness4AdmiSoluSet-item2}},
passing to a subsequence (still denoted) \(k\),
\(K \cap \Omega^{(\bm{0})} \Subset \Omega^{(k)}\) and \(K\setminus \Omega^{(\bm{0})} \Subset G \setminus \Omega^{(k)}\).
Using Proposition~\ref{prop:properties-of-gshock}\eqref{prop:properties-of-gshock:5},
Proposition~\ref{prop:apriori-estimates-on-u-prop4.12},
Lemma~\ref{lem:properties-of-G-alpha-theta}\eqref{lem:properties-of-G-alpha-theta-7},
and the continuity property of admissible solutions given in Step~\textbf{1} of the proof of Lemma~\ref{lem:BCF-cor4.45}, we have
\begin{equation*}
\norm{\varphi^{(\btheta_{k})} - \varphi_{\rm norm}}_{1,\alpha,K \cap \Omega^{(\bm{0})}} \to 0
\qquad\,\, \text{as $k \to \infty$}\,.
\end{equation*}
Similarly, using~\eqref{eq:normal-reflection-solution},
Proposition~\ref{Prop:localtheorystate5}\eqref{Prop:localtheorystate5:continuity-properties},
and~\eqref{eq:extension-of-varphi},
we have
\begin{equation*}
\norm{\varphi^{(\btheta_k)} - \varphi_{\rm norm}}_{W^{1,1}(K \setminus \Omega^{(\bm{0})})} \to 0
\qquad \text{as $k \to \infty$}\,.
\end{equation*}

Finally, by~\eqref{eq:extension-of-varphi} and the uniform bound in Lemma~\ref{UniformBound-Lem},
there exists a constant \(C >0\) depending only on \((\gamma,v_2,G)\) such that
\(\norm{\varphi^{(\btheta_k)} - \varphi_{\rm norm}}_{0,1,G} \leq C\) for all \(k\in\mathbb{N}\),
which leads to
\begin{equation*}
\|\varphi^{(\btheta_{k})} - \varphi_{\rm norm}\|_{W^{1,1}(G)}
\leq \|\varphi^{(\btheta_{k})} - \varphi_{\rm norm}\|_{W^{1,1}(K)} + 2 C \, {\rm meas}(G \setminus K)\,.
\end{equation*}
This completes the proof of {\rm Theorem~\ref{Thm:Existence-AdmisSolu}}.
\qed

\section{Optimal Regularity of Solutions and Convexity of Free Boundaries}
\label{Sec:OptimalRegularity-Convexity}
With the {\it a priori} estimates obtained in \S\ref{Sec:UniformEstiAdmiSolu}--\S\ref{Sec:IterationMethod-Existence},
we can now give the complete proofs
of Theorems~\ref{RegularityThm}--\ref{Thm3:ConvexTransonicShock}, respectively.

\subsection{Proof of Theorem~\ref{RegularityThm}: Optimal regularity of solutions} \label{subsec:optimal-reg}
This section is devoted to the complete proof of Theorem~\ref{RegularityThm}.
Let $\varphi$ be an admissible solution in the sense of Definition~\ref{Def:AdmisSolus}.

\begin{proof}
Fix \(\btheta \in \Theta\).
By the symmetry of the problem, it suffices to prove the statement near \(\Gamma_{\rm sonic}^5\).

\smallskip
\textbf{1}. \textit{Proof of {\rm Theorem~\ref{RegularityThm}\eqref{ReguThm01}}.}
First, it follows from Lemmas~\ref{LocBddLem} and~\ref{Lem:LocalEstimates4Interior}
that $\Gamma_{\rm{shock}}$ is $C^{\infty}$ in its
relative interior and $\varphi\in C^{\infty}(\overline{\Omega}\setminus \Gamma_{\rm{sonic}}^{5}\cup\Gamma_{\rm{sonic}}^{6})$.
By Definition~\ref{Def:GammaSonicAndPts}, $\Gamma_{\rm{sonic}}^{5}$ is a closed arc of a circle
when $\theta_1\in[0,\theta^{\rm{s}})$, and becomes a point $P_{1}$
when $\theta_{1}\in[\theta^{\rm{s}},\theta^{\rm{d}})$.
If $\theta_1\in[0,\theta^{\rm{s}})$, it follows from Propositions~\ref{Prop:Esti4theta1a}--\ref{Prop:Esti4theta1b}
that $\varphi$ is $C^{1,1}$ up to $\Gamma_{\rm{sonic}}^{5}$,
whilst, if $\theta_{1}\in[\theta^{\rm{s}},\theta^{\rm{d}})$,
then Propositions~\ref{Prop:Esti4theta1c} and~\ref{Prop:Esti4theta1d} imply that
$\varphi$ is $C^{1,\alpha}$ up to $\Gamma_{\rm{sonic}}^{5}=\{P_{1}\}$ for some $\alpha\in(0,1)$.
Furthermore, from Propositions~\ref{Prop:Esti4theta1a}--\ref{Prop:Esti4theta1c}
and \ref{Prop:Esti4theta1d}, we see that $\overline{\Gamma_{\rm shock}\cup S_{25}^{\rm seg}}$ is a $C^{1,\alpha}$--curve,
including at $P_{2}$, for any \(\theta_1 \in [0,\theta^{\rm d})\).
It remains to show that $\overline{\Gamma_{\rm shock}\cup S_{25}^{\rm seg}}$ is a \(C^{2,\alpha}\)--curve when \(\theta_1 \in [0,\theta^{\rm s})\).

For \(\theta_1 \in [0,\theta^{\rm s})\), let coordinates $(x,y) = \rcal(\bm{\xi})$ be
given by~\eqref{PolarCoordiNearO5}.
Let \(\bar{\varepsilon} > 0\),
\(f_{5,0}\), and \(f_{5,{\rm sh}}\) be from Step~\textbf{1} of the proof of Proposition~\ref{Prop:Esti4theta1a}.
We extend
$f_{5,\rm sh}$ to
$(-\bar{\varepsilon},\bar{\varepsilon})$ by
\begin{equation}
\label{f5extend}
f_{5,\rm sh}(x) \defeq {f}_{5,0}(x) \qquad\,
\text{for $x\in(-\bar{\varepsilon},0]$}\,,
\end{equation}
so that
\begin{equation}
 \label{f5extend2}
 (f_{5,\rm sh}-{f}_{5,0})(0)=(f_{5,\rm sh}-{f}_{5,0})^{\prime}(0)=0\,.
\end{equation}
Define $\psi \defeq (\varphi - \varphi_5)\circ \rcal^{-1}$
and $\bar{\psi}_{25} \defeq (\varphi_{2}-\varphi_{5}) \circ \rcal^{-1}$.
Then $\bar{\psi}_{25}(x,{f}_{5,0}(x))=0$, which implies
\begin{equation*}
\label{eqpsi}
\bar{\psi}_{25}(x,f_{5,\rm sh}(x))-\bar{\psi}_{25}(x,{f}_{5,0}(x)) =
\psi (x,f_{5,\rm sh}(x))
\qquad
\text{for $ x \in (0, \bar{\varepsilon})$}\,.
\end{equation*}
Differentiating the above equality with respect to $x$ twice,
we obtain the following expression:
\begin{equation} \label{eq:second-derivative-f5sh-f50}
    (f_{5,{\rm sh}}-{f}_{5,0})^{\prime\prime}(x)=\frac{A_{1}(x)+A_{2}(x)+A_{3}(x)}{\partial_{y}\bar{\psi}_{25}(x,f_{5,\rm sh}(x))}\,,
\end{equation}
where, for \((a_0,a_1,a_2) \defeq (1,2,1)\),
\begin{align*}
    A_{1}(x)&\defeq \sum\limits_{k=0}^{2}a_{k}\big(({f}_{5,0}(x))^{k}\partial_{x}^{2-k}\partial_{y}^{2}\bar{\psi}_{25}(x,{f}_{5,0}(x))
    -(f_{5,{\rm sh}}(x))^{k}\partial_{x}^{2-k}\partial_{y}^{2}\bar{\psi}_{25}(x,f_{5,\rm sh}(x)\big) \,,\\
    A_{2}(x)&\defeq \left(\partial_{y}\bar{\psi}_{25}(x,{f}_{5,0}(x))-\partial_{y}\bar{\psi}_{25}(x,f_{5,\rm sh}(x))\right){f}^{\prime\prime}_{5,0}(x)\,,\\
    A_{3}(x)&\defeq -f_{5,\rm sh}^{\prime\prime}(x)\partial_{y}\psi(x, {f}_{5,\rm sh}(x))-
 \sum\limits_{k=0}^{2}a_{k}(f_{5,\rm sh}^{\prime}(x))^{k}\partial_{x}^{2-k}\partial_{y}^{k}\psi(x,f_{5,\rm sh}(x))\,.
\end{align*}
By~\eqref{f5extend2}, $A_{1}(0)=A_{2}(0)=0$.
We differentiate the boundary condition in the tangential direction along $\Gamma_{\rm shock}$
and apply the arguments in
Step~\textbf{1}
of the proof of Proposition~\ref{Prop:Esti4theta1a} to obtain that there exists
a constant $C>0$ such that
\begin{align*}
    |\psi_{xx}|\leq C |(\psi, \psi_x,\psi_y, \psi_{xy},\psi_{yy})|
\qquad\text{on $\rcal(\Gamma_{\rm shock}\cap  \partial \Omega^5_{\bar{\varepsilon}})$}\,,
\end{align*}
where $\Omega^5_{\bar{\varepsilon}}$ is from Step~\textbf{1} of the proof of Proposition~\ref{Prop:Esti4theta1a}.
From the above estimate and Proposition~\ref{Prop:Esti4theta1a}, we obtain
that
$\psi_{xx}(x,f_{5,\rm sh}(x)) \to 0$ as $x\to 0^+$, which implies that $A_{3}(x) \to 0$ as $x \to 0^{+}$.
Hence, $\partial_{y}\bar{\psi}_{25}(x,f_{5,\rm sh}(x))\neq 0$
on $\rcal(\Gamma_{\rm shock}\cap  \partial \Omega^5_{\bar{\varepsilon}})$.
Then we conclude from~\eqref{eq:second-derivative-f5sh-f50} that
$(f_{5,\rm sh}-{f}_{5,0})^{\prime\prime}(0)=0$.
This implies that the extension of $f_{5,\rm sh}$ given by~\eqref{f5extend} is
in $C^{2}([-\bar{\varepsilon},\bar{\varepsilon}])$.
Furthermore, we conclude from Proposition~\ref{Prop:Esti4theta1a} that
the extension of  $f_{5,\rm sh}$ given by~\eqref{f5extend} is
in $C^{2,\alpha}((-\bar{\varepsilon},\bar{\varepsilon}))$ for any $\alpha\in(0,1)$.
This implies that $\overline{\Gamma_{\rm shock}\cup S_{25}^{\rm seg}}$ is $C^{2,\alpha}$
for any $\alpha\in(0,1)$, including at point $P_2= \rcal^{-1}(0,{f}_{5,\rm sh}(0))$.
This completes the proof of statement~\eqref{ReguThm01}.

\smallskip
\textbf{2}. \textit{Proof of {\rm Theorem~\ref{RegularityThm}\eqref{ReguThm02}--\eqref{ReguThm03}}.}
It suffices to consider the case that $\theta_1 \in [0,\theta^{\rm{s}})$.
We can apply Theorem~\ref{Cite-BCF-2009:Thm3-1} to obtain the regularity of $\psi$
in the neighbourhood of $\rcal(\Gamma_{\rm{sonic}}^{5})$.
Then the admissible solution $\varphi$ satisfies Theorem~\ref{RegularityThm}\eqref{ReguThm02}--\eqref{ReguThm03}.

\smallskip
\textbf{3}. \textit{Proof of {\rm Theorem~\ref{RegularityThm}\eqref{ReguThm04}}.}
By Propositions~\ref{Prop:Esti4theta1a}--\ref{Prop:Esti4theta1b},
$\Gamma_{\rm shock}\cap \partial \Omega^5_{\bar{\varepsilon}}$ can be represented as the graph of $y=f_{5,{\rm sh}}(x)$
for $0\leq x\leq \bar{\varepsilon}$.

Let $\{y_{m}^{(1)}\}_{m\in\mathbb{N}}$ be a sequence satisfying that
$0<y_{m}^{(1)}<{f}_{5,\rm sh}(0)$ for each
$m\in \mathbb{N}$, and $y_{m}^{(1)} \to {f}_{5,\rm sh}(0)$ as ${m\rightarrow \infty}$.
By Theorem~\ref{RegularityThm}\eqref{ReguThm03},
we can choose a sequence $\{x_{m}^{(1)}\}_{m\in\mathbb{N}}$ such that
$(x_{m}^{(1)},y_{m}^{(1)})\in \rcal(\Omega^5_{\bar{\varepsilon}})$,
$0 < x_{m}^{(1)} < \frac{1}{m}$, and
\begin{equation*}
    \tabs{\psi_{xx}(x_{m}^{(1)},y_{m}^{(1)})-\frac{1}{\gamma+1}}\leq \frac{1}{m}\qquad
    \text{for each $m\in \mathbb{N}$}\,.
\end{equation*}
It follows from Step~\textbf{1} of the proof of Proposition~\ref{Prop:Esti4theta1a}
that $0<y_{m}^{(1)}<{f}_{5,\rm sh}(0)<f_{5,\rm sh}(x_{m}^{(1)})$ for each
$m\in \mathbb{N}$ so that
\begin{equation}
\lim\limits_{m\rightarrow \infty}(x_{m}^{(1)},y_{m}^{(1)})=(0,{f}_{5,\rm sh}(0))\,,
\qquad
 \lim\limits_{m\rightarrow\infty}|\psi_{xx}(x_{m}^{(1)},y_{m}^{(1)})|=\frac{1}{\gamma+1}\,.
\end{equation}

On the other hand, there exists a constant $\varepsilon \in (0,\bar{\varepsilon}]$ such that the boundary condition
on $\Gamma_{\rm shock}\cap \partial \Omega^5_{\bar{\varepsilon}}$ can be written as
$\psi_{x}+b_{1}\psi_{y}+b_{0}\psi=0$ with
$(b_0,b_1)=(b_0,b_1)(\psi_{x},\psi_{y},\psi,x,f_{5,\rm sh}(x))$ for any \(x \in (0, \varepsilon)\).
Let $\omega>0$ from Step~\textbf{1} of the proof of Proposition~\ref{Prop:Esti4theta1a}. Then
\begin{equation*}
    \{(x,f_{5,\rm sh}(x)-\tfrac{\omega}{10}x) \,:\, 0<x< \varepsilon\}
    \subseteq \rcal (\Omega^5_{\bar{\varepsilon}}) \,.
\end{equation*}
Denote $F(x)\defeq \psi_{x}(x,f_{5,\rm sh}(x)-\frac{\omega}{10}x)$.
By the geometric relation given above, we have
\begin{align*}
F(x) &= \psi_{x}(x,f_{5,\rm sh}(x))-\tfrac{\omega}{10}x\int_{0}^{1}\psi_{xy}(x,f_{5,\rm sh}(x)-\tfrac{\omega}{10} tx)\,{\rm d}t\\
&= -(b_{1}\psi_{y}+b_{0}\psi)(x,f_{5,\rm sh}(x))
-\tfrac{\omega}{10}x\int_{0}^{1}\psi_{xy}(x,f_{5,\rm sh}(x)-\tfrac{\omega}{10} tx)\,{\rm d}t
\qquad\mbox{for $x \in (0,\varepsilon)$}\,.
\end{align*}
Then Proposition~\ref{Prop:Esti4theta1a} implies that
$F(0)=0$, $F\in C([0,\varepsilon])\cap C^{1}((0,\varepsilon))$,
$\lim\limits_{x\rightarrow 0+}\frac{F(x)}{x}=0$
so that,
by the mean value theorem,
there exists a sequence $\{x_{m}^{(2)}\}_{m\in\mathbb{N}}\subseteq (0,\varepsilon)$ such that
\begin{equation}\label{eqf}
\lim\limits_{m\rightarrow \infty}x_{m}^{(2)}=0\,,\qquad
F^{\prime}(x_{m}^{(2)})=0\,.
\end{equation}
For each $m\in \mathbb{N}$, define $y_{m}^{(2)}\defeq f_{5,\rm sh}(x_{m}^{(2)})-\frac{\omega}{10}x_{m}^{(2)}$,
so that $\{(x_{m}^{(2)},y_{m}^{(2)})\}_{m\in\mathbb{N}} \subseteq \rcal(\Omega^5_{\bar{\varepsilon}})$.
By the definition of $F(\cdot)$ and~\eqref{eqf}, we have
\begin{align}\label{eq6-11}
    \lim\limits_{m\rightarrow \infty}\psi_{xx}(x_{m}^{(2)},y_{m}^{(2)})
    &= \lim\limits_{m\rightarrow \infty}F^{\prime}(x_{m}^{(2)})-\lim\limits_{m\rightarrow \infty}(f_{5,\rm sh}(x_{m}^{(2)})
    -\tfrac{\omega}{10})\psi_{xy}(x_{m}^{(2)},y_{m}^{(2)})\\
    &= -\lim\limits_{m\rightarrow \infty}(f_{5,\rm sh}(x_{m}^{(2)})-\tfrac{\omega}{10})\psi_{xy}(x_{m}^{(2)},y_{m}^{(2)})\,.\nonumber
\end{align}
Since $\lim\limits_{m\rightarrow \infty}(x_{m}^{(2)},y_{m}^{(2)})=(0,f_{5,\rm sh}(0))$,
we combine~\eqref{eq6-11} with Proposition~\ref{Prop:Esti4theta1a}  to obtain
\begin{equation*}
 \lim\limits_{m\rightarrow \infty}\psi_{xx}(x_{m}^{(2)},y_{m}^{(2)})=0\,.
\end{equation*}

We have demonstrated that there are two sequences,
$\{(x_{m}^{(1)},y_{m}^{(1)})\}_{m\in\mathbb{N}}$ and $\{(x_{m}^{(2)},y_{m}^{(2)})\}_{m\in\mathbb{N}}$,
taking values in $\rcal(\Omega^5_{\bar{\varepsilon}})$, such that the limits of both sequences are $(0,f_{5,\rm sh}(0))$, but
\begin{equation*}
\lim\limits_{m\rightarrow \infty}\psi_{xx}(x_{m}^{(1)},y_{m}^{(1)})
\neq \lim\limits_{m\rightarrow \infty}\psi_{xx}(x_{m}^{(2)},y_{m}^{(2)})\,.
\end{equation*}
This completes the proof of Theorem~\ref{RegularityThm}\eqref{ReguThm04}.
\end{proof}

\subsection{Proof of Theorem 2.3: Convexity of free boundaries and transonic shocks}
\label{subsec:convexity}
We now discuss the geometric properties
of the transonic shock as a free boundary.
In Chen-Feldman-Xiang~\cite{ChenFeldmanXiang2020ARMA}, a  general framework was developed under which the self-similar transonic shock, as a free boundary,
is proved to be uniformly convex for the potential flow equation;
see {\rm Framework~(A)} in Appendix~\ref{Sec:Appendix-B-Convexity}.
In this subsection, we apply this framework to prove the uniform convexity
of the transonic shock in the four-shock interaction problem, so it suffices to prove that
the admissible solutions satisfy the conditions in Theorems~\ref{ThC1}--\ref{ThC2}.

\begin{lemma}\label{Convexitylemma}
The following statements hold{\rm:}
\begin{enumerate}[{\rm (i)}]
\item
\label{Convexitylemma-item-1}
Any admissible solution in the sense of {\rm Definition~\ref{Def:AdmisSolus}}
satisfies the conditions of {\rm Theorems~\ref{ThC1}}--{\rm\ref{ThC2}}.

\item
\label{Convexitylemma-item-2}
Any global weak solution of the four-shock interaction problem in the sense of
{\rm Definition~\ref{Def:WeakSoluForPotentialFlowEq}} ---satisfying also all
properties of {\rm Definition~\ref{Def:AdmisSolus}}
except~\eqref{DirecDerivMonotone-S25-S26} and with transonic shock $\Gamma_{\rm{shock}}$ being
a strictly convex graph---satisfies condition~\eqref{DirecDerivMonotone-S25-S26} of {\rm Definition~\ref{Def:AdmisSolus}}.
\end{enumerate}
\end{lemma}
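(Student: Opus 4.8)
For part~\eqref{Convexitylemma-item-1} the plan is to check, line by line, that an admissible solution in the sense of Definition~\ref{Def:AdmisSolus} supplies every structural hypothesis required by the convexity framework (Framework~(A)) on which Theorems~\ref{ThC1}--\ref{ThC2} rest; nothing new has to be proved, only assembled. For part~\eqref{Convexitylemma-item-2} the plan is to run a maximum-principle argument for the directional derivatives $v:=\partial_{\bm{e}}(\varphi_2-\varphi)$ with $\bm{e}\in{\rm Cone}(\bm{e}_{S_{25}},\bm{e}_{S_{26}})$, exactly as one does for admissible solutions, the single substitution being that the boundary value of $v$ on $\Gamma_{\rm shock}$ is now produced by the assumed strict convexity of the shock graph rather than by~\eqref{DirecDerivMonotone-S25-S26}.

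\textbf{Part (i).}
First I would record that $\Omega$ is a bounded domain enclosed by $\Gamma_{\rm shock}$, the two sonic arcs $\Gamma^5_{\rm sonic},\Gamma^6_{\rm sonic}$ (which are arcs of the sonic circles of states $(5),(6)$, degenerating to single points when the corresponding incident angle lies in $[\theta^{\rm s},\theta^{\rm d})$), and the wall segment $\Gamma_{\rm sym}$, by Definitions~\ref{Def:GammaSonicAndPts} and~\ref{Def:AdmisSolus}. Then $\varphi$ solves~\eqref{Eq4PseudoPoten}, which is strictly elliptic in $\cl{\Omega}\setminus(\Gamma^5_{\rm sonic}\cup\Gamma^6_{\rm sonic})$ with the uniform ellipticity and controlled degeneracy of Proposition~\ref{prop:ellipticDegeneracyNearSonicBoundary}; by Theorem~\ref{RegularityThm}\eqref{ReguThm01}, $\Gamma_{\rm shock}$ is $C^\infty$ in its relative interior and $\overline{S^{\rm seg}_{25}\cup\Gamma_{\rm shock}\cup S^{\rm seg}_{26}}$ is $C^{1,\alpha}$ up to $P_2,P_3$ (indeed $C^{2,\alpha}$ away from the sonic corners), tangent to $S_{25}$ at $P_2$ and to $S_{26}$ at $P_3$; by Remark~\ref{Rmk4AdmissibleSolu}\eqref{item1-Rmk4AdmissibleSolu}, $\Gamma_{\rm shock}$ is a transonic shock with $D\varphi_2\cdot\bm{\nu}>D\varphi\cdot\bm{\nu}>0$ and $0<\tfrac{D\varphi\cdot\bm{\nu}}{c}<1<D\varphi_2\cdot\bm{\nu}$; by Proposition~\ref{prop:lowerBoundBetweenShockAndSonicCircle}, $\Gamma_{\rm shock}$ stays at a uniform positive distance from the sonic circle $\partial B_1(O_2)$ of state $(2)$; and the directional-monotonicity cone property holds, namely $\partial_{\bm{e}}(\varphi_2-\varphi)<0$ in $\cl{\Omega}$ for $\bm{e}\in{\rm Cone}^0(\bm{e}_{S_{25}},\bm{e}_{S_{26}})$ by~\eqref{nStrictDirectMonotPropty}, while~\eqref{DirecDerivMonotone-S25-S26} itself is part of Definition~\ref{Def:AdmisSolus}. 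Together with the boundary conditions of Problem~\ref{FBP} and the sonic-corner regularity in Theorem~\ref{RegularityThm}\eqref{ReguThm02}--\eqref{ReguThm04}, these are precisely the inputs of Framework~(A) of~\cite{ChenFeldmanXiang2020ARMA} after the obvious translation of notation (the problem is posed in $\mathbb{R}^2_+$ with $\Gamma_{\rm sym}$ as the rigid wall, and convexity is asserted only on closed subsets of the relative interior of $\Gamma_{\rm shock}$, away from $P_2,P_3$); hence Theorems~\ref{ThC1}--\ref{ThC2} apply. The only point needing care is that $\Gamma_{\rm shock}$ here joins two sonic arcs over the top rather than a sonic arc and a reflection point, but near each endpoint $P_2,P_3$ the local configuration is exactly the one treated in~\cite{ChenFeldmanXiang2020ARMA}, so the local analysis carries over verbatim.

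\textbf{Part (ii).}
Fix $\theta\in[\theta_{26},\theta_{25}]$, set $\bm{e}=(\cos\theta,\sin\theta)$ and $v:=\partial_{\bm{e}}(\varphi_2-\varphi)$. As in~\cite[Lemmas~3.2 and~3.6]{BCF-2019}, $v$ satisfies a linear second-order equation that is strictly elliptic in $\Omega$, so it obeys the strong maximum principle and Hopf's lemma; it therefore suffices to prove $v\le 0$ on $\partial\Omega=\Gamma_{\rm shock}\cup\Gamma^5_{\rm sonic}\cup\Gamma_{\rm sym}\cup\Gamma^6_{\rm sonic}$. On $\Gamma^j_{\rm sonic}$ one has $D\varphi=D\varphi_j$, so $v=\partial_{\bm{e}}(\varphi_2-\varphi_j)=(-u_j,v_2)\cdot\bm{e}$, the component of the constant vector $D(\varphi_2-\varphi_j)=(-u_j,v_2)$, which is normal to $S_{2j}$; since $u_5>0$, $u_6<0$ and $v_2<0$, this vector is a positive multiple of $\bm{\nu}_{25}$ (resp.\ $\bm{\nu}_{26}$), and a short trigonometric computation using $\theta_{25}\in(\tfrac\pi2,\pi]$, $\theta_{26}\in[0,\tfrac\pi2)$, $\theta_{25}-\theta_{26}<\pi$, together with the orthogonality relations $\bm{\nu}_{2j}\cdot\bm{e}_{S_{2j}}=0$, gives $(-u_j,v_2)\cdot\bm{e}\le 0$ for all $\theta\in[\theta_{26},\theta_{25}]$. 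On $\Gamma_{\rm shock}$, the Rankine--Hugoniot conditions~\eqref{RHCondi}, valid since $\varphi$ is a weak solution, give that $D(\varphi_2-\varphi)$ is parallel to the interior unit normal $\bm{\nu}$ (tangential parts agree because $[\varphi]_S=0$) and, by the entropy condition $\rho>1$, that $D(\varphi_2-\varphi)\cdot\bm{\nu}>0$, whence $v=|D(\varphi_2-\varphi)|\,\bm{\nu}\cdot\bm{e}$. Now the hypothesis that $\Gamma_{\rm shock}$ is a strictly convex graph, together with the $C^1$ regularity of $\Gamma_{\rm shock}^{\rm ext}=S^{\rm seg}_{25}\cup\Gamma_{\rm shock}\cup S^{\rm seg}_{26}$ assumed in Definition~\ref{Def:AdmisSolus}\eqref{item1-Def:AdmisSolus}, forces $\bm{\nu}(P)$ to rotate strictly monotonically as $P$ runs from $P_2$ to $P_3$, with $\bm{\nu}(P_2)=\bm{\nu}_{25}$ and $\bm{\nu}(P_3)=\bm{\nu}_{26}$; hence $\bm{\nu}(P)$ remains in the closed circular arc between $\bm{\nu}_{25}$ and $\bm{\nu}_{26}$, and the same trigonometric computation yields $\bm{\nu}(P)\cdot\bm{e}\le 0$, i.e.\ $v\le 0$ on $\Gamma_{\rm shock}$. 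On $\Gamma_{\rm sym}$ I would use the slip condition $\partial_\eta\varphi=0$: extending $\varphi$ by even reflection across $\{\eta=0\}$ yields a solution across $\Gamma_{\rm sym}$, while the reflected $\varphi_2$ carries a gradient jump $(0,2v_2)$ with $v_2<0$, so $\varphi_2-\varphi$ acquires a ``concave'' kink across $\Gamma_{\rm sym}$; a standard argument (as in~\cite{BCF-2019}) then rules out a positive maximum of $v$ on $\overline{\Gamma_{\rm sym}}$. The strong maximum principle then gives $v\le 0$ in $\cl{\Omega}$, and since $\theta\in[\theta_{26},\theta_{25}]$ was arbitrary, \eqref{DirecDerivMonotone-S25-S26} follows; this is one direction of the equivalence asserted in Theorem~\ref{Thm3:ConvexTransonicShock}, the other being part~\eqref{Convexitylemma-item-1} combined with Theorems~\ref{ThC1}--\ref{ThC2}.

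\textbf{Main obstacle.}
The hard part will be the symmetry boundary $\Gamma_{\rm sym}$ in part~\eqref{Convexitylemma-item-2}: because $\varphi_2$, unlike $\varphi$, is not even in $\eta$, one cannot reflect the whole problem, and one must verify that the definite-sign kink of $\varphi_2-\varphi$ across $\Gamma_{\rm sym}$ (fixed by $v_2<0$) is compatible with the maximum principle/Hopf lemma for the linear elliptic operator satisfied by $v$, and that this is consistent with $\sin\theta>0$ for $\theta\in(\theta_{26},\theta_{25})$; pinning down the precise equation for $v$ and the sign of its boundary data at $\Gamma_{\rm sym}$ is where care is needed, though all of it parallels the admissible-solution analysis and~\cite{BCF-2019}. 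A lesser point, in part~\eqref{Convexitylemma-item-1}, is to make explicit the dictionary between the four-shock configuration near $P_2,P_3$ and the geometry for which Framework~(A) was originally formulated.
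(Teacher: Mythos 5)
Your plan treats the application of Theorems~\ref{ThC1}--\ref{ThC2} as a matter of ``translating notation'', but the facts you assemble (ellipticity, regularity, entropy, the standoff from $\partial B_1(O_2)$, the cone monotonicity) are mostly not the hypotheses that need work. Framework~(A) itself requires the interior angles at all four corners to lie in $(0,\pi)$; in the degenerate case $\theta_i\in[\theta^{\rm s},\theta^{\rm d})$, where $\Gamma^{i+4}_{\rm sonic}$ collapses to the reflection point, this angle is $\pi-\theta_{25}$ (resp.\ $\theta_{26}$) and must be bounded via~\eqref{eq:reflected-shock-angle-lower-bound} --- your outline does not address this. More importantly, the substantive hypotheses of Theorems~\ref{ThC1}--\ref{ThC2} are the conditions (C-5)--(C-10), and you verify essentially none of them. (C-5) does follow from~\eqref{DirecDerivMonotone-S25-S26} together with $\Gamma_{\rm shock}=\{\varphi=\varphi_2\}$, but you only gesture at this. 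Condition (C-6) requires that $\phi_{\bm e}$ cannot attain a local minimum on $\Gamma_1=\Gamma^6_{\rm sonic}\cup\Gamma_{\rm sym}\cup\Gamma^5_{\rm sonic}$ for a suitable $\bm e$; the paper proves this (its Step~4) by noting that $w=\partial_{\bm e}(\varphi-\varphi_j)$ solves a strictly elliptic equation, vanishes on $\Gamma_{\rm sym}\cup\Gamma^j_{\rm sonic}$ where this value is its global maximum by Lemma~\ref{lem:monotonicityForPhi2-Phi}(iii), so a local minimum there would force $w\equiv0$ by the strong maximum principle, contradicting Lemma~\ref{lem:monotonicityForPhi2-Phi}(i). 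Conditions (C-7)--(C-10) require the explicit decomposition $\hat\Gamma_0=\Gamma^5_{\rm sonic}\setminus\{P_2\}$, $\hat\Gamma_1=\Gamma^0_{\rm sym}$, $\hat\Gamma_2=\varnothing$, $\hat\Gamma_3=\Gamma^6_{\rm sonic}\setminus\{P_3\}$, the constancy of $\phi_{\bm e}$ on the sonic arcs (since $D\varphi=D\varphi_j$ there), and the no-local-extremum property on $\Gamma^0_{\rm sym}$, treated separately for $\bm e=(0,\pm1)$ via the slip condition and for other $\bm e$ as in \cite{ChenFeldmanXiang2020ARMA}. None of this appears in your proposal, so part~(i) as written has a genuine gap: the ``dictionary'' you ask for is not the issue; the extremum conditions on the fixed boundary are.

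\textbf{Part (ii).} Here your route is genuinely different from the paper's: you propose a maximum-principle argument for $v=\partial_{\bm e}(\varphi_2-\varphi)$ with boundary-sign checks on all four boundary pieces, using convexity only to control $\bm\nu\cdot\bm e$ on $\Gamma_{\rm shock}$. Your sign computations on the sonic arcs and on $\Gamma_{\rm shock}$ are fine (and use the same geometric fact $f'(T_A)>f'(T)>f'(T_B)$ that the paper uses), but the argument is not complete at $\Gamma_{\rm sym}$: there only $\partial_\eta(\varphi_2-\varphi)=v_2<0$ is known, the $\xi$-component of $D(\varphi_2-\varphi)$ is exactly what one is trying to control, so $v\le0$ is not available as boundary data, and the ``standard argument'' you defer to is precisely the missing step you flag yourself. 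The paper's proof of (ii) avoids all boundary analysis: from the strict concavity of the shock graph and the endpoint tangencies it deduces $\{P+{\rm Con}\}\cap\Omega=\varnothing$ for every $P\in\Gamma_{\rm shock}$, and then combines this with $\varphi\le\varphi_2$ in $\Omega$ (inequality~\eqref{EntropyIneqMutant}, which is still assumed) and $\varphi=\varphi_2$ on $\Gamma_{\rm shock}$ to conclude $\partial_{\bm e}\varphi\ge\partial_{\bm e}\varphi_2$ for all $\bm e\in{\rm Con}$, i.e.~\eqref{DirecDerivMonotone-S25-S26} --- no elliptic equation for $v$, no $\Gamma_{\rm sym}$ or sonic-arc discussion. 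If you wish to keep your maximum-principle route, you must supply the $\Gamma_{\rm sym}$ step (e.g.\ the argument of \cite{ChenFeldmanXiang2020ARMA} used by the paper for condition (C-9), distinguishing $\bm e=(0,\pm1)$ from oblique directions); otherwise the shorter geometric argument of the paper is the way to close part~(ii).
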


\begin{proof} We divide the proof into six steps. We start with
the proof of assertion~\eqref{Convexitylemma-item-1}.

\smallskip
\textbf{1.} \textit{Claim}: Region $\Omega$ satisfies the conditions in {\rm Framework (A)}.
Indeed, for $\btheta \in [0,\theta^{\rm s})^2$, the required piecewise regularity holds,
since $\Gamma_{\rm{sym}}$ is a straight segment,
while $\Gamma_{\rm{sonic}}^5$ and $\Gamma_{\rm{sonic}}^6$
are the arcs of the sonic circles,
and $\Gamma_{\rm{shock}}$ has the regularity stated in Theorem~\ref{RegularityThm}\eqref{ReguThm01}.
For any incident angles $\btheta \in \Theta$, the fact that all angles of the corners of $\Omega$
are less than $\pi$ can be verified as follows:

If $\theta_1\in[0,\theta^{\rm s})$, since $\overline{S^{\rm seg}_{25}\cup\Gamma_{\rm{shock}}}$ is a $C^{2,\alpha}$--curve for any $\alpha\in(0,1)$, including at points $P_2$,
it follows that $\Gamma_{\rm shock}$ is tangential to $S^{\rm seg}_{25}$ at $P_2$.
Meanwhile, for any $\btheta\in\Theta$, $\partial B_{c_5}(O_5)$ and $S_{25}$
always have two intersection points including $P_2$.
Therefore, the meeting angles at the two corners $\{P_1,P_2\}$ belong
to $[0,\pi)$ when
$\theta_1\in(0,\theta^{\rm s})$.
If $\theta_1\in[\theta^{\rm s},\theta^{\rm d})$, we see that $P_2=P_1=P_0^1$.
From~\eqref{eq:reflected-shock-angle-lower-bound},
the meeting angle at $P_0^1$ is $\pi-\theta_{25}\in(0,\frac{\pi}{2})$, which satisfies
\begin{equation*}
    \pi-\theta_{25}\geq \delta_{5,6}^{(\theta^{\rm s})} > 0\,.
\end{equation*}
Similarly, the meeting angles at the two corners $\{P_3,P_4\}$ also belong to $(0,\pi)$
for any $\theta_{2}\in[0,\theta^{\rm d})$.

\smallskip
\textbf{2.}  The entropy condition \qeqref{Thm-C1-item1} in Theorem~\ref{ThC1} follows directly
from the properties of
Definition~\ref{Def:AdmisSolus}, where state $(0)$ in Theorem~\ref{ThC1} is state $(2)$ in our problem.
From the regularity of $\varphi$ and $\Gamma_{\rm{shock}}$ in Theorem~\ref{RegularityThm}\eqref{ReguThm01},
we see that conditions \qeqref{Thm-C1-item2} and \qeqref{Thm-C1-item4} of Theorem~\ref{ThC1} hold.
Property~\eqref{item3-Def:AdmisSolus} of Definition~\ref{Def:AdmisSolus} implies that condition
\qeqref{Thm-C1-item3} of Theorem~\ref{ThC1} holds.

\smallskip
\textbf{3.}  We use the notation for the endpoints of $\Gamma_{\rm{shock}}$ from {\rm Framework (A)},
that is, $A\defeq P_2$ and $B\defeq P_3$,
where $P_{i+1} \equiv P_0^i$ when \(\theta_i \in [\theta^{\rm s}, \theta^{\rm d})\) for \(i=1,2\).
Then, by the properties of Definition~\ref{Def:AdmisSolus}, we see that
    $\bm{\tau}_{A} = \bm{e}_{S_{25}}$ and
    $\bm{\tau}_{B} = \bm{e}_{S_{26}}$,
and it is clear that $\bm{\tau}_{A}\neq \pm \bm{\tau}_{B}$ for all \(\btheta \neq \bm{0}\).
Accordingly, let ${\rm Con}$
be given by~\eqref{eq:appendixB-cone}.
Combining property~\eqref{DirecDerivMonotone-S25-S26} with the fact that
$\Gamma_{\rm{shock}}$ is the level set: $\{\varphi-\varphi_2=0\}$,
we obtain that
$\{P+{\rm Con}\}\cap \Omega=\varnothing$ for all $P\in \Gamma_{\rm{shock}}$.
Thus, condition \qeqref{Thm-C1-item5} of Theorem~\ref{ThC1} is satisfied.

\smallskip
\textbf{4.}  For condition (C-\ref{Thm-C1-item6}) of Theorem~\ref{ThC1},
we show that Case~(C-\ref{Thm-C1-item6-c}) holds with \(\phi = \varphi - \varphi_2\), $\bm{e}=(0,1) \in {\rm Con}$,
$\Gamma_{1}\defeq \Gamma_{\rm sonic}^6\cup\Gamma_{\rm{sym}}\cup\Gamma_{\rm sonic}^5$,
and $\Gamma_{2}=\varnothing$.
For \(j = 5,6\), write \(w \defeq \partial_{\bm{e}}(\varphi - \varphi_j)\),
which satisfies a strictly elliptic equation in \(\Omega\) by taking the derivative of~\eqref{Eq4phi}
in direction \(\bm{e}\), and using  Definition~\ref{Def:AdmisSolus}\eqref{item3-Def:AdmisSolus}.
By conditions~\eqref{item3n4-FBP}--\eqref{item5-FBP} of Problem~\ref{FBP}, \( w = 0\)
on \(\Gamma_{\rm sym} \cup \Gamma^{j}_{\rm sonic}\), which is a global maximum by Lemma~\ref{lem:monotonicityForPhi2-Phi}\eqref{item3-Lem3-6}.
Therefore, if \(P \in \Gamma_{\rm sym} \cup \Gamma^j_{\rm sonic}\) is a point of local minimum
for \(w\), then \(w = 0\) on \(B_r(P) \cap \Omega\) for some \(r>0\), and so \(w = 0\) in~\(\Omega\) by the strong maximum principle,
which is a contradiction to Lemma~\ref{lem:monotonicityForPhi2-Phi}\eqref{item1-Lem3-2}.
It follows that \(\phi_{\bm{e}} \equiv w - v_2\) cannot attain a local minimum on \(\Gamma_1\).

\smallskip
\textbf{5.} We now show that conditions \qeqref{Thm-C1-item7}--\qeqref{Thm-C1-item10}
are satisfied with $\hat{\Gamma}_0={\Gamma_{\rm{sonic}}^5}\setminus\{P_2\}$
for $\theta_1\in [0,\theta^{\rm s})$
(resp.~$\hat{\Gamma}_{0}=\varnothing$ for $\theta_1\in [\theta^{\rm s},\theta^{\rm d})$),
$\hat{\Gamma}_1=\Gamma_{\rm{sym}}^{0}$, $\hat{\Gamma}_2=\varnothing$,
and $\hat{\Gamma}_3={\Gamma_{\rm{sonic}}^6}\setminus\{P_3\}$ for $\theta_2\in [0,\theta^{\rm s})$
(resp.~$\hat{\Gamma}_{3}=\varnothing$ for $\theta_2\in [\theta^{\rm s},\theta^{\rm d})$).

Indeed, \qeqref{Thm-C1-item7} clearly holds.
Also, \qeqref{Thm-C1-item8} holds since $D\varphi=D\varphi_{5}$ on
$\Gamma_{\rm{sonic}}^5$ for $\theta_1\in [0,\theta^{\rm s})$ so that
$\phi_{\bm{e}}=\partial_{\bm{e}}(\varphi_5-\varphi_2)$ is a constant.
Similarly, $D\varphi=D\varphi_{6}$ on
$\Gamma_{\rm{sonic}}^6$ for $\theta_2\in [0,\theta^{\rm s})$.
Condition \qeqref{Thm-C1-item9} on $\hat{\Gamma}_1=\Gamma_{\rm{sym}}^{0}$ can be checked as follows:
If $\bm{e} \neq (0,\pm1)$,
then, as in Step~{\bf 6} of the proof of~\cite[Lemma~7.8]{ChenFeldmanXiang2020ARMA},
$\phi_{\bm{e}}$ cannot attain its local minima or maxima on
$\Gamma_{\rm{sym}}^{0}$.
In the other case, when $\bm{e}=(0,\pm 1)$,
the slip boundary condition $\partial_{\eta}\phi=0$ on $\Gamma_{\rm{sym}}^{0}$ implies that \(\phi_{\bm{e}}\) is constant
on $\Gamma_{\rm{sym}}^{0}$, which
verifies \qeqref{Thm-C1-item9}.
Case (C-10a) of \qeqref{Thm-C1-item10} clearly holds here since $\hat{\Gamma}_{2}=\varnothing$.
This concludes the proof of assertion~\eqref{Convexitylemma-item-1}.

\smallskip
\textbf{6.} It remains to prove assertion~\eqref{Convexitylemma-item-2}:
Any solution satisfying all the properties of Definition~\ref{Def:AdmisSolus}
except~\eqref{DirecDerivMonotone-S25-S26},
and with transonic shock $\Gamma_{\rm{shock}}$ being a strictly convex graph
in the sense of~\eqref{eqc3}--\eqref{eqc4}, satisfies condition~\eqref{DirecDerivMonotone-S25-S26} of Definition~\ref{Def:AdmisSolus}.

We apply~\eqref{eqc3} in the present case with
$(A,B) = (P_2,P_3)$ and \(\bm{e} = (0,1)\) such that \(\bm{\xi}(S,T) = (-T,S)\).
Then, using the properties of
Definition~\ref{Def:AdmisSolus}(i),
\begin{equation*}
\bm{e}_{S_{25}} = \frac{(-1,f^{\prime}(T_{A}))}{|(-1,f^{\prime}(T_{A}))|}\,,
\qquad\, \bm{e}_{S_{26}} = {-}\frac{(-1,f^{\prime}(T_{B}))}{|(-1,f^{\prime}(T_{B}))|}\,.
\end{equation*}
Also, from the strict concavity of $f$ in the sense of~\eqref{eqc4},
we obtain that $f^{\prime}(T_A)>f^{\prime}(T)>f^{\prime}(T_B)$
and $f(T)<f(T_1)+f^{\prime}(T_1)(T-T_1)$
for all $T,\, T_1\in(T_A,T_B)$.
From this, we see that $\{P+{\rm Con}\}\cap\Omega=\varnothing$ for any
$P\in\Gamma_{\rm{shock}}$.
Since $\varphi\leq\varphi_2$ in $\Omega$ from Definition~\ref{Def:AdmisSolus}
and $\varphi=\varphi_2$ on $\Gamma_{\rm{shock}}$,
we obtain that $\partial_{\bm{e}}\varphi\geq\partial_{\bm{e}}\varphi_2$
for any $\bm{e}\in {\rm Con}$, which implies~\eqref{DirecDerivMonotone-S25-S26}.
\end{proof}

\begin{appendices}
\section{Proof of Lemma~\ref{lem:MonotonicityOfSteadyAngles} and Related Properties of Solutions}
\label{Sec:Appendix-A}
\renewcommand\thesubsection{A.\arabic{subsection}}
\setcounter{equation}{0}
\setcounter{theorem}{0}
\setcounter{definition}{0}
\renewcommand\theequation{A.\arabic{equation}}
\renewcommand{\thedefinition}{A.\arabic{definition}}
\renewcommand{\thelemma}{A.\arabic{lemma}}
\renewcommand{\thetheorem}{A.\arabic{theorem}}
\renewcommand{\thecorollary}{A.\arabic{corollary}}

\subsection{Proof of Lemma~\ref{lem:MonotonicityOfSteadyAngles}: Monotonicity of critical angles for 2-D steady potential flow}
\label{SubSec:A1}
The proof of Lemma~\ref{lem:MonotonicityOfSteadyAngles} is given in four steps.

\smallskip
\textbf{1.}
The shock polar for 2-D steady potential flow connecting a constant supersonic
upstream state \(U_\infty = (\rho_{\infty}, u_{\infty}, 0)\)
to a constant downstream state \(U = (\rho, u , v)\) is given by
\begin{equation}
\begin{aligned}
    u = u_\infty - \frac{2\big( h(\rho)-h(\rho_\infty)\big)}{\rho + \rho_\infty}\frac{\rho}{u_\infty}\,, \quad
    v^2 = u_\infty^2 - u^2 - 2\big(h(\rho)-h(\rho_\infty)\big)\,, \label{Eq:Appendix-A2}
\end{aligned}
\end{equation}
where $h(\rho)\defeq \frac{1}{\gamma-1}(\rho^{\gamma-1}-1)$ for $\gamma>1$, and $h(\rho)=\ln \rho$ for $\gamma=1$;
see {\it e.g.}~\cite[\S 17.2]{Dafermos2016} for the derivation.
The downstream density \(\rho \in(\rho_\infty,\overline{\rho})\)
is a parameter along the shock polar, where the maximal density \(\overline{\rho} > \rho_\infty\)
is determined uniquely by \(v(\overline{\rho})=0\) in~\eqref{Eq:Appendix-A2}, which
satisfies
\begin{align*}
    M_\infty^2 \Big(1 - \big(\frac{\rho_\infty}{\overline{\rho}} \big)^2 \Big)
    = \frac{2\big( h(\overline{\rho}) - h(\rho_\infty) \big)}{c_\infty^2} \,,
\end{align*}
for \(M_\infty \defeq \frac{u_\infty}{c_\infty} > 1\),
the Mach number of the upstream state that is assumed to be supersonic, and $c_\infty$ the sonic speed.
In the following, we denote by \(w \defeq \tan{\theta_{\rm stdy}}\),
where \(\theta_{\rm stdy} \in (0,\frac{\pi}{2})\) is the angle between the velocities of the upstream
and the downstream flows, and by \(\tau \defeq \frac{\rho}{\rho_\infty} \in (1,\overline{\tau})\)
the ratio between the downstream and upstream densities,
with \(\overline{\tau} \defeq \frac{\overline{\rho}}{\rho_\infty}\).
Using the polytropic pressure law with \(\gamma \geq 1\), we have
\begin{align}
    w &= \frac{v }{u }
    =  \frac{ \big( 2 h(\tau) \big)^{1/2}
    \big(M_\infty^2 (1 - \tau^{-2}) - 2 h(\tau) \big)^{1/2} }{ M_\infty^2 (1 + \tau^{-1}) - 2 h(\tau) }\,.
    \label{eq:appendixDeflectionAngleFormula}
\end{align}

From~\cite[Lemma~7.3.2]{ChenFeldman-RM2018}, there exist both a unique detachment
angle \(\theta_{\rm stdy}^{\rm d} \in (0, \frac\pi2)\) and a unique sonic
angle \(\theta_{\rm stdy}^{\rm s} \in (0, \theta_{\rm stdy}^{\rm d})\),
which are characterized by
\begin{align*}
    w^{\rm d} \defeq \tan{\theta_{\rm stdy}^{\rm d}} = \sup{\{ w(\tau)\,:\,\tau \in (1,\overline{\tau})\}}\,,
    \qquad\,\,
    w^{\rm s} \defeq \tan{\theta_{\rm stdy}^{\rm s}} = \frac{v }{u } \quad
    \text{when $u^2 + v^2 = \rho^{\gamma-1}$}\,.
\end{align*}
By differentiation of~\eqref{eq:appendixDeflectionAngleFormula}~with respect
to $\tau\in (1,\overline{\tau})$, it follows that \(w^{\rm d} = w(\tau_{\rm d})\),
where \(\tau_{\rm d} \in (1,\overline{\tau})\) satisfies
\begin{align} \label{eq:appendixDetachment_l}
     M_\infty^2 \big( 2h(\tau_{\rm d}) + (\tau_{\rm d}^2 - 1) h'(\tau_{\rm d}) \big)
     - 2h(\tau_{\rm d}) \big( 2h(\tau_{\rm d}) + \tau_{\rm d} (\tau_{\rm d} + 1) h'(\tau_{\rm d}) \big) = 0 \,.
\end{align}
Similarly, by routine calculation,
it follows that \(w^{\rm s} = w(\tau_{\rm s})\),
where \(\tau_{\rm s} \in (1,\overline{\tau})\) satisfies
\begin{align} \label{eq:AppendixSonicDensityMachRelation}
    (\gamma + 1) h(\tau_{\rm s}) = M_\infty^2 - 1\,.
\end{align}

For the polytropic gas,
it is clear that the detachment angle and sonic angle
$\theta_{\rm stdy}^{\rm d}$ and $\theta_{\rm stdy}^{\rm s}$ depend only on $\gamma$ and $M_\infty$.
We also observe the following useful identities:
\begin{align} \label{eq:appendixPolytropicLawIdentities}
    \tau \, h'(\tau) = (\gamma-1) h(\tau) + 1\,,
    \quad
    \frac{h''(\tau)}{h'(\tau)} = \frac{\gamma-2}{\tau}
    \qquad\; \text{for all $\tau \in (0,\infty)$}\,.
\end{align}

\textbf{2.}
For the detachment angle, we combine~\eqref{eq:appendixDeflectionAngleFormula}
with~\eqref{eq:appendixDetachment_l} to obtain a parametric description of
the detachment angle and the upstream Mach number in terms of \(\tau_\text{d}\):
\begin{equation}\label{Eq:AppendxParamDetchAng}
    w^{\rm d} = \frac{1}{2}
    \frac{ \big( \tau_{\rm d} (\tau_{\rm d}^2-1) h' - 2h \big)^{1/2}
    \big( (\tau_{\rm d}^2-1)h' + 2h \big)^{1/2} }{ \tau_{\rm d}(\tau_{\rm d}+1) h' + h }\,, \qquad
    M_\infty^2 =
    \frac{2h \big( 2h + \tau_{\rm d}(\tau_{\rm d} + 1) h' \big)}{2h + (\tau_{\rm d}^2-1) h'}\,,
\end{equation}
where \(h, h',\) and \(h''\) are evaluated at \(\tau=\tau_{\rm d}\).
A direct calculation shows that the signs of \((w^{\rm d})_{\tau_\text{d}}\)
and \((M_\infty^2)_{\tau_\text{d}}\) are equal to the sign of the quantity:
\begin{align*}
    f_{\rm d}(\tau_\text{d}) \defeq
    2 h^2 \big( 3 + (\tau_{\rm d}+1) \frac{h''}{h'} \big) + (3\tau_{\rm d} -5)(\tau_{\rm d}+1) h h'
    + \tau_{\rm d} (\tau_{\rm d}+1) (\tau_{\rm d}^2-1) (h')^2\,.
\end{align*}
Applying~\eqref{eq:appendixPolytropicLawIdentities} evaluated at $\tau = \tau_{\rm d}$,
along with $\gamma \geq 1$, $\tau_{\rm d} > 1$, and $h > 0$, we obtain
\begin{align*}
    f_{\rm d} (\tau_\text{d}) &=
    \big( 3(\tau_{\rm d}-\tau_{\rm d}^{-1})+ \big( 3(\gamma-1) (\tau_{\rm d}-\tau_{\rm d}^{-1}) + 2(2 - \tau_{\rm d}^{-1}) \big) h \big) h
    + (1 + \tau_{\rm d}^{-1})  \big( \tau_{\rm d}^2 (\tau_{\rm d}^2-1) (h')^2 - 2h \big)
    \\
    &> (1 + \tau_{\rm d}^{-1}) \tilde{f}_{\rm d}(\tau_\text{d})\,,
\end{align*}
where $\tilde{f}_{\rm d}(\tau_\text{d}) \defeq \tau_{\rm d}^2 (\tau_{\rm d}^2-1) (h')^2 - 2h $.
We note that $\tilde{f}_{\rm d}(1) = 0$  and
\begin{equation*}
\tilde{f}_{\rm d}'(\tau) = 2 h'(\tau) \big( (\gamma-1)\tau(\tau^2-1) h'(\tau) + \tau^{\gamma+1}-1 \big) > 0
\qquad\,\, \mbox{for any $\tau>1$}\,.
\end{equation*}
It follows that $\tilde{f}_{\rm d}(\tau_\text{d})>0$ and \(f_{\rm d}(\tau_\text{d})>0\)
so that \((w^{\rm d})_{\tau_{\rm d}}>0\) and \((M_\infty^2)_{\tau_{\rm d}}>0\)
for all \(\tau_\text{d}>1\).
Using the chain rule, we conclude that \(w^{\rm d}\) and
\(\theta^{\rm d}_{\rm stdy}\)
are strictly increasing with respect to \(M_\infty > 1\).

\smallskip
\textbf{3.}
For the sonic angle, we combine~\eqref{eq:appendixDeflectionAngleFormula}
with~\eqref{eq:AppendixSonicDensityMachRelation} to give a parametric description
of the sonic angle and upstream Mach number in terms of~\(\tau_{\rm s}\):
\begin{equation}\label{Eq:AppendxParamSonicAng}
    w^{\rm s} =
    \frac{ \big( 2 h \big)^{1/2} \big(
    (\tau_{\rm s}^2 - 1) \tau_{\rm s}^{\gamma-1} - 2 h
    \big)^{1/2} }{ (\tau_{\rm s} + 1) \tau_{\rm s}^{\gamma-1} + 2h}\,,\qquad
    M_\infty^2 = 1 + (\gamma+1)h\,,
\end{equation}
where $h$ and $h'$ are evaluated at \(\tau = \tau_{\rm s}\).
It is clear that \((M_\infty^2)_{\tau_{\rm s}} > 0\) since \(h' > 0\),
whilst a direct calculation shows that the sign of \((w^{\rm s})_{\tau_{\rm s}}\) is equal to the sign of the quantity
\begin{align*}
    f_{\rm s}(\tau_{\rm s}) &\defeq
    2 h \big(1 + (\gamma-1) h\big) \big(1 + (\gamma+1)h\big)
    + (\tau_{\rm s}+1) \big( (\tau_{\rm s} -1) \tau_{\rm s}^{\gamma-1} - 2h\big) h' \,.
\end{align*}
Applying~\eqref{eq:appendixPolytropicLawIdentities} evaluated at $\tau = \tau_{\rm s}$,
along with $\gamma\geq 1$, $\tau_{\rm s}>1$, and $h'(\tau_{\rm s}) = \tau_{\rm s}^{\gamma-2} > 0$,
we obtain
\begin{align*}
    f_{\rm s}(\tau_{\rm s})
    =  \big( 2(\gamma+1) \tau_{\rm s} h^2
            + \tau_{\rm s} (\tau_{\rm s}^2-1) h' - 2h \big) h'
            >\tilde{f}_{\rm s}(\tau_{\rm s}) h'\,,
\end{align*}
where $ \tilde{f}_{\rm s}(\tau_{\rm s}) \defeq \tau_{\rm s} (\tau_{\rm s}^2-1) h' - 2h $.
Notice that \(\tilde{f}_{\rm s}(1) = 0\) and
\begin{align*}
    \tilde{f}_{\rm s}'(\tau) = (\gamma+1) (\tau^2-1) h'(\tau) > 0
    \qquad\mbox{for any $\tau>1$}.
\end{align*}
Then it follows that $\tilde{f}_{\rm s}(\tau_{\rm s})>0$ and $f_{\rm s}(\tau_{\rm s})>0$
so that \((w^{\rm s})_{\tau_{\rm s}} > 0\).
We conclude via the chain rule that \(w^{\rm s}\) and hence \(\theta_{\rm stdy}^{\rm s}\)
are strictly increasing with respect to \(M_\infty > 1\).

\smallskip
\textbf{4.}
The limiting values stated in the lemma can be checked directly
from~\eqref{Eq:AppendxParamDetchAng}--\eqref{Eq:AppendxParamSonicAng},
now that the monotonicity of \((w^{\rm d},M_\infty^2)\) with respect to \(\tau_{\rm d}\)
in~\eqref{Eq:AppendxParamDetchAng} and the monotonicity of \((w^{\rm s}, M_\infty^2)\)
with respect to \(\tau_{\rm s}\) in~\eqref{Eq:AppendxParamSonicAng} have been verified.
\qed

\subsection{Monotonicity properties with respect to the incident angles}
In Lemma~\ref{lem:MonotonicityOfMach2}, we have shown that the pseudo-Mach number of state \((2)\)
at the reflection point \(P_0^1\) is a strictly decreasing function of \(\theta_1 \in (0,\theta^{\rm cr})\).
Subsequently, in Proposition~\ref{Prop:localtheorystate5}, we have used
Lemmas~\ref{lem:MonotonicityOfMach2} and~\ref{lem:MonotonicityOfSteadyAngles}
to prove the existence of the unique detachment angle \(\theta^{\rm d}\)
and sonic angle \(\theta^{\rm s}\).
In Remark~\ref{remark:local-theory}\eqref{remark:local-theory-item-i}, we have stated the relation
between \(\theta^{\rm d}, \,\theta^{\rm s}\), and \(\theta^{\rm cr}\), which depends on the choice of \((\gamma,v_2)\).
The following lemma provides the proof of this statement.

\begin{lemma}\label{Lem:Defv2d-v2s}
For any \(\gamma > 1,\) there exist constants \(v_2^{\rm d}\) and \(v_2^{\rm s},\)
with \(v_{\min} < v_2^{\rm s} < v_2^{\rm d} < 0,\) uniquely determined by \(\gamma\)
such that
\begin{align}
\label{eq:AppendixCriticalV2-statement-1}
\sgn
\big( \hat{\theta}_{25}(\theta^{\rm cr}; v_2) - \theta_{\rm stdy}^{\rm d}(1,|D\varphi_2(P_{0, {\rm cr}}^1)|)
\big)
&= \sgn (v_2 - v_2^{\rm d})\,,\\
\label{eq:AppendixCriticalV2-statement-2}
\sgn
\big( \hat{\theta}_{25}(\theta^{\rm cr}; v_2) - \theta_{\rm stdy}^{\rm s}(1,|D\varphi_2(P_{0, {\rm cr}}^1)|)
\big)
&= \sgn (v_2 - v_2^{\rm s})\,,
\end{align}
where
$P_{0, {\rm cr}}^1 \defeq P_0^1|_{\theta_1 = \theta^{\rm cr}}${\rm ,} and
we define \(\theta_{\rm stdy}^{\rm d}(1,M_\infty) \equiv \theta_{\rm stdy}^{\rm s}(1,M_\infty) \defeq 0\)
for any \(0 \leq M_\infty \leq 1\).
Furthermore, when \(\gamma = 1,\)
we define \(v_{\min}= v_2^{\rm s} = v_2^{\rm d} = -\infty\).
\end{lemma}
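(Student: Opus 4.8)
The plan is to reduce the assertion to the study of two explicit scalar functions of $v_2$ and then apply the intermediate value theorem. Throughout, quantities evaluated at $\theta_1 = \theta^{\rm cr}$ are understood as the limits $\theta_1 \to \theta^{\rm cr-}$; for $\gamma > 1$, where $\ell(0^+,1) < \infty$, these are simply the values at $\rho_1 = 0$, obtained from the continuity of $\rho_1(\theta_1;v_2)$ up to $\theta^{\rm cr}$. First I would record the boundary behaviour at $\theta_1 = \theta^{\rm cr}$. Using $\cos\theta^{\rm cr} = -v_2/\ell(0^+,1)$ together with~\eqref{eq:appendixMachNumberState12} and the definition~\eqref{eq:state5SelfSimilarTurningAngle} of $\hat\theta_{25}$, and writing $t \defeq -v_2/\ell(0^+,1) \in (0,1)$, a short computation gives $\theta^{\rm cr}(v_2) = \arccos t$,
\[
M_{0,{\rm cr}}(v_2) \defeq |D\varphi_2(P_{0,{\rm cr}}^1)| = \ell(0^+,1)\cot\theta^{\rm cr} = \frac{t\,\ell(0^+,1)}{\sqrt{1-t^2}}\,, \qquad \hat\theta_{25}(\theta^{\rm cr};v_2) = \theta^{\rm cr}(v_2)\,,
\]
where the last identity follows since $\sin\hat\theta_{25}(\theta^{\rm cr}) = -v_2/M_{0,{\rm cr}} = \sin\theta^{\rm cr}$ and both angles lie in $(0,\tfrac\pi2)$. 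Consequently $v_2 \mapsto \theta^{\rm cr}(v_2)$ is continuous and strictly increasing on $(v_{\min},0)$ with limits $0$ and $\tfrac\pi2$ at the endpoints, while $v_2 \mapsto M_{0,{\rm cr}}(v_2)$ is continuous and strictly decreasing with limits $+\infty$ and $0$.

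Next I would set, on $(v_{\min},0)$,
\[
F(v_2) \defeq \theta^{\rm cr}(v_2) - \theta_{\rm stdy}^{\rm d}\big(1,M_{0,{\rm cr}}(v_2)\big)\,, \qquad G(v_2) \defeq \theta^{\rm cr}(v_2) - \theta_{\rm stdy}^{\rm s}\big(1,M_{0,{\rm cr}}(v_2)\big)\,,
\]
so that, by the identity $\hat\theta_{25}(\theta^{\rm cr};v_2) = \theta^{\rm cr}(v_2)$, the claims~\eqref{eq:AppendixCriticalV2-statement-1}--\eqref{eq:AppendixCriticalV2-statement-2} become $\sgn F(v_2) = \sgn(v_2 - v_2^{\rm d})$ and $\sgn G(v_2) = \sgn(v_2 - v_2^{\rm s})$. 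By Lemma~\ref{lem:MonotonicityOfSteadyAngles} together with the convention $\theta_{\rm stdy}^{\rm d}(1,\cdot) \equiv \theta_{\rm stdy}^{\rm s}(1,\cdot) \equiv 0$ on $[0,1]$, the maps $M_\infty \mapsto \theta_{\rm stdy}^{\rm d}(1,M_\infty)$ and $M_\infty \mapsto \theta_{\rm stdy}^{\rm s}(1,M_\infty)$ are continuous and nondecreasing on $[0,\infty)$; composing with the strictly decreasing $M_{0,{\rm cr}}(\cdot)$ makes $F$ and $G$ continuous and strictly increasing. The endpoint behaviour recorded above, together with the limits in Lemma~\ref{lem:MonotonicityOfSteadyAngles}, gives $F,G \to \tfrac\pi2 > 0$ as $v_2 \to 0^-$, while $F \to -\tfrac\pi2 < 0$ and $G \to -\arctan\sqrt{\tfrac{2}{\gamma-1}} < 0$ as $v_2 \to v_{\min}^+$. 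Hence $F$ and $G$ each have a unique zero in $(v_{\min},0)$, which I call $v_2^{\rm d}$ and $v_2^{\rm s}$; strict monotonicity then yields the required sign identities. Since these zeros lie in the open interval, $v_{\min} < v_2^{\rm s}$ and $v_2^{\rm d} < 0$, and uniqueness of the zeros shows that $v_2^{\rm d},v_2^{\rm s}$ depend only on $\gamma$.

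For the ordering $v_2^{\rm s} < v_2^{\rm d}$ I would use $\theta_{\rm stdy}^{\rm s}(1,M) \le \theta_{\rm stdy}^{\rm d}(1,M)$ for all $M \ge 0$, with strict inequality when $M > 1$ (from~\cite[Lemma~7.3.2]{ChenFeldman-RM2018}), so that $G \ge F$ on $(v_{\min},0)$. Since $v_2^{\rm d} \in (v_{\min},0)$ we have $\theta^{\rm cr}(v_2^{\rm d}) > 0$, and $F(v_2^{\rm d}) = 0$ forces $\theta_{\rm stdy}^{\rm d}(1,M_{0,{\rm cr}}(v_2^{\rm d})) = \theta^{\rm cr}(v_2^{\rm d}) > 0$, hence $M_{0,{\rm cr}}(v_2^{\rm d}) > 1$; therefore $G(v_2^{\rm d}) > F(v_2^{\rm d}) = 0 = G(v_2^{\rm s})$, and strict monotonicity of $G$ gives $v_2^{\rm d} > v_2^{\rm s}$. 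Finally, for $\gamma = 1$ one has $\ell(0^+,1) = +\infty$, so $v_{\min} = -\infty$ and $\cos\theta^{\rm cr} = 0$, i.e., $\theta^{\rm cr} = \tfrac\pi2$ for every $v_2 < 0$; moreover $M_2^{(\theta_1)} \to |v_2|$ as $\theta_1 \to \tfrac\pi2^-$ by~\eqref{eq:appendixMachNumberState12} (the term $\rho_1^2\ell(\rho_1)^2/(1-\rho_1)^2$ vanishes while $\cot^2\theta_1\,\ell(\rho_1)^2/(1-\rho_1)^2 \to v_2^2$), so $\hat\theta_{25}(\theta^{\rm cr};v_2) = \lim_{\theta_1\to\pi/2^-}\arcsin(-v_2/M_2^{(\theta_1)}) = \tfrac\pi2$. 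Since $\theta_{\rm stdy}^{\rm d}(1,M), \theta_{\rm stdy}^{\rm s}(1,M) < \tfrac\pi2$ for all $M \ge 0$, both left-hand sides in~\eqref{eq:AppendixCriticalV2-statement-1}--\eqref{eq:AppendixCriticalV2-statement-2} are strictly positive, which matches $\sgn(v_2 - (-\infty)) = +1$ under the convention $v_2^{\rm s} = v_2^{\rm d} = -\infty$.

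I expect the only delicate points to be the boundary identities at $\theta_1 = \theta^{\rm cr}$ — especially $\hat\theta_{25}(\theta^{\rm cr};v_2) = \theta^{\rm cr}(v_2)$ — and the degenerate case $\gamma = 1$, where $v_{\min}$, the critical angle, and the compatibility relation all degenerate together; once these are in place, the remainder is a routine monotonicity-plus-intermediate-value argument, with the ordering of $v_2^{\rm s}$ and $v_2^{\rm d}$ coming from $\theta_{\rm stdy}^{\rm s} < \theta_{\rm stdy}^{\rm d}$ in the supersonic regime.
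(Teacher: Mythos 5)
Your proposal is correct and follows essentially the same route as the paper's proof: compute $|D\varphi_2(P_{0,{\rm cr}}^1)|$ and $\hat\theta_{25}(\theta^{\rm cr};v_2)$ explicitly, establish their strict monotonicity in $v_2$, invoke Lemma~\ref{lem:MonotonicityOfSteadyAngles} together with the endpoint limits, and conclude by the intermediate value theorem, with the $\gamma=1$ case handled by the degeneration $\ell(0^+,1)=\infty$. The only presentational differences are that you make the identity $\hat\theta_{25}(\theta^{\rm cr};v_2)=\theta^{\rm cr}(v_2)$ explicit and work with strictly increasing functions $F,G$ on all of $(v_{\min},0)$, whereas the paper confines the delicate monotonicity comparison to the subinterval $(v_{\min},v_{\rm mid})$ where $M_{2,\min}>1$; both are equivalent.
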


\noindent \textit{Proof.}  We divide the proof into two steps.

\smallskip
\textbf{1.} For \(\gamma >1\) and $ v_2 \in (\vmin, 0)$, a direct calculation by using \eqref{Eq:DefThetaVac},
\eqref{eq:appendixMachNumberState12}, and~Lemma~\ref{lem:MonotonicityOfMach2} gives\\
\begin{equation}\label{eq:AppendixCriticalV2-1}
\begin{aligned}
&M_{2,\min} \defeq \inf_{\theta_1 \in (0,\theta^{\rm cr})} |D\varphi_2(P_0^1)| = |D\varphi_2(P_{0,{\rm cr}}^1)| = \frac{v_2 \vmin}{\sqrt{\vmin^2 - v_2^2}}\,,\\
&\hat{\theta}_{25}(\theta^{\rm cr};v_2) = \arcsin{(\frac{-v_2}{M_{2,\min}})}
= \arccos{(|{\frac{v_2}{\vmin}}|)} > 0\,.
\end{aligned}
\end{equation}
Then \(\hat{\theta}_{25}(\theta^{\rm cr};v_2)\) is strictly increasing with respect to \(v_2 \in (\vmin,0)\).
We also calculate
\begin{equation}
\label{eq:AppendixCriticalV2-2}
   \lim_{v_2 \to \vmin^+} M_{2,\min} = \infty\,,
   \quad
   \lim_{v_2 \to 0^-} M_{2,\min} = 0\,,
   \quad
   \frac{{\rm d} M_{2,\min}^2}{{\rm d}v_2} = \frac{2 \vmin^4 v_2 }{(\vmin^2 - v_2^2)^2} < 0 \,,
\end{equation}
so that \(M_{2,\min}\) is strictly decreasing with respect to \(v_2 \in (\vmin,0)\).
Also observe that
$M_{2,\min} > 1$
if and only if
$v_2 \in (\vmin, v_{\rm mid})$,
for $v_{\rm mid} \defeq \frac{\vmin}{\sqrt{\vmin^2 + 1}}$.
Then, by Lemma~\ref{lem:MonotonicityOfSteadyAngles},
\(\theta_{\rm stdy}^{\rm d}(1,M_{2,\min})\) is strictly decreasing with respect to
\( v_2 \in ( \vmin, v_{\rm mid} ) \).
In particular, by~\eqref{eq:AppendixCriticalV2-1}--\eqref{eq:AppendixCriticalV2-2}
and~\cite[Lemma~7.3.3]{ChenFeldman-RM2018}, we have
\begin{align*}
\begin{alignedat}{3}
    (\hat{\theta}_{25}(\theta^{\rm cr};v_2) ,  \theta_{\rm stdy}^{\rm d}(1,M_{2,\min}) ) &\to ( \arctan{(\abs{\vmin})} , 0 )
    \qquad &&\text{as} \;\;
    v_2 &&\to v_{\rm mid}^-\,,\\
    (\hat{\theta}_{25}(\theta^{\rm cr};v_2),  \theta_{\rm stdy}^{\rm d}(1,M_{2,\min}) ) &\to ( 0, \tfrac\pi2 )
    \quad &&\text{as} \;\;
    v_2 &&\to \vmin^+\,.
\end{alignedat}
\end{align*}
Then we conclude from the monotonicity properties of
$(\hat{\theta}_{25}(\theta^{\rm cr};v_2)\,, \theta_{\rm stdy}^{\rm d}(1,M_{2,\min}))$
with respect to \(v_2 \in ( \vmin, v_{\rm mid} )\) that there exists a unique value
\(v_2^{\rm d} \in ( \vmin, v_{\rm mid} )\), depending only on \(\gamma\),
such that~\eqref{eq:AppendixCriticalV2-statement-1} holds.

The proof of~\eqref{eq:AppendixCriticalV2-statement-2} follows via a similar argument
by using the fact that \(\theta_{\rm stdy}^{\rm s} \in (0, \theta_{\rm stdy}^{\rm d})\) and
\begin{flalign*}
&&
\begin{alignedat}{3}
    \theta_{\rm stdy}^{\rm s}(1,M_{2,\min}) &\to 0 \qquad &&\text{as} \;\;
    v_2 &&\to v_{\rm mid}^-\,,\\
    \theta_{\rm stdy}^{\rm s}(1,M_{2,\min}) &\to \arctan \big( \tfrac{2}{\gamma-1}\big)^{1/2} \qquad &&\text{as} \;\;
    v_2 &&\to \vmin^{+}\,.
\end{alignedat}
&&
\end{flalign*}

\smallskip
\textbf{2.} For \(\gamma = 1\), from~\eqref{Eq:DefThetaVac}, we see that
\(\vmin = -\infty\) and \(\theta^{\rm cr} = \tfrac\pi2\).
Taking the limit: \(\vmin \to -\infty\) in Step~\textbf{1} above, we obtain that
$M_{2,\min} = \abs{v_2}$ and
$\hat{\theta}_{25}(\tfrac\pi2;v_2) = \tfrac\pi2$.
From Lemma~\ref{lem:MonotonicityOfSteadyAngles}, we know that the sonic and detachment angles
for 2-D steady potential flow
satisfy \(0 < \theta^{\rm s}(1,M_\infty) < \theta^{\rm d}(1,M_\infty) < \tfrac\pi2\) for any \(M_\infty > 1\),
which implies that~\eqref{eq:AppendixCriticalV2-statement-1}--\eqref{eq:AppendixCriticalV2-statement-2}
hold with \(v_2^{\rm d} = v_2^{\rm s} = -\infty\). \qed

Lemma~\ref{lem:MonotonicityOfMach2} is also useful to determine the monotonicity of other quantities
related to the four-shock interaction problem.
In the following lemma, we show that the reflected shock angles \(\theta_{25}\) and \(\theta_{26}\)
are monotonic with respect to the incident shock angles \(\theta_1\) and \(\theta_2\) respectively
and, subsequently, that certain intersection points of the reflected shocks are also monotonic functions
of the incident angles.

\begin{lemma} \label{lem:monotonicityOfTheta5}
Fix \(\gamma \geq 1\) and \(v_2 \in (\vmin,0)\).
\begin{enumerate}[{\rm (i)}]
\item \label{item1-lem:monotonicityOfTheta5}
For any incident angle \(\theta_1 \in [0,\theta^{\rm d}),\)
let the reflected shock angle \(\theta_{25} \in (\tfrac{\pi}{2}, \pi]\) be given
by~\eqref{eq:def-reflected-shock-angle-25-26}.
Then \(\theta_{25}\) is a continuous, strictly decreasing function of \(\theta_1,\)
and the map{\rm:} \(\theta_1 \mapsto \theta_{25}\) is \(C^\infty\)--smooth on \(\theta_1 \in [0,\theta^{\rm d})\).

\item \label{item2-lem:monotonicityOfTheta5}
For the right-most intersection \(P_2 = (\xi^{P_2},\eta^{P_2})\)  of \(\partial B_{c_5}(O_5)\) and \(S_{25}\)
as given
in {\rm Definition~\ref{Def:GammaSonicAndPts},}
and for the \(\eta\)--intercept of \(S_{25}\) which is denoted here by \(a_{25}\){\rm:}
\(\eta^{P_2}\) is strictly decreasing with respect to \(\theta_1 \in (0, \theta^{\rm s})\){\rm,}
and \(a_{25}\) is strictly increasing with $\theta_1 \in (0,\theta^{\rm d})$.
\end{enumerate}
\end{lemma}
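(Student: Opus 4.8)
The plan is to reduce both assertions to the single fact that $\rho_5$ (equivalently $u_5$) is strictly increasing in $\theta_1$, and to read off this fact from the algebraic relations at the reflection point $P_0^1$ together with the monotonicity of $\xi^{P_0^1}$ already recorded after Lemma~\ref{lem:MonotonicityOfMach2}. With $\rho_2=c_2=1$, $D\varphi_2(P_0^1)=(-\xi^{P_0^1},v_2)$ and $D\varphi_5(P_0^1)=(u_5-\xi^{P_0^1},0)$, I would first combine the Rankine--Hugoniot relations~\eqref{eq:state5RHcondition1}, the Bernoulli law, and the slip condition $v_5=0$ (eliminating the normal jump condition) into the two scalar identities
\begin{align*}
u_5(\xi^{P_0^1}-u_5)=\tfrac{2h(\rho_5)}{\rho_5+1}=\tfrac{\ell(\rho_5)^2}{\rho_5-1}\,,\qquad u_5^2+v_2^2=\ell(\rho_5)^2\,,
\end{align*}
the second of which is~\eqref{eq:LengthO2O5CompatiCondi}. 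Thus $\rho_5=\rho_5(u_5)$ is determined by $\ell(\rho_5)^2=u_5^2+v_2^2$ and is strictly increasing in $u_5$ (as $\ell$ is strictly increasing on $(1,\infty)$ by~\eqref{eq:monotonicity-of-ell} and $\ell(\rho_5)\ge|v_2|=\ell(\rho_0)$ forces $\rho_5\ge\rho_0>1$), while $\xi^{P_0^1}=u_5+\tfrac{\ell(\rho_5)^2}{(\rho_5-1)u_5}=:F(u_5)$. Since $\theta_{25}=\arctan(u_5/v_2)\in(\tfrac\pi2,\pi]$ by~\eqref{eq:def-reflected-shock-angle-25-26} and $v_2<0$ is fixed, the continuity and $C^\infty$--smoothness of $\theta_1\mapsto\theta_{25}$ on $[0,\theta^{\rm d})$ are inherited from those of $u_5=u_5^{\rm wk}$ in Proposition~\ref{Prop:localtheorystate5}(i), and $\tfrac{d\theta_{25}}{d\theta_1}$ has the sign opposite to $\tfrac{du_5}{d\theta_1}$. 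As $\xi^{P_0^1}$ is strictly decreasing in $\theta_1$ on $[0,\theta^{\rm d})$, assertion~(i) reduces to showing that $F$ is strictly decreasing on the weak branch, i.e.\ on the range of $u_5$ corresponding to $\theta_1\in[0,\theta^{\rm d})$, equivalently $\rho_5\in(\rho_0,\rho_{\rm d})$ with $\rho_{\rm d}$ the steady detachment density solving~\eqref{eq:appendixDetachment_l}; then $u_5$, hence $\rho_5$, is strictly increasing in $\theta_1$ and $\theta_{25}$ strictly decreasing.

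To establish $F'<0$ on the weak branch I would compute
\begin{align*}
F'(u_5)=1-\frac{\ell(\rho_5)^2}{(\rho_5-1)\bigl(\ell(\rho_5)^2-v_2^2\bigr)}+\frac{1}{\ell(\rho_5)\ell'(\rho_5)}\,\frac{d}{d\rho}\Bigl(\frac{\ell(\rho)^2}{\rho-1}\Bigr)\Bigl|_{\rho=\rho_5}\,,
\end{align*}
substitute the explicit formula $\ell(\rho)^2=\tfrac{2(\rho-1)h(\rho)}{\rho+1}$ from~\eqref{eq:sonic-centre-relations}, and reduce $F'(u_5)<0$ to a single-variable inequality in $\rho:=\rho_5$, which — using the identities $\rho h'(\rho)=(\gamma-1)h(\rho)+1$ and $\rho^{\gamma-1}=1+(\gamma-1)h(\rho)$ of~\eqref{eq:appendixPolytropicLawIdentities} — should become the positivity of a polynomial-type expression in $\rho$ on $(\rho_0,\rho_{\rm d})$. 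Since $F(u_5)\to\infty$ as $u_5\to0^+$ (where $\rho_5\to\rho_0$), $F'<0$ automatically near $u_5=0$, so it remains to check that the reduced expression keeps its sign up to $\rho_{\rm d}$. \emph{This reduced inequality is the principal technical obstacle}: it is explicit but, exactly as in the proofs of Lemma~\ref{lem:MonotonicityOfSteadyAngles} and Lemma~\ref{lem:MonotonicityOfMach2}, it requires an auxiliary-function argument handling $\gamma\ge1$ uniformly and making use of the constraint $\rho<\rho_{\rm d}$.

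Once~(i) is in hand, assertion~(ii) becomes single-variable. From $a_{25}=-\tfrac1{v_2}\xi^{P_0^1}u_5$ in Definition~\ref{def:state5andstate6} and the consequence $\xi^{P_0^1}u_5=\tfrac{2\rho_5h(\rho_5)}{\rho_5+1}-v_2^2$ of the two identities above, one gets $a_{25}=\tfrac1{|v_2|}\bigl(\tfrac{2\rho_5h(\rho_5)}{\rho_5+1}-v_2^2\bigr)$; since $\rho\mapsto\tfrac{2\rho h(\rho)}{\rho+1}$ is strictly increasing (its derivative has numerator $h(\rho)+\rho(\rho+1)h'(\rho)>0$), $a_{25}$ is strictly increasing in $\theta_1$ on $(0,\theta^{\rm d})$. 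For $\eta^{P_2}$, I would parametrise $S_{25}$ by $\bm\xi(t)=P_0^1+t\,\bm e_{S_{25}}$ and use that $|D\varphi_5(\bm\xi(t))|^2$ is quadratic in $t$; then $P_2$ (the intersection of $S_{25}$ with $\partial B_{c_5}(O_5)$ nearer $P_0^1$) satisfies $\eta^{P_2}=\bigl(|D\varphi_5(P_0^1)||\cos\theta_{25}|-\sqrt{c_5^2-q_5^2}\bigr)\sin\theta_{25}$ with $q_5={\rm dist}(O_5,S_{25})=\tfrac{\ell(\rho_5)}{\rho_5-1}$. Simplifying via the identities gives
\begin{align*}
\eta^{P_2}=\frac{|v_2|}{\rho_5-1}-\sqrt{1-\frac{v_2^2}{\ell(\rho_5)^2}}\;\sqrt{\rho_5^{\gamma-1}-\frac{2h(\rho_5)}{\rho_5^2-1}}\,,
\end{align*}
a function of $\rho_5$ alone. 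The first term is strictly decreasing in $\rho_5$; in the subtracted product, $\sqrt{1-v_2^2/\ell(\rho_5)^2}$ is strictly increasing, and $\rho\mapsto\rho^{\gamma-1}-\tfrac{2h(\rho)}{\rho^2-1}$ is strictly increasing for $\gamma\ge1,\ \rho>1$ by a short calculus argument (reducing, in the limiting case $\gamma=1$, to the elementary inequality $1-\rho^{-2}<2\ln\rho$). Hence $\eta^{P_2}$ is strictly decreasing in $\rho_5$, and therefore, by~(i), strictly decreasing in $\theta_1\in(0,\theta^{\rm s})$. The only non-routine input beyond~(i) here is that last monotonicity, which parallels the $\tilde f$--type arguments of Appendix~\ref{SubSec:A1}.
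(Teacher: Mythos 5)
Your reduction of part~(i) is sound as far as it goes: the identities $u_5(\xi^{P_0^1}-u_5)=\ell(\rho_5)^2/(\rho_5-1)$ and $\ell(\rho_5)^2=u_5^2+v_2^2$ are correct consequences of~\eqref{eq:state5RHcondition1} and the Bernoulli law, and strict decrease of $\theta_{25}$ is indeed equivalent to strict increase of $u_5$. But the heart of the lemma --- the strict monotonicity --- is left unproved: you reduce it to $F'(u_5)<0$ on the weak branch and then explicitly defer that inequality as ``the principal technical obstacle'', without carrying out the auxiliary-function argument or even verifying that the sign persists up to the detachment value. As written, part~(i) is therefore not established, and since your proof of~(ii) (both the formula for $a_{25}$ and the closed-form expression for $\eta^{P_2}$ as a function of $\rho_5$) is explicitly conditional on~(i), the whole argument is incomplete. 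A secondary, more minor point: the monotonicity of $\rho\mapsto \rho^{\gamma-1}-2h(\rho)/(\rho^2-1)$ that you invoke for $\eta^{P_2}$ is only asserted with a sketch and also needs a uniform-in-$\gamma$ verification.

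The gap is avoidable, and in fact your own identities hand you the paper's route. The paper does not compute any derivative of the shock-polar relation: it proves that $\theta_1\mapsto\theta_{25}$ is injective (if $\theta_{25}$ coincides for two incident angles, then state $(5)$, hence $S_{25}$, hence the reflection point $P_0^1$ and the pseudo-Mach number $M_2^{(\theta_1)}$ coincide, so $\theta_1$ coincides by Lemma~\ref{lem:MonotonicityOfMach2}), and then combines injectivity with continuity from Proposition~\ref{Prop:localtheorystate5} and the endpoint behaviour $\theta_{25}\to\pi^-$ as $\theta_1\to 0^+$, $\theta_{25}<\pi$ for $\theta_1>0$, to conclude strict decrease; part~(ii) is then quoted from the corresponding Prandtl--Meyer results in the cited monograph. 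You can run exactly this injectivity argument with what you already have: $u_5$ determines $\rho_5$ through $\ell(\rho_5)^2=u_5^2+v_2^2$ and~\eqref{eq:monotonicity-of-ell}, hence determines $\xi^{P_0^1}=F(u_5)$, hence determines $\theta_1$ by the strict monotonicity of $\xi^{P_0^1}$ (equivalently of $M_2^{(\theta_1)}$) in $\theta_1$; injectivity plus smoothness plus the boundary values then give $u_5$ strictly increasing without ever needing $F'<0$. Replacing your derivative computation by this argument would close the gap in~(i), after which your self-contained computations for~(ii) (a genuinely different and arguably more explicit route than the paper's citations) go through, modulo the short calculus check mentioned above.
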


\begin{proof}
For (i), the continuity and smoothness of the map: \(\theta_1 \mapsto \theta_{25}\)
follows due to Proposition~\ref{Prop:localtheorystate5} and~\eqref{eq:def-reflected-shock-angle-25-26},
whilst the strict monotonicity follows directly from Lemma~\ref{lem:MonotonicityOfMach2} and~\cite[Lemma~2.17]{BCF-2019}.
Indeed, suppose that \(\theta_1,\tilde{\theta}_1 \in [0, \theta^{\rm d})\) are such
that \(\theta_{25}(\theta_1) = \theta_{25}(\tilde{\theta}_1)\).
Then, by~\cite[Lemma~2.17]{BCF-2019}, the uniform state \((5)\) is uniquely determined
by \((v_2, \gamma, \theta_{25})\) so that the location of the reflected shock \(S_{25}\) must
coincide for both \(\theta_1\) and \(\tilde{\theta}_1\).
In particular, for the pseudo-Mach number of state \((2)\) at the reflection point \(P_0^1\),
 as defined by~\eqref{eq:appendixMachNumberState12}, we see that \(M_2^{(\theta_1)} = M_2^{(\tilde{\theta}_1)}\)
so that \(\theta_1 = \tilde{\theta}_1\) by Lemma~\ref{lem:MonotonicityOfMach2}.

We have shown that the map: \(\theta_1 \mapsto \theta_{25}\) is continuous and injective,
and therefore strictly monotonic.
In particular, by property (\ref{Prop:localtheorystate5:continuity-properties}) of
Proposition~\ref{Prop:localtheorystate5} and~\eqref{eq:def-reflected-shock-angle-25-26},
\(\theta_{25} \to \pi^-\) as \(\theta_1 \to 0^+\);
whilst, by property~(\ref{Prop:localtheorystate5:3n4}) of
Proposition~\ref{Prop:localtheorystate5} and~\eqref{eq:def-reflected-shock-angle-25-26},
\(\theta_{25} < \pi\) for any \(\theta_1 \in (0,\theta^{\rm d}]\).
It follows that the map: \(\theta_1 \mapsto \theta_{25}\) is monotonic decreasing.

\smallskip
For (ii), the first result follows directly from part~(i) combined with~\cite[Lemma 2.22]{BCF-2019},
whilst the second result follows directly from part~(i) combined with~\cite[Eqs.~(2.4.14) and (2.4.42)]{BCF-2019}.
\end{proof}

Finally, we give the proof that the intersection point \(P_I\) of the reflected shocks \(S_{25}\) and \(S_{26}\)
can be extended to a \(C(\cl{\Omega})\)--function.

\begin{lemma} \label{lem:intersection-point-P_I}
For \(\btheta \in \cl{\Theta}\setminus\{\bm{0}\},\) denote by \(P_I = (\xi^{P_I},\eta^{P_I})\) the unique
intersection point of the reflected shocks \(S_{25}\) and \(S_{26},\) as given by~\eqref{eq:intersection-of-S25-S26}.
Then
\begin{align} \label{appendix-eq:limit-values-intersection-point}
    \lim_{ \btheta \to \bm{0},\,\btheta \in \cl{\Theta} \setminus \bm{0}} (\xi^{P_I}, \eta^{P_I}) = (0, \eta_0)\,,
\end{align}
where \(\eta_{0}\) is given by~\eqref{Sol-NormalReflec0-RH}.
\end{lemma}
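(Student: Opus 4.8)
The plan is to work directly from the closed-form expressions~\eqref{eq:intersection-of-S25-S26} for \(\xi^{P_I}\) and \(\eta^{P_I}\) and to reduce everything to the asymptotics of a single scalar. The first step is to eliminate variables in the Rankine--Hugoniot system~\eqref{eq:state5RHcondition1} between states \((2)\) and \((5)\) at \(P_0^1\): using \(v_5 = 0\), the pseudo-velocities \(D\varphi_2(P_0^1) = (-\xi^{P_0^1}, v_2)\) and \(D\varphi_5(P_0^1) = (u_5 - \xi^{P_0^1}, 0)\), the relation \(\ell(\rho_5)^2 = u_5^2 + v_2^2\) from~\eqref{Eq:LengthO2O5CompatiCondi}, and \(\ell(\rho)^2 = \tfrac{2(\rho-1)h(\rho)}{\rho+1}\), a short computation yields
\[ u_5 \xi^{P_0^1} = \frac{\rho_5 u_5^2 + v_2^2}{\rho_5 - 1} = \frac{2\rho_5 h(\rho_5)}{\rho_5 + 1} - v_2^2 \eqdef A(\rho_5) \qquad \text{for $\theta_1 \in (0,\theta^{\rm d})$,} \]
with \(\rho_5 = \rho_5(\theta_1)\); by the symmetry in Remark~\ref{remark:local-theory}\eqref{remark:local-theory-item-ii} applied to the reflection of \(S_{32}\) at \(P_0^2\), the analogue \(u_6 \xi^{P_0^2} = A(\rho_6)\) holds with \(\rho_6 = \rho_6(\theta_2)\). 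Using \(\ell(\rho_0) = -v_2\) and~\eqref{Sol-NormalReflec0-RH}, one checks that \(A(\rho_0) = \tfrac{2h(\rho_0)}{\rho_0+1} = \tfrac{v_2^2}{\rho_0-1} = -v_2\eta_0 \eqdef L\), and by Proposition~\ref{Prop:localtheorystate5}\eqref{Prop:localtheorystate5:continuity-properties} we have \((\rho_5,\rho_6) \to (\rho_0,\rho_0)\) as \(\btheta \to \bm{0}\).

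Substituting \(\xi^{P_0^1} = A(\rho_5)/u_5\) and \(\xi^{P_0^2} = A(\rho_6)/u_6\) into~\eqref{eq:intersection-of-S25-S26} and using \(u_5 > 0 > u_6\) (so \(u_5 - u_6 = u_5 + |u_6|\)), I would rewrite the two coordinates as
\[ \xi^{P_I} = \frac{A(\rho_5) - A(\rho_6)}{u_5 + |u_6|}\,, \qquad \eta^{P_I} = -\frac{1}{v_2}\Big( \frac{|u_6|}{u_5 + |u_6|} A(\rho_5) + \frac{u_5}{u_5 + |u_6|} A(\rho_6) \Big)\,. \]
The expression for \(\eta^{P_I}\) is \(-\tfrac{1}{v_2}\) times a convex combination of \(A(\rho_5)\) and \(A(\rho_6)\), each tending to \(L\); hence \(\eta^{P_I} \to -L/v_2 = \eta_0\) with no further work. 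For \(\xi^{P_I}\), since \(u_5 + |u_6| \geq \max\{u_5, |u_6|\}\), we get \(|\xi^{P_I}| \leq \tfrac{|A(\rho_5) - L|}{u_5} + \tfrac{|A(\rho_6) - L|}{|u_6|}\), so it remains to show that each quotient tends to \(0\) as the corresponding incident angle tends to \(0\).

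The crux --- and the only place requiring a genuine estimate --- is the claim \(|A(\rho_5) - L| = o(u_5)\) as \(\theta_1 \to 0^+\). I would establish it by confronting two facts, both in the single variable \(\rho_5 = \rho_5(\theta_1)\): on one hand, since \(A(\cdot)\) is \(C^1\) near \(\rho_0\), the mean value theorem gives \(|A(\rho_5) - L| = |A(\rho_5) - A(\rho_0)| \leq C\,|\rho_5 - \rho_0|\); on the other hand, from \(u_5^2 = \ell(\rho_5)^2 - v_2^2 = (\ell(\rho_5) - |v_2|)(\ell(\rho_5) + |v_2|)\) together with \(\ell(\rho_0) = |v_2|\) and the fact that \(\ell\) is \(C^1\) and strictly increasing on \((1,\infty)\) (which also forces \(\rho_5 > \rho_0\) whenever \(u_5 > 0\)), one obtains \(u_5^2 \geq c\,(\rho_5 - \rho_0)\) for \(\theta_1\) small, i.e. \(\rho_5 - \rho_0 \leq C' u_5^2\). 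Combining, \(|A(\rho_5) - L| \leq CC'\,u_5^2\), whence \(|A(\rho_5) - L|/u_5 \leq CC'\,u_5 \to 0\); the identical argument with \((\rho_6, u_6)\) handles the other quotient, giving \(\xi^{P_I} \to 0\). Thus the main obstacle is not a delicate analytic estimate but rather carrying out the Rankine--Hugoniot elimination cleanly enough to expose the square-root degeneracy \(u_5 \sim \sqrt{\rho_5 - \rho_0}\) against the Lipschitz dependence \(A(\rho_5) - L = O(\rho_5 - \rho_0)\); once that structure is visible, the convergence \(P_I \to (0,\eta_0)\) is immediate and holds along any path in \(\cl{\Theta} \setminus \{\bm{0}\}\).
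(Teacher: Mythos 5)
Your proposal is correct, and it takes a genuinely different route from the paper. The paper first treats the unilateral case \(\theta_2=0\) by identifying the configuration with a Prandtl--Meyer reflection, writing \(\xi^{P_I}=\frac{a_{25}-\eta_0}{\tan\theta_{25}}\), and applying L'H\^opital's rule; the resulting derivative \(\frac{{\rm d}a_{25}}{{\rm d}\theta_{25}}\) is controlled by differentiating the implicit pseudo-Mach-number relation \(M_2^{\frac{\gamma-1}{\gamma+1}}(M_2^{\frac{2}{\gamma+1}}-M_5^{\frac{2}{\gamma+1}})=v_2\sec\theta_{25}\) from the Prandtl--Meyer analysis, after which the general case follows by the triangle inequality and \(\eta^{P_I}\) by the sandwich \(\min\{a_{25},a_{26}\}\le\eta^{P_I}\le\max\{a_{25},a_{26}\}\). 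You instead eliminate variables in the Rankine--Hugoniot system to get the exact identity \(u_j\xi^{P_0^{j-4}}=A(\rho_j)\) (I checked the algebra: \((\rho_5-1)u_5\xi^{P_0^1}=\rho_5u_5^2+v_2^2\) does follow from~\eqref{eq:state5RHcondition1} with \(\bm{\nu}_{25}\parallel(-u_5,v_2)\), and \(A(\rho_0)=-v_2\eta_0\) from \(\ell(\rho_0)=-v_2\)), and then the whole limit reduces to the square-root degeneracy \(u_5^2\gtrsim\rho_5-\rho_0\) (from \(u_5^2=(\ell(\rho_5)-|v_2|)(\ell(\rho_5)+|v_2|)\) and \(\ell'(\rho_0)>0\)) beating the Lipschitz bound \(|A(\rho_5)-A(\rho_0)|\lesssim\rho_5-\rho_0\). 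Your argument is more self-contained (no appeal to the Mach-number parametrization of~\cite{BCF-2019}), avoids L'H\^opital, and is quantitative: it gives the rate \(|\xi^{P_I}|=O(u_5+|u_6|)\), and the observation that \(\eta^{P_I}\) is \(-\tfrac{1}{v_2}\) times a convex combination of \(A(\rho_5)\) and \(A(\rho_6)\) disposes of the second coordinate with no estimate at all. The only point to make explicit is the degenerate boundary cases: when \(\theta_{j-4}=0\) the point \(P_0^{j-4}\) is at infinity and the identity \(u_j\xi^{P_0^{j-4}}=A(\rho_j)\) must be read as the convention \(u_j\xi^{P_0^{j-4}}:=-v_2 a_{2j}=-v_2\eta_0=A(\rho_0)\) (consistent with Definition~\ref{def:state5andstate6}), and the quotient \(|A(\rho_j)-L|/|u_j|\) should be interpreted as \(0\) there before applying the bound \(u_5+|u_6|\ge\max\{u_5,|u_6|\}\); with that reading your estimate covers every path in \(\cl{\Theta}\setminus\{\bm{0}\}\), exactly as the paper's case split by subsequences does.
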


\begin{proof}
By Proposition~\ref{Prop:localtheorystate5} and
Lemma~\ref{lem:monotonicityOfTheta5}\eqref{item1-lem:monotonicityOfTheta5},
limit~\eqref{appendix-eq:limit-values-intersection-point} is equivalent to the limit:
\begin{align*}
 \lim_{\substack{(\theta_{25},\theta_{26}) \to (\pi,0) \\
 ( \theta_{25}, \theta_{26})\in \cl{\Theta_{\rm refl}}\setminus \{(\pi,0)\}}}(\xi^{P_I}, \eta^{P_I}) = (0, \eta_0)\,,
\end{align*}
where $ \Theta_{\rm refl} \defeq \{ (\theta_{25}, \theta_{26}) : \btheta \in \Theta \}$.
The proof of the above limit consists of three steps.

\smallskip
\textbf{1.} We first consider the unilateral normal reflection case:
$\btheta \in \big( \{0\} \times (0,\theta^{\rm d}) \big) \cup \big( (0,\theta^{\rm d}) \times \{ 0\} \big)$.\\
Without loss of generality, we restrict our attention to the case: \(\btheta \in (0,\theta^{\rm d}) \times \{0\}\),
and denote \((v_\infty,\beta) \defeq (-v_2,\pi - \theta_{25})\), where \(\theta_{25}\) depends only on \((v_2,\gamma,\theta_1)\).
We consider the Prandtl-Meyer reflection configuration associated with
parameters \((v_\infty,\beta)\), as defined in~\cite{BCF-2019}, and reflect this configuration in the vertical axis.
By~\cite[Lemma~2.17]{BCF-2019}, the location of the reflected shock \(S_{25}\) is uniquely
determined by \((v_2,\gamma,\theta_{25})\), and thus coincides with the Prandtl-Meyer oblique shock~\(S_{\mathcal{O}}\).
Similarly, the (normal) reflected shock~\(S_{26} = S_0\) coincides with the Prandtl-Meyer normal shock~\(S_{\mathcal{N}}\),
so that the intersection point \(P_I\) of \(S_{25}\) and \(S_{26}\),
as given by~\eqref{eq:intersection-of-S25-S26}, coincides with the intersection point of \(S_{\mathcal{O}}\) and \(S_{\mathcal{N}}\), as given by~\cite[Eq.~(4.1.33)]{BCF-2019}.
That is,
\begin{align*}
    (\xi^{P_I}\,, \eta^{P_I})
    = \big( \frac{a_{25} - \eta_0}{\tan{\theta_{25}}}\,, \eta_0 \big)\,,
\end{align*}
where \(a_{25} \defeq -\xi^{P_0^1} \tan{\theta_{25}}\) and  \(\eta_0\) are the \(\eta\)--intercepts of \(S_{25}\) and the normal reflection \(S_{0}\), respectively, as given in~\S\ref{Sec:FormulateProblemsDefAdmisSolu}.
By Proposition~\ref{Prop:localtheorystate5}\eqref{Prop:localtheorystate5:continuity-properties} and Lemma~\ref{lem:monotonicityOfTheta5}\eqref{item2-lem:monotonicityOfTheta5},
\(a_{25} \to \eta_0\) as \(\theta_{25} \to \pi^-\). Then
we may apply L'H{\^o}pital's rule to give
\begin{align*}
    \lim_{\theta_{25} \to \pi^-} \xi^{P_I} = -\lim_{\theta_{25} \to \pi^-} \frac{{\rm d} a_{25}}{{\rm d} \theta_{25}}\,.
\end{align*}

From~\cite[Eq.~(2.4.14)]{BCF-2019},
\( a_{25} = v_2 - q_2 \sec{\theta_{25}}\),
for
\(q_2 \defeq {\rm dist}(S_{25}, O_2)\).
By taking the derivative with respect to
\(\theta_{25}\),
we have
\begin{align*}
    \frac{{\rm d} a_{25}}{{\rm d} \theta_{25}} = -\sec{\theta_{25}} \frac{{\rm d} q_2}{{\rm d} \theta_{25}}
    - q_2 \sec{\theta_{25}}\tan{\theta_{25}}\,.
\end{align*}
Using~\cite[Lemma~6.1.2]{ChenFeldman-RM2018} and~\eqref{Sol-NormalReflec0-RH},
   $\displaystyle{1 < \lim_{\theta_{25} \to \pi^-} q_2 = \eta_0 - v_2 < \infty}$
so that
\begin{align*}
    \lim_{\theta_{25} \to \pi^-} \frac{{\rm d} a_{25}}{{\rm d} \theta_{25}} = \lim_{\theta_{25} \to \pi^-} \frac{{\rm d} q_2}{{\rm d} \theta_{25}}\,.
\end{align*}

By~\cite[Eq.~(2.4.12)]{BCF-2019}, we have a relation between the pseudo-Mach numbers
\((M_2, M_5) \defeq (\tfrac{q_2}{c_2},  \tfrac{q_5}{ c_5})\)
for \(q_5 \defeq {\rm dist}(S_{25},O_5)\) and the given parameters
\((\gamma, v_2, \theta_{25})\),
expressed as
\begin{align*}
    M_2^{\frac{\gamma-1}{\gamma+1}} ( M_2^{\frac{2}{\gamma+1}} - M_{5}^{\frac{2}{\gamma+1}} ) = v_2 \sec{\theta_{25}}\,.
\end{align*}
Taking the derivative:
\begin{equation*}
\begin{aligned}
    0 &= \lim_{\theta_{25} \to \pi^-} \frac{{\rm d}}{{\rm d} \theta_{25}}
    \Big( M_2^{\frac{\gamma-1}{\gamma+1}} ( M_2^{\frac{2}{\gamma+1}} - M_{5}^{\frac{2}{\gamma+1}} ) \Big)\label{eq:dM_infty-dbeta-calc} \\
    &= \lim_{\theta_{25} \to \pi^-}  \frac{1}{\gamma+1} \Big( (\gamma-1) M_2^{-1} v_2 \sec{\theta_{25}}
    + 2 M_2^{\frac{\gamma-1}{\gamma+1}}\big( M_2^{\frac{1-\gamma}{1+\gamma}} - M_{5}^{\frac{1-\gamma}{1+ \gamma}} \frac{{\rm d} M_{5}}{{\rm d} M_2} \big) \Big)
    \frac{{\rm d} M_2}{{\rm d} \theta_{25}}\,.
    \end{aligned}
\end{equation*}
By~\cite[Eqs.~(2.4.9)--(2.4.10)]{BCF-2019}, we see that
 $\frac{{\rm d} M_{5}}{{\rm d} M_2} < 0$ for all $M_2 \in (0,\infty) \setminus \{1\}$.
Therefore, the bracketed term above
is uniformly positive for
\(\theta_{25}\in(\tfrac{\pi}{2},\pi)\)
so that
\begin{align*}
    \lim_{\theta_{25}\to \pi^-} \frac{{\rm d} q_2}{{\rm d} \theta_{25}}
    = \lim_{\theta_{25}\to \pi^-} \frac{{\rm d} M_2}{{\rm d} \theta_{25}} = 0\,.
\end{align*}

\textbf{2.}
We now consider the general case \(\btheta \in \cl{\Theta}\setminus\{\bm{0}\}\).
By direct calculation, we have
\begin{align*}
    \xi^{P_I} = \frac{a_{25} - a_{26}}{\tan{\theta_{26}} - \tan{\theta_{25}}}\,,
\end{align*}
where \(a_{2j} > 0\) represents the \(\eta\)--intercept of \(S_{2j}\) for \(j = 5, 6\).
Note that \(a_{25}\) depends only on \((v_2,\gamma,\theta_{25})\),
whilst \(a_{26}\) depends only on \((v_2,\gamma,\theta_{26})\).
In particular, from Step~\textbf{1},
\begin{equation} \label{appendix-eq:limits-of-intersection-points}
0 = \lim_{\theta_{25} \to \pi^-} \frac{a_{25} - \eta_0}{\tan{\theta_{25}}} =
\lim_{\theta_{26} \to 0^+} \frac{a_{26} - \eta_0}{\tan{\theta_{26}}}\,.
\end{equation}

Let \(\left\{\btheta^{(k)}\right\}_{k\in\mathbb{N}} \subseteq \cl{\Theta}\setminus\{\bm{0}\}\) be any sequence
of parameters with \(\btheta^{(k)} \to \bm{0}\) as \(k\to\infty\).
By moving to a subsequence, we may assume any of the following cases:
(i) \(\theta_1^{(k)} = 0\) for all $k\in\mathbb{N}$; (ii) \(\theta_2^{(k)} = 0\) for all $k\in\mathbb{N}$;
(iii) both \(\theta_1^{(k)}>0\) and  \(\theta_2^{(k)} > 0\) for all $k\in\mathbb{N}$.
Denote by \(\xi^{P_I}_{(k)}\) the $\xi$--coordinates of the intersection points $P_I$ associated
with parameters \(\btheta^{(k)}\).
In cases~(i) and~(ii), it is clear from the previous step that \(\xi^{P_I}_{(k)} \to 0\) as \(k \to \infty\),
so we focus on case (iii).
Using the triangle inequality, the fact that $\tan\theta_{25}$ and $\tan\theta_{26}$ have opposite signs,
and~\eqref{appendix-eq:limits-of-intersection-points}, we obtain
\begin{align*}
    \abs{\xi^{P_I}_{(k)}}
    =
    \tabs{ \frac{a_{25}^{(k)} - a_{26}^{(k)}}{\tan{\theta_{26}^{(k)} - \tan{\theta_{25}^{(k)}}}}}
    \leq
    \tabs{\frac{a_{25}^{(k)} - \eta_0}{\tan{\theta_{25}^{(k)}}}}
    +
    \tabs{\frac{a_{26}^{(k)}-\eta_0}{\tan{\theta_{26}^{(k)}}}}
    \to 0 \qquad\,\, \text{as $k \to \infty$}\,.
\end{align*}

\smallskip
\textbf{3.}
Finally, we show that \(\eta^{P_I} \to \eta_0\) as \(\btheta \to \bm{0}\)
with \(\btheta \in \cl{\Theta} \setminus \{\bm{0}\}\).
Indeed, it is clear from~\eqref{appendix-eq:limits-of-intersection-points}
that \(a_{25} \to \eta_0\) as \(\theta_1 \to 0^+\), and \(a_{26} \to \eta_0\) as \(\theta_2 \to 0^+\).
Furthermore, it follows from Step~\textbf{1} of the proof of Lemma~\ref{UniformBound-Lem} that
$\min\{a_{25},a_{26}\} \leq \eta^{P_I} \leq \max\{a_{25},a_{26}\}$.
\end{proof}

\section{Some Known Results Needed for the Proofs}
\label{Sec:Appendix-B}
\renewcommand\thesubsection{B.\arabic{subsection}}
\setcounter{subsection}{0}
\setcounter{equation}{0}
\setcounter{theorem}{0}
\renewcommand\theequation{B.\arabic{equation}}
\renewcommand{\theproposition}{B.\arabic{proposition}}
\renewcommand{\thedefinition}{B.\arabic{definition}}
\renewcommand{\thelemma}{B.\arabic{lemma}}
\renewcommand{\thetheorem}{B.\arabic{theorem}}

In this appendix, we present some known results that are used in Sections 3--5
above.

\subsection{Well-posedness of the iteration boundary value problem}
\label{Sec:Appendix-B1}
For a constant $h>0$ and a function $f_{\rm bd}:[0,h]\rightarrow\mathbb{R}_+$,
fix a bounded domain $\Omega\subseteq\mathbb{R}^2$ as
\begin{equation}\label{ChenFeldman-vonN-Eq:4-5-1}
\Omega \defeq \big\{\bm{x}\in\mathbb{R}^2 \,:\, 0<x_1<h, \, 0<x_2<f_{\rm bd}(x_1)\big\}\,,
\end{equation}
where $f_{\rm bd}$ satisfies that, for constants  $t_0\geq0$, $t_h, \, t_1, \,  t_2, \, t_3,\, M\in(0,\infty)$,
and $\alpha\in(0,1)$,
\begin{equation}\label{ChenFeldman-vonN-Eq:4-5-2n15} \begin{aligned}
& f_{\rm bd}\in C^1([0,h])\,, \quad f_{\rm bd}(0)=t_0\,, \quad f_{\rm bd}(h)=t_h\,,\\
& f_{\rm bd}(x_1)\geq\min\{t_0+t_1x_1,\, t_2,\, t_h-t_3(x_1-h)\} \qquad\,\, \text{for all $x_1\in(0,h)$}\,,\\
& \|f_{\rm bd}\|_{2,\alpha,(0,h)}^{(-1-\alpha),\{0,h\}}\leq M\,.
\end{aligned} \end{equation}
The boundary, $\partial\Omega\defeq \cup_{k=0}^{3}(\Gamma_k\cup\{P_{k+1}\})$, with vertices and segments:
\begin{equation}\label{ChenFeldman-vonN-Eq:4-5-5} \begin{aligned}
&  P_1=(h,0)\,, \quad P_2=(h,f_{\rm bd}(h))\,, \quad P_3=(0,f_{\rm bd}(0))\,, \quad P_4=(0,0)\,,\\
& \overline{\Gamma_0}=\partial\Omega\cap\{x_1=0\}\,, \quad \overline{\Gamma_1}=\partial\Omega\cap\{x_2=f_{\rm bd}(x_1)\}\,,\\
& \overline{\Gamma_2}=\partial\Omega\cap\{x_1=h\} \,, \quad
\overline{\Gamma_3}=\partial\Omega\cap\{x_2=0\} \,,
\end{aligned} \end{equation}
and $\Gamma_{k}$, \(k=0,1,2,3,\) are the relative interiors of the segments defined above.

Let $g_{\rm so}\in C^{\infty}(\mathbb{R}^2\setminus\{\frac{h}{3}<x_1<\frac{2h}{3}\})$ be a piecewise smooth function
defined in $\mathbb{R}^2$ such that
\begin{equation}\label{BCF-Eq:C-5-3plus} \begin{aligned}
& \|g_{\rm so}\|_{C^3(\overline{\Omega\setminus\{\frac{h}{3}<x_1<\frac{2h}{3}\}})} \leq C_{\rm so}\,,\\
& g_{\rm so}(\cdot,x_2) \;\;\;\text{is linear on $x_1$ in}\; \{x_1\leq\frac{h}{4}\}\cup\{x_1\geq\frac{3h}{4}\}\,,   \\
& \partial_{x_2}g_{\rm so}=0 \quad \text{on $\Gamma_3$}\,.
\end{aligned} \end{equation}

We consider the following nonlinear problem:
\begin{equation}\label{vonN-Sec4-5-2:NonlinearProblem}
\left\{
\begin{aligned}
\, & \! \sum_{i,j=1}^2\tilde{A}_{ij}(Du,\bm{x})D_{ij}u+\sum_{i=1}^2\tilde{A}_i(Du,\bm{x})D_iu=0 \quad && \text{in}\; \Omega \,,\\
& \tilde{B}(Du,u,\bm{x})=0 \quad &&\text{on} \; \Gamma_1 \, ,\\
& u=g_{\rm so}(\bm{x}) \quad &&\text{on}\; \Gamma_2 \cup \Gamma_0 \,,\\
& \bm{b}^{(\rm w)}(\bm{x}) \cdot Du=0 \quad && \text{on}\; \Gamma_3 \,.
\end{aligned}
\right.
\end{equation}
For constants $\lambda\in(0,1)$, $M<\infty$, $\alpha\in(0,1)$, $\beta\in[\frac12,1)$,
$\delta\in(0,1)$, $\sigma\in(0,1)$, and $\varepsilon\in(0,\frac{h}{10})$,
assume that the following conditions are satisfied:
\begin{enumerate}[{\rm (i)}]
\item \label{vonN-Sec4-5-2:Assump-i}
For any $\bm{x}\in\Omega$ and $\bm{p}, \bm{\mu}\in\mathbb{R}^2$,
$\lambda \, {\rm dist}(\bm{x},\Gamma_{2}\cup\Gamma_0)|\bm{\mu}|^2\leq\sum_{i,j=1}^{2}\tilde{A}_{ij}(\bm{p},\bm{x})\mu_i\mu_j\leq\lambda^{-1}|\bm{\mu}|^2$\,.
Moreover, for any $\bm{x}\in\Omega\setminus\{\frac{\varepsilon}{2}
\leq x_1 \leq h-\frac{\varepsilon}{2}\}$,
\begin{equation*}\label{ChenFeldman-vonN-Eq:4-5-90}
\lambda |\bm{\mu}|^2
\leq\sum_{i,j=1}^{2}\frac{\tilde{A}_{ij}(\bm{p},\bm{x})\mu_i\mu_j}{\left(\min\{x_1,h-x_1,\delta\}\right)^{2-\frac{i+j}{2}}}
\leq\lambda^{-1}|\bm{\mu}|^2\,.
\end{equation*}

\item \label{vonN-Sec4-5-2:Assump-ii}
$(\tilde{A}_{ij},\tilde{A})(\bm{p},\bm{x})$ are independent of $\bm{p}\in\mathbb{R}^2$ on $\Omega\cap\{\varepsilon \leq x_1 \leq h-\varepsilon\}$ with
\begin{equation*}\label{ChenFeldman-vonN-Eq:4-5-91}
\|\tilde{A}_{ij}\|_{L^{\infty}(\Omega\cap\{\varepsilon\leq x_1\leq h-\varepsilon\})}\leq\lambda^{-1}\,,
\qquad \|(\tilde{A}_{ij},\tilde{A})\|_{1,\alpha,\Omega\cap\{\varepsilon\leq x_1\leq h-\varepsilon\}}\leq M \,.
\end{equation*}

\item \label{vonN-Sec4-5-2:Assump-iii}
For any $\bm{p}\in\mathbb{R}^2$,
\begin{equation*}\label{ChenFeldman-vonN-Eq:4-5-92}
\|(\tilde{A}_{ij},\tilde{A}_i)(\bm{p},\cdot)\|_{C^{\beta}(\overline{\Omega\setminus\{2\varepsilon\leq x_1 \leq h- 2\varepsilon \}})}+\|D_{\bm{p}}(\tilde{A}_{ij},\tilde{A}_i)(\bm{p},\cdot)\|_{L^{\infty}(\Omega\setminus\{2\varepsilon\leq x_1 \leq h- 2\varepsilon \})} \leq M \,.
\end{equation*}

\item \label{vonN-Sec4-5-2:Assump-iv}
$(\tilde{A}_{ij},\tilde{A}_i)\in C^{1,\alpha}(\mathbb{R}^2\times(\overline{\Omega}\setminus(\overline{\Gamma}_2\cup\overline{\Gamma_0})))$ and, for any $s\in(0,\frac{h}{4})$,
\begin{equation*}\label{ChenFeldman-vonN-Eq:4-5-93}
\|(\tilde{A}_{ij},\tilde{A}_i)\|_{1,\alpha,\mathbb{R}^2\times(\overline{\Omega}\cap\{s \leq x_1 \leq h-s\})} \leq M\Big(\frac{h}{s}\Big)^{M}.
\end{equation*}

\item \label{vonN-Sec4-5-2:Assump-v}
For each $(\bm{p},\bm{x})\in\mathbb{R}^2\times(\overline{\Omega}\setminus\{\varepsilon < x_1 < h-\varepsilon\})$
and $i,j=1,2$, define
\begin{equation*}\label{BCF-PMRef-AppendixC5-vi}
\hat{\bm{p}}\defeq\bm{p} - Dg_{\rm so}(\bm{x})\,,
\qquad (\tilde{a}_{ij},\tilde{a}_{i})(\hat{\bm{p}},\bm{x})\defeq (\tilde{A}_{ij},\tilde{A}_{i})(\hat{\bm{p}}+Dg_{\rm so}(\bm{x}),\bm{x}) \,.
\end{equation*}
For any $(\bm{p},(x_1,0)) \in \mathbb{R}^2 \times (\Gamma_3 \setminus \{\varepsilon \leq x_1 \leq h-\varepsilon \})$,
\begin{equation*}\label{ChenFeldman-vonN-Eq:4-5-94}
(\tilde{a}_{11},\tilde{a}_{22},\tilde{a}_1)((\hat{p}_1,-\hat{p}_2),(x_1,0))=(\tilde{a}_{11},\tilde{a}_{22},\tilde{a}_1)((\hat{p}_1,\hat{p}_2),(x_1,0)) \, .
\end{equation*}
For any $(\bm{p},\bm{x})\in\mathbb{R}^2\times(\Omega\setminus\{\varepsilon \leq x_1 \leq h-\varepsilon\})$
and $i=1,2$,
\begin{equation*}\label{ChenFeldman-vonN-Eq:4-5-95n96}
 |\tilde{a}_{ii}(\hat{\bm{p}},\bm{x})-\tilde{a}_{ii}(Dg_{\rm so}(\bm{x}_{\rm e}),\bm{x}_{\rm e})|\leq M|\bm{x}-\bm{x}_{\rm e}|^{\beta}\,,\qquad
 (\tilde{A}_{12},\tilde{A}_{21})(\bm{p},\bm{x}_{\rm e})=0\,,
\end{equation*}
either for point $\bm{x}_{\rm e}\defeq (0,x_2)$ or point $\bm{x}_{\rm e}\defeq (h,x_2)$.

\smallskip
\item \label{vonN-Sec4-5-2:Assump-vi}
For any $\bm{p}\in\mathbb{R}^2$ and
$\bm{x}\in\Omega\setminus\{ \frac{\varepsilon}{2} \leq x_1 \leq h-\frac{\varepsilon}{2}\}$,
$\, \tilde{A}_1(\bm{p},\bm{x})\leq-\lambda$\,.

\item \label{vonN-Sec4-5-2:Assump-vii}
The nonlinear boundary condition $\eqref{vonN-Sec4-5-2:NonlinearProblem}_2$ satisfies the following conditions:
{\begin{enumerate}[(\ref{vonN-Sec4-5-2:Assump-vii}--a)]
\item \label{vonN-Sec4-5-2:Assump-viia}
For any $(\bm{p},z,\bm{x})\in\mathbb{R}^2\times\mathbb{R}\times\Gamma_1$,
$D_{\bm{p}}\tilde{B}(\bm{p},z,\bm{x})\cdot\bm{\nu}^{(1)}(\bm{x})\geq\lambda$,
where $\bm{\nu}^{(1)}$ is the unit normal vector on $\Gamma_1$ pointing to $\Omega$.

\item \label{vonN-Sec4-5-2:Assump-viib}
For any $(\bm{p},z)\in\mathbb{R}^2\times\mathbb{R}$ and $k=1,2,3$,
\begin{align*}\label{ChenFeldman-vonN-Eq:4-5-99to102}
&\|\tilde{B}(Dg_{\rm so},g_{\rm so},\cdot)\|_{C^3(\overline{\Omega}\setminus\{\frac{h}{3}\leq x_1\leq \frac{2h}{3}\})}
+ \|D^{k}_{(\bm{p},z)}\tilde{B}(\bm{p},z,\cdot)\|_{C^3(\overline{\Omega})}\leq M\,,\\
&\|D_{\bm{p}}\tilde{B}(\bm{p},z,\cdot)\|_{C^0(\overline{\Omega})}\leq \lambda^{-1}\,,\\
&\begin{aligned}
&D_{z}\tilde{B}(\bm{p},z,\bm{x})\leq-\lambda
\quad && \text{for all}\; \bm{x}\in\Gamma_1\,,\\
&D_{p_1}\tilde{B}(\bm{p},z,\bm{x})\leq-\lambda
\quad && \text{for all}\; \bm{x} \in \Gamma_1\setminus\{\varepsilon\leq x_1\leq h-\varepsilon\}\,.
\end{aligned}
\end{align*}

\item \label{vonN-Sec4-5-2:Assump-viic}
There exist $v\in C^3(\overline{\Gamma_1})$ and a nonhomogeneous linear operator:
\begin{equation*}
L(\bm{p},z,\bm{x})=\bm{b}^{(1)}(\bm{x})\cdot \bm{p}+b^{(1)}_0(\bm{x})z+g_1(\bm{x})
\end{equation*}
defined for $\bm{x}\in\Gamma_1$ and $(\bm{p},z)\in\mathbb{R}^2\times\mathbb{R}$, satisfying
\begin{equation*}\label{ChenFeldman-vonN-Eq:4-5-103}
\|v\|_{C^3(\Omega)}+\|(\bm{b}^{(1)},b^{(1)}_0,g_1)\|_{C^3(\overline{\Gamma_1})} \leq M \,,
\end{equation*}
such that, for any $(\bm{p},z,\bm{x})\in\mathbb{R}^2\times\mathbb{R}\times\Gamma_1$,
\begin{equation*}\label{ChenFeldman-vonN-Eq:4-5-104}
\begin{aligned}
&\big|\tilde{B}(\bm{p},z,\bm{x})-L(\bm{p},z,\bm{x})\big|
  \leq\sigma \big( |\bm{p}-Dv(\bm{x})|+|z-v(\bm{x})| \big)\;,\\
&\big|D_{\bm{p}}\tilde{B}(\bm{p},z,\bm{x})-\bm{b}^{(1)}(\bm{x})\big|
  +\big|D_z\tilde{B}(\bm{p},z,\bm{x})-b_0^{(1)}(\bm{x})\big|\leq\sigma \,.
\end{aligned} \end{equation*}

\item \label{vonN-Sec4-5-2:Assump-viid}
Obliqueness requirements: For the interior unit normal vector $\bm{\nu}^{\rm (w)}$ on $\Gamma_3$ to $\Omega$,
\begin{equation*}\label{ChenFeldman-vonN-Eq:4-5-105to107}
\begin{aligned}
& \bm{b}^{\rm (w)}\cdot\bm{\nu}^{\rm (w)}\geq\lambda\,, \quad b^{\rm (w)}_1\leq0 \qquad \text{on $\Gamma_{3}$}\,,\\
&\bm{b}^{\rm (w)}=(0,1) \;\;\text{on $\Gamma_3\setminus\{\varepsilon \leq x_1 \leq h-\varepsilon\}$}\,,
\qquad \|\bm{b}^{\rm (w)}\|_{C^3(\overline{\Gamma_3})}\le M\,.
\end{aligned} \end{equation*}

\item \label{vonN-Sec4-5-2:Assump-viie}
$\,\,\tilde{B}(\bm{0},0,\cdot)\equiv0\quad$ on $\Gamma_1\setminus\{\varepsilon \leq x_1 \leq h-\varepsilon\}\,$.
\end{enumerate}}
\end{enumerate}

\begin{proposition}[{\cite[Proposition 4.7.2]{ChenFeldman-RM2018}}]
\label{Prop:AppendixB01}
Fix constants $h,t_1,t_2,t_3\in(0,\infty)$ and $t_0,t_h\in[0,\infty)$.
For constants $\lambda>0,$ $M<\infty,$ $\alpha\in(0,1),$ and $C_{\rm so}>0,$
domain $\Omega\subseteq\mathbb{R}^2$ satisfies
conditions~\eqref{ChenFeldman-vonN-Eq:4-5-1}--\eqref{BCF-Eq:C-5-3plus}.
For $\beta\in[\frac12,1),$ $\delta\in(0,1),$ $\sigma\in(0,1),$ and $\varepsilon\in(0,\frac{h}{10}),$
the nonlinear boundary value problem~\eqref{vonN-Sec4-5-2:NonlinearProblem} satisfies
conditions~\eqref{vonN-Sec4-5-2:Assump-i}--\eqref{vonN-Sec4-5-2:Assump-vii}.
Then there exist
$\alpha_1\in(0,\frac12)$ depending only on $\lambda,$
and constants $\delta_0, \sigma\in(0,1)$ depending only on $(\lambda,M,C_{\rm so},\alpha,\beta,\varepsilon)$ such that,
under further requirement $\delta\in[0,\delta_0),$
the boundary value problem~\eqref{vonN-Sec4-5-2:NonlinearProblem} has a unique solution
$u\in C(\overline{\Omega})\cap C^1(\overline{\Omega}\setminus(\overline{\Gamma_2}\cup\overline{\Gamma_0}))
\cap C^2(\Omega)$.
Moreover, $u$ satisfies
\begin{equation}\label{ChenFeldman-vonN-Eq:4-5-119n120p1}
\|u\|_{0,\overline{\Omega}} \leq C\,, \qquad\,
|u(\bm{x})-g_{\rm so}(\bm{x})|\leq C\min\{x_1,h-x_1\} \quad \text{in $\Omega$}\,,
\end{equation}
where $C>0$ depends only on $(\lambda,M,C_{\rm so},\varepsilon)$.
Furthermore, $u \in C(\overline{\Omega}) \cap C^{2,\alpha_1} (\overline{\Omega} \setminus \overline{\Gamma_2}
\cup \overline{\Gamma_0})$ satisfies
\begin{equation}\label{ChenFeldman-vonN-Eq:4-5-138p1}
\|u\|_{2,\alpha_1,\overline{\Omega\cap\{s<x_1<h-s\}}}\leq C_s
\end{equation}
for each $s\in(0,\frac{h}{10}),$ where $C_s>0$ depends only
on $(\lambda,M,C_{\rm so},\alpha,\beta,\varepsilon,s)$.
\end{proposition}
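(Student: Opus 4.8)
The plan is to establish Proposition~\ref{Prop:AppendixB01} by combining the Leray--Schauder degree theorem with a priori estimates for the associated \emph{linear} mixed boundary value problem, following the strategy used for the analogous result in~\cite{ChenFeldman-RM2018}. First I would set up the fixed-point framework: given a function $w$ in a suitable weighted H\"older space over $\Omega$, I freeze the coefficients $(\tilde A_{ij},\tilde A_i)(Dw,\cdot)$ of the equation and replace the nonlinear boundary operator $\tilde B$ on $\Gamma_1$ by its linearization around $w$, so that each iteration amounts to solving a linear oblique-derivative problem with Dirichlet data $g_{\rm so}$ on $\Gamma_2\cup\Gamma_0$, a linear Robin-type condition on $\Gamma_1$, and the oblique condition $\bm{b}^{(\rm w)}\cdot Du=0$ on $\Gamma_3$, uniformly elliptic in the interior but degenerate near $\Gamma_2\cup\Gamma_0$. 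Solvability and regularity of this linear problem, with constants independent of $w$, is the heart of the matter; the nonlinear statement then follows by a degree argument.

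For the linear problem I would work in three regions. \emph{(i) Interior, away from the degeneracy and corners:} conditions~\eqref{vonN-Sec4-5-2:Assump-i}--\eqref{vonN-Sec4-5-2:Assump-iv} give uniform ellipticity and bounded coefficients, so standard Schauder and oblique-derivative (Lieberman-type) estimates apply on sets $\{s<x_1<h-s\}$, producing the bound~\eqref{ChenFeldman-vonN-Eq:4-5-138p1} there. \emph{(ii) Near $\Gamma_2\cup\Gamma_0$:} the equation has the model degenerate structure in which the $x_1 u_{x_1x_1}$-term is balanced by $\tilde A_1u_{x_1}$ with $\tilde A_1\le-\lambda$ (condition~\eqref{vonN-Sec4-5-2:Assump-vi}); here I would construct barrier functions of the form $C\min\{x_1,h-x_1\}\pm(\text{controlled oscillation})$ to obtain $|u-g_{\rm so}|\le C\min\{x_1,h-x_1\}$, and then apply the anisotropic (parabolic) rescaling $\bm{x}\mapsto z_0+\tfrac14(x_0S,\sqrt{x_0}T)$ exactly as in Step~\textbf{4} of the proof of Proposition~\ref{Prop:Esti4theta1a}, converting local degenerate interior/boundary estimates into uniform bounds in a weighted norm of parabolic type. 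The reflection symmetry built into condition~\eqref{vonN-Sec4-5-2:Assump-v} is what lets me reflect evenly across $\Gamma_3$ so that the corner points $P_1,P_4$ behave like interior points of a degenerate equation. \emph{(iii) Near the corners $P_2,P_3$,} where the Dirichlet segment meets the oblique boundary $\Gamma_1$: the obliqueness requirement~(\ref{vonN-Sec4-5-2:Assump-vii}--a) together with the geometry of $\Omega$ encoded in~\eqref{ChenFeldman-vonN-Eq:4-5-2n15} allows the use of corner estimates for mixed Dirichlet/oblique problems, giving a weighted $C^{2,\alpha_1}$-bound up to $P_2,P_3$ with $\alpha_1\in(0,\tfrac12)$ determined by the opening angle; patching (i)--(iii) yields~\eqref{ChenFeldman-vonN-Eq:4-5-138p1} and the second estimate in~\eqref{ChenFeldman-vonN-Eq:4-5-119n120p1}.

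Uniqueness and the $C^0$-bound come from a maximum principle for the (homogeneous) linearized problem: the sign conditions $D_z\tilde B\le-\lambda$ on $\Gamma_1$, $\tilde A_1\le-\lambda$ near $\Gamma_2\cup\Gamma_0$, the obliqueness of the $\Gamma_1$- and $\Gamma_3$-conditions, and the absence of a zeroth-order term in the equation together rule out nontrivial solutions via the strong maximum principle and Hopf's lemma at a putative extremum. For the nonlinear problem I would then verify that the fixed-point map is compact on the chosen weighted space (using the estimates above plus compact embeddings between weighted H\"older spaces of different exponents, as in~\cite[Lemma~4.6.3]{ChenFeldman-RM2018}), obtain a uniform a priori bound on all fixed points of a homotopy deforming $\tilde B$ to its linear approximation $L$ from~(\ref{vonN-Sec4-5-2:Assump-vii}--c) --- here the smallness of $\sigma$ is used to absorb the nonlinear defect --- and conclude existence by homotopy invariance of the Leray--Schauder degree, with the nondegeneracy requirement $\delta\in[0,\delta_0)$ entering to keep the a priori constants finite. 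The final estimates~\eqref{ChenFeldman-vonN-Eq:4-5-119n120p1}--\eqref{ChenFeldman-vonN-Eq:4-5-138p1} then follow by re-running the linear estimates with coefficients frozen at the solution itself.

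The main obstacle I expect is the interaction between the degenerate ellipticity on $\Gamma_2\cup\Gamma_0$ and the corners $P_2,P_3$ where these arcs meet the oblique boundary $\Gamma_1$: one must produce uniform estimates simultaneously compatible with the parabolic scaling near the degenerate arcs and with the corner scaling near $P_2,P_3$, which forces a careful matching of the two regimes and uses the hypotheses on $f_{\rm bd}$ in~\eqref{ChenFeldman-vonN-Eq:4-5-2n15} and the structural conditions~\eqref{vonN-Sec4-5-2:Assump-v} and~(\ref{vonN-Sec4-5-2:Assump-vii}--b)--(\ref{vonN-Sec4-5-2:Assump-vii}--e) in full strength. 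A secondary difficulty is that near $\Gamma_2\cup\Gamma_0$ the coefficients are only $C^\beta$ (condition~\eqref{vonN-Sec4-5-2:Assump-iii}) while the ellipticity itself degenerates, so the regularization/approximation scheme for the linear degenerate problem must be arranged so that the constants in the weighted estimates remain uniform as the regularization parameter is sent to zero.
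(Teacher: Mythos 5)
This statement is not proved in the paper at all: it is quoted verbatim as a known result, Proposition~4.7.2 of Chen--Feldman's monograph \cite{ChenFeldman-RM2018}, and appears in Appendix~B precisely so that it can be invoked in Lemma~\ref{Lem:Wellpose4iterBVP} without reproving it. So there is no in-paper proof to measure your argument against; the relevant comparison is with the proof in the cited monograph, and at the level of detail you give, your outline follows the same route: freezing coefficients and treating $\tilde B$ as a small ($\sigma$-small) perturbation of the linear operator $L$ from condition~(\ref{vonN-Sec4-5-2:Assump-vii}--c), barrier/comparison functions exploiting $\tilde A_1\le-\lambda$ near $\Gamma_0\cup\Gamma_2$ to get the growth bound $|u-g_{\rm so}|\le C\min\{x_1,h-x_1\}$, anisotropic rescaling for the degenerate region together with even reflection across $\Gamma_3$ via condition~\eqref{vonN-Sec4-5-2:Assump-v}, Lieberman-type oblique-derivative Schauder estimates in the uniformly elliptic region, a comparison/maximum principle for uniqueness using $D_z\tilde B\le-\lambda$ and obliqueness, and a fixed-point or degree argument closed by the uniform a priori bounds.

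Two small corrections to your emphasis. First, the weighted $C^{2,\alpha_1}$ control near the ``corners'' $P_2,P_3$ that you place at the centre of the difficulty is not actually demanded by the statement: the estimate~\eqref{ChenFeldman-vonN-Eq:4-5-138p1} is taken only over $\Omega\cap\{s<x_1<h-s\}$, which excludes neighbourhoods of $\overline{\Gamma_0}\cup\overline{\Gamma_2}$ and hence all four corner points; near $P_2,P_3$ one only needs continuity up to the boundary and the linear growth bound in~\eqref{ChenFeldman-vonN-Eq:4-5-119n120p1}, both of which come from the barriers rather than from corner Schauder theory. Second, the exponent $\alpha_1$ in the statement depends only on $\lambda$ (through the ellipticity ratio and the obliqueness constants), not on a corner opening angle as you suggest. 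Neither point invalidates your plan, but both simplify it; the genuinely delicate ingredients are the ones you correctly identify at the end, namely the $C^\beta$-only coefficients in the degenerate zone and the uniformity of constants in the regularization limit, and for those the execution in \cite[Chapter~4]{ChenFeldman-RM2018} is the reference to follow.
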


\begin{proposition}[{\cite[Proposition 4.8.7]{ChenFeldman-RM2018}}]
\label{Prop:AppendixB02}
Fix constants $h,t_1,t_2,t_3\in(0,\infty),$ $t_0=0,$ and $t_h\geq0$.
For constants $\lambda>0,$ $M<\infty,$ $\alpha\in(0,1),$ and $C_{\rm so}>0,$
domain $\Omega\subseteq\mathbb{R}^2$ satisfies
conditions~\eqref{ChenFeldman-vonN-Eq:4-5-1}--\eqref{BCF-Eq:C-5-3plus} with changes{\rm :}
$\, P_3=P_4=(0,0)$ and $\overline{\Gamma_0}=\{(0,0)\}$.
For $\beta\in[\frac12,1),$ $\delta\in(0,1),$ $\sigma\in(0,1),$ and $\varepsilon\in(0,\frac{h}{10}),$
the nonlinear boundary value problem~\eqref{vonN-Sec4-5-2:NonlinearProblem} satisfies
conditions~\eqref{vonN-Sec4-5-2:Assump-ii}{\rm,} \eqref{vonN-Sec4-5-2:Assump-iv}{\rm,}
and \eqref{vonN-Sec4-5-2:Assump-vii} above,
and {\rm (\ref{vonN-Sec4-5-2:Assump-i*}*)}{\rm,} {\rm (\ref{vonN-Sec4-5-2:Assump-iii*}*)}{\rm,}
{\rm (\ref{vonN-Sec4-5-2:Assump-v*}*)}{\rm,} and {\rm (\ref{vonN-Sec4-5-2:Assump-vi*}*)} below{\rm:}
\begin{enumerate}[{\rm (i*)}]
\item \label{vonN-Sec4-5-2:Assump-i*}
For any $\bm{x}\in\Omega$ and $\bm{p}, \bm{\kappa}=(\kappa_1\kappa_2)\in\mathbb{R}^2,$
\begin{equation*}\begin{aligned}
& \min\big\{\lambda\,{\rm dist}(\bm{x},\overline{\Gamma_0})+\delta, \lambda\,{\rm dist}(x,\Gamma_2)\big\}|\bm{\kappa}|^2
\leq \sum_{i,j=1}^2\tilde{A}_{ij}(\bm{p},\bm{x})\kappa_i\kappa_j\leq\lambda^{-1}|\bm{\kappa}|^2\,,\\
& \big\|((\tilde{A}_{ij},\tilde{A}_{i})(Dg_{\rm so},\cdot),
  D^{m}_{\bm{p}}(\tilde{A}_{ij},\tilde{A}_{i})(\bm{p},\cdot))\big\|^{(-\alpha),\{P_4\}}_{1,\alpha,\Omega\cap\{x_1<2\varepsilon\}}
  \leq M \qquad\text{for $m=1,2$}\,.
\end{aligned}\end{equation*}
Moreover, for any $\bm{x}\in\Omega\cap\{h-\frac{\varepsilon}{2}<x_1<h\},$
\begin{equation*}\label{ChenFeldman-vonN-Eq:4-5-90p2}
\lambda |\bm{\mu}|^2
\leq\sum_{i,j=1}^{2}\frac{\tilde{A}_{ij}(\bm{p},\bm{x})\mu_i\mu_j}{\left(\min\{h-x_1,\delta\}\right)^{2-\frac{i+j}{2}}}
\leq\lambda^{-1}|\bm{\mu}|^2\,.
\end{equation*}

\addtocounter{enumi}{+1}
\item \label{vonN-Sec4-5-2:Assump-iii*}
For any $\bm{p}\in\mathbb{R}^2,$
\begin{equation*}\label{ChenFeldman-vonN-Eq:4-5-92p2}
\|(\tilde{A}_{ij},\tilde{A}_i)(\bm{p},\cdot)\|_{C^{\beta}(\overline{\Omega\cap\{ h - 2\varepsilon < x_1 < h \}})}
+ \|D_{\bm{p}}(\tilde{A}_{ij},\tilde{A}_i)(\bm{p},\cdot)\|_{L^{\infty}(\Omega\cap\{ h- 2\varepsilon<x_1<h \})} \leq M\,.
\end{equation*}

\addtocounter{enumi}{+1}
\item \label{vonN-Sec4-5-2:Assump-v*}
For each $(\bm{p},\bm{x})\in\mathbb{R}^2\times(\overline{\Omega}\cap\{ h-\varepsilon<x_1<h\})$ and $i,j=1,2,$
define
\begin{equation*}\label{BCF-PMRef-AppendixC5-vip2}
\hat{\bm{p}}\defeq\bm{p} - Dg_{\rm so}(\bm{x})\,,
\qquad (\tilde{a}_{ij},\tilde{a}_{i})(\hat{\bm{p}},\bm{x})\defeq (\tilde{A}_{ij},\tilde{A}_{i})(\hat{\bm{p}}+Dg_{\rm so}(\bm{x}),\bm{x})\,.
\end{equation*}
For any $ (\bm{p},(x_1,0)) \in \mathbb{R}^2 \times (\Gamma_3\cap\{ h -\varepsilon <x_1<h\}) ,$
\begin{equation*}\label{ChenFeldman-vonN-Eq:4-5-94p2}
(\tilde{a}_{11},\tilde{a}_{22},\tilde{a}_1)((\hat{p}_1,-\hat{p}_2),(x_1,0))=(\tilde{a}_{11},\tilde{a}_{22},\tilde{a}_1)((\hat{p}_1,\hat{p}_2),(x_1,0))\,.
\end{equation*}
For any $(\bm{p},\bm{x})\in\mathbb{R}^2\times(\Omega\cap\{h-\varepsilon<x_1<h\})$ and $i=1,2,$
\begin{equation*}\label{ChenFeldman-vonN-Eq:4-5-95n96p2}
\quad|\tilde{a}_{ii}(\hat{\bm{p}},\bm{x})-\tilde{a}_{ii}(Dg_{\rm so}(\bm{x}_{\rm e}),\bm{x}_{\rm e})|\leq M|\bm{x}-\bm{x}_{\rm e}|^{\beta},
\quad
(\tilde{A}_{12},\tilde{A}_{21})(\bm{p},\bm{x}_{\rm e})=0
\qquad\mbox{for $\bm{x}_{\rm e}\defeq (h,x_2)$}\,.
\end{equation*}

\smallskip
\item \label{vonN-Sec4-5-2:Assump-vi*}
For any $\bm{p}\in\mathbb{R}^2$ and
$\bm{x}\in\Omega\cap\{ h-\frac{\varepsilon}{2} < x_1 < h \},$
$\,\, \tilde{A}_1(\bm{p},\bm{x})\leq-\lambda$\,.
\end{enumerate}
Then there exist both $\alpha_1\in(0,\frac12)$ depending only on $(\lambda,\delta)$
and $\sigma\in(0,1)$ depending only on $(\lambda,\delta,M,C_{\rm so},\alpha,\beta,\varepsilon)$ such that,
under the further requirement $\delta\in[0,\delta_0),$
the boundary value problem~\eqref{vonN-Sec4-5-2:NonlinearProblem} has a unique
solution $u\in C(\overline{\Omega})\cap C^1(\overline{\Omega}\setminus(\overline{\Gamma_2}\cup\overline{\Gamma_0}))\cap C^2(\Omega)$.
Moreover, $u$ satisfies
\begin{equation}\label{ChenFeldman-vonN-Eq:4-5-119n120p2}
\|u\|_{0,\overline{\Omega}} \leq C\,, \qquad\,
|u(\bm{x})-g_{\rm so}(\bm{x})|\leq C\min\{x_1,h-x_1\} \quad \text{in $\Omega$}\,,
\end{equation}
where $C>0$ depends only on $(\lambda,\delta,M,C_{\rm so},\varepsilon)$.
Furthermore,
$u \in C(\overline{\Omega}) \cap C^{2,\alpha_1}(\overline{\Omega}\setminus\overline{\Gamma_2}\cup\overline{\Gamma_0})$
satisfies
\begin{equation}\label{ChenFeldman-vonN-Eq:4-5-138p2}
\|u\|_{2,\alpha_1,\overline{\Omega\cap\{s<x_1<h-s\}}}\leq C_s
\end{equation}
for each $s\in(0,\frac{h}{10}),$ where $C_s>0$ depends only
on $(\lambda,\delta,M,C_{\rm so},\alpha,\beta,\varepsilon,s)$.
In addition, $u$ satisfies
\begin{equation*}
\|u\|^{(-1-\alpha_1),\{P_4\}}_{2,\alpha_1,\Omega\cap\{x_1<\frac{h}{4}\}}<\hat{C}\,,
\end{equation*}
for constant $\hat{C}>0$ depending only on $(\lambda,\delta,M,\alpha,\varepsilon)$.
\end{proposition}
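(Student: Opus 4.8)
The plan is to adapt the scheme used for Proposition~\ref{Prop:AppendixB01} to the present setting, in which the Dirichlet portion $\Gamma_0$ has collapsed to the single corner point $P_4 = P_3 = (0,0)$ and the ellipticity of the equation is only ``$\delta$-strict'' near that point. First I would homogenize the Dirichlet-type data: setting $w \defeq u - g_{\rm so}$, the function $w$ solves a problem of the same structure, with $w = 0$ on $\Gamma_2 \cup \{P_4\}$, the slip condition $\bm{b}^{\rm (w)}\cdot Dw = -\bm{b}^{\rm (w)}\cdot Dg_{\rm so}$ on $\Gamma_3$, and the nonlinear condition on $\Gamma_1$ rewritten in terms of $w$ via hypothesis~(vii). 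Hypotheses~(i*), (iii*), (v*), and (vi*) are tailored so that, after this reduction, the coefficients retain the degeneracy rate ${\rm dist}(\cdot,\Gamma_2)$ near $\Gamma_2$, the regularized rate ${\rm dist}(\cdot,P_4) + \delta$ near $P_4$, and the evenness in $x_2$ near the corners of $\Gamma_3$ that is needed for reflection.

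Next I would set up a fixed-point iteration. Freezing the arguments of $(\tilde{A}_{ij},\tilde{A}_i)$ and of the nonlinear operator $\tilde{B}$ at a function drawn from a suitable weighted \Holder{} ball, one solves the resulting linear oblique-derivative/mixed boundary value problem and shows that the solution map has a fixed point. The linear solvability with uniform estimates is the core. Away from $\Gamma_2 \cup \{P_4\}$ the equation is uniformly elliptic, so interior and boundary Schauder estimates, combined with the maximum principle for the mixed Dirichlet--oblique problem (using $\tilde{A}_1 \leq -\lambda$ near the degenerate set and $D_z \tilde{B} \leq -\lambda$ on $\Gamma_1$ to get the comparison principle), yield the $L^\infty$ bound and the linear-growth bound~\eqref{ChenFeldman-vonN-Eq:4-5-119n120p2}, together with the interior estimate~\eqref{ChenFeldman-vonN-Eq:4-5-138p2}. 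Near $\Gamma_2 = \{x_1 = h\}$, the anisotropic rescaling that flattens the degeneracy turns the equation into a uniformly elliptic one on unit-size cells, so the weighted (parabolic-scaled) \Holder{} estimates of Chapter~4 of~\cite{ChenFeldman-RM2018}---the same machinery invoked in Propositions~\ref{Prop:Esti4theta1a}--\ref{Prop:Esti4theta1c}---apply; uniqueness then follows from the comparison principle applied to the difference of two solutions.

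The main obstacle is the corner point $P_4$, where the Dirichlet boundary has shrunk to a point while the equation degenerates. One cannot simply patch the parabolic-scaled estimates at $\Gamma_2$ with separate weighted estimates at $\Gamma_0$, since the two scalings are incompatible. Instead I would localize near $P_4$, use the evenness conditions of~(v*) to reflect the wall $\Gamma_3$ across $\{x_2 = 0\}$, and recast the problem in a neighbourhood of $P_4$ as a Dirichlet problem with a single degenerate corner---here the compatibility $t_0 = 0$ and the linearity of $g_{\rm so}$ in $x_1$ near $\{x_1 = 0\}$ are precisely what make the reflection admissible---and then invoke the corner regularity theory of~\cite{ChenFeldman-RM2018} to extract the $(-1-\alpha_1)$-weighted $C^{2,\alpha_1}$ bound $\|u\|^{(-1-\alpha_1),\{P_4\}}_{2,\alpha_1,\Omega\cap\{x_1<\frac{h}{4}\}} < \hat{C}$. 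A final step patches the interior, $\Gamma_2$-parabolic, and $P_4$-corner estimates via a partition of unity to produce a bounded invariant set, so that the Leray--Schauder degree theorem delivers a solution in the claimed regularity class, and the same estimates yield uniqueness.
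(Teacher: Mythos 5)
This statement is quoted verbatim (as Proposition 4.8.7 of~\cite{ChenFeldman-RM2018}) in Appendix~B, which the paper explicitly introduces as a collection of \emph{known results} used elsewhere; the paper supplies no proof of it at all, so there is nothing in the source to compare your argument against line by line. Your proposal is therefore being judged only as a standalone sketch of how the cited monograph proves such results, and at that level it is broadly in the right family of techniques (homogenization by $g_{\rm so}$, fixed point for the frozen-coefficient linear problem, comparison principles via $\tilde{A}_1\leq-\lambda$ and $D_z\tilde{B}\leq-\lambda$, anisotropic rescaling at the degenerate boundary, reflection across $\Gamma_3$ using the evenness in (v*)).

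There is, however, one concrete misreading that affects what you identify as ``the main obstacle.'' In hypothesis (i*) the lower ellipticity bound near $P_4$ is $\lambda\,{\rm dist}(\bm{x},\overline{\Gamma_0})+\delta$, so for $\delta>0$ the equation is \emph{uniformly} elliptic in a neighbourhood of $P_4$ (with constant depending on $\delta$); this is precisely why every constant in the conclusion --- $\alpha_1$, $C$, $C_s$, $\hat{C}$ --- is allowed to depend on $\delta$, in contrast to Proposition~\ref{Prop:AppendixB01} where the degeneracy at $\Gamma_0$ is genuine and the constants are $\delta$-independent. The only true degeneracy in the present proposition is at $\Gamma_2=\{x_1=h\}$, which lies at distance $\geq h/2$ from $P_4$, so there is no incompatibility of scalings to patch: near $P_4$ one needs only a corner estimate for a uniformly elliptic mixed one-point-Dirichlet/oblique problem (a standard barrier plus scaling argument yielding the $(-1-\alpha_1)$-weighted norm bound with $\alpha_1=\alpha_1(\lambda,\delta)$), while the parabolic-scaled machinery is confined to the neighbourhood of $\Gamma_2$ where (i*), (iii*), (v*), (vi*) are formulated. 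Your sketch would still go through once this is corrected, but as written it solves a harder problem than the hypotheses pose and leaves the actual $\delta$-dependence of the conclusion unexplained.
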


\begin{proposition}
\label{Prop:AppendixB03}
Fix constants $h, t_1, t_2, t_3\in(0,\infty)$ and $t_0=t_h=0$. For constants $\lambda>0,$
 $M<\infty,$ $\alpha\in(0,1),$ and $C_{\rm so}>0,$ domain $\Omega\subseteq\mathbb{R}^2$ satisfies
conditions~\eqref{ChenFeldman-vonN-Eq:4-5-1}--\eqref{BCF-Eq:C-5-3plus} with changes{\rm:}
\begin{equation*}
\begin{aligned}
P_3=P_4=(0,0)\,,\quad  \overline{\Gamma_0}=\{(0,0)\}\,; \qquad\,
P_2=P_1=(h,0)\,,\quad  \overline{\Gamma_2}=\{(h,0)\}\,.
\end{aligned}
\end{equation*}
For $\beta\in[\frac12,1),$ $\delta\in(0,1),$ $\sigma\in(0,1),$ and $\varepsilon\in(0,\frac{h}{10}),$
the nonlinear boundary value problem~\eqref{vonN-Sec4-5-2:NonlinearProblem} satisfies
conditions~\eqref{vonN-Sec4-5-2:Assump-ii}{\rm,} \eqref{vonN-Sec4-5-2:Assump-iv}{\rm,}
and \eqref{vonN-Sec4-5-2:Assump-vii} above,  and {\rm (\ref{vonN-Sec4-5-2:Assump-i}*)} below{\rm:}
\begin{enumerate}[{\rm (i*)}]
\item For any $\bm{x}\in\Omega$ and $\bm{p}, \bm{\kappa}=(\kappa_1, \kappa_2)\in\mathbb{R}^2,$
\begin{equation*}
\begin{aligned}
& \min\big\{\lambda\,{\rm dist}(\bm{x},\overline{\Gamma_0})+\delta, \lambda\,{\rm dist}(x,\Gamma_2)+\delta\big\}|
\bm{\kappa}|^2
\leq \sum_{i,j=1}^2\tilde{A}_{ij}(\bm{p},\bm{x})\kappa_i\kappa_j\leq\lambda^{-1}|\bm{\kappa}|^2\,,\\
& \big\|((\tilde{A}_{ij},\tilde{A}_{i})(Dg_{\rm so},\cdot),
   D^{m}_{\bm{p}}(\tilde{A}_{ij},\tilde{A}_{i})(\bm{p},\cdot))\big\|^{(-\alpha),\{P_1,P_4\}}_{1,\alpha,\Omega\setminus\{2\varepsilon<x_1<h-2\varepsilon\}}
   \leq M
\qquad\text{for $m=1,2$}\,.
\end{aligned}\end{equation*}
\end{enumerate}
Then there exist constants $\alpha_1\in(0,\frac12)$ depending only on $(\lambda,\delta),$
and $\sigma\in(0,1)$ depending only on $(\lambda,\delta,M,C_{\rm so},\alpha,\beta,\varepsilon)$
such that, under the further requirement $\delta\in[0,\delta_0),$
the boundary value problem~\eqref{vonN-Sec4-5-2:NonlinearProblem} has a unique solution
$u\in C(\overline{\Omega})\cap C^1(\overline{\Omega}\setminus(\overline{\Gamma_2}
\cup\overline{\Gamma_0}))\cap C^2(\Omega)$.
Moreover, $u$ satisfies
\begin{equation}\label{ChenFeldman-vonN-Eq:4-5-119n120p3}
\|u\|_{0,\overline{\Omega}} \leq C\,, \qquad
|u(\bm{x})-g_{\rm so}(\bm{x})|\leq C\min\{x_1,h-x_1\}
\quad\text{in $\Omega$}\,,
\end{equation}
where
$C>0$ depends only on $(\lambda,\delta,M,C_{\rm so},\varepsilon)$.
Furthermore,
$u \in C(\overline{\Omega}) \cap C^{2,\alpha_1} (\overline{\Omega} \setminus \overline{\Gamma_2}
\cup \overline{\Gamma_0})$ satisfies
\begin{equation}\label{ChenFeldman-vonN-Eq:4-5-138p3}
\|u\|_{2,\alpha_1,\overline{\Omega\cap\{s<x_1<h-s\}}}\leq C_s
\end{equation}
for each $s\in(0,\frac{h}{10}),$ where $C_s>0$ depends only
on $(\lambda,\delta,M,C_{\rm so},\alpha,\beta,\varepsilon,s)$. In addition, $u$ satisfies
\begin{equation*}
\|u\|^{(-1-\alpha_1),\{P_1,P_4\}}_{2,\alpha_1,\Omega\setminus\{\frac{h}{4}\leq x_1
\leq \frac{3h}{4}\}}<\hat{C}\,,
\end{equation*}
for constant $\hat{C}>0$ depending only on $(\lambda,\delta,M,\alpha,\varepsilon)$.
\end{proposition}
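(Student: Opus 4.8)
The plan is to establish Proposition~\ref{Prop:AppendixB03} by the same scheme used for Propositions~\ref{Prop:AppendixB01} and~\ref{Prop:AppendixB02} (that is, \cite[Propositions~4.7.2 and~4.8.7]{ChenFeldman-RM2018}); the only structural novelty is that the ellipticity of the equation now degenerates---at a linear rate---near \emph{both} corners $P_1=P_2$ and $P_3=P_4$, rather than near one of them, while the oblique-derivative condition on $\Gamma_1$ and the slip condition on $\Gamma_3$ must be treated uniformly at those two points. I would argue by the method of continuity: embed~\eqref{vonN-Sec4-5-2:NonlinearProblem} into a one-parameter family of problems joining it at $t=1$ to a model problem at $t=0$ whose solvability is known, checking that the structural hypotheses---the ellipticity condition~(i*), the coefficient bounds~(ii) and~(iv), and the boundary conditions~(vii)---hold uniformly along the homotopy with constants controlled only by $(\lambda,\delta,M,C_{\rm so},\alpha,\beta,\varepsilon)$. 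Existence at $t=1$ then follows once the admissible set of $t$ is shown to be open (implicit function theorem in the weighted H\"older spaces $C^{2,\alpha_1}$ adapted to the corners, using the linear degenerate oblique-derivative theory) and closed (uniform \emph{a priori} estimates, below). Uniqueness would be obtained by applying the comparison principle for the mixed boundary value problem to the difference of two solutions.

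The \emph{a priori} estimates would be assembled from three regions. First, the global bound $\|u\|_{0,\overline{\Omega}}\le C$ and the linear-growth bound $|u-g_{\rm so}|\le C\min\{x_1,h-x_1\}$ are produced by constructing super- and subsolutions of the form $g_{\rm so}\pm C\,\omega$, with $\omega\ge0$ comparable to $\min\{x_1,h-x_1\}$ near $P_4$ and $P_1$; here one exploits $\tilde A_1\le-\lambda$ near the corners, $D_z\tilde B\le-\lambda$ and $D_{p_1}\tilde B\le-\lambda$ on the degenerate parts of $\Gamma_1$, the obliqueness on $\Gamma_3$, and Hopf's lemma. Second, away from $\{P_1,P_4\}$ the equation is uniformly elliptic and the boundary conditions uniformly oblique, so standard Schauder and oblique-derivative theory~\cite{Gilbarg-Trudinger83,Lieberman-86,Lieberman2013} yields~\eqref{ChenFeldman-vonN-Eq:4-5-138p3} on $\Omega\cap\{s<x_1<h-s\}$. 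Third, near each of the two corner points, pass to polar-type coordinates centred at the corresponding sonic centre and invoke the weighted Schauder theory for degenerate oblique-derivative problems in corner domains, with weight the distance to the corner; condition~(i*), which supplies the weighted H\"older bounds on the coefficients near $\{P_1,P_4\}$, together with the fact that the opening angles at $P_1$ and $P_4$ lie in $(0,\pi)$, gives the weighted estimate $\|u\|^{(-1-\alpha_1),\{P_1,P_4\}}_{2,\alpha_1,\Omega\setminus\{\frac h4\le x_1\le\frac{3h}4\}}\le\hat C$. Patching these three families with a partition of unity subordinate to $\{\,x_1<\tfrac h2\,\}\cup\{\,x_1>\tfrac h4\,\}$ closes the continuity argument and produces~\eqref{ChenFeldman-vonN-Eq:4-5-119n120p3}.

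The hard part will be the \emph{simultaneous} handling of the two degenerate corners: the barrier $\omega$, the choice of weighted norm, and the constants in the corner Schauder estimates must all be arranged so that they remain uniform in the homotopy parameter (and in $\delta\in[0,\delta_0)$), and so that the corner estimates near $P_1$ and near $P_4$ are compatible on the overlap region $\{\frac h4\le x_1\le\frac{3h}4\}$, where uniform ellipticity holds but must be matched to the two-point-weighted bounds. Relative to Proposition~\ref{Prop:AppendixB02}, which treats a single degenerate corner, the genuinely new work is to verify that the barrier construction and the weighted estimates localise cleanly at each of $P_1$ and $P_4$ and then combine; once this is in place, the remainder is a verbatim adaptation of~\cite[Propositions~4.7.2 and~4.8.7]{ChenFeldman-RM2018}.
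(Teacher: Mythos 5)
The paper does not actually prove Proposition~\ref{Prop:AppendixB03}: it is stated in the appendix of ``known results'' as the two-corner analogue of Propositions~\ref{Prop:AppendixB01}--\ref{Prop:AppendixB02} (i.e.\ of \cite[Propositions~4.7.2 and 4.8.7]{ChenFeldman-RM2018}), with no citation and no argument, so the implicit proof is precisely the adaptation you outline. Your decomposition of the \emph{a priori} estimates --- global sup and linear-growth bounds via barriers for \eqref{ChenFeldman-vonN-Eq:4-5-119n120p3}, interior Schauder and oblique-derivative estimates for \eqref{ChenFeldman-vonN-Eq:4-5-138p3}, and weighted corner estimates at each of $P_1$ and $P_4$ patched on the uniformly elliptic overlap --- is the right plan, and you correctly identify the only genuinely new point relative to Proposition~\ref{Prop:AppendixB02}, namely running the corner argument at both ends with compatible constants.

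Two caveats. First, the existence mechanism: you propose the method of continuity with an implicit-function-theorem openness step ``in the weighted H\"older spaces''. For this quasilinear mixed oblique-derivative problem the openness step is not supplied by standard theory; it requires showing that the linearization (a linear problem with two corners and ellipticity constant proportional to $\delta$) is an isomorphism between the corner-weighted spaces, which is exactly the linear solvability that the scheme of the cited propositions establishes separately (via the corner estimates of \cite[Propositions~4.3.7--4.3.11]{ChenFeldman-RM2018}) and then feeds into a fixed-point/degree argument for uniformly elliptic approximations --- e.g.\ on the truncations $\{s<x_1<h-s\}$ with Dirichlet data $g_{\rm so}$ on the cuts --- followed by a compactness passage to the limit, with uniqueness by comparison upgrading subsequential to full convergence. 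As written, your proposal assumes this linear theory rather than proving it. Second, your barrier step ``exploits $\tilde A_1\le-\lambda$ near the corners'', i.e.\ condition~\eqref{vonN-Sec4-5-2:Assump-vi}; but Proposition~\ref{Prop:AppendixB03} assumes only \eqref{vonN-Sec4-5-2:Assump-ii}, \eqref{vonN-Sec4-5-2:Assump-iv}, \eqref{vonN-Sec4-5-2:Assump-vii}, and (i*), so no sign condition on $\tilde A_1$ is available. The bound $|u-g_{\rm so}|\le C\min\{x_1,h-x_1\}$ must instead be extracted from the boundary structure near the corners --- the signs $D_z\tilde B\le-\lambda$ and $D_{p_1}\tilde B\le-\lambda$ on $\Gamma_1\setminus\{\varepsilon\le x_1\le h-\varepsilon\}$, condition (vii--e), the linearity of $g_{\rm so}$ in $x_1$ there from \eqref{BCF-Eq:C-5-3plus}, and the slip condition on $\Gamma_3$ --- so the comparison function $g_{\rm so}\pm C\omega$ has to be verified against the boundary operators at the corners, not against the interior equation through a first-order sign. (A minor slip: ``polar coordinates centred at the sonic centre'' has no meaning in this abstract setting; the relevant local coordinates are those adapted to the corners $P_1$, $P_4$ themselves.)
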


\subsection{Regularized distance function}
\begin{lemma}[{\cite[Lemmas 13.9.1]{ChenFeldman-RM2018}}]
\label{Lem:RegularizedDist-13-9-1}
For any $g(\cdot)\in C^{0,1}([-1,1])$ that is positive on $(-1,1),$
there exists a function $\delta_{g}\in C^{\infty}(\overline{R_{\infty}}\setminus\overline{R_{g}}),$
\textit{the regularized distance}, such that
\begin{enumerate}[{\rm (i)}]
\item $|\delta_g(s,t') - {\rm dist}((s,t'),\Sigma_g)| \leq \frac{1}{2}{\rm dist}(\bfx,\Sigma_g)\,\,$
for all $(s,t')\in \mathbb{R}^2\setminus\overline{\Sigma_g}${\rm ;}

\item $|D^m\delta_g(s,t')|\leq C(m) \big({\rm dist}((s,t'),\Sigma_{g})\big)^{1-m}$
for all $(s,t')\in \mathbb{R}^2\setminus\overline{\Sigma_g}$ and $m=1,2,\cdots,$
where $C(m)>0$ depends only on $m${\rm ;}

\item  $\delta_{g}(s,t')\geq C_{\rm rd}(t'-g(s))$ for all $(s,t')\in\overline{R_g}\setminus\overline{\Sigma_g},$
where $C_{\rm rd}>0$ depends only on ${\rm Lip}[g]${\rm ;}

\item Let $g_k, g\in C^{0,1}([-1,1])$ be uniformly bounded Lipschitz functions for all $k\in\mathbb{N},$
with $g_k(s)$ converging to $g(s)$ uniformly on $[-1,1]$.
Then
$\,\, \lim\limits_{k\to\infty} \|\delta_{g_k}(s,t') - \delta_{g}(s,t')\|_{C^{m}(K)}=0$
for any $m=0,1,2,\cdots${\rm ,} and any compact set $K\subseteq \overline{R_{\infty}}\setminus\overline{R_g}$.
\end{enumerate}
\end{lemma}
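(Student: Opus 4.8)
This is a known result: it is stated as \cite[Lemma~13.9.1]{ChenFeldman-RM2018}, and the construction goes back to Lieberman's regularized distance for Lipschitz domains. The plan is to carry out that construction in the present setting. Let $d_0(s,t')\defeq{\rm dist}((s,t'),\Sigma_g)$, which is globally $1$-Lipschitz on $\mathbb{R}^2$. Fix $K\in C^\infty_{\rm c}(\mathbb{R}^2)$ with $K\geq0$, $\int_{\mathbb{R}^2}K=1$, and $\supp K$ contained in the unit ball, and set $C_K\defeq\int_{\mathbb{R}^2}|z|\,K(z)\,{\rm d}z$. For a universal constant $\kappa_0>0$ with $\kappa_0 C_K<\tfrac13$, define $\delta_g(s,t')$, for $(s,t')\notin\overline{\Sigma_g}$, to be the unique $w$ with $|w-d_0(s,t')|\leq\tfrac12 d_0(s,t')$ solving
\begin{equation*}
w \;=\; \int_{\mathbb{R}^2} d_0\bigl((s,t')-\kappa_0 w\,z\bigr)\,K(z)\,{\rm d}z \,.
\end{equation*}
The right-hand side, viewed as a function $F(s,t',w)$, satisfies $|\partial_w F|\leq\kappa_0 C_K<\tfrac13$ (since $d_0$ is $1$-Lipschitz) and maps the interval $\{|w-d_0(s,t')|\leq\tfrac12 d_0(s,t')\}$ into itself, so the fixed point exists and is unique; because $d_0$ enters $F$ only smoothed against $K$, $F$ is $C^\infty$ jointly in $(s,t',w)$, and $\partial_w(w-F)=1-\partial_w F>\tfrac23$, so the implicit function theorem yields $\delta_g\in C^\infty(\mathbb{R}^2\setminus\overline{\Sigma_g})$, in particular $\delta_g\in C^\infty(\overline{R_\infty}\setminus\overline{R_g})$.

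Property (i) then follows at once from $|\delta_g-d_0|\leq\tfrac12 d_0$ and $d_0={\rm dist}(\cdot,\Sigma_g)$. For (ii), differentiate the defining relation and integrate by parts to move every derivative onto $K$; using that $d_0$ is $1$-Lipschitz, that $\int_{\mathbb{R}^2}D^\beta K=0$ for $|\beta|\geq1$, that $\delta_g\sim{\rm dist}$ by Step (i), and that $\kappa_0$ is universal, an induction on $m$ gives $|D^m\delta_g|\leq C(m)\,{\rm dist}^{1-m}$ with $C(m)$ depending only on $m$. For (iii), the $L$-Lipschitz graph $\Sigma_g$ satisfies ${\rm dist}((s,t'),\Sigma_g)\geq(1+L^2)^{-1/2}|t'-g(s)|$ with $L\defeq{\rm Lip}[g]$, so $\delta_g\geq\tfrac12\,{\rm dist}\geq\tfrac12(1+L^2)^{-1/2}|t'-g(s)|$ (and trivially $\delta_g\geq0\geq t'-g(s)$ on the side $t'\leq g(s)$), which is (iii) with $C_{\rm rd}$ depending only on $L$.

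For property (iv): if $g_k\to g$ uniformly with a common bound and a common Lipschitz constant, then $\Sigma_{g_k}\to\Sigma_g$ in the Hausdorff sense and the distances $d_{0,k}\to d_0$ locally uniformly; the same universal $\kappa_0$ and the same contraction and implicit-function bounds of Step (i) apply uniformly in $k$. Moving all derivatives onto $K$ by integration by parts as in (ii), local uniform convergence $d_{0,k}\to d_0$ propagates to $\delta_{g_k}\to\delta_g$ in $C^m_{\rm loc}(\overline{R_\infty}\setminus\overline{R_g})$ for every $m$, which is (iv).

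The main obstacle is not any individual estimate but the uniformity required to glue everything together: the implicit function theorem must be applied \emph{globally} on $\mathbb{R}^2\setminus\overline{\Sigma_g}$ (which is why the bound $|\partial_w F|<\tfrac13$ is arranged to hold everywhere, so that it does not degenerate as $d_0\to0$), and one must control the behaviour near the endpoints $(\pm1,g(\pm1))\in\overline{\Sigma_g}$ and along the lateral edges $\{s=\pm1\}$ of the strip, verifying that all the estimates with the stated dependence of constants persist there. These technicalities are exactly the ones handled in \cite[Lemma~13.9.1]{ChenFeldman-RM2018}; the outline above is a streamlined version of that argument.
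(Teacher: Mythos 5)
The paper gives no proof of this lemma—it is stated in Appendix B as a known result and cited directly to \cite[Lemma 13.9.1]{ChenFeldman-RM2018}—and your outline reproduces exactly the construction behind that citation: Lieberman-type regularized distance as the fixed point of the mollified distance function, with the contraction/implicit-function argument for smoothness, the kernel estimates for (ii), the Lipschitz-graph geometry for (iii), and uniform convergence of the distance functions for (iv). The sketch is correct (the minor glossed points, e.g.\ smoothness of $F$ via the change of variables valid for $w>0$ and the fact that a compact $K\subseteq\overline{R_\infty}\setminus\overline{R_g}$ has positive distance to $\overline{\Sigma_g}$, are standard), so this is essentially the same proof as the cited source.
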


\subsection{Leray-Schauder degree theorem}
\label{Sec:Appendix-B2}
\begin{definition}[Compact map]
\label{Def:CompactThm}
Let $X$ and $Y$ be Banach spaces.
Fix any open subset $G\subseteq X$.
A map $\mathbf{f}: \overline{G} \to Y$ is called compact if
\begin{enumerate}[{\rm (i)}]
\item $\mathbf{f}$ is continuous{\rm;}
\item $\mathbf{f}(U)$ is precompact in $Y$ for any bounded subset $U\subseteq\overline{G}$.
\end{enumerate}
\end{definition}

\begin{definition}
\label{Def:VertexSet}
Let $X$ be a Banach space, and let $G$ be a bounded open subset of $X$.
Write $V(G,X)$ to denote the set of all maps $\mathbf{f}:\overline{G}\to X$ satisfying
\begin{enumerate}[{\rm (i)}]
\item $\mathbf{f}$ is a compact map in the sense of {\rm Definition~\ref{Def:CompactThm};}
\item $\mathbf{f}$ has no fixed point on the boundary of $\partial G$.
\end{enumerate}
\end{definition}

\begin{definition}\label{Def:CompactlyHomotopic}
Let $X$ be a Banach space, and let $G$ be a bounded open subset of $X$.
Two maps $\mathbf{f}, \mathbf{g}\in V(G,X)$ are called compactly homotopic
on $\partial G$ if there exists a map $\mathbf{H}$ with the following properties{\rm:}
\begin{enumerate}[{\rm (i)}]
\item $\mathbf{H}:\overline{G}\times[0,1] \to X$ is a compact map{\rm ;}
\item $\mathbf{H}(\mathbf{x},\tau)\neq \mathbf{x}$ for all $(\mathbf{x},\tau)\in\partial G\times[0,1]${\rm ;}
\item $\mathbf{H}(\mathbf{x},0)=\mathbf{f}(\mathbf{x})$ and $\mathbf{H}(\mathbf{x},1)=\mathbf{g}(\mathbf{x})$
for all $\mathbf{x}\in\overline{G}$.
\end{enumerate}
\end{definition}

We write $\partial G:\mathbf{f}\cong \mathbf{g}$  if $\mathbf{f}$ and $\mathbf{g}$
are compactly homotopic on $\partial G$ in the sense of
Definition~\ref{Def:CompactlyHomotopic}, and we call $\mathbf{H}$ a compact homotopy.

\begin{theorem}[Leray-Schauder degree theorem]
\label{Thm:LeraySchauderDegree}
Let $X$ be a Banach space, and let $G$ be a bounded open subset of $X$.
Then, for each map $\mathbf{f}\in V(G,X),$ a unique integer $\mathbf{Ind}(\mathbf{f},G)$
can be assigned with the following properties{\rm :}
\begin{enumerate}[{\rm (i)}]
\item If $\mathbf{f}(\cdot)\equiv\mathbf{x}_0$ on $\overline{G}$
for some fixed $\mathbf{x}_0\in G,$ then $\mathbf{Ind}(\mathbf{f},G)=1${\rm ;}
\item If $\mathbf{Ind}(\mathbf{f},G)\neq0,$
then there exists a fixed point $\mathbf{x}\in G$ of map $\mathbf{f},$ that is,
$\mathbf{f}(\mathbf{x})=\mathbf{x}${\rm ;}
\item $\mathbf{Ind}(\mathbf{f},G)=\Sigma_{j=1}^{n}\mathbf{Ind}(\mathbf{f},G_j),$
whenever $\mathbf{f}\in V(G,X) \cap (\cap_{j=1}^{n}V(G_j,X)),$
where $G_i\cap G_j=\varnothing$ for $i\neq j$ and $\overline{G}=\cup_{j=1}^{n}\overline{G_j}${\rm ;}
\item If $\partial G:\mathbf{f}\cong \mathbf{g},$
then $\mathbf{Ind}(\mathbf{f},G)=\mathbf{Ind}(\mathbf{g},G)$.
\end{enumerate}
Such a number $\mathbf{Ind}(\mathbf{g},G)$ is called the fixed point index of $\mathbf{f}$ over $G$.
\end{theorem}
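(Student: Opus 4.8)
The plan is to realize $\mathbf{Ind}(\mathbf{f},G)$ as the Leray--Schauder degree $\deg(\mathbf{I}-\mathbf{f},G,\mathbf{0})$, built from the finite-dimensional Brouwer degree by a finite-rank approximation procedure, and then to deduce properties (i)--(iv) by transporting the analogous properties of the Brouwer degree. First I would recall (or cite) the Brouwer degree: for a bounded open $\Omega\subseteq\mathbb{R}^m$, a map $g\in C^0(\overline{\Omega};\mathbb{R}^m)$, and a point $y\notin g(\partial\Omega)$, one defines $\deg(g,\Omega,y)\defeq\sum_{x\in g^{-1}(y)}\sgn\det Dg(x)$ first for $g\in C^1$ with $y$ a regular value, extends to all $y\notin g(\partial\Omega)$ via Sard's theorem, and finally to $g\in C^0$ by uniform approximation; the facts I will use are normalization ($\deg(\mathbf{I},\Omega,y)=1$ for $y\in\Omega$), solvability ($\deg(g,\Omega,y)\neq0\Rightarrow y\in g(\Omega)$), additivity over disjoint open subsets, the excision/reduction property, and homotopy invariance for homotopies with $y\notin g_t(\partial\Omega)$.

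Next I would pass to infinite dimensions. Given $\mathbf{f}\in V(G,X)$, the map $\mathbf{f}$ is compact and fixed-point-free on $\partial G$, and since $\overline{\mathbf{f}(\overline{G})}$ is compact, $\alpha\defeq\inf_{\mathbf{x}\in\partial G}\norm{\mathbf{x}-\mathbf{f}(\mathbf{x})}>0$. For $\varepsilon\in(0,\alpha)$, the Schauder projection/approximation theorem gives a continuous map $\mathbf{f}_\varepsilon:\overline{G}\to X_\varepsilon$ into a finite-dimensional subspace $X_\varepsilon\subseteq X$ with $\norm{\mathbf{f}_\varepsilon-\mathbf{f}}_{C^0(\overline{G})}<\varepsilon$; then $\mathbf{I}-\mathbf{f}_\varepsilon$ maps $\overline{G}\cap X_\varepsilon$ into $X_\varepsilon$ with $\mathbf{0}\notin(\mathbf{I}-\mathbf{f}_\varepsilon)(\partial(G\cap X_\varepsilon))$, and I set
\[
\mathbf{Ind}(\mathbf{f},G)\defeq\deg\big(\mathbf{I}-\mathbf{f}_\varepsilon,\,G\cap X_\varepsilon,\,\mathbf{0}\big),
\]
the Brouwer degree computed in $X_\varepsilon$. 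Independence of $(\varepsilon,\mathbf{f}_\varepsilon,X_\varepsilon)$ I would check by: (a) for two approximations with ranges in a common $X_\varepsilon$, using the straight-line homotopy $t\mathbf{f}_\varepsilon+(1-t)\mathbf{f}_{\varepsilon'}$, which stays within $\max\{\varepsilon,\varepsilon'\}<\alpha$ of $\mathbf{f}$ and hence avoids fixed points on $\partial G$, together with Brouwer homotopy invariance; and (b) for different subspaces, using the reduction property of the Brouwer degree (the degree computed in $X_{\varepsilon'}\supseteq X_\varepsilon$ agrees with that in $X_\varepsilon$ when the range lies in $X_\varepsilon$), after passing to $X_\varepsilon+X_{\varepsilon'}$.

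Then I would verify the four properties. For (i), with $\mathbf{f}\equiv\mathbf{x}_0\in G$, take $\mathbf{x}_0\in X_\varepsilon$ and $\mathbf{f}_\varepsilon=\mathbf{f}$, so $\mathbf{Ind}(\mathbf{f},G)=\deg(\mathbf{I}-\mathbf{x}_0,G\cap X_\varepsilon,\mathbf{0})=\deg(\mathbf{I},G\cap X_\varepsilon,\mathbf{x}_0)=1$ by normalization and translation invariance. For (ii), if $\mathbf{f}$ has no fixed point in $\overline{G}$, then $\alpha'\defeq\inf_{\overline{G}}\norm{\mathbf{x}-\mathbf{f}(\mathbf{x})}>0$ again by compactness, and choosing $\varepsilon<\alpha'$ forces $\mathbf{0}\notin(\mathbf{I}-\mathbf{f}_\varepsilon)(\overline{G\cap X_\varepsilon})$, so the Brouwer degree is $0$ by solvability; the contrapositive is exactly (ii). Property (iii) is inherited from additivity of the Brouwer degree once a single $(\varepsilon,X_\varepsilon)$ is chosen to work simultaneously on $G$ and all $G_j$. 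For (iv), a compact homotopy $\mathbf{H}$ with $\mathbf{H}(\mathbf{x},t)\neq\mathbf{x}$ on $\partial G\times[0,1]$ gives $\beta\defeq\inf_{\partial G\times[0,1]}\norm{\mathbf{x}-\mathbf{H}(\mathbf{x},t)}>0$ by compactness of $\overline{\mathbf{H}(\overline{G}\times[0,1])}$; approximating $\mathbf{H}$ uniformly within $\varepsilon<\beta$ by a finite-rank $\mathbf{H}_\varepsilon(\cdot,t)$ and applying Brouwer homotopy invariance in the common subspace gives $\deg(\mathbf{I}-\mathbf{H}_\varepsilon(\cdot,0),G\cap X_\varepsilon,\mathbf{0})=\deg(\mathbf{I}-\mathbf{H}_\varepsilon(\cdot,1),G\cap X_\varepsilon,\mathbf{0})$, and the two sides equal $\mathbf{Ind}(\mathbf{f},G)$ and $\mathbf{Ind}(\mathbf{g},G)$ by the construction above.

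The hard part is not any single step but the coherence of the finite-rank reduction: one must establish the reduction property for the Brouwer degree (that the degree of an identity-minus-finite-rank map is independent of the finite-dimensional subspace containing the range) and keep the approximation parameter uniformly small across the boundary non-degeneracy bounds for $\mathbf{f}$, for the homotopy, and for the additivity decomposition simultaneously. Since this theorem is classical and is used in the paper only to supply homotopy invariance of the fixed point index and the normalization $\mathbf{Ind}(\mathcal{I}(\cdot,\bm{0}),\mathcal{K}(\bm{0}))=1$, I would in practice state it with a reference to a standard treatment and retain the above as the proof sketch.
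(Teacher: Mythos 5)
The paper does not prove this statement: Theorem~\ref{Thm:LeraySchauderDegree} is quoted in Appendix~B as a classical result (alongside Theorem~\ref{Thm:HomotopyInvariance4FPI}, which is cited to Zeidler), and only its consequences --- normalization and homotopy invariance of the fixed point index --- are used in \S\ref{subsec:proof-existence}. Your proposal supplies the standard textbook construction of the Leray--Schauder degree via Schauder finite-rank approximation of the compact map and reduction to the Brouwer degree, and it is correct in all essentials: the positivity of $\inf_{\partial G}\norm{\mathbf{x}-\mathbf{f}(\mathbf{x})}$ follows from precompactness of $\mathbf{f}(\overline{G})$ exactly as you argue, the well-definedness via the straight-line homotopy plus the reduction property is the standard argument, and properties (i)--(iv) transport from the Brouwer degree as you describe (for (iii) one also uses, as you implicitly do, that $\overline{G}\setminus\cup_j G_j\subseteq\cup_j\partial G_j$ carries no fixed points of $\mathbf{f}$, so the excision hypothesis of Brouwer additivity is met). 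Two small points worth making explicit if this were written out in full: the degenerate case $G\cap X_\varepsilon=\varnothing$ should be assigned degree $0$, and the word ``unique'' in the statement is discharged by your well-definedness argument only under the reading that the construction yields a well-defined integer; the stronger axiomatic uniqueness (that (i), (iii), (iv) characterize the index) is a separate classical fact you do not address. Given that the paper itself defers to the literature here, your concluding decision to state the theorem with a reference and keep the construction as a sketch matches the paper's treatment.
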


A generalized homotopy invariance of the fixed point index is given by the following theorem.

\begin{theorem}[{\cite[\S13.6, A4*]{Zeidler1986}}]
\label{Thm:HomotopyInvariance4FPI}
Let $X$ be a Banach space, and let $t_2>t_1$.
Let $U\subseteq X\times[t_1,t_2],$ and let $U_t=\{\mathbf{x} \,:\, (\mathbf{x},t)\in U\}$.
Then, for any compact map $\mathbf{h}:\overline{U}\to X$
with $\mathbf{h}(\mathbf{x},t)\neq\mathbf{x}$ on $\partial U,$
\begin{equation*}
\mathbf{Ind}(\mathbf{h}(\cdot,t),U_t)={\rm const}. \qquad \text{for all}\; t\in[t_1,t_2]\,,
\end{equation*}
provided that $U$ is a bounded open set in $X\times[t_1,t_2]$.
\end{theorem}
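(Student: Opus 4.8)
The plan is to deduce the generalized homotopy invariance from the four defining properties of the Leray--Schauder fixed point index recorded in Theorem~\ref{Thm:LeraySchauderDegree}, the only genuinely new feature being that the open set $U_t$ is allowed to vary with $t$. Set $\phi(t) \defeq \mathbf{Ind}(\mathbf{h}(\cdot,t),U_t)$; since $[t_1,t_2]$ is connected, it suffices to show that $\phi$ is locally constant. First I would check that $\phi$ is well defined: for each fixed $t$, the slice map $\mathbf{h}(\cdot,t)$ is compact on $\overline{U_t}$ (continuity and precompactness of the image are inherited from $\mathbf{h}$), and it has no fixed point on $\partial U_t$, because any $\mathbf{x}\in\partial U_t$ gives $(\mathbf{x},t)\in\partial U$ in $X\times[t_1,t_2]$, where $\mathbf{h}$ has no fixed point by hypothesis; hence $\mathbf{h}(\cdot,t)\in V(U_t,X)$.

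Next, I would analyze the total fixed-point set $F \defeq \{(\mathbf{x},t)\in\overline{U} \,:\, \mathbf{h}(\mathbf{x},t)=\mathbf{x}\}$. Since $\mathbf{h}(\overline U)$ is precompact and $F$ is closed in the bounded set $\overline U$ with $\mathbf{x}=\mathbf{h}(\mathbf{x},t)\in \mathbf{h}(\overline U)$ on $F$, the set $F$ is compact. By assumption $F\cap\partial U=\varnothing$, so $F\subseteq U$ and $d\defeq{\rm dist}(F,\partial U)>0$; if instead $F=\varnothing$ we finish immediately, because property~(ii) of Theorem~\ref{Thm:LeraySchauderDegree} then forces $\phi(t)=0$ for every $t$. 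Using convexity of $X\times[t_1,t_2]$ and openness of $U$, I would show that the $d$-neighbourhood of $F$ lies inside $U$. Then, fixing $t_0\in[t_1,t_2]$ and putting $\delta\defeq d/2$, I would introduce the single open set $W\defeq\{\mathbf{x}\in X \,:\, {\rm dist}((\mathbf{x},t_0),F)<d/2\}$ and verify, by the triangle inequality in $X\times[t_1,t_2]$, that for every $t$ in $I_\delta\defeq(t_0-\delta,t_0+\delta)\cap[t_1,t_2]$: (a) $W\times I_\delta\subseteq U$, so $W\subseteq U_t$; (b) the fixed-point slice $F_t\defeq\{\mathbf{x}:(\mathbf{x},t)\in F\}$ satisfies $F_t\subseteq W$; and (c) $\mathbf{h}(\cdot,t)$ has no fixed point on $\partial W$.

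With $W$ in hand I would invoke the fixed-domain tools. On the one hand, the straight-line homotopy $\mathbf{H}(\mathbf{x},\tau)\defeq\mathbf{h}(\mathbf{x},(1-\tau)t_0+\tau t)$ is a compact map $\overline W\times[0,1]\to X$ with no fixed point on $\partial W\times[0,1]$ by (c) (the parameter $(1-\tau)t_0+\tau t$ stays in $I_\delta$), so Theorem~\ref{Thm:LeraySchauderDegree}(iv) gives $\mathbf{Ind}(\mathbf{h}(\cdot,t_0),W)=\mathbf{Ind}(\mathbf{h}(\cdot,t),W)$. On the other hand, for each $t\in I_\delta$ the decomposition $\overline{U_t}=\overline W\cup\overline{U_t\setminus\overline W}$ with $W\cap(U_t\setminus\overline W)=\varnothing$, together with the fact that $\mathbf{h}(\cdot,t)$ has no fixed point on $\partial U_t$ or on $\partial W$, lets me apply additivity (iii); since $U_t\setminus\overline W$ contains no fixed point of $\mathbf{h}(\cdot,t)$ by (b), property~(ii) makes the corresponding index vanish, yielding the excision identity $\mathbf{Ind}(\mathbf{h}(\cdot,t),U_t)=\mathbf{Ind}(\mathbf{h}(\cdot,t),W)$, and likewise at $t_0$. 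Chaining these three equalities gives $\phi(t)=\phi(t_0)$ for all $t\in I_\delta$, so $\phi$ is locally constant and hence constant on $[t_1,t_2]$.

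The main obstacle is exactly the varying domain: Theorem~\ref{Thm:LeraySchauderDegree} only supplies homotopy invariance over a \emph{fixed} $G$, so the whole argument hinges on producing, near each $t_0$, a single ``tube'' $W$ that simultaneously sits inside every $U_t$ for $t$ near $t_0$, captures all nearby fixed points, and is fixed-point-free on its boundary. This is where compactness of $\mathbf{h}$ is indispensable — it is what makes $F$ compact and $d={\rm dist}(F,\partial U)$ strictly positive, and without it no such uniform $W$ need exist — and where the excision consequence of the additivity property must be used carefully to pass between $\mathbf{Ind}(\cdot,U_t)$ and $\mathbf{Ind}(\cdot,W)$.
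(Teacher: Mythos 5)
Your proposal is correct, but note that the paper itself offers no proof of this statement: Theorem~\ref{Thm:HomotopyInvariance4FPI} is quoted verbatim from Zeidler~\cite[\S 13.6, A4*]{Zeidler1986} in Appendix~\ref{Sec:Appendix-B} as a known result, so there is no internal argument to compare against. What you have done is rederive the generalized (variable-domain) homotopy invariance from the axiomatic properties of the fixed point index listed in Theorem~\ref{Thm:LeraySchauderDegree}, and the derivation is sound: compactness of $\mathbf{h}$ makes the total fixed-point set $F$ compact, $F\cap\partial U=\varnothing$ gives ${\rm dist}(F,\partial U)>0$, convexity of $X\times[t_1,t_2]$ turns that distance into a ``tube'' $W$ contained in every $U_t$ for $t$ near $t_0$ and capturing all nearby fixed points, and then the fixed-domain property (iv) on $W$ combined with excision (additivity (iii) plus the solution property (ii) applied to $U_t\setminus\overline{W}$) yields local constancy, hence constancy by connectedness. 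Your identification of where each hypothesis is used (compactness for $d>0$, openness of $U$ for $\partial U_t\times\{t\}\subseteq\partial U$) is accurate. Two small points to tidy if you write this up: (a) the degenerate cases $U_t=\varnothing$, $W=\varnothing$, or $U_t\setminus\overline{W}=\varnothing$ should be dispatched explicitly (in each case property (ii), or the convention that the index over the empty set vanishes, gives the value $0$ or collapses the additivity decomposition to a single piece); and (b) in the excision step you should record that $W\cap\overline{U_t\setminus\overline{W}}=\varnothing$ (immediate since $W$ is open), so that $\mathbf{h}(\cdot,t)$ has no fixed point on all of $\overline{U_t\setminus\overline{W}}$, not merely on its boundary, which is what justifies both membership in $V(U_t\setminus\overline{W},X)$ and the vanishing of that index.
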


\subsection{Regularity theorem} \label{Sec:Appendix-B3}
\begin{theorem}[{\cite[Theorem~3.1]{BCF-2009}}]\label{Cite-BCF-2009:Thm3-1}
For constants
$r, \, R>0,$ define $Q_{r,R}^{+}$ by
\begin{equation*}
    Q^{+}_{r,R}\defeq \big\{(x,y)\in\mathbb{R}^2 \,:\, 0<x<r, \, |y|<R\big\}\,.
\end{equation*}
For positive constants $a,\,b,\,M,\,N,$ and $\kappa\in(0,\frac{1}{4}),$
suppose that $\psi\in C(\overline{Q^{+}_{r,R}})\cap C^{2}(Q^{+}_{r,R})$
satisfies $\psi>0,$ $-Mx\leq \psi_{x}\leq \frac{2-\kappa}{a}x,$ and
\begin{equation*}
    (2x-a\psi_{x}+O_{1})\psi_{xx}+O_{2}\psi_{xy}+(b+O_{3})\psi_{yy}-(1+O_{4})\psi_{x}+O_{5}\psi_{y}=0
\qquad\, \mbox{in $Q^{+}_{r,R}$}\,,
\end{equation*}
 and $\psi=0$ on $\partial Q^{+}_{r,R}\cap\{x=0\},$
where $O_{k}(x,y),$ $k=1,\cdots,5,$ are continuously differentiable and
\begin{equation*}
    \frac{|O_{1}(x,y)|}{x^2}+\frac{|D_{(x,y)}O_{1}(x,y)|}{x}+
\sum\limits_{k=2}^{5}\Big(\frac{|O_{k}(x,y)|}{x}+
|D_{(x,y)}O_{k}(x,y)|\Big)\leq N \qquad \mbox{in $Q^{+}_{r,R}$}\,.
\end{equation*}
Then $\psi\in C^{2,\alpha}(\overline{Q^{+}_{{r}/{2},{R}/{2}}})$
for any $\alpha\in(0,1)$ and
\begin{flalign*}
&&
    \psi_{xx}(0,y)=a^{-1}\, ,
    \quad\,\, \psi_{xy}(0,y)=\psi_{yy}(0,y)=0
    \qquad\mbox{for all $y\in(-\frac{R}{2},\frac{R}{2})$}\,.
    &&
\end{flalign*}
\end{theorem}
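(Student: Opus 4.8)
The plan is to treat this as a degenerate elliptic estimate near the characteristic boundary $\{x=0\}$, via the anisotropic (``parabolic'') rescaling that balances the leading operator $2x\,\partial_{xx}+b\,\partial_{yy}$ while respecting the quasilinear coefficient $2x-a\psi_x$; the whole argument is local in $y$, which accounts for the passage from $Q^+_{r,R}$ to $Q^+_{r/2,R/2}$. First I would record the elementary consequences of the hypotheses: since $\psi>0$, $\psi=0$ on $\{x=0\}$, and $-Mx\le\psi_x\le\tfrac{2-\kappa}{a}x$, integration in $x$ gives $0<\psi(x,y)\le\tfrac{2-\kappa}{2a}x^{2}$, and $|\psi_x|\le Mx$ gives $\psi_x(0,y)=0$, while the decay hypotheses force $O_k(0,y)=0$ for $k=1,\dots,5$. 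The expected asymptotic profile is $\psi_0:=\tfrac{1}{2a}x^{2}$, which solves the reduced equation $(2x-a\psi_{0,x})\psi_{0,xx}-\psi_{0,x}=0$ exactly and has $\psi_{0,xx}=a^{-1}$, $\psi_{0,xy}=\psi_{0,yy}=0$.

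The core step is the uniform estimate. For $z_0=(x_0,y_0)$ with $x_0$ small I would pass to the rescaled function $\psi^{(z_0)}(X,T):=x_0^{-2}\psi\bigl(x_0X,\,y_0+\sqrt{x_0}\,T\bigr)$ on a unit box $X\in(\tfrac34,\tfrac54)$, $|T|<1$ (intersected with the domain), observing that $x=x_0X$ stays strictly inside $\{x>0\}$, so only interior estimates are needed. After dividing the rescaled equation by $x_0$, the coefficient of $\psi^{(z_0)}_{XX}$ is $2X-a\psi^{(z_0)}_X+x_0^{-1}O_1$, bounded below by a positive constant depending only on $\kappa$ (because $2x-a\psi_x\ge\kappa x$ and $|O_1|\le Nx^{2}$) and above by a constant depending on $(a,M)$; the coefficient of $\psi^{(z_0)}_{TT}$ is $b+O_3\sim b$; and every term produced by $O_1,\dots,O_5$ or by the mixed derivative carries a positive power of $x_0$ and is an arbitrarily small perturbation. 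With $\psi^{(z_0)}$ and $D\psi^{(z_0)}$ uniformly bounded (from the $Cx^{2}$- and gradient bounds), the rescaled equation is uniformly elliptic and quasilinear with uniformly controlled coefficients, so a De Giorgi--Nash--Moser step followed by interior Schauder estimates yields $\|\psi^{(z_0)}\|_{C^{2,\alpha}(\overline{Q_{1/2}^{(z_0)}})}\le C$ uniformly in $z_0$. Scaling back, this is precisely a bound in the parabolic norm of Definition~\ref{Def:ParabolicNorm01} with $\sigma=2$: $|\psi_{xx}|\le C$, $|\psi_{xy}|\le Cx^{1/2}$, $|\psi_{yy}|\le Cx$, $|\psi_x|\le Cx$, $|\psi_y|\le Cx^{3/2}$, $|\psi|\le Cx^{2}$, with matching weighted H\"older seminorms. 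In particular $\psi_{xy},\psi_{yy}$ extend continuously to $\{x=0\}$ with value $0$, $\psi_{xx}$ and $\psi_x/x$ extend continuously, and the weighted H\"older control upgrades this to $\psi\in C^{2,\alpha}(\overline{Q^+_{r/2,R/2}})$ for every $\alpha\in(0,1)$.

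It then remains to identify $\psi_{xx}(0,y)$. I would do this via a rescaled limit: at a sequence $z_0=(x_0,y_0)\to(0,y^{\ast})$ over boxes expanding to fill $\{X>0\}\times\mathbb{R}$, the uniform estimates of the core step give a $C^{2}_{\mathrm{loc}}$-limit $\psi^{\infty}$ which solves the model equation $(2X-a\psi^{\infty}_X)\psi^{\infty}_{XX}+b\psi^{\infty}_{TT}-\psi^{\infty}_X=0$ (all the $O_k$- and non-model terms drop out in the limit). Because $\psi_{xx}$ and $\psi_x/x$ are continuous up to $\{x=0\}$, one gets $\psi^{\infty}_X(X,T)=\mu(y^{\ast})X$ with $\mu(y):=\psi_{xx}(0,y)$, hence $\psi^{\infty}=\tfrac12\mu(y^{\ast})X^{2}$ is $T$-independent, and substituting into the model equation forces $\mu(y^{\ast})\bigl(1-a\mu(y^{\ast})\bigr)=0$. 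By continuity $\mu\equiv0$ or $\mu\equiv a^{-1}$ on $(-\tfrac R2,\tfrac R2)$; the degenerate branch $\mu\equiv0$ would force $\psi=o(x^{2})$ at every boundary point, which I would exclude using the strict positivity hypothesis $\psi>0$ through a comparison argument with a quadratic-type subsolution. Hence $\mu\equiv a^{-1}$, giving $\psi_{xx}(0,y)=a^{-1}$, and $\psi_{xy}(0,y)=\psi_{yy}(0,y)=0$ from the core estimate.

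The hard part will be the core estimate: selecting the correct anisotropic scaling and verifying carefully that each term built from $O_1,\dots,O_5$ and from the quasilinear product $a\psi_x\psi_{xx}$ is genuinely subordinate, so that the rescaled equation is uniformly elliptic with uniformly bounded H\"older coefficients (so that the interior Schauder estimate applies at every scale). The second genuinely delicate point is the exclusion of the degenerate alternative $\mu\equiv0$, where the strict positivity $\psi>0$ must be brought in; the remaining arguments are routine rescaling, Schauder theory, and unwinding of the weighted norms.
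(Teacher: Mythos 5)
You should first note that the paper does not prove this statement at all: it is quoted verbatim from \cite{BCF-2009} (Theorem~3.1 there), so the only possible comparison is with the cited proof. Your ``core step'' --- the anisotropic rescaling $\psi^{(z_0)}(X,T)=x_0^{-2}\psi(x_0X,\,y_0+\sqrt{x_0}\,T)$, the verification that $2x-a\psi_x\geq\kappa x$ and the $O_k$ bounds make the rescaled equations uniformly elliptic with small perturbative coefficients, and interior De~Giorgi--Nash/Schauder estimates --- is indeed how that proof (and the related estimates in Section~3 of this paper) begins, and your bookkeeping of the $O_k$ terms is essentially correct. One small slip there: the hypotheses give no bound on $\psi_y$, so the uniform bound on $D\psi^{(z_0)}$ cannot be quoted ``from the gradient bounds''; it must itself be produced by the interior estimates applied to the uniformly bounded functions $\psi^{(z_0)}$ (only the $\psi_X$ bound, which controls ellipticity, is available a priori).

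The genuine gaps come after that. First, from the parabolic bound you assert that ``$\psi_{xx}$ and $\psi_x/x$ extend continuously'' to $\{x=0\}$ and that ``the weighted H\"older control upgrades this to $\psi\in C^{2,\alpha}(\overline{Q^+_{r/2,R/2}})$ for every $\alpha$.'' This does not follow: the seminorms of Definition~\ref{Def:ParabolicNorm01} carry weights $\min\{x,\tilde x\}^{\alpha+k+l/2-2}$ that degenerate as $x\to0$, so the scaling estimate yields only $|\psi_{xx}|\leq C$, $|\psi_{xy}|\leq C\sqrt{x}$, $|\psi_{yy}|\leq Cx$ and H\"older control on balls of radius comparable to $x$; it gives no modulus of continuity for $\psi_{xx}$ across $x=0$ and no unweighted $C^{2,\alpha}$ bound up to the boundary. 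Since the entire content of the theorem is the existence and identification of the boundary values of $D^2\psi$, together with a rate strong enough to give $C^{2,\alpha}$ for \emph{every} $\alpha<1$, this step begs the question. Second, the identification $\psi_{xx}(0,y)=a^{-1}$: your blow-up argument invokes the (unproved) continuity of $\psi_{xx}$ and $\psi_x/x$ to conclude $\psi^\infty_X=\mu X$, which is circular; without it one needs a Liouville-type classification of global solutions of $(2X-a W_X)W_{XX}+bW_{TT}-W_X=0$ under $0\leq W\leq CX^2$, $|W_X|\leq\frac{2-\kappa}{a}X$, which you do not supply. Moreover, excluding the degenerate branch $\mu\equiv0$ ``by comparison with a quadratic-type subsolution using $\psi>0$'' is precisely the hard point: strict positivity carries no quantitative lower bound, so manufacturing a positive subsolution that forces $\psi\gtrsim x^2$ (and, afterwards, the rate $|\psi_{xx}-a^{-1}|\lesssim x^{\alpha}$ needed for the stated regularity) requires the explicit comparison-function constructions that occupy the bulk of the proof in \cite{BCF-2009}. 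As written, the proposal establishes the interior weighted estimates but not the theorem's conclusions.
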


\subsection{General framework for the convexity of transonic shocks}
\label{Sec:Appendix-B-Convexity}
We state a general framework for the transonic shock as a free boundary, developed in Chen-Feldman-Xiang~\cite{ChenFeldmanXiang2020ARMA}.

Let $\Omega$ be an open, bounded, and connected set,
and $\partial\Omega=\Gamma_{{\rm shock}}\cup \Gamma_{1}\cup \Gamma_{2}$,
where the closed curve segment $\Gamma_{{\rm shock}}$ is a transonic shock that separates
a pseudo-supersonic constant state denoted as state $(2)$ outside $\Omega$
from a pseudo-subsonic (non-constant) state inside $\Omega$, and
$\Gamma_{1}\cup \Gamma_{2}$ is a fixed boundary.

We now present a more structural framework for domain $\Omega$ under consideration.

\medskip
\noindent
{\bf{Framework {\rm (A)}}}.  The structural framework for domain $\Omega$:
\begin{enumerate}[{\rm (i)}]
\item Domain $\Omega$ is bounded, with boundary $\partial\Omega$ that is a continuous closed curve without self-intersections.
Furthermore, $\partial\Omega$ is piecewise $C^{1,\alpha}$ up to the endpoints of each smooth part for some $\alpha\in(0,1)$,
and the number of smooth parts is finite.

\item At each corner point of $\partial\Omega$, angle $\theta$ between the arcs meeting at that point
from the interior of $\Omega$ satisfies $\theta\in (0,\pi)$.

\item  $\partial\Omega=\Gamma_{{\rm shock}}\cup \Gamma_{1}\cup \Gamma_{2}$,
where $\Gamma_{{\rm shock}}$, $\Gamma_{1}$, and $\Gamma_{2}$ are connected and disjoint,
and both $\Gamma_{{\rm shock}}^{0}$ and $\Gamma_{1}\cup \Gamma_{2}$ are non-empty.
Moreover, if $\Gamma_{i}\neq\varnothing$ for some $i\in\{1,2\}$,
then its relative interior is nonempty, {\it i.e.},~$\Gamma_{i}^{0}\neq\varnothing$.

\item $\Gamma_{{\rm shock}}$ includes its endpoints $A$ and $B$ with corresponding unit tangent vectors
$\bm{\tau}_{A}$ and $\bm{\tau}_{B}$ pointing to the interior of $\Gamma_{{\rm shock}}$, respectively.
If $\Gamma_{1}\neq\varnothing$, then $A$ is a common endpoint of  $\Gamma_{{\rm shock}}$
and $\Gamma_{1}$. If $\Gamma_{2}\neq\varnothing$,
then $B$ is a common endpoint of  $\Gamma_{{\rm shock}}$ and $\Gamma_{2}$.
\end{enumerate}

Let $\phi=\varphi-\varphi_{2}$.
Then $\phi$ satisfies the following equation:
\begin{equation}
\label{eqc1}
    (c^2-\varphi_{\xi_1}^2)\phi_{\xi_1\xi_1}-2\varphi_{\xi_1}\varphi_{\xi_2}\phi_{\xi_1\xi_2}+(c^2-\varphi_{\xi_2}^2)\phi_{\xi_2\xi_2}=0 \qquad\text{in $\Omega$}\,,
\end{equation}
with the boundary conditions:
\begin{equation}
\label{eqc2}
\phi=0\,, \quad
\rho(|D\phi+D\varphi_2|^2,\phi+\varphi_2)D(\phi+\varphi_2)\cdot \bm{\nu}=\rho_2 D\varphi_2\cdot \bm{\nu}
\qquad\,\, \text{on $\Gamma_{\rm{shock}}$}\,.
\end{equation}

If $\bm{\tau}_{A}\neq\pm \bm{\tau}_{B}$, define the cone:
\begin{equation} \label{eq:appendixB-cone}
    {\rm Con} \defeq \{r\bm{\tau}_{A}+s\bm{\tau}_{B} \,:\, r,\, s > 0\}\,.
\end{equation}

\begin{theorem}[{\cite[Theorem 2.1]{ChenFeldmanXiang2020ARMA}}]
\label{ThC1}
Assume that domain $\Omega$ satisfies {\rm Framework (A)}.
Assume that $\phi\in C^{1}(\overline{\Omega})\cap C^{2}(\Omega\cup \Gamma_{\rm{shock}}^{0}) \cap C^{3}(\Omega)$
is a solution of~\eqref{eqc1}--\eqref{eqc2}{\rm,} which is not a constant state in $\Omega$.
Moreover, let $\phi$ satisfy the following conditions{\rm :}
\begin{enumerate}[{\rm (C-1)}]
\item \label{Thm-C1-item1} The entropy condition holds
across $\Gamma_{\rm{shock}}${\rm:} $\rho(|D\varphi|^2,\varphi)>\rho_2$
and $\phi_{\bm{\nu}}<0$ along $\Gamma_{\rm{shock}},$ where $\bm{\nu}$ is the unit normal vector on $\Gamma_{\rm{shock}}$ pointing to $\Omega${\rm;}

\item \label{Thm-C1-item2} There exist constants $C_1>0$ and $\alpha_1\in (0,1)$
such that $\|\phi\|_{1+\alpha_1,\overline{\Omega}}\leq C_1${\rm;}

\item \label{Thm-C1-item3}
Equation~\eqref{eqc1} is strictly elliptic in $\Omega\cup \Gamma_{\rm{shock}}^0${\rm:}
$c^2-|D\varphi|^2>0$ in $\Omega\cup \Gamma_{\rm{shock}}^0${\rm;}

\item \label{Thm-C1-item4} $\Gamma_{\rm{shock}}$ is $C^2$ in its relative interior{\rm;}

\item \label{Thm-C1-item5}
$\bm{\tau}_{A}\neq\pm \bm{\tau}_{B},$ and $\{P+{\rm Con}\}\cap \Omega=\varnothing$ for any point $P\in \overline{\Gamma_{\rm{shock}}}${\rm;}

\item \label{Thm-C1-item6}
There exists a vector $\bm{e}\in {\rm Con}$ such that one of the following conditions holds{\rm :}
\begin{enumerate}[{\rm (C-\ref{Thm-C1-item6}a)}]
    \item
    \label{Thm-C1-item6-a}
    $\Gamma_{1}\neq \varnothing,$ and the directional derivative $\phi_{\bm{e}}$ cannot have
    a local maximum point on $\Gamma_{1}^{0}\cup \{A\}$ and a local minimum point on $\Gamma_{2}^{0}${\rm;}
    \item
    \label{Thm-C1-item6-b}
    $\Gamma_{2}\neq \varnothing,$ and  $\phi_{\bm{e}}$ cannot have a local minimum point
    on $\Gamma_{1}^{0}$ and a local maximum point on $\Gamma_{2}^{0}\cup\{B\}${\rm;}
    \item
    \label{Thm-C1-item6-c}
    $\phi_{\bm{e}}$ cannot have a local minimum point on $\Gamma_{1}\cup\Gamma_{2}${\rm;}
\end{enumerate}
where all the local maximum or minimum points are relative to $\overline{\Omega}$.
\end{enumerate}
Then the free boundary $\Gamma_{\rm{shock}}$ is a convex graph in each direction \(\bm{e} \in {\rm Con}\).
That is, there exists a concave function $f\in C^{1,\alpha}(\mathbb{R})$ in the orthogonal coordinate system \(\bm{\xi} \eqdef S \bm{e} + T\bm{e}^\perp\)
such that
\begin{equation}
\label{eqc3}
\begin{aligned}
& \Gamma_{\rm{shock}}=\big\{\bm{\xi}(S,T)\in\mathbb{R}^2 \,:\, S=f(T), \, T_A<T<T_B\big\}\,,\\
& \Omega\cap\big\{\bm{\xi} \,:\, T_A<T<T_B\big\}\subseteq\big\{\bm{\xi} \,:\, S<f(T)\big\}\,.
\end{aligned}
\end{equation}
Moreover, the free boundary $\Gamma_{\rm{shock}}$ is strictly convex in its relative interior in the sense that,
if $P= \bm{\xi} (S,T)\in \Gamma_{\rm{shock}}^0$ and $f^{\prime\prime}(T)=0,$
then there exists an integer $m>1,$ independent of the choice of \(\bm{e} \in {\rm Con}\) such that,
for any $n=2,\cdots,2m-1,$
\begin{equation}
\label{eqc4}
    f^{(n)}(T)=0 \,, \qquad f^{(2m)}(T)<0\,.
\end{equation}
The number of the points at which $f^{\prime\prime}(T)=0$ is at most finite on each compact subset of $\Gamma_{\rm{shock}}^0$.
In particular, the free boundary $\Gamma_{\rm{shock}}$ cannot contain any straight segment.
\end{theorem}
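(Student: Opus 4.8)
The plan is to follow the maximum-principle strategy of Chen--Feldman--Xiang: reduce the convexity of $\Gamma_{\rm shock}$ to a one-sided pointwise bound on a second-order quantity along the shock, and establish that bound by a maximum principle together with a boundary-point (Hopf) lemma applied to the directional derivatives $\partial_{\bm e}\phi$, $\bm e\in{\rm Con}$, the shock boundary condition being obtained by differentiating the Rankine--Hugoniot relation \eqref{eqc2}. \emph{Step 1 (graph property).} First I would show that for each $\bm e\in{\rm Con}$, $\Gamma_{\rm shock}$ is the graph $S=f(T)$ with $\Omega$ on the side $\{S<f(T)\}$, as in \eqref{eqc3}. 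The entropy condition \qeqref{Thm-C1-item1} together with $\phi=0$ on $\Gamma_{\rm shock}$ gives $\phi<0$ in a one-sided neighbourhood of $\Gamma_{\rm shock}^0$ inside $\Omega$; since \eqref{eqc1} is strictly elliptic in $\Omega\cup\Gamma_{\rm shock}^0$ by \qeqref{Thm-C1-item3} and has no zeroth-order term, a maximum-principle argument (using the structure of $\partial\Omega$ in Framework~(A) and the non-extremality of $\phi_{\bm e}$ on $\Gamma_1\cup\Gamma_2$ from \qeqref{Thm-C1-item6}) upgrades this to $\phi<0$ in $\Omega$ and to $\partial_{\bm e}\phi<0$ on $\overline{\Gamma_{\rm shock}}\setminus\{A,B\}$: if $\partial_{\bm e}\phi$ vanished at a relatively interior shock point, the level curve $\{\phi=0\}=\Gamma_{\rm shock}$ would be tangent there to the direction $\bm e\in{\rm Con}$, incompatible with $\phi<0$ in $\Omega$ and the cone condition \qeqref{Thm-C1-item5}. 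Since $\bm\tau_A\neq\pm\bm\tau_B$, the cone ${\rm Con}$ is proper, and it follows that $\Gamma_{\rm shock}$ is a graph over the $T$-axis with the stated inclusion; the $C^{1,\alpha}$ regularity of $f$ comes from \qeqref{Thm-C1-item2} and \qeqref{Thm-C1-item4}.

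\emph{Step 2 (reduction to a curvature sign).} Differentiating the identity $\phi(f(T)\bm e+T\bm e^\perp)\equiv0$ twice and dividing by $\partial_{\bm e}\phi<0$ shows that $f''(T)$ has the sign of $D^2\phi[\bm t,\bm t]$ at the corresponding shock point, where $\bm t$ is tangent to $\Gamma_{\rm shock}$. Since $\partial_{\bm t}\phi\equiv0$ along $\Gamma_{\rm shock}$, this quantity equals $-\kappa\,\partial_{\bm\nu}\phi$, with $\kappa$ the curvature measured with respect to $\bm\nu$; hence $f''\le0$ (concavity of $f$) is equivalent to $\partial_{\bm t\bm t}\phi\le0$ on $\Gamma_{\rm shock}^0$, and the order of vanishing of $f''$ equals that of $\partial_{\bm t\bm t}\phi$. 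So it suffices to prove $\partial_{\bm t\bm t}\phi\le0$ on $\Gamma_{\rm shock}^0$ and to control its zeros.

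\emph{Step 3 (directional derivatives and strict convexity).} Fix $\bm e\in{\rm Con}$ and set $w\defeq\partial_{\bm e}\phi$. Differentiating \eqref{eqc1} in direction $\bm e$ shows $w$ solves a linear, uniformly elliptic equation in $\Omega$ (by \qeqref{Thm-C1-item2}--\qeqref{Thm-C1-item3}) with no zeroth-order term, so $w$ attains its extrema on $\partial\Omega$, and by \qeqref{Thm-C1-item6} the relevant extremum is excluded on $\Gamma_1\cup\Gamma_2$. On $\Gamma_{\rm shock}^0$ I would differentiate the second relation in \eqref{eqc2} in the tangential direction and use \eqref{eqc1} to eliminate one second-order derivative of $\varphi$; the outcome is an oblique-derivative condition for $w$ of the form $\bm\beta\cdot Dw+a\,w=b$ with obliqueness $\bm\beta\cdot\bm\nu>0$, $a\le0$, and inhomogeneity $b$ equal, up to a strictly positive factor, to $\partial_{\bm t\bm t}\phi$. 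If $\partial_{\bm t\bm t}\phi>0$ somewhere on $\Gamma_{\rm shock}^0$, then the Hopf lemma applied to $w$ at an interior minimum point, combined with the endpoint normalizations $\partial_{\bm\tau_A}\phi(A)=\partial_{\bm\tau_B}\phi(B)=0$ (since $\Gamma_{\rm shock}$ is a level set of $\phi$), contradicts $w<0$ in $\Omega$ from Step~1; hence $\partial_{\bm t\bm t}\phi\le0$ and $\Gamma_{\rm shock}$ is a convex graph. For strict convexity, at a point $P=\bm\xi(S,T)$ with $f''(T)=0$ the inhomogeneity $b$ vanishes, so $w-w(P)$ solves a homogeneous oblique-derivative problem; the strong maximum principle and Hopf lemma then force $w-w(P)$ to vanish to some finite even order $2m$ in the normal direction at $P$ (the solution being non-constant), giving $f^{(n)}(T)=0$ for $n<2m$ and $f^{(2m)}(T)<0$ as in \eqref{eqc4}, with $m$ intrinsic to the curve, hence independent of $\bm e$; a covering argument using this finite order of vanishing shows the zeros of $f''$ are finite on compact subsets of $\Gamma_{\rm shock}^0$, and in particular $\Gamma_{\rm shock}$ contains no straight segment.

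\emph{Main obstacle.} The heart of the argument is the derivation of the shock boundary condition in Step~3: one must show that, after one tangential differentiation of the fully nonlinear relation \eqref{eqc2} and substitution of the PDE, the resulting condition for $w=\partial_{\bm e}\phi$ has zeroth-order coefficient of the correct sign and inhomogeneity genuinely comparable to $\partial_{\bm t\bm t}\phi$; this is where the precise structure of the potential-flow shock relation, the strict ellipticity \qeqref{Thm-C1-item3}, and the cone geometry \qeqref{Thm-C1-item5} must all be used together. The second delicate point is the finite-order-vanishing analysis at a degenerate shock point underlying the strict convexity~\eqref{eqc4}.
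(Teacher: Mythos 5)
First, a point of reference: this paper does not prove Theorem~\ref{ThC1} at all --- it is quoted verbatim in Appendix~B from Chen--Feldman--Xiang~\cite{ChenFeldmanXiang2020ARMA} as a known input, and is only \emph{applied} (in Lemma~\ref{Convexitylemma}) by verifying its hypotheses for admissible solutions. So there is no in-paper proof to compare against; your proposal has to stand on its own as a proof of the cited result, and as written it does not yet close.

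Two concrete issues. First, a sign error in Step~1 that propagates: with $\bm{\nu}$ pointing into $\Omega$, the entropy condition gives $\phi<0$ just inside $\Gamma_{\rm shock}$, and since \qeqref{Thm-C1-item5} places $\Omega$ on the $\{S<f(T)\}$ side, the one-sided derivative satisfies $\phi_{\bm{e}}\geq 0$ on $\Gamma_{\rm shock}$ for $\bm{e}\in{\rm Con}$ --- not $\phi_{\bm{e}}<0$ as you assert. This flips which extremum of $w=\phi_{\bm{e}}$ is relevant and the direction of the Hopf inequality throughout Step~3. Second, and more seriously, the contradiction in Step~3 is underspecified at exactly the point where the real work lies. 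Knowing that the inhomogeneity $b\sim\phi_{\bm{\tau}\bm{\tau}}$ is positive at \emph{some} shock point does not let you invoke Hopf's lemma: Hopf applies only at a point that is a local extremum of $w$ relative to all of $\overline{\Omega}$, and at such a point one additionally needs $\partial_{\bm{\tau}}w=0$ to reduce the differentiated Rankine--Hugoniot relation to a sign statement on $\phi_{\bm{\tau}\bm{\tau}}$. Producing such a point on $\Gamma_{\rm shock}^{0}$ --- rather than on $\Gamma_1\cup\Gamma_2$ or at $A$, $B$ --- is the heart of the Chen--Feldman--Xiang argument: it requires varying $\bm{e}$ over ${\rm Con}$, an extremal-direction/continuity argument, and their ``minimal chain'' construction to propagate putative extrema to the fixed boundary where \qeqref{Thm-C1-item6} excludes them (note that \qeqref{Thm-C1-item6} only constrains extrema on $\Gamma_1\cup\Gamma_2$, so the endpoint cases $A$, $B$ need the normalizations $\phi_{\bm{\tau}_A}(A)=\phi_{\bm{\tau}_B}(B)=0$ together with a separate case analysis of (C-\ref{Thm-C1-item6-a})--(C-\ref{Thm-C1-item6-c})). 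Your sketch names these ingredients but does not assemble them, so the convexity claim is not yet established.

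The strict-convexity statement~\eqref{eqc4} has a further gap: finite, even-order vanishing of $w-w(P)$ at a degenerate boundary point is not a consequence of ``the strong maximum principle and Hopf lemma''; in~\cite{ChenFeldmanXiang2020ARMA} it is obtained by an iterative computation relating $f^{(n)}$ to tangential derivatives of $\phi_{\bm{e}}$ along the shock via the equation and the differentiated boundary condition, and the independence of $m$ from $\bm{e}$ is a conclusion of that computation, not an a priori fact. As it stands, Steps~1--2 (graph property and the reduction of $f''\leq 0$ to $\phi_{\bm{\tau}\bm{\tau}}\leq 0$) are essentially right modulo the sign, but the core of the theorem remains to be proved.
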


Furthermore, under some additional assumptions, we can show that the shock curve is uniformly convex
in its relative interior in the sense defined in the following theorem:

\begin{theorem}[{\cite[Theorem 2.3]{ChenFeldmanXiang2020ARMA}}]
\label{ThC2}
Let $\Omega$ and $\phi$ be as in {\rm Theorem~\ref{ThC1}}.
Furthermore, assume that, for any unit vector $\bm{e}\in \mathbb{R}^2,$
the boundary part $\Gamma_{1}\cup\Gamma_{2}$ can be further decomposed so that
\begin{enumerate}[{\rm (C-1)}]
\addtocounter{enumi}{+6}
\item \label{Thm-C1-item7}
$\Gamma_{1}\cup\Gamma_{2}=\hat{\Gamma}_{0}\cup\hat{\Gamma}_{1}\cup\hat{\Gamma}_{2}\cup\hat{\Gamma}_{3},$
where some $\hat{\Gamma}_{i}$ may be empty, $\hat{\Gamma}_{i}$ is connected for each $i=0,1,2,3,$
and all curves $\hat{\Gamma}_{i}$ are located along $\partial\Omega$ in the order of their indices,
{\it i.e.},~non-empty sets $\hat{\Gamma}_{j}$ and
$\hat{\Gamma}_{k},$ $k>j,$ have a common endpoint if and only
if either $k=j+1$ or $\hat{\Gamma}_{i}=\varnothing$ for all $i=j+1,\cdots,k-1$.
Also, the non-empty set $\hat{\Gamma}_{i}$ with the smallest $($resp.~largest$)$ index has
the common endpoint $A$ $($resp.~$B$$)$ with $\Gamma_{\rm{shock}}$.
Moreover, if  $\hat{\Gamma}_{i}\neq\varnothing$ for some $i=0,1,2,3,$
then its relative interior is nonempty{\rm :} $\hat{\Gamma}_{i}^{0}\neq\varnothing${\rm;}

\item \label{Thm-C1-item8}
$\phi_{\bm{e}}$ is constant along $\hat{\Gamma}_{0}$ and $\hat{\Gamma}_{3}${\rm;}

\item \label{Thm-C1-item9}
For $i=1,2,$ if $\phi_{\bm{e}}$ attains its local minimum or maximum relative
to $\overline{\Omega}$ on $\hat{\Gamma}_{i}^{0},$ then $\phi_{\bm{e}}$ is constant along $\hat{\Gamma}_{i}${\rm;}

\item \label{Thm-C1-item10}
One of the following two conditions holds{\rm :}
\begin{enumerate}[{\rm (C-\ref{Thm-C1-item10}a)}]
    \item Either $\hat{\Gamma}_{1}=\varnothing$ or $\hat{\Gamma}_{2}=\varnothing${\rm;}
    \item Both $\hat{\Gamma}_{1}$ and  $\hat{\Gamma}_{2}$  are non-empty, and $\hat{\Gamma}_{3}=\varnothing,$
    so that $\hat{\Gamma}_{2}$ has the common endpoint $B$ with  $\Gamma_{\rm{shock}}$.
    At point $B,$ the following conditions hold{\rm :}
    \begin{enumerate}
        \item[{\rm (1*)}] If $\bm{\nu}_{\rm{sh}}(B)\cdot\bm{e}<0,$ then $\phi_{\bm{e}}$ cannot attain
        its local maximum relative to $\overline{\Omega}$ at $B${\rm;}
         \item[{\rm (2*)}] If $\bm{\nu}_{\rm{sh}}(B)\cdot\bm{e}=0,$ then $\phi_{\bm{e}}(B)=\phi_{\bm{e}}(Q^*)$
         for the common endpoint $Q^*$ of $\hat{\Gamma}_{1}$ and  $\hat{\Gamma}_{2}${\rm;}
    \end{enumerate}
    where $\bm{\nu}_{\rm{sh}}(B) \defeq \lim\limits_{\Gamma^{0}_{\rm{shock}}\ni P\rightarrow B}\bm{\nu}(P),$
    which exists since $\Gamma_{\rm{shock}}$ is $C^1$ up to $B$.
\end{enumerate}
\end{enumerate}
Then the shock function $f(T)$ in~\eqref{eqc3} satisfies $f^{\prime\prime}(T)<0$ for all
$T\in(T_A,T_B),$ that is,  $\Gamma_{\rm{shock}}$ is uniformly convex on closed subsets of
its relative interior.
\end{theorem}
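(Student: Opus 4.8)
\textbf{Reduction via Theorem~\ref{ThC1}.} The plan is to build directly on Theorem~\ref{ThC1}. Since $\Omega$ and $\phi = \varphi - \varphi_2$ already satisfy (C-1)--(C-6), that theorem gives that $\Gamma_{\rm shock}$ is a concave graph $S = f(T)$, $T \in (T_A, T_B)$, in the coordinates $\bm{\xi}= S\bm{e}+T\bm{e}^\perp$ for every $\bm{e}\in{\rm Con}$, with $f \in C^{1,\alpha}(\mathbb{R})\cap C^2((T_A,T_B))$; moreover $f$ is strictly convex in the sense of~\eqref{eqc4}, and the degeneracy set $Z \defeq \{T\in(T_A,T_B): f''(T)=0\}$ is finite on every compact subinterval and consists of points $T$ at which $f''(T)=\cdots=f^{(2m-1)}(T)=0$ and $f^{(2m)}(T)<0$ for some integer $m\geq 2$. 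Hence it suffices to prove $Z=\varnothing$: then $f''<0$ on $(T_A,T_B)$, and $f''\leq -c<0$ on compact subsets by continuity, which is the asserted uniform convexity.

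\textbf{The linearized quantity and its boundary behavior.} Fix a unit vector $\bm{e}\in{\rm Con}$ and set $w \defeq \partial_{\bm{e}}\phi = D\phi\cdot\bm{e}$. Differentiating~\eqref{eqc1} in the direction $\bm{e}$ and using the strict ellipticity (C-3) together with the interior regularity $\phi\in C^3(\Omega)$, the function $w$ solves a homogeneous, strictly elliptic, linear second-order equation $\mathcal{L}w=0$ in $\Omega\cup\Gamma_{\rm shock}^0$. Since $\phi\equiv0$ on $\Gamma_{\rm shock}$, one has $\phi_{\bm{\tau}}=0$ and hence $w=(\bm{e}\cdot\bm{\nu})\,\phi_{\bm{\nu}}$ on $\Gamma_{\rm shock}$; using the geometry of ${\rm Con}$ with $\bm{\tau}_A\neq\pm\bm{\tau}_B$ one checks $\bm{e}\cdot\bm{\nu}<0$ on all of $\overline{\Gamma_{\rm shock}}$, so the entropy condition (C-1), $\phi_{\bm{\nu}}<0$, gives $w>0$ on $\overline{\Gamma_{\rm shock}}$. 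The key relation comes from differentiating the Rankine--Hugoniot condition~\eqref{eqc2} tangentially along $\Gamma_{\rm shock}$, eliminating $\phi_{\bm{\nu}\bm{\nu}}$ via~\eqref{eqc1}, and using $\phi_{\bm{\tau}\bm{\tau}}=-\phi_{\bm{\nu}}\,\kappa_{\rm sh}$, where $\kappa_{\rm sh}=-f''(1+(f')^2)^{-3/2}\geq0$ is the curvature of $\Gamma_{\rm shock}$; this produces an oblique-derivative identity
\[
\partial_{\bm{\nu}}w + b_1\,\partial_{\bm{\tau}}w + b_0\,w = a_{\rm sh}\,\kappa_{\rm sh}\qquad\text{on }\Gamma_{\rm shock}^0,
\]
with $a_{\rm sh}>0$ forced by (C-1) and (C-3). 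On the fixed boundary, the structural decomposition (C-7) with (C-8)--(C-9) gives that $w$ is constant on $\hat\Gamma_0$ and $\hat\Gamma_3$, and on $\hat\Gamma_1^0$, $\hat\Gamma_2^0$ the function $w$ cannot attain a local extremum relative to $\overline\Omega$ unless it is constant there.

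\textbf{Ruling out degenerate points.} Suppose, for contradiction, that $Z\neq\varnothing$, and fix $\bar T\in Z$ with degenerate shock point $P_0=\bm{\xi}(f(\bar T),\bar T)$. Localizing near $P_0$ and exploiting the precise order of vanishing from~\eqref{eqc4}, I would compare $w$ with its boundary values: since $w>0$ on $\overline{\Gamma_{\rm shock}}$ and $\mathcal{L}w=0$ in $\Omega$, the maximum principle, the Hopf lemma applied at $\Gamma_{\rm shock}^0$ through the oblique identity above, and the controls on extrema of $w$ on $\Gamma_1\cup\Gamma_2$ from (C-7)--(C-9) propagate the vanishing $\kappa_{\rm sh}(P_0)=0$ along $\Gamma_{\rm shock}^0$; iterating the strong maximum principle along the connected curve $\Gamma_{\rm shock}^0$ then forces $\kappa_{\rm sh}\equiv0$ there, i.e. $f$ affine on $(T_A,T_B)$, contradicting $\bm{\tau}_A\neq\pm\bm{\tau}_B$. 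Condition (C-10) is precisely what makes the propagation valid up to the corner $B$: when one of $\hat\Gamma_1$, $\hat\Gamma_2$ is empty (case (C-10a)) the reduced boundary structure suffices just as for the endpoints in Theorem~\ref{ThC1}; when both are nonempty and $\hat\Gamma_3=\varnothing$, the sub-cases (C-10b)(1*)--(2*) prevent $w$ from having a boundary extremum at $B$ of the type that would otherwise break the maximum-principle chain.

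\textbf{Main obstacle.} The delicate work lies in two places. First, the exact derivation of the oblique boundary identity for $w$ on $\Gamma_{\rm shock}$: one must differentiate the fully nonlinear Rankine--Hugoniot condition~\eqref{eqc2} along the shock, trade the normal-normal second derivative $\phi_{\bm{\nu}\bm{\nu}}$ through the PDE~\eqref{eqc1}, and then verify the positivity $a_{\rm sh}>0$ and the obliqueness sign of $b_1$ purely from the entropy condition (C-1) and the ellipticity (C-3). Second, making the propagation argument rigorous despite $\Gamma_{\rm shock}$ being only $C^2$ in its relative interior and merely $C^{1,\alpha}$ at $A$ and $B$: one must localize carefully near a candidate degenerate point, use the order-$2m$ vanishing in~\eqref{eqc4} to set up the correct auxiliary barrier, and chain the Hopf lemma along $\Gamma_{\rm shock}^0$ through every corner configuration permitted by (C-7)--(C-10) in order to conclude that $\kappa_{\rm sh}$ vanishes identically.
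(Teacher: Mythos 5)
A preliminary remark on the comparison itself: this paper contains no proof of the statement. Theorem~\ref{ThC2} is imported verbatim from Chen--Feldman--Xiang \cite[Theorem 2.3]{ChenFeldmanXiang2020ARMA} as a known result in Appendix~\ref{Sec:Appendix-B-Convexity}; the only work done here is the verification in Lemma~\ref{Convexitylemma} that admissible solutions satisfy its hypotheses. So your attempt can only be judged against the cited source and on its own merits.

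On the merits, your surrounding machinery is the right one (set $w=\phi_{\bm{e}}$, differentiate the Rankine--Hugoniot condition \eqref{eqc2} tangentially along the shock to obtain an oblique condition whose inhomogeneity carries the curvature, and use the maximum principle, Hopf's lemma, and (C-7)--(C-10) to control extrema on the fixed boundary), but the pivotal step --- extracting a contradiction from a single curvature-vanishing point $P_0\in\Gamma^{0}_{\rm shock}$ --- is missing, and the mechanism you sketch cannot supply it. For a fixed $\bm{e}$ in the interior of ${\rm Con}$, your own observation gives $w=(\bm{e}\cdot\bm{\nu})\,\phi_{\bm{\nu}}>0$ on all of $\overline{\Gamma_{\rm shock}}$: by concavity of $f$ the shock tangents interpolate between $\bm{\tau}_A$ and $-\bm{\tau}_B$, so no interior direction of the cone is ever tangent to the shock. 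Consequently $\kappa_{\rm sh}(P_0)=0$ forces no extremum of $w$ at $P_0$, Hopf's lemma has nothing to bite on there, and the claim that one can ``propagate $\kappa_{\rm sh}\equiv 0$ along $\Gamma^{0}_{\rm shock}$ by iterating the strong maximum principle'' is not an argument: vanishing curvature at one point is a pointwise statement about $f$, not a quantity that the elliptic equation for $w$ transports along the boundary. The cited proof instead pivots on choosing $\bm{e}$ adapted to the degenerate point --- essentially the shock tangent $\bm{\tau}(P_0)$, which in general lies outside ${\rm Con}$; this is precisely why the theorem assumes (C-7)--(C-10) for \emph{every} unit vector $\bm{e}$, a hypothesis your argument never invokes. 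With that choice, $\phi_{\bm{e}}$ vanishes at $P_0$, the even-order contact \eqref{eqc4} from Theorem~\ref{ThC1} gives it a one-sided extremum relative to $\overline{\Omega}$ there, the differentiated Rankine--Hugoniot condition with $\kappa_{\rm sh}(P_0)=0$ annihilates $\partial_{\bm{\nu}}\phi_{\bm{e}}(P_0)$, and Hopf's lemma then forces $\phi_{\bm{e}}$ to be constant, which (C-7)--(C-10), $\bm{\tau}_A\neq\pm\bm{\tau}_B$, and the non-constancy of $\phi$ exclude; your ``main obstacle'' paragraph in effect concedes that this part is not carried out. As written, the proposal is an outline of the auxiliary apparatus rather than a proof of the theorem.
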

\end{appendices}

\medskip
\noindent \textbf{Acknowledgements:}
\addcontentsline{toc}{section}{Acknowledgements}
The research of Gui-Qiang G.~Chen is supported in part
by the UK Engineering and Physical Sciences Research Council Awards
EP/L015811/1, EP/V008854/1, and EP/V051121/1.
The research of Alex Cliffe is supported in part by the UK Engineering and Physical Sciences Research Council Awards EP/N509711/1 and EP/R513295/1.
The research of Feimin Huang is supported in part by the National Key R\&D Program of China No.~2021YFA1000800,
and the National Natural Sciences Foundation of China No.~12288201.
The research of Song Liu is supported in part by the Hong Kong Institute of Advanced Study and the GRF grant CityU~11300420.
The research of Qin Wang is supported in part by the National Natural Sciences Foundation of China No.~12261100.

\end{document}